\tikzstyle{startstop} = [rectangle, rounded corners, minimum width=1cm, minimum height=1cm,text centered, draw=black]
\tikzstyle{io} = [rectangle, rounded corners, minimum width=1cm, minimum height=1cm,text centered, draw=black]
\tikzstyle{process} = [rectangle, rounded corners, minimum width=1cm, minimum height=1cm,text centered, draw=black]
\tikzstyle{decision} = [rectangle, rounded corners, minimum width=1cm, minimum height=1cm,text centered, draw=black]
\tikzstyle{explain} = [minimum width=1cm, minimum height=0.5cm,text centered, draw=black]
\tikzstyle{equations} = []
\tikzstyle{arrow} = [thick,->,>=stealth]
\tikzstyle{d-arrow} = [thick,->,dashed,>=stealth]
\numberwithin{equation}{section}
\let\al=\alpha
\let\b=\beta
\let\d=\delta
\let\ep=\epsilon
\let\la=\lambda
\let\s=\sigma
\let\f=\frac
\let\om=\omega
\let\Om=\Omega
\let\na=\nabla
\let\th=\theta
\let\pa=\partial
\def\mfq{\mathfrak{q}}
\def\mfe{\mathfrak{e}}
\def\mfs{\mathfrak{s}}
\def\bbD{\mathbb{D}}
\def\bfD{\mathbf{D}}
\def\bfR{\mathbf{R}}
\def\tom{\tilde{\omega}}
\def\tchi{\tilde{\chi}}
\newcommand\reallywidehat[1]{%
\savestack{\tmpbox}{\stretchto{%
  \scaleto{%
    \scalerel*[\widthof{\ensuremath{#1}}]{\kern-.6pt\bigwedge\kern-.6pt}%
    {\rule[-\textheight/2]{1ex}{\textheight}}
  }{\textheight}%
}{0.5ex}}%
\stackon[1pt]{#1}{\tmpbox}%
}
\def\rmA{\mathrm{A}}
\def\rmB{\mathrm{B}}
\def\rmD{\mathrm{D}}
\def\rmE{\mathrm{E}}
\def\rmH{\mathrm{H}}
\def\rmJ{\mathrm{J}}
\def\rmR{\mathrm{R}}
\def\rmK{\mathrm{K}}
\def\rmL{\mathrm{L}}
\def\rmM{\mathrm{M}}
\def\rmN{\mathrm{N}}
\def\rmT{\mathrm{T}}
\def\rmU{\mathrm{U}}
\def\R{\mathbb R}
\def\no{\noindent}
\def\dive{\mathop{\rm div}\nolimits}
\def\eqdef{\buildrel\hbox{\footnotesize def}\over =}
\def\bbT{\mathbb{T}}
\newcommand{\beq}{\begin{equation}}
\newcommand{\eeq}{\end{equation}}
\newcommand{\ben}{\begin{eqnarray}}
\newcommand{\een}{\end{eqnarray}}
\newcommand{\beno}{\begin{eqnarray*}}
\newcommand{\eeno}{\end{eqnarray*}}
\newcommand{\udl}[1]{\underline{#1}}
\newtheorem{theorem}{Theorem}[section]
\newtheorem{lemma}[theorem]{Lemma}
\newtheorem{proposition}[theorem]{Proposition}
\newtheorem{corol}[theorem]{Corollary}
\newtheorem{remark}[theorem]{Remark}
\begin{document}
\begin{CJK*}{UTF8}{gkai}
\title[Nonlinear inviscid damping]{Inviscid damping of monotone shear flows for 2D inhomogeneous Euler equation with non-constant density in a finite channel}
\author{Weiren Zhao}
\address[W. Zhao]{Department of Mathematics, New York University Abu Dhabi, Saadiyat Island, P.O. Box 129188, Abu Dhabi, United Arab Emirates.}
\email{zjzjzwr@126.com, wz19@nyu.edu}
\maketitle

\begin{abstract}
We prove the nonlinear inviscid damping for a class of monotone shear flows with non-constant background density for the two-dimensional ideal inhomogeneous fluids in $\mathbb{T}\times [0,1]$ when the initial perturbation is in Gevrey-$\f{1}{s}$ ($\f12<s<1$) class with compact support. 
\end{abstract}
\setcounter{tocdepth}{1}
{\small\tableofcontents}
\section{Introduction}
We consider the two-dimensional inhomogeneous incompressible Euler system in a finite channel $\mathbb{T}\times [0,1]$:
\beq\label{eq:inhomincompEuler-1}
\left\{
\begin{aligned}
&\pa_t\rho+{\bf v}\cdot\na \rho=0,\\
&\rho(\pa_t {\bf v}+{\bf v}\cdot\na {\bf v})+\na {\bf P}=0,\\
&\dive\, {\bf v}=0
\end{aligned}
\right.
\eeq
where $\rho>0$ is the density, ${\bf v}$ is the velocity and ${\bf P}$ is the pressure. By introducing $m=\rho^{-1}$, we have 
\beq\label{eq:inhomincompEuler}
\left\{
\begin{aligned}
&\pa_tm+{\bf v}\cdot\na m=0,\\
&\pa_t {\bf v}+{\bf v}\cdot\na {\bf v}+m\na {\bf P}=0,\\
&\dive\, {\bf v}=0.
\end{aligned}
\right.
\eeq
The system has a nontrivial steady state:
\beno
{\bf v}_s=(u(y),0)\quad m_s=\th(y)\quad {\bf P}_s=\text{cons.}.
\eeno
In this paper, we study the global stability of this steady state. It is natural to introduce the perturbation: ${\bf v}=(u(y),0)+U$, $m=\th(y)+d(t,x,y)$ and ${\bf P}=\text{cons.}+P$. We also introduce the perturbed vorticity $\om=\na \times U$ and the associated stream function $\psi$. Then we obtain the equations of perturbation $(U,d,\om,\psi,P)$:
\begin{subequations}\label{eq:vorticity}
\beq
\left\{
\begin{aligned}
&\pa_t U^x+u(y)\pa_xU^x+u'(y)\pa_x\psi+U\cdot \na U^x+\th\pa_xP+d\pa_xP=0\\
&\pa_tU^y+u(y)\pa_xU^y+U\cdot \na U^y+\th\pa_yP+d\pa_yP=0,
\end{aligned}
\right.
\eeq
and
\beq
\left\{
\begin{aligned}
&\pa_t\om+u(y)\pa_x\om-u''(y)\pa_x\psi-\th'(y)\pa_xP+\pa_yP\pa_xd-\pa_xP\pa_yd+U\cdot\nabla \om=0,\\
&\pa_t d+u(y)\pa_xd+\th'(y)\pa_x\psi+U\cdot \na d=0,
\end{aligned}
\right.
\eeq
where 
\beq
\left\{
\begin{aligned}
&U=\na^{\bot}\psi, \quad \Delta\psi=\om,\quad 
\dive\big((\th+d)\na P\big)=-\dive (u\cdot \na u)-2u'(y)\pa_{xx}\psi,\\
&\psi(t,x,0)=\psi(t,x,1)=0,\quad \pa_yP(t,x,0)=\pa_yP(t,x,1)=0,\\
&\om|_{t=0}=\om_{in}(x,y),\quad d|_{t=0}=d_{in}(x,y).
\end{aligned}
\right.
\eeq
\end{subequations}
In the homogeneous density case $\th=1, d=0$, \eqref{eq:vorticity} is the Euler equation around a shear flow $(u(y),0)$. In \cite{Orr1907}, Orr observed that for the Couette flow case $u(y)=y$, the velocity will tend to a shear flow as $t\to \infty$. This phenomenon is called inviscid damping. 
Bedrossian and Masmoudi \cite{BM2015} proved nonlinear inviscid damping around the Couette flow in Gevrey-$m$  class ($1\leq m<2$). 
Deng and Masmoudi \cite{DM2018}  proved some instability for initial perturbations in the Gevrey-$m$ class ($m>2$). 
We refer to \cite{IonescuJia2020cmp, IonescuJia2021} and references therein for other related interesting results.
For the monotone shear flow setting, Case \cite{Case1960} predicted the $t^{-1}$ decay for the velocity. It is later proved by Wei, Zhang, and Zhao \cite{WeiZhangZhao2018}. We also refer to \cite{Zillinger2017, Jia2020siam, Jia2020arma}. The nonlinear inviscid damping for stable monotone shear flow was proved by Ionescu-Jia \cite{IJ2020}, and Masmoudi-Zhao \cite{MasmoudiZhao2020}. 
For non-monotone flows such as the Poiseuille flow and the Kolmogorov flow, another dynamic phenomenon should be taken into consideration, which is the so-called vorticity depletion, predicted by Bouchet and Morita \cite{BM2010} and later proved in \cite{WeiZhangZhao2019, WeiZhangZhao2020, ionescuSameerJia2022linear}. See also \cite{BCV2017, LMZZ2021, IonescuJia2021} for similar depletion in various systems. For the inhomogeneous Euler equation, very recently, Chen, Wei, Zhang, and Zhang \cite{ChenWeiZhangZhang2023} proved the nonlinear inviscid damping for the Couette flow with the density near a constant, namely, $\th=1$. 

Our main result is
\begin{theorem}\label{Thm: main}
Suppose $u(y), \th(y)$ are smooth functions defined on $[0,1]$ which satisfy: 
\begin{enumerate}
\item (Monotone) There exists $c_0>0$ such that $u'\geq c_0>0$.
\item (Positive density) There exists $c_0>0$ such that $\th\geq c_0>0$. 
\item (Compact support) There exists $\kappa_0\in (0,\f{1}{10}]$ such that $\mathrm{supp}\, u''\subset [4\kappa_0,1-4\kappa_0]$ and $\mathrm{supp}\, \th'\subset [4\kappa_0,1-4\kappa_0]$. 
\item (Linear stability) The distorted Rayleigh operator $u\mathrm{Id}-\left(\f{u'}{\th}\right)'\tilde{\Delta}^{-1}$ has no eigenvalue and no embedded eigenvalue. 
\item (Regularity) There exist $K>1$ and $s_0\in (0,1)$ such that for all integers $m\geq 0$,
\beno
\sup_{y\in\R}\left|\f{d^m(\th'(y))}{dy^m}\right|+\sup_{y\in\R}\left|\f{d^m(u''(y))}{dy^m}\right|\leq K^m(m!)^{\f{1}{s_0}}(m+1)^{-2}.
\eeno
\end{enumerate}
For all $1>s_0\geq s> 1/2$ and $\la_{in}>0$, there exist $\la_{in}>\la_{\infty}=\la_{\infty}(\la_{in}, K,\kappa_0,s)>0$ and $0<\ep_0=\ep_0(\la_{in},\la_{\infty},\kappa_0,s)\leq \f12$ such that for all $\ep\leq \ep_0$ if $\om_{in},\ d_{in}$ have compact support in $\mathbb{T}\times [3\kappa_0,1-3\kappa_0]$ and satisfy
\beno
\|\om_{in}\|_{\mathcal{G}^{s, \la_{in}}}^2=\sum_{k}\int\left|\widehat{\om}_{in}(k,\eta)\right|^2e^{2\la_{in}\langle k,\eta\rangle^s}d\eta\leq \ep^2,
\quad \int_{\mathbb{T}\times [0,1]}\om_{in}(x,y)dxdy=0
\eeno
and
\beno
\|d_{in}\|_{\mathcal{G}^{s,\la_{in}}}^2=\sum_{k}\int\left|\widehat{d}_{in}(k,\eta)\right|^2e^{2\la_{in}\langle k,\eta\rangle^s}d\eta\leq \ep^2,
\quad \int_{\mathbb{T}\times [0,1]}d_{in}(x,y)dxdy=0
\eeno
then the smooth solution $\om(t)$ and $d(t)$ to \eqref{eq:vorticity} satisfy: 
\begin{itemize}
\item[1.] (Compact support) For all $t\geq 0$, $\mathrm{supp}\, \om(t)\subset \mathbb{T}\times [2\kappa_0,1-2\kappa_0]$ and $\mathrm{supp}\, d(t)\subset \mathbb{T}\times [2\kappa_0,1-2\kappa_0]$. 
\item[2.] (Scattering) There exist $f_{\infty}, d_{\infty}\in \mathcal{G}^{s,\la_{\infty}}$ with $\mathrm{supp}\, f_{\infty}\subset \mathbb{T}\times [2\kappa_0,1-2\kappa_0]$ and $\mathrm{supp}\, d_{\infty}\subset \mathbb{T}\times [2\kappa_0,1-2\kappa_0]$ such that for all $t\geq 0$, 
\beq\label{eq: Scattering om} 
\left\|\om(t,x+tu(y)+\Phi(t,y),y)-f_{\infty}(x,y)\right\|_{\mathcal{G}^{s, \la_{\infty}}}\lesssim \f{\ep}{\langle t\rangle},
\eeq
and
\beq\label{eq: Scattering den} 
\left\|d(t,x+tu(y)+\Phi(t,y),y)-d_{\infty}(x,y)\right\|_{\mathcal{G}^{s,\la_{\infty}}}\lesssim \f{\ep}{\langle t\rangle},
\eeq
where $\Phi(t,y)$ is given explicitly by 
\beq\label{eq:Phi(t,y)}
\Phi(t,y)=\f{1}{2\pi}\int_0^t\int_{\mathbb{T}}U^x(\tau,x,y)dxd\tau=u_{\infty}(y)t+O(\ep^2),
\eeq
with $u_{\infty}(y)=-\f{1}{2\pi}\pa_y\int_{\mathbb{T}}\Delta^{-1}f_{\infty}(x, y)dx$. 
\item[3.] (Inviscid damping) 
The velocity field $U$ satisfies
\begin{align}\label{eq: inviscid damping}
\left\|\f{1}{2\pi}\int U^x(t,x,\cdot)dx-u_{\infty}\right\|_{\mathcal{G}^{s,\la_{\infty}}}&\lesssim \f{\ep^2}{\langle t\rangle^2},\\
\left\|U^x(t)-\f{1}{2\pi}\int U^x(t,x,\cdot)dx\right\|_{L^2}&\lesssim \f{\ep}{\langle t\rangle},\\
\big\|U^y(t)\big\|_{L^2}&\lesssim \f{\ep}{\langle t\rangle^2}.
\end{align}
\end{itemize}
\end{theorem}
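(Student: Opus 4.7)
The plan is to prove the theorem by a continuity/bootstrap argument in Gevrey class, combining the coordinate-straightening method of Masmoudi-Zhao and Ionescu-Jia for monotone shear flows with a simultaneous propagation of Gevrey regularity on the density perturbation. First I would introduce a time-dependent change of variables $(x,y)\mapsto (z,v)$ with $z = x - tv$ and $v=v(t,y)$ defined by an ODE so that the new vorticity $F(t,z,v)$ and density $D(t,z,v)$ are transported along the full nonlinear characteristics rather than the unperturbed shear. The associated phase correction is exactly the $\Phi(t,y)$ appearing in \eqref{eq:Phi(t,y)}, driven by the mean of $U^x$; this pulls the linear-transport resonance out of the problem and makes the eventual scattering estimates \eqref{eq: Scattering om}-\eqref{eq: Scattering den} a statement about convergence of $F,D$ as $t\to\infty$.

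\textbf{Linear inviscid damping.} At each nonzero Fourier mode $k$ in $z$, the linearized problem reduces to the spectral analysis of the distorted Rayleigh operator $u\,\mathrm{Id}-(u'/\th)'\tilde\Delta^{-1}$. Using the hypothesis that this operator has no eigenvalues and no embedded eigenvalues, together with the compact support of $u''$ and $\th'$ (so that the spectral problem is posed on the compactly supported perturbation of multiplication by $u$), I would prove a limiting-absorption principle and deduce the sharp linear inviscid damping: the stream function $\psi_k$ and the pressure $P_k$ gain factors of $\langle kt\rangle^{-1}$ per $y$-derivative placed on the initial data, which is exactly the rate matching \eqref{eq: inviscid damping} at the linear level.

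\textbf{Bootstrap in Gevrey class.} I would then run a bootstrap in a time-dependent Gevrey norm $A(t,\na)$ of Bedrossian-Masmoudi type (weights $e^{\la(t)\langle k,\eta\rangle^s}$ with $\la(t)$ decreasing from $\la_{in}$ down to $\la_{\infty}$, together with CK-type commutator weights), simultaneously controlling $F$, $D$, and the coordinate $v$. The key nonlinear estimates are: transport commutators $[A,\tilde U\cdot\na]$, handled as in the homogeneous monotone case; the reaction terms $(u'/\th)'\tilde\Delta^{-1}\pa_z F$ and $\th'\pa_z P$, absorbed using the linear inviscid damping and standard Orr-type weights; the elliptic inversion for $P$ in $\dive((\th+d)\na P)=-\dive(u\cdot\na u)-2u'\pa_{xx}\psi$, done perturbatively around the fixed operator $\dive(\th\na\cdot)$ and closed by smallness of $\|d\|$; and the new quadratic term $\pa_y d\,\pa_x P-\pa_x d\,\pa_y P$ in the vorticity equation, where one factor always enjoys decay through the $P$-inversion while the other is paid for by the Gevrey bound on $d$.

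\textbf{Main obstacle.} The hardest step is closing the bootstrap on the coupled pair $(F,D)$: unlike in the constant-density case, the density $d$ is merely transported and admits no elliptic self-smoothing, yet it re-enters the vorticity equation through $\th'\pa_x P$ and through the quadratic pressure-density term, which could in principle drive a Gevrey cascade. The remedy I would pursue is to work with a good unknown such as $q:=\om-(u'/\th)\,d$, whose equation cancels the most dangerous linear cross-term and leaves only contributions consistent with the distorted Rayleigh picture established in step two. Once uniform $A$-norm bounds on $q$ and $D$ are obtained, the scattering profiles $f_{\infty},d_{\infty}$ arise as $t\to\infty$ limits of $F$ and $D$; the velocity decay rates in \eqref{eq: inviscid damping} follow from the resulting stream-function and pressure estimates; and the compact-support propagation in $y$ is an immediate consequence of $\|\tilde U\|_{L^1_t L^{\infty}_{z,v}}=O(\ep)$ being much smaller than $\kappa_0$.
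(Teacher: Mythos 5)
The good unknown you propose, $q=\om-\f{u'}{\th}\,d$, does not cancel the dangerous linear cross-term $\th'\pa_x P$ in the vorticity equation, and in fact cannot: the density equation $\pa_t d+u\pa_xd+\th'\pa_x\psi+U\cdot\na d=0$ contains no pressure term at all, so no linear combination of $\om$ and $d$ can touch $\th'\pa_x P$. If you compute the evolution of your $q$ you find
\begin{align*}
\pa_t q+u\pa_x q &= u''\pa_x\psi+\th'\pa_xP+\f{u'\th'}{\th}\pa_x\psi+\text{(nonlinear)},
\end{align*}
so the pressure term survives and you have merely modified the stream-function coefficient. The mechanism that actually works — and the one the paper uses — is to bring in the \emph{horizontal momentum} equation, which carries the competing term $\th\pa_x P$. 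Setting $\tilde{\om}=\f{\om}{\th}-\f{\th'}{\th^2}\pa_y\psi=\tilde\Delta\psi$ and combining the $\om$- and $U^x$-equations cancels $\f{\th'}{\th}\pa_xP$ against $-\f{\th'}{\th}\pa_x P$ and leaves the compactly supported nonlocal coefficient $\big(\f{u'}{\th}\big)'$, which matches the distorted Rayleigh operator in hypothesis (4). The price is that $\psi$ is now recovered from $\tilde{\om}$ through the distorted elliptic operator $\tilde\Delta$, so the entire elliptic/Green-function analysis must be redone for $\tilde\Delta$ rather than for the standard Laplacian.

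\textbf{On the nonlocal term and the wave operator.} You propose to absorb $(u'/\th)'\tilde\Delta^{-1}\pa_zF$ with ``linear inviscid damping and standard Orr-type weights.'' This will not close. That term is a genuine \emph{linear} forcing with an $O(1)$ coefficient that lives on a fixed compact set; Orr weights handle the transient growth from the Orr mechanism inside the transport, but they do not give you the structural cancellation needed for an $O(1)$ nonlocal linear term — this is precisely the obstruction already present in the monotone shear case without density (Masmoudi-Zhao). The paper's resolution is to construct a wave operator $\bfD_{u,k}$ conjugating the distorted Rayleigh operator $u\mathrm{Id}-(u'/\th)'\tilde\Delta^{-1}$ to multiplication by $u$, applied to each low wavenumber $|k|<k_M$ separately; the linear stability hypothesis (4) is used exactly here, via the representation formula for the resolvent. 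The crucial analytic input is that the Fourier kernel of $\bfD_{u,k}$ decays like $e^{-\la_{\cal D}|\xi_1-\xi_2|^{s_0}}$, which makes the conjugation compatible with the Gevrey multipliers in the bootstrap. Your outline has no substitute for this step, and without it the energy estimate cannot be closed.

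\textbf{On the pressure and the channel.} Finally, treating $\dive((\th+d)\na P)=\cdots$ ``perturbatively around $\dive(\th\na\cdot)$'' underestimates the difficulties created by the finite-channel geometry: the right-hand side is not compactly supported, the Neumann condition imposes a compatibility constraint on the zero mode, and a naive inversion produces a $\chi''\pa_y\psi$ term that loses a $y$-derivative. The paper handles these by a six-term pressure decomposition with explicit boundary correctors (so each piece has a compactly supported source), by deriving the zero-mode pressure from the vertical momentum equation rather than from the elliptic problem, and by an iteration scheme that recovers the lost derivative using the support separation between the cutoffs and $\th'$, $u''$, $d$. These are not optional refinements; without them the $\th'\pa_x P$ term cannot even be Fourier-analyzed.

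\textbf{Summary.} Your coordinates, your Gevrey bootstrap framework, and your intuition that a good unknown and a spectral hypothesis on the distorted Rayleigh operator must enter are all aligned with the paper. But your specific good unknown is wrong (density cannot be traded against pressure), you are missing the wave-operator step entirely, and the pressure treatment in the channel is substantially harder than you allow. As written, the argument would fail at the good-unknown step.
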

We list some remarks here. 

\begin{enumerate}
\item By modifying the Fourier multiplier, one can prove the asymptotic stability in the Gevrey-$2$ class. 
\item The compact support assumptions on $u''$, $\th'$ and $\om_{in}, d_{in}$ seem necessary, which ensures the Fourier analysis works in the estimate. 
\item In the proof, we construct a wave operator to absorb the nonlocal term. Such kind of operators appears in the study of dispersive equations with potential, (also called distorted Fourier transform, see \cite{DelortMasmoudi, GP20,  LLS21, GPR18, Schlag, Yajima} for more details. Let us also mention that recently, the wave operator was successfully used to solve important problems in fluid mechanics.
In \cite{LWZ}, the authors use the wave operator method to solve Gallay's conjecture on pseudospectral and spectral bounds of the Oseen vortices operator. In \cite{WeiZhangZhao2020}, the wave operator method was used to solve Beck and Wayne's conjecture. In \cite{MasmoudiZhao2020}, the authors use the wave operator method to prove the nonlinear inviscid damping for stable monotone shear flows. 
\item In \cite{ChenWeiZhangZhang2023}, the authors introduce a well-constructed weight $\rmA^*$ which captures the important difference of regularities between the vorticity and density. There are other choices. 
The weight introduced in \cite{ChenWeiZhangZhang2023} is constructed in a clever way. 
We use the same weight in this paper.
It helps us to simplify the proof and reduce more than 20 pages of this paper.
\item It is quite interesting to study the stability threshold problem for the 2D inhomogeneous incompressible Navier-Stokes equations at a high Reynolds number. For the Navier-Stokes equations, we refer to \cite{BGM2015, BGM2017, BGM2020, BMV2016, BVW2018, CLWZ2020, ChenWeiZhang2020,  LMZ-G2022, LMZ2022, MasmoudiZhao2020cpde, MasmoudiZhao2019} for the recent progress. 
\end{enumerate}

\subsection{Main idea}
The paper contains mainly two parts. The first part, from section \ref{sec: step1} to section \ref{sec: good system}, is the more important part, where we do the reduction. In this part, we introduce a good unknown, pressure decomposition, nonlinear change of coordinates, and the wave operator to deduce a good working PDE system. The following map (\ref{map}) shows the main idea of reaching a good working system. In each step, we ensure that the system is better for nonlinear energy estimates. In the second part, we close the energy estimate by using time-dependent Fourier multipliers which capture the growth of the solution at each frequency and each time. 

\begin{figure*}[tbp]
\caption{The map of main ideas}
\label{map}

\medskip

{\small
\begin{tikzpicture}[node distance=4cm]
\node (start) [startstop]{$\pa_t\om+u(y)\pa_x\om-u''(y)\pa_x\psi -\boxed{\th'(y)\pa_xP}+...=0$};
\node (exp1) [explain, below of=start, yshift=2.5cm] {Step 1: good unknown $\tilde{\om}=\pa_y\left(\f{1}{\th}\pa_y\psi\right)+\f{1}{\th}\pa_{xx}\psi\eqdef\tilde{\Delta}\psi$};
\node (io1) [io, below of=exp1, yshift=2.5cm] {$\dive\big((\th+d)\na P\big)=\boxed{-\dive (U\cdot \na U)-2u'(y)\pa_{xx}\psi}$};
\node (exp2) [explain, below of=io1, yshift=2.5cm] {Step 2: pressure decomposition};
\node (io7) [io, below of=exp2, yshift=2.5cm] {$\pa_tU^y+u(y)\pa_xU^y+U\cdot \na U^y+\th\pa_yP+d\pa_yP=0$};
\node (exp9) [explain, below of=io7, yshift=2.5cm] {zero mode pressure};
\node (io2) [io, below of=exp9, yshift=2.5cm] {$\pa_t\tilde{\om}+\boxed{u(y)\pa_x\tilde{\om}+P_0(U^x)\pa_x\tilde{\om}}-\left(\f{u'}{\th}\right)' U^y
+...=0$};
\node (exp3) [explain, below of=io2, yshift=2.5cm] {Step 3: change of coordinates $(t, x, y)\to (t, z, v)$};
\node (io3) [io, below of=exp3, yshift=2.5cm] {$\pa_t\Om-\boxed{\udl{\varphi_1}\pa_zP_{\neq}\Psi}+\mathrm{U}\cdot\na_{z,v}\Om+\mathcal{N}_{\Om}[\Psi]+\mathcal{N}_a[\Pi]
=0$};
\node (exp4) [explain, below of=io3, yshift=2.5cm] {Step 4: wave operator};
\node (exp6) [explain, right of=exp4, yshift=1cm, xshift=4cm] {inverse change of coordinates};
\node (io6) [io, below of=exp6, yshift=2.5cm] {$\pa_t\tilde{\om}+\pa_x\mathcal{R}\tilde{\om}=0$};
\node (io4) [io, below of=exp4, yshift=2.5cm] {The working system \eqref{eq: main system}};
\node (exp5) [explain, right of=exp1, yshift=0cm, xshift=4cm] {distorted elliptic equation}; 
\node (exp7) [explain, below of=exp5, yshift=2.5cm] {Sturm-Liouville Equation};
\node (io5) [io, right of=exp2, yshift=0cm, xshift=4cm] {
$\dive\big((\th+d)\na P_{l,2}\big)=(\chi_2''u)(y)\boxed{\pa_{y}\psi_{\neq}}$}; 
\node (exp8) [explain, below of=io5, yshift=2.5cm, xshift=0cm] {elliptic commutator}; 
\draw [arrow](start) -- (exp1);
\draw [arrow](exp1) -- (exp5);
\draw [arrow](exp5) -- (exp7);
\draw [arrow](exp2) -- (io5);
\draw [arrow](io5) -- (exp8);
\draw [arrow](exp8) -- (io2);
\draw [arrow](exp7) -- (io1);
\draw [arrow](io1) -- (exp5);
\draw [arrow](exp1) -- (io1);
\draw [arrow](io1) -- (exp2);
\draw [arrow](exp2) -- (io7);
\draw [arrow](io7) -- (exp9);
\draw [arrow](exp9) -- (io2);
\draw [arrow](io2) -- (exp3);
\draw [arrow](exp3) -- (io3);
\draw [arrow](io3) -- (exp4);
\draw [arrow](exp4) -- (io4);
\draw [arrow](io6) -- (exp4);
\draw [arrow](exp6) -- (io6);
\draw [arrow](exp4) -- (exp6);
\end{tikzpicture}

}
\end{figure*}
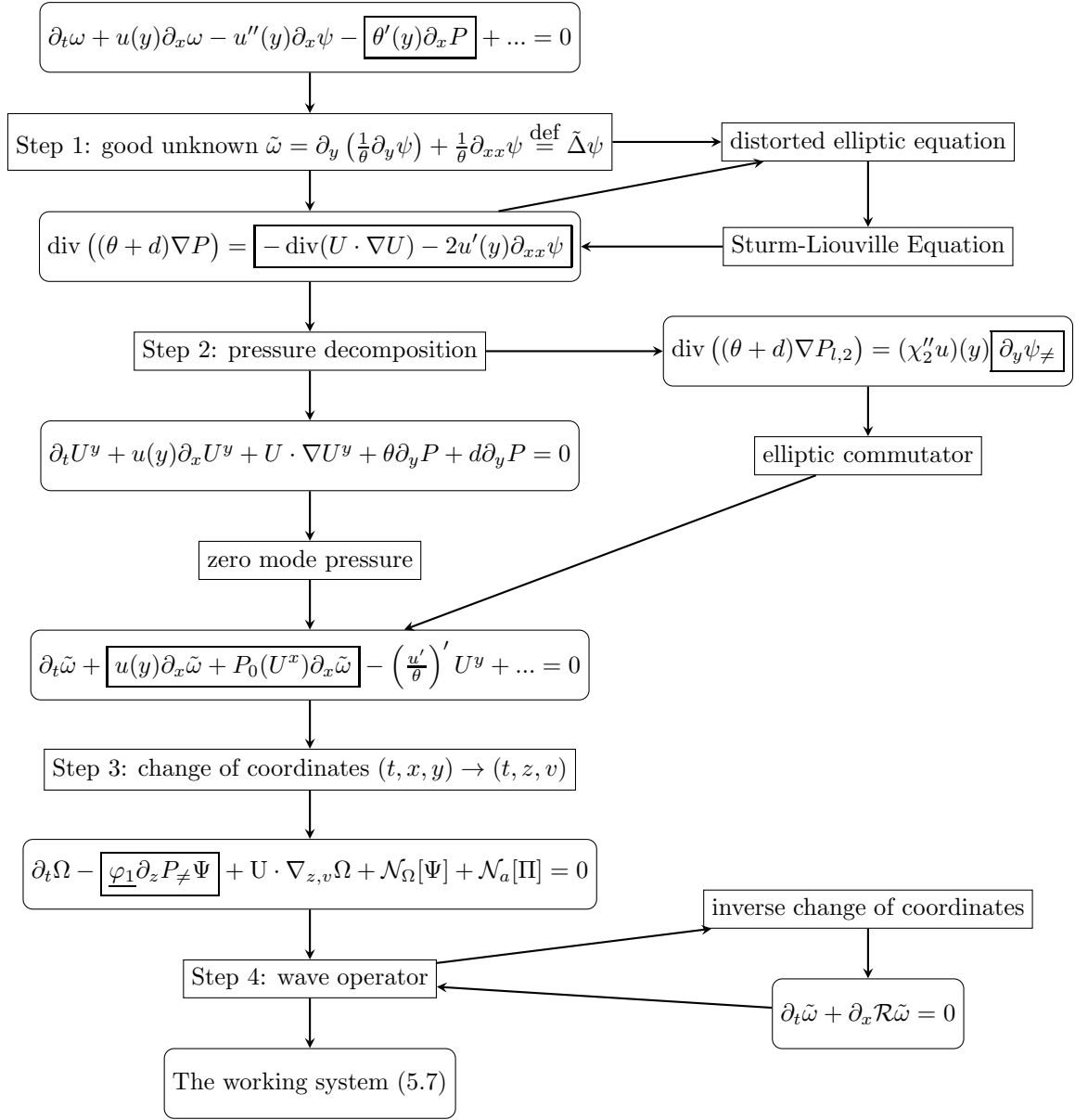

We first introduce a good unknown to get rid of the linear pressure term $\th'(y)\pa_xP$, see section \ref{sec: step1} for more details. This offers us a better equation (see equation \eqref{eq: linearsystem}), at least at the linear level. One can easily compare the linearized equation \eqref{eq: linearsystem} of the good unknown with the classical linearized Euler equation and obtain linear (in)stability results parallelly. Due to the new good unknown, we have to study a distorted elliptic equation to recover the stream function, even at $(t, x, y)$ coordinate. Moreover, the equation for the pressure is also distorted by the background density. In Appendix \ref{sec: ST equ}, we study the Sturm-Liouville equation and obtain the decay estimate of the Fourier transform of the corresponding Green function, see Proposition \ref{eq: T_1} and Proposition \ref{eq: T_2}, which will be used in the elliptic estimates for the stream function and pressure. This part is new. Roughly speaking, after being distorted by the density and nonlinear change of coordinates, the main part of the elliptic operator still behaves as $\pa_z^2+(\pa_v-t\pa_z)^2$, namely, the Fourier transform of the Green function $\mathcal{G}$ satisfies
\beno
|\mathcal{G}(t, k, \xi, \eta)|\leq C\min\left\{\f{e^{-\la_{\Delta}\langle \xi-\eta\rangle^s}}{1+k^2+(\xi-kt)^2}, \f{e^{-\la_{\Delta}\langle \xi-\eta\rangle^s}}{1+k^2+(\eta-kt)^2}\right\}.
\eeno
Note that the denominator $k^2+(\xi-kt)^2$ is the Fourier transform of $-\pa_z^2-(\pa_v-t\pa_z)^2$. 

The pressure term is one of the main difficulties: 1, the right-hand side of the pressure equation \eqref{eq: new system} is not compactly supported; 2, the pressure equation is an elliptic equation with Neumann boundary conditions, which is not friendly to the zero mode, namely, the compatibility condition should be kept in the decomposition. The second step is pressure decomposition, which is mainly because of the finite channel setting. Instead of solving the elliptic equation, we obtain the zero mode of the pressure from the vertical velocity equation. See \eqref{eq: pressure-zero-mode} for the equation and section \ref{sec: pressure-zero} for detailed estimates. For the nonzero modes, we first divide the pressure into the linear part $P_l$ and the nonlinear part $P_n$. We divide the pressure into 6 terms,
\begin{align*}
P=P_{l,1}+P_{l,2}+P_{l, b}+P_{n,1}+P_{n,2}+P_{n, b}.
\end{align*}
Now we introduce the main idea of decomposition. For the nonlinear part $P_n$, we consider the steady case, namely 
\beno
-(\th+d)\pa_xP_n=U^x\pa_xU^x+U^y\pa_yU^x=\f12\pa_x(U^x)^2+\pa_x\psi(\Om+\pa_x\pa_x\psi).
\eeno
Noticing that near the boundary $d+\th$ is constant, $\Om=0$, we then have 
\beno
P_n=-\f{((\pa_x\psi)^2+(\pa_y\psi)^2)+C}{2(\th+d)}
\eeno 
near the boundary. We use this idea to construct two boundary correction terms $P_{l,b}$ and $P_{n,b}$ to get rid of the boundary value on the right-hand side of the pressure equation and make them compactly supported. The same idea also works for the decomposition of the linear part, see section \ref{sec: step1} for more details. There is a derivative loss after the decomposition, which does not appear in the whole space $\mathbb{T}\times \mathbb{R}$ setting, discussed in \cite{ChenWeiZhangZhang2023}. For example, for the linear part, before the decomposition, one can formally regard $\Delta P_l\approx -\pa_{xx}\psi$. After the decomposition, there is $\chi_2''\pa_y\psi$ on the right-hand side of the equation of $P_{l,2}$, see \eqref{eq: P_l,2}. Note that the derivative loss in $y$ is bad because it will lead to growth. Luckily, $\chi_2''$ vanishes on the support of $\th', d, \chi_1$. So we first study $\Delta P_{l,2}\approx \chi_2''\pa_y\psi$ and control $\langle \pa_x,\pa_y\rangle P_{l,2}$ by $\psi$. Then we study $\Delta (P_{l,2}\chi_1)\approx 2\chi_1'\pa_yP_{l,2}$ and control $\langle \pa_x,\pa_y\rangle^2 P_{l,2}\chi_1$ by $\langle \pa_x,\pa_y\rangle P_{l,2}$, which can be formally regarded as the estimate of the commutator 
\beno
[\Delta^{-1},\chi_1](\chi_2''\pa_y\psi).
\eeno 
The same problem appears in the pressure decomposition of the nonlinear part. Roughly speaking, the additional growth of $\Pi_{l,2}$ and $\Pi_{n,2}$ happens only near the boundary, which does not affect the vorticity equation due to the compact support of $a$ and $\th'$. We use the same strategy to treat $\Pi_{l,2}$ and $\Pi_{n,2}$, see section \ref{sec: ell-pressure} for more details. 

The third step is the nonlinear change of coordinates, which helps us to get rid of the non-decaying zero mode in the transport term. The step will also distort the elliptic equation for the stream function and pressure. But after separating the linear equation with the time-independent coefficients, the time-dependent (solution-dependent) part is small, see \eqref{eq: decomposition stream} and \eqref{eq: elliptic pressure-1} for the decompositions, also see Proposition \ref{prop: estimate diff-1}, Lemma \ref{lem: S_2}, Lemma \ref{lem: S_a}, Lemma \ref{lem: S_2-1}, and Lemma \ref{lem: S_a-1} for the estimates of the solution-dependent part. 

The original nonlocal term $-u''\pa_x\psi-\th'\pa_xP$, which does not appear in the Couette flow with a constant background density case, is hard to deal with. After introducing the good unknown $\tom$ (see \ref{eq: good unknown}) and doing the nonlinear change of coordinates, we have a slightly better new nonlocal term $\udl{\varphi_1}\pa_zP_{\neq}\Psi$. The new nonlocal term may lead to instability. However, under the linear stability assumption (4) in Theorem \ref{Thm: main}, we can construct a wave operator to eliminate it. Similar to \cite{MasmoudiZhao2020}, we have the following observations: 
1, the operator can be constructed for each frequency in $z$ separately; 
2, the nonlocal term is small for high frequencies in the $z$ variable, which also corresponds to the linear stability of the distorted Rayleigh operator in large wave numbers. So the wave operator is not needed for higher frequencies;
3, the nonlocal term $(\varphi_1\circ u^{-1})(v)\pa_zP_{\neq}\mathring{\mathcal{T}}_{1,D}^{-1}[\Om]$ with time-independent coefficient $(\varphi_1\circ u^{-1})(v)$ is responsible for the linear instability. Thus, we only construct a wave operator for lower frequencies in $z$ to eliminate the time-independent part. Due to the change of coordinates, even the linearized nonlocal term $\mathring{\mathcal{T}}_{1, D}^{-1}[\Om]$ is time-dependent. We then introduce the inverse linear change of coordinates to make the nonlocal term time-independent. We arrive at \eqref{eq: linearsystem}, which is the same as the linearized equation in the original variables but has a different meaning. The construction of the wave operator is highly non-trivial. In section \ref{sec: LID}, we study \eqref{eq: linearsystem}. This part is new. We obtain the representation formula of the solution, which gives the linear inviscid damping. The representation formula leads us to the formula of the wave operator. In section \ref{sec: wave operator}, we study the basic properties of the wave operator. The nice estimates of its Fourier kernel, make the wave operator well-adapted to nonlinear interactions. More precisely, the Fourier kernel $\mathcal{D}(t,k,\xi_1,\xi_2)$ of the wave operator satisfies 
\beno
\left|\mathcal{D}(t,k,\xi_1,\xi_2)\right|
\lesssim e^{-\la_{\mathcal{D}}|\xi_1-\xi_2|^{s_0}}. 
\eeno
In terms of regularity, one can simply regard the effect of the wave operator acting on the function $f$ as multiplying $f$ by a smooth function, whose Fourier transform is $e^{-\la_{\mathcal{D}}|\xi|^{s_0}}$. This is one reason why the wave operator is well-adapted to the nonlinear system. The second reason is related to the famous Orr mechanism \cite{Orr1907}. In physical terms, because the mode of the vorticity in question is initially well-mixed, and then proceeds to un-mix under the shear flow evolution, a transient growth happens near the Orr critical time. In the breakthrough paper \cite{BM2015}, the authors estimate this transient growth mathematically rigorously by introducing a multiplier $\rmA(t,\na)$. In terms of frequency, the wave operator has two effects: 1, it changes the amplitude of the function at each frequency; 2, it essentially mixes the information near each frequency, due to the fast decay of its Fourier kernel. It ensures that the critical time does not move too much. These good properties of the wave operator allow us to use the same multiplier $\rmA$ of \cite{BM2015} and use the same strategy as in \cite{MasmoudiZhao2020} to study the effect of the wave operator on the nonlinear terms. 

With all the preparations done, we arrive at a good working system \eqref{eq: main system}. In the second part, we study the nonlinear problem and close the energy estimates. The main idea is to introduce time-dependent Fourier multipliers which capture the different growths of different frequencies at different times. We use the classical multiplier $\rmA(t,\na)$ (see Appendix \ref{sec: operator A} for details) of \cite{BM2015} to capture the nonlinear growth from the nonlinear interactions of the transport term $\rmU\cdot \na\Om$. We use an additional multiplier $\rmB(t,\na)$ introduced in \cite{ChenWeiZhangZhang2023} to gain the derivative for the density $a$ for short times. It is useful in the estimate of the nonlinear interactions between $a$ and the pressure $\Pi$ when $a$ is in higher frequencies. We also use the multipliers $\mathcal{M}_3, \mathcal{M}_4, $ and $\mathcal{M}_5$ by using the ideas from  \cite{ChenWeiZhangZhang2023} to control the pressure. We refer to \cite{ChenWeiZhangZhang2023} for some explanations about the new multipliers. Some ideas related to the energy estimates and applying the weight are given in section \ref{sec energy functional} after we introduce the working system and the Fourier multipliers, which are more technical. 

The proof of the main theorem is based on a standard bootstrap argument. In section \ref{sec energy functional}, we introduce the bootstrap Proposition \ref{prop: btsp}. In section \ref{sec: energy estimate}, we give the key propositions which will be used to prove Proposition \ref{prop: btsp} and finish the proof of Theorem \ref{Thm: main} by admitting those propositions. The rest of the paper is to prove propositions in section \ref{sec: energy estimate}. Some estimates are done in the previous papers \cite{BM2015, MasmoudiZhao2020}. The key estimate is to control the pressure term, which is in section \ref{sec: pressure-zero} and section \ref{sec: ell-pressure}. 

\subsection{Notations}
For $f(z,v)$ smooth with compact support in $\mathbb{T}\times (u(0),u(1))$, we define the Fourier transform in the first variable $\mathcal{F}_{1}f(k,v)$, the Fourier transform in the second variable $\mathcal{F}_{2}f(z,\eta)$ and the Fourier transform in both variables $\hat{f}_k(\eta)$ where $(k,\eta)\in \mathbb{Z}\times \R$, 
\begin{align*}
&\mathcal{F}_{1}f(k,v)=\f{1}{2\pi}\int_{\mathbb{T}}f(z,v)e^{-izk}dz,\\
&\mathcal{F}_{2}f(z,\eta)=\int_{\R}f(z,v)e^{-iv\eta}dv,\\
&\hat{f}_k(\eta)=\hat{f}(k,\eta)=\f{1}{2\pi}\int_{\mathbb{T}\times \R}f(z,v)e^{-izk-iv\eta}dzdv. 
\end{align*}

We use a convention is to denote $\rmM, \rmN$ as dyadic integers $\rmM, \rmN\in \mathbb D$  where
\begin{align*}
  \mathbb D=\left\{\frac{1}{2},1,2,\dots,2^j,\dots\right\}=2^{\mathbb N}\cup \frac{1}{2}.
\end{align*}
This will be useful when defining Littlewood-Paley projections and paraproduct decompositions. We will use the same $\rmA$ for $\rmA(t,\na) f=(\rmA(\eta) \hat{f}(\eta))^{\vee}$ or $\rmA \hat{f}=\rmA(\eta) \hat{f}(\eta)$, where $\rmA(t,\na)$ is a Fourier multiplier. For function $\rmA(t, k, \xi)$, we denote the time derivative of $\rmA$ by $\dot{\rmA}(t,k,\xi)=\pa_t\rmA(t,k,\xi)$. We also denote the time derivative on the Fourier multiplier ${\rmA}(t,\na)$ by 
\beno
\dot{\rmA}(t,\na) f=(\dot{\rmA}(t, k,\eta) \hat{f}(t, k, \eta))^{\vee}.
\eeno

We use the notation $f \lesssim g$ when there exists a constant $C>0$ independent of the parameters of interest such that $f \leq C g$ (we define $g \gtrsim f$ analogously). Similarly, we use the notation $f \approx g$ when there exists $C>0$ such that $C^{-1} g \leq f \leq C g$. 
We use the notation $c$ to denote a constant such that $0\le c<1$ which may be different from line to line.

We will denote the $l^{1}$ vector norm $|k, \eta|=|k|+|\eta|$, which by convention is the norm taken in our work. Similarly, given a scalar or vector in $\mathbb{R}^{n}$ we denote
$$
\langle v\rangle=\left(1+|v|^{2}\right)^{\frac{1}{2}} .
$$

We denote the projection to the $k$th mode of $f(x,y)$ by
\begin{align*}
  P_kf(x,y)=f_k(x,y)=\frac{1}{2\pi}\left(\int_{\mathbb{T}}f(x',y)e^{-ikx'}dx'\right)e^{ikx},
\end{align*}
and denote the projection to the non-zero mode by
\begin{align*}
  P_{\neq}f(x,y)=f_{\neq}=\sum_{k\in\mathbb Z\setminus\{0\}}f_k(x,y).
\end{align*}
and the projection to zero mode by
\begin{align*}
  \left(P_{0}f\right)(y)= \frac{1}{2\pi}\left(\int_{\mathbb{T}}f(x',y)dx'\right).
\end{align*}
We also use $\hat f_k(\xi)$ to denote $\hat f(k,\xi)$ to emphasize it is the Fourier transform of the $k$ mode. 

In some proofs, we use the Littlewood-Paley dyadic decomposition. Let $\varrho_0$ be a smooth function such that $\varrho_0=1$ for $|\xi|\leq \f12$ and $\varrho(\xi)=0$ for $|\xi|\geq \f34$. We define $\varrho=\varrho_0(\xi/2)-\varrho_0(\xi)$ and $\varrho_{\rmM}(\xi)=\varrho(\rmM^{-1}\xi)$ with dyadic numbers $\rmM=1,2,4,...$.
\begin{align*}
&    f_{\rmM}=\varrho_{\rmM}(|\na|)f,\quad \widehat{f}_k(\xi)_{\rmM}=\widehat{f}_k(\xi)\rho_{\rmM}(k,\xi)\\
 & f_{<\rmM}= \varrho_0(|\na|)f+\sum_{\rmK\in 2^{\mathbb{N}}: \rmK<\rmM}f_{\rmK}.
\end{align*}

We denote the standard $L^{2}$ norms by $\|\cdot\|_{2}$. We make common use of the Gevrey-$\frac{1}{s}$ norm with Sobolev correction defined by
\begin{align*}
  \|f\|_{\mathcal G^{s,\lambda,\sigma}}^2=\sum_k \int \left|\hat f_k(\eta)\right|^2 e^{2\lambda|k,\eta|^s}\langle k,\eta\rangle^{2\sigma}  d\eta.
\end{align*}
In the proof, we sometimes use $\mathcal G^{s,\sigma}$ instead where we omit $\la=\la(t)$ for simplification and keep the important index $\s$ which represents the Sobolev corrections. We refer to this norm as the $\mathcal G^{s,\lambda,\sigma}$ norm and occasionally refer to the space of functions
\begin{align*}
  \mathcal G^{s,\lambda,\sigma}= \left\{f\in L^2:\|f\|_{\mathcal G^{s,\lambda,\sigma}}<\infty\right\}.
\end{align*}
We refer to {\it section A.2} in \cite{BM2015} for a discussion of the basic properties of this norm and some related useful inequalities. 
For $\eta\geq 0$, we define $\rmE(\eta)\in \mathbb{Z}$ to be the integer part. We define for $\eta\in \R$ and $1\leq |k|\leq \rmE(\sqrt{|\eta|})$ with $\eta k\geq 0$, $t_{k,\eta}=\big|\f{\eta}{k}\big|-\f{|\eta|}{2|k|(|k|+1)}$ and $t_{0,\eta}=2|\eta|$ and the critical intervals
\beno
\mathrm{I}_{k,\eta}=\left\{
\begin{aligned}
&[t_{|k|,\eta},t_{|k|-1,\eta}]\quad &\text{if}\ \eta k\geq 0 \ \text{and } 1\leq |k|\leq \rmE(\sqrt{|\eta|}),\\
&\emptyset \quad &\text{otherwise}.
\end{aligned}\right.
\eeno
For minor technical reasons, we define a slightly restricted subset as the resonant intervals
\beno
\mathbf{I}_{k,\eta}=\left\{
\begin{aligned}
&\mathrm{I}_{k,\eta}\quad &\sqrt{|\eta|}\leq t_{k,\eta},\\
&\emptyset \quad &\text{otherwise}.
\end{aligned}\right.
\eeno

We use $\mathbf{1}_{A}$ be the characteristic function which means $\mathbf{1}_{A}(x)=\left\{\begin{aligned}&1\quad x\in A\\&0\quad x\notin A\end{aligned}\right.$. 
We also use the smooth cut-off functions $\chi_1$ and $\chi_2$ in Gevrey-$\f{2}{s_0+1}$ class which satisfies 
\begin{align*}
&\mathrm{supp}\, \chi_1\subset [\kappa_0,1-\kappa_0],\quad 
\chi_1(y)\equiv 1\  \text{for}\  y\in [1-2\kappa_0,2\kappa_0],\\
&\mathrm{supp}\, \chi_2\subset \big[\f{\kappa_0}{2},1-\f{\kappa_0}{2}\big],\quad  
\chi_2(y)\equiv 1\  \text{for}\  y\in [\kappa_0,1-\kappa_0],
\end{align*} 
and there exist $K_1>1$ such that for all integers $m\geq 0$
\ben\label{eq: chi}
\sup_{y\in\R}\left|\f{d^m\chi_i(y)}{dy^m}\right|\leq K_1^m(m!)^{\f{2}{s_0+1}}(m+1)^{-2},\quad i=1,2. 
\een

\section{Good unknown and coordinate system}\label{sec: step1}
There are two linear nonlocal terms $-u''\pa_x\psi$ and $-\th'\pa_xP$ in the vorticity equation of \eqref{eq:vorticity}. The first nonlocal term also appears in the Euler case \cite{MasmoudiZhao2020}, where the authors introduce the wave operator to eliminate it. The second nonlocal term is new, which does not exist in the constant background density case $\th=1$. The idea of eliminating this term is to use the horizontal velocity equation, which also contains $\pa_xP$ in the equation. We introduce the good unknown 
\ben\label{eq: good unknown}
\tilde{\om}=\f{\om}{\th}-\f{\th'}{\th^2}\pa_y\psi=\pa_y\left(\f{1}{\th}\pa_y\psi\right)+\f{1}{\th}\pa_{xx}\psi\eqdef\tilde{\Delta}\psi.
\een

\subsection{New system in good unknown}
A direct calculation gives
\beq\label{eq: new system}
\left\{\begin{aligned}
&\pa_t\tilde{\om}+u(y)\pa_x\tilde{\om}-\left(\f{u'}{\th}\right)' U^y
+U\cdot\na\tilde{\om}+\f{\th'}{\th}\pa_x\psi\tilde{\om}+\left(\f{\th''}{\th^2}-\f{(\th')^2}{\th^3}\right)\pa_x\psi\pa_y\psi\\
&\quad\quad\quad\quad\quad\quad\quad\quad\quad\quad\quad\quad\quad\quad\quad\quad
+\f{\pa_xd\pa_yP-\pa_yd\pa_xP}{\th}+\f{\th'd\pa_xP}{\th^2}=0,\\
&\tilde{\Delta}\psi\eqdef\pa_y\left(\f{1}{\th}\pa_y\psi\right)+\f{1}{\th}\pa_{xx}\psi=\tilde{\om},
\quad U=(U^x, U^y)=\na^{\bot}\psi,\\
&\psi(t, x,0)=\psi(t, x, 1)=0\\
&\pa_t d+u(y)\pa_xd+\th'(y)\pa_x\psi+U\cdot \na d=0,\\
&\dive\big((\th+d)\na P\big)=-\dive (U\cdot \na U)-2u'(y)\pa_{xx}\psi,
\end{aligned}
\right.
\eeq
Let us mention some observations of the new system here. 
\begin{itemize}
\item The new linear nonlocal term $\left(\f{u'}{\th}\right)'U^y$ has compact support. 
\item If $d=0$, $\th=1$, we are back to the Euler equation around shear flows. 
\end{itemize}
\subsection{Pressure decomposition}
The pressure equation is not good for applying the Fourier transform, because the right-hand side $-\dive (U\cdot \na U)-2u'(y)\pa_{xx}\psi$ is not compactly supported. We divide $P$ into six terms: two linear terms $P_{l,1}$ and $P_{l, 2}$, two nonlinear terms $P_{n,1}$ and $P_{n,2}$, and two boundary corrections $P_{l, b}$ $P_{n, b}$, namely, 
\begin{align*}
P=P_{l,1}+P_{l,2}+P_{l, b}+P_{n,1}+P_{n,2}+P_{n, b}.
\end{align*}
We define $P_{l, b}=\f{u\pa_y\psi_{\neq}-u'\psi_{\neq}}{\th+d}(1-\chi_2)$, where $\psi_{\neq}=\psi-\f{1}{2\pi}\int_{\mathbb{T}}\psi(t, x, y)dx$ is the projection of $\psi$ onto the non-zero mode. Then by using the facts 
\beno
\begin{aligned}
(1-\chi_2)\om=0,\ u''(1-\chi_2)=0,\ (1-\chi_2)\na(\f{1}{\th+d})=0,
\end{aligned}
\eeno 
we have
\begin{align*}
\pa_yP_{l, b}&=\f{u(P_{\neq}[\om]-\pa_{xx}\psi_{\neq})}{\th+d}(1-\chi_2)-\f{u\pa_y\psi_{\neq}-u'\psi_{\neq}}{\th+d}\chi_2'\\
\dive\big((\th+d)\na P_{l, b}\big)&=\Delta\big((u\pa_y\psi-u'\psi)(1-\chi_2)\big)\\
&=(1-\chi_2)\big(u\pa_{yxx}\psi_{\neq}-u'\pa_{xx}\psi_{\neq}\big)
-\chi_2''(u\pa_y\psi_{\neq}-u'\psi_{\neq})\\
&\quad-2\chi_2'(u\pa_{yy}\psi_{\neq}-u'\pa_y\psi_{\neq})-2\chi_2'(u'\pa_y\psi_{\neq}-u''\psi_{\neq})\\
&\quad+(1-\chi_2)(u\pa_{yyy}\psi_{\neq}+2u'\pa_{yy}\psi_{\neq}-u'\pa_{yy}\psi_{\neq}-u''\pa_y\psi_{\neq}-u'''\psi_{\neq})\\
&=-2(1-\chi_2)u'\pa_{xx}\psi_{\neq}+2\chi_2'(u\pa_{xx}\psi_{\neq})
-\chi_2''(u\pa_y\psi_{\neq}-u'\psi_{\neq})
\end{align*}
which gives $\pa_yP_{l, b}(t, x, 0)=\pa_yP_{l, b}(t, x, 1)=0$ and 
\ben\label{eq: P_lb, property}
\f{\pa_xd\pa_yP_{l, b}-\pa_yd\pa_xP_{l, b}}{\th}+\f{\th'd\pa_xP_{l, b}}{\th^2}\equiv 0,\quad \text{and}\quad d\pa_xP_{l,b}\equiv0.
\een
This means that the linear boundary correction does not affect the equation of $\tom$. 
Then we define $P_l$ by solving the following equation: 
\ben\label{eq: P_l,1}
\left\{\begin{aligned}
&\dive\big((\th+d)\na P_{l,1}\big)=-2(\chi_2u')(y)\pa_{xx}\psi-2(u\chi_2')(y)\pa_{xx}\psi-(u'\chi_2'')(y)\psi_{\neq},\\
&\pa_yP_{l,1}(t, x, 0)=\pa_yP_{l,1}(t, x, 1)=0,
\end{aligned}\right.
\een
and
\ben\label{eq: P_l,2}
\left\{\begin{aligned}
&\dive\big((\th+d)\na P_{l,2}\big)=(\chi_2''u)(y)\pa_{y}\psi_{\neq},\\
&\pa_yP_{l,2}(t, x, 0)=\pa_yP_{l,2}(t, x, 1)=0.
\end{aligned}\right.
\een
It is easy to check that $P_{l,1}+P_{l,2}+P_{l,b}$ solves
\ben
\left\{\begin{aligned}
&\dive\big((\th+d)\na (P_{l,1}+P_{l,2}+P_{l,b})\big)=-2u'(y)\pa_{xx}\psi,\\
&\pa_y(P_{l,1}+P_{l,2}+P_{l,b})(t, x, 0)=\pa_y(P_{l,1}+P_{l,2}+P_{l,b})(t, x, 1)=0.
\end{aligned}\right.
\een
We then define the nonlinear boundary correction: 
\ben
P_{n, b}(t, x, y)=-\f12\f{1}{\th+d}\Big[(\pa_x\psi)^2+(\pa_y\psi)^2\Big](1-\chi_2).
\een
Notice that $\pa_yP_{n, b}(t, x, 0)=\pa_yP_{n, b}(t, x, 1)=0$ and 
\beno
\f{\pa_xd\pa_yP_{n, b}-\pa_yd\pa_xP_{n, b}}{\th}+\f{\th'd\pa_xP_{n, b}}{\th^2}\equiv 0,\quad \text{and}\quad d\pa_xP_{n,b}\equiv0
\eeno
which means that the nonlinear boundary correction does not affect the equation of $\tom$. 

Now let us deduce the equation for $P_{n}$. A direct calculation gives
\begin{align*}
\dive\big((\th+d)\na P_{n}\big)
&=-\dive(U\cdot \na U)+\f12\dive\Big((\th+d)\na\Big(\f{1}{\th+d}\Big[(\pa_x\psi)^2+(\pa_y\psi)^2\Big](1-\chi_2)\Big)\Big)\\
&=-2(\pa_{xy}\psi)^2-2(\pa_{xx}\psi)^2+2\om\pa_{xx}\psi\\
&\quad +\f12\dive\Big(\Big[(\pa_x\psi)^2+(\pa_y\psi)^2\Big](1-\chi_2)(\th+d)\na\Big(\f{1}{\th+d}\Big)\Big)\\
&\quad +\f12\dive\Big((1-\chi_2)\na\Big(\Big[(\pa_x\psi)^2+(\pa_y\psi)^2\Big]\Big)\Big)\\
&\quad -\f12\pa_y\Big(\Big[(\pa_x\psi)^2+(\pa_y\psi)^2\Big]\chi_2'\Big).
\end{align*}
Noticing that $(1-\chi_2)=0$ on the compact support of $\na\Big(\f{1}{\th+d}\Big)$, we obtain that
\begin{align*}
\dive\big((\th+d)\na P_{n}\big)
&=-\dive(U\cdot \na U)+\dive\Big((\th+d)\na\Big(\f{1}{\th+d}\Big[(\pa_x\psi)^2+(\pa_y\psi)^2\Big](1-\chi_2)\Big)\Big)\\
&=-2(\pa_{xy}\psi)^2-2(\pa_{xx}\psi)^2+2\om\pa_{xx}\psi\\
&\quad +\f12(1-\chi_2)\dive\Big(\na\Big(\Big[(\pa_x\psi)^2+(\pa_y\psi)^2\Big]\Big)\Big)\\
&\quad -\chi_2'\pa_y\Big(\Big[(\pa_x\psi)^2+(\pa_y\psi)^2\Big]\Big)
 -\f12\chi_2''\Big[(\pa_x\psi)^2+(\pa_y\psi)^2\Big]\\
 &=-2\chi_2(\pa_{xy}\psi)^2-2\chi_2(\pa_{xx}\psi)^2+2\om\pa_{xx}\psi\\
&\quad +(1-\chi_2) \pa_x\om\pa_x\psi+(1-\chi_2)\pa_y\om\pa_y\psi\\
&\quad -\chi_2'\Big(\Big[2(\pa_x\psi)\pa_{xy}\psi+2(\pa_y\psi)\pa_{xx}\psi\Big]\Big)
 -\f12\chi_2''\Big[(\pa_x\psi)^2+(\pa_y\psi)^2\Big]. 
\end{align*}
Noticing that $(1-\chi_2)=0$ on the compact support of $\na\om$, we obtain the equation of $P_{n}$
\beq\label{eq: P_n}
\left\{
\begin{aligned}
\dive\big((\th+d)\na P_{n}\big)&=-2\chi_2(\pa_{xy}\psi)^2-2\chi_2(\pa_{xx}\psi)^2+2\om\pa_{xx}\psi\\
&\quad -\chi_2'\Big(\Big[2(\pa_x\psi)\pa_{xy}\psi+2(\pa_y\psi)\pa_{xx}\psi\Big]\Big)
 -\f12\chi_2''\Big[(\pa_x\psi)^2+(\pa_y\psi)^2\Big]. \\
 \pa_y P_{n}(t, x, 0)&=\pa_yP_{n}(t, x, 1)=0.
\end{aligned}
\right.
\eeq
Note that the last term $(\pa_y\psi)^2$ in \eqref{eq: P_n} requires an additional decomposition for $P_n=P_{n,1}+P_{n,2}$ with 
\beq\label{eq: P_n1}
\left\{
\begin{aligned}
\dive\big((\th+d)\na P_{n,1}\big)&=-2\chi_2(\pa_{xy}\psi)^2-2\chi_2(\pa_{xx}\psi)^2+2\om\pa_{xx}\psi\\
&\quad -\chi_2'\Big(\Big[2(\pa_x\psi)\pa_{xy}\psi+2(\pa_y\psi)\pa_{xx}\psi\Big]\Big)
 -\f12\chi_2''\Big[(\pa_x\psi)^2+(\pa_yP_{\neq}\psi)^2\Big]. \\
 \pa_y P_{n,1}(t, x, 0)&=\pa_yP_{n,1}(t, x, 1)=0.
\end{aligned}
\right.
\eeq
and
\beq\label{eq: P_n2}
\left\{
\begin{aligned}
\dive\big((\th+d)\na P_{n,2}\big)&=
 -\chi_2''(\pa_yP_{\neq}\psi)(\pa_yP_{0}\psi)-\f12\chi_2''(\pa_yP_{0}\psi)^2. \\
 \pa_y P_{n,2}(t, x, 0)&=\pa_yP_{n,2}(t, x, 1)=0.
\end{aligned}
\right.
\eeq

By this decomposition, the functions of the right-hand side of equations \eqref{eq: P_l,1}, \eqref{eq: P_l,2}, \eqref{eq: P_n}, and \eqref{eq: P_n2} have compact support. Let us give a formal discussion about the behavior of each pressure. By dropping the lower order terms, coefficients, and the distorting effect in the elliptic equations, in terms of regularity, one can formally regard that these equations are
\begin{align*}
&\Delta P_{l,1}\approx \chi_2\pa_{xx}\psi,\quad
\Delta P_{l,2}\approx \chi_2''\pa_y\psi_{\neq},\quad
\Delta P_{n,1}\approx \om\pa_{xx}\psi, \quad \Delta P_{n,2}\approx \chi_2''(\pa_y\psi_{\neq}).
\end{align*}

\subsection{Zero mode of the pressure}
For the zero mode of the pressure, we use the equation
\beno
\pa_tU^y+u(y)\pa_xU^y+U\cdot \na U^y+\th\pa_yP+d\pa_yP=0
\eeno
to estimate the zero mode of the pressure. We have 
\beno
(\th+P_0(d))\pa_yP_0(P)+P_0[P_{\neq}(d)\pa_yP_{\neq}(P)]=P_0(U^x\pa_xU^y-U^y\pa_xU^x)=P_0(-\pa_y\psi\pa_{xx}\psi+\pa_x\psi\pa_{xy}\psi).
\eeno

\subsection{Nonlinear coordinate transform}
Let us now introduce the change of variables $(t,x,y)\to (t,z,v)$ to eliminate the non-decaying zero mode: 
\beq\label{eq: coor-change}
\begin{aligned}
v(t,y)&=u(y)-\f{\chi_1(y)}{t}\int_0^t\f{1}{2\pi}\int_{\bbT}\pa_y\psi(t',x,y)dx dt',\\
z(t,x,y)&=x-tv(t,y).
\end{aligned}
\eeq
Thus $\mathrm{Ran}\, u=\mathrm{Ran}\, v=[u(0),u(1)]$.  
Let us also define $\udl{\pa_tv}(t,v)$ and $\udl{\pa_yv}(t,v)$ so that
\begin{align*}
&\udl{\pa_tv}(t,v(t,y))=\pa_tv(t,y),\quad
\udl{\pa_yv}(t,v(t,y))=\pa_yv(t,y). 
\end{align*}
In order to simplify the notations, we introduce 
\beq\label{eq:definition vaprhi}
\begin{aligned}
&\varphi_1(y)=\left(\f{u'}{\th}\right)'(y),\quad
\varphi_2(y)=\f{\th'(y)}{\th(y)},\quad
\varphi_3(y)=\left(\f{\th''}{\th^2}-\f{(\th')^2}{\th^3}\right),\\
&\varphi_4(y)=\f{1}{\th(y)},\quad 
\varphi_5(y)=\varphi_2(y)\varphi_4(y),\quad \varphi_6(y)=-2(\chi_2u')(y)-2(u\chi_2')(y),\\
&\varphi_7(y)=-(u'\chi_2'')(y),\quad \varphi_8(y)=(\chi_2''u)(y),
\quad \varphi_9(y)=u'(y),\quad \varphi_{10}(y)=\th'(y),\\
&\varphi_{11}(y)=\th(y).
\end{aligned}
\eeq
Here and below, for any function $\varphi(y)$, we denote 
\beno
\udl{\varphi}(t,v(t,y))=\varphi(y)\quad \text{and}\quad 
\widetilde{\varphi}(u(y))=\varphi(y).
\eeno
We write two functions $\udl{\varphi}(t,v)$ and $\widetilde{\varphi}(v)$ both in $(t,v)$ variable and regard $\widetilde{\varphi}(v)$ as the linear part of $\udl{\varphi}(t,v)$. So we can expect 
\ben\label{eq: varphidelta}
\varphi^{\d}(t,v)\eqdef\udl{\varphi}(t,v)-\widetilde{\varphi}(v)
\een
is small. 
\begin{remark}\label{Rmk: equation of coefficients}
Recall the notations $\udl{\varphi}(t,v(t,y))=\varphi(y)$ and $\widetilde{\varphi}(u(y))=\varphi(y)$, then $\udl{\varphi}(t,v)$ satisfies the following transport equation
\beno
\pa_t\udl{\varphi}(t,v)+\udl{\pa_tv}(t,v)\pa_v\udl{\varphi}(t,v)=0,
\eeno
and $\varphi^{\d}(t,v)=\udl{\varphi}(t,v)-\widetilde{\varphi}(v)$ satisfies
\beno
\pa_t\varphi^{\d}+\udl{\pa_tv}(t,v)\pa_v\varphi^{\d}=-\udl{\pa_tv}(t,v)\pa_v\widetilde{\varphi}(v). 
\eeno
Also note that $\varphi_{4}(t,v)$, $\varphi_9(t,v)$, and $\varphi_{11}(t,v)$ do not have compact support, but $\varphi^{\d}_{j}(t,v)$ has compact support for $j=1,2,...,11$
\end{remark}

Define $\Om(t,z,v)$, $a(t,z,v)$, $\Pi(t,z,v)$ and $\Psi(t,z,v)$ so that 
\beq\label{eq: Om, a, Pi, Psi}
\begin{aligned}
&\Om(t,z(t,x,y),v(t,y))=\tilde{\om}(t,x,y),\quad a(t,z(t,x,y),v(t,y))=d(t,x,y),\\
&\Pi_{l,1}(t,z(t,x,y),v(t,y))=P_{l,1}(t,x,y),\quad \Pi_{l,2}(t,z(t,x,y),v(t,y))=P_{l,2}(t,x,y),\\
&\Pi_{n,1}(t, z(t, x, y), v(t, y))=P_{n,1}(t, x, y),\quad
\Pi_{n,2}(t, z(t, x, y), v(t, y))=P_{n,2}(t, x, y),\\
&\Psi(t,z(t,x,y),v(t,y))=\psi(t,x,y),\quad
\udl{\om}(t, z(t, x, y), v(t, y))=\om(t, x, y)
\end{aligned}
\eeq
hence the original 2D inhomogeneous Euler system \eqref{eq:vorticity} is expressed in the $(t,z,v)$ variables as
\beq\label{eq: Om}
\left\{
\begin{aligned}
&\pa_tP_0(\udl{\om})+\udl{\pa_yv}P_0(\pa_z\Psi\pa_v\udl{\om}-\pa_v\Psi\pa_z\udl{\om})-\udl{\pa_yv}P_0(\pa_z\Pi_{l}\pa_va-\pa_v\Pi_{l}\pa_za)\\
&\quad\quad\quad\ 
-\udl{\pa_yv}P_0(\pa_z\Pi_{n}\pa_va-\pa_v\Pi_{n}\pa_za)=0,\\
&\pa_t\Om-\udl{\varphi_1}\pa_zP_{\neq}\Psi+\mathrm{U}\cdot\na_{z,v}\Om+\mathcal{N}_{\Om}[\Psi]+\mathcal{N}_a[\Pi]
=0,\\
&\pa_ta+\udl{\th'}\pa_z\Psi+\mathrm{U}\cdot\na_{z,v}a=0,\\
&\udl{\varphi_4}\pa_{zz}\Psi+\udl{\pa_yv}(\pa_v-t\pa_z)\Big(\udl{\varphi_4}\udl{\pa_yv}(\pa_v-t\pa_z)\Psi\Big)=\Om,\\
&\Psi(t, z, u(0))=\Psi(t, z, u(1))=0,\\
&\mathrm{U}=(0,\udl{\pa_tv})+\udl{\pa_yv}\na^{\bot}_{v,z}P_{\neq}(\Psi),
\end{aligned}\right.
\eeq 
where $\mathcal{N}$ represents the nonlinear terms: 
\ben
\mathcal{N}_{\Om}[\Psi]=\udl{\varphi_2}\pa_z\Psi\Om+\udl{\varphi_3}\pa_z\Psi\udl{\pa_yv}(\pa_v-t\pa_z)\Psi
\een 
and 
$\mathcal{N}_a[\Pi]=\mathcal{N}_a[\Pi_{l}]+\mathcal{N}_a[\Pi_{n}]$ with $\Pi_l=\Upsilon_2\Pi_{l,1}+\Upsilon_1\Pi_{l,2}$, $\Pi_n=\Upsilon_2\Pi_{n,1}+\Upsilon_1\Pi_{n,2}$, and 
\beq\label{eq: N_aPi}
\begin{aligned}
&\mathcal{N}_a[\Pi_{l}]=\udl{\varphi_4}\udl{\pa_yv}(\pa_v\Pi_{l}\pa_za-\pa_z\Pi_{l}\pa_va)+\udl{\varphi_5}a\pa_z\Pi_{l},\\
&\mathcal{N}_a[\Pi_n]=\udl{\varphi_4}\udl{\pa_yv}(\pa_v\Pi_n\pa_za-\pa_z\Pi_n\pa_va)+\udl{\varphi_5}a\pa_z\Pi_n. 
\end{aligned}
\eeq
Here $(P_{l,1}, P_{l,2}, P_{n,1}, P_{n,2})$ solve the following equations.
\begin{subequations}\label{eq: Pressure}
\beq
\left\{
\begin{aligned}
&\pa_z\Big((a+\udl{\th})\pa_z\Pi_{l,1}\Big)+\udl{\pa_yv}(\pa_v-t\pa_z)\Big((a+\udl{\th})\udl{\pa_yv}(\pa_v-t\pa_z)\Pi_{l,1}\Big)\\
&\quad\quad \quad \quad\quad \quad 
=\udl{\varphi_6}\pa_{zz}\Psi+\udl{\varphi_7}P_{\neq}(\Psi),\\
&(\pa_v-t\pa_z)\Pi_{l,1}(t, z, u(0))=(\pa_v-t\pa_z)\Pi_{l,1}(t, z, u(1))=0,
\end{aligned}\right.
\eeq 
and
\beq
\left\{
\begin{aligned}
&\pa_z\Big((a+\udl{\th})\pa_z\Pi_{l,2}\Big)+\udl{\pa_yv}(\pa_v-t\pa_z)\Big((a+\udl{\th})\udl{\pa_yv}(\pa_v-t\pa_z)\Pi_{l,2}\Big)\\
&\quad\quad \quad \quad\quad \quad
=\udl{\varphi_8}\udl{\pa_yv}(\pa_v-t\pa_z)P_{\neq}(\Psi),\\
&(\pa_v-t\pa_z)\Pi_{l,2}(t, z, u(0))=(\pa_v-t\pa_z)\Pi_{l,2}(t, z, u(1))=0,
\end{aligned}\right.
\eeq 
and
\beq
\left\{
\begin{aligned}
&\pa_z\Big((a+\udl{\th})\pa_z\Pi_{n,1}\Big)+\udl{\pa_yv}(\pa_v-t\pa_z)\Big((a+\udl{\th})\udl{\pa_yv}(\pa_v-t\pa_z)\Pi_{n,1}\Big)\\
&\quad\quad \quad \quad\quad \quad 
=-2\udl{\chi_2}(\udl{\pa_yv}(\pa_v-t\pa_z)\pa_z\Psi)^2-2\pa_{zz}\Psi\Big(\Om-\udl{\chi_2}\pa_{zz}\Psi\Big),\\
&\quad\quad \quad \quad\quad \quad\quad
-2\udl{\chi_2'}\udl{\pa_yv}\Big[(\pa_z\Psi)(\pa_v-t\pa_z)\pa_z\Psi+(\pa_v-t\pa_z)\Psi\pa_{zz}\Psi\Big]\\
&\quad\quad \quad \quad\quad \quad\quad 
-\f{1}{2}\udl{\chi_2''}\Big[(\pa_z\Psi)^2+(\udl{\pa_yv})^2((\pa_v-t\pa_z)P_{\neq}\Psi)^2\Big],\\
&(\pa_v-t\pa_z)\Pi_{n,1}(t, z, u(0))=(\pa_v-t\pa_z)\Pi_{n,1}(t, z, u(1))=0,
\end{aligned}\right.
\eeq 
and
\beq
\left\{
\begin{aligned}
&\pa_z\Big((a+\udl{\th})\pa_z\Pi_{n,2}\Big)+\udl{\pa_yv}(\pa_v-t\pa_z)\Big((a+\udl{\th})\udl{\pa_yv}(\pa_v-t\pa_z)\Pi_{n,2}\Big)\\
&\quad\quad \quad \quad\quad \quad 
= -\f{1}{2}\udl{\chi_2''}\Big[2(\udl{\pa_yv})^2((\pa_v-t\pa_z)P_{\neq}\Psi)\pa_vP_{0}\Psi+(\udl{\pa_yv})^2(\pa_vP_{0}\Psi)\pa_vP_{0}\Psi\Big],\\
&(\pa_v-t\pa_z)\Pi_{n,2}(t, z, u(0))=(\pa_v-t\pa_z)\Pi_{n,2}(t, z, u(1))=0.
\end{aligned}\right.
\eeq 
\end{subequations}
We also have that the zero mode of the pressure solves
\beno
(\udl{\th}+P_0(a))\pa_vP_0(\Pi)+P_0\big[P_{\neq}(a)P_{\neq}(\pa_v-t\pa_z)P_{\neq}(\Pi)\big]=P_0\Big(\pa_z\Psi\pa_z(\pa_v-t\pa_z)\Psi-\pa_{zz}\Psi\pa_v\Psi\Big).
\eeno
Note that \eqref{eq: Om} is not our final working system. The main part of the nonlocal quasi-linear term $-\underline{\varphi_1}\pa_zP_{\neq}(\Psi)$ will be eliminated by applying the wave operator.

\subsection{Linearization and the inverse linear change of coordinate}

Let us also introduce the linearization of \eqref{eq: Om} in $(t,z,v)$ coordinates
\beq
\label{eq:linvorticity}
\left\{
\begin{aligned}
&\pa_t\Om-\widetilde{\varphi_1}\pa_zP_{\neq}\Psi
=0,\\
&\tilde{\Delta}_{t}^l\Psi\eqdef\widetilde{\varphi_4}\pa_{zz}\Psi+\widetilde{u'}(\pa_v-t\pa_z)\Big(\widetilde{\varphi_4}\widetilde{u'}(\pa_v-t\pa_z)\Psi\Big)=\Om,
\end{aligned}
\right.
\eeq
Here we recall that $\widetilde{\varphi_1}(v)=\left(\left(\f{u'}{\th}\right)'\circ u^{-1}\right)(v)$, $\widetilde{\varphi_4}(v)=\f{1}{(\th\circ u^{-1})(v)}$ and $\widetilde{u'}(v)=(u'\circ u^{-1})(v)$. Let us now introduce the inverse linear change of coordinate, namely,  $(t,z,v)\to (t, x, y)$: 
\ben\label{eq: inversecoordinate}
y=u^{-1}(v), \quad x=z+t u(y)
\een
Define $\tilde{\om}(t,x,y)=\Om(t, x-tu(y), u(y))$ and $\psi(t, x, y)=\Psi(t, x-tu(y), u(y))$. Then we arrive at
\ben\label{eq: linearsystem}
\pa_t\tilde{\om}+\pa_x\mathcal{R}\tilde{\om}=0, \quad \psi=\tilde{\Delta}^{-1}\tilde{\om},
\een
where $\mathcal{R}=u\mathrm{Id}-\left(\f{u'}{\th}\right)'\tilde{\Delta}^{-1}$ is the distorted Rayleigh operator. Note that here we use the same notation $(t, x, y)$, $\tilde{\om}$, and $\psi$. They are different from the functions in \eqref{eq: new system} written in original $(t, x, y)$ coordinates.

\section{Linear inviscid damping}\label{sec: LID}
In this section, we study the linearized equation \eqref{eq: linearsystem}. The main purpose is to introduce the wave operator associated with the distorted Rayleigh operator $\mathcal{R}$. 

The wave operator related to a self-adjoint operator is well-known \cite{DelortMasmoudi,GPR18,GP20,LLS21,Schlag,Yajima}. Let $A, B$ be two self-adjoint operators in the Hilbert space $H$, then the wave operator $\bbD$ related to $A$ and $B$ satisfies
\beno
A\bbD=\bbD B.
\eeno
It can be defined by
\beno
\bbD=\lim_{t\to \infty}e^{-itA}e^{itB}. 
\eeno
However, the wave operator related to non-self-adjoint operators is usually not easy to construct and estimate. 
Let us explain the intuition of the construction on the operator $i\pa_x$ associated with the Fourier transform. More precisely, we know that $\la$ and $e^{-i\la x}$ are the generalized eigenvalue and eigenfunction. For any function $f\in L^1\cap L^2$ we have the representation formula
\ben\label{eq:Fourier and wave operator}
f(x)=\int_{\R}\langle f,e^{-i\la x}\rangle e^{-i\la x}d\la.
\een
Let us define $T(f)(\la)=\langle f,e^{-i\la x}\rangle=\int_{\R}f(x)e^{ix\la}dx$, then $T\circ (i\pa_x)=\la T$, and we can regard $T$ as a wave operator conjugating $i\pa_x$ to the multiplication by $\la$. 
\subsection{The distorted Rayleigh equation}
By taking the Fourier transform in $x$, we have 
\beno
\pa_t\tilde{\Delta}_k\hat{\psi}+ik u(y)\tilde{\Delta}_k\hat{\psi}-ik \big(\f{u'}{\th}\big)'\hat{\psi}=0.
\eeno
Let $\tilde{\mathcal{L}}_k\hat{\psi}=\tilde{\Delta}_k^{-1}\Big(u(y)\tilde{\Delta}_k\hat{\psi}-\big(\f{u'}{\th}\big)'\hat{\psi}\Big)$. 
We then have 
\beq\label{eq: represent}
\hat{\psi}(t, k, y)=\f{1}{2\pi i}\oint_{\pa \mathcal{S}} e^{-i c k t}(c-\tilde{\mathcal{L}}_k)^{-1}\hat{\psi}_{in} dc
\eeq
where $\mathcal{S}$ contains the spectrum of $\tilde{\mathcal{L}}_k$. 
Next, we study the resolvent $\Psi^{I}=(c-\tilde{\mathcal{L}}_k)^{-1}\hat{\psi}_{in}$. Then $\Psi^{I}$ solves the distorted inhomogeneous Rayleigh equation
\ben\label{eq: inhomRay1}
(u-c)\pa_y\left(\f{1}{\th}\pa_y\Psi^{I}\right)-(u-c)\f{k^2}{\th}\Psi^{I}-\big(\f{u'}{\th}\big)'\Psi^{I}=-\tilde{\Delta}_k\hat{\psi}_{in}=-\hat{\tilde{\om}}_{in}
\een
with boundary condition $\Psi^{I}(0)=\Psi^{I}(1)=0$. We also introduce the homogeneous distorted Rayleigh equation
\ben\label{eq: homRay1}
(u-c)\pa_y\left(\f{1}{\th}\pa_y\Psi^{H}\right)-(u-c)\f{k^2}{\th}\Psi^{H}-\big(\f{u'}{\th}\big)'\Psi^{H}=0.
\een
\begin{remark}
Let $u\in C^4$, $u'\neq 0$ and $\th>0$. 
Then we have the following facts:
\begin{itemize}
\item $\mathrm{Ran}\, u\subset \s(\tilde{\mathcal{L}}_k)$ is the continuous spectrum. 
\item If $c\notin \mathrm{Ran}\, u$ is an eigenvalue of $\tilde{\mathcal{L}}_k$, then \eqref{eq: homRay1} has a non-trivial solution in $H^2(0,1)\cap H_0^1(0,1)$. 
\item If $c=u(y_c)\in \mathrm{Ran}\, u$ is an embedded eigenvalue, then $\big(\f{u'}{\th}\big)'(y_c)=0$. The boundary value $u(0), u(1)$ are not the embedded eigenvalue.
\item If $\big(\f{u'}{\th}\big)'$ does not change sign in $[0,1]$, then $\tilde{\mathcal{L}}_k$ has no eigenvalue or embedded eigenvalue. 
\end{itemize}
\end{remark}
\subsection{Homogeneous equation}
In this section, we show the existence of a smooth solution to the homogeneous equation \eqref{eq: homRay1}. 
\begin{proposition}\label{prop: homsol}
Let $y,y'\in [0,1]$, and let $\phi_1$ solve
\beq\label{eq:phi_1}
\phi_1(y, y', k)=1+\int_{y'}^y\f{k^2\th(z')}{(u(z')-u(y'))^2}\int_{y'}^{z'}\f{(u(z'')-u(y'))^2}{\th(z'')}\phi_1(z'',y',k)dz''dz'
\eeq
or in the differential equation form
\beq\label{eq:phi_1DE}
\left\{\begin{aligned}
&\pa_y\left(\f{(u(y)-u(y'))^2}{\th(y)}\pa_y\phi_1(y, y', k)\right)=k^2\f{(u(y)-u(y'))^2}{\th(y)}\phi_1(y, y', k),\\
&\phi_1(y', y', k)=1\quad \pa_y\phi_1(y', y', k)=0.
\end{aligned}\right.
\eeq 
Then $\phi(y, y',k)=(u(y)-u(y'))\phi_1(y, y',k)$ solves \eqref{eq: homRay1} with $c=u(y')\in \mathrm{Ran}\, u$. 

Let $\ep_0>0$ and for $|\ep|<\ep_0$, let $\phi_1(y, y', \ep, k)$ solve 
\beq\label{eq:phi_1,ep}
\phi_1(y, y', \ep, k)=1+\int_{y'}^y\int_{y'}^{z'}\f{k^2\th(z')(u(z'')-u(y')-i\ep)^2}{\th(z'')(u(z')-u(y')-i\ep)^2}\phi_1(z'',y', \ep,k)dz''dz'
\eeq
or in the differential equation form
\beq\label{eq:phi_1epDE}
\left\{\begin{aligned}
&\pa_y\left(\f{(u(y)-u(y')-i\ep)^2}{\th(y)}\pa_y\phi_1(y, y', \ep, k)\right)=k^2\f{(u(y)-u(y')-i\ep)^2}{\th(y)}\phi_1(y, y', \ep, k),\\
&\phi_1(y', y', \ep, k)=1\quad \pa_y\phi_1(y', y', \ep, k)=0.
\end{aligned}\right.
\eeq 
Then $\phi(y, y', \ep, k)=(u(y)-u(y')-i\ep)\phi_1(y, y', \ep, k)$ solves \eqref{eq: homRay1} with $c=u(y')+i\ep$. 

Moreover, the following estimates hold for $y, y'\in [0,1]$, $k\in \mathbb{Z}\setminus\{0\}$, and $|\ep|<\ep_0$. 
\begin{align}
\label{eq: est1}   &\phi_{1}(y,y', k)\ge1,\quad (y-y')\pa_y\phi_{1}(y,y', k)\geq 0\\
\label{eq: est2}   &C^{-1}e^{C^{-1}|k|(|y-y'|)}\le\phi_{1}(y,y', k)\le Ce^{C|k||y-y'|},\\
\label{eq: est3}    &C^{-1}|k|\min\{|k|(y-y'),1\}\leq \f{|\pa_y\phi_{1}(y,y', k)|}{\phi_{1}(y,y', k)}\leq C|k|\min\{|k|(y-y'),1\}\\
\label{eq: est4}    &|\pa_{yy}\phi_{1}(y,y', k)|\leq Ck^2\phi_{1}(y,y', k),\\
\label{eq: est5}    & 0\leq \phi_{1}(y,y', k)-1\le C\min \{1, |k|^2|y-y'|^2\}\phi_{1}(y,y', k),
\end{align}
and $\phi_1(y, y', \ep, k)\to \phi_{1}(y,y', k)$ as $\ep\to 0$, and for $\ep_0$ small enough (may depend on $k$)
\begin{align}
\label{eq: est6}
\f12\leq |\phi_1(y, y', \ep, k)|\le Ce^{C|k||y-y'|}. 
\end{align}
Here $C$ is a constant independent of $k$. 
\end{proposition}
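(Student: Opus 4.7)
The plan is to construct $\phi_1$ via Picard iteration on the Volterra integral equation \eqref{eq:phi_1}, and then derive every estimate either from the iterative structure of that equation or from its differential form \eqref{eq:phi_1DE}. First I would verify that the integrand in \eqref{eq:phi_1} is non-singular: although $(u(z')-u(y'))^{-2}$ blows up as $z'\to y'$, the inner integral $\int_{y'}^{z'}(u(z'')-u(y'))^2/\th(z'')\,dz''$ vanishes like $(z'-y')^3$ by the monotonicity $u'\geq c_0$, so the double kernel is bounded uniformly in $y'$. Setting $\phi_1^{(0)}\equiv 1$ and iterating gives a monotone increasing sequence of non-negative functions, converging uniformly on $[0,1]^2\times\mathbb{Z}\setminus\{0\}$ to a solution $\phi_1$ with $\phi_1\geq 1$ (which proves the first half of \eqref{eq: est1}); the equivalence with \eqref{eq:phi_1DE} is automatic after two differentiations. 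The second half of \eqref{eq: est1} is read off from $\pa_y\phi_1(y,y',k)=\frac{k^2\th(y)}{(u(y)-u(y'))^2}\int_{y'}^{y}\frac{(u(z)-u(y'))^2}{\th(z)}\phi_1(z,y',k)\,dz$, which carries the sign of $y-y'$. To verify that $\phi=(u-c)\phi_1$ solves \eqref{eq: homRay1} one expands $(u-c)\pa_y(\th^{-1}\pa_y\phi)$ and observes that the coefficient of $\pa_y\phi_1$ is exactly $\pa_y\bigl[(u-c)^2/\th\bigr]$, so the combination reassembles $\pa_y\bigl[(u-c)^2/\th\,\pa_y\phi_1\bigr]$, which equals $k^2(u-c)^2/\th\,\phi_1$ by \eqref{eq:phi_1DE}; the remaining piece cancels the $\bigl(\tfrac{u'}{\th}\bigr)'\phi$ term.

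For the two-sided exponential bound \eqref{eq: est2} I would compare with the constant-coefficient model $u(y)=y$, $\th\equiv 1$, where $\phi_1(y,y',k)=\cosh(|k|(y-y'))$ is explicit. Since the assumptions on $u',\th$ give $c_0^{-1}\leq\th\leq C$ and $c_0\leq u'\leq C$, replacing $u(y)-u(y')$ and $\th$ by their constant-coefficient counterparts changes the integral kernel only by multiplicative constants; substituting into \eqref{eq:phi_1} and applying a Gronwall argument to $\Phi(y):=\sup_{|\tilde y-y'|\leq |y-y'|}\phi_1(\tilde y,y',k)$ yields the upper bound $\phi_1\leq Ce^{C|k||y-y'|}$. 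For the matching lower bound I would start from $\phi_1\geq 1$, substitute back into \eqref{eq:phi_1} to get the quadratic improvement $\phi_1(y,y',k)\geq 1+c|k|^2(y-y')^2$, and bootstrap this into the exponential lower bound; alternatively, a Sturm comparison applied to the ODE \eqref{eq:phi_1DE} against the constant-coefficient model works directly.

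The remaining pointwise bounds follow from the integral formula for $\pa_y\phi_1$ divided by $\phi_1$. For \eqref{eq: est3}, the monotonicity $\phi_1(z,y',k)\leq \phi_1(y,y',k)$ traps the ratio $\pa_y\phi_1/\phi_1$ between two explicit multiples of $\frac{k^2\th(y)}{(u(y)-u(y'))^2}\int_{y'}^y\frac{(u(z)-u(y'))^2}{\th(z)}\,dz$; this weighted average behaves like $k^2(y-y')$ on the small scale $|y-y'|\lesssim|k|^{-1}$ and saturates at order $|k|$ for large $|y-y'|$ by \eqref{eq: est2}. For \eqref{eq: est4} one rewrites \eqref{eq:phi_1DE} as $\pa_{yy}\phi_1=k^2\phi_1-\bigl(\tfrac{2u'}{u-c}-\tfrac{\th'}{\th}\bigr)\pa_y\phi_1$; the apparent singularity $1/(u-c)$ is tamed by the factor $(u(z)-u(y'))^2\leq (u(y)-u(y'))^2$ inside the formula for $\pa_y\phi_1$, giving $|\pa_y\phi_1/(u-c)|\lesssim k^2\phi_1$. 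For \eqref{eq: est5}, applying $\phi_1(z'',y',k)\leq \phi_1(y,y',k)$ in the double integral representation of $\phi_1-1$ bounds it by $Ck^2(y-y')^2\phi_1$, while $\phi_1-1\leq \phi_1$ is trivial.

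For the complex-parameter case, I would rerun Picard iteration on \eqref{eq:phi_1,ep}: for any fixed $k$, the integrand depends continuously on $\ep$ and equals the real kernel at $\ep=0$, so the iterates converge uniformly on $[0,1]^2\times\{|\ep|<\ep_0\}$ and $\phi_1(y,y',\ep,k)\to \phi_1(y,y',k)$ as $\ep\to 0$. Continuity plus the lower bound $\phi_1(y,y',k)\geq 1$ gives $|\phi_1(y,y',\ep,k)|\geq 1/2$ for $\ep$ small, and repeating the Gronwall upper-bound argument (with the kernel now complex but of modulus still $\lesssim 1$) yields the upper bound in \eqref{eq: est6}. The main obstacle is the matched exponential bound \eqref{eq: est2}: one must ensure that the Gronwall constants depend only on the structural parameters $c_0, K$ and not on $k$, $y$, or $y'$, which requires the cancellation between the singular $(u-c)^{-2}$ factor and the vanishing inner integral to be quantified uniformly; everything else is a fairly standard ODE argument once that uniform control is in place.
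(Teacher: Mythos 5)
\textbf{Verification of the ODE, existence, and the bounds \eqref{eq: est1}, \eqref{eq: est4}, \eqref{eq: est5}, \eqref{eq: est6}.} Your Picard iteration is a reasonable substitute for the paper's contraction argument in $\cosh$-weighted norms (the latter also yields differentiability in $(y,y',\ep)$, which you recover from uniform convergence of the iterates). The positivity of the iterates gives \eqref{eq: est1}, the double-integral representation of $\phi_1-1$ together with the monotonicity $\phi_1(z,y',k)\le\phi_1(y,y',k)$ gives \eqref{eq: est5}, and your handling of the apparent singularity $1/(u-c)$ in \eqref{eq: est4} by writing $\pa_y\phi_1/(u-c)$ via the integral formula and using $(u(z)-u(y'))^2\le(u(y)-u(y'))^2$ is correct and essentially what the paper does. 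The verification that $\phi=(u-c)\phi_1$ solves \eqref{eq: homRay1} by reassembling a total derivative is also fine.

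\textbf{Gap in \eqref{eq: est3}.} This is where the plan breaks. You propose to establish \eqref{eq: est2} first (by Gronwall / comparison with the constant-coefficient model), and then read off \eqref{eq: est3} from the integral formula for $\pa_y\phi_1/\phi_1$, attributing the ``saturation at order $|k|$'' for $|y-y'|\gtrsim |k|^{-1}$ to \eqref{eq: est2}. But \eqref{eq: est2} only pins $\phi_1$ between two exponentials $e^{C^{-1}|k||y-y'|}$ and $e^{C|k||y-y'|}$ with \emph{different} rates, so the ratio $\phi_1(z,y',k)/\phi_1(y,y',k)$ appearing inside the integral can a priori decay like $e^{-(C-C^{-1})|k||y-y'|}\to 0$; the same mismatch destroys both the upper bound $\pa_y\phi_1/\phi_1\le C|k|$ and the lower bound $\gtrsim |k|$ for $|y-y'|\gtrsim|k|^{-1}$. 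What is actually needed — and what the paper supplies — is the auxiliary Riccati function $f(y,y',k)=\pa_y\phi_1/(\th\phi_1)$, which satisfies $f'+\f{2u'}{u-u(y')}f+f^2\th=\f{k^2}{\th}$ with the sign term $\f{2u'}{u-u(y')}f\ge0$; a one-line comparison/contradiction argument then yields the uniform bound $|f|\le|k|/\th$, i.e.\ $|\pa_y\phi_1/\phi_1|\le C|k|$ \emph{with a constant independent of $|y-y'|$}. This is the step that controls the ratio $\phi_1(z'',y',k)/\phi_1(y,y',k)\ge e^{-|k||y-z''|}$ with the \emph{same} rate as the upper bound, closing both sides of \eqref{eq: est3}. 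The paper also then gets \eqref{eq: est2} by integrating $\pa_y\phi_1/\phi_1$ — the reverse of your dependency — and \eqref{eq: est4} from $|f'|\lesssim k^2$, which follows from the Riccati equation and the bound on $f$ just obtained. You should add the Riccati step; without it the proof of \eqref{eq: est3} does not go through, and the argument you give for \eqref{eq: est4} is an alternative that happens to survive, but \eqref{eq: est3} does not.
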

\begin{proof}
{\bf Existence part.} We first show the existence of $\phi_1(y, y', \ep, k)$ for any fixed $k\in \mathbb{Z}\setminus\{0\}$ and $(y, y', \ep )\in [0,1]^2\times [-\ep_0,\ep_0]$. 
We introduce the weighted norm
\begin{align*}
&\|f\|_{X_0}\eqdef \sup_{(y,y')\in [0,1]^2}\left|\f{f(y, y')}{\cosh A(y-y')}\right|,\\
&\|f\|_{X}\eqdef \sup_{(y,y',\ep)\in [0,1]^2\times [-\ep_0,\ep_0]}\left|\f{f(y, y',\ep)}{\cosh A(y-y')}\right|,
\end{align*}
and
\beno
&&\|f\|_{Y_0}\eqdef\sum_{k=0}^2\sum_{|\b|=k}A^{-k}\|\na_{y,y'}^{\b}f\|_{X_0},\\
&&\|f\|_{Y}\eqdef\|f\|_{X}+\frac{1}{A}\big(\|\partial_yf\|_{X}+\|\partial_{y'}f\|_{X}+\|\pa_{\ep}f\|_{X}\big).
\eeno
Consider the linear operator for $0\leq \ep\leq |\ep_0|$
\beno
T_{\ep}[\phi_1]=\int_{y'}^y\int_{y'}^{z'}\f{\th(z')(u(z'')-u(y')-i\ep)^2}{\th(z'')(u(z')-u(y')-i\ep)^2}\phi_1(z'',y', \ep,k)dz''dz',
\eeno
it is easy to check that 
\beno
\|T_{0}[\phi_1]\|_{X_0}\leq \f{C}{A^2}\|\phi_1\|_{X_0},
\quad \|T_{\ep}[\phi_1]\|_{X}\leq \f{C}{A^2}\|\phi_1\|_{X}.
\eeno
A direct calculation gives
\begin{align*}
\pa_y T_{\ep}[\phi_1]=\int_{y'}^{y}\f{\th(y)(u(z'')-u(y')-i\ep)^2}{\th(z'')(u(y)-u(y')-i\ep)^2}\phi_1(z'',y', \ep,k)dz'',
\end{align*}
which gives
\beno
\|\pa_yT_{0}[\phi_1]\|_{X_0}\leq \f{C}{A}\|\phi_1\|_{X_0},
\quad \|\pa_yT_{\ep}[\phi_1]\|_{X}\leq \f{C}{A}\|\phi_1\|_{X}.
\eeno
We also have 
\begin{align*}
\pa_{yy} T_{\ep}[\phi_1]
&=\phi_1+\int_{y'}^{y}\f{\th'(y)(u(z'')-u(y')-i\ep)^2}{\th(z'')(u(y)-u(y')-i\ep)^2}\phi_1(z'',y', \ep,k)dz''\\
&\quad -2\int_{y'}^{y}\f{\th(y)u'(y)(u(z'')-u(y')-i\ep)^2}{\th(z'')(u(y)-u(y')-i\ep)^3}\phi_1(z'',y', \ep,k)dz'',
\end{align*}
which gives 
\beno
\|\pa_{yy}T_{0}[\phi_1]\|_{X_0}\leq C\|\phi_1\|_{X_0},
\quad \|\pa_{yy}T_{\ep}[\phi_1]\|_{X}\leq C\|\phi_1\|_{X}.
\eeno
A direct calculation gives for $-\ep_0\leq \ep\leq \ep_0$
\begin{align*}
(\pa_y+\pa_{y'})T_{\ep}[\phi_1]
&=\int_{y'}^y\int_{y'}^{z'}(\pa_{y'}+\pa_{z'}+\pa_{z''})\left(\f{\th(z')(u(z'')-u(y')-i\ep)^2}{\th(z'')(u(z')-u(y')-i\ep)^2}\phi_1(z'',y', \ep,k)\right)dz''dz'\\
&=\int_{y'}^y\int_{y'}^{z'}(\pa_{y'}+\pa_{z'}+\pa_{z''})\left(\f{\th(z')(u(z'')-u(y')-i\ep)^2}{\th(z'')(u(z')-u(y')-i\ep)^2}\right)\phi_1(z'',y', \ep,k)dz''dz'\\
&\quad+\int_{y'}^y\int_{y'}^{z'}\f{\th(z')(u(z'')-u(y')-i\ep)^2}{\th(z'')(u(z')-u(y')-i\ep)^2}(\pa_{y'}+\pa_{z''})\phi_1(z'',y', \ep,k)dz''dz'
\end{align*}
and
\begin{align*}
&(\pa_y+\pa_{y'})^2T_{\ep}[\phi_1]\\
&=\int_{y'}^y\int_{y'}^{z'}(\pa_{y'}+\pa_{z'}+\pa_{z''})^2\left(\f{\th(z')(u(z'')-u(y')-i\ep)^2}{\th(z'')(u(z')-u(y')-i\ep)^2}\right)\phi_1(z'',y', \ep,k)dz''dz'\\
&\quad+\int_{y'}^y\int_{y'}^{z'}\f{\th(z')(u(z'')-u(y')-i\ep)^2}{\th(z'')(u(z')-u(y')-i\ep)^2}(\pa_{y'}+\pa_{z''})^2\phi_1(z'',y', \ep,k)dz''dz'\\
&\quad+\int_{y'}^y\int_{y'}^{z'}(\pa_{y'}+\pa_{z'}+\pa_{z''})\left(\f{\th(z')(u(z'')-u(y')-i\ep)^2}{\th(z'')(u(z')-u(y')-i\ep)^2}\right)(\pa_{y'}+\pa_{z''})\phi_1(z'',y', \ep,k)dz''dz',
\end{align*}
which together with the fact that for $0\leq y'\leq z''\leq z'\leq 1$ or $0\leq z'\leq z''\leq y'\leq 1$
\beno
\left|(\pa_{y'}+\pa_{z'}+\pa_{z''})^{j}\left(\f{\th(z')(u(z'')-u(y')-i\ep)^2}{\th(z'')(u(z')-u(y')-i\ep)^2}\right)\right|\leq C,\quad \text{for}\quad j=0,1,2,
\eeno
gives that
\begin{align*}
&\|(\pa_y+\pa_{y'})T_{0}[\phi_1]\|_{X_0}\leq \f{C}{A^2}\big(\|(\pa_y+\pa_{y'})\phi_1\|_{X_0}
+\|\phi_1\|_{X_0}\big),\\
&\|(\pa_y+\pa_{y'})T_{\ep}[\phi_1]\|_{X}\leq \f{C}{A^2}\big(\|(\pa_y+\pa_{y'})\phi_1\|_{X}
+\|\phi_1\|_{X}\big),\\
&\|(\pa_y+\pa_{y'})^2T_{0}[\phi_1]\|_{X_0}\leq \f{C}{A^2}\big(\|(\pa_y+\pa_{y'})^2\phi_1\|_{X_0}+\|(\pa_y+\pa_{y'})\phi_1\|_{X_0}
+\|\phi_1\|_{X_0}\big),\\
&\|(\pa_y+\pa_{y'})^2T_{\ep}[\phi_1]\|_{X}\leq \f{C}{A^2}\big(\|(\pa_y+\pa_{y'})^2\phi_1\|_{X}+\|(\pa_y+\pa_{y'})\phi_1\|_{X}
+\|\phi_1\|_{X}\big).
\end{align*}
We also have
\begin{align*}
\pa_{\ep}T_{\ep}[\phi_1]
&=-2i\int_{y'}^y\int_{y'}^{z'}\f{\th(z')(u(z'')-u(y')-i\ep)}{\th(z'')(u(z')-u(y')-i\ep)^2}\phi_1(z'',y', \ep,k)dz''dz'\\
&\quad+2i\int_{y'}^y\int_{y'}^{z'}\f{\th(z')(u(z'')-u(y')-i\ep)^2}{\th(z'')(u(z')-u(y')-i\ep)^3}\phi_1(z'',y', \ep,k)dz''dz'\\
&\quad+\int_{y'}^y\int_{y'}^{z'}\f{\th(z')(u(z'')-u(y')-i\ep)^2}{\th(z'')(u(z')-u(y')-i\ep)^2}\pa_{\ep}\phi_1(z'',y', \ep,k)dz''dz',
\end{align*}
which gives 
\begin{align*}
\|\pa_{\ep}T_{\ep}[\phi_1]\|_{X}\leq \f{C}{A}\|\phi_1\|_{X}+\f{C}{A^2}\|\pa_{\ep}\phi_1\|_{X}. 
\end{align*}
By combining all the estimates, we arrive at 
\beno
\|T_{0}[\phi_1]\|_{Y_0}\leq \f{C}{A^2}\|\phi_1\|_{Y_0}, \quad\|T_{\ep}[\phi_1]\|_{Y}\leq \f{C}{A^2}\|\phi_1\|_{Y}.
\eeno
By using the equations
\beno
\phi_1(y, y', \ep, k)=1+k^2T_{\ep}[\phi_1](y, y', \ep, k) \quad \text{and}\quad \phi_1(y, y', k)=1+k^2T_{\ep}[\phi_1](y, y', k)
\eeno
and taking $A$ large enough, we show that $T_{\ep}$ is the contraction map yields the existence of $\phi_1(y, y', \ep, k)$ and $\phi_1(y, y', k)$. 

\medskip

\no{\bf Estimate part. } The estimate \eqref{eq: est1} follows directly from the fact that $T_0$ is a positive operator. We also get that for $0\leq y'\leq z''\leq y\leq 1$ or $0\leq y\leq z''\leq y'\leq 1$, 
\beno
1\leq \phi_1(z'', y', k)\leq \phi_1(y, y', k).
\eeno
Thus
\begin{align*}
\phi_1(y, y', k)-1
&=k^2\int_{y'}^y\int_{y'}^{z'}\f{\th(z')(u(z'')-u(y'))^2}{\th(z'')(u(z')-u(y'))^2}\phi_1(z'',y', k)dz''dz'\\
&\leq Ck^2|y-y'|^2\phi_1(y, y', k),\\
|\pa_y\phi_1(y, y', k)|
&=k^2\left|\int_{y'}^{y}\f{\th(y)(u(z'')-u(y'))^2}{\th(z'')(u(y)-u(y'))^2}\phi_1(z'',y',k)dz''\right|\\
&\leq k^2|y-y'|\phi_1(y, y', k),
\end{align*}
which gives \eqref{eq: est5} and part of \eqref{eq: est3}.  

We introduce $f(y, y', k)=\f{\pa_y\phi_1(y, y', k)}{\th(y)\phi_1(y, y', k)}$ then $f(y, y', k)\geq 0$ for $y\geq y'$ and $f(y, y', k)\leq 0$ for $y\leq y'$. We also have 
\beno
f'+\f{2u'}{u(y)-u(y')}f+f^2\th=\f{k^2}{\th},
\eeno
and $f'(y', y', k)=\f{k^2}{3\th(y')}>0$. By the continuity of $f$ and  the fact that $\f{2u'}{u(y)-u(y')}f\geq 0$, using a classical contradiction argument, we have $|f(y, y', k)|\leq \f{|k|}{\th(y)}$, which gives \eqref{eq: est3}. We also get that 
\beq\label{eq: f'}
|f'(y, y', k)|\leq Ck^2
\eeq
and
\begin{align*}
\f{\phi_1(z'',y',k)}{\phi_1(y, y', k)}=\exp\Big(\int_y^{z''}\th(z')f(z', y', k)dz'\Big)\geq e^{-|k||y-z''|}. 
\end{align*}
Thus we have for $|y-y'|\leq \f{1}{|k|}$, $1\leq \phi_1(z'', y, k)\leq \phi_1(y, y', k)\leq C$
\begin{align*}
\left|\f{\pa_y\phi_1(y, y', k)}{\phi_1(y, y', k)}\right|
&=k^2\left|\int_{y'}^{y}\f{\th(y)(u(z'')-u(y'))^2}{\th(z'')(u(y)-u(y'))^2}\f{\phi_1(z'',y',k)}{\phi_1(y, y', k)}dz''\right|\\
&\geq C^{-1}k^2|y-y'|
\end{align*}
and for $|y-y'|\geq \f{1}{|k|}$ and if $y>y'$
\begin{align*}
\left|\f{\pa_y\phi_1(y, y', k)}{\phi_1(y, y', k)}\right|
&=k^2\int_{y'}^{y}\f{\th(y)(u(z'')-u(y'))^2}{\th(z'')(u(y)-u(y'))^2}\f{\phi_1(z'',y',k)}{\phi_1(y, y', k)}dz''\\
&\geq C^{-1}k^2\int_{y'}^{y}\f{(u(z'')-u(y'))^2}{(u(y)-u(y'))^2}e^{-|k||y-z''|}dz''\\
&\geq C^{-1}k^2\int_{y-\f{1}{2|k|}}^{y}\f{(u(z'')-u(y'))^2}{(u(y)-u(y'))^2}e^{-|k||y-z''|}dz''\geq C^{-1}|k|,
\end{align*}
and if $y<y'$, 
\begin{align*}
\left|\f{\pa_y\phi_1(y, y', k)}{\phi_1(y, y', k)}\right|
&\geq C^{-1}k^2\int^{y+\f{1}{2|k|}}_{y}\f{(u(z'')-u(y'))^2}{(u(y)-u(y'))^2}e^{-|k||y-z''|}dz''\geq C^{-1}|k|.
\end{align*}
We now use 
\begin{align*}
\phi_1(y,y',k)=\exp\Big(\int_{y'}^{y}\f{\pa_y\phi_1(z', y', k)}{\phi_1(z', y', k)}dz'\Big)\geq e^{C^{-1}|k||y-y'|},
\end{align*}
which give \eqref{eq: est2}. The estimate \eqref{eq: est4} follows directly from \eqref{eq: f'} and \eqref{eq: est4}. The estimate \eqref{eq: est6} follows from the fact that $\phi_1(y,y', \ep,k)$ is continuous in $\ep$ and \eqref{eq: est2}. Thus we proved the proposition. 
\end{proof}
\subsection{Inhomogeneous solution}
In this section, we give the representation formula of the inhomogeneous solution to \eqref{eq: inhomRay1} and study the limit as $\ep \to 0$. Let us first introduce 
\beno
\rho(y')&=(u(y')-u(0))(u(1)-u(y')),
\eeno
and
\begin{subequations}\label{eq:j_1}
\beq
\begin{aligned}
J_1(y',k)&\eqdef-(u(1)-u(y'))\f{\th(0)}{u'(0)}-(u(y')-u(0))\f{\th(1)}{u'(1)}\\
&\quad+\rho(y')\int_{0}^1\f{\th(z')}{(u(z')-u(y'))^2}\Big(\f{1}{\phi_1(z',y',k)^2}-1\Big)dz'\\
&\quad-\rho(y')\mathcal{H}\big[\big((\th\circ u^{-1})(u^{-1})'\big)'\chi_{[u(0),u(1)]}\big](u(y'))\\
&=-(u(1)-u(y'))\f{\th(0)}{u'(0)}-(u(y')-u(0))\f{\th(1)}{u'(1)}\\
&\quad+\rho(y')\int_{0}^1\f{\th(z')}{(u(z')-u(y'))^2}\Big(\f{1}{\phi_1(z',y',k)^2}-1\Big)dz'\\
&\quad+\rho(y')P.V.\int_0^1\f{1}{u(z)-u(y')}\Big(\f{\th(z)}{u'(z)}\Big)'dz,
\end{aligned}
\eeq
and
\beq
\begin{aligned}
J_2(y')= \rho(y')\big((\th\circ u^{-1})(u^{-1})\big)'(u(y'))
=- \rho(y')\f{\th(y')^2}{u'(y')^3}\Big(\f{u'}{\th}\Big)'(y').
\end{aligned}
\eeq
\end{subequations}
We then have the following lemma. 
\begin{lemma}\label{lem: embedded}
Let $y'\in [0,1]$ and $k\in \mathbb{Z}\setminus\{0\}$. Then $u(y')\in\mathrm{Ran}\, u$ is an embedded eigenvalue of $\tilde{\mathcal{L}}_k$, if and only if 
\beno
J_1(y',k)^2+J_2(y')^2=0.
\eeno
\end{lemma}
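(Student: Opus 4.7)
The plan is to construct a second linearly independent solution of the distorted homogeneous Rayleigh equation \eqref{eq: homRay1} with $c=u(y')$, analyze its Frobenius structure at $y=y'$, and then compute the boundary determinant that governs the existence of an $H^{2}(0,1)\cap H_{0}^{1}(0,1)$ solution. Starting from the smooth solution $\phi(y,y',k)=(u(y)-u(y'))\phi_{1}(y,y',k)$ provided by Proposition \ref{prop: homsol}, I would write \eqref{eq: homRay1} in the self-adjoint form $\partial_y(\theta^{-1}\partial_y\Psi)=[k^{2}/\theta+(u'/\theta)'/(u-c)]\Psi$ and apply reduction of order. Using the Wronskian identity $\theta^{-1}(\phi\partial_y\phi^{(2)}-\phi^{(2)}\partial_y\phi)=\mathrm{const}$, one obtains a second solution $\phi^{(2)}(y,y',k)=\phi(y,y',k)W(y,y',k)$ with
\[
W'(y,y',k)=\frac{\theta(y)}{(u(y)-u(y'))^{2}\,\phi_{1}(y,y',k)^{2}}.
\]

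The integrand has a non-integrable $(y-y')^{-2}$ singularity, so I would split $\phi_{1}^{-2}=1+(\phi_{1}^{-2}-1)$. By estimate \eqref{eq: est5}, $\phi_{1}^{-2}-1=O((y-y')^{2})$, so the second piece produces an absolutely convergent integral that will account for the middle line of \eqref{eq:j_1}. For the constant-$1$ piece I would use the algebraic identity
\[
\frac{\theta(z)}{(u(z)-u(y'))^{2}}=-\partial_{z}\Bigl(\frac{\theta(z)/u'(z)}{u(z)-u(y')}\Bigr)+\frac{1}{u(z)-u(y')}\Bigl(\frac{\theta}{u'}\Bigr)'(z)
\]
and integrate by parts, producing (i) explicit boundary contributions at $z=0,1$ and (ii) a Cauchy principal value integral at $z=y'$. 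Together with the identity $(\theta/u')'(y')=-\theta(y')^{2}(u'(y'))^{-2}(u'/\theta)'(y')$, this extracts a Frobenius-type singular part of $\phi^{(2)}$ of the form $\alpha(y',k)\,\phi(y,y',k)\log|u(y)-u(y')|$, where $\alpha(y',k)$ is a nonzero multiple of $(u'/\theta)'(y')$, equivalently a nonzero multiple of $J_{2}(y')$.

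Since every element of $H^{2}\cap H_{0}^{1}$ is Lipschitz and cannot contain this $\log$-branch, the existence of an embedded eigenvalue forces the coefficient $\alpha$ to vanish, i.e.\ $J_{2}(y')=0$; this simultaneously turns the PV integral above into a genuinely convergent one. Assuming $J_{2}(y')=0$, both $\phi$ and $\phi^{(2)}$ extend smoothly across $y=y'$ and form a basis of the solution space, so a nontrivial $\Psi=A\phi+B\phi^{(2)}$ vanishing at both endpoints exists iff
\[
\phi(0,y',k)\phi^{(2)}(1,y',k)-\phi(1,y',k)\phi^{(2)}(0,y',k)=\phi(0,y',k)\phi(1,y',k)\bigl(W(1,y',k)-W(0,y',k)\bigr)=0.
\]
The prefactor equals $-\rho(y')\phi_{1}(0,y',k)\phi_{1}(1,y',k)$, which is a nonzero multiple of $\rho(y')$ by \eqref{eq: est2}. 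Plugging in the regular/singular decomposition of $W(1)-W(0)$, the boundary terms reproduce $-(u(1)-u(y'))\theta(0)/u'(0)-(u(y')-u(0))\theta(1)/u'(1)$, the $\phi_{1}^{-2}-1$ piece reproduces the second line of \eqref{eq:j_1}, and the integration-by-parts remainder reproduces the principal value integral; up to the nonzero overall factor this determinant is precisely $J_{1}(y',k)$. Combining the two steps yields the claimed criterion $J_{1}^{2}+J_{2}^{2}=0$ in both directions.

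The main obstacle I anticipate is the exact bookkeeping in the final step, namely matching every constant coming from the Wronskian normalization and the integration-by-parts to the precise prefactors in \eqref{eq:j_1}, and rigorously justifying the $\delta\to 0$ limit in the Cauchy principal value. I would handle both by running the entire computation first on the $\epsilon$-regularized profile $\phi_{1}(y,y',\epsilon,k)$ of Proposition \ref{prop: homsol}, where no singularity appears and estimate \eqref{eq: est6} provides uniform control, and then passing to the limit $\epsilon\to 0$ to identify the log branch and the PV integral unambiguously.
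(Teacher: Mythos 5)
Your proposal is correct, and for the ``if'' direction it is essentially the paper's argument in different clothing: your $\phi^{(2)}=\phi W$ is exactly the paper's second solution $\varphi_1(y,y')=\phi(0,y',k)\phi(y,y',k)\int_0^y\theta/\phi^2$, your split of $W'$ into the $(\phi_1^{-2}-1)$ piece plus an exact derivative plus a Cauchy kernel is the paper's $I_1+I_2+I_3$ decomposition (later reused in \eqref{eq: decomposition-e}), and your boundary determinant $\phi(0)\phi(1)(W(1)-W(0))$ evaluates to $-\phi_1(0,y',k)\phi_1(1,y',k)J_1(y',k)$, which is precisely the identity $\varphi_1(1,y')=-\phi_1(0,y',k)\phi_1(1,y',k)J_1(y',k)$ the paper proves. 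Where you genuinely diverge is the ``only if'' half, i.e.\ showing an embedded eigenvalue forces $\left(\f{u'}{\th}\right)'(y')=0$: the paper argues that $\pa_y(\th^{-1}\pa_y\Phi^H)\in L^2$ forces either $\left(\f{u'}{\th}\right)'(y')=0$ or $\Phi^H(y')=0$, and then kills the second case with the completed-square energy identity $0=\int\th^{-1}|\pa_y\Phi^H-\f{u'\Phi^H}{u-u(y')}|^2+k^2\th^{-1}|\Phi^H|^2$; you instead invoke the Frobenius structure at the regular singular point and exclude the $(y-y')\log|y-y'|$ branch by $H^2$ regularity. Your route is self-contained once the reduction-of-order analysis is done and avoids the energy identity; the paper's route avoids having to classify the local solution space and handles the endpoint cases $y'\in\{0,1\}$ uniformly (there $\Phi^H(y')=0$ is forced by the boundary condition, and the energy identity applies directly).

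One step in your necessity argument is compressed: excluding the log branch only shows that the eigenfunction has no $\phi^{(2)}$ component on either side of $y'$, not that $\alpha=0$. You must then separately observe that a nontrivial eigenfunction cannot be a pure multiple of $\phi$, because $C^1$ matching at $y'$ forces a single global constant $A$ and $\phi(0,y',k)=(u(0)-u(y'))\phi_1(0,y',k)\neq 0$ for $y'\in(0,1)$ kills it (at $y'\in\{0,1\}$ the compact support of $u''$ and $\th'$ makes $\left(\f{u'}{\th}\right)'$ vanish near the boundary, so no singular analysis is needed and $J_1\neq 0$ there settles the claim). This is a one-line fix, not a structural flaw.
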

\begin{proof}
Suppose $J_1(y',k)^2+J_2(y')^2=0$, then $\Big(\f{u'}{\th}\Big)'(y')=0$ and $y'\neq 0,1$. We have for $0\leq y< y'$,
\begin{align*}
\varphi_1(y,y')=\phi(0,y',k)\phi(y,y',k) \int_0^{y}\f{\th(z)}{\phi(z,y',k)^2}dz
\end{align*}
solves the homogeneous equation \eqref{eq: homRay1} with boundary condition $\varphi_1(0,y')=0$ and $\pa_y\varphi_1(0,y')=1$. Next, we show that $\varphi_1$ can naturally $C^2$ extend to $y'\leq y\leq 1$ under the assumption $\Big(\f{u'}{\th}\Big)'(y')=0$. A direct calculation gives for $0\leq y< y'$, 
\begin{align*}
\varphi_1(y,y')=&\phi(0,y',k)\phi(y,y',k) \int_0^{y}\f{\th(z)}{(u(z)-u(y'))^2}dz\\
&+\phi(0,y',k)\phi(y,y',k) \int_0^{y}\f{\th(z)}{(u(z)-u(y'))^2}\Big(\f{1}{\phi_1(z,y',k)^2}-1\Big)dz\\
=&\phi(0,y',k)\phi(y,y',k) \int_{u(0)}^{u(y)}\f{(\th\circ u^{-1})(\tilde{u})(u^{-1})'(\tilde{u})}{(\tilde{u}-u(y'))^2}d\tilde{u}\\
&+\phi(0,y',k)\phi(y,y',k) \int_0^{y}\f{\th(z)}{(u(z)-u(y'))^2}\Big(\f{1}{\phi_1(z,y',k)^2}-1\Big)dz\\
=&\phi(0,y',k)\phi(y,y',k) \int_{u(0)}^{u(y)}\f{(\th\circ u^{-1})(\tilde{u})(u^{-1})'(\tilde{u})-(\th\circ u^{-1})(u(y'))(u^{-1})'(u(y'))}{(\tilde{u}-u(y'))^2}d\tilde{u}\\
&-\f{\phi(0,y',k)\phi(y,y',k)(\th\circ u^{-1})(u(y'))(u^{-1})'(u(y'))}{\tilde{u}-u(y')}\bigg|_{u(0)}^{u(y)}\\
&+\phi(0,y',k)\phi(y,y',k) \int_0^{y}\f{\th(z)}{(u(z)-u(y'))^2}\Big(\f{1}{\phi_1(z,y',k)^2}-1\Big)dz=I_1(y)+I_2(y)+I_3(y).
\end{align*}
Note that the assumption $\Big(\f{u'}{\th}\Big)'(y')=0$ gives $\big((\th\circ u^{-1})(u^{-1})'\big)'(u(y'))=0$, which gives that the integral in $I_1$
\begin{align*}
&\int_{u(0)}^{u(y)}\f{[(\th\circ u^{-1})(u^{-1})'](\tilde{u})-[(\th\circ u^{-1})(u^{-1})'](u(y'))}{(\tilde{u}-u(y'))^2}d\tilde{u}\\
&=\int_{u(0)}^{u(y)}\f{[(\th\circ u^{-1})(u^{-1})'](\tilde{u})-[(\th\circ u^{-1})(u^{-1})'](u(y'))-\big((\th\circ u^{-1})(u^{-1})'\big)'(u(y'))(\tilde{u}-u(y'))}{(\tilde{u}-u(y'))^2}d\tilde{u}
\end{align*}
is well defined for all $y\in [0,1]$ and $I_j(y)\in C^2$, for $j=1,2,3$. It is easy to check that $I_1+I_2+I_3$ is also well defined for all $y\in [0,1]$ which also solves homogeneous equation \eqref{eq: homRay1} with $c=u(y')$. We also use $\varphi_1(y,y')$ to represent its extension. A direct calculation gives
\beno
\varphi_1(1,y')=I_1(1)+I_2(1)+I_3(1)=-\phi_1(0,y',k)\phi_1(1,y',k)J_1(y',k)=0.
\eeno
Thus $\varphi_1(y,y')$ is an eigenfunction associated with the embedded eigenvalue $u(y')$. 

\medskip

Suppose $u(y')$ is an embedded eigenvalue and $\Phi^{H}\in H^2$ is the corresponding eigenfunction. Then $\pa_y\big(\f{1}{\th}\pa_y\Phi^{H}\big)=\f{k^2}{\th}\Phi^{H}+\f{\big(\f{u'}{\th}\big)'\Phi^{H}(y)}{u(y)-u(y')}\in L^2$. Thus either $\Phi^{H}(y')=0$ or $\big(\f{u'}{\th}\big)'(y')=0$. If $\Phi^{H}(y')=0$, we have
\begin{align*}
0&=\int_0^1\f{1}{\th}(|\pa_y\Phi^{H}|^2+k^2|\Phi^{H}|^2)dy
+\int_0^1\f{\big(\f{u'}{\th}\big)'|\Phi^{H}(y)|^2}{u(y)-u(y')}dy\\
&=\int_0^1\f{1}{\th}(|\pa_y\Phi^{H}|^2+k^2|\Phi^{H}|^2)dy
-2\mathrm{Re}\,\int_0^1\f{\big(\f{u'}{\th}\big)\pa_y\Phi^{H}(y)\overline{\Phi^{H}(y)}}{u(y)-u(y')}dy
+\int_0^1\f{\big(\f{u'}{\th}\big)u'|\Phi^{H}(y)|^2}{(u(y)-u(y'))^2}dy\\
&=\int_0^1\f{1}{\th}\left|\pa_y\Phi^{H}-\f{u'\Phi^{H}(y)}{u(y)-u(y')}\right|^2+\f{k^2}{\th}|\Phi^{H}|^2dy,
\end{align*}
which gives $\Phi^{H}\equiv 0$. Thus $\big(\f{u'}{\th}\big)'(y')=0$ which means $J_2(y')=0$. Then by the argument in the first part, $\varphi_1(y, y')$ is a $C^2$ solution that solves the homogeneous equation \eqref{eq: homRay1} with $c=u(y')$. Then $\varphi_1(y, y')$ is an eigenfunction if and only if $\varphi_1(1,y')=0$, which gives $J_1(y',k)=0$. 
Thus we proved the lemma. 
\end{proof}
By following a similar idea, we have the following lemma. 
\begin{lemma}\label{eq:Wronskian}
Let $k\in \mathbb{Z}\setminus \{0\}$ and $c=u(y')+i\ep$ with $0<|\ep|\leq \ep_0$. Then $c$ is an eigenvalue of $\mathcal{\tilde{L}}_k$ if and only if 
\beno
\int_{0}^1\f{\th(z')}{\phi(z',y',\ep,k)^2}dz'=0
\eeno
\end{lemma}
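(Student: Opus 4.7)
The plan is to build two linearly independent solutions of the homogeneous distorted Rayleigh equation \eqref{eq: homRay1} with $c=u(y')+i\ep$, and then translate the Dirichlet eigenvalue condition $\Psi^H(0)=\Psi^H(1)=0$ into a condition on the Wronskian-type integral appearing in the statement. The fact that $\ep\neq 0$ is crucial: it guarantees $u-c$ is non-vanishing on $[0,1]$ and, together with the lower bound $|\phi_1(y,y',\ep,k)|\ge 1/2$ from \eqref{eq: est6}, makes $\phi(y,y',\ep,k)=(u(y)-u(y')-i\ep)\phi_1(y,y',\ep,k)$ non-vanishing and smooth on $[0,1]$, so $\th/\phi^2$ is integrable.

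I would first produce a second solution by reduction of order. By Proposition \ref{prop: homsol}, $\phi$ solves \eqref{eq: homRay1}. Writing $\tilde\phi=\phi\,g$ and substituting into the equation (after dividing by $(u-c)$, which is legitimate for $\ep\neq 0$), the coefficient of $g$ vanishes because $\phi$ solves the equation, and an elementary manipulation shows that the remaining equation is equivalent to
\beno
\pa_y\!\left(\f{\phi^2}{\th}\,g'\right)=0.
\eeno
Hence, after normalizing, $g(y)=\int_0^y \th(z)/\phi(z,y',\ep,k)^2\,dz$, giving the smooth solution
\beno
\tilde\phi(y)=\phi(y,y',\ep,k)\int_0^y\f{\th(z)}{\phi(z,y',\ep,k)^2}\,dz,
\eeno
which satisfies $\tilde\phi(0)=0$ and is linearly independent of $\phi$ (the Wronskian $\phi^2g'/\th=1$ is a nonzero constant).

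With $\{\phi,\tilde\phi\}$ a basis of solutions, any candidate eigenfunction has the form $\Psi^H=\alpha\phi+\beta\tilde\phi$. The condition $\Psi^H(0)=0$ reads $\alpha\,\phi(0,y',\ep,k)=0$; since $\phi(0,y',\ep,k)=(u(0)-u(y')-i\ep)\phi_1(0,y',\ep,k)\neq 0$, this forces $\alpha=0$, after which the condition $\Psi^H(1)=0$ becomes $\beta\,\tilde\phi(1)=0$ with $\beta\neq 0$. Because $\phi(1,y',\ep,k)\neq 0$ for the same reason, this is equivalent to
\beno
\int_0^1\f{\th(z')}{\phi(z',y',\ep,k)^2}\,dz'=0,
\eeno
which yields both directions of the claimed characterization.

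The only subtle step is confirming that any $H^2\cap H_0^1$ eigenfunction is indeed of the above form; this is immediate once we note that for $\ep\neq 0$ the distorted Rayleigh equation \eqref{eq: homRay1} is a linear ODE with smooth, non-degenerate coefficients on $[0,1]$, so the two-dimensional space of classical solutions exhausts $H^2$ solutions. The main (but mild) point to be careful about is tracking the non-vanishing of $\phi$ at the endpoints using $\ep\neq 0$ and \eqref{eq: est6}; everything else is a direct Wronskian-type computation.
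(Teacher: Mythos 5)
Your argument is correct and follows essentially the same route as the paper: the paper likewise constructs the reduction-of-order solution $\varphi(y,y',\ep,k)=\phi(0,y',\ep,k)\phi(y,y',\ep,k)\int_0^y\th(z')/\phi(z',y',\ep,k)^2\,dz'$, observes it is the (unique up to scaling) solution vanishing at $y=0$, and reduces the eigenvalue condition to $\varphi(1,y',\ep,k)=0$ using $\phi(0,y',\ep,k)\phi(1,y',\ep,k)\neq 0$ for $\ep\neq 0$. Your extra remarks on linear independence and on why an $H^2\cap H_0^1$ eigenfunction must lie in the classical solution space are fine and only make explicit what the paper leaves implicit.
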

\begin{proof}
The function 
\beno
\varphi(y, y', \ep, k)=\phi(0, y', \ep, k)\phi(y, y', \ep, k)\int_{0}^y\f{\th(z')}{\phi(z',y',\ep,k)^2}dz'\in H^2
\eeno
is a solution to the homogeneous equation \eqref{eq: homRay1} with $c=u(y')+i\ep$ with boundary value $\varphi(0, y', \ep, k)=0$ and $\pa_y\varphi(0, y', \ep, k)=1$. The integral is well-defined for all $y\in [0,1]$ by taking $\ep_0$ small enough. Thus $\varphi(y, y', \ep, k)$ is an eigenfunction associated with $c=u(y')+i\ep$ if and only if $\varphi(1, y', \ep, k)=0$. The lemma follows directly from the fact that $\phi(0, y', \ep, k)\phi(1, y', \ep, k)\neq 0$ for $\ep\neq 0$. 
\end{proof}
\begin{lemma}\label{eq: J_1^2+J_2^2}
Under the assumption that $\mathcal{\tilde{L}}_k$ has no embedded eigenvalue, we have 
\beno
C^{-1}(1+k^2\rho(y')^2)\leq J_1(y',k)^2+\pi^2J_2(y')^2\leq  C(1+k^2\rho(y')^2).
\eeno
\end{lemma}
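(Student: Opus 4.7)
The plan is to prove the upper and lower bounds separately, using very different techniques.

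\emph{Upper bound.} I would estimate the four pieces of $J_{1}(y',k)$ directly from Proposition~\ref{prop: homsol}. The two boundary terms are trivially $O(1)$. For the integral
\begin{align*}
I(y',k)=\int_{0}^{1}\frac{\theta(z')}{(u(z')-u(y'))^{2}}\Big(\phi_{1}(z',y',k)^{-2}-1\Big)\,dz',
\end{align*}
split the domain at $|z'-y'|=1/|k|$. On the inner region \eqref{eq: est5} gives $|\phi_{1}^{-2}-1|\lesssim k^{2}(z'-y')^{2}$, so the integrand is $O(k^{2})$ on a set of measure $O(1/|k|)$; on the outer region $\phi_{1}\geq 1$ and $\int_{|z'-y'|>1/|k|}(z'-y')^{-2}dz'\lesssim |k|$. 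Hence $|I(y',k)|\lesssim|k|$, contributing $O(|k|\rho)$ to $J_{1}$. For the principal value term, expand $(\theta/u')'(z)=(\theta/u')'(y')+O(|z-y'|)$: the remainder piece gives a uniformly bounded integral, and the leading piece yields a $\log\frac{u(1)-u(y')}{u(y')-u(0)}$ contribution of size $O(1+|\log\rho|)$, which becomes $O(1)$ after multiplication by $\rho$. Altogether $|J_{1}|\leq C(1+|k|\rho)$, and $|J_{2}|\leq C\rho\leq C$ is immediate from the second formula in \eqref{eq:j_1}, proving the upper bound.

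\emph{Lower bound — general strategy.} By Lemma~\ref{lem: embedded} combined with the no-embedded-eigenvalue assumption, $J_{1}(y',k)^{2}+J_{2}(y')^{2}>0$ pointwise. The map $y'\mapsto J_{1}(y',k)^{2}+\pi^{2}J_{2}(y')^{2}$ is continuous on $[0,1]$ using dominated convergence and the continuity of $\phi_{1}$ in $y'$ from Proposition~\ref{prop: homsol}. I then split according to the size of $|k|\rho(y')$. If $|k|\rho(y')\leq M$ for a large fixed $M$, then $1+k^{2}\rho^{2}\leq 1+M^{2}$, so it suffices to prove $J_{1}^{2}+\pi^{2}J_{2}^{2}\geq c$. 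For $|k|$ in a bounded range this is pure compactness; for $|k|$ unbounded, the constraint $|k|\rho\leq M$ forces $y'$ into an $O(M/|k|)$ neighborhood of $\{0,1\}$, on which the boundary terms $-(u(1)-u(y'))\theta(0)/u'(0)-(u(y')-u(0))\theta(1)/u'(1)$ are bounded away from zero and dominate, while the $O(\rho |k|)$ and $O(1+\rho|\log\rho|)$ contributions to $J_1$ are uniformly small once $M$ is fixed and $|k|$ is large.

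\emph{Lower bound — large $|k|\rho$.} For $|k|\rho(y')> M$ with $M$ large, I would establish the asymptotic
\begin{align*}
I(y',k)=-c(y')|k|+O(1),\qquad c(y')\geq c_{0}>0\text{ uniformly in }y'\in[0,1].
\end{align*}
By \eqref{eq: est2}, $\phi_{1}^{-2}$ is exponentially small on $\{|z'-y'|>A/|k|\}$ for any fixed large $A$; there the integrand reduces to $-\theta(z')/(u(z')-u(y'))^{2}+o(1)$, and the substitution $w=|k|(z'-y')$ isolates a leading linear $|k|$ behavior with coefficient $c(y')\to 2\theta(y')/u'(y')^{2}>0$ as $A\to\infty$. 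The transition zone $|z'-y'|\sim 1/|k|$ is absorbed in the $O(1)$ remainder using \eqref{eq: est5}. Consequently $J_{1}(y',k)=-\rho(y')c(y')|k|+O(1+\rho|\log\rho|)$, which for $M$ large gives $|J_{1}|\geq \tfrac{1}{2}c_{0}|k|\rho$ and hence $J_{1}^{2}\geq \tfrac{c_{0}^{2}}{4}k^{2}\rho^{2}\gtrsim 1+k^{2}\rho^{2}$. Combining the two regimes yields $J_{1}^{2}+\pi^{2}J_{2}^{2}\geq C^{-1}(1+k^{2}\rho^{2})$.

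The main obstacle is the large-$|k|$ asymptotic of $I(y',k)$: extracting the $-c(y')|k|$ leading term with a strictly positive, uniform coefficient. The delicate point is the transition scale $|z'-y'|\sim 1/|k|$, where neither \eqref{eq: est2} nor \eqref{eq: est5} is sharp; the control there is effectively a rescaled WKB analysis of $\phi_{1}$. Everything else — continuity, compactness, Plemelj-type identification of the $\pi J_{2}$ imaginary contribution — is comparatively routine once this quantitative leading-order asymptotic is in hand.
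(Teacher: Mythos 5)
Your upper bound is fine and matches the paper's. The lower bound, however, has a genuine gap in the intermediate regime, and it stems from never using the \emph{signs} of the terms in $J_1$. Take $|k|$ large and $|k|\rho(y')\leq M$ with $M$ the large constant you fixed for the other regime, say $\rho(y')\approx M/|k|$. Your own upper bound gives only $\rho(y')|I(y',k)|\lesssim \rho(y')|k|\leq M$, which is \emph{not} uniformly small — it is comparable to or larger than the $O(1)$ boundary term — so the claim that the boundary terms ``dominate'' fails, and without sign information $J_1$ could a priori vanish there. (Choosing $M$ small instead breaks the other regime, where you need $\rho|I|\gtrsim M$ to beat the $O(1)$ boundary contribution; no single choice of $M$ closes both cases on magnitude estimates alone.) The paper's proof closes exactly this hole by observing that the boundary term $-(u(1)-u(y'))\f{\th(0)}{u'(0)}-(u(y')-u(0))\f{\th(1)}{u'(1)}$ is bounded above by a strictly negative constant, and that the integral $\int_0^1\f{\th(z')}{(u(z')-u(y'))^2}\big(\phi_1^{-2}-1\big)dz'$ is also negative (since $\phi_1\geq 1$ by \eqref{eq: est1}) with magnitude between $C^{-1}|k|$ and $C|k|$; the two main contributions therefore reinforce rather than cancel, and $J_1^2\gtrsim 1+k^2\rho^2$ follows for all $y'$ at once when $|k|>M_0$.

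Relatedly, the sharp asymptotic $I(y',k)=-c(y')|k|+O(1)$ that you flag as the ``main obstacle'' is both unnecessary and not correctly set up: the outer region $|z'-y'|>A/|k|$ contributes only $O(|k|/A)$, which vanishes as $A\to\infty$, so the leading coefficient lives entirely in the transition zone and your identification $c(y')\to 2\th(y')/u'(y')^2$ from the outer integral does not hold. The paper needs only the one-sided quantitative bound $I\leq -C^{-1}|k|$, which follows in two lines by restricting to $|z'-y'|\leq 1/|k|$, where $\phi_1$ is bounded and $\phi_1-1\gtrsim k^2|z'-y'|^2$ (from \eqref{eq: est3}--\eqref{eq: est5}), so that $\f{1}{(z'-y')^2}\big(1-\phi_1^{-2}\big)\gtrsim k^2$ on a set of measure $1/|k|$. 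If you replace your asymptotic by this sign-definite lower bound and invoke the negativity of the boundary term, both regimes close and your decomposition becomes a correct (if slightly rearranged) version of the paper's argument.
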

\begin{proof}
For $|k|\leq M_0$, Lemma \ref{lem: embedded} implies 
\beno
J_1(y',k)^2+\pi^2J_2(y')^2\approx_{M_0} 1. 
\eeno
Let us now consider the case $|k|>M_0$ with $M_0$ large enough. By Proposition \ref{prop: homsol}, we have 
\begin{align*}
&\int_{0}^1\f{\th(z')}{(u(z')-u(y'))^2}\Big(\f{1}{\phi_1(z',y',k)^2}-1\Big)dz'\\
&\leq -C^{-1}\int_{|z'-y'|\leq \f{1}{|k|}}\f{1}{(z'-y')^2}\left|\int_{z'}^{y'}\pa_y\phi_1(z'', y', k)dz''\right|dz'\\
&\leq -C^{-1}\int_{|z'-y'|\leq \f{1}{|k|}}k^2dz'\leq -C^{-1}|k|.
\end{align*}
It is easy to check that 
\begin{align*}
&\left|\int_{0}^1\f{\th(z')}{(u(z')-u(y'))^2}\Big(\f{1}{\phi_1(z',y',k)^2}-1\Big)dz'\right|\\
&\leq C\int_0^1\f{1}{(z'-y')^2}\min\{k^2|z'-y'|^2,1\}dz'
\leq C|k|,
\end{align*}
and
\beno
\left|\mathcal{H}\big[\big((\th\circ u^{-1})(u^{-1})'\big)'\chi_{[u(0),u(1)]}\big](u(y'))\right|\leq C\|\big((\th\circ u^{-1})(u^{-1})'\big)'\|_{H^1}.
\eeno
Thus we have 
\begin{align*}
J_1(y', k)^2
&\geq \Big(-(u(1)-u(y'))\f{\th(0)}{u'(0)}-(u(y')-u(0))\f{\th(1)}{u'(1)}\Big)^2\\
&\quad+\rho(y')^2\left|\int_{0}^1\f{\th(z')}{(u(z')-u(y'))^2}\Big(\f{1}{\phi_1(z',y',k)^2}-1\Big)dz'\right|^2\\
&\quad -\Big(\rho(y')\mathcal{H}\big[\big((\th\circ u^{-1})(u^{-1})'\big)'\chi_{[u(0),u(1)]}\big](u(y'))\Big)^2\\
&\geq C^{-1}(1+k^2\rho(y')^2)
\end{align*}
and
\begin{align*}
J_1(y', k)^2
&\leq \Big(-(u(1)-u(y'))\f{\th(0)}{u'(0)}-(u(y')-u(0))\f{\th(1)}{u'(1)}\Big)^2\\
&\quad+\rho(y')^2\left|\int_{0}^1\f{\th(z')}{(u(z')-u(y'))^2}\Big(\f{1}{\phi_1(z',y',k)^2}-1\Big)dz'\right|^2\\
&\quad +\Big(\rho(y')\mathcal{H}\big[\big((\th\circ u^{-1})(u^{-1})'\big)'\chi_{[u(0),u(1)]}\big](u(y'))\Big)^2\\
&\leq C(1+k^2\rho(y')^2). 
\end{align*}
Thus we prove the lemma. 
\end{proof}
Now let us study the inhomogeneous equation \eqref{eq: inhomRay1}. By using the homogeneous solution, we rewrite it as 
\beno
\pa_y\left(\f{\phi^2}{\th}\pa_y\left(\f{\Psi^{I}}{\phi}\right)\right)=-\hat{\tilde{\om}}_{in}\phi_1.
\eeno
Thus,  for $c=u(y')+i\ep$ with $y'\in [0,1]$ and $0<|\ep|\leq \ep_0$, it holds that
\beq\label{eq: inhomsol}
\begin{aligned}
\Psi^{I}(y, y', \ep, k)
&=-\phi(y, y', \ep, k)\int_{0}^y\f{\th(z')\int_{y'}^{z'}\hat{\tilde{\om}}_{in}(z'',k)\phi_1(z'',y',\ep,k)dz''}{(u(z')-u(y')-i\ep)^2\phi_1(z',y',\ep,k)^2}dz'\\
&\quad+\f{\int_{0}^1\f{\th(z')\int_{y'}^{z'}\hat{\tilde{\om}}_{in}(z'',k)\phi_1(z'',y',\ep,k)dz''}{(u(z')-u(y')-i\ep)^2\phi_1(z',y',\ep,k)^2}dz'}{\int_{0}^1\f{\th(z')}{(u(z')-u(y')-i\ep)^2\phi_1(z',y',\ep,k)^2}dz'}\phi(y, y', \ep, k)\int_{0}^y\f{\th(z')}{\phi(z',y',\ep,k)^2}dz'\\
&=-\phi(y, y', \ep, k)\int_{1}^y\f{\th(z')\int_{y'}^{z'}\hat{\tilde{\om}}_{in}(z'',k)\phi_1(z'',y',\ep,k)dz''}{(u(z')-u(y')-i\ep)^2\phi_1(z',y',\ep,k)^2}dz'\\
&\quad+\f{\int_{0}^1\f{\th(z')\int_{y'}^{z'}\hat{\tilde{\om}}_{in}(z'',k)\phi_1(z'',y',\ep,k)dz''}{(u(z')-u(y')-i\ep)^2\phi_1(z',y',\ep,k)^2}dz'}{\int_{0}^1\f{\th(z')}{(u(z')-u(y')-i\ep)^2\phi_1(z',y',\ep,k)^2}dz'}\phi(y, y', \ep, k)\int_{1}^y\f{\th(z')}{\phi(z',y',\ep,k)^2}dz'.
\end{aligned}
\eeq
Note that Lemma \ref{eq:Wronskian} ensures that the above representation formula is well-defined by taking $\ep_0$ small enough. 
By the continuity of $\phi(y,y',\ep,k)$ in $\ep$, it holds that 
\begin{align*}
&\lim_{\ep\to 0}\phi(y, y', \ep, k)\int_{1}^y\f{\th(z')}{\phi(z',y',\ep,k)^2}dz'=\phi(y, y', k)\int_{1}^y\f{\th(z')}{\phi(z',y',k)^2}dz'\quad \text{for}\quad y'<y\leq 1\\
&\lim_{\ep\to 0}\phi(y, y', \ep, k)\int_{0}^y\f{\th(z')}{\phi(z',y',\ep,k)^2}dz'=\phi(y, y', k)\int_{0}^y\f{\th(z')}{\phi(z',y',k)^2}dz'\quad \text{for}\quad 0\leq y<y'\\
&\lim_{\ep\to 0}\phi(y, y', \ep, k)\int_{j}^y\f{\th(z')\int_{y'}^{z'}\hat{\tilde{\om}}_{in}(z'',k)\phi_1(z'',y',\ep,k)dz''}{(u(z')-u(y')-i\ep)^2\phi_1(z',y',\ep,k)^2}dz'\\
&\quad =\phi(y, y', k)\int_{j}^y\f{\th(z')\int_{y'}^{z'}\hat{\tilde{\om}}_{in}(z'',k)\phi_1(z'',y',k)dz''}{(u(z')-u(y'))^2\phi_1(z',y',k)^2}dz'\quad \text{for}\quad j=0,1,\ y\in [0,1].
\end{align*}
We have for $y'\in (0,1)$
\begin{align*}
&\int_{0}^1\f{\th(z')}{(u(z')-u(y')-i\ep)^2\phi_1(z',y',\ep,k)^2}dz'\\
&=\int_{0}^1\f{\th(z')}{(u(z')-u(y')-i\ep)^2}dz'+\int_{0}^1\f{\th(z')}{(u(z')-u(y')-i\ep)^2}\Big(\f{1}{\phi_1(z',y',\ep,k)^2}-1\Big)dz'\\
&=\int_{u(0)}^{u(1)}\f{(\th\circ u^{-1})(u)(u^{-1})'(u)}{(u-u(y')-i\ep)^2}du
+\int_{0}^1\f{\th(z')}{(u(z')-u(y')-i\ep)^2}\Big(\f{1}{\phi_1(z',y',\ep,k)^2}-1\Big)dz'\\
&=\f{-(\th\circ u^{-1})(u)(u^{-1})'(u)}{(u-u(y')-i\ep)}\bigg|_{u(0)}^{u(1)}+\int_{u(0)}^{u(1)}\f{\big((\th\circ u^{-1})(u^{-1})\big)'(u)}{u-u(y')-i\ep}du\\
&\quad +\int_{0}^1\f{\th(z')}{(u(z')-u(y')-i\ep)^2}\Big(\f{1}{\phi_1(z',y',\ep,k)^2}-1\Big)dz'\\
&\to \f{\th(0)(u^{-1})'(u(0))}{(u(0)-u(y'))}-\f{\th(1)(u^{-1})'(u(1))}{(u(1)-u(y'))}\\
&\quad -\mathcal{H}\big[\big((\th\circ u^{-1})(u^{-1})'\big)'\chi_{[u(0),u(1)]}\big](u(y'))\pm i\pi \big((\th\circ u^{-1})(u^{-1})\big)'(u(y'))\\
&\quad +\int_{0}^1\f{\th(z')}{(u(z')-u(y'))^2}\Big(\f{1}{\phi_1(z',y',k)^2}-1\Big)dz'
\quad \text{as}\quad \ep\to 0^{\pm},
\end{align*}
where $\mathcal{H}[f](u)=P.V.\int\f{f(u')}{u-u'}du'$ is the Hilbert transform and 
$\chi_{[u(0),u(1)]}(u)=\left\{
\begin{aligned}&1,\ u\in [u(0),u(1)]\\
&0,\ \text{else},
\end{aligned}\right.$ is the characteristic function. 
We also have for $y'\in (0,1)$,
\begin{align*}
&\int_{0}^1\f{\th(z')\int_{y'}^{z'}\hat{\tilde{\om}}_{in}(z'',k)\phi_1(z'',y',\ep,k)dz''}{(u(z')-u(y')-i\ep)^2\phi_1(z',y',\ep,k)^2}dz'\\
&=\int_{0}^1\f{\th(z')\int_{y'}^{z'}\hat{\tilde{\om}}_{in}(z'',k)dz''}{(u(z')-u(y')-i\ep)^2}dz'
+\int_{0}^1\int_{y'}^{z'}\f{\th(z')\hat{\tilde{\om}}_{in}(z'',k)}{(u(z')-u(y')-i\ep)^2}\Big(\f{\phi_1(z'',y',\ep,k)}{\phi_1(z',y',\ep,k)^2}-1\Big)dz''dz'\\
&=\int_{u(0)}^{u(1)}\f{\int_{u(y')}^{\tilde{u}}\hat{\tilde{\om}}_{in}(u^{-1}(\tilde{\tilde{u}}),k)(u^{-1})'(\tilde{\tilde{u}})d\tilde{\tilde{u}}}{(\tilde{u}-u(y')-i\ep)^2}(\th\circ u^{-1})(\tilde{u})(u^{-1})'(\tilde{u})d\tilde{u}\\
&\quad +\int_{0}^1\int_{y'}^{z'}\f{\th(z')\hat{\tilde{\om}}_{in}(z'',k)}{(u(z')-u(y')-i\ep)^2}\Big(\f{\phi_1(z'',y',\ep,k)}{\phi_1(z',y',\ep,k)^2}-1\Big)dz''dz'\\
&=-\f{(\th\circ u^{-1})(\tilde{u})\int_{u(y')}^{\tilde{u}}\hat{\tilde{\om}}_{in}(u^{-1}(\tilde{\tilde{u}}),k)(u^{-1})'(\tilde{\tilde{u}})d\tilde{\tilde{u}}}{(\tilde{u}-u(y')-i\ep)}(u^{-1})'(\tilde{u})\bigg|_{u(0)}^{u(1)}\\
&\quad+\int_{u(0)}^{u(1)}\f{(\th\circ u^{-1})(\tilde{u})\hat{\tilde{\om}}_{in}(u^{-1}({\tilde{u}}),k)\big((u^{-1})'({\tilde{u}})\big)^2}{(\tilde{u}-u(y')-i\ep)}d\tilde{u}\\
&\quad+\int_{u(0)}^{u(1)}\f{\int_{u(y')}^{\tilde{u}}\hat{\tilde{\om}}_{in}(u^{-1}(\tilde{\tilde{u}}),k)(u^{-1})'(\tilde{\tilde{u}})d\tilde{\tilde{u}}}{(\tilde{u}-u(y')-i\ep)}\Big((\th\circ u^{-1})(\tilde{u})(u^{-1})'\Big)'(\tilde{u})d\tilde{u}\\
&\quad +\int_{0}^1\int_{y'}^{z'}\f{\th(z')\hat{\tilde{\om}}_{in}(z'',k)}{(u(z')-u(y')-i\ep)^2}\Big(\f{\phi_1(z'',y',\ep,k)}{\phi_1(z',y',\ep,k)^2}-1\Big)dz''dz'\\
&\xrightarrow{\ep\to 0\pm} -\f{(\th\circ u^{-1})(\tilde{u})\int_{u(y')}^{\tilde{u}}\hat{\tilde{\om}}_{in}(u^{-1}(\tilde{\tilde{u}}),k)(u^{-1})'(\tilde{\tilde{u}})d\tilde{\tilde{u}}}{(\tilde{u}-u(y'))}(u^{-1})'(\tilde{u})\bigg|_{u(0)}^{u(1)}\\
&\quad\quad\quad -\mathcal{H}\big[(\th\circ u^{-1})\hat{\tilde{\om}}_{in}(u^{-1}(\cdot),k)\big((u^{-1})'\big)^2\chi_{[u(0),u(1)]}\big](u(y'))
\pm i\pi\f{\hat{\tilde{\om}}_{in}(y',k)\th(y')}{(u'(y'))^2}\\
&\quad\quad\quad+\int_{u(0)}^{u(1)}\f{\int_{u(y')}^{\tilde{u}}\hat{\tilde{\om}}_{in}(u^{-1}(\tilde{\tilde{u}}),k)(u^{-1})'(\tilde{\tilde{u}})d\tilde{\tilde{u}}}{(\tilde{u}-u(y'))}\Big((\th\circ u^{-1})(\tilde{u})(u^{-1})'\Big)'(\tilde{u})d\tilde{u}\\
&\quad\quad\quad +\int_{0}^1\int_{y'}^{z'}\f{\th(z')\hat{\tilde{\om}}_{in}(z'',k)}{(u(z')-u(y'))^2}\Big(\f{\phi_1(z'',y',k)}{\phi_1(z',y',k)^2}-1\Big)dz''dz'\\
&\quad\quad=P.V.\int_{0}^1\f{\th(z')\int_{y'}^{z'}\hat{\tilde{\om}}_{in}(z'',k)dz''}{(u(z')-u(y'))^2}dz'\\
&\quad\quad\quad +\int_{0}^1\int_{y'}^{z'}\f{\th(z')\hat{\tilde{\om}}_{in}(z'',k)}{(u(z')-u(y'))^2}\Big(\f{\phi_1(z'',y',k)}{\phi_1(z',y',k)^2}-1\Big)dz''dz'
\pm i\pi\f{\th(y')\hat{\tilde{\om}}_{in}(y',k)}{(u'(y'))^2}\\
&\quad\quad\eqdef \Pi_1[\hat{\tilde{\om}}_{in}]\pm i\pi \Pi_2[\hat{\tilde{\om}}_{in}].
\end{align*}
\begin{lemma}\label{lem: L^2 est}
It holds that 
\beno
\|\Pi_1[\hat{\tilde{\om}}_{in}]\|_{L^2}\leq C\|\hat{\tilde{\om}}_{in}\|_{L^2},
\eeno
where $C>0$ is a constant independent of $k$. 
\end{lemma}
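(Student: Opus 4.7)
Write $f = \hat{\tilde{\om}}_{in}$ and $F(z',y') = \int_{y'}^{z'} f(z'')\,dz''$, and split
\[
\Pi_1[f](y',k) = T_1(y') + T_2(y',k),
\]
where $T_1$ is the first (P.V., $\phi_1$-free) integral in the defining formula and $T_2$ is the $\phi_1$-correction. I will establish $\|T_1\|_{L^2} + \|T_2\|_{L^2} \lesssim \|f\|_{L^2}$ with constants independent of $k$.

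\textbf{Step 1 (the main term $T_1$).} The idea is to integrate by parts in $z'$ using
\[
\f{1}{(u(z')-u(y'))^2} \;=\; \pa_{z'}\!\left(\f{-1}{u'(z')(u(z')-u(y'))}\right) \;-\; \f{u''(z')}{(u'(z'))^2\,(u(z')-u(y'))},
\]
which trades the non-integrable kernel $(u(z')-u(y'))^{-2}$ for the Hilbert-type kernel $(u(z')-u(y'))^{-1}$ at the cost of boundary terms at $z'=0,1$ and a smoother interior term. Using $\pa_{z'}F = f$, the main piece becomes the principal-value integral of $\theta f/u'$ against $(u(z')-u(y'))^{-1}$; after the change of variables $w=u(z')$, $v=u(y')$, this is a cutoff of the classical Hilbert transform of a bounded multiple of $f\circ u^{-1}$, so it is $L^2$-bounded uniformly (in fact independently of $k$). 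The boundary terms $F(j,y')/(u(j)-u(y'))$ at $j=0,1$ are handled by Hardy's inequality, since $F(0,y') = -\int_0^{y'} f$ vanishes at $y'=0$ and $F(1,y') = \int_{y'}^1 f$ vanishes at $y'=1$, giving $L^2(dy')$ bounds by $\|f\|_{L^2}$ because $u$ is bilipschitz. The remaining interior contribution $F(z',y')/(u(z')-u(y'))$ against a smooth weight reduces, via $F(z',y') = F_1(z')-F_1(y')$ with $F_1(z)=\int_0^z f$, to a Hilbert transform plus a bounded multiplication operator.

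\textbf{Step 2 (the correction $T_2$).} The key ingredients are the estimates \eqref{eq: est1}--\eqref{eq: est5} from Proposition \ref{prop: homsol}. In particular, for $z''$ between $y'$ and $z'$, the monotonicity $1\leq \phi_1(z'',y',k)\leq \phi_1(z',y',k)$ combined with \eqref{eq: est5} and \eqref{eq: est2} gives
\[
\left|\f{\phi_1(z'',y',k)}{\phi_1(z',y',k)^2} - 1\right| \;\lesssim\; \min\!\big\{\,k^2(z'-y')^2,\; 1\,\big\}, \qquad \f{1}{\phi_1(z',y',k)} \lesssim e^{-c|k||z'-y'|}.
\]
I then perform the same integration-by-parts reduction in $z'$ as in Step~1, now with the smooth factor $\theta(z')/\phi_1(z',y',k)^2$ absorbed into the weight. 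In the local regime $|z'-y'|\lesssim 1/|k|$ the extra $k^2(z'-y')^2$ factor cancels the double singularity of $(u(z')-u(y'))^{-2}$, leaving an integrable kernel; in the global regime $|z'-y'|\gtrsim 1/|k|$ the exponential decay of $\phi_1(z',y',k)^{-1}$ provides integrability. The resulting pieces are estimated exactly as in Step~1 (Hilbert transform boundedness, Hardy, Schur), all uniformly in $k$.

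\textbf{Main obstacle.} The central difficulty is obtaining the uniform-in-$k$ control in Step 2: a naive pointwise-in-$y'$ estimate loses a factor of $|k|^{1/2}$, because in the transition regime $|z'-y'|\sim 1/|k|$ the bounds on the local and global kernels meet at a scale where absolute-value estimates are no longer summable. The resolution is to exploit the $L^2$-cancellation inherent in the Hilbert kernel $(u(z')-u(y'))^{-1}$ produced by the integration by parts, rather than bounding the integrand pointwise; this is precisely why the sharp estimates on $\phi_1$ from Proposition \ref{prop: homsol} --- both the quadratic vanishing near $z'=y'$ and the exponential decay of $\phi_1^{-1}$ away from it --- are needed.
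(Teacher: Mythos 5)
Your proposal follows essentially the same route as the paper's proof: the paper bounds the smooth pieces of $\Pi_1$ (the boundary terms, the Hilbert transform, and the term carrying $\big((\th\circ u^{-1})(u^{-1})'\big)'$) directly, and splits the $\phi_1$-correction at the scale $|u(z')-u(y')|\sim 2/|k|$ into three pieces $I_1,I_2,I_3$, closed with the quadratic vanishing and exponential decay from Proposition \ref{prop: homsol} together with the Hardy--Littlewood maximal function and the maximal (truncated) Hilbert transform. The one point I would correct is your ``main obstacle'' paragraph: the near-diagonal piece needs no cancellation at all --- the paper estimates it with absolute values, since the kernel is $O(|k|/|z'-y'|)$ and the inner integral is $|z'-y'|$ times an average of $|f|$, so the $z'$-integration over $|z'-y'|\le 2/|k|$ yields $CMf(y')$ --- whereas cancellation (via the maximal Hilbert transform, uniformly in the $k$-dependent truncation) is needed only for the non-decaying ``$-1$'' tail in the far region, which your appeal to ``exponential decay of $\phi_1^{-1}$'' does not by itself cover.
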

\begin{proof}
By the $L^2$ estimate of the Hilbert transform and the maximum function, we have 
\beno
\left\|-\mathcal{H}\big[(\th\circ u^{-1})\hat{\tilde{\om}}_{in}(u^{-1}(\cdot),k)\big((u^{-1})'\big)^2\chi_{[u(0),u(1)]}\big](u(y'))\right\|_{L^2}\leq C\|\hat{\tilde{\om}}_{in}\|_{L^2},\\
\left\|-\f{(\th\circ u^{-1})(\tilde{u})\int_{u(y')}^{\tilde{u}}\hat{\tilde{\om}}_{in}(u^{-1}(\tilde{\tilde{u}}),k)(u^{-1})'(\tilde{\tilde{u}})d\tilde{\tilde{u}}}{(\tilde{u}-u(y'))}(u^{-1})'(\tilde{u})\bigg|_{u(0)}^{u(1)}\right\|_{L^2}\leq C\|\hat{\tilde{\om}}_{in}\|_{L^2},
\eeno
and
\beno
\left\|\int_{u(0)}^{u(1)}\f{\int_{u(y')}^{\tilde{u}}\hat{\tilde{\om}}_{in}(u^{-1}(\tilde{\tilde{u}}),k)(u^{-1})'(\tilde{\tilde{u}})d\tilde{\tilde{u}}}{(\tilde{u}-u(y'))}\Big((\th\circ u^{-1})(\tilde{u})(u^{-1})'\Big)'(\tilde{u})d\tilde{u}\right\|_{L^2}\leq  C\|\hat{\tilde{\om}}_{in}\|_{L^2}.
\eeno
We write
\begin{align*}
&\int_{0}^1\int_{y'}^{z'}\f{\th(z')\hat{\tilde{\om}}_{in}(z'',k)}{(u(z')-u(y'))^2}\Big(\f{\phi_1(z'',y',k)}{\phi_1(z',y',k)^2}-1\Big)dz''dz'\\
&=\int_{|u(z')-u(y')|\leq \f{2}{|k|}}\int_{y'}^{z'}\f{\th(z')\hat{\tilde{\om}}_{in}(z'',k)}{(u(z')-u(y'))^2}\Big(\f{\phi_1(z'',y',k)}{\phi_1(z',y',k)^2}-1)\Big)dz''dz'\\
&\quad-\int_{|u(z')-u(y')|\geq \f{2}{|k|}}\f{\th(z')\int_{y'}^{z'}\hat{\tilde{\om}}_{in}(z'',k)dz''}{(u(z')-u(y'))^2}dz'\\
&\quad+\int_{|u(z')-u(y')|\geq \f{2}{|k|}}\int_{y'}^{z'}\f{\th(z')\hat{\tilde{\om}}_{in}(z'',k)}{(u(z')-u(y'))^2}\f{\phi_1(z'',y',k)}{\phi_1(z',y',k)^2}dz''dz'\\
&=I_1+I_2+I_3.
\end{align*}
By Proposition \ref{prop: homsol}, we have
\begin{align*}
&\|I_1\|_{L^2}\leq C\left\|\int_{|u(z')-u(y')|\leq \f{2}{|k|}}\f{|k|}{|y'-z'|}\int_{y'}^{z'}|\hat{\tilde{\om}}_{in}(z'',k)|dz''dz'\right\|_{L^2}\leq C\|\hat{\tilde{\om}}_{in}\|_{L^2},\\
&\|I_3\|_{L^2}\leq C\left\|\int_{|u(z')-u(y')|\geq \f{2}{|k|}}\f{|k|}{|y'-z'|}\int_{y'}^{z'}|\hat{\tilde{\om}}_{in}(z'',k)|dz''e^{-C^{-1}|k||y'-z'|}dz'\right\|_{L^2}\leq C\|\hat{\tilde{\om}}_{in}\|_{L^2}.
\end{align*}
We have 
\begin{align*}
I_2&=-\chi_{\{|u(y')-u(0)|\geq \f{2}{|k|}\}}(u(y'))\int_{u(0)}^{u(y')-\f{2}{|k|}}\f{(\th\circ u^{-1})(u)(u^{-1})'(u)\int_{u(y')}^{u}\hat{\tilde{\om}}_{in}(u^{-1}(\tilde{u}),k)(u^{-1})'d\tilde{u}}{(u-u(y'))^2}du\\
&\quad-\chi_{\{|u(y')-u(1)|\geq \f{2}{|k|}\}}(u(y'))\int_{u(y')+\f{2}{|k|}}^{u(1)}\f{(\th\circ u^{-1})(u)(u^{-1})'(u)\int_{u(y')}^{u}\hat{\tilde{\om}}_{in}(u^{-1}(\tilde{u}),k)(u^{-1})'d\tilde{u}}{(u-u(y'))^2}du\\
&=\chi_{\{|u(y')-u(0)|\geq \f{2}{|k|}\}}(u(y'))\f{(\th\circ u^{-1})(u)(u^{-1})'(u)\int_{u(y')}^{u}\hat{\tilde{\om}}_{in}(u^{-1}(\tilde{u}),k)(u^{-1})'d\tilde{u}}{(u-u(y'))}\bigg|_{u(0)}^{u(y')-\f{2}{|k|}}\\
&\quad+\chi_{\{|u(y')-u(1)|\geq \f{2}{|k|}\}}(u(y'))\f{(\th\circ u^{-1})(u)(u^{-1})'(u)\int_{u(y')}^{u}\hat{\tilde{\om}}_{in}(u^{-1}(\tilde{u}),k)(u^{-1})'d\tilde{u}}{(u-u(y'))}\bigg|_{u(y')+\f{2}{|k|}}^{u(1)}\\
&\quad-\int_{|u-u(z')|\geq \f{2}{|k|}}\f{\Big((\th\circ u^{-1})(u)(u^{-1})'(u)\Big)'\int_{u(y')}^{u}\hat{\tilde{\om}}_{in}(u^{-1}(\tilde{u}),k)(u^{-1})'d\tilde{u}}{u-u(y')}du\\
&\quad-\int_{|u-u(z')|\geq \f{2}{|k|}}\f{(\th\circ u^{-1})(u)((u^{-1})'(u))^2\hat{\tilde{\om}}_{in}(u^{-1}(u),k)}{u-u(y')}du.
\end{align*}
By the $L^2$ estimate of the maximal Hilbert transform, and the maximum function, we have 
\beno
\|I_2\|_{L^2(0,1)}\leq C\|\hat{\tilde{\om}}_{in}\|_{L^2}. 
\eeno
Combining the above estimates, we prove the lemma. 
\end{proof}
We conclude the above calculations by the following proposition. 
\begin{proposition}\label{prop: limit}
It holds that $\lim\limits_{\ep\to 0^{\pm}}\Psi^{I}(y, y', \ep, k)=\Psi_{\pm}^{I}(y, y', k)$.
where 
\beno
\Psi_{\pm}^{I}(y, y', k)=\left\{
\begin{aligned}
&-\phi(y, y', k)\int_{0}^y\f{\th(z')\int_{y'}^{z'}\hat{\tilde{\om}}_{in}(z'',k)\phi_1(z'',y',k)dz''}{(u(z')-u(y'))^2\phi_1(z',y',k)^2}dz'\\
&\quad +\f{\rho(y')(\Pi_1[\hat{\tilde{\om}}_{in}]\pm i\pi \Pi_2[\hat{\tilde{\om}}_{in}])}{J_1(y',k)\pm i\pi J_2(y') }\phi(y, y', k)\int_{0}^y\f{\th(z')}{\phi(z',y',k)^2}dz',\quad \text{for}\quad y<y'\\
&-\phi(y, y', k)\int_{1}^y\f{\th(z')\int_{y'}^{z'}\hat{\tilde{\om}}_{in}(z'',k)\phi_1(z'',y',k)dz''}{(u(z')-u(y'))^2\phi_1(z',y',k)^2}dz'\\
&\quad +\f{\rho(y')(\Pi_1[\hat{\tilde{\om}}_{in}]\pm i\pi \Pi_2[\hat{\tilde{\om}}_{in}])}{J_1(y',k)\pm i\pi J_2(y') }\phi(y, y', k)\int_{1}^y\f{\th(z')}{\phi(z',y',k)^2}dz'\quad \text{for}\quad y'<y.
\end{aligned}\right.
\eeno
\end{proposition}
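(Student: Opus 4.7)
The plan is to pass to the limit $\varepsilon \to 0^{\pm}$ in the explicit representation \eqref{eq: inhomsol} term by term. The prefactors $\phi(y,y',\varepsilon,k)$ converge pointwise to $\phi(y,y',k)$ by the continuity statement in Proposition \ref{prop: homsol} (estimate \eqref{eq: est6}), and by dominated convergence the inner integrals $\int_j^y \frac{\theta(z')}{\phi(z',y',\varepsilon,k)^2}\,dz'$ with $j\in\{0,1\}$ pass to their obvious limits, since away from $z'=y'$ there is no singularity and $\phi_1$ stays uniformly bounded away from zero for $\varepsilon_0$ small. Similarly, the ``piece'' $\int_j^y$ of the first big integral (with $j=0$ if $y<y'$ and $j=1$ if $y>y'$) is non-singular at $z'=y'$ and converges by dominated convergence.

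The heart of the argument is identifying the $\varepsilon\to 0^{\pm}$ limits of the two Cauchy-type integrals
\[
A(\varepsilon)=\int_0^1 \frac{\theta(z')}{(u(z')-u(y')-i\varepsilon)^2\,\phi_1(z',y',\varepsilon,k)^2}\,dz',
\]
\[
B(\varepsilon)=\int_0^1\frac{\theta(z')\int_{y'}^{z'}\hat{\tilde\omega}_{in}(z'',k)\phi_1(z'',y',\varepsilon,k)\,dz''}{(u(z')-u(y')-i\varepsilon)^2\,\phi_1(z',y',\varepsilon,k)^2}\,dz'.
\]
For $A(\varepsilon)$ I would split $\phi_1^{-2}=1+(\phi_1^{-2}-1)$; the ``correction'' piece has an integrand of size $O(k^2)\cdot\phi_1^{-2}$ near $z'=y'$ thanks to \eqref{eq: est5}, hence is uniformly integrable and converges by dominated convergence to the corresponding integral with $\varepsilon=0$. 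For the ``principal'' piece I change variable $u=u(z')$ and integrate by parts in $u$ (pushing the $(u-u(y')-i\varepsilon)^{-2}$ onto $(u-u(y')-i\varepsilon)^{-1}$), producing boundary terms plus a single-pole Cauchy integral; the Plemelj--Sokhotski formula then yields the principal value $\mathcal{H}[((\theta\circ u^{-1})(u^{-1})')']\bigl(u(y')\bigr)$ plus the jump $\mp i\pi\,((\theta\circ u^{-1})(u^{-1})')'(u(y'))$, which assembles to $\rho(y')^{-1}\bigl(J_1(y',k)\pm i\pi J_2(y')\bigr)$ after using the definitions \eqref{eq:j_1}. The same strategy, with $\int_{y'}^{z'}\hat{\tilde\omega}_{in}\phi_1\,dz''$ playing the role of a smooth multiplier that vanishes at $z'=y'$, gives $B(\varepsilon)\to \Pi_1[\hat{\tilde\omega}_{in}]\pm i\pi\,\Pi_2[\hat{\tilde\omega}_{in}]$; here one first replaces $\phi_1(z'',y',\varepsilon,k)$ by $1+\bigl(\phi_1-1\bigr)$ so that the leading Plemelj--Sokhotski computation reduces to the one already written out in the excerpt above the statement.

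To legitimately quotient $B(\varepsilon)/A(\varepsilon)$ in the limit, I invoke the no-embedded-eigenvalue hypothesis via Lemma \ref{eq: J_1^2+J_2^2}: it gives $|J_1(y',k)\pm i\pi J_2(y')|\geq C^{-1}\bigl(1+|k|\rho(y')\bigr)>0$, so the denominator $A(\varepsilon)=\rho(y')^{-1}(J_1\pm i\pi J_2)+o(1)$ is bounded away from zero uniformly for small $|\varepsilon|$, and the quotient converges. Collecting the pieces produces exactly the formula for $\Psi^I_{\pm}(y,y',k)$ stated in the proposition.

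The main obstacle is controlling the $\phi_1$-corrections simultaneously with the Plemelj--Sokhotski limit near $z'=y'$: the naive integrand $\theta(z')/(u(z')-u(y')-i\varepsilon)^2$ is not absolutely integrable, so the integration-by-parts must be performed \emph{before} letting $\varepsilon\to 0$, and one must verify uniformly in $\varepsilon$ that the integrated-by-parts kernel (a single-pole Cauchy kernel against a $C^1$ density in $u$) is compatible with the $\phi_1^{-2}-1$ perturbation; estimate \eqref{eq: est5} provides exactly the $O(|k|^2|z'-y'|^2)$ decay needed to kill one factor of the singularity and close the dominated convergence. The boundary terms generated by the integration by parts are precisely those producing the first two terms in the definition of $J_1(y',k)$.
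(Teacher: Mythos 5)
Your proposal is correct and follows essentially the same route as the paper: the paper's proof consists precisely of the computation displayed immediately before the statement — splitting $\phi_1^{-2}=1+(\phi_1^{-2}-1)$ (resp.\ $\phi_1(z'')\phi_1(z')^{-2}=1+(\phi_1(z'')\phi_1(z')^{-2}-1)$), changing variables to $u=u(z')$, integrating by parts to reduce the double pole to a single one, and invoking Plemelj--Sokhotski to identify the limits of the denominator and numerator with $\rho(y')^{-1}(J_1\pm i\pi J_2)$ and $\Pi_1\pm i\pi\Pi_2$ — together with the continuity of $\phi(y,y',\ep,k)$ in $\ep$ and the non-vanishing of $J_1\pm i\pi J_2$ guaranteed by the no-embedded-eigenvalue assumption (Lemmas \ref{lem: embedded} and \ref{eq: J_1^2+J_2^2}). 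Your handling of the quotient and of the $\phi_1$-corrections via \eqref{eq: est5} matches the paper's argument.
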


We have the following uniform estimate. 
\begin{proposition}\label{prop: uniform bound}
For any fixed $k\in \mathbb{Z}\setminus\{0\}$, suppose that $\mathcal{\tilde{L}}_k$ has no eigenvalue or embedded eigenvalue. Then for all $c\notin \mathrm{Ran}\, u$, let $\Psi^{I}$ solve \eqref{eq: inhomRay1} with boundary condition $\Psi^{I}(0)=\Psi^{I}(1)=0$. Then it holds that 
\ben\label{eq: uniform est}
\|\Psi^{I}\|_{H^1}\leq C\|\hat{\tilde{\om}}_{in}\|_{H^1},
\een
where $C$ is a constant independent of $c$. 
\end{proposition}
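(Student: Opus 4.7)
I would separate the complex plane according to the distance of $c$ from $\mathrm{Ran}\,u$ and treat the far-field and near-field regimes by different methods.

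\textbf{Far-field regime.} When $\mathrm{dist}(c,\mathrm{Ran}\,u)\geq\delta_0$ for a fixed $\delta_0>0$, multiplication by $1/(u-c)$ is uniformly bounded on $L^2(0,1)$. Dividing \eqref{eq: inhomRay1} by $u-c$, testing against $\overline{\Psi^I}$, integrating by parts with the Dirichlet boundary condition, and taking real and imaginary parts yields the identity
\[\int_0^1\f{|\pa_y\Psi^I|^2+k^2|\Psi^I|^2}{\th}\,dy+\int_0^1\f{(u'/\th)'}{u-c}|\Psi^I|^2\,dy=\int_0^1\f{\hat{\tilde\om}_{in}\,\overline{\Psi^I}}{u-c}\,dy.\]
Combined with the absence of eigenvalues of $\tilde{\mathcal L}_k$, a standard Fredholm alternative argument rules out the zero mode and gives a uniform $H^1$ bound on $\{\mathrm{dist}(c,\mathrm{Ran}\,u)\geq\delta_0\}$.

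\textbf{Near-field regime.} For $c=u(y')+i\epsilon$ with $y'\in[0,1]$ and $0<|\epsilon|\leq\epsilon_0$ small I would work from the explicit representation \eqref{eq: inhomsol}, decomposing $\Psi^I=\Psi^I_{\mathrm{loc}}+\Psi^I_{\mathrm{bdy}}$ with $\Psi^I_{\mathrm{loc}}$ the first summand of \eqref{eq: inhomsol} and $\Psi^I_{\mathrm{bdy}}$ the correction carrying the quotient factor. For $\Psi^I_{\mathrm{bdy}}$, Lemma \ref{eq: J_1^2+J_2^2} gives the limiting lower bound $|J_1\pm i\pi J_2|\geq C^{-1}(1+|k|\rho(y'))$, which by continuity in $\epsilon$ upgrades to a uniform lower bound on the $\epsilon$-denominator for small $\epsilon$. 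The numerator is controlled in $L^2_{y'}$ by Lemma \ref{lem: L^2 est} for $\Pi_1$ and by a direct pointwise bound for $\Pi_2$. The remaining homogeneous factor $\phi(y,y',k)\int\th/\phi^2\,dz'$ is pointwise bounded using Proposition \ref{prop: homsol}: the growth $\phi_1\sim e^{C|k||y-y'|}$ combined with the weight $1/\phi^2$ produces an effective convolution kernel of the form $C\rho(y')e^{-C^{-1}|k||y-y'|}$, which delivers the $L^2_y$ bound for $\Psi^I_{\mathrm{bdy}}$.

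\textbf{Local piece and main obstacle.} For $\Psi^I_{\mathrm{loc}}$, the inner integral $\int_{y'}^{z'}\hat{\tilde\om}_{in}(z'',k)\phi_1(z'',y',\epsilon,k)\,dz''$ vanishes linearly at $z'=y'$, taming the $1/(u(z')-u(y')-i\epsilon)^2$ singularity. Combined with the suppression $\phi_1(z'',y',k)/\phi_1(z',y',k)^2\lesssim e^{-C^{-1}|k||z'-z''|}$ from \eqref{eq: est2}, one derives the pointwise convolution bound
\[|\Psi^I_{\mathrm{loc}}(y,y',\epsilon,k)|\lesssim\int_0^1 e^{-C^{-1}|k||y-z|}|\hat{\tilde\om}_{in}(z,k)|\,dz,\]
and hence $\|\Psi^I_{\mathrm{loc}}\|_{L^2}\lesssim|k|^{-1}\|\hat{\tilde\om}_{in}\|_{L^2}$ by Young's inequality. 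The main obstacle is the derivative estimate: differentiating in $y$ brings out the factor $\pa_y\phi/\phi=O(|k|)$ by \eqref{eq: est3}, a priori costing one power of $|k|$ and threatening the uniform $H^1$ bound. Recovering the correct estimate relies on the smoothness of $\Psi^I$ across $y=y'$ for $\epsilon\neq 0$ (a cancellation between $\Psi^I_{\mathrm{loc}}$ and $\Psi^I_{\mathrm{bdy}}$), together with the $H^1$ regularity of $\hat{\tilde\om}_{in}$ on the right-hand side to integrate by parts in $z''$ and gain a $|k|^{-1}$ factor that absorbs the singular weight.
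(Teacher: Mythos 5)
Your route is genuinely different from the paper's. The paper does not attempt quantitative kernel estimates at all: after disposing of $\mathrm{dist}(c,\mathrm{Ran}\,u)>M$ directly, it runs a compactness/contradiction argument. Assuming the bound fails along a sequence $(c_n,\Psi_n^I,\hat{\tilde\om}_{in,n})$ with $\|\Psi_n^I\|_{H^1}=1$ and $\|\hat{\tilde\om}_{in,n}\|_{H^1}\to0$, it extracts a limit $c_\infty$; if $c_\infty\notin\mathrm{Ran}\,u$ elliptic regularity produces an eigenvalue, and if $c_\infty=u(y_\infty)$ it rewrites $\f{1}{u-c_n}=\pa_y\big(\f{\ln(u-c_n)}{u'}\big)$ to get a uniform $L^2\cap\dot W^{1,p}$ bound on $\f{1}{\th}\pa_y\Psi_n^I$ minus a logarithmic profile, upgrades weak to strong $H^1$ convergence, and passes to the limit in the weak formulation to exhibit an embedded eigenvalue. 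Your scheme instead aims at explicit, $c$-uniform bounds from the representation \eqref{eq: inhomsol}, which would be more informative if completed, but is correspondingly harder.

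The genuine gap is exactly where you flag it: the $H^1$ estimate in the near-field regime is not established. The $L^2$ part of your argument is plausible (though even there the claimed pointwise convolution bound for $\Psi^I_{\mathrm{loc}}$ needs the prefactor $\phi(y,y',\ep,k)=O(|y-y'|+|\ep|)$ to kill a logarithmic divergence of the $z'$-integral as $y\to y'$; the linear vanishing of the inner integral alone leaves a $1/|z'-y'|$ singularity). But for $\pa_y\Psi^I$ the dangerous contribution is not a loss of $|k|$ — it is the term where $\pa_y$ falls on $\phi$, producing $u'(y)\phi_1(y,y',\ep,k)$ times the full singular $z'$-integral, which behaves like $\big((u'/\th)'(y')$ or $\hat{\tilde\om}_{in}(y')\big)\cdot\ln|u(y)-u(y')-i\ep|$ near $y=y'$. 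This is square-integrable uniformly in $\ep$, but that is precisely the estimate that must be proved, and "a cancellation between $\Psi^I_{\mathrm{loc}}$ and $\Psi^I_{\mathrm{bdy}}$" is not the mechanism — the logarithm does not cancel, it is simply in $L^2$ — while the proposed integration by parts in $z''$ to gain $|k|^{-1}$ addresses a different (and less serious) issue. As written, the final paragraph identifies the obstacle and gestures at two possible fixes without carrying either out, so the proof is incomplete at its hardest step. A secondary, repairable looseness: uniformity over the intermediate region $\{\delta_0\le\mathrm{dist}(c,\mathrm{Ran}\,u)\le M\}$ does not follow from a pointwise Fredholm alternative alone; you still need continuity of the resolvent plus compactness of that region, i.e.\ a small compactness argument of the very kind the paper uses globally.
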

\begin{proof}
It is easy to check that for $c\in \{c: \mathrm{dist}(c,\mathrm{Ran}\, u)> M\}$ with $M$ large enough, it holds that
\beno
\|\Psi^{I}\|_{H^1}\leq C\|\hat{\tilde{\om}}_{in}\|_{L^2}.
\eeno
So we only focus on the case $c\in \{c: \mathrm{dist}(c,\mathrm{Ran}\, u)\leq M\}$. 
We prove the estimate \eqref{eq: uniform est} by a contradiction argument. Suppose \eqref{eq: uniform est} does not hold, namely, there is $(c_n, \Psi_n^{I}, \hat{\tilde{\om}}_{in, n})$ such that $\{c_n\}_{n\geq 1}\subset \{c: 0<\mathrm{dist}(c,\mathrm{Ran}\, u)\leq M\}$ and as $n\to \infty$, $\|\Psi_n^{I}\|_{H^1}=1$, $\|\hat{\tilde{\om}}_n\|_{H^1}\to 0$ and 
\beno
(u-c_n)\pa_y\left(\f{1}{\th}\pa_y\Psi_n^{I}\right)-(u-c_n)\f{k^2}{\th}\Psi_n^{I}-\big(\f{u'}{\th}\big)'\Psi_n^{I}=-\hat{\tilde{\om}}_{in, n}. 
\eeno 
Then there exists a subsequence also denoted by $(c_n, \Psi_n^{I}, \hat{\tilde{\om}}_{in, n})$, so that 
\begin{align*}
c_{n}\to c_{\infty},\quad \Psi_n^{I}\rightharpoonup \Psi_{\infty}^I\quad \hat{\tilde{\om}}_{in, n}\to 0\quad\text{in}\quad H^1.
\end{align*}
If $c_{\infty}\notin \mathrm{Ran}\, u$, then $\|\Psi_{n}^l\|_{H^3}\leq C(\|\hat{\tilde{\om}}_{in, n}\|_{H^1}+\|\Psi_{n}^l\|_{H^1})$, thus $\Psi_n^{I}\to \Psi_{\infty}^I$ and 
\beno
(u-c_{\infty})\pa_y\left(\f{1}{\th}\pa_y\Psi_{\infty}^{I}\right)-(u-c_{\infty})\f{k^2}{\th}\Psi_{\infty}^{I}-\big(\f{u'}{\th}\big)'\Psi_{\infty}^{I}=0. 
\eeno 
It implies that $c_{\infty}$ is an eigenvalue that leads to a contradiction. 

If $c_{\infty}=u(y_{\infty})\in \mathrm{Ran}\, u$ (without loss of generality, we assume that $\mathrm{Im}\, c_n>0$), then we have 
\begin{align*}
\pa_y\left(\f{1}{\th}\pa_y\Psi_n^{I}\right)
&=\f{k^2}{\th}\Psi_n^{I}+\f{\big(\f{u'}{\th}\big)'\Psi_n^{I}-\hat{\tilde{\om}}_{in, n}}{u-c_n}\\
&=\f{k^2}{\th}\Psi_n^{I}+\pa_y\left(\f{\big(\f{u'}{\th}\big)'\Psi_n^{I}-\hat{\tilde{\om}}_{in, n}}{u'}\ln (u-c_n)\right)-\pa_y\left(\f{\big(\f{u'}{\th}\big)'\Psi_n^{I}-\hat{\tilde{\om}}_{in, n}}{u'}\right)\ln (u-c_n)
\end{align*}
which gives that for $1<p<2$
\ben
\left\|\f{1}{\th}\pa_y\Psi_n^{I}-\f{\big(\f{u'}{\th}\big)'\Psi_n^{I}-\hat{\tilde{\om}}_{in, n}}{u'}\ln (u-c_n)\right\|_{L^2\cap \dot{W}^{1,p}}\leq C.
\een
Therefore, we have for any $\d_0>0$
\beno
\|\pa_y\Psi_n^{I}\|_{L^2((y_{\infty}-\d_0, y_{\infty}+\d_0)\cap(0,1))}\leq C\d_0^{\f13}
\eeno
Here the constant $C$ is independent of $n$. We also have 
\beno
\Psi_n^{I}\to \Psi_{\infty}^{I} \quad \text{in}\quad H^1(0,1)\setminus (y_{\infty}-\d_0, y_{\infty}+\d_0). 
\eeno
Thus we have 
\beno
\Psi_n^{I}\to \Psi_{\infty}^{I} \quad \text{in}\quad H^1(0,1). 
\eeno
For any $\varphi\in H^1_0$, we also have 
\begin{align*}
0&=-\int_0^1\f{\pa_y\Phi_n^I(y)\pa_y\varphi(y)}{\th(y)}dy-\int_0^1\f{\Phi_n^I(y)\varphi(y)}{\th(y)}dy
-\int_0^1\big(\f{u'}{\th}\big)'(y)\f{\Phi_n^{I}\varphi}{u-c_n}dy
+\int_0^1\f{\hat{\tilde{\om}}_{in,n}\varphi}{u-c_n}dy\\
&=-\int_0^1\f{\pa_y\Phi_n^I(y)\pa_y\varphi(y)}{\th(y)}dy-\int_0^1\f{\Phi_n^I(y)\varphi(y)}{\th(y)}dy
-\int_0^1\big(\f{u'}{\th}\big)'(y)\f{\Phi_n^{I}\varphi}{u'}\pa_y(\ln (u-c_n)) dy\\
&\quad+\int_0^1\f{\hat{\tilde{\om}}_{in,n}\varphi}{u'}\pa_y(\ln(u-c_n))dy\\
&=-\int_0^1\f{\pa_y\Phi_n^I(y)\pa_y\varphi(y)}{\th(y)}dy-\int_0^1\f{\Phi_n^I(y)\varphi(y)}{\th(y)}dy
+\int_0^1\pa_y\left(\big(\f{u'}{\th}\big)'(y)\f{\Phi_n^{I}\varphi}{u'}\right)\ln (u-c_n) dy\\
&\quad -\big(\f{u'}{\th}\big)'(y)\f{\Phi_n^{I}\varphi}{u'}\ln (u(y)-c_n)\bigg|_{0}^1+\f{\hat{\tilde{\om}}_{in,n}\varphi}{u'}\ln(u(y)-c_n)\bigg|_{0}^1\\
&\quad -\int_0^1\pa_y\left(\f{\hat{\tilde{\om}}_{in,n}\varphi}{u'}\right)\ln(u-c_n)dy\\
&\to -\int_0^1\f{\pa_y\Phi_{\infty}^I(y)\pa_y\varphi(y)}{\th(y)}dy-\int_0^1\f{\Phi_{\infty}^I(y)\varphi(y)}{\th(y)}dy
+\int_0^1\pa_y\left(\big(\f{u'}{\th}\big)'(y)\f{\Phi_{\infty}^{I}\varphi}{u'}\right)\ln (u-c_n) dy\\
&\quad -i\pi \big(\f{u'}{\th}\big)'(y_{\infty})\f{\Phi_{\infty}^{I}(y_{\infty})\varphi(y_{\infty})}{u'(y_{\infty})}.
\end{align*}
By taking $\varphi(y)=\overline{\Phi_{\infty}^{I}(y)}$, we have $\big(\f{u'}{\th}\big)'(y_{\infty})|\Phi_{\infty}^{I}(y_{\infty})|^2=0$. Thus we have $\Phi_{\infty}^{I}\in H^2\cap H^1_0$ and $c_{\infty}$ is an embedded eigenvalue, which leads to a contradiction. 
\end{proof}
\subsection{Linear damping, representation formula, and the wave operator}
In this section, we prove the weak linear inviscid damping and introduce the representation formula and the wave operator which will be used in the nonlinear problem. 

\begin{lemma}
Let $\psi$ be the velocity solving the linearized equation \eqref{eq: linearsystem}. Then it holds that 
\beno
\|(\pa_y\hat{\psi}, \hat{\psi})\|_{H^1_tL^2_{y}}\leq C\|\hat{\tilde{\om}}_{in}\|_{H^1_{y}}, 
\eeno
which implies $\lim\limits_{t\to \infty}\|(\pa_y\hat{\psi}, \hat{\psi})\|_{L^2_{y}}=0$.
\end{lemma}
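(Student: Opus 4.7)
The natural strategy is to turn the resolvent representation \eqref{eq: represent} into a genuine Fourier integral in time and then invoke Plancherel together with the uniform resolvent bound of Proposition \ref{prop: uniform bound}. More precisely, under assumption (4) of Theorem \ref{Thm: main}, $\tilde{\mathcal{L}}_k$ has neither eigenvalues nor embedded eigenvalues, so its spectrum reduces to the continuous part $\mathrm{Ran}\, u = [u(0),u(1)]$. Collapsing the contour $\partial\mathcal{S}$ in \eqref{eq: represent} onto this interval and using the limiting absorption principle of Proposition \ref{prop: limit}, I obtain
\begin{equation*}
\hat\psi(t,k,y)=\frac{1}{2\pi i}\int_{u(0)}^{u(1)} e^{-ickt}\bigl[\Psi^{I}_{-}(y,u^{-1}(c),k)-\Psi^{I}_{+}(y,u^{-1}(c),k)\bigr]\,dc.
\end{equation*}

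Rescaling $\tau = kt$, the right-hand side is exactly $|k|^{-1}$ times the inverse Fourier transform (in the $c$-variable) of the jump $[\Psi^{I}_{-}-\Psi^{I}_{+}](y,u^{-1}(c),k)$, extended by zero outside $[u(0),u(1)]$. Applying Plancherel in $t$ gives
\begin{equation*}
\int_{\mathbb{R}}|\hat\psi(t,k,y)|^{2}\,dt = \frac{1}{2\pi|k|}\int_{u(0)}^{u(1)}\bigl|\Psi^{I}_{-}(y,u^{-1}(c),k)-\Psi^{I}_{+}(y,u^{-1}(c),k)\bigr|^{2}dc.
\end{equation*}
Integrating in $y$ and applying Proposition \ref{prop: uniform bound} uniformly in $c$ yields $\|\hat\psi\|_{L^{2}_{t}L^{2}_{y}}\le C\|\hat{\tilde\om}_{in}\|_{H^{1}}$; the same computation carried out with $\partial_{y}\Psi^{I}_{\pm}$ (also controlled in $L^{2}_{y}$ by Proposition \ref{prop: uniform bound}) gives the bound for $\partial_{y}\hat\psi$.

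For the time derivative, differentiating under the integral produces a factor $-ick$, which changes the Fourier-side weight from $1$ to $|kc|^{2}$. Since $c$ ranges over the bounded interval $[u(0),u(1)]$, Plancherel yields
\begin{equation*}
\|\partial_{t}\hat\psi\|_{L^{2}_{t}L^{2}_{y}}^{2}\le \frac{C|k|}{2\pi}\sup_{c}\|[\Psi^{I}_{-}-\Psi^{I}_{+}](\cdot,u^{-1}(c),k)\|_{L^{2}_{y}}^{2}\le C_{k}\|\hat{\tilde\om}_{in}\|_{H^{1}}^{2},
\end{equation*}
and analogously for $\partial_{t}\partial_{y}\hat\psi$. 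Combining these four estimates proves the asserted $H^{1}_{t}L^{2}_{y}$ bound.

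The damping conclusion then follows from a soft argument: the map $t\mapsto \|(\partial_{y}\hat\psi(t,k,\cdot),\hat\psi(t,k,\cdot))\|_{L^{2}_{y}}$ is in $H^{1}(\mathbb{R}_{+})\subset C_{0}(\mathbb{R}_{+})$ by the one-dimensional Sobolev embedding, so its square is both continuous and integrable on $[0,\infty)$, hence vanishes at infinity. The principal technical obstacle is the collapse of the contour onto the continuous spectrum: one must verify that Proposition \ref{prop: limit} indeed gives the correct jump across $[u(0),u(1)]$ and that the contributions at infinity and at the endpoints $c=u(0),u(1)$ vanish. This is where the no-embedded-eigenvalue assumption (guaranteeing the denominator $J_{1}\pm i\pi J_{2}$ in Proposition \ref{prop: limit} stays bounded away from zero, per Lemma \ref{eq: J_1^2+J_2^2}) is essential, and where the uniform resolvent bound of Proposition \ref{prop: uniform bound} — obtained by the compactness/contradiction argument there — is the genuinely nontrivial input.
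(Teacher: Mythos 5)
Your proposal is correct and follows essentially the same route as the paper: collapse the resolvent contour onto $\mathrm{Ran}\,u$ via Proposition \ref{prop: limit} to get the jump representation of $\hat\psi$ (and of $\pa_t\hat\psi$ with the extra factor $-iu(y')k$), apply Plancherel in $t$ to convert the $L^2_t$ norms into $L^2_c$ norms of $\Psi^I_\pm$, control these uniformly in $c$ by Proposition \ref{prop: uniform bound}, and conclude the decay from the one-dimensional Sobolev embedding $H^1_t\hookrightarrow C_0$. The only cosmetic difference is that you make the change of variables $c=u(y')$ and the Fourier-transform interpretation explicit, whereas the paper keeps the $y'$-integral and cites Plancherel directly.
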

\begin{proof}
We have by \eqref{eq: represent}, Proposition \ref{prop: uniform bound} , and Proposition \ref{prop: limit} that
\begin{align*}
\hat{\psi}(t,k,y)&=\f{1}{2\pi i}\int_{0}^1e^{-i u(y') k t}(\Psi_{-}^{I}(y, y', k)-\Psi_{+}^{I}(y, y', k))u'(y')dy'\\
\pa_t\hat{\psi}(t,k,y)&=\f{1}{2\pi i}\int_{0}^1(-i u(y') k) e^{-i u(y') k t}(\Psi_{-}^{I}(y, y', k)-\Psi_{+}^{I}(y, y', k))u'(y')dy'
\end{align*}
and
\beno
\|\Psi_{\pm}^{I}(y, y', k)\|_{L^{\infty}_{y'}H^1_y}\leq C\|\hat{\tilde{\om}}_{in}\|_{H^1_{x,y}}.
\eeno
Therefore by Plancherel's formula, we infer that
\begin{align*}
&\|(\pa_y\hat{\psi}, \hat{\psi})\|_{H_t^1L_y^2}^2
=\int_{\R}\big(\|(\pa_y\hat{\psi}, \hat{\psi})\|_{L_y^2}^2
+\|\partial_t(\pa_y\hat{\psi}, \hat{\psi})\|_{L_y^2}^2\big)dt\\
&\lesssim \int_{0}^1\int_{\R}\big(|\hat{\psi}(t,k,y)|^2+|\partial_y\hat{\psi}(t,k,\cdot)|^2
+k^2|\partial_t\hat{\psi}(t,k,y)|^2
+|\partial_t\partial_y\hat{\psi}(t,k,y)|^2\big)dtdy\\
&\lesssim \int_{0}^1\int_{\text{Ran}\,u}\big(k^2|\Psi_{\pm}^{I}(y, y', k)|^2+|\partial_y\Psi_{\pm}^{I}(y, y', k)|^2\big)dcdy\\
&\leq C\int_{\text{Ran}\,u}\|\hat{\tilde{\om}}_{in}(k,\cdot)\|_{H^1_y}^2dc\lesssim C\|\hat{\tilde{\om}}_{in}(k,\cdot)\|_{H^1_y}^2.
\end{align*}
The lemma follows directly from the Sobolev embedding. 
\end{proof}
In order to obtain the point-wise linear inviscid damping, one may need to study the regularity of $\Psi_{\pm}^{I}(y, y', k)$ in $y'$. We may not discuss it here. 

We now introduce the representation formula. 
\begin{proposition}\label{prop: represent psi}
Let $\psi$ be the velocity solving the linearized equation \eqref{eq: linearsystem}. Then it holds that 
\begin{align*}
\hat{\psi}(t,k,y)&=-\int_{0}^1e^{-i u(y') k t}\f{J_1(y', k)\Pi_2[\hat{\tilde{\om}}_{in}](y')-J_2(y')\Pi_1[\hat{\tilde{\om}}_{in}](y',k)}{J_1(y', k)^2+\pi^2J_2(y')^2}\rho(y')u'(y')e(y,y', k)dy'
\end{align*}
where 
\ben\label{eq:e}
e(y,y', k)=
\left\{
\begin{aligned}
&\phi(y,y', k)\int_{0}^y\frac{\th(z)}{\phi(z,y',k )^2}dz\quad 0\leq y<y',\\
&\phi(y,y', k)\int_1^y\frac{\th(z)}{\phi(z,y', k)^2}dz\quad y'<y\leq 1. 
\end{aligned}
\right.
\een
\end{proposition}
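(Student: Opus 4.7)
The plan is to start from the Dunford integral representation \eqref{eq: represent} and collapse the contour to an integral along $\mathrm{Ran}\,u$. Choosing $\mathcal{S}$ to be a rectangular contour $\{c=c_r+ic_i:c_r\in[u(0)-\delta,u(1)+\delta],\ |c_i|\le R\}$ and using the uniform $H^1$ bound of Proposition \ref{prop: uniform bound} together with a decay argument in $|c_i|$ (standard for resolvents far from the spectrum), one shows that the vertical sides contribute $O(R^{-1})$ and may be discarded. Shrinking the top and bottom horizontal sides toward $\mathrm{Ran}\,u$, i.e.\ taking $\varepsilon\to 0^\pm$, and invoking Proposition \ref{prop: limit} yields
\[
\hat\psi(t,k,y)=\frac{1}{2\pi i}\int_{0}^{1}e^{-iu(y')kt}\bigl(\Psi^{I}_{-}(y,y',k)-\Psi^{I}_{+}(y,y',k)\bigr)u'(y')\,dy',
\]
exactly as in the proof of the preceding lemma.

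Next I would compute the difference $\Psi^{I}_{-}-\Psi^{I}_{+}$ using the explicit formulas from Proposition \ref{prop: limit}. The crucial observation is that the first term in $\Psi^{I}_{\pm}$, namely
\[
-\phi(y,y',k)\int_{0\,(\text{or }1)}^{y}\frac{\theta(z')\int_{y'}^{z'}\widehat{\tilde\omega}_{in}(z'',k)\phi_1(z'',y',k)\,dz''}{(u(z')-u(y'))^{2}\phi_1(z',y',k)^{2}}\,dz',
\]
is independent of the sign of $\varepsilon$ (the principal-value integrand has no imaginary $i\pi$-jump since the integrand is integrable), so it cancels in the difference. What survives is only the second term, proportional to $e(y,y',k)$ from \eqref{eq:e}, with the $\pm$ appearing through the coefficient $\dfrac{\Pi_1\pm i\pi\Pi_2}{J_1\pm i\pi J_2}$. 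A short algebraic computation gives
\[
\frac{\Pi_1-i\pi\Pi_2}{J_1-i\pi J_2}-\frac{\Pi_1+i\pi\Pi_2}{J_1+i\pi J_2}
=\frac{-2i\pi\bigl(J_1\Pi_2-J_2\Pi_1\bigr)}{J_1^{2}+\pi^{2}J_2^{2}},
\]
which after multiplication by $\rho(y')\,e(y,y',k)u'(y')$ and the prefactor $\frac{1}{2\pi i}$ produces exactly the claimed representation formula.

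The main technical obstacle is the contour deformation in the first step: one must justify taking the $\varepsilon\to 0^\pm$ limit uniformly in $y'\in[0,1]$ inside the integral. This requires controlling $\Psi^{I}(y,y',\varepsilon,k)$ in a way that is integrable in $y'$ and stable under the limit. Proposition \ref{prop: uniform bound} gives the uniform $H^1_y$ bound independent of $c\notin\mathrm{Ran}\,u$, Proposition \ref{prop: homsol} gives continuity of $\phi(\cdot,y',\varepsilon,k)$ in $\varepsilon$, and Lemma \ref{eq: J_1^2+J_2^2} guarantees the non-vanishing of the denominator $J_1^{2}+\pi^{2}J_2^{2}\gtrsim 1+k^{2}\rho(y')^{2}$. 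Together with Lemma \ref{lem: L^2 est} (boundedness of $\Pi_1,\Pi_2$ in $L^2$), these ingredients supply the dominated-convergence argument needed to pass the limit inside the integral. Once this is done, the computation above is purely algebraic and the proposition follows.
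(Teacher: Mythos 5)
Your proposal is correct and follows essentially the same route as the paper: the paper's proof is literally the one-line statement that the proposition "follows directly from \eqref{eq: represent}, Proposition \ref{prop: uniform bound}, and Proposition \ref{prop: limit} with a simple calculation," and your contour collapse plus the cancellation of the $\pm$-independent first term of $\Psi^{I}_{\pm}$ and the algebraic identity $\frac{\Pi_1-i\pi\Pi_2}{J_1-i\pi J_2}-\frac{\Pi_1+i\pi\Pi_2}{J_1+i\pi J_2}=\frac{-2i\pi(J_1\Pi_2-J_2\Pi_1)}{J_1^2+\pi^2J_2^2}$ is exactly that calculation. (Only a cosmetic quibble: the first term is $\pm$-independent not because a principal value is harmless but because the integration path $[0,y]$ resp. $[1,y]$ never meets the singular point $z'=y'$.)
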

\begin{proof}
The proposition follows directly from \eqref{eq: represent}, Proposition \ref{prop: uniform bound}, and Proposition \ref{prop: limit} with a simple calculation. 
\end{proof}
The representation formula in Proposition \ref{prop: represent psi} leads to the wave operator. By taking $t=0$, we have the following representation formula:
\beq\label{eq:rep-form}
\hat{\psi}_{in}(k,y)=-\int_0^1\bbD_{u,k}\left[\tilde{\Delta}_k\hat{\psi}_{in}\right](y',k)\f{\f{\th(y')}{u'(y')}\rho(y')e(y,y', k)}{\sqrt{J_1(y', k)^2+\pi^2J_2(y')^2}}dy',
\eeq
where 
\beq\label{eq: wave operator}
\begin{aligned}
\bbD_{u,k}\left[\tilde{\om}\right](y',k)
&\eqdef 
\f{u'(y')}{\th(y')}\f{u'(y')J_1(y', k)\Pi_2[\tilde{\om}](y')-u'(y')J_2(y')\Pi_1[\tilde{\om}](y',k)}{\sqrt{J_1(y', k)^2+\pi^2J_2(y')^2}}\\
&=\f{J_1(y', k)\tilde{\om}(y')+\rho(y')\f{\th(y')}{u(y')}\left(\f{u'}{\th}\right)'(y')\Pi_1[\tilde{\om}](y',k)}{\sqrt{J_1(y', k)^2+\pi^2J_2(y')^2}}.
\end{aligned}
\eeq

\section{Wave operator}\label{sec: wave operator}
In this section, we give the basic properties of the wave operator and study its Fourier kernel. 
\begin{proposition}\label{prop: general-wave}
For $k\neq 0$, let $\mathcal{R}_{u,k}=u\mathrm{Id}-\left(\f{u'}{\th}\right)'\tilde{\Delta}_k^{-1}$. Suppose that for all $k\neq 0$, $\mathcal{R}_{u,k}$ has no eigenvalue and no embedded eigenvalue. Let  $\bbD_{u,k}$ be defined in \eqref{eq: wave operator}. Then there exists $\bbD^1_{u,k}$ such that
\ben\label{eq:DR=uD}
\bbD_{u,k}\mathcal{R}_{u,k}=u\bbD_{u,k},
\een
and for $\om,g\in L^2(0,1)$
\ben\label{eq: id1}
\int_0^1\bbD_{u,k}(\om)(y)\bbD^1_{u,k}(g)(y)dy=\int_0^1\om(y)g(y)dy. 
\een
Moreover, there exists $C>1$ independent of $k$ such that 
\ben\label{eq: est11}
C^{-1}\leq \|\bbD_{u,k}\|_{L^2\to L^2}\leq C,\quad C^{-1}\leq\|\bbD^1_{u,k}\|_{L^2\to L^2}\leq C.
\een
\end{proposition}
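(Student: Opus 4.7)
The plan is to establish the four assertions—upper bound on $\bbD_{u,k}$, intertwining, construction of $\bbD^1_{u,k}$, and lower bounds—in this order, with the representation formula \eqref{eq:rep-form} as the central tool. For the upper bound in \eqref{eq: est11}, I would work directly from \eqref{eq: wave operator}, splitting $\bbD_{u,k}[\tilde\omega](y') = \alpha(y',k)\tilde\omega(y') + \beta(y',k)\Pi_1[\tilde\omega](y',k)$ with $\alpha = J_1/\sqrt{J_1^2+\pi^2J_2^2}$ and $\beta = \rho\,\th(u'/\th)'/(u'\sqrt{J_1^2+\pi^2J_2^2})$. Trivially $|\alpha|\leq 1$; Lemma \ref{eq: J_1^2+J_2^2} together with the smoothness of $u,\th$ gives $|\beta|\leq C$ uniformly in $(y',k)$; and Lemma \ref{lem: L^2 est} furnishes $\|\Pi_1[\tilde\omega]\|_{L^2}\leq C\|\tilde\omega\|_{L^2}$. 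These combine to $\|\bbD_{u,k}\|_{L^2\to L^2}\leq C$ uniformly in $k$.

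For the intertwining \eqref{eq:DR=uD} I would use the linear flow. Rewriting Proposition \ref{prop: represent psi} via $\tilde\omega = \tilde\Delta_k\hat\psi$ yields
\begin{equation*}
\tilde\omega(t,k,y) = -\int_0^1 e^{-iu(y')kt}\,\bbD_{u,k}[\tilde\omega_{in}](y')\,\tilde\Delta_k^y M(y,y',k)\,dy',\qquad M := \f{\th(y')\rho(y')\, e(y,y',k)}{u'(y')\sqrt{J_1^2+\pi^2J_2^2}}.
\end{equation*}
Differentiating at $t=0$ and using $\pa_t\tilde\omega = -ik\mathcal{R}_{u,k}\tilde\omega$ gives $\mathcal{R}_{u,k}\tilde\omega_{in}(y) = -\int u(y')\bbD_{u,k}[\tilde\omega_{in}](y')\,\tilde\Delta_k^y M\,dy'$. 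On the other hand, applying \eqref{eq:rep-form} directly with $\mathcal{R}_{u,k}\tilde\omega_{in}$ in place of $\tilde\omega_{in}$ gives the same quantity equal to $-\int\bbD_{u,k}[\mathcal{R}_{u,k}\tilde\omega_{in}](y')\,\tilde\Delta_k^y M\,dy'$. Subtraction yields $\int[u(y')\bbD_{u,k}\tilde\omega_{in} - \bbD_{u,k}\mathcal{R}_{u,k}\tilde\omega_{in}](y')\,\tilde\Delta_k^y M(y,y',k)\,dy' \equiv 0$ in $y$; the completeness of the generalized eigenfunctions $\{\tilde\Delta_k^y M(\cdot,y',k)\}_{y'\in[0,1]}$—a consequence of the uniform resolvent bound in Proposition \ref{prop: uniform bound} combined with the assumed absence of (embedded) eigenvalues—forces \eqref{eq:DR=uD}.

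To produce $\bbD^1_{u,k}$, I observe that \eqref{eq:rep-form} is the operator identity $\tilde\Delta_k^{-1} = -\mathcal{E}\,\bbD_{u,k}$ where $\mathcal{E}h(y) := \int_0^1 M(y,y',k)h(y')\,dy'$, so $\mathrm{Id} = -\tilde\Delta_k\,\mathcal{E}\,\bbD_{u,k}$. Pairing with $g\in L^2$ converts this into \eqref{eq: id1} under the definition
\begin{equation*}
\bbD^1_{u,k}[g](y') := -\int_0^1 \tilde\Delta_k^y[M(\cdot,y',k)](y)\,g(y)\,dy,
\end{equation*}
interpreted distributionally (the required boundary terms vanish because $e(0,y',k)=e(1,y',k)=0$). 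To turn this into an explicit $L^2$-bounded operator I would compute $\tilde\Delta_k^y M(y,y',k)$ using the fact that $\phi(y,y',k)$ solves the homogeneous distorted Rayleigh equation at $c=u(y')$: the computation produces a Dirac mass at $y=y'$ (with amplitude fixed by the Wronskian jump of $e$) plus a principal-value kernel. After integrating by parts against $g$, $\bbD^1_{u,k}$ acquires the same structural form as $\bbD_{u,k}$—a bounded multiplier plus an integral operator of Hilbert-transform/maximal-function type—and its uniform $L^2$-boundedness follows from the same Lemmas \ref{lem: L^2 est} and \ref{eq: J_1^2+J_2^2}.

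Once both upper bounds are known, the pairing \eqref{eq: id1} applied with $g=\omega$ gives $\|\omega\|_{L^2}^2 \leq \|\bbD_{u,k}\omega\|_{L^2}\|\bbD^1_{u,k}\omega\|_{L^2} \leq C\|\bbD_{u,k}\omega\|_{L^2}\|\omega\|_{L^2}$, whence $\|\bbD_{u,k}\omega\|_{L^2}\geq C^{-1}\|\omega\|_{L^2}$; the lower bound on $\bbD^1_{u,k}$ follows symmetrically. I expect the main obstacle to be the third step: identifying $\tilde\Delta_k^y[M(\cdot,y',k)]$ as a tempered distribution and extracting from it an explicit integral kernel for $\bbD^1_{u,k}$ whose $L^2$-operator norm is controlled uniformly in $k$. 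The sharp pointwise estimates on $\phi_1$ from Proposition \ref{prop: homsol} and the two-sided control of $J_1^2+\pi^2J_2^2$ from Lemma \ref{eq: J_1^2+J_2^2} will be essential to keep all constants independent of $k$.
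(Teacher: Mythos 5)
Your upper and lower bound arguments are sound and essentially coincide with the paper's; the differences are concentrated in the intertwining relation \eqref{eq:DR=uD} and the construction of $\bbD^1_{u,k}$, and both contain gaps.

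The intertwining argument hinges on the completeness claim that if $\int_0^1 F(y')\,\tilde\Delta_k^y M(y,y',k)\,dy'\equiv 0$ for all $y$ then $F\equiv 0$. This is not a consequence of Proposition \ref{prop: uniform bound} together with absence of (embedded) eigenvalues, as you assert. Applying $\tilde\Delta_k^y$ to \eqref{eq:rep-form} gives the operator identity $(\tilde\Delta_k\,\mathcal{E})\,\bbD_{u,k}=-\mathrm{Id}$, which shows that $h\mapsto\int h(y')\,\tilde\Delta_k^y M\,dy'$ has a \emph{right} inverse, hence is surjective — not injective. Injectivity of this map (completeness) is equivalent to $\bbD_{u,k}$ being surjective, which you have not yet established at that point in your argument: you have only the upper bound. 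One can repair the logic by proving \eqref{eq: id1} and the lower bounds first (making $\bbD_{u,k}$ an isomorphism, whence $\tilde\Delta_k\,\mathcal{E}=-\bbD_{u,k}^{-1}$ is injective), and only then deducing \eqref{eq:DR=uD} from the subtraction identity; but as ordered, the step does not go through. The paper avoids this entirely: it proves \eqref{eq:DR=uD} by a direct algebraic computation — substituting $\mathcal{R}_{u,k}[\om]=u\om-(\tfrac{u'}{\th})'\psi$ into $\Pi_1$, integrating by parts against the ODE solved by $\phi(y,y',k)$, and verifying the identity $\Pi_1[\mathcal{R}_{u,k}\om]=u(y')\Pi_1[\om]+\psi(y')\tfrac{u'(y')}{\th(y')\rho(y')}J_1(y',k)$ — which uses neither the representation formula nor any spectral completeness.

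For $\bbD^1_{u,k}$, your idea of computing $\tilde\Delta_k^y M(y,y',k)$ distributionally and reading off a delta mass plus a principal-value kernel is reasonable in spirit, but under-specified in a way that matters. By \eqref{eq: decomposition-e}, $e(y,y',k)$ contains a term $\phi(y,y',k)\,\big(\tilde\th(u^{-1})'\big)'(u(y'))\ln|u(y)-u(y')|$, so $\pa_y e$ has a logarithmic singularity at $y=y'$ on top of the Wronskian jump; the distributional computation of $\tilde\Delta_k^y e$ is therefore more delicate than ``Dirac plus p.v.,'' and you would need to verify that the coefficient of the delta matches $J_1(y',k)/\sqrt{J_1^2+\pi^2J_2^2}$ exactly. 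The paper sidesteps this by pairing $\int_0^1\psi\,\tilde\Delta_k g\,dy$ via \eqref{eq:rep-form} and performing the integration by parts in the $z$-integral, which reuses the same $I_1+I_2+I_3$ identity from the intertwining step and produces the explicit formula \eqref{eq: def D_1} for $\bbD^1_{u,k}$ with no distributional bookkeeping.
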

Note that \eqref{eq: est11} implies that the wave operator $\bbD_{u,k}$ is invertible and 
\ben\label{eq:D^1dual}
\bbD^1_{u,k}=(\bbD_{u,k}^{-1})^*.
\een
We give the representation formula of $\bbD_{u, k}^{-1}$ in \eqref{eq: D_uk^{-1}}. 
\begin{proof}
{\bf Proof of \eqref{eq:DR=uD}. }
Let $\psi=\tilde{\Delta}_k^{-1}\om$, namely $\pa_y\left(\f{1}{\th}\pa_y\psi\right)-\f{k^2}{\th}\psi=\om$ and $\psi(0)=\psi(1)=0$. Then $\mathcal{R}_{u,k}[\om](y')=u(y')\om(y')-\left(\f{u'}{\th}\right)'\psi(y')$. A direct calculation gives
\begin{align*}
&\bbD_{u,k}\left[\mathcal{R}_{u,k}[\om]\right](y',k)\\
&=\f{u'(y')}{\th(y')}\f{u'(y')J_1(y', k)\Pi_2[\mathcal{R}_{u,k}[\om]](y')-u'(y')J_2(y')\Pi_1[\mathcal{R}_{u,k}[\om]](y',k)}{\sqrt{J_1(y', k)^2+\pi^2J_2(y')^2}}\\
&=u(y')\f{u'(y')J_1(y', k)\Pi_2[\om]}{\sqrt{J_1(y', k)^2+\pi^2J_2(y')^2}}\f{u'(y')}{\th(y')}
-\f{J_1(y', k)\f{\th(y')}{u'(y')}\left(\f{u'}{\th}\right)'\psi(y')}{\sqrt{J_1(y', k)^2+\pi^2J_2(y')^2}}\f{u'(y')}{\th(y')}\\
&\quad-\f{u'(y')}{\th(y')}\f{u'(y')J_2(y')P.V.\int_{0}^1\f{\th(z')\int_{y'}^{z'}\left(u(z'')\om(z'')-\left(\f{u'}{\th}\right)'\psi(z'')\right)\phi_1(z'', y', k)dz''}{(u(z')-u(y'))^2\phi_1(z', y', k)^2}dz'}{\sqrt{J_1(y', k)^2+\pi^2J_2(y')^2}}.
\end{align*}
Let us now simplify the second term. Recall that $\phi(y, y', k)=(u(y)-u(y'))\phi_1(y, y', k)$ solves 
\beno
\pa_y\left(\f{1}{\th}\pa_y\phi\right)-\f{k^2}{\th}\phi=\left(\f{u'}{\th}\right)'\phi_1.
\eeno
Then by using the integration by part, we have 
\begin{align*}
&P.V.\int_{0}^1\f{\th(z')\int_{y'}^{z'}\left(u(z'')\om(z'')-\left(\f{u'}{\th}\right)'\psi(z'')\right)\phi_1(z'', y', k)dz''}{(u(z')-u(y'))^2\phi_1(z', y', k)^2}dz'\\
&=P.V.\int_{0}^1\f{\th(z')\int_{y'}^{z'}u(z'')\om(z'')\phi_1(z'', y', k)dz''}{(u(z')-u(y'))^2\phi_1(z', y', k)^2}dz'\\
&\quad-P.V.\int_{0}^1\f{\th(z')\int_{y'}^{z'}\psi(z'')\left(\pa_y\left(\f{1}{\th}\pa_y\phi(z'', y', k)\right)-\f{k^2}{\th}\phi(z'', y', k)\right) dz''}{(u(z')-u(y'))^2\phi_1(z', y', k)^2}dz'\\
&=P.V.\int_{0}^1\f{\th(z')\int_{y'}^{z'}u(z'')\om(z'')\phi_1(z'', y', k)dz''}{(u(z')-u(y'))^2\phi_1(z', y', k)^2}dz'
+P.V.\int_{0}^1\f{\th(z')\int_{y'}^{z'}\psi(z'')\f{k^2}{\th}\phi(z'', y', k) dz''}{(u(z')-u(y'))^2\phi_1(z', y', k)^2}dz'\\
&\quad+P.V.\int_{0}^1\f{\th(z')\int_{y'}^{z'}\pa_y\psi(z'')\left(\f{1}{\th}\pa_y\phi(z'', y', k)\right) dz''}{(u(z')-u(y'))^2\phi_1(z', y', k)^2}dz'\\
&\quad-P.V.\int_{0}^1\f{(u(z')-u(y'))\psi(z')\pa_y\phi_1(z', y', k)+u'(z')\psi(z')\phi_1(z', y', k)-\th(z')\psi(y')\left(\f{u'(y')}{\th(y')}\right)}{(u(z')-u(y'))^2\phi_1(z', y', k)^2}dz'\\
&=P.V.\int_{0}^1\f{\th(z')\int_{y'}^{z'}u(z'')\om(z'')\phi_1(z'', y', k)dz''}{(u(z')-u(y'))^2\phi_1(z', y', k)^2}dz'
-P.V.\int_{0}^1\f{\th(z')\int_{y'}^{z'}\om(z'')\phi(z'', y', k) dz''}{(u(z')-u(y'))^2\phi_1(z', y', k)^2}dz'\\
&\quad+P.V.\int_{0}^1\f{\pa_y\psi(z') }{(u(z')-u(y'))\phi_1(z', y', k)}dz'
-\int_{0}^1\f{\psi(z')\pa_y\phi_1(z', y', k)}{(u(z')-u(y'))\phi_1(z', y', k)^2}dz'\\
&\quad-P.V.\int_{0}^1\f{u'(z')\psi(z')\phi_1(z', y', k)-\th(z')\psi(y')\left(\f{u'(y')}{\th(y')}\right)}{(u(z')-u(y'))^2\phi_1(z', y', k)^2}dz'\\
&=u(y')\Pi_1[\om]+I_1[\psi]+I_2[\psi]+I_3[\psi].
\end{align*}
We write
\begin{align*}
I_1&=\int_{0}^1\f{\pa_y\psi(z') }{(u(z')-u(y'))}\Big(\f{1}{\phi_1(z', y', k)}-1\Big)dz'+P.V.\int_{0}^1\f{\pa_y\psi(z') }{(u(z')-u(y'))}dz'\\
I_2&=\int_{0}^1\f{\psi(z') }{(u(z')-u(y'))}\pa_y\Big(\f{1}{\phi_1(z', y', k)}-1\Big)dz'\\
I_3&=\int_{0}^1\psi(z')\pa_y\Big(\f{1}{(u(z')-u(y'))}\Big)\Big(\f{1}{\phi_1(z', y', k)}-1\Big)dz'\\
&\quad-P.V.\int_0^1\f{u'(z')\psi(z')-\th(z')\psi(y')\left(\f{u'(y')}{\th(y')}\right)}{(u(z')-u(y'))^2}dz'\\
&\quad+\psi(y')\f{u'(y')}{\th(y')}\int_{0}^1\f{\th(z')}{(u(z')-u(y'))^2}\Big(\f{1}{\phi_1(z', y', k)^2}-1\Big)dz'.
\end{align*}
Therefore, we obtain that
\begin{align}
\label{eq:I_1+I_2+I_3}
I_1+I_2+I_3
&=P.V.\int_{0}^1\f{\pa_y\psi(z') }{(u(z')-u(y'))}dz'\\
\nonumber&\quad+P.V.\int_0^1\pa_y\Big(\f{1}{u(z')-u(y')}\Big)\Big(\psi(z')-\f{\th(z')}{u'(z')}\psi(y')\left(\f{u'(y')}{\th(y')}\right)\Big)dz'\\
\nonumber&\quad+\psi(y')\f{u'(y')}{\th(y')}\int_{0}^1\f{\th(z')}{(u(z')-u(y'))^2}\Big(\f{1}{\phi_1(z', y', k)^2}-1\Big)dz'\\
\nonumber&=\psi(y')\f{u'(y')}{\th(y')}P.V.\int_0^1\f{1}{u(z')-u(y')}\Big(\f{\th(z')}{u'(z')}\Big)'dz'\\
\nonumber&\quad-\f{\f{\th(1)}{u'(1)}\psi(y')\left(\f{u'(y')}{\th(y')}\right)}{u(1)-u(y')}+\f{\f{\th(0)}{u'(0)}\psi(y')\left(\f{u'(y')}{\th(y')}\right)}{u(0)-u(y')}\\
\nonumber&\quad+\psi(y')\f{u'(y')}{\th(y')}\int_{0}^1\f{\th(z')}{(u(z')-u(y'))^2}\Big(\f{1}{\phi_1(z', y', k)^2}-1\Big)dz'\\
\nonumber&=\psi(y')\f{u'(y')}{\th(y')\rho(y')}J_1(y',k),
\end{align}
which gives that
\begin{align*}
&P.V.\int_{0}^1\f{\th(z')\int_{y'}^{z'}\left(u(z'')\om(z'')-\left(\f{u'}{\th}\right)'\psi(z'')\right)\phi_1(z'', y', k)dz''}{(u(z')-u(y'))^2\phi_1(z', y', k)^2}dz'\\
&=u(y')\Pi_1[\om]+\psi(y')\f{u'(y')}{\th(y')\rho(y')}J_1(y',k).
\end{align*}
Then we can conclude that
\begin{align*}
&\bbD_{u,k}\left[\mathcal{R}_{u,k}[\om]\right](y',k)\\
&=u(y')\f{u'(y')}{\th(y')}\f{u'(y')J_1(y', k)\Pi_2[\om]}{\sqrt{J_1(y', k)^2+\pi^2J_2(y')^2}}-u(y')\f{u'(y')}{\th(y')}\f{u'(y')J_2(y')\Pi_1[\om]}{\sqrt{J_1(y', k)^2+\pi^2J_2(y')^2}}=u(y')\bbD_{u,k}[\om]. 
\end{align*}
{\bf Proof of \eqref{eq: id1}. } Let $g\in H^{2}(0,1)\cap H_0^1(0,1)$ and $\psi=\tilde{\Delta}_k^{-1}\om$, namely $\pa_y\left(\f{1}{\th}\pa_y\psi\right)-\f{k^2}{\th}\psi=\om$ and $\psi(0)=\psi(1)=0$. 
Then 
\begin{align*}
\int_0^1\om(y)g(y)dy=\int_0^1\psi(y)\tilde{\Delta}_kg(y)dy. 
\end{align*}
By the representation formula \eqref{eq:rep-form}, we have
\begin{align*}
\int_0^1\psi(y)\tilde{\Delta}_k[g]dy
&=-\int_0^1\int_0^1\bbD_{u,k}\left[\om\right](y',k)\f{\f{\th(y')}{u'(y')}\rho(y')e(y,y', k)}{\sqrt{J_1(y', k)^2+\pi^2J_2(y')^2}}dy'\tilde{\Delta}_kg(y)dy\\
&=-\int_0^1\int_0^y\bbD_{u,k}\left[\om\right](y',k)\f{\th(y')}{u'(y')}\f{\rho(y')\phi(y,y', k)\int_1^y\frac{\th(z)}{\phi(z,y', k)^2}dz}{\sqrt{J_1(y', k)^2+\pi^2J_2(y')^2}}dy'\tilde{\Delta}_kg(y)dy\\
&\quad-\int_0^1\int_y^1\bbD_{u,k}\left[\om\right](y',k)\f{\th(y')}{u'(y')}\f{\rho(y')\phi(y,y', k)\int_{0}^y\frac{\th(z)}{\phi(z,y',k )^2}dz}{\sqrt{J_1(y', k)^2+\pi^2J_2(y')^2}}dy'\tilde{\Delta}_kg(y)dy\\
&=\int_0^1\f{\bbD_{u,k}\left[\om\right](y',k)\rho(y')}{\sqrt{J_1(y', k)^2+\pi^2J_2(y')^2}}\f{\th(y')}{u'(y')}\int_{y'}^1\f{\th(z)\int_{y'}^z\phi(y,y', k)\tilde{\Delta}_kg(y)dy}{\phi(z,y',k )^2}dzdy'\\
&\quad-\int_0^1\f{\bbD_{u,k}\left[\om\right](y',k)\rho(y')}{\sqrt{J_1(y', k)^2+\pi^2J_2(y')^2}}\f{\th(y')}{u'(y')}\int_0^{y'}\f{\th(z)\int_{z}^{y'}\phi(y,y', k)\tilde{\Delta}_kg(y)dy}{\phi(z,y',k )^2}dzdy'\\
&=\int_0^1\f{\bbD_{u,k}\left[\om\right](y',k)\rho(y')}{\sqrt{J_1(y', k)^2+\pi^2J_2(y')^2}}\f{\th(y')}{u'(y')}\int_{0}^1\f{\th(z)\int_{y'}^z\phi(y,y', k)\tilde{\Delta}_kg(y)dy}{\phi(z,y',k )^2}dzdy'.
\end{align*}
We now simplify the integral by using integration by parts
\begin{align*}
&\int_{0}^1\f{\th(z)\int_{y'}^z\phi(z'',y', k)\tilde{\Delta}_kg(z'')dy}{\phi(z,y',k )^2}dz\\
&=\int_{0}^1\f{\th(z)\int_{y'}^z\tilde{\Delta}_k\phi(z'',y', k)g(z'')dy}{\phi(z,y',k )^2}dz
+P.V.\int_{0}^1\f{\pa_yg(z)}{(u(z)-u(y'))\phi_1(z,y',k )}dz\\
&\quad-\int_{0}^1\f{\pa_y\phi_1(z,y', k)g(z)}{(u(z)-u(y'))\phi_1(z,y',k )^2}dz
-\int_{0}^1\f{u'(z)\phi_1(z,y', k)g(z)-\th(z)\f{1}{\th(y')}u'(y')g(y')}{\phi(z,y',k )^2}dz\\
&=P.V.\int_{0}^1\f{\th(z)\int_{y'}^z\left(\f{u'}{\th}\right)'\phi_1(z'', y', k)g(z'')dy}{\phi(z,y',k )^2}dz
+I_1[g]+I_2[g]+I_3[g],
\end{align*}
where $I_1, I_2, I_3$ are defined in the previous step. 
Thus by \eqref{eq:I_1+I_2+I_3}, we get 
\begin{align*}
&\int_{0}^1\f{\th(z)\int_{y'}^z\phi(z'',y', k)\tilde{\Delta}_kg(z'')dy}{\phi(z,y',k )^2}dz=\Pi_1\left[\left(\f{u'}{\th}\right)'g\right]+\f{u'(y')}{\th(y')\rho(y')}J_1(y',k)g(y').
\end{align*}
We define
\ben\label{eq: def D_1}
\bbD^{1}_{u,k}[g]=\f{\f{\th(y')}{u'(y')}\rho(y')\Pi_1\left[\left(\f{u'}{\th}\right)'g\right]+J_1(y',k)g(y')}{\sqrt{J_1(y', k)^2+\pi^2J_2(y')^2}},
\een
and then we obtain \eqref{eq: id1} for $g\in H^{2}(0,1)\cap H_0^1(0,1)$. 

By Lemma \ref{eq: J_1^2+J_2^2} and Lemma \ref{lem: L^2 est}, it is easy to obtain for any $\om\in L^2(0,1)$ and $k\neq 0$,
\ben\label{eq: D, D^1 upper}
\|\bbD_{u,k}[\om]\|_{L^2}+\|\bbD^{1}_{u,k}[\om]\|_{L^2}\leq C\|\om\|_{L^2}. 
\een
Here $C>0$ is a constant independent of $k$.  

Due to the fact that $H^2\cap H_0^1$ is dense in $L^2$, for any $g\in L^2$, there is a sequence $\{g_n\}_{n\geq 1}$ such that $H^2\cap H_0^1 \ni g_n\to g$ in $L^2$ and 
\begin{align*}
\int_{0}^1\bbD_{u,k}(\om)(y'){\bbD}_{u,k}^1(g_n)(y')dy'=\int_0^1\om(y)g_n(y)dy. 
\end{align*}
Then 
\begin{align*}
&\left|\int_{0}^1\bbD_{u,k}(\om)(y_c){\bbD}_{u,k}^1(g)(y')dy'-\int_0^1\om(k,y)g(k,y)dy\right|\\
&=\left|\int_{0}^1\bbD_{u,k}(\om)(y'){\bbD}_{u,k}^1(g_n)(y')dy'-\int_0^1\om(k,y)(g(y)-g_n(y))dy\right|\\
&\leq C\|\bbD_{u,k}(\om)\|_{L^2}\|{\bbD}_{u,k}^1(g-g_n)\|_{L^2}+\|\om\|_{L^2}\|g-g_n\|_{L^2}\to 0.
\end{align*}
Thus we proved \eqref{eq: id1}. The lower bound of ${\bbD}_{u,k}$ and ${\bbD}_{u,k}^1$ in \eqref{eq: est11} follow directly from a duality argument, and the upper bound \eqref{eq: D, D^1 upper}. Thus we proved the proposition.
\end{proof}
\begin{remark}
For any bounded function $C^{-1}<p(y')<C$, the operators $p(y')\bbD_{u, k}$ and $\f{1}{p(y')}\bbD_{u,k}^1$ also satisfy the same properties in Proposition \eqref{prop: general-wave}. The wave operator $\bbD_{u, k}$ is normalized so that if $\left(\f{u'}{\th}\right)'=0$, then $\bbD_{u,k}=\bbD_{u,k}^1=\mathrm{Id}$.
 
Moreover, it is easy to check that
\ben\label{eq: formula D-D^1}
\bbD_{u,k}[\om](y')-\bbD^{1}_{u,k}[\om](y')=\f{\f{\th(y')}{u'(y')}\rho(y')}{\sqrt{J_1(y', k)^2+\pi^2J_2(y')^2}}\left[\left(\f{u'}{\th}\right)',\Pi_1\right][\om](y'). 
\een
which is a commutator.  
\end{remark}
\subsection{Wave operator in $(t, z, v)$ coordinates}
Recall that we construct the wave operator in $(t, \tilde{x}, \tilde{y})$ coordinates (we drop tildes and still use $(t, x, y)$ in the above calculations for convenience), which is from the nonlinear change of coordinates and then the linear inverse change of coordinates. We show the relationship between different coordinate systems in the following map. 
\beno
\underbrace{(t,x,y)}_{\text{original system}}\xmapsto{\text{nonlinear change of coordinates}} \underbrace{(t,z,v)}_{\text{working object}}\xrightleftharpoons[\text{linear change of coordinates}]{\text{inverse linear change of coordinate}}\underbrace{(t,\tilde{x},\tilde{y})}_{\text{wave operator}}.
\eeno
In this section, we give the representation formula of the wave operator in $(t, z, v)$ coordinates. 

Let us first rewrite ${\bbD}_{u,k}$ and ${\bbD}_{u,k}^1$ in different forms. We first define
\begin{align}
&b_1(y', k)=\f{J_1(y', k)}{\sqrt{J_1(y', k)^2+\pi^2J_2(y')^2}}\\
&b_2(y', k)=-\f{\rho(y')\f{\th(y')}{u'(y')}}{\sqrt{J_1(y', k)^2+\pi^2J_2(y')^2}}.
\end{align}
Recall that $\varphi_1(y)=\left(\f{u'}{\th}\right)'(y)$ defined in \eqref{eq:definition vaprhi} and $e(z, y', k)$ defined in \eqref{eq:e}. A direct calculation gives that
\begin{align*}
\Pi_1[\om](y')=-P.V.\int_0^1\f{e(z, y', k)}{u(y')-u(y)}\om(z)dz
\end{align*}
which gives
\beq\label{eq: D, D^1 new form}
\begin{aligned}
&\bbD_{u,k}(\om)(k,y)=b_1(k,y)\om(y)+\varphi_1(y)b_2(k,y)p.v.\int_0^1\f{e(k,y',y)}{u(y')-u(y)}\om(y')dy',\\
&\bbD_{u,k}^1(\om)(k,y)=b_1(k,y)\om(y)+b_2(k,y)\int_0^1\f{e(k,y',y)\varphi_1(y')}{u(y')-u(y)}\om(y')dy'.
\end{aligned}
\eeq

By the \eqref{eq: id1}, it is easy to obtain that the inverse of $\bbD_{u,k}$ exists and has the following formula
\begin{align}\label{eq: D_uk^{-1}}
\bbD_{u,k}^{-1}(\om)(k,y)&=b_1(k,y)\om(y)+\varphi_1(y)\int_0^1\f{e(k,y,y')}{u(y)-u(y')}b_2(k,y')\om(y')dy'.
\end{align}

We also define $B_1(v, k)$, $B_2(v, k)$ and $E(v, v', k)$ so that
\begin{align}
B_1(u(y), k)=b_1(y', k),\quad  
B_2(u(y), k)=b_2(y', k), \quad E(u(y), u(y'), k)=e(y, y', k). 
\end{align}
Let us now introduce the following operators associated with $\bbD_{u, k}$, $\bbD_{u, k}^{-1}$, and $\bbD_{u, k}^1$. For any Schwartz function $f(t, z, v)$ defined on $\mathbb{R}\times \mathbb{T}\times \mathrm{Ran}\, u$, we define
\begin{align}
\label{eq:wavein zv}&\bfD_{u,k}\big(\mathcal{F}_{1}{f}(t,k,\cdot)\big)(t,k,v)\\
\nonumber&\eqdef B_1(v, k)\mathcal{F}_{1}{f}(t,k,v)\\
\nonumber&\quad+\widetilde{\varphi_1}(v)B_2(v, k)\int_{u(0)}^{u(1)}E(k,v_1,v)\f{\mathcal{F}_{1}{f}(t,k,v_1)e^{-i(v_1-v)tk}}{v_1-v}(u^{-1})'(v_1)dv_1,\\
\label{eq:inversewavein zv}&\bfD_{u,k}^{-1}\big(\mathcal{F}_{1}{f}(t,k,\cdot)\big)(t,k,v)\\
\nonumber&\eqdef B_1(v, k)\mathcal{F}_{1}{f}(t,k,v)\\
\nonumber&\quad+\widetilde{\varphi_1}(v)\int_{u(0)}^{u(1)}E(k,v,v_1)B_2(k,v_1)\f{\mathcal{F}_{1}{f}(t,k,v_1)e^{i(v_1-v)tk}}{v-v_1}(u^{-1})'(v_1)dv_1.
\end{align}
and
\begin{align}
\label{eq:wD^1 in zv}&\bfD_{u,k}^1\big(\mathcal{F}_{1}{f}(t,k,\cdot)\big)(t,k,v)\\
\nonumber&\eqdef B_1(v, k)\mathcal{F}_{1}{f}(t,k,v)\\
\nonumber&\quad+B_2(v, k)\int_{u(0)}^{u(1)}E(k,v_1,v)\f{\widetilde{\varphi_1}(v_1)\mathcal{F}_{1}{f}(t,k,v_1)e^{-i(v_1-v)tk}}{v_1-v}(u^{-1})'(v)dv_1.
\end{align}
With these definitions, we have the following proposition. 
\begin{proposition}\label{prop:wave-coordinate}
For $k\neq 0$, let $\tilde{\Delta}_{t, k}^l=-k^2\widetilde{\varphi_4}+\widetilde{u'}(\pa_v-itk)\Big(\widetilde{\varphi_4}\widetilde{u'}(\pa_v-itk)\Big)$ and 
\beno
\mathbf{R}_{u,k}=v\mathrm{Id}-\widetilde{\varphi_1}(v)\big(\tilde{\Delta}_{t, k}^l\big)^{-1}
\eeno 
be a modified Rayleigh operator. 
Then it holds for any ${f}(t,z,v)$ and ${g}(t,z,v)$ that
\begin{align}
&\bfD_{u,k}\bfR_{u,k}=v\bfD_{u,k},\\
&\bfD_{u,k}\bfD_{u,k}^{-1}=\bfD_{u,k}^{-1}\bfD_{u,k}=\mathrm{Id},\ \bfD_{u,k}^{-1}=(\bfD_{u,k}^1)^*,\\
\label{eq:[pa_t,D]}
&[\pa_t,\bfD_{u,k}]\big(\mathcal{F}_{1}{{f}}(t,k,\cdot)\big)(t,k,v)
=[ikv,\bfD_{u,k}]\big(\mathcal{F}_{1}{{f}}(t,k,\cdot)\big)(t,k,v),\\
\label{eq: dual id}&\int\bfD_{u,k}\big(\mathcal{F}_{1}{f}(t,k,\cdot)\big)(t,k,v)\overline{\bfD_{u,k}^1\big(\mathcal{F}_{1}{g}(t,k,\cdot)\big)(t,k,v)}dv
=\int\mathcal{F}_{1}{f}(k,u)\overline{\mathcal{F}_{1}{g}(k,v)}dv,\\
\label{eq:L^2 bound}&
C^{-1}\leq \|\bfD_{u,k}\|_{L^2\to L^2}\leq C,\quad C^{-1}\leq \|\bfD_{u,k}^1\|_{L^2\to L^2}\leq C,\quad
C^{-1}\leq \|\bfD_{u,k}^{-1}\|_{L^2\to L^2}\leq C.
\end{align}
Let $\Om(t, z, v)$ be defined in \eqref{eq: Om, a, Pi, Psi}. Then applying the inverse linear change of coordinate \eqref{eq: inversecoordinate}, we get that
\begin{align*}
\bfD_{u,k}\big(\mathcal{F}_{1}{{\Om}}(t,k,\cdot)\big)(t,k,v)e^{ikz}\bigg|_{z=x-u(y)t,\ v=u(y)}
&=\bfD_{u,k}\big(\mathcal{F}_{1}{\Om}(t,k,\cdot)\big)(t,k,u(y))e^{ik(x-tu(y))}\\
&=\bbD_{u,k}\big(\mathcal{F}_{1}{w}(t,k,\cdot)\big)(t,k,y)e^{ikx},
\end{align*}
where $w(t,x,y)={\Om}\big(t,x-u(y)t,u(y)\big)$ is different from $\tilde{\om}(t,x,y)$ in the original coordinate system. 
\end{proposition}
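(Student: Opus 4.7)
The plan is to reduce every claim in Proposition \ref{prop:wave-coordinate} to the corresponding property of $\bbD_{u,k}$ from Proposition \ref{prop: general-wave} via the inverse linear change of coordinates $\tilde y = u^{-1}(v)$, $\tilde x = z + tv$. The central bookkeeping observation is that, at the level of Fourier modes in $z$, this change of variables is implemented by the phase-twisted composition
\beno
P_t: f(v) \longmapsto e^{-iku(y)t} f(u(y)),
\eeno
so that if $w(t,x,y) = \Om(t, x-tu(y), u(y))$ then $\mathcal F_1 w(t,k,y) = e^{-iku(y)t}\mathcal F_1\Om(t,k,u(y)) = (P_t \mathcal F_1\Om)(y)$. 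The definitions \eqref{eq:wavein zv}, \eqref{eq:inversewavein zv}, \eqref{eq:wD^1 in zv} are precisely the conjugates $P_t^{-1}\bbD_{u,k}P_t$, $P_t^{-1}\bbD_{u,k}^{-1}P_t$ and $P_t^{-1}\bbD_{u,k}^1 P_t$ respectively; this is checked by substituting $v_1=u(y')$, $dy' = (u^{-1})'(v_1)\,dv_1$ in the integral defining $\bbD_{u,k}$ (and its variants) applied to $P_t f$, the phase ratio $e^{-iku(y')t}/e^{-iku(y)t} = e^{-ik(v_1-v)t}$ producing exactly the exponential kernel appearing in \eqref{eq:wavein zv}--\eqref{eq:wD^1 in zv}. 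This establishes the last claim of the proposition (the coordinate identity) directly, since by definition $\bbD_{u,k}(\mathcal F_1 w)(k,y) = (P_t \bfD_{u,k}(\mathcal F_1\Om))(y) = e^{-iku(y)t}\bfD_{u,k}(\mathcal F_1\Om)(t,k,u(y))$.

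Next I check that the distorted Laplacian transforms correctly: writing $\pa_v - t\pa_z = (u^{-1})'(v)\pa_y = \pa_y/u'(y)$ acting on functions pulled back through the linear change of variables, a direct computation gives
\beno
\widetilde{u'}(\pa_v-itk)\bigl(\widetilde{\varphi_4}\,\widetilde{u'}(\pa_v-itk)\cdot\bigr) - k^2\widetilde{\varphi_4}\;\stackrel{P_t}{\longleftrightarrow}\; \pa_y\bigl(\tfrac{1}{\th}\pa_y\cdot\bigr)-\tfrac{k^2}{\th} = \tilde\Delta_k,
\eeno
so $\tilde\Delta_{t,k}^l = P_t^{-1}\tilde\Delta_k P_t$ and consequently $\bfR_{u,k} = P_t^{-1}\mathcal R_{u,k}P_t$. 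Conjugating the intertwining relation $\bbD_{u,k}\mathcal R_{u,k} = u\,\bbD_{u,k}$ of Proposition \ref{prop: general-wave} yields $\bfD_{u,k}\bfR_{u,k} = v\,\bfD_{u,k}$, since multiplication by $u(y)$ pulls back to multiplication by $v$. In the same way, conjugating $\bbD_{u,k}\bbD_{u,k}^{-1}=\bbD_{u,k}^{-1}\bbD_{u,k}=\mathrm{Id}$ gives $\bfD_{u,k}\bfD_{u,k}^{-1}=\bfD_{u,k}^{-1}\bfD_{u,k}=\mathrm{Id}$, and the duality \eqref{eq: dual id} follows from \eqref{eq: id1} after accounting for the Jacobian $dy = (u^{-1})'(v)\,dv$ and the cancellation $e^{-iku(y)t}\overline{e^{-iku(y)t}}=1$ of the two phase factors (this is why the $(u^{-1})'$ placement in \eqref{eq:wD^1 in zv} differs from \eqref{eq:inversewavein zv} and ensures $\bfD_{u,k}^{-1}=(\bfD_{u,k}^1)^*$ in the flat $L^2(dv)$ inner product). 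The $L^2$ bounds \eqref{eq:L^2 bound} are immediate: $P_t$ is an $L^2$-bounded isomorphism with uniform constants because $|e^{-iku(y)t}|=1$ and $u'$ is bounded away from $0$ and $\infty$, so the conjugation preserves \eqref{eq: est11}.

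For the commutator \eqref{eq:[pa_t,D]}, the cleanest route is to use that $\bbD_{u,k}$ is time-independent, hence $[\pa_t,\bbD_{u,k}]=0$. Combined with $\pa_t P_t = -iku(y)\cdot P_t$ and $\pa_t P_t^{-1} = P_t^{-1}\cdot ikv$, differentiating $\bfD_{u,k}=P_t^{-1}\bbD_{u,k}P_t$ gives
\beno
[\pa_t,\bfD_{u,k}] = P_t^{-1}\bbD_{u,k}\bigl(-iku(y)\bigr)P_t + P_t^{-1}\bigl(ikv\bigr)\bbD_{u,k}P_t = ikv\,\bfD_{u,k} - \bfD_{u,k}\,ikv = [ikv,\bfD_{u,k}],
\eeno
since $P_t^{-1}(\text{mult. by }u(y))P_t$ is multiplication by $v$. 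Alternatively, one can verify this by applying $\pa_t$ directly to the integral representation \eqref{eq:wavein zv}: the time dependence sits only in the factor $e^{-i(v_1-v)tk}$, whose derivative contributes $-ik(v_1-v)$, exactly cancelling the $(v_1-v)^{-1}$ and matching $[ikv,\bfD_{u,k}]$ computed from the definition.

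The only real bookkeeping hazard is matching the phase conventions and the placement of $(u^{-1})'$ between $\bfD_{u,k}^{-1}$ and $\bfD_{u,k}^1$; once the conjugation $\bfD_{u,k}=P_t^{-1}\bbD_{u,k}P_t$ is set up precisely, everything propagates mechanically from Proposition \ref{prop: general-wave}. The main obstacle is therefore notational rather than mathematical: one must carry the two phase factors $e^{\pm ikvt}$ consistently and remember that the duality identity \eqref{eq: id1} is written without conjugation in $y$-coordinates but \eqref{eq: dual id} is the sesquilinear form in $v$-coordinates, which is exactly why the conjugation of $\bbD_{u,k}^1$ must carry its $(u^{-1})'$ on the outside.
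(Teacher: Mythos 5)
Your proposal is correct and takes essentially the same approach as the paper: the paper's proof is the one-line remark that the result "follows directly from Proposition \ref{prop: general-wave} and the representation formulas \eqref{eq: D, D^1 new form} and \eqref{eq: D_uk^{-1}} by a simple change of coordinates," and you have carried out exactly that change of coordinates, organized as conjugation by the phase-twisted composition operator $P_t$. Your framework also makes transparent why the $(u^{-1})'$ factor sits inside the integral for $\bfD_{u,k}$ and $\bfD_{u,k}^{-1}$ but outside for $\bfD_{u,k}^1$ (the Jacobian cancellation needed for the sesquilinear pairing in $v$), a point the paper relegates to Remark \ref{Rmk:dual-v}.
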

\begin{remark}\label{Rmk:dual-v}
We have changed $\bbD_{u,k}^1$ in the $(z,v)$ coordinate slightly to ensure that \eqref{eq: dual id} holds. 
Let $\Om(t, z, v)$ be defined in \eqref{eq: Om, a, Pi, Psi}. It holds that
\beno
\bfD_{u,k}^1\big(\mathcal{F}_{1}{\Om}(t,k,\cdot)\big)(t,k,v)e^{ikz}\bigg|_{z=x-u(y)t,\ v=u(y)}
=\f{1}{u'(y)}\bbD_{u,k}^1\big(u'\mathcal{F}_{1}{\om}(t,k,\cdot)\big)(t,k,y)e^{ikx},
\eeno
where $w(t,x,y)={\Om}\big(t,x-u(y)t,u(y)\big)$. 
\end{remark}
The proposition follows directly from Proposition \ref{prop: general-wave} and the representation formulas \eqref{eq: D, D^1 new form} and \eqref{eq: D_uk^{-1}} by a simple change of coordinates. 

\begin{remark}\label{Rmk: compact support}
By the compact support property of $b_2$. It is easy to check $\bfD_{u,k}(\mathcal{F}_1\Om)$ has the same compact support as $\Om$. 
\end{remark}

\subsection{Gevrey regularity of the homogeneous solution}
To obtain the Fourier kernels of $\bfD_{u,k}$, $\bfD_{u,k}^{-1}$, and $\bfD_{u,k}^1$, we need to study the regularity of the coefficients $B_1(v, k)$, $B_2(v, k)$ and the kernel $E(v, v', k)$.  
Indeed, all these functions are related to the homogeneous solution $\phi_1(y, y', k)$ obtained in Proposition \ref{prop: homsol}. 

Let us define $\Phi(v, v', k)$ and $\Phi_1(v, v', k)$ to be such that
\ben\label{eq: Phi, Phi_1}
\Phi(u(y), u(y'), k)=\phi(y, y', k), \quad \Phi_1(u(y), u(y'), k)=\phi_1(y, y', k). 
\een
By Proposition \ref{prop: homsol} and \eqref{eq:phi_1}, we have
\beno
\Phi(v, v', k)=(v-v')\Phi_1(v, v', k),
\eeno
and
\beq\label{eq:Phi_1}
\begin{aligned}
\Phi_1(v, v', k)
&=1+\int_{v'}^v\f{k^2\tilde{\th}(z')(u^{-1})'(z')}{(z'-v')^2}\int_{v'}^{z'}\f{(z''-v')^2}{\tilde{\th}(z'')}(u^{-1})'(z'')\Phi_1(z'',v',k)dz''dz'\\
&\eqdef 1+k^2(\mathrm{T}_0\circ \mathrm{T}_{2,2})[\Phi_1],
\end{aligned}
\eeq
where $\tilde{\th}(v)=(\th\circ u^{-1})(v)$, and for $j_1\leq j_2+1$, $j_2=0, 1, 2,...$
\begin{align*}
&\mathrm{T}_0[f](v, v', k)=\int_{v'}^vf(v_1, v', k)dv_1\\
&\mathrm{T}_{j_1, j_2}[\Phi_1](z', v', k)=\f{\tilde{\th}(z')(u^{-1})'(z')}{(z'-v')^2}\int_{v'}^{z'}\f{(z''-v')^2}{\tilde{\th}(z'')}(u^{-1})'(z'')\Phi_1(z'',v',k)dz''. 
\end{align*} 
For any $k\neq 0$, let $M\geq 1$ be a large constant, then we define the following weighted Gevrey spaces:
\ben
\mathcal{G}^{M|k|,s}_{ph,\cosh}([a,b]^2)\eqdef \{f\in C^{\infty}([a,b]^2):~\|f\|_{\mathcal{G}^{M|k|,s}_{ph,\cosh}([a,b]^2)}<\infty\}, 
\een
with 
\beq
\|f\|_{\mathcal{G}^{M|k|,s}_{ph,\cosh}([a,b]^2)}\eqdef
\sup_{(v, v')\in [a,b]^2,\, m\geq m_1\geq 0}\f{|\pa_{v}^{m-m_1}(\pa_{v'}+\pa_{v})^{m_1}f(v, v')|}{\Gamma_s(m)M^m|k|^{m}\cosh M|k|(v-v')},
\eeq
where $\Gamma_s(m)=(m!)^{\f1s}(m+1)^{-2}$. 

\begin{lemma}\label{rmk: Gevrey1}
It holds that
\begin{align*}
\sup_{(v, v')\in [a,b]^2,\, m\geq m_1\geq 0}\f{|\pa_{v}^{m-m_1}\pa_{v'}^{m_1}f(v, v')|}{\Gamma_s(m)2^{m_1}M^m|k|^{m}\cosh M|k|(v-v')}\leq \|f\|_{\mathcal{G}^{M|k|,s}_{ph,\cosh}([a,b]^2)}. 
\end{align*}
\end{lemma}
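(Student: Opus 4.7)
The claim is essentially a combinatorial identity plus the definition of the norm. The key observation is that although the norm controls derivatives of the form $\pa_v^{m-m_1}(\pa_v+\pa_{v'})^{m_1}f$, the operator $\pa_{v'}$ can be written as $(\pa_v+\pa_{v'})-\pa_v$, and crucially $\pa_v$ commutes with $\pa_v+\pa_{v'}$. Therefore I plan to apply the binomial theorem in the form
\[
\pa_{v'}^{m_1}=\big((\pa_v+\pa_{v'})-\pa_v\big)^{m_1}=\sum_{j=0}^{m_1}\binom{m_1}{j}(-1)^{j}\pa_v^{j}(\pa_v+\pa_{v'})^{m_1-j},
\]
and then multiply on the left by $\pa_v^{m-m_1}$ to get
\[
\pa_v^{m-m_1}\pa_{v'}^{m_1}f=\sum_{j=0}^{m_1}\binom{m_1}{j}(-1)^{j}\pa_v^{(m-m_1)+j}(\pa_v+\pa_{v'})^{m_1-j}f.
\]

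Next I would apply the definition of $\|f\|_{\mathcal{G}^{M|k|,s}_{ph,\cosh}}$ to each term on the right-hand side. For the $j$-th term the total order of differentiation is $(m-m_1+j)+(m_1-j)=m$ and the number of $(\pa_v+\pa_{v'})$ factors is $m_1-j\le m$, so the norm applies with the parameter $m$ unchanged. This yields
\[
\big|\pa_v^{(m-m_1)+j}(\pa_v+\pa_{v'})^{m_1-j}f(v,v')\big|\le \Gamma_s(m)M^m|k|^m\cosh\!\big(M|k|(v-v')\big)\,\|f\|_{\mathcal{G}^{M|k|,s}_{ph,\cosh}}.
\]
Summing in $j$ with absolute values and using $\sum_{j=0}^{m_1}\binom{m_1}{j}=2^{m_1}$ produces exactly the factor $2^{m_1}\Gamma_s(m)M^m|k|^m\cosh(M|k|(v-v'))\|f\|_{\mathcal{G}^{M|k|,s}_{ph,\cosh}}$ on the right, which is the bound claimed in the lemma after dividing through.

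\textbf{Expected obstacle.} There is essentially no obstacle: the step uses only the commutativity $[\pa_v,\pa_v+\pa_{v'}]=0$, the binomial identity, and linearity of the norm bound. The only thing to be slightly careful about is to verify that the range of indices in the norm definition ($m\ge m_1\ge 0$) covers each summand: for the $j$-th term the ``$m$'' parameter is $m$ and the ``$m_1$'' parameter becomes $m_1-j$, which satisfies $0\le m_1-j\le m$, so the norm indeed bounds each term uniformly. This makes the proof a one-line consequence of the binomial expansion.
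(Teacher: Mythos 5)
Your proof is correct and is essentially the same as the paper's: both expand $\pa_{v'}^{m_1}=\big((\pa_v+\pa_{v'})-\pa_v\big)^{m_1}$ by the binomial theorem (the paper just writes the sum in the reindexed variable $m_2=m_1-j$), bound each term by the norm since the total order is $m$ and the $(\pa_v+\pa_{v'})$ power lies in $[0,m]$, and then use $\sum_{j}\binom{m_1}{j}=2^{m_1}$.
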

\begin{proof}
We have 
\begin{align*}
&\sup_{(v, v')\in [a,b]^2,\, m\geq m_1\geq 0}\f{|\pa_{v}^{m-m_1}\pa_{v'}^{m_1}f(v, v')|}{\Gamma_s(m)2^{m_1}M^m|k|^{m}\cosh M|k|(v-v')}\\
&\leq \sup_{m_1\geq 0}2^{-m_1}\sum_{m_2=0}^{m_1}\f{m_1!}{m_2!(m_1-m_2)!}\sup_{(v, v')\in [a,b]^2,\, m\geq m_2\geq 0}\f{|\pa_{v}^{m-m_2}(\pa_{v'}+\pa_{v})^{m_2}f(v, v')|}{\Gamma_s(m)M^m|k|^{m}\cosh M|k|(v-v')}\\
&\leq \sup_{m_1\geq 0}2^{-m_1}\sum_{m_2=0}^{m_1}\f{m_1!}{m_2!(m_1-m_2)!}\|f\|_{\mathcal{G}^{M|k|,s}_{ph,\cosh}([a,b]^2)}\leq \|f\|_{\mathcal{G}^{M|k|,s}_{ph,\cosh}([a,b]^2)}. 
\end{align*}
Thus we proved the remark.
\end{proof}

\begin{remark}
It holds that
\begin{align*}
\|fg\|_{\mathcal{G}^{M|k|,s}_{ph,\cosh}([a,b]^2)}
\leq C\|f\|_{\mathcal{G}^{M|k|,s}_{ph,\cosh}([a,b]^2)}\|g\|_{\mathcal{G}^{M|k|,s}_{ph,1}([a,b]^2)}. 
\end{align*}
where the norm $\|\cdot\|_{\mathcal{G}^{M|k|,s}_{ph,1}([a,b]^2)}$ is defined in \eqref{eq:def-G_ph,1}.
\end{remark}

\begin{proposition}\label{prop: T}
Suppose $(u^{-1})'', \tilde{\th}\in \mathcal{G}^{K_u,s_0}_{ph,1}([u(0),u(1)])$, then there exists $M_0=M_0(K_u)\geq 1$, such that for any $M\geq M_0$, it holds that
\beno
\|\rmT_0\circ\rmT_{2,2}\|_{\mathcal{G}^{M|k|,s_0}_{ph,\cosh}([u(0),u(1)]^2)\to \mathcal{G}^{M|k|,s_0}_{ph,\cosh}([u(0),u(1)]^2)}\leq \f{C}{M^2k^2}. 
\eeno
\end{proposition}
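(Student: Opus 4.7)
The plan is to perform a rescaling change of variables that reveals a prefactor $(v-v')^2$, from which the $1/(M^2k^2)$ gain will ultimately come, and then distribute the derivatives $\partial_v^{m-m_1}(\partial_v+\partial_{v'})^{m_1}$ via Leibniz, exploiting the Gevrey regularity of the coefficients and of $f$. Specifically, under the substitution $z'=v'+s(v-v')$ and $z''=v'+st(v-v')$ with $(s,t)\in[0,1]^2$, a direct computation yields
\begin{equation*}
\rmT_0\circ\rmT_{2,2}[f](v,v')=(v-v')^2\int_0^1\!\!\int_0^1 s\,t^2\,\mathcal{A}(z',z'')\,f(z'',v',k)\,dt\,ds,
\end{equation*}
where $\mathcal{A}(z',z'')=f_1(z')f_2(z'')$ with $f_1(z')=\tilde\th(z')(u^{-1})'(z')$ and $f_2(z'')=(u^{-1})'(z'')/\tilde\th(z'')$. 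Both $f_1,f_2$ lie in $\mathcal G^{K_u',s_0}_{ph,1}$ (with a possibly enlarged $K_u'$) by the algebra and reciprocal properties of Gevrey spaces, using the hypothesis $\tilde\th,(u^{-1})''\in\mathcal G^{K_u,s_0}_{ph,1}$ together with $\tilde\th\ge c_0>0$.

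I would next pass to coordinates $(w,w')=(v-v',v')$, under which $\partial_v+\partial_{v'}=\partial_{w'}$ and $\partial_v=\partial_w$, and record the identities $\partial_{w'}z'=\partial_{w'}z''=1$, $\partial_w z'=s$, $\partial_w z''=st$, $\partial_{w'}(w^2)=0$, $\partial_w(w^2)=2w$. Expanding $\partial_w^{m-m_1}\partial_{w'}^{m_1}$ by Leibniz, the $\partial_{w'}^{m_1}$ derivatives distribute multinomially over $\mathcal{A}$ and $f(z'',w')$ as tangential derivatives of the form $(\partial_{z'}+\partial_{z''})^\gamma$ and $(\partial_1+\partial_2)^\delta$, matching exactly the derivative combinations controlled by the two Gevrey norms. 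The $\partial_w^{m-m_1}$ derivatives similarly distribute, with at most two of them falling on $w^2$ (producing $2w$ or $2$), and the remainder producing bounded factors $s^a(st)^b(st)^c\le 1$ times pure $\partial_{z'},\partial_{z''},\partial_1$ derivatives.

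The core estimate then bounds each term by the Gevrey norms. For $\mathcal{A}$ one uses the one-variable Gevrey estimates on $f_1,f_2$ to obtain $|\partial_{z'}^\alpha\partial_{z''}^\beta(\partial_{z'}+\partial_{z''})^\gamma\mathcal{A}|\le C\|f_1\|\|f_2\|K_u'^{\alpha+\beta+\gamma}\Gamma_{s_0}(\alpha+\beta+\gamma)$, after summing a binomial over the separable structure. For $f$ the hypothesis directly gives $|\partial_1^\mu(\partial_1+\partial_2)^\nu f(z'',w')|\le\|f\|(M|k|)^{\mu+\nu}\Gamma_{s_0}(\mu+\nu)\cosh(M|k|(z''-w'))$, and the weight monotonicity $|z''-w'|=st|w|\le|w|$ gives $\cosh(M|k|(z''-w'))\le\cosh(M|k|(v-v'))$, which pulls the weight outside the integral. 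The factorial bookkeeping closes via the Gevrey product inequality $\Gamma_{s_0}(a)\Gamma_{s_0}(b)\le C\Gamma_{s_0}(a+b)$, while the geometric series $\sum_\alpha(K_u'/M)^\alpha$ converges for $M\ge M_0(K_u)$ sufficiently large.

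The gain $1/(M^2k^2)$ then comes from the surviving prefactor $(v-v')^{2-j}$, where $j\in\{0,1,2\}$ is the number of $\partial_w$ derivatives that hit $w^2$: in each case $|v-v'|^{2-j}(M|k|)^{2-j}\le C\cosh(M|k|(v-v'))$ supplies precisely the factor $(M|k|)^{-2}\le 1/(M^2k^2)$. The main obstacle is the combinatorial bookkeeping of the multinomial Leibniz expansion across four channels (the two Gevrey factors of $\mathcal{A}$, the function $f$, and the prefactor $w^2$), checking that the total factorial growth remains $\Gamma_{s_0}(m)$ after all sums are performed, and that the interplay between derivatives hitting different factors does not produce extra powers of $M|k|$ outside of what is absorbed by the $(v-v')^2$ prefactor; boundedness of the scaling variables $s,t\in[0,1]$ is crucial here in preventing the change of variables from spoiling the Gevrey bounds.
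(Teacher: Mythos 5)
Your overall route is essentially the paper's: the substitution $z'=v'+s(v-v')$, $z''=v'+st(v-v')$ is exactly the rescaling the paper uses in its ``Claim'' (there written as $u'=v'+(v-v')t$), and your Leibniz distribution of $(\pa_v+\pa_{v'})^{m_1}$ over the coefficients and $f$ reproduces the paper's computation of $(\pa_v+\pa_{v'})^m\rmT_0\circ\rmT_{2,2}f$. The coefficient regularity ($f_1,f_2\in\mathcal G^{K_u',s_0}_{ph,1}$ via algebra/reciprocal properties and $\tilde\th\ge c_0$) and the factorial bookkeeping via $\Gamma_{s_0}(a)\Gamma_{s_0}(b)\lesssim\Gamma_{s_0}(a+b)$ are also as in the paper.

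However, the step where you extract the factor $1/(M^2k^2)$ is wrong as written. You first bound $\cosh(M|k|(z''-v'))\le\cosh(M|k|(v-v'))$ and pull this weight out of the $(s,t)$-integral, and then separately invoke $|v-v'|^{2-j}(M|k|)^{2-j}\le C\cosh(M|k|(v-v'))$ to convert the surviving prefactor into the gain. Applying both produces $\cosh(M|k|(v-v'))^2$ in your upper bound, while the norm $\mathcal G^{M|k|,s_0}_{ph,\cosh}$ divides by only one factor of $\cosh$; since $\cosh(X)^2\sim e^{2X}/4$ is not $O(\cosh X)$ and $|v-v'|$ is not small, the estimate does not close. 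The two mechanisms cannot be decoupled: the gain must come from the single integral inequality
\begin{equation*}
X^{2}\int_0^1\!\!\int_0^1 s\,t^{2}\cosh(stX)\,dt\,ds\le C\cosh(X),\qquad X=M|k|\,|v-v'|,
\end{equation*}
(and its analogues with $X^{2-j}$ when $j$ derivatives hit $w^2$), i.e.\ you must keep $\cosh(stX)$ \emph{inside} the $(s,t)$-integral and use the Jacobian weight $st^2$ to absorb the power $X^{2-j}$. This is precisely the paper's estimate $\int_{v'}^v\frac{1}{(u_1-v')^2}\int_{v'}^{u_1}(u_2-v')^2\cosh(M|k|(u_2-v'))\,du_2\,du_1\le\frac{C}{M^2k^2}\cosh(M|k|(v-v'))$ in rescaled coordinates, where each of the two integrations of $\cosh$ yields one factor of $(M|k|)^{-1}$. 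With that correction your argument closes; as a minor additional remark, for the pure $\pa_v^{m_1}$ derivatives with $m_1\ge2$ the paper instead differentiates the operator twice using the ODE structure ($\pa_v\rmT_0\circ\rmT_{2,2}f=\rmT_{2,2}f$, etc.) before rescaling, which avoids tracking which $\pa_w$ derivatives land on $w^2$, but your direct Leibniz version is equivalent once the integral inequality above is used.
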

\begin{proof}
In this proof, we use the following notations for convenience. Let $L_1(v)=\tilde{\th}(v)(u^{-1})'(v)$ and $L_2(v)=\f{(u^{-1})'(v)}{\tilde{\th}(v)}$, then 
\beno
\|L_1\|_{\mathcal{G}^{M|k|,s}_{ph,1}([u(0),u(1)]^2)}+\|L_2\|_{\mathcal{G}^{M|k|,s}_{ph,1}([u(0),u(1)]^2)}\leq C. 
\eeno
It is easy to check that
\begin{align*}
&(\pa_{v}+\pa_{v'})^{m}\rmT_0\circ\rmT_{2,2}f\\
&=\sum_{k_1+k_2\leq m}\f{m!}{(m-k_1-k_2)!k_1!k_2!}\\
&\qquad \times\int_{v'}^v\f{L_1^{(k_1+1)}(u_1)}{(u_1-v')^2}\int_{v'}^{u_1}(u_2-v')^2L_2^{(k_2+1)}(u_2)(\pa_{u_2}+\pa_{v'})^{m-k_1-k_2}f(k,u_2,v')d u_2 du_1,
\end{align*}
which gives us that
\begin{align*}
&\left|\f{(\pa_{v}+\pa_{v'})^{m}\rmT_0\circ\rmT_{2,2}f}{\Gamma_{s_0}(m)M^m|k|^m\cosh M|k|(v-v')}\right|\\
&\leq \sum_{k_1+k_2\leq m}\f{m!}{(m-k_1-k_2)!k_1!k_2!}
\Gamma_{s_0}(m-k_1-k_2)\Gamma_{s_0}(k_1)\Gamma_{s_0}(k_2)\\
&\qquad \times\int_{v'}^v\f{1}{(u_1-v')^2}\int_{v'}^{u_1}(u_2-v')^2\cosh M|k|(u_2-v')d u_2 du_1\\
&\qquad\times \sup_{0\leq m_1\leq m}\left|\f{(\pa_{v}+\pa_{v'})^{m_1}f}{\Gamma_{s_0}(m_1)M^{m_1}|k|^{m_1}\cosh M|k|(v-v')}\right|\|L_1\|_{\mathcal{G}^{M|k|,s_0}_{ph,1}([u(0),u(1)])}\|L_2\|_{\mathcal{G}^{M|k|,s_0}_{ph,1}([u(0),u(1)])}\\
&\leq \f{C}{M^2k^2}\sup_{0\leq m_1\leq m}\left|\f{(\pa_{v}+\pa_{v'})^{m_1}f}{\Gamma_{s_0}(m_1)M^{m_1}|k|^{m_1}\cosh M|k|(v-v')}\right|,
\end{align*}
which implies 
\beno
\sup_{m\geq 0}\left|\f{(\pa_{v}+\pa_{v'})^{m}\rmT_0\circ\rmT_{2,2}f}{\Gamma_{s_0}(m)M^m|k|^m\cosh M|k|(v-v')}\right|
\leq \f{C}{M^2k^2}\sup_{m\geq 0}\left|\f{(\pa_{v}+\pa_{v'})^{m}f}{\Gamma_{s_0}(m)M^{m}|k|^{m}\cosh M|k|(v-v')}\right|. 
\eeno
We also have 
\begin{align*}
&\pa_{v}\rmT_0\circ\rmT_{2,2}f(v, v')=\rmT_{2,2}f(v, v')\\
&\pa_{v}^2\rmT_0\circ\rmT_{2,2}f(v, v')=(v-v')L_1'(v)\rmT_{3,2}f-2\rmT_{3,2}f+L_1(v)L_2(v)f(v, v')
\end{align*}
{\bf Claim:} It holds for any $s\in (0,1)$ and $j\geq 0$ that 
\beq
\begin{aligned}
&\sup_{m_1,\,m\geq 0}\left\|\f{\pa_{v}^{m_1}(\pa_{v}+\pa_{v'})^{m-m_1}\Big(\f{1}{(v-v')^{j+1}}\int_{v'}^v(u'-v')^{j}f(u',v')du'\Big)}{\Gamma_{s}(m)M^m|k|^m\cosh M|k|(v-v')}\right\|_{L^{\infty}([u(0),u(1)]^2)}\\
&\qquad\leq C\sup_{m_1, m\geq 0}\left\|\f{(\pa_{v'}+\pa_{v})^{m-m_1}\pa_{v}^{m_1}f(v, v')}{\Gamma_{s}(m)M^m|k|^m\cosh M|k|(v-v')}\right\|_{L^{\infty}([u(0),u(1)]^2)}. 
\end{aligned}
\eeq
Let us admit the claim and finish the proof of the proposition first. Indeed, it is easy to check that
\begin{align*}
\sum_{m=0}^{2}\sum_{m_1=0}^{m}\left\|\f{\pa_{v}^{m_1}\pa_{v'}^{m-m_1}\rmT_0\circ\rmT_{2,2}f}{(M|k|)^m\cosh M|k|(v-v')}\right\|_{L^{\infty}([u(0),u(1)]^2)}
\leq \f{C}{M^2k^2}\sum_{m=0}^2\left\|\f{\pa_{v'}^mf}{\cosh M|k|(v-v')}\right\|_{L^{\infty}([u(0),u(1)]^2)}. 
\end{align*}
For $m_1\geq 2$, by using the claim, we have that 
\begin{align*}
&\left\|\f{(\pa_{v'}+\pa_{v})^{m-m_1}\pa_{v}^{m_1}\rmT_0\circ\rmT_{2,2}f}{\Gamma_{s_0}(m)(M|k|)^m\cosh M|k|(v-v')}\right\|_{L^{\infty}([u(0),u(1)]^2)}\\
&\leq \left\|\f{(\pa_{v'}+\pa_{v})^{m-m_1}\pa_{v}^{m_1-2}\Big((v-v')L_1'(v)\rmT_{3,2}f\Big)}{\Gamma_{s_0}(m)(M|k|)^m\cosh M|k|(v-v')}\right\|_{L^{\infty}([u(0),u(1)]^2)}\\
&\quad+2\left\|\f{(\pa_{v'}+\pa_{v})^{m-m_1}\pa_{v}^{m_1-2}\rmT_{3,2}f}{\Gamma_{s_0}(m)(M|k|)^m\cosh M|k|(v-v')}\right\|_{L^{\infty}([u(0),u(1)]^2)}\\
&\quad+\left\|\f{(\pa_{v'}+\pa_{v})^{m-m_1}\pa_{v}^{m_1-2}L_1(v)L_2(v)f(v, v'))}{\Gamma_{s_0}(m)(M|k|)^m\cosh M|k|(v-v')}\right\|_{L^{\infty}([u(0),u(1)]^2)}\\
&\leq \f{C}{M^2k^2}\sup_{m_2\leq m_1-2,\, m_3\leq m-m_1}\left\|\f{(\pa_{v'}+\pa_{v})^{m_3}\pa_{v}^{m_2}f(v, v')}{\Gamma_{s_0}(m_2+m_3)(M|k|)^{m_2+m_3}\cosh M|k|(v-v')}\right\|_{L^{\infty}([u(0),u(1)]^2)}. 
\end{align*}

Finally, let us prove the claim. Indeed we have
\begin{align*}
&\pa_{v}^{m_1}(\pa_{v}+\pa_{v'})^{m-m_1}\left(\f{1}{(v-v')^{j+1}}\int_{v'}^u(u'-v')^{j}f(u',v')du'\right)\\
&=\pa_{v}^{m_1}(\pa_{v}+\pa_{v'})^{m-m_1}\left(\int_0^1t^jf(v'+(v-v')t,v')dt\right)\\
&=\int_0^1t^jt^{m_1}\big(\pa_{v}^{m_1}(\pa_{v}+\pa_{v'})^{m-m_1}f\big)(v'+(v-v')t,v')dt,
\end{align*}
which gives that
\begin{align*}
&\left\|\f{\pa_{v}^{m_1}(\pa_{v}+\pa_{v'})^{m-m_1}\left(\f{1}{(v-v')^{j+1}}\int_{v'}^u(u'-v')^{j}f(u',v')du'\right)}{\cosh M|k|(v-v')}\right\|_{L^{\infty}[u(0),u(1)]^2}\\
&\leq \f{\int_0^1t^jt^{m_1}\cosh tM|k|(v-v')dt}{\cosh M|k|(v-v')}\left\|\f{\pa_{v}^{m_1}(\pa_{v}+\pa_{v'})^{m-m_1}f}{\cosh M|k|(v-v')}\right\|_{L^{\infty}[u(0),u(1)]^2}\\
&\leq C\left\|\f{\pa_{v}^{m_1}(\pa_{v}+\pa_{v'})^{m-m_1}f}{\cosh M|k|(v-v')}\right\|_{L^{\infty}[u(0),u(1)]^2}.
\end{align*}
Thus we proved the proposition. 
\end{proof}
By \eqref{eq:Phi_1}, Proposition \ref{prop: T} and Lemma \ref{rmk: Gevrey1}, we have the following corollary:
\begin{corol}\label{corol: regular Phi}
Suppose that $u, \th$ are the same as in Proposition \ref{prop: T} and $\Phi_1$ satisfies \eqref{eq:Phi_1}, then there exist $M_0$ and $C(k)>0$ such that for any $|k|\neq 0$, for all integers $m\geq m_1\geq 0$ and $M\geq M_0$, 
\beno
\left|\pa_v^{m_1}\pa_{v'}^{m-m_1}\Phi_1(v, v', k)\right|\leq C(k)\Gamma_{s_0}(m)(2M)^m|k|^m.
\eeno
Moreover there exists $C\geq 1$ such that
\beno
\left|\pa_{v}^{m_1}\pa_{v'}^{m-m_1}\Big(\f{\Phi_1(v, v', k)-1}{(v-v')^2}\Big)\right|\leq C(k)\Gamma_{s_0}(m)(CM)^m|k|^m. 
\eeno
\end{corol}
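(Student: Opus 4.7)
My plan is to solve the fixed-point equation for $\Phi_1$ in the weighted Gevrey space $\mathcal{G}^{M|k|,s_0}_{ph,\cosh}([u(0),u(1)]^2)$ via a Neumann series and then translate the resulting norm bound into pointwise derivative estimates.

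\textbf{Step 1 (First estimate).} Proposition \ref{prop: T} shows that $k^2\,\rmT_0\circ\rmT_{2,2}$ has operator norm at most $C/M^2$ on $\mathcal{G}^{M|k|,s_0}_{ph,\cosh}$, which is less than $1/2$ for $M\geq M_0$ with $M_0$ large (depending only on $K_u$). A direct inspection shows $\|1\|_{\mathcal{G}^{M|k|,s_0}_{ph,\cosh}}\leq 1$, so the unique fixed point $\Phi_1 = \sum_{n\geq 0}(k^2\,\rmT_0\circ\rmT_{2,2})^n[1]$ satisfies $\|\Phi_1\|_{\mathcal{G}^{M|k|,s_0}_{ph,\cosh}}\leq 2$. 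Unwinding the norm and applying Lemma \ref{rmk: Gevrey1} to convert the mixed $(\partial_v+\partial_{v'})$-derivatives into pure $\partial_{v'}$-derivatives (at the cost of a factor $2^{m_1}$), then absorbing $\cosh M|k|(v-v')\leq \cosh(M|k|(u(1)-u(0)))=:C(k)$, yields the first bound with $(2M)^m$.

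\textbf{Step 2 (Second estimate).} The idea is to rewrite $\Phi_1-1=k^2\,\rmT_0\circ\rmT_{2,2}[\Phi_1]$ so that the vanishing $(v-v')^2$ factor is visible. Substituting $z''=v'+s(z'-v')$ in the inner integral and $z'=v'+t(v-v')$ in the outer integral yields
\begin{equation*}
\frac{\Phi_1-1}{(v-v')^2}=\frac{k^2}{(v-v')^2}\int_{v'}^v (z'-v')\,L_1(z')\,H(z',v',k)\,dz',
\end{equation*}
where $L_1=\tilde{\th}(u^{-1})'$, $L_2=(u^{-1})'/\tilde{\th}$, and
\begin{equation*}
H(z',v',k)=\frac{1}{(z'-v')^3}\int_{v'}^{z'}(w-v')^2\,(L_2\Phi_1)(w,v',k)\,dw.
\end{equation*}
Applying the ``claim'' buried in the proof of Proposition \ref{prop: T} (first with $j=2$ to control $H$, then with $j=1$ to control the outer integral), together with the product rule $\|fg\|_{\mathcal{G}^{M|k|,s_0}_{ph,\cosh}}\lesssim \|f\|_{\mathcal{G}^{M|k|,s_0}_{ph,\cosh}}\|g\|_{\mathcal{G}^{M|k|,s_0}_{ph,1}}$ (using that $L_1,L_2\in\mathcal{G}^{M|k|,s_0}_{ph,1}$ for $M\geq K_u$), one obtains $\|(\Phi_1-1)/(v-v')^2\|_{\mathcal{G}^{M|k|,s_0}_{ph,\cosh}}\leq Ck^2$. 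A final application of Lemma \ref{rmk: Gevrey1} and absorbing the $\cosh$ factor into $C(k)$ deliver the second bound with some absolute constant $C\geq 1$.

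\textbf{Main obstacle.} The delicate point is the singular prefactor $(v-v')^{-2}$: dividing the bound on $\Phi_1$ directly by $(v-v')^2$ would blow up on the diagonal. What saves us is the twofold vanishing $\Phi_1(v',v',k)=1$ and $\partial_v\Phi_1(v',v',k)=0$ (read off from \eqref{eq:phi_1DE}), which lets us recast $(\Phi_1-1)/(v-v')^2$ as a genuine averaging integral along the segment from $(v',v')$ to $(v,v')$; once this is done, the interior claim from the proof of Proposition \ref{prop: T} delivers all derivative estimates uniformly in $k$.
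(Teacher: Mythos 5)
Your proposal is correct and follows essentially the same route as the paper: the first bound comes from the contraction/smallness of $k^2\,\rmT_0\circ\rmT_{2,2}$ on $\mathcal{G}^{M|k|,s_0}_{ph,\cosh}$ (the paper uses the a priori estimate $\|\Phi_1\|\leq \tfrac{C}{M^2}\|\Phi_1\|+C$ rather than a Neumann series, which is the same fixed-point argument), followed by Lemma \ref{rmk: Gevrey1} and absorption of the $\cosh$ weight into $C(k)$. For the second bound the paper merely invokes the vanishing of $\Phi_1-1$ and $\pa_v\Phi_1$ on the diagonal (i.e.\ a Taylor-remainder argument), whereas you make this explicit by reading the quadratic vanishing off the integral equation and reusing the averaging claim from the proof of Proposition \ref{prop: T} with $j=1,2$; this is a valid and slightly more self-contained way to fill in the step the paper leaves to the reader.
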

\begin{proof}
The second inequality follows directly from the first inequality and the fact that $\Phi_1(v,v', k)\big|_{v=v'}=1$ and $\pa_{v'}\Phi_1(v, v',k)\big|_{v=v'}=\pa_v\Phi_1(v, v', k)=0\big|_{v=v'}$. So we only need to prove the first inequality. 

By taking $M_0$ large enough, it follows from \eqref{eq:Phi_1} and Proposition \ref{prop: T} that
\begin{align*}
\|\Phi_1\|_{\mathcal{G}^{M|k|,s_0}_{ph,\cosh}([u(0),u(1)]^2)}
&\leq \|k^2\rmT_0\circ\rmT_{2,2}\Phi_1\|_{\mathcal{G}^{M|k|,s_0}_{ph,\cosh}([u(0),u(1)]^2)}+C\\
&\leq \f{C}{M^2}\|\Phi_1\|_{\mathcal{G}^{M|k|,s_0}_{ph,\cosh}([u(0),u(1)]^2)}+C,
\end{align*}
which gives us that
\beno
\|\Phi_1\|_{\mathcal{G}^{M|k|,s_0}_{ph,\cosh}([u(0),u(1)]^2)}\leq C. 
\eeno
Then the corollary follows from Lemma \ref{rmk: Gevrey1}. 
\end{proof}
\subsection{Gevrey regularity of the coefficients and Fourier kernels}
In order to estimate the Fourier kernels of $\bfD_{u, k}$, $\bfD_{u, k}^{-1}$, and $\bfD_{u, k}^1$, we need write the kernel $E(v, v', k)$ in a good form. Recall that in $(y, y')$ coordinates we have 
\beno
e(y,y', k)=
\left\{
\begin{aligned}
&\phi(y,y', k)\int_{0}^y\frac{\th(z)}{\phi(z,y',k )^2}dz\quad 0\leq y<y',\\
&\phi(y,y', k)\int_1^y\frac{\th(z)}{\phi(z,y', k)^2}dz\quad y'<y\leq 1. 
\end{aligned}
\right.
\eeno
 A direct calculation gives us that for $j=0,1$
\begin{align*}
&\phi(y, y', k)\int_j^y\f{\th(w)}{\phi(y, w, k)^2}dw\\
&=\phi(y, y', k)\int_j^y\f{\th(w)}{(u(w)-u(y'))^2}\Big(\f{1}{\phi_1(w, y', k)^2}-1\Big)dy'\\
&\quad+\phi(y, y', k)\int_j^y\f{\th(w)}{(u(w)-u(y'))^2}dw\\
&=\phi(y, y', k)\int_j^y\f{\th(z)}{(u(w)-u(y'))^2}\Big(\f{1}{\phi_1(w, y', k)^2}-1\Big)dw\\
&\quad+\phi(y, y', k)\int_{u(j)}^{u(y)}\f{\Big(\tilde{\th}(u^{-1})'\Big)(u_1)-\Big(\tilde{\th}(u^{-1})'\Big)(u(y'))-\Big(\tilde{\th}(u^{-1})'\Big)'(u(y'))(u_1-u(y'))}{(u_1-u(y'))^2}du_1\\
&\quad-\Big(\tilde{\th}(u^{-1})'\Big)(u(y'))\f{\phi(y, y', k)}{u(w)-u(y')}\bigg|_{w=j}^{w=y}
+\phi(y, y', k)\Big(\tilde{\th}(u^{-1})'\Big)'(u(y'))\ln |u(w)-u(y')|\bigg|_{w=j}^{w=y}.
\end{align*}
Thus we obtain
\beq\label{eq: decomposition-e}
\begin{aligned}
&\phi(y, y', k)\int_j^y\f{\th(w)}{\phi(y, w, k)^2}dw\\
&=-\Big(\tilde{\th}(u^{-1})'\Big)(u(y'))
+\phi(y, y', k)\Big(\tilde{\th}(u^{-1})'\Big)'(u(y'))\ln |u(y)-u(y')|
+\phi_j^{re}(y, y', k),
\end{aligned}
\eeq
where $\phi_j^{re}$ is the regular part
\begin{align*}
&\phi_j^{re}(y, y', k)\\
&=\phi(y, y', k)\int_j^y\f{\th(w)}{(u(w)-u(y'))^2}\Big(\f{1}{\phi_1(w, y', k)^2}-1\Big)dw\\
&\quad+\phi(y, y', k)\int_{u(j)}^{u(y)}\f{\Big(\tilde{\th}(u^{-1})'\Big)(u_1)-\Big(\tilde{\th}(u^{-1})'\Big)(u(y'))-\Big(\tilde{\th}(u^{-1})'\Big)'(u(y'))(u_1-u(y'))}{(u_1-u(y'))^2}du_1\\
&\quad+\Big(\tilde{\th}(u^{-1})'\Big)(u(y'))(1-\phi_1(y, y', k))
+\Big(\tilde{\th}(u^{-1})'\Big)(u(y'))\f{u(y)-u(y')}{u(j)-u(y')}\phi_1(y, y', k)\\
&\quad-\phi(y, y', k)\Big(\tilde{\th}(u^{-1})'\Big)'(u(y'))\ln |u(j)-u(y')|. 
\end{align*}
Note that the integral $\phi(y, y', k)\int_j^y\f{\th(w)}{\phi(w, y',k)^2}dw$ is well defined for $y<y'$ if $j=0$ and for $y>y'$ if $j=1$. However, the right-hand side of \eqref{eq: decomposition-e} can be defined for $y\neq y'$.

Therefore, we have
\beq\label{eq: new-e}
\begin{aligned}
e(y, y', k)=
&-\Big(\tilde{\th}(u^{-1})'\Big)(u(y'))
+\phi(y, y', k)\Big(\tilde{\th}(u^{-1})'\Big)'(u(y'))\ln |u(y)-u(y')|\\
&+\phi_0^{re}(y, y', k)(1_{\mathbb{R}^-}(y-y'))
+\phi_1^{re}(y, y', k)(1_{\mathbb{R}^+}(y-y'))
\end{aligned}
\eeq
and thus
\beq\label{eq: new-E}
\begin{aligned}
E(v, v', k)=
&-\Big(\tilde{\th}(u^{-1})'\Big)(v')
+\Phi(v, v', k)\Big(\tilde{\th}(u^{-1})'\Big)'(v')\ln |v-v'|\\
&+\Phi_0^{re}(v, v', k)(1_{\mathbb{R}^-}(v-v'))
+\Phi_1^{re}(v, v', k)(1_{\mathbb{R}^+}(v-v'))
\end{aligned}
\eeq
where for $j=0,1$, $\Phi_j^{re}(v, v', k)$ satisfies $\Phi_j^{re}(u(y), u(y'), k)=\phi_j^{re}(y, y', k)$, namely, 
\begin{align*}
&\Phi_j^{re}(v, v', k)\\
&=\Phi(v, v', k)\int_{u(j)}^v\f{\tilde{\th}(w)u^{-1}(w)}{(w-v')^2}\Big(\f{1}{\Phi_1(w, v', k)^2}-1\Big)dw\\
&\quad+\Phi(v, v', k)\int_{u(j)}^{v}\f{\Big(\tilde{\th}(u^{-1})'\Big)(u_1)-\Big(\tilde{\th}(u^{-1})'\Big)(v')-\Big(\tilde{\th}(u^{-1})'\Big)'(v')(u_1-v')}{(u_1-v')^2}du_1\\
&\quad+\Big(\tilde{\th}(u^{-1})'\Big)(v')(1-\Phi_1(v, v', k))
+\Big(\tilde{\th}(u^{-1})'\Big)(v')\f{v-v'}{u(j)-v'}\Phi_1(v, v', k)\\
&\quad-\Phi(v, v', k)\Big(\tilde{\th}(u^{-1})'\Big)'(v')\ln |u(j)-v'|. 
\end{align*}
We define
\beq\label{eq: Phi_j1^re}
\begin{aligned}
&\Phi_{j,1}^{re}(v, v', k)=\f{\Phi_j^{re}(v, v', k)}{v-v'}\\
&=\Phi_1(v, v', k)\int_{u(j)}^v\f{\tilde{\th}(w)u^{-1}(w)}{(w-v')^2}\Big(\f{1}{\Phi_1(w, v', k)^2}-1\Big)dw\\
&\quad+\Phi_1(v, v', k)\int_{u(j)}^{v}\f{\Big(\tilde{\th}(u^{-1})'\Big)(u_1)-\Big(\tilde{\th}(u^{-1})'\Big)(v')-\Big(\tilde{\th}(u^{-1})'\Big)'(v')(u_1-v')}{(u_1-v')^2}du_1\\
&\quad+\Big(\tilde{\th}(u^{-1})'\Big)(v')\f{(1-\Phi_1(v, v', k))}{v-v'}
+\Big(\tilde{\th}(u^{-1})'\Big)(v')\f{1}{u(j)-v'}\Phi_1(v, v', k)\\
&\quad-\Phi_1(v, v', k)\Big(\tilde{\th}(u^{-1})'\Big)'(v')\ln |u(j)-v'|. 
\end{aligned}
\eeq
We have the following proposition:
\begin{proposition}\label{Prop: Regularity E, B}
It holds that 
\beno
|\pa_v^{m_1}\pa_{v'}^{m-m_1}B_1(v, k)|+|\pa_v^{m_1}\pa_{v'}^{m-m_1}B_2(v, k)|\leq C(k)\Gamma_{s}(m)(M)^{m}.
\eeno
Recall that $\tchi_2=\chi_2\circ u^{-1}$ is a smooth function with compact support such that $\mathrm{supp}\, \chi_2\subset [\f{\kappa_0}{2},1-\f{\kappa_0}{2}]$ and satisfies \eqref{eq: chi}.
\beno
|\pa_v^{m_1}\pa_{v'}^{m-m_1}\tchi_2(v)\tchi_2(v')\Phi^{re}_j(v, v' k)|\leq C(k)\Gamma_{s}(m)(M)^{m}, \quad\text{for}\quad j=0,1,
\eeno
and
\beno
|\pa_v^{m_1}\pa_{v'}^{m-m_1}\tchi_2(v)\tchi_2(v')\Phi^{re}_{j,1}(v, v' k)|\leq C(k)\Gamma_{s}(m)(M)^{m}, \quad\text{for}\quad j=0,1.
\eeno
\end{proposition}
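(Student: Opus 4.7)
All four bounds will be derived from Corollary \ref{corol: regular Phi} (Gevrey regularity of $\Phi_1$), the Gevrey-$1/s_0$ regularity of $u$ and $\th$ in hypothesis (5) of Theorem \ref{Thm: main}, and standard algebra/composition estimates in $\mathcal{G}^{s_0}$. The $|k|^m$ growth already present in Corollary \ref{corol: regular Phi} is harmless since we are allowed a $k$-dependent constant $C(k)$.

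\textbf{Estimates for $B_1, B_2$.} Using the formulas $B_1 = J_1/\sqrt{J_1^2 + \pi^2 J_2^2}$ and $B_2 = -\rho\tilde\th/\big(\tilde{u'}\sqrt{J_1^2+\pi^2 J_2^2}\big)$ (as functions of $v'$; derivatives in $v$ contribute only when $m_1=0$), Lemma \ref{eq: J_1^2+J_2^2} gives a uniform lower bound $J_1^2 + \pi^2 J_2^2 \ge C^{-1}$, so by Faà di Bruno it suffices to obtain Gevrey bounds for $J_1$ and $J_2$. The quantity $J_2$ is a product of Gevrey-$1/s_0$ functions of $v'$, hence Gevrey by the algebra property. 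For $J_1$ I decompose as in \eqref{eq:j_1} into: explicit polynomial boundary terms (trivially Gevrey); the Hilbert-transform term $\mathcal{H}\big[\big((\th\circ u^{-1})(u^{-1})'\big)'\chi_{[u(0),u(1)]}\big](u(y'))$, which acts on a compactly supported Gevrey-$1/s_0$ function and is therefore Gevrey on compact subsets of $(u(0), u(1))$ by standard Hilbert-transform-on-Gevrey estimates; and the integral $\rho(v')\int_0^1 \frac{\tilde\th(z')(u^{-1})'(z')}{(z'-v')^2}\big(\frac{1}{\Phi_1^2}-1\big)\, dz'$, whose integrand is Gevrey in $(z', v')$ by the second inequality in Corollary \ref{corol: regular Phi} (using $\Phi_1 = 1 + O((z'-v')^2)$), with integration preserving the bound.

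\textbf{Estimates for $\Phi_j^{re}$ and $\Phi_{j,1}^{re}$.} Direct inspection of \eqref{eq: Phi_j1^re} expresses $\Phi_{j,1}^{re}$ as a sum of five terms. The cutoff $\tchi_2(v')$ annihilates the singularity of $\ln|u(j) - v'|$ at the endpoints $u(0), u(1)$ (since $\mathrm{supp}\,\tchi_2$ is strictly inside $(u(0), u(1))$), leaving a Gevrey factor with bounds from \eqref{eq: chi}. In the integral $\int_{u(j)}^v \frac{\tilde\th(w)(u^{-1})'(w)}{(w-v')^2}\big(\frac{1}{\Phi_1^2}-1\big)\,dw$, the integrand is Gevrey-regular in $(w, v')$ by Corollary \ref{corol: regular Phi}; differentiating in $v$ produces a Gevrey boundary contribution at $w = v$, and differentiation in $v'$ commutes with the integral since the integrand remains locally bounded. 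In the second integral, Taylor's theorem makes $\big[(\tilde\th(u^{-1})')(u_1) - (\tilde\th(u^{-1})')(v') - (\tilde\th(u^{-1})')'(v')(u_1-v')\big]/(u_1-v')^2$ smooth, with Gevrey bounds by the algebra property. The near-diagonal term $(1-\Phi_1)/(v-v')$ is Gevrey by the second estimate in Corollary \ref{corol: regular Phi}, and the remaining explicit products are handled by the algebra property. The identity $\Phi_j^{re} = (v-v')\Phi_{j,1}^{re}$ then gives the bound for $\Phi_j^{re}$.

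\textbf{Main obstacle.} The principal technical difficulty is to ensure that the mixed derivative $\pa_v^{m_1}\pa_{v'}^{m-m_1}$, acting through the various integrals and Faà di Bruno compositions above, produces only the Gevrey growth $\Gamma_{s_0}(m) M^m$ and no extra combinatorial loss. This is managed by first estimating in the combination $(\pa_v + \pa_{v'})^{m-m_1}\pa_v^{m_1}$ natural for the weighted $\mathcal{G}^{M|k|, s_0}_{ph,\cosh}$ norm of Proposition \ref{prop: T}, and then invoking Lemma \ref{rmk: Gevrey1} to convert back to the mixed derivatives appearing in the statement of the proposition.
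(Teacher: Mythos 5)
Your argument follows essentially the same route as the paper: decompose $J_1$ as in \eqref{eq:j_1}, treat the Hilbert-transform term via the Fourier/compact-support mechanism (the paper's Remark~\ref{Rmk: fourier-gevrey} plus Lemma~\ref{Lem: A1} rather than your appeal to ``standard Hilbert-transform-on-Gevrey estimates,'' but this amounts to the same thing), use Lemma~\ref{eq: J_1^2+J_2^2} for the lower bound on the denominator, invoke Corollary~\ref{corol: regular Phi} for $\Phi_1$, handle the singularities $(u(j)-v')^{-1}$ and $\ln|u(j)-v'|$ by the compact support of $\tchi_2(v')$ strictly inside $(u(0),u(1))$, and convert from $(\pa_v+\pa_{v'})$-derivatives to mixed derivatives via Lemma~\ref{rmk: Gevrey1}. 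This is the paper's proof.

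One genuine imprecision worth correcting: the claim that the $|k|^m$ factor in Corollary~\ref{corol: regular Phi} is ``harmless since we are allowed a $k$-dependent constant $C(k)$'' is wrong as stated. A constant $C(k)$ independent of $m$ cannot absorb $|k|^m$. What actually happens is that the constant $M$ in the bound $\Gamma_s(m)M^m$ is permitted to depend on $k$ (as it must, since the corollary itself produces $(M|k|)^m$), and this is acceptable because Proposition~\ref{Prop: Regularity E, B} is only ever applied for $0<|k|<k_M$ with $k_M$ fixed, so a $k$-dependent $M$ becomes a $k_M$-dependent $\la_{\mathcal{D}}$ downstream in Proposition~\ref{prop: kernel-wave-op}. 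You should say this explicitly rather than attributing it to $C(k)$.
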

\begin{proof}
First, we write the numerator $J_1(y',k)$ of $b_1(y', k)$ in $v'$ coordinate 
\begin{align*}
\mathbf{J}_1(v',k)&\eqdef-(u(1)-v')\f{\th(0)}{u'(0)}-(v'-u(0))\f{\th(1)}{u'(1)}\\
&\quad+\tilde{\rho}(v')\int_{0}^1\f{\tilde{\th}(w)(u^{-1})'(w)}{(w-v')^2}\Big(\f{1}{\Phi_1(w,v',k)^2}-1\Big)dw\\
&\quad-\tilde{\rho}(v')\mathcal{H}\big[\big((\th\circ u^{-1})(u^{-1})'\big)'\chi_{[u(0),u(1)]}\big](v')
\end{align*}
where $\tilde{\rho}(v')=(v'-u(0))(u(1)-v')$. Notice that $\big((\th\circ u^{-1})(u^{-1})'\big)'$ has compact support. Thus by Remark \ref{Rmk: fourier-gevrey}, we have
\beno
\left|\mathcal{F}_{v\to \xi}[(\th\circ u^{-1})(u^{-1})'](\xi)\right|\leq Ce^{-\la_{u,\th}\langle\xi\rangle^{s}}
\eeno
which implies 
\beno
\left|\mathcal{F}_{v'\to \xi}\big[\mathcal{H}\big[\big((\th\circ u^{-1})(u^{-1})'\big)'\chi_{[u(0),u(1)]}\big](v')\big]\right|\leq Ce^{-\la_{u,\th}\langle\xi\rangle^{s}}.
\eeno
Thus by Lemma \ref{Lem: A1}, we have 
\beno
\left|\pa_{v'}^{m}\big[\mathcal{H}\big[\big((\th\circ u^{-1})(u^{-1})'\big)'\chi_{[u(0),u(1)]}\big](v')\right|\leq C\Gamma_{s}(m)(M)^{m}.
\eeno
The estimates of $B_1, B_2$ follow directly from \eqref{eq: est1}, Corollary \ref{corol: regular Phi}, Lemma \ref{eq: J_1^2+J_2^2}, and \eqref{eq: space-embed}. 

Now we prove the regularity of $\Phi_{j}^{re}$ and $\Phi_{j,1}^{re}$. Notice that with the cut-off function, the `bad' terms $\tchi_2(v)\tchi_2(v')\Big(\tilde{\th}(u^{-1})'\Big)(v')\f{v-v'}{u(j)-v'}\Phi_1(v, v', k)$
is regular. For the term that contains logarithmic function $\Phi(v, v', k)\Big(\tilde{\th}(u^{-1})'\Big)'(v')\ln |u(j)-v'|$, we notice that $\Big(\tilde{\th}(u^{-1})'\Big)'(v')$ has compact support. Thus by using \eqref{eq: est1}, Corollary \ref{corol: regular Phi}, Lemma \ref{eq: J_1^2+J_2^2}, and \eqref{eq: space-embed}, we prove the proposition. 
\end{proof}
Although the wave operator is nonlocal both in physical space and frequency space, the next proposition shows that the wave operator does not move frequencies a lot. Now we introduce the Fourier kernels and the estimates. 
\begin{proposition}\label{prop: kernel-wave-op}
Recall that $\tchi_2=\chi_2\circ u^{-1}$ is a smooth function with compact support such that $\mathrm{supp}\, \chi_2\subset [\f{\kappa_0}{2},1-\f{\kappa_0}{2}]$ and satisfies \eqref{eq: chi}. Then for any $0<|k|\leq k_{M}$, there exist $\mathcal{D}(t,k,\xi_1,\xi_2)$, $\mathcal{D}^1(t,k,\xi_1,\xi_2)$ and $\mathcal{D}^{-1}(t,k,\xi_1,\xi_2)$ such that 
\beno
\mathcal{F}_{2}\Big(\bfD_{u,k}\big(\tchi_2\mathcal{F}_{1}f(t,k,\cdot)\big)\Big)(t,k,\xi_1)=\int \mathcal{D}(t,k,\xi_1,\xi_2)\hat{f}_k(t,\xi_2)d\xi_2,
\eeno
and
\beno
\mathcal{F}_{2}\Big(\tchi_2\bfD_{u,k}^1\big(\tchi_2\mathcal{F}_{1}f(t,k,\cdot)\big)\Big)(t,k,\xi_1)=\int \mathcal{D}^1(t,k,\xi_1,\xi_2)\hat{f}_k(t,\xi_2)d\xi_2,
\eeno
and
\beno
\mathcal{F}_{2}\Big(\bfD_{u,k}^{-1}\big(\tchi_2\mathcal{F}_{1}f(t,k,\cdot)\big)\Big)(t,k,\xi_1)=\int \mathcal{D}^{-1}(t,k,\xi_1,\xi_2)\hat{f}_k(t,\xi_2)d\xi_2.
\eeno
Moreover, there exists $\la_{\mathcal{D}}=\la_{\mathcal{D}}(\la_0,\kappa_0,s_0,s_1, k_{M})$ independent of $t$ such that
\ben\label{eq: kernel est-DDD}
\left|\mathcal{D}(t,k,\xi_1,\xi_2)\right|+\left|\mathcal{D}^1(t,k,\xi_1,\xi_2)\right|+\left|\mathcal{D}^{-1}(t,k,\xi_1,\xi_2)\right|\lesssim e^{-\la_{\mathcal{D}}|\xi_1-\xi_2|^{s_0}}. 
\een
\end{proposition}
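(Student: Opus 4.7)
The plan is to derive the three kernels $\mathcal{D}$, $\mathcal{D}^{-1}$, $\mathcal{D}^1$ in parallel starting from the explicit representations \eqref{eq:wavein zv}--\eqref{eq:wD^1 in zv}. Each of the three operators is the sum of (i) a multiplication by the Gevrey function $B_1(v,k)$ and (ii) an integral operator whose kernel is $\widetilde{\varphi_1}(v)B_2(v,k)\,E(k,v_1,v)/(v_1-v)$ (or the variant for $\bfD_{u,k}^1$, $\bfD_{u,k}^{-1}$), weighted by the oscillation $e^{\mp i(v_1-v)tk}$ and the inner density $(u^{-1})'(v_1)$. Inserting the decomposition \eqref{eq: new-E} of $E$ and using $\Phi=(v_1-v)\Phi_1$ together with $\Phi_j^{re}=(v_1-v)\Phi_{j,1}^{re}$, this integral splits into four structurally different pieces: a Hilbert-type singular piece $-\tilde\th(u^{-1})'(v)/(v_1-v)$, a logarithmic piece $\Phi_1(v_1,v,k)(\tilde\th(u^{-1})')'(v)\ln|v_1-v|$, and two cut-off pieces $\Phi_{j,1}^{re}(v_1,v,k)\,\mathbf{1}_{\mathbb R^\pm}(v_1-v)$. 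By Proposition \ref{Prop: Regularity E, B} (and the Gevrey-$s_0$ hypotheses on $u,\th$), each of the factors $B_1$, $B_2$, $\widetilde{\varphi_1}$, $\tchi_2(v)\tchi_2(v_1)\Phi_{j,1}^{re}$ is Gevrey-$s_0$ on its support, with seminorms uniform in $|k|\le k_M$.

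The uniformity in $t$ of the Gevrey decay will come from the factorization $e^{-i(v_1-v)tk}=e^{ivtk}\cdot e^{-iv_1tk}$: the first exponential is absorbed into the $v$-multiplier and the second into the $v_1$-density, so on the Fourier side these become shifts of the frequency variables by $\pm tk$. After a matching change of variables in the output and input Fourier variables $\xi_1,\xi_2$, the $t$-dependence drops out of all outer Gevrey factors and survives only inside bounded symbols such as $\mathrm{sgn}(\xi-tk)$. Concretely, for the singular piece one obtains (using that $\mathrm{P.V.}\,1/x$ has Fourier symbol $-i\pi\,\mathrm{sgn}$) a convolution-type expression
\[
\mathcal{D}_{\mathrm{sing}}(t,k,\xi_1,\xi_2)=-i\pi\int \mathcal{F}\bigl[\tchi_2\widetilde{\varphi_1}B_2\,\tilde\th(u^{-1})'\bigr](\xi_1-\xi_*)\,\mathrm{sgn}(\xi_*-tk)\,\mathcal{F}\bigl[\tchi_2(u^{-1})'\bigr](\xi_*-\xi_2)\,d\xi_*.
\]
Both outer Fourier transforms are bounded by $C e^{-\la\langle\cdot\rangle^{s_0}}$ (Gevrey-Fourier for compactly supported Gevrey functions), so the triangle-type inequality $|\xi_1-\xi_*|^{s_0}+|\xi_*-\xi_2|^{s_0}\gtrsim|\xi_1-\xi_2|^{s_0}$ together with an elementary convolution estimate yields $|\mathcal{D}_{\mathrm{sing}}|\lesssim e^{-\la_{\mathcal{D}}|\xi_1-\xi_2|^{s_0}}$ uniformly in $t$ and in $|k|\le k_M$.

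The logarithmic and indicator pieces are handled in the same spirit: substituting $w=v_1-v$, the weight $K_j(w,v)=\tchi_2(v)\tchi_2(v+w)\Phi_{j,1}^{re}(v+w,v,k)$ (and its log analogue with $\Phi_1$) is compactly supported and jointly Gevrey-$s_0$ in $(w,v)$, so its partial Fourier transform in $v$ decays like $e^{-\la|\xi_1-\xi_2|^{s_0}}$ uniformly in $w$. The remaining one-dimensional $w$-integral against the $t$-shifted bounded symbol (respectively $\mathrm{sgn}(\cdot-tk)$, the Fourier transform of $\ln|\cdot|$, or the Fourier transform of $\mathbf{1}_{\mathbb R^\pm}$) is then uniformly bounded in $\xi_2$, preserving the required decay. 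Adding back the multiplication part $B_1(v,k)\tchi_2(v)$ contributes a convolution factor $\widehat{B_1(\cdot,k)\tchi_2}(\xi_1-\xi_2)$, itself bounded by $Ce^{-\la\langle\xi_1-\xi_2\rangle^{s_0}}$, and finally reconciling with the input $\hat f_k$ adds an outer convolution with $\widehat{\tchi_2}$, again Gevrey-decaying; summing all contributions and slightly reducing $\la_{\mathcal{D}}$ gives \eqref{eq: kernel est-DDD}. The cases $\bfD_{u,k}^{-1}$ and $\bfD_{u,k}^1$ are structurally identical, differing only in which factor carries $\widetilde{\varphi_1}$ or $(u^{-1})'$, so the same scheme applies verbatim.

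The main obstacle is the singular piece: one must check that the principal-value integral against $1/(v_1-v)$ is well-defined after the cut-off (it is, because the remaining weight is Lipschitz in $v_1-v$ from Proposition \ref{Prop: Regularity E, B}), and that the Hilbert-transform factor--only $L^\infty$ on the Fourier side--does not destroy the convolution Gevrey-decay argument. This is precisely what the factorization of the oscillation achieves: the full Gevrey decay is carried by the two outer multipliers, while the $\mathrm{sgn}$-type factor only contributes a uniformly bounded function of the frequency variable, so the final rate $\la_{\mathcal{D}}=\la_{\mathcal{D}}(\la_0,\kappa_0,s_0,s_1,k_M)$ is independent of $t$ and of $k$ in the range $0<|k|\le k_M$.
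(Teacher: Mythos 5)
Your proposal is correct and follows essentially the same route as the paper: the paper likewise splits the nonlocal part via \eqref{eq: new-E} into a regular piece, an indicator piece, a logarithmic piece and a principal-value piece (the operators $\Pi_1,\dots,\Pi_4$ in \eqref{eq: bfD}), invokes Proposition \ref{Prop: Regularity E, B} and Corollary \ref{corol: regular Phi} for the Gevrey regularity of the weights, and obtains $t$-uniformity because the oscillation $e^{-i(v_1-v)tk}$ only shifts the argument of the uniformly bounded Fourier symbol of each singular factor (Lemma \ref{lem: Fourier_type1} together with Lemma \ref{Rmk: fo-g-2}). Your explicit convolution formula for the singular piece is just the separable special case of that general kernel computation, so the two arguments coincide in substance.
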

\begin{proof}
Note that by \eqref{eq:wavein zv}, \eqref{eq:inversewavein zv} and \eqref{eq:wD^1 in zv}, the wave operator $\bfD_{u,k}$ is similar to the operator $\bfD_{u,k}^1$ and the inverse wave operator $\bfD_{u,k}^{-1}$ in structure. We only present the proof for the wave operator $\bfD_{u,k}$. The estimates of $\bfD_{u,k}^1$ and $\bfD_{u,k}^{-1}$ are similar and we omit them. 

We divide the nonlocal part of the wave operator into four different types of integral operators whose kernels have different singularities:
\begin{align}\label{eq: bfD}
\bfD_{u,k}\big(\mathcal{F}_{1}\tilde{f}(t,k,\cdot)\big)(t,k,v')
=B_1(v, k)\mathcal{F}_{1}\tilde{f}(t,k,v)+\sum_{j=1}^4\Pi_j(\tilde{f}),
\end{align}
where
\begin{align*}
\Pi_1(\tilde{f})&=\widetilde{\varphi_1}(v')B_2(v', k)\int_{u(0)}^{u(1)}{\mathcal{F}_{1}\tilde{f}(u_1)e^{-i(u_1-v')tk}}(u^{-1})'(u_1)\Phi^{re}_{1,1}(u_1,v', k)du_1,\\
\Pi_2(\tilde{f})&=\widetilde{\varphi_1}(v')B_2(v', k)\int_{u(0)}^{u(1)}{\mathcal{F}_{1}\tilde{f}(u_1)e^{-i(u_1-v')tk}}(u^{-1})'(u_1)\Phi^{re}_{0,1}(u_1,v', k)(1_{\R^-}(u_1-v'))du_1\\
&\quad-\widetilde{\varphi_1}(v')B_2(v', k)\int_{u(0)}^{u(1)}{\mathcal{F}_{1}\tilde{f}(u_1)e^{-i(u_1-v')tk}}(u^{-1})'(u_1)\Phi^{re}_{1,1}(u_1,v', k)1_{\R^-}(u_1-v')du_1,\\
\Pi_3(\tilde{f})&=\widetilde{\varphi_1}(v')B_2(v', k)\int_{u(0)}^{u(1)}{\mathcal{F}_{1}\tilde{f}(u_1)e^{-i(u_1-v')tk}}(u^{-1})'(u_1)\\
&\qquad\qquad\qquad\qquad\qquad\qquad\qquad\times \Big(\tilde{\th}(u^{-1})'\Big)'(v')\Phi_1(u_1,v', k)\ln|u_1-v'|du_1,\\
\Pi_4(\tilde{f})&=-\Big(\tilde{\th}(u^{-1})'\Big)(v')\widetilde{\varphi_1}(v')B_2(v', k)\int_{u(0)}^{u(1)}\f{\mathcal{F}_{1}\tilde{f}(u_1)e^{-i(u_1-v')tk}}{u_1-v'}(u^{-1})'(u_1)du_1.
\end{align*} 
By Lemma \ref{lem: Fourier_type1}, we get that there exists $\mathcal{D}(t,k,\xi_1,\xi_2)$ such that
\beno
\mathcal{F}_{2}\Big(\bfD_{u,k}\big(\tchi_2\mathcal{F}_{1}f(t,k,\cdot)\big)\Big)(t,k,\xi_1)=\int \mathcal{D}(t,k,\xi_1,\xi_2)\hat{f}_k(t,\xi_2)d\xi_2. 
\eeno
The behavior of $\mathcal{D}(t,k,\xi_1,\xi_2)$ depends only on the regularity of $D_1$, $D_2$ and $E$. Therefore by Lemma \ref{lem: Fourier_type1}, Remark \ref{Rmk: fourier-gevrey}, Corollary \ref{corol: regular Phi}, Proposition \ref{Prop: Regularity E, B}, and Lemma \ref{Rmk: fo-g-2}, we obtain Proposition \ref{prop: kernel-wave-op}. 
\end{proof}
\subsection{Commutator}

In this section, let us study the difference between $\bfD_{u,k}^1$ and $\bfD_{u,k}$ and prove the following proposition.
\begin{proposition}\label{prop: commutator}
There exists $\mathcal{D}^{com}(t,k,\xi,\xi_1)$ such that
\beno
\mathcal{F}_2\Big(\tchi_2\bfD_{u,k}(\tchi_2\mathcal{F}_1{f})-\tchi_2\bfD_{u,k}^1(\tchi_2\mathcal{F}_1{f})\Big)(\xi)=\int \mathcal{D}^{com}(t,k,\xi,\xi_1)\hat{f}_k(t,\xi_1)d\xi_1.
\eeno
Moreover, there exists $\la_{\mathcal{D}}=\la_{\mathcal{D}}(\la_0,\th_0,s_0,s_1, k_{M})$ independent of $t$ such that
\beno
\big|\mathcal{D}^{com}(t,k,\xi,\xi_1)\big|\lesssim \min\left(\f{e^{-\la_{\mathcal{D}}|\xi-\xi_1|^{s_0}}}{1+|\xi-kt|^2}, \f{e^{-\la_{\mathcal{D}}|\xi-\xi_1|^{s_0}}}{1+|\xi_1-kt|^2}\right).
\eeno
\end{proposition}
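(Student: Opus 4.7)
The plan is to exploit the commutator identity \eqref{eq: formula D-D^1}, which in $(y,y')$-coordinates exhibits $\bbD_{u,k}-\bbD_{u,k}^1$ as the smoothing commutator $[(u'/\th)',\Pi_1]$. Passing to $(z,v)$ coordinates via \eqref{eq:wavein zv} and \eqref{eq:wD^1 in zv} and using the elementary identity
\[
\frac{\widetilde{\varphi_1}(v)(u^{-1})'(v_1)-\widetilde{\varphi_1}(v_1)(u^{-1})'(v)}{v_1-v}=\widetilde{\varphi_1}(v)\,\frac{(u^{-1})'(v_1)-(u^{-1})'(v)}{v_1-v}+(u^{-1})'(v)\,\frac{\widetilde{\varphi_1}(v)-\widetilde{\varphi_1}(v_1)}{v_1-v},
\]
the difference $(\bfD_{u,k}-\bfD_{u,k}^1)(\mathcal{F}_{1}f)(v)$ becomes an integral operator whose kernel contains $E(k,v_1,v)\,e^{-i(v_1-v)tk}$ multiplied by a \emph{smooth} divided difference of $\widetilde{\varphi_1}$ and of $(u^{-1})'$. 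In particular, the principal-value singularity $(v_1-v)^{-1}$ that appeared in each of $\bfD_{u,k}$ and $\bfD_{u,k}^1$ individually disappears in the difference.

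Next I would insert the decomposition \eqref{eq: new-E} of $E$ into a constant (in $v_1$) piece, a logarithmic piece $\Phi(v_1,v,k)(\tilde\th(u^{-1})')'(v)\ln|v_1-v|$, and the regular pieces $\Phi_j^{re}(v_1,v,k)$, multiply by the cutoffs $\tchi_2(v)\tchi_2(v_1)$, and repackage the commutator as a sum of four integral operators with kernels $K_j(t,k,v,v_1)$. By Corollary \ref{corol: regular Phi}, Proposition \ref{Prop: Regularity E, B} and assumption (5) of Theorem \ref{Thm: main} (which supplies Gevrey-$s_0$ regularity of $\widetilde{\varphi_1}$ and $(u^{-1})'$), each $K_j$, after isolating the unavoidable $\ln|v_1-v|$ factor from the logarithmic piece, satisfies a Gevrey-$s_0$ bound
\[
\bigl|\partial_v^{m_1}\partial_{v_1}^{m-m_1}K_j(t,k,v,v_1)\bigr|\le C(k)\,\Gamma_{s_0}(m)\,M^m
\]
uniformly in $(v,v_1)\in[u(0),u(1)]^2$.

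To extract $\mathcal{D}^{com}$, I would expand $\mathcal{F}_{1}f(v_1)=\int e^{iv_1\xi_1}\hat f_k(\xi_1)\,d\xi_1$, regroup the phases as $e^{-iv(\xi-kt)+iv_1(\xi_1-kt)}$, and read off
\[
\mathcal{D}^{com}(t,k,\xi,\xi_1)=\sum_j\iint e^{-iv(\xi-kt)+iv_1(\xi_1-kt)}\,\tchi_2(v)K_j(t,k,v,v_1)\tchi_2(v_1)\,dv\,dv_1.
\]
The Gevrey factor $e^{-\la_{\mathcal{D}}|\xi-\xi_1|^{s_0}}$ is produced exactly as in Proposition \ref{prop: kernel-wave-op}, by applying Lemma \ref{lem: Fourier_type1} to the smooth factors of $K_j$. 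The additional $(1+|\xi-kt|^2)^{-1}$ decay is obtained by integrating by parts twice in $v$ against the phase $e^{-iv(\xi-kt)}$; symmetrically, two integrations by parts in $v_1$ against $e^{iv_1(\xi_1-kt)}$ yield the $(1+|\xi_1-kt|^2)^{-1}$ alternative, and we keep whichever bound is more favorable on a given frequency regime. The cutoffs $\tchi_2$ kill all boundary terms.

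The main obstacle is the logarithmic piece, where the naive double integration by parts fails because $\partial_v^2\ln|v-v_1|$ is not locally integrable. I would handle it by splitting $\ln|v_1-v|=\eta(v_1-v)\ln|v_1-v|+(1-\eta(v_1-v))\ln|v_1-v|$ with a small smooth cutoff $\eta$ concentrated near the origin. The outer factor is Gevrey-smooth on $[u(0),u(1)]^2$ and is treated exactly as the regular pieces. The inner factor is compactly supported and integrable with Fourier transform in the difference variable decaying like $|\zeta|^{-1}\log|\zeta|$; combined with the Gevrey-$s_0$ decay of all other smooth factors and one integration by parts in $v$ (resp.\ $v_1$), this yields the required bound on $\mathcal{D}^{com}$ after convolution, along the lines of Lemma \ref{Rmk: fo-g-2} and Remark \ref{Rmk: fourier-gevrey}.
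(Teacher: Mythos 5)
Your overall architecture matches the paper's: the commutator kills the principal-value singularity via the smooth divided difference of $\widetilde{\varphi_1}(v)(u^{-1})'(v_1)-\widetilde{\varphi_1}(v_1)(u^{-1})'(v)$, you decompose $E$ by \eqref{eq: new-E}, you get the Gevrey factor as in Proposition \ref{prop: kernel-wave-op}, and you extract the extra decay by differentiating twice against the phase $e^{-iv(\xi-kt)}$ (equivalently, applying $(\pa_v-itk)^2$, which is what the paper does). However, your handling of the logarithmic piece has a genuine gap. Splitting $\ln|v_1-v|$ near the diagonal and invoking the $|\zeta|^{-1}$ Fourier decay of $\eta(w)\ln|w|$ together with \emph{one} integration by parts only yields $(1+|\xi-kt|^2)^{-1}$ for the term in which the derivative misses the log; when the derivative lands on $\eta(v_1-v)\ln|v_1-v|$ you produce the kernel $\eta(v_1-v)/(v_1-v)$, whose Fourier transform is bounded but does not decay, so that term carries only a single power of $\langle\xi-kt\rangle^{-1}$. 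The missing ingredient — which the paper uses and which you have available but do not assemble — is that in the commutator the log is always multiplied by the bracket $\widetilde{\varphi_1}(v)(u^{-1})'(v_1)-\widetilde{\varphi_1}(v_1)(u^{-1})'(v)$, which \emph{vanishes on the diagonal}. Hence $\pa_v\ln|v_1-v|$ times this bracket equals the Gevrey-smooth divided difference, so the first derivative of the log piece produces only admissible kernels, and a second derivative can then be taken, with the worst outcome being a principal-value kernel of type $\mu_4$ in Lemma \ref{lem: Fourier_type1}, whose Fourier kernel is bounded with Gevrey off-diagonal decay. This is how the paper obtains $|\mathcal{D}^{com,2}(t,k,\xi,\xi_1)|\lesssim e^{-\la'|\xi-\xi_1|^{s_0}}$ with $\mathcal{D}^{com,2}=-(\xi-kt)^2\mathcal{D}^{com}$; no cutoff splitting of the logarithm is needed.

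A second, smaller divergence: for the alternative bound with $(1+|\xi_1-kt|^2)^{-1}$ you propose symmetric integration by parts in $v_1$, which would re-encounter the same regularity obstructions (the log and the characteristic functions $1_{\mathbb{R}^{\pm}}(v_1-v)$ are no smoother in $v_1$ than in $v$). The paper instead deduces the second bound from the first by the elementary inequality $\f{1+|\xi_1-kt|^2}{1+|\xi-kt|^2}\lesssim\langle\xi-\xi_1\rangle^2$, absorbing $\langle\xi-\xi_1\rangle^2$ into the Gevrey factor at the cost of shrinking $\la_{\mathcal{D}}$. You should adopt this shortcut, and you should also account explicitly for the jump of $\Phi_{0,1}^{re}1_{\mathbb{R}^-}+\Phi_{1,1}^{re}1_{\mathbb{R}^+}$ across the diagonal when differentiating: it produces diagonal (multiplication-operator) terms, which are harmless precisely because the bracket vanishes at $v_1=v$ at the first order.
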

\begin{proof}
We have by \eqref{eq: new-E} that 
\begin{align*}
&\tchi_2\bfD_{u,k}(\mathcal{F}_1f)-\tchi_2\bfD_{u,k}^1(\mathcal{F}_1f)\\
&=\tchi_2B_2(v, k)\int_{u(0)}^{u(1)}E(k,v_1,v)\f{\mathcal{F}_{1}{f}(v_1)e^{-i(v_1-v)tk}}{v_1-v}\Big[\widetilde{\varphi_1}(v)(u^{-1})'(v_1)-\widetilde{\varphi_1}(v_1)(u^{-1})'(v)\Big]dv_1\\
&=-\tchi_2B_2(k,v)\int_{u(0)}^{u(1)}\Big(\tilde{\th}(u^{-1})'\Big)(v)\f{\mathcal{F}_{1}{f}(v_1)e^{-i(v_1-v)tk}}{v_1-v}\Big[\widetilde{\varphi_1}(v)(u^{-1})'(v_1)-\widetilde{\varphi_1}(v_1)(u^{-1})'(v)\Big]dv_1\\
&\quad+\tchi_2B_2(k,v)\int_{u(0)}^{u(1)}\Phi_1(v_1, v, k)\Big(\tilde{\th}(u^{-1})'\Big)'(v)\ln |v_1-v|\mathcal{F}_{1}{f}(v_1)e^{-i(v_1-v)tk}\\
&\qquad\qquad\qquad\qquad \qquad\qquad\qquad \qquad\qquad\qquad 
\times \Big[\widetilde{\varphi_1}(v)(u^{-1})'(v_1)-\widetilde{\varphi_1}(v_1)(u^{-1})'(v)\Big]dv_1\\
&\quad+\tchi_2B_2(v, k)\int_{u(0)}^{u(1)}\Phi_{0,1}^{re}(v_1, v, k)(1_{\mathbb{R}^-}(v_1-v))\mathcal{F}_{1}{f}(v_1)e^{-i(v_1-v)tk}\\
&\qquad\qquad\qquad\qquad \qquad\qquad\qquad \qquad\qquad\qquad 
\times\Big[\widetilde{\varphi_1}(v)(u^{-1})'(v_1)-\widetilde{\varphi_1}(v_1)(u^{-1})'(v)\Big]dv_1\\
&\quad+\tchi_2B_2(v, k)\int_{u(0)}^{u(1)}\Phi_{1,1}^{re}(v_1, v, k)(1_{\mathbb{R}^+}(v_1-v))\mathcal{F}_{1}{f}(v_1)e^{-i(v_1-v)tk}\\
&\qquad\qquad\qquad\qquad \qquad\qquad\qquad \qquad\qquad\qquad 
\times\Big[\widetilde{\varphi_1}(v)(u^{-1})'(v_1)-\widetilde{\varphi_1}(v_1)(u^{-1})'(v)\Big]dv_1.
\end{align*}

Let $\mathcal{D}^{com}(t,k,\xi,\xi_1)$ be the Fourier kernel of the operator $\tchi_2\bfD_{u,k}(\tchi_2\mathcal{F}_1f)-\tchi_2\bfD_{u,k}^1(\tchi_2\mathcal{F}_1f)$, which means that
\beno
\mathcal{F}_2\Big(\tchi_2\bfD_{u,k}(\tchi_2\mathcal{F}_1f)-\tchi_2\bfD_{u,k}^1(\tchi_2\mathcal{F}_1f)\Big)(\xi)=\int \mathcal{D}^{com}(t,k,\xi,\xi_1)\hat{f}_k(t,\xi_1)d\xi_1.
\eeno
Then by the same argument as in the proof of Proposition \ref{prop: kernel-wave-op}, we get that there exists $\la_{\mathcal{D}}$ such that
\beno
\big|\mathcal{D}^{com}(t,k,\xi,\xi_1)\big|\lesssim e^{-\la_{\mathcal{D}}|\xi-\xi_1|^{s_0}}. 
\eeno
Let us also study the derivate $(\pa_v-itk)$ acting on $\tchi_2\bfD_{u,k}(\mathcal{F}_1f)-\tchi_2\bfD_{u,k}^1(\mathcal{F}_1f)$:
\begin{align*}
&(\pa_v-itk)\Big(\tchi_2\bfD_{u,k}(\mathcal{F}_1f)-\tchi_2\bfD_{u,k}^1(\mathcal{F}_1f)\Big)\\
&=\tchi_2B_2(v, k)\int_{u(0)}^{u(1)}\Phi_1(v_1, v, k)\Big(\tilde{\th}(u^{-1})'\Big)'(v)\ln |v_1-v|\mathcal{F}_{1}{f}(v_1)e^{-i(v_1-v)tk}\\
&\qquad\qquad\qquad\qquad \qquad\qquad\qquad \qquad\qquad\qquad 
\times \pa_{v}\Big[\widetilde{\varphi_1}(v)(u^{-1})'(v_1)-\widetilde{\varphi_1}(v_1)(u^{-1})'(v)\Big]dv_1\\
&\quad+\tchi_2B_2(v, k)\int_{u(0)}^{u(1)}\Phi_{0,1}^{re}(v_1, v, k)(1_{\mathbb{R}^-}(v_1-v))\mathcal{F}_{1}{f}(v_1)e^{-i(v_1-v)tk}\\
&\qquad\qquad\qquad\qquad \qquad\qquad\qquad \qquad\qquad\qquad 
\times \pa_{v}\Big[\widetilde{\varphi_1}(v)(u^{-1})'(v_1)-\widetilde{\varphi_1}(v_1)(u^{-1})'(v)\Big]dv_1\\
&\quad+\tchi_2B_2(v, k)\int_{u(0)}^{u(1)}\Phi_{1,1}^{re}(v_1, v, k)(1_{\mathbb{R}^+}(v_1-v))\mathcal{F}_{1}{f}(v_1)e^{-i(v_1-v)tk}\\
&\qquad\qquad\qquad\qquad \qquad\qquad\qquad \qquad\qquad\qquad 
\times \pa_{v}\Big[\widetilde{\varphi_1}(v)(u^{-1})'(v_1)-\widetilde{\varphi_1}(v_1)(u^{-1})'(v)\Big]dv_1\\
&\quad+\text{good terms}. 
\end{align*}
Here note that the derivative $\pa_v$ acting on $\ln|v_1-v|$ gives a good term:
\beno
\f{\widetilde{\varphi_1}(v)(u^{-1})'(v_1)-\widetilde{\varphi_1}(v_1)(u^{-1})'(v)}{v_1-v}\in \mathcal{G}_{ph,1}^{K_u,s}([u(0),u(1)]^2), 
\eeno
for some $K_u>0$.

Therefore, there exists $\mathcal{D}^{com,1}(t,k,\xi,\xi_1)$ such that
\beno
(i\xi-ikt)\mathcal{F}_2\Big(\tchi_2\bfD_{u,k}(\tchi_2\mathcal{F}_1f)-\tchi_2\bfD_{u,k}^1(\tchi_2\mathcal{F}_1f)\Big)(\xi)=\int \mathcal{D}^{com,1}(t,k,\xi,\xi_1)\hat{f}_k(t,\xi_1)d\xi_1.
\eeno
Moreover, Lemma \ref{Rmk: fo-g-2} gives us that there exists $\la_{\mathcal{D}}'$ such that 
\beno
|\mathcal{D}^{com,1}(t,k,\xi,\xi_1)|\lesssim e^{-\la_{\mathcal{D}}'|\xi-\xi_1|^{s_0}}, 
\eeno
which together with the fact that 
\beno
i(\xi-kt)\mathcal{D}^{com}(t,k,\xi,\xi_1)=\mathcal{D}^{com,1}(t,k,\xi,\xi_1),
\eeno
gives us that
\beno
\big|\mathcal{D}^{com}(t,k,\xi,\xi_1)\big|\lesssim \f{e^{-\la_{\mathcal{D}}'|\xi-\xi_1|^{s_0}}}{1+|\xi-kt|}. 
\eeno

We can repeat the above argument once more and get that 
\begin{align*}
&(\pa_v-itk)^2\Big(\tchi_2\bfD_{u,k}(\mathcal{F}_1f)-\tchi_2\bfD_{u,k}^1(\mathcal{F}_1f)\Big)\\
&=\tchi_2B_2(v, k)\int_{u(0)}^{u(1)}\Phi_1(v_1, v, k)\Big(\tilde{\th}(u^{-1})'\Big)'(v)\f{\mathcal{F}_{1}{f}(v_1)}{v_1-v}e^{-i(v_1-v)tk}\\
&\qquad\qquad\qquad\qquad \qquad\qquad\qquad \qquad\qquad\qquad 
\times \Big[\widetilde{\varphi_1}'(v)(u^{-1})'(v_1)-\widetilde{\varphi_1}(v_1)(u^{-1})''(v)\Big]dv_1\\
&\quad+\tchi_2B_2(v, k)\Phi_{0,1}^{re}(v, v, k)\mathcal{F}_{1}{f}(v)\Big[\widetilde{\varphi_1}'(v)(u^{-1})'(v)-\widetilde{\varphi_1}(v)(u^{-1})''(v)\Big]\\
&\quad+\tchi_2B_2(v, k)\Phi_{1,1}^{re}(v, v, k)\mathcal{F}_{1}{f}(v)\Big[\widetilde{\varphi_1}'(v)(u^{-1})'(v)-\widetilde{\varphi_1}(v)(u^{-1})''(v)\Big]\\
&\quad+\text{good terms}. 
\end{align*}
Therefore, there exists $\mathcal{D}^{com,2}(t,k,\xi,\xi_1)$ such that
\beno
(i\xi-ikt)^2\mathcal{F}_2\Big(\tchi_2\bfD_{u,k}(\tchi_2\mathcal{F}_1f)-\tchi_2\bfD_{u,k}^1(\tchi_2\mathcal{F}_1f)\Big)(\xi)=\int \mathcal{D}^{com,2}(t,k,\xi,\xi_1)\hat{f}_k(t,\xi_1)d\xi_1.
\eeno
Moreover, there exists $\la_{\mathcal{D}}'$ such that 
\beno
|\mathcal{D}^{com,2}(t,k,\xi,\xi_1)|\lesssim e^{-\la_{\mathcal{D}}'|\xi-\xi_1|^{s_0}}, 
\eeno
which together with the fact that 
\beno
-(\xi-kt)^2\mathcal{D}^{com}(t,k,\xi,\xi_1)=\mathcal{D}^{com,2}(t,k,\xi,\xi_1),
\eeno
gives us that
\beno
\big|\mathcal{D}^{com}(t,k,\xi,\xi_1)\big|\lesssim \f{e^{-\la_{\mathcal{D}}'|\xi-\xi_1|^{s_0}}}{1+|\xi-kt|^2}. 
\eeno
By using the fact that 
\beno
\f{1+|\xi_1-kt|^2}{1+|\xi-kt|^2}\lesssim \f{1+|\xi-kt|^2+|\xi-\xi_1|^2}{1+|\xi-kt|^2}\lesssim \langle\xi-\xi_1\rangle^2,
\eeno
we obtain Proposition \ref{prop: commutator} for some $\la_{\mathcal{D}}<\la_{\mathcal{D}}'$. 
\end{proof}

\section{The good system}\label{sec: good system}
Now we apply the wave operator on $\Om$ in \eqref{eq: Om}. Let us first introduce the new good unknown
\beq\label{eq: def-f}
\begin{aligned}
f(t,z,v)
&\eqdef P_0(\Om)(t,v)+P_{|k|\geq k_{M}}\Om(t,z,v)\\
&\quad+\sum_{0<|k|<k_{M}}\bfD_{u,k}(\mathcal{F}_1{\Om}(t,k,\cdot))(t,k,v)e^{izk},
\end{aligned}
\eeq
where 
\beno
P_{|k|\geq k_{M}}\Om(t,z,v)=\sum_{|k|\geq k_{M}}\mathcal{F}_1\Om(t,k,v)e^{ikz}.
\eeno
Let us give the following remark and determine $k_M$. 
\begin{remark}[Determination of $k_M$]\label{Rmk: determine k_M}
To make the choice of $k_M$ more precise, let us first introduce three constants:
\begin{enumerate}
\item The constant $C_{ell}\geq 1$ only depends on the background flow. It is the constant appearing in the linear elliptic estimate. 
\item The constant $C_{\varphi_1''}= 1+\|\widetilde{\varphi_1}\|_{\mathcal{G}^{s,\s+2}}$ only depends on the Gevrey norm of $\widetilde{\varphi_1}$. 
\item The constant $C_0\geq 1$ is a universal constant that depends on $s$ and $\s$. 
\end{enumerate}
We choose $k_M$ such that
\beq\label{eq: k_M determine}
k_{M}\geq 100C_0C_{u''}C_{ell}.
\eeq 
Since $C_{u''}$ and $C_{ell}$ only depend on the background flow, the constant $k_M$ is fixed once the background flow is given. 
\end{remark}

By Remark \ref{Rmk: compact support}, $f$ has the same compact support as $\Om$. 

Let us now deduce the equation of the new good unknown $f$. An easy calculation shows that
\begin{align*}
\pa_tf
&=\pa_tP_0(\Om)+\pa_tP_{k\geq k_M}\Om\\
&\quad+\pa_t\sum_{0<|k|<k_{M}}\bfD_{u,k}(\mathcal{F}_1{\Om}(t,k,\cdot))(t,k,v)e^{iz k}\\
&=-P_{0}\left(\rmU\cdot\na_{z,v}\Om\right)-P_{0}\left(\mathcal{N}_{\Om}[\Psi]\right)-P_0\left(\mathcal{N}_a[\Pi]\right)
+\udl{\varphi_1}\pa_{z}P_{k\geq k_M}\Psi\\
&\quad-P_{k\geq k_M}\Big(\rmU\cdot\na_{z,v}\Om\Big)
-P_{k\geq k_M}\left(\mathcal{N}_{\Om}[\Psi]\right)-P_{k\geq k_M}\left(\mathcal{N}_a[\Pi]\right)\\
&\quad+\sum_{0<|k|<k_{M}}[\pa_t,\bfD_{u,k}]\left(\mathcal{F}_{1}\Om(t,k,\cdot)\right)(t,k,v)e^{iz k}\\
&\quad+\sum_{0<|k|<k_{M}}\bfD_{u,k}\left(\mathcal{F}_{1}\Big(\udl{\varphi_1}\pa_{z}\big(\Psi\big)\Big)\right)(t,k,v)e^{iz k}\\
&\quad-\sum_{0<|k|<k_{M}}\bfD_{u,k}\left(\mathcal{F}_{1}\Big(\rmU\cdot\na_{z,v}\Om\Big)\right)(t,k,v)e^{iz k}\\
&\quad-\sum_{0<|k|<k_{M}}\bfD_{u,k}\left(\mathcal{F}_{1}\Big(\mathcal{N}_{\Om}[\Psi]\Big)\right)(t,k,v)e^{iz k}
-\sum_{0<|k|<k_{M}}\bfD_{u,k}\left(\mathcal{F}_{1}\Big(\mathcal{N}_{a}[\Pi]\Big)\right)(t,k,v)e^{iz k}.
\end{align*}
By the fact that,
\begin{align*}
&\bfD_{u,k}\left(\mathcal{F}_{1}\Big(\udl{\varphi_1}\pa_{z}\Psi\Big)\right)(t,k,v)\\
&=\bfD_{u,k}\left(\mathcal{F}_{1}\Big(\widetilde{\varphi_1}\pa_{z}\big(\big(\tilde{\Delta}_t^l\big)^{-1}\Om\big)\Big)\right)(t,k,v)
+\bfD_{u,k}\left(\mathcal{F}_{1}\Big(\udl{\varphi_1}\pa_{z}\Psi-\widetilde{\varphi_1}\pa_{z}\big(\big(\tilde{\Delta}_t^l\big)^{-1}\Om\big)\Big)\right)(t,k,v)\\
&=\bfD_{u,k}\left(\mathcal{F}_{1}\Big(\widetilde{\varphi_1}\pa_{z}\big(\big(\tilde{\Delta}_t^l\big)^{-1}\Om\big)\Big)\right)(t,k,v)
+\bfD_{u,k}\left(\mathcal{F}_{1}\Big(\varphi_1^{\d}\pa_{z}\Psi-\widetilde{\varphi_1}\pa_{z}\big(\big(\tilde{\Delta}_t^l\big)^{-1}\Om-\Psi\big)\Big)\right)(t,k,v)\\
&=-ikv\bfD_{u,k}\left(\mathcal{F}_{1}\Om(t,k,\cdot)\right)(t,k,v)
+\bfD_{u,k}\left(ik\mathcal{F}_{1}v\Om(t,k,\cdot)\right)(t,k,v)\\
&\quad+\bfD_{u,k}\left(\mathcal{F}_{1}\Big(\varphi^{\d}\pa_{z}\Psi-\widetilde{\varphi_1}\pa_{z}\big(\big(\tilde{\Delta}_t^l\big)^{-1}\Om-\Psi\big)\Big)\right)(t,k,v),
\end{align*}
we obtain by \eqref{eq:[pa_t,D]} that
\begin{align}
\pa_tf
\label{eq:f}&=-P_{0}\left(\rmU\cdot\na_{z,v}\Om\right)
-P_{0}\left(\mathcal{N}_{\Om}[\Psi]\right)-P_0\left(\mathcal{N}_a[\Pi]\right)
+\udl{\varphi_1}\pa_{z}P_{|k|\geq k_M}\Psi\\
\nonumber&\quad-P_{|k|\geq k_M}\Big(\rmU\cdot\na_{z,v}\Om\Big)
-P_{k\geq k_M}\left(\mathcal{N}_{\Om}[\Psi]\right)-P_{k\geq k_M}\left(\mathcal{N}_a[\Pi]\right)\\
\nonumber&\quad+\sum_{0<|k|<k_{M}}\bfD_{u,k}\left(\mathcal{F}_{1}\Big(\varphi_1^{\d}\pa_{z}\Psi-\widetilde{\varphi_1}\pa_{z}\big(\big(\tilde{\Delta}_t^l\big)^{-1}\Om-\Psi\big)\Big)\right)(t,k,v)e^{izk}\\
\nonumber&\quad-\sum_{0<|k|<k_{M}}\bfD_{u,k}\left(\mathcal{F}_{1}\Big(\rmU\cdot\na_{z,v}\Om\Big)\right)(t,k,v)e^{iz k}\\
\nonumber&\quad-\sum_{0<|k|<k_{M}}\bfD_{u,k}\left(\mathcal{F}_{1}\Big(\mathcal{N}_{\Om}[\Psi]\Big)\right)(t,k,v)e^{iz k}\\
\nonumber&\quad-\sum_{0<|k|<k_{M}}\bfD_{u,k}\left(\mathcal{F}_{1}\Big(\mathcal{N}_{a}[\Pi]\Big)\right)(t,k,v)e^{iz k}.
\end{align}
Here we recall the notations used in the previous calculations
\begin{align*}
\varphi_1^{\d}(t, v)&=\udl{\varphi_1}(t, v)-\widetilde{\varphi_1}(v)\\
\tilde{\Delta}_t^l&=\widetilde{\varphi_4}\pa_{zz}+\widetilde{u'}(\pa_v-t\pa_z)\Big(\widetilde{\varphi_4}\widetilde{u'}(\pa_v-t\pa_z)\Big). 
\end{align*}
We also have 
\begin{align*}
\pa_tv(t,y)=-\f{\chi_1(y)}{t^2}\int_0^tP_0(U^x)(t',y)dt'+\f{\chi_1(y)}{t}P_0(U^x)(t,y),
\end{align*}
and
\begin{align*}
\pa_{tt}v(t,y)=\pa_t\big(\udl{\pa_tv}(t,v(t,y))\big)=\pa_t\big(\udl{\pa_tv}\big)(t,v(t,y))+\pa_tv(t,y)\pa_v\big(\udl{\pa_tv}\big)(t,v(t,y))
\end{align*}
which gives that
\begin{align*}
\pa_{tt}v(t,y)&=\f{2\chi_1(y)}{t^3}\int_0^tP_0(U^x)(t',y)dt'-\f{2\chi_1(y)}{t^2}P_0(U^x)(t,y)+\f{\chi_1(y)}{t}P_0(\pa_tU^x)(t,y)\\
&=-\f{2}{t}\pa_tv(t,y)+\f{\chi_1(y)}{t}P_0(\pa_tU^x)(t,y).
\end{align*}
By using the fact that
\begin{align*}
\pa_tU^x+u(y)\pa_xU^x+u'(y)U^y+\th(y)\pa_xP+d\pa_xP+U^x\pa_xU^x+U^y\pa_yU^x=0,
\end{align*}
we obtain that
\beq\label{eq: v_t}
\begin{aligned}
\pa_t(\udl{\pa_tv})+\udl{\pa_tv}\pa_v(\udl{\pa_tv})+\f{2}{t}(\udl{\pa_tv})=
&-\f{\udl{\chi_1}}{t}\udl{\pa_yv}P_0\left(\na_{z,v}^{\bot}P_{\neq}(\Psi)\cdot\na_{z,v}\underline{U^x}\right)\\
&-\f{\udl{\chi_1}}{t}P_0\Big(P_{\neq}(a)\pa_zP_{\neq}(\Pi_{l}+\Pi_{n})\Big),
\end{aligned}
\eeq
where $\underline{U^x}(t,z(t,x,y),v(t,y))=U^x(t,x,y)$. 

We also have 
\begin{align*}
\pa_yv(t,y)=u'(y)-\f{1}{t}\int_0^tP_0(\om)(t',y)dt',
\end{align*}
then it holds that
\begin{align*}
\pa_t\Big(t\big(\pa_yv(t,y)-u'(y)\big)\Big)=-P_0(\om)(t,y). 
\end{align*}
Let $h(t,v(t,y))=\pa_yv(t,y)-u'(y)$, then $h(t,v)$ satisfies
\beq\label{eq: h}
\pa_th+\udl{\pa_tv}\pa_vh=\f{1}{t}\big(-P_0(\udl{\om})-h\big)\eqdef \bar{h}=\udl{\pa_yv}\pa_v\udl{\pa_tv}.
\eeq
We then get
\beq\label{eq: bar{h}}
\begin{aligned}
\pa_t\bar{h}
&=-\f{\bar{h}}{t}-\f{1}{t}(\pa_tP_0(\udl{\om})+\pa_th)\\
&=-\f{\bar{2h}}{t}-\udl{\pa_tv}\pa_v\bar{h}+\f{\udl{\pa_yv}}{t}\Big(P_0(\pa_z\Psi\pa_v\udl{\om}-\pa_v\Psi\pa_z\udl{\om})-P_0(\pa_z\Pi_{l}\pa_va-\pa_v\Pi_{l}\pa_za)\\
&\quad\quad\quad\quad \quad\quad\quad\quad\quad\quad\quad 
-P_0(\pa_z\Pi_{n}\pa_va-\pa_v\Pi_{n}\pa_za)\Big).
\end{aligned}
\eeq
We now introduce the cut-off function $\Upsilon_1(v), \Upsilon_2(v)\in \mathcal{G}^{M(\kappa_0),\f{s+1}{2}}_{ph,1}$ which satisfies 
\beno
&\mathrm{supp}\, \Upsilon_1(v)\subset \left[u(\kappa_0),u(1-\kappa_0)\right]\\
&\mathrm{supp}\, \Upsilon_2(v)\subset \left[u(\f{\kappa_0}{2}),u(1-\f{\kappa_0}{2})\right]
\eeno 
and
$
\Upsilon(v)\equiv 1,\  \text{for }v\in [u({\kappa_0}),u(1-\kappa_0)]. 
$
\subsection{The working system}
We then rewrite the system by considering the compact support. 
\begin{subequations}\label{eq: main system}
\beq\label{eq:f-1}
\left\{
\begin{aligned}
\pa_tf
&=-P_{0}\left(\Upsilon_2\rmU\cdot\na_{z,v}\Om\right)
-P_{0}\left(\mathcal{N}_{\Om}[\Upsilon_2\Psi]\right)\\
&\quad-P_0\left(\mathcal{N}_a[\Upsilon_2\Pi_{l}]\right)
-P_0\left(\mathcal{N}_a[\Upsilon_2\Pi_{n}]\right)\\
&\quad
-P_{|k|\geq k_M}\Big(\Upsilon_2\rmU\cdot\na_{z,v}\Om\Big)
+\udl{\varphi_1}\pa_{z}P_{|k|\geq k_M}(\Upsilon_2\Psi)
-P_{|k|\geq k_M}\left(\mathcal{N}_{\Om}[\Upsilon_2\Psi]\right)\\
&\quad
-P_{|k|\geq k_M}\left(\mathcal{N}_a[\Pi_{l}]\right)
-P_{|k|\geq k_M}\left(\mathcal{N}_a[\Pi_{n}]\right)\\
&\quad+\sum_{0<|k|<k_{M}}\Upsilon_2\bfD_{u,k}\left(\mathcal{F}_{1}\Big(\varphi_1^{\d}\pa_{z}(\Upsilon_2\Psi)-\widetilde{\varphi_1}\pa_{z}\big(\mathring{\mathcal{T}}_{1,D}^{-1}[\Om]-\Upsilon_2\Psi\big)\Big)\right)(t,k,v)e^{izk}\\
&\quad-\sum_{0<|k|<k_{M}}\Upsilon_2\bfD_{u,k}\left(\mathcal{F}_{1}\Big(\rmU\cdot\na_{z,v}\Om\Big)\right)(t,k,v)e^{iz k}\\
&\quad-\sum_{0<|k|<k_{M}}\Upsilon_2\bfD_{u,k}\left(\mathcal{F}_{1}\Big(\mathcal{N}_{\Om}[\Upsilon_2\Psi]\Big)\right)(t,k,v)e^{iz k}
-\Upsilon_2\bfD(\mathcal{N}_{a}[\Pi])\\
\pa_ta&=-\udl{\th'}\pa_z(\Upsilon_2\Psi)-\Upsilon_2\mathrm{U}\cdot\na_{z,v}a,\\
\mathrm{U}&=(0,\udl{\pa_tv})+\udl{\pa_yv}\na^{\bot}_{v,z}P_{\neq}(\Upsilon_2\Psi),\\
\Om&=\udl{\varphi_4}\pa_{zz}\Psi+\udl{\pa_yv}(\pa_v-t\pa_z)\Big(\udl{\varphi_4}\udl{\pa_yv}(\pa_v-t\pa_z)\Psi\Big).
\end{aligned}\right.
\eeq
where the operator $\mathring{\mathcal{T}}_{1,D}^{-1}$ is defined in \eqref{eq: inverse cutoff-op} and $\bfD(\mathcal{N}_{a}[\Pi])=\bfD(\mathcal{N}_{a}[\Pi_{l}])+\bfD(\mathcal{N}_{a}[\Pi_{n}])$ with 
\beq
\begin{aligned}
&\bfD(\mathcal{N}_{a}[\Pi_{l}])=\sum_{0<|k|<k_{M}}\bfD_{u,k}\left(\mathcal{F}_{1}\Big(\Upsilon_2\mathcal{N}_{a}[\Pi_{l}]\Big)\right)(t,k,v)e^{iz k}\\
&\bfD(\mathcal{N}_{a}[\Pi_{n}])=\sum_{0<|k|<k_{M}}\bfD_{u,k}\left(\mathcal{F}_{1}\Big(\Upsilon_2\mathcal{N}_{a}[\Pi_{n}]\Big)\right)(t,k,v)e^{iz k}.
\end{aligned}
\eeq
with $\mathcal{N}_a(\Pi)$ being defined in \eqref{eq: N_aPi} and 
\beq
\Pi_{l}=\Upsilon_2\Pi_{l,1}+\Upsilon_1\Pi_{l,2},\quad \Pi_{n}=\Upsilon_2\Pi_{n,1}+\Upsilon_1\Pi_{n,2}. 
\eeq
We can recover $\Om$ from $f$. 
\begin{align}\label{eq: Om-f}
\Om(t, z, v)=P_0(f)+\sum_{0<|k|<k_{M}}\bfD_{u, k}^{-1}\big[\mathcal{F}_1f\big](t, k, v)e^{izk}+P_{|k|\geq k_{M}}[f](t, z, v).
\end{align}
The pressure terms $\Pi$ are solved from the elliptic systems:
\beq\label{eq: Pressure-l1}
\left\{
\begin{aligned}
&\pa_z\Big((a+\udl{\th})\pa_z\Pi_{l,1}\Big)+\udl{\pa_yv}(\pa_v-t\pa_z)\Big((a+\udl{\th})\udl{\pa_yv}(\pa_v-t\pa_z)\Pi_{l,1}\Big)\\
&\quad\quad \quad \quad\quad \quad \quad\quad \quad 
=\udl{\varphi_6}\pa_{zz}(\Upsilon_2\Psi)+\udl{\varphi_7}P_{\neq}(\Upsilon_2\Psi),\\
&(\pa_v-t\pa_z)\Pi_{l,1}(t, z, u(0))=(\pa_v-t\pa_z)\Pi_{l,1}(t, z, u(1))=0,
\end{aligned}\right.
\eeq 
and
\beq\label{eq: Pressure-l2}
\left\{
\begin{aligned}
&\pa_z\Big((a+\udl{\th})\pa_z\Pi_{l,2}\Big)+\udl{\pa_yv}(\pa_v-t\pa_z)\Big((a+\udl{\th})\udl{\pa_yv}(\pa_v-t\pa_z)\Pi_{l,2}\Big)\\
&\quad\quad \quad \quad\quad \quad\quad\quad \quad  =\udl{\varphi_8}\udl{\pa_yv}(\pa_v-t\pa_z)P_{\neq}(\Upsilon_2\Psi),\\
&(\pa_v-t\pa_z)\Pi_{l,2}(t, z, u(0))=(\pa_v-t\pa_z)\Pi_{l,2}(t, z, u(1))=0,
\end{aligned}\right.
\eeq 
and
\beq\label{eq: Pressure-n1}
\left\{
\begin{aligned}
&\pa_z\Big((a+\udl{\th})\pa_z\Pi_n\Big)+\udl{\pa_yv}(\pa_v-t\pa_z)\Big((a+\udl{\th})\udl{\pa_yv}(\pa_v-t\pa_z)\Pi_n\Big)\\
&\quad\quad \quad \quad\quad 
=-2\udl{\chi_2}(\udl{\pa_yv}(\pa_v-t\pa_z)\pa_z(\Upsilon_2\Psi))^2-2\pa_{zz}(\Upsilon_2\Psi)\Big(\Om-\udl{\chi_2}\pa_{zz}(\Upsilon_2\Psi)\Big),\\
&\quad\quad \quad \quad\quad \quad
-2\udl{\chi_2'}\udl{\pa_yv}\Big[(\pa_z(\Upsilon_2\Psi))(\pa_v-t\pa_z)\pa_z(\Upsilon_2\Psi)+(\pa_v-t\pa_z)(\Upsilon_2\Psi)\pa_{zz}(\Upsilon_2\Psi)\Big]\\
&\quad\quad \quad \quad\quad \quad
-\f{1}{2}\udl{\chi_2''}\Big[(\pa_z(\Upsilon_2\Psi))^2+(\udl{\pa_yv})^2((\pa_v-t\pa_z)(\Upsilon_2\Psi))^2\Big],\\
&(\pa_v-t\pa_z)\Pi_n(t, z, u(0))=(\pa_v-t\pa_z)\Pi_n(t, z, u(1))=0.
\end{aligned}\right.
\eeq 
and
\beq\label{eq: Pressure-n-2}
\left\{
\begin{aligned}
&\pa_z\Big((a+\udl{\th})\pa_z\Pi_{n,2}\Big)+\udl{\pa_yv}(\pa_v-t\pa_z)\Big((a+\udl{\th})\udl{\pa_yv}(\pa_v-t\pa_z)\Pi_{n,2}\Big)\\
&\quad\quad \quad 
=
-\f{1}{2}\udl{\chi_2''}\Big[2(\udl{\pa_yv})^2((\pa_v-t\pa_z)P_{\neq}(\Upsilon_2\Psi))\pa_vP_0(\Upsilon_2\Psi)+(\udl{\pa_yv})^2(\pa_vP_0(\Upsilon_2\Psi))^2\Big],\\
&(\pa_v-t\pa_z)\Pi_{n,2}(t, z, u(0))=(\pa_v-t\pa_z)\Pi_{n,2}(t, z, u(1))=0.
\end{aligned}\right.
\eeq 
We have that the zero mode of the pressure solves
\beq\label{eq: pressure-zero-mode}
\begin{aligned}
&\pa_vP_0(\Pi\Upsilon_1)+(\udl{\varphi_4}\Upsilon_2)P_0(a)\pa_vP_0(\Pi\Upsilon_1)+(\udl{\varphi_4}\Upsilon_2)P_0\big[P_{\neq}(a)P_{\neq}(\pa_v-t\pa_z)P_{\neq}(\Pi\Upsilon_1)\big]\\
&=(\udl{\varphi_4}\Upsilon_2)P_0\Big(\pa_z(\Psi\Upsilon_2)\pa_z(\pa_v-t\pa_z)(\Psi\Upsilon_2)-\pa_{zz}(\Psi\Upsilon_2)(\pa_v-t\pa_z)(\Psi\Upsilon_2)\Big).
\end{aligned}
\eeq
We also list the equations related to the change of coordinates.
\beq\label{eq: v_t-2}
\begin{aligned}
\pa_t(\udl{\pa_tv})+\udl{\pa_tv}\pa_v(\udl{\pa_tv})+\f{2}{t}(\udl{\pa_tv})=
&-\f{\udl{\chi_1}}{t}\udl{\pa_yv}P_0\left(\na_{z,v}^{\bot}P_{\neq}(\Upsilon_2\Psi)\cdot\na_{z,v}(\Upsilon_2\underline{U^x})\right)\\
&-\f{\udl{\chi_1}}{t}P_0\Big(P_{\neq}(a)\pa_zP_{\neq}\big((\Upsilon_2\Pi_{l,1})+(\Upsilon_2\Pi_{l,2})+(\Upsilon_2\Pi_{n})\big)\Big),
\end{aligned}
\eeq
where 
\beno
\underline{U^x}(t,z,v)=-\underline{\pa_yv}(\pa_v-t\pa_z)(\Upsilon_2\Psi). 
\eeno 
For $j=1,2, ...,11$, we have
\beq\label{eq: varphi^d}
\pa_t\varphi_j^{\d}+\udl{\pa_tv}(t,v)\pa_v\varphi_j^{\d}=-\udl{\pa_tv}(t,v)\pa_v\widetilde{\varphi_j}(v)
\eeq
We also have that $h=\udl{\pa_yv}-\udl{u'}$ solves
\beq\label{eq: h-2}
\pa_th+\udl{\pa_tv}\pa_vh=\bar{h}.
\eeq
where $\bar{h}$ solves
\beq\label{eq: bar{h}-2}
\begin{aligned}
\pa_t\bar{h}
&=-\f{2\bar{h}}{t}-\udl{\pa_tv}\pa_v\bar{h}\\
&\quad+\f{\udl{\pa_yv}}{t}\Big(P_0(\pa_z(\Upsilon_2\Psi)\pa_v\udl{\om}-\pa_v(\Upsilon_2\Psi)\pa_z\udl{\om})-P_0(\pa_z(\Pi_{l})\pa_va-\pa_v(\Pi_{l})\pa_za)\\
&\quad\quad\quad\quad \quad
-P_0(\pa_z(\Upsilon_2\Pi_{n})\pa_va-\pa_v(\Upsilon_2\Pi_{n})\pa_za)\Big).
\end{aligned}
\eeq

\end{subequations}

\section{Energy functional and bootstrap proposition}\label{sec energy functional}
We will use the same multiplier $\rmA(t,\na)$ introduced in \cite{BM2015}. 
\beno
\rmA_k(t,\eta)=e^{\la(t)\langle k,\eta\rangle^s}\langle k,\eta\rangle^{\s}\rmJ_{k}(t,\eta).
\eeno
The index $\la(t)$ is the bulk Gevrey-$\f{1}{s}$ regularity and will be chosen to satisfy
\beno
&&\la(t)=\f34\la_0+\f14\la',\quad t\leq 1\\
&&\dot{\la}(t)=-\f{\d_{\la}}{\langle t\rangle^{2\tilde{q}}}(1+\la(t)),\quad t\in (1,\infty)
\eeno
where $\la_0, \la'$ are parameters that depend on the regularity of the background flow $u(y)$, density $\th(y)$ and chosen by the proof, $\d_{\la}\approx \la_0-\la'$ is a small parameter that ensures $\la(t)\geq \f{\la_0}{2}+\f{\la'}{2}$ for $t\geq 0$ and $\f12<\tilde{q}\leq \f{s}{8}+\f{7}{16}$ is a parameter chosen by the proof. 

The main multiplier for dealing with the Orr mechanism and the associated nonlinear growth is 
\beq\label{eq: J}
\rmJ_{k}(t,\eta)=\f{e^{\mu |\eta|^{\f12}}}{w_k(t,\eta)}+e^{\mu |k|^{\f12}},
\eeq
where 
\beq\label{eq: w_k(t,eta)}
w_k(t,\eta)=\left\{
\begin{aligned}
&w_k(t_{\rmE(\sqrt{\eta}),\eta},\eta)\quad t<t_{\rmE(\sqrt{\eta}),\eta}\\
&w_{\mathrm{NR}}(t,\eta)\quad t\in [t_{\rmE(\sqrt{\eta}),\eta},2\eta]\setminus \mathrm{I}_{k,\eta}\\
&w_{\mathrm{R}}(t,\eta)\quad t\in \mathrm{I}_{k,\eta}\\
&1\quad t\geq 2\eta.
\end{aligned}
\right.
\eeq
Here $(w_{\mathrm{R}}(t,\eta),w_{\mathrm{NR}}(t,\eta))$ is defined in the following way: 

Let $w_{\mathrm{NR}}$ be a non-decreasing function of time with $w_{\mathrm{NR}}(t,\eta)=1$ for $t\geq 2\eta$. For definiteness, we remark here that for $|\eta|\leq 1$, $w_{\mathrm{NR}}(t,\eta)=1$, which will be a consequence of the definition. For $k=1,2,3,...,\rmE(\sqrt{\eta})$, we define
\beq\label{eq: w_NR-R}
\begin{aligned}
w_{\mathrm{NR}}&(t,\eta)=\left(\f{k^2}{\eta}\Big(1+b_{k,\eta}\Big|t-\f{\eta}{k}\Big|\Big)\right)^{C\kappa_1}w_{\mathrm{NR}}(t_{k-1,\eta},\eta),\\
&\forall t\in \mathrm{I}^{\mathrm{R}}_{k,\eta}=\left[\f{\eta}{k},t_{k-1,\eta}\right],
\end{aligned}
\eeq
and
\beq\label{eq: w_NR-L}
\begin{aligned}
w_{\mathrm{NR}}&(t,\eta)=\left(1+a_{k,\eta}\Big|t-\f{\eta}{k}\Big|\right)^{-1-C\kappa_1}w_{\mathrm{NR}}\left(\f{\eta}{k},\eta\right),\\
&\forall t\in \mathrm{I}^{\mathrm{L}}_{k,\eta}=\left[t_{k,\eta},\f{\eta}{k}\right].
\end{aligned}
\eeq
The constant $b_{k,\eta}$ is chosen to ensure that $\f{k^2}{\eta}\Big(1+b_{k,\eta}\Big|t_{k-1,\eta}-\f{\eta}{k}\Big|\Big)=1$, hence for $k\geq 2$
\beq\label{eq: b_k,eta}
b_{k,\eta}=\f{2(k-1)}{k}\left(1-\f{k^2}{\eta}\right)
\eeq
and $b_{1,\eta}=1-\f{1}{\eta}$ and similarly, 
\beq\label{eq: a_k,eta}
a_{k,\eta}=\f{2(k+1)}{k}\left(1-\f{k^2}{\eta}\right).
\eeq
On each interval $\mathrm{I}_{k,\eta}$, we define $w_{\mathrm{R}}(t,\eta)$ by
\begin{align}\label{eq: w_R-R}
&w_{\mathrm{R}}(t,\eta)=\f{k^2}{\eta}\left(1+b_{k,\eta}\Big|t-\f{\eta}{k}\Big|\right)w_{\mathrm{NR}}(t,\eta),\quad
\forall t\in \mathrm{I}^{\mathrm{R}}_{k,\eta}=\left[\f{\eta}{k},t_{k-1,\eta}\right],\\
\label{eq: w_R-L}
&w_{\mathrm{R}}(t,\eta)=\f{k^2}{\eta}\left(1+a_{k,\eta}\Big|t-\f{\eta}{k}\Big|\right)w_{\mathrm{NR}}(t,\eta),\quad
\forall t\in \mathrm{I}^{\mathrm{L}}_{k,\eta}=\left[t_{k,\eta},\f{\eta}{k}\right].
\end{align}
We also define $\rmJ^{\mathrm{R}}(t,\eta)$ and $\rmA^{\mathrm{R}}(t,\eta)$ to assign resonant regularity at every critical time: 
\beq\label{eq: J^R,A^R}
\begin{aligned}
&\rmJ^{\mathrm{R}}(t,\eta)
=\left\{\begin{aligned}
&e^{\mu |\eta|^{\f12}}w_{\mathrm{R}}^{-1}(t_{\rmE(\sqrt{\eta}),\eta},\eta)\quad t<t_{\rmE(\sqrt{\eta}),\eta}\\
&e^{\mu |\eta|^{\f12}}w_{\mathrm{R}}^{-1}(t,\eta)\quad t\in [t_{\rmE(\sqrt{\eta}),\eta},2\eta]\\
&e^{\mu |\eta|^{\f12}}\quad t\geq 2\eta,
\end{aligned}\right.\\
&\rmA^{\mathrm{R}}(t,\eta)=e^{\la(t)\langle\eta\rangle^s}\langle\eta\rangle^{\s}\rmJ^{\mathrm{R}}(t,\eta).
\end{aligned}
\eeq

One may refer to \cite{BM2015} and Appendix \ref{sec: operator A} for more basic properties of the multiplier $\rmA(t,\na)$. 

We also introduce the Fourier multiplier $\rmA^*(t,\na)=\mathcal{F}^{-1}(\rmA^*_k(t, \eta)\widehat{f}_k(t,\eta))$ from \cite{ChenWeiZhangZhang2023}, 
\beno
\rmA^*_k(t, \eta)=\rmA_k(t,\eta)\rmB_k(t,\eta),\quad \rmB_k(t,\eta)=\Big(1+\f{k^2+|\eta|}{\langle t\rangle^2}\Big)^{\f12}. 
\eeno
The weight is well-constructed to capture the important regularity difference between the vorticity $f$ and the density $a$. 
\begin{remark}
It holds that
\begin{align*}
\f{\pa_t\rmA^*_k(t, \eta)}{\rmA^*_k(t, \eta)}
=\f{\pa_t\rmA_k(t, \eta)}{\rmA_k(t, \eta)}-\f{t(k^2+|\eta|)}{\langle t\rangle^4+(k^2+|\eta|)\langle t\rangle^2}\approx \f{\pa_t\rmA_k(t, \eta)}{\rmA_k(t, \eta)}. 
\end{align*}
\end{remark}
\begin{proof}
The remark follows directly from the fact that
\beno
\f{t(k^2+|\eta|)}{\langle t\rangle^4+(k^2+|\eta|)\langle t\rangle^2}\lesssim \f{\langle k\rangle^{\f12}}{\langle t\rangle^{\f32}}+ \f{\langle \eta\rangle^{\f12}}{\langle t\rangle^{2}}\lesssim \f{\langle k,\eta\rangle^s}{\langle t\rangle^{2\bar{q}}}. 
\eeno
\end{proof}

Let us mention some ideas for choosing the multiplier $\rmB$. It is used to solve the problem from the nonlinear interactions between the pressure and the density, which can be formally written as $\pa_z\Pi\pa_va$ and $\pa_v\Pi\pa_z a$. On one hand, if the pressure is in higher frequencies, namely, $\pa_z\Pi_{high}\pa_va_{low}$ and $\pa_v\Pi_{high}\pa_z a_{low}$, both terms behavior as the Reaction terms $\pa_z\Psi_{high}\pa_v\Om_{low}$ and $\pa_v\Psi_{high}\pa_z \Om_{low}$. They can be controlled easily. We only need to control  
\beno
\left\|\left\langle\f{\pa_v}{t\pa_z}\right\rangle^{-1}(\pa_z^2+(\pa_v-t\pa_z)^2)\left(\f{|\na|^{\f{s}{2}}}{\langle t\rangle^s}\rmA+\sqrt{\f{\pa_t w}{w}}\tilde{\rmA}\right)P_{\neq}(\Pi\Upsilon_2)\right\|_2^2
\eeno
by the $\mathcal{CK}$ terms. However, it is not enough to close the elliptic estimate for the pressure $\Pi$. Indeed, unlike the elliptic equation of the stream function, the elliptic error terms in the pressure equations are not just zero mode, see $\mathcal{S}_a[\Pi]$ and its estimates in Lemma \ref{lem: S_a}. Notice that $\Pi$ behaves better than the stream function $\Psi$, which allows us to replace $\left\langle\f{\pa_v}{t\pa_z}\right\rangle^{-1}\rmA(t,\na)$ by a stronger weight $\mathcal{M}_3$, see Propositions \ref{prop:pressure-zero mode}, \ref{Prop: non-zero-pessure-L}, and \ref{Prop: non-zero-pessure-N}, and section \ref{sec: pressure-zero} and section \ref{sec: ell-pressure} for more details. On the other hand, when the density $a$ is in higher frequencies, namely, $\pa_z\Pi_{low}\pa_va_{high}$ and $\pa_v\Pi_{low}\pa_z a_{high}$, we have derivative loss. Note that in similar terms that involve the stream function and vorticity, we use the commutator to gain a $1/2$ derivative. Here we put half more derivative in the multiplier of $a$ for short time, which allows us to control these terms. Then we should be careful in estimating the linear term $-\udl{\th'}\pa_z(\Upsilon_2\Psi)$ in the equation of $a$, because the added weight $\rmB$ will act on $\Psi$, see Proposition \ref{prop: I_a^l} and section \ref{sec: I_a^l} for more details. The other place where we take advantage of the operator $\rmB$ is the estimate of elliptic error term $\mathcal{S}_a(\Pi)$, see the proof of Lemma \ref{lem: S_a} for more details.

Let us now remark here that by the nice Fourier kernel estimates, the operator $\rmA$ commutates well with the wave operator. The following two remarks can be found in \cite{MasmoudiZhao2020}. 
\begin{remark}\label{Rmk: wave bdd-A}
There exists $C=C(k_M)\geq 1$ independent of $t$ such that for any $\la<\la_{\mathcal{D}}$,
\begin{align*}
{C}^{-1}\|\Om\|_{\mathcal{G}^{s,\la,\s}(\mathbb{T}\times [u(0),u(1)])}
\leq \|f\|_{\mathcal{G}^{s,\la,\s}(\mathbb{T}\times [u(0),u(1)])}
\leq C\|\Om\|_{\mathcal{G}^{s,\la,\s}(\mathbb{T}\times [u(0),u(1)])},
\end{align*}
and that there exists $C=C(k_M)\geq 1$ independent of $t$  such that for any $\la(t)<\la_{\mathcal{D}}$,
\beno
C^{-1}\|\rmA\Om\|_2^2\leq \|\rmA f\|_2^2\leq C\|\rmA\Om\|_2^2. 
\eeno
\end{remark}
\begin{remark}\label{Rmk:CKAOm}
Proposition \ref{prop: kernel-wave-op} gives us that for $\la_0$ and $\la'$ sufficiently small, 
\beno
-\dot{\la}(t)\left\||\na|^{s/2}\rmA \Om\right\|_2^2+\left\|\sqrt{\f{\pa_tw}{w}}\tilde{\rmA}\Om\right\|_2^2\lesssim \mathcal{CK}_{f},
\eeno
where 
\beno
\mathcal{CK}_f=\sum_{k\in\mathbb{Z}}\int_{\mathbb{R}}|\dot{\rmA}_k(t,\eta)|\rmA_k(t,\eta)|\widehat{f}_k(t,\eta)|^2d\eta.
\eeno
\end{remark}
\begin{remark}\label{Rmk: switch-large-zero}
The following fact will be frequently used: for any function $\udl{\varphi}$ of zero mode, we write $\udl{\varphi}=\widetilde{\varphi}+\varphi^{\d}$
\begin{align*}
\left\|\sqrt{\f{\pa_tw}{w}}\tilde{\rmA}(\udl{\varphi}P_{\neq}f)\right\|_{L^2}
&\lesssim \|\widetilde{\varphi}\|_{\mathcal{G}^{s,\s+2}}\mathcal{CK}_f+ \|\varphi^{\d}\|_{\mathcal{G}^{s,\s-6}}\mathcal{CK}_f+\left\|\sqrt{\f{\pa_tw}{w}}\rmA^{\rmR}\varphi^{\d}\right\|_2\|f\|_{\mathcal{G}^{s,\s-6}}\\
&\lesssim \mathcal{CK}_f+\ep\mathcal{CK}_{\varphi^{\d}},
\end{align*}
where
\beno
\mathcal{CK}_{\varphi^{\d}}=\int_{\mathbb{R}}\dot{\rmA}^{\rmR}(t,\eta)\rmA^{\rmR}(t,\eta)|\widehat{\varphi^{\d}}(t,\eta)|^2d\eta.
\eeno
\end{remark}
We give the following remark, related to the operator $\rmA$. 
\begin{remark}\label{Rmk: A-switch}
Let $\ell=k-l$ and $\zeta=\xi-\eta$, then 
\begin{enumerate}
\item if $\ell\neq 0$, $k\neq 0$, and $l\neq 0$, then
\begin{align*}
\rmA_k(\xi)\lesssim &\rmA_{l}(\eta)e^{c\la|\ell,\zeta|^s}\Big(1+\f{|\xi|}{k^2+(\xi-kt)^2}\mathbf{1}_{t\in {\rm{I}}_{k,\xi}\cap {\rm{I}}_{k,\eta}}\mathbf{1}_{c|\xi|\leq |\eta|\leq \f1c|\xi|}\Big)\mathbf{1}_{\rmL\rmH}\\
&+\rmA_{\ell}(\zeta)e^{c\la|l,\eta|^s}\Big(1+\f{|\xi|}{k^2+(\xi-kt)^2}\mathbf{1}_{t\in {\rm{I}}_{k,\xi}\cap {\rm{I}}_{k,\zeta}}\mathbf{1}_{c|\xi|\leq |\zeta|\leq \f1c|\xi|}\Big)\mathbf{1}_{\rmH\rmL}\\
&+e^{c\la|l,\eta|^s}e^{c\la|\ell,\xi|^s}\langle l,\eta\rangle^{\s/2+1}\langle \ell,\xi\rangle^{\s/2+1}\mathbf{1}_{\rmH\rmH},
\end{align*}
\item if $\ell=0$ and $k\neq 0$, then
\begin{align*}
\rmA_k(\xi)\lesssim &\rmA_{k}(\eta)e^{c\la|\zeta|^s}\mathbf{1}_{\rmL\rmH}
+\rmA^{\rmR}(\zeta)e^{c\la|k,\eta|^s}\mathbf{1}_{\rmH\rmL}\\
&+e^{c\la|k,\eta|^s}e^{c\la|\xi|^s}\langle k,\eta\rangle^{\s/2+1}\langle \xi\rangle^{\s/2+1}\mathbf{1}_{\rmH\rmH},
\end{align*}
\end{enumerate}
where $\rmL\rmH=\{4|\ell,\zeta|\leq |l,\eta|\}$, $\rmH\rmL=\{|\ell,\zeta|\geq 4|l,\eta|\}$, and $\rmH\rmH=\{\f14|\ell,\zeta|\leq |l,\eta|\leq 4|\ell,\zeta|\}$. 
\end{remark}

\subsection{Energy functionals}
Now we define the energy functionals:
\begin{align*}
\mathcal{E}_f&=\sum_{k\in\mathbb{Z}}\int_{\mathbb{R}}|\rmA_k(t,\eta)\widehat{f}_k(t,\eta)|^2d\eta,\quad 
\mathcal{CK}_f=\sum_{k\in\mathbb{Z}}\int_{\mathbb{R}}|\dot{\rmA}_k(t,\eta)|\rmA_k(t,\eta)|\widehat{f}_k(t,\eta)|^2d\eta,\\
\mathcal{E}_a&=\sum_{k\in\mathbb{Z}}\int_{\mathbb{R}}|\rmA_k^*(t,\eta)\widehat{f}_k(t,\eta)|^2d\eta,\quad
\mathcal{CK}_a=\sum_{k\in\mathbb{Z}}\int_{\mathbb{R}}|\dot{\rmA}_k^*(t,\eta)|\rmA_k^*(t,\eta)|\widehat{f}_k(t,\eta)|^2d\eta, \\
\mathcal{E}_{h}&=\int_{\mathbb{R}}|\rmA^{\rmR}(t,\eta)\widehat{h}(t,\eta)|^2d\eta,\quad
\mathcal{CK}_{h}=\int_{\mathbb{R}}|\dot{\rmA}^{\rmR}(t,\eta)|\rmA^{\rmR}(t,\eta)|\widehat{h}(t,\eta)|^2d\eta\\
\mathcal{E}_{\bar{h}}&=\int_{\mathbb{R}}\f{\langle t\rangle^{2+2s}}{\langle \eta\rangle^{2s}}|\rmA_0(t,\eta)\widehat{(\udl{\pa_yv}\pa_v\udl{\pa_tv})}(t,\eta)|^2d\eta,\\
\mathcal{CK}_{\bar{h}}&=\int_{\mathbb{R}}\f{\langle t\rangle^{2+2s}}{\langle \eta\rangle^{2s}}|\dot{\rmA}_0(t,\eta)|\rmA_0(t,\eta)|\widehat{(\udl{\pa_yv}\pa_v\udl{\pa_tv})}(t,\eta)|^2d\eta, \\
\mathcal{E}_{\varphi^{\d}_j}&=\int_{\mathbb{R}}|\rmA^{\rmR}(t,\eta)\widehat{\varphi^{\d}_j}(t,\eta)|^2d\eta,\quad
\mathcal{CK}_{\varphi^{\d}_j}=\int_{\mathbb{R}}\dot{\rmA}^{\rmR}(t,\eta)\rmA^{\rmR}(t,\eta)|\widehat{\varphi^{\d}_j}(t,\eta)|^2d\eta,\quad j=1,2,...,11.
\end{align*}
We also introduce an assistant functional:
\begin{align*}
\mathcal{E}_{\udl{\pa_tv}}&=\int_{\mathbb{R}}\langle t\rangle^{4-\rmK_{\rmD}\ep}e^{2\la(t)\langle \eta\rangle^s}\langle \eta\rangle^{2\s-12}|\widehat{\udl{\pa_tv}}(t,\eta)|^2d\eta.
\end{align*}

We define the total energy functional 
\begin{align*}
\mathcal{E}(t)=\mathcal{E}_f+\f{1}{\rmK_a}\mathcal{E}_a+\mathcal{E}_{\bar{h}}+\mathcal{E}_{\udl{\pa_tv}}+\f{1}{\rmK_v}\Big(\sum_{j=1}^{11}\mathcal{E}_{\varphi^{\d}_j}\Big)
\end{align*}
where $\rmK_{\rmD}\geq 1$, $\rmK_a\geq 1$ and $\rmK_v\geq 1$ are large constants determined in the proof. Indeed, $\rmK_{\rmD}$ depends on $s, \la, \la'$, $\rmK_a$ is mainly determined by the constant $C_{\varepsilon_0}$ in Proposition \ref{prop: I_a^l} and $\rmK_v$ is mainly determined by the Gevrey norms of $\widetilde{\varphi_j}$ and $\widetilde{\th}$. 

\subsection{Determine $k_M$}
In this section, we explain why the constant $k_{M}$ is determined as it is in Remark \ref{Rmk: determine k_M}. We define
\ben\label{eq: Linear nonlocal term-high in k}
I_{f,5}^{L}\eqdef\f1{2\pi}\sum_{|k|\geq k_{M}}\int_{\mathbb{R}}\rmA_k(t,\eta)^2\widehat{\Big(\widetilde{\varphi_1}\pa_{z}\mathring{\mathcal{T}}_{1,D}^{-1}(\Om)\Big)}_k(t,\eta)\overline{\widehat{\Om}_k(t, \eta)}d\eta.
\een
This is the linear part of $I_{f,5}$ which is defined in \eqref{eq: definition-I_fj}. We have the following Lemma. 
\begin{lemma}\label{lem: I_{f,5}^{L}}
It holds for all $k_M\geq 1$ that
\begin{align}\label{eq: estimate I_f5^L}
|I_{f,5}^{L}|\leq \f{C_{\varphi_1}}{\langle t\rangle^2}\|\rmA P_{|k|\geq k_M}f\|_{L^2}^2+\f{C_0C_{ell}C_{\varphi_1}}{k_M} \mathcal{CK}_f. 
\end{align}
Here both $C_{ell}>1$ and $C_{\varphi_1}>1$ are constants only depending on the background flow and density. More precisely, the constant $C_{ell}$ is from Proposition \ref{eq: elliptic estimate}, and the constant $C_{\varphi_1}$ is given by the Gevrey norm of $\widetilde{\varphi_1}$. 
\end{lemma}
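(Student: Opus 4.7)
The overall plan is to exploit that for $|k|\geq k_M$ the Fourier symbol of $\pa_z\mathring{\mathcal{T}}_{1,D}^{-1}$ is pointwise bounded by $|k|/(k^2+(\eta-kt)^2)\leq 1/|k|\leq 1/k_M$, which makes the linear nonlocal term $I_{f,5}^L$ a small perturbation of the dissipation $\mathcal{CK}_f$ once the fixed Gevrey multiplier $\widetilde{\varphi_1}$ is peeled off. Since $|k|\geq k_M$, the definition \eqref{eq: def-f} gives $\mathcal{F}_1 f(t,k,\cdot)=\mathcal{F}_1\Omega(t,k,\cdot)$, so $I_{f,5}^L$ is immediately expressible as a quadratic form in $\hat f_k$ with no wave-operator correction.

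\emph{Paraproduct, multiplier exchange, and elliptic bound.} First I would decompose $\widetilde{\varphi_1}\cdot \pa_z\mathring{\mathcal{T}}_{1,D}^{-1}\Omega$ via a paraproduct. Since $\widetilde{\varphi_1}$ is a time-independent function in Gevrey-$\tfrac{1}{s_0}$ whose Fourier transform satisfies $|\widehat{\widetilde{\varphi_1}}(\xi)|\lesssim C_{\varphi_1}e^{-\lambda_0 |\xi|^{s_0}}$ with $\lambda_0>\lambda(t)$ uniformly in $t\geq 0$ and $s<s_0$, Remark \ref{Rmk: A-switch} allows me to trade $\rmA_k(\eta)$ for $\rmA_k(\xi)$ modulo a bounded Gevrey kernel in each LH/HL/HH paraproduct piece, with the HH contribution absorbed outright into $C_{\varphi_1}$. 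Next, the Green's function estimate from the Sturm--Liouville appendix (Propositions on $T_1$, $T_2$) gives the pointwise Fourier bound
\[
\bigl|\mathcal{F}\!\bigl(\pa_z\mathring{\mathcal{T}}_{1,D}^{-1}\Omega\bigr)(k,\xi)\bigr|\lesssim C_{ell}\int \frac{|k|\,e^{-\lambda_\Delta\langle\xi-\xi'\rangle^s}}{k^2+(\xi-kt)^2}\,|\hat f_k(\xi')|\,d\xi',
\]
so that Cauchy--Schwarz in the convolution reduces $|I_{f,5}^L|$ to controlling
\[
\sum_{|k|\geq k_M}\int \frac{|k|\,\rmA_k(\xi)^2}{k^2+(\xi-kt)^2}\,|\hat f_k(\xi)|^2\,d\xi
\]
up to harmless constants absorbed into $C_{\varphi_1}$.

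\emph{Region decomposition.} I would then split the $\xi$-integration into the resonant region $t\in\mathrm{I}_{k,\xi}$ and its complement. In the resonant region, the definition of $w_k$ in \eqref{eq: w_R-R}--\eqref{eq: w_R-L} implies $\pa_t w_k(\xi)/w_k(\xi)\gtrsim k^2/(k^2+(\xi-kt)^2)$, whence
\[
\frac{|k|}{k^2+(\xi-kt)^2}\leq \frac{1}{|k|}\cdot\frac{k^2}{k^2+(\xi-kt)^2}\leq \frac{C_0}{k_M}\cdot\frac{\pa_t w_k(\xi)}{w_k(\xi)},
\]
and Remark \ref{Rmk:CKAOm} bounds the resonant contribution by $C_0 C_{ell}C_{\varphi_1}k_M^{-1}\mathcal{CK}_f$. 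In the non-resonant region, for $|k|\geq k_M$ and $t\geq 1$, a straightforward case analysis on whether $|\xi|\lesssim |k|t$ or $|\xi|\gg |k|t$ yields $|k|/(k^2+(\xi-kt)^2)\lesssim 1/(|k|\langle t\rangle^2)\leq 1/(k_M\langle t\rangle^2)$, producing the $C_{\varphi_1}\langle t\rangle^{-2}\|\rmA P_{|k|\geq k_M}f\|_{L^2}^2$ term after summing in $k$.

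\emph{Main obstacle.} The crux is the resonant-region pointwise comparison $\tfrac{|k|}{k^2+(\xi-kt)^2}\leq \tfrac{C_0}{k_M}\tfrac{\pa_t w_k}{w_k}$, which is precisely the quantitative statement driving the choice $k_M\geq 100\,C_0C_{\varphi_1''}C_{ell}$ in \eqref{eq: k_M determine}: it is the reason why, for $|k|\geq k_M$, the wave operator is unnecessary and the linear nonlocal term is absorbable into the dissipation. Once this comparison is in hand, the rest is standard paraproduct bookkeeping; the only subtlety is verifying that the LH and HL pieces do not spoil the $k_M^{-1}$ gain, which follows from Remark \ref{Rmk: A-switch} combined with $\lambda_0>\lambda(t)$.
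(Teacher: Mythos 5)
Your overall architecture (peel off $\widetilde{\varphi_1}$ by a paraproduct/frequency split, use the Green's function bound for $\mathring{\mathcal{T}}_{1,D}^{-1}$, and absorb the resonant region into $\mathcal{CK}_f$ with a $k_M^{-1}$ gain via $\tfrac{|k|}{k^2+(\xi-kt)^2}=\tfrac{1}{|k|}\cdot\tfrac{k^2}{k^2+(\xi-kt)^2}\lesssim \tfrac{1}{k_M}\tfrac{\pa_t w_k}{w_k}$) is the same as the paper's, and the resonant half of your region decomposition is correct. The gap is in the non-resonant region: the pointwise bound $\tfrac{|k|}{k^2+(\xi-kt)^2}\lesssim \tfrac{1}{|k|\langle t\rangle^{2}}$ is false there. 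Take $|k|=k_M$, $\xi=k_M t$ with $t=k_M/2$, so $\xi=k_M^2/2$; then $\mathrm{I}_{k_M,\xi}=\emptyset$ because $|k_M|>\rmE(\sqrt{|\xi|})$, so the point is non-resonant, yet the left side equals $k_M^{-1}$ while your claimed right side is $\approx k_M^{-3}$. More generally, whenever $\xi\approx kt$ and $|k|\gtrsim\sqrt{|\xi|}$ (equivalently $t\lesssim |k|$) the symbol is $\approx |k|^{-1}$ with no $\langle t\rangle^{-2}$ decay, and this region cannot be charged to the $\langle t\rangle^{-2}\|\rmA P_{|k|\geq k_M}f\|_{L^2}^2$ term of the lemma.

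The repair is to charge this sub-region to $\mathcal{CK}_f$ as well, but through the \emph{other} part of $\dot{\rmA}_k/\rmA_k$, namely $|\dot\la(t)|\langle k,\xi\rangle^{s}\gtrsim \langle k,\xi\rangle^{s}\langle t\rangle^{-2\tilde q}$: with $\xi\approx kt$ one has $\langle k,\xi\rangle^{s}\gtrsim |k|^{s}\langle t\rangle^{s}$, and since $s>\tfrac12$ and $\tilde q\le \tfrac{s}{8}+\tfrac{7}{16}$ one checks $|k|^{-1}\lesssim k_M^{-1}\,\dot\rmA_k(\xi)/\rmA_k(\xi)$ for all $|k|\geq k_M$. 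This is exactly what the paper's single inequality $\tfrac{1}{k^2+(\zeta-kt)^2}\lesssim \tfrac{1}{k_M}\sqrt{\dot\rmA_k(\eta)/\rmA_k(\eta)}\sqrt{\dot\rmA_k(\zeta)/\rmA_k(\zeta)}\,\langle \zeta-\xi,\eta-\xi\rangle^{3}$ encodes (note also that it places the two $\sqrt{\dot\rmA/\rmA}$ factors at the two distinct frequencies of the quadratic form, which your ``Cauchy--Schwarz to a diagonal form'' step should make explicit). Consequently, in the paper the entire low-high piece goes into $\tfrac{C_0C_{ell}C_{\varphi_1}}{k_M}\mathcal{CK}_f$, and the $\tfrac{C_{\varphi_1}}{\langle t\rangle^{2}}\|\rmA P_{|k|\geq k_M}f\|_{L^2}^{2}$ term arises instead from the pieces where $\widetilde{\varphi_1}$ carries the high frequency, via the lossy elliptic decay $\|P_{|k|\geq k_M}\mathring{\mathcal{T}}_{1,D}^{-1}(\Om)\|_{\mathcal{G}^{s,\la,4}}\lesssim \langle t\rangle^{-2}C_{ell}\|P_{|k|\geq k_M}f\|_{\mathcal{G}^{s,\la,6}}$ — pieces you dismiss too quickly as ``absorbed outright into $C_{\varphi_1}$.''
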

\begin{proof}
We have
\begin{align*}
I_{f,5}^L
&=\f1{2\pi}\sum_{|k|\geq k_{M}}\int_{|\eta-\xi|\geq 8|k,\xi|}\rmA_k(t,\eta)^2\Big(\widehat{\widetilde{\varphi_1}}(\eta-\xi)ik\widehat{\mathring{\mathcal{T}}_{1,D}^{-1}(\Om)}_k(t,\xi)\Big)\overline{\widehat{\Om}_k(t, \eta)} d\xi d\eta\\
&\quad +\f1{2\pi}\sum_{|k|\geq k_{M}}\int_{|\eta-\xi|\leq \f18|k,\xi|}\rmA_k(t,\eta)^2\Big(\widehat{\widetilde{\varphi_1}}(\eta-\xi)ik\widehat{\mathring{\mathcal{T}}_{1,D}^{-1}(\Om)}_k(t,\xi)\Big)\overline{\widehat{\Om}_k(t, \eta)} d\xi d\eta\\
&\quad +\f1{2\pi}\sum_{|k|\geq k_{M}}\int_{\f18|k,\xi|\leq |\eta-\xi|\leq 8|k,\xi|}\rmA_k(t,\eta)^2\Big(\widehat{\widetilde{\varphi_1}}(\eta-\xi)ik\widehat{\mathring{\mathcal{T}}_{1,D}^{-1}(\Om)}_k(t,\xi)\Big)\overline{\widehat{\Om}_k(t, \eta)} d\xi d\eta\\
&=I_{f,5}^{L, HL}+I_{f,5}^{L,LH}+I_{f,5}^{L,HH}. 
\end{align*}
On the support of the integrand of $I_{f,5}^{L, HL}$, we have 
\beno
\rmA_k(t,\eta)\leq \rmA^{\rmR}(t, \eta-\xi)e^{c\la \langle k,\xi\rangle^{s}},
\eeno
which implies that 
\begin{align*}
|I_{f,5}^{L, HL}|
&\leq C\|\rmA^{\rmR}\widetilde{\varphi_1}\|_{L^2}\|P_{|k|\geq k_{M}}\mathring{\mathcal{T}}_{1,D}^{-1}(\Om)\|_{\mathcal{G}^{s,\la,4}}\|\rmA P_{|k|\geq k_{M}} \Om\|_{L^2}\\
&\leq C_{ell}\langle t\rangle^2\|\rmA^{\rmR}\widetilde{\varphi_1}\|_{L^2}\|P_{|k|\geq k_{M}}f\|_{\mathcal{G}^{s,\la,6}}\|\rmA P_{|k|\geq k_{M}} f\|_{L^2}.
\end{align*}
Here we use Proposition \ref{eq: elliptic estimate}
\begin{align*}
&\sum_{|k|\geq k_{M}}\|\langle k, \xi\rangle^4e^{c\la\langle k,\xi\rangle^s}\widehat{\mathring{\mathcal{T}}_{1,D}^{-1}(\Om)}_{k}(t,\xi)\|_{L^2}^2\\
&=\sum_{|k|\geq k_{M}}\left\|\langle k, \xi\rangle^4e^{c\la\langle k,\xi\rangle^s}\int \mathcal{G}_{D,1}(t, k, \xi,\eta)\widehat{\Om}_k(t,\eta)d\eta\right\|_{L^2}^2\\
&\leq \sum_{|k|\geq k_{M}}\left\|\int \f{e^{\la\langle k,\xi-\eta\rangle^s}e^{-\la_{\Delta}\langle \xi-\eta\rangle^s}}{1+k^2+(\eta-kt)^2}\langle k, \eta\rangle^4e^{c\la\langle k,\eta\rangle^s}\widehat{\Om}_k(t,\eta)d\eta\right\|_{L^2}^2\\
&\leq \sum_{|k|\geq k_{M}}\left\|\int \f{e^{-\f12\la_{\Delta}\langle \xi-\eta\rangle^s}}{\langle t\rangle^2}\langle k, \eta\rangle^6e^{c\la\langle k,\eta\rangle^s}\widehat{\Om}_k(t,\eta)d\eta\right\|_{L^2}^2
\leq \langle t\rangle^{-4} C_{ell}\|P_{|k|\geq k_{M}}f\|_{\mathcal{G}^{s,\la,6}}^2.
\end{align*}
A similar argument gives that 
\beno
|I_{f,5}^{L, HH}|
&\leq C_{ell}\langle t\rangle^2\|\rmA^{\rmR}\widetilde{\varphi_1}\|_{L^2}\|\rmA P_{|k|\geq k_{M}} f\|_{L^2}^2.
\eeno
To estimate $|I_{f,5}^{L, LH}|$, we have by Proposition \ref{eq: elliptic estimate} that,  
\begin{align*}
|I_{f,5}^{L, LH}|
&=\left|\f1{2\pi}\sum_{|k|\geq k_{M}}\int_{|\eta-\xi|\leq \f18|k,\xi|}\rmA_k(t,\eta)^2\Big(\widehat{\widetilde{\varphi_1}}(\eta-\xi)ik \mathcal{G}_{D,1}(t, \xi, \zeta)\widehat{\Om}_k(\zeta)\Big)\overline{\widehat{\Om}_k(t, \eta)} d\zeta d\xi d\eta\right|\\
&\leq C_{ell}\sum_{|k|\geq k_{M}}\left|\int_{|\eta-\xi|\leq \f18|k,\xi|}\rmA_k(t,\eta)^2\Big(e^{-\la_{u,\theta}\langle \eta-\xi\rangle^s}\f{|k|e^{-\la_{\Delta}\langle \zeta-\xi\rangle^s}}{k^2+(\zeta-kt)^2} \widehat{\Om}_k(\zeta)\Big)\overline{\widehat{\Om}_k(t, \eta)} d\zeta d\xi d\eta\right|\\
&\leq C_{ell}C_{\varphi_1}\sum_{|k|\geq k_{M}}\left|\int_{|\eta-\xi|\leq \f18|k,\xi|}\Big(e^{-\f34\la_{u,\theta}\langle \eta-\xi\rangle^s}\f{|k|e^{-\f34\la_{\Delta}\langle \zeta-\xi\rangle^s}}{k^2+(\zeta-kt)^2} \widehat{\rmA f}_k(\zeta)\Big)\overline{\widehat{\rmA f}_k(t, \eta)} d\zeta d\xi d\eta\right|.
\end{align*}
It is easy to check that for $k\geq k_M$
\beno
\f{1}{k^2+(\zeta-kt)^2}\lesssim \f{1}{k_M}\sqrt{\f{\dot{\rmA}_k(t,\eta)}{\rmA_{k}(t,\eta)}}\sqrt{\f{\dot{\rmA}_k(t,\zeta)}{\rmA_{k}(t,\zeta)}}\langle \zeta-\xi,\eta-\xi\rangle^3
\eeno
which implies that there is a universal constant $C_0$ such that 
\beno
|I_{f,5}^{L, LH}|\leq \f{C_0C_{ell}C_{\varphi_1}}{k_M}\mathcal{CK}_f.
\eeno
Thus we obtain \eqref{eq: estimate I_f5^L} and finish the proof. 
\end{proof}
One can regard $k_{M}$ as the threshold after which we no longer need to apply the wave operator. 

\subsection{The bootstrap argument}
The local wellposedness of the inhomogeneous incompressible Euler equation in the Gevrey class is classical. In this way, we may safely ignore the time interval $[0,1]$ by further restricting the size of the initial data. 

The goal is next to prove by a continuity argument that this energy $\mathcal{E}(t)$ (together with some related quantities) is uniformly bounded for all time if $\ep$ is sufficiently small. 
 
We define the following controls referred to in the sequel as the bootstrap hypotheses for $t\geq 1$ and some constant $C(k_{M})$,
\begin{itemize}
\item[(B1)]  $\mathcal{E}(t)\leq 10C(k_{M})\ep^2$; 
\item[(B2)] Compact support of $\Om$ and $a$: 
\beno
\mathrm{supp}\,\Om\subset \Big[2.5\kappa_0-10C(k_{M})\ep,1-2.5\kappa_0+10C(k_{M})\ep\Big],\\
\mathrm{supp}\, a \subset \Big[2.5\kappa_0-10C(k_{M})\ep,1-2.5\kappa_0+10C(k_{M})\ep\Big];
\eeno
\item[(B3)] `$\mathrm{CK}$' integral estimates
\begin{align*}
\int_{1}^t\bigg[\mathcal{CK}_f+\f{1}{\rmK_a}\mathcal{CK}_a+\mathcal{CK}_{\bar{h}}+\mathcal{CK}_{\udl{\pa_tv}}+\f{1}{\rmK_v}\Big(\sum_{j=1}^{11}\mathcal{CK}_{\varphi^{\d}_j}\Big)\bigg]d\tau
\leq 20C(k_{M})\ep^2,
\end{align*}
\end{itemize}
\begin{proposition}[Bootstrap]\label{prop: btsp}
Let $k_{M}$ satisfy \eqref{eq: k_M determine}. There exists an $\ep_0\in (0,\f12)$ depending only on $k_{M}$, $\la_{0}$, $\la'$, $s$ and $\s$ such that if $\ep<\ep_0$ and on $[1, T^*]$ the bootstrap hypotheses (B1)-(B3) hold, then for $\forall t\in [1, T^*]$,
\begin{itemize}
\item[1.]  $\mathcal{E}(t)\leq 8C(k_{M})\ep^2$, 
\item[2.] Compact support of $\Om$ and $a$: 
\beno
\mathrm{supp}\,\Om\subset \Big[2.5\kappa_0-8C(k_{M})\ep,1-2.5\kappa_0+8C(k_{M})\ep\Big],\\
\mathrm{supp}\, a \subset \Big[2.5\kappa_0-8C(k_{M})\ep,1-2.5\kappa_0+8C(k_{M})\ep\Big];
\eeno
\item[3.] and the $\mathrm{CK}$ controls satisfy:
\begin{align*}
\int_{1}^t\bigg[\mathcal{CK}_f+\f{1}{\rmK_a}\mathcal{CK}_a+\mathcal{CK}_{\bar{h}}+\mathcal{CK}_{\udl{\pa_tv}}+\f{1}{\rmK_v}\Big(\sum_{j=1}^{11}\mathcal{CK}_{\varphi^{\d}_j}\Big)\bigg]d\tau
\leq 18C(k_{M})\ep^2,
\end{align*}
\end{itemize}
from which it follows that $T^*=+\infty$. 
\end{proposition}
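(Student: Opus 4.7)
The argument is a standard continuity bootstrap: set $T^\star=\sup\{t\ge 1:\text{(B1)-(B3) hold on }[1,t]\}$, and assume for contradiction that $T^\star<\infty$. I would then differentiate the total energy and show that, on $[1,T^\star]$, each of the improved conclusions 1--3 follows with the constants $10C(k_M),20C(k_M)$ replaced by $8C(k_M),18C(k_M)$, which contradicts maximality. The basic identity I start from is
\[
\tfrac12\tfrac{d}{dt}\mathcal E(t)+\mathcal{CK}_f+\tfrac1{K_a}\mathcal{CK}_a+\mathcal{CK}_{\bar h}+\mathcal{CK}_{\underline{\partial_t v}}+\tfrac1{K_v}\sum_{j=1}^{11}\mathcal{CK}_{\varphi^\delta_j}=\sum_\star I_\star,
\]
where the $I_\star$ are the inner products of the good multipliers $\mathrm A_k,\mathrm A_k^\star,\mathrm A^{\mathrm R}$ against each right-hand side of the working system \eqref{eq: main system}. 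The $\mathcal{CK}$ terms on the left are the coercive dissipation generated by $\dot\lambda$ and by the weights $w_k(t,\eta)$ (Remark~\ref{Rmk:CKAOm}), and they are exactly what we need to absorb the bulk of the nonlinear terms and prove (B3).

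The $f$-equation is the central one. The linear nonlocal term $\underline{\varphi_1}\partial_z\Psi$ was engineered away on low frequencies by the wave operator, leaving only (i)~the high-$k$ piece $|k|\ge k_M$, (ii)~the correction $\widetilde{\varphi_1}\partial_z(\mathring{\mathcal T}_{1,D}^{-1}\Omega-\Upsilon_2\Psi)$, and (iii)~the time-dependent drift $\varphi_1^\delta\partial_z\Psi$. Piece (i) is handled by Lemma~\ref{lem: I_{f,5}^{L}}: the choice $k_M\ge 100 C_0 C_{u''}C_{\mathrm{ell}}$ forces the $\mathcal{CK}_f$-coefficient to be $\le\tfrac1{100}$ which is absorbed, while the pointwise factor $\langle t\rangle^{-2}$ gives an integrable-in-time bound. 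Pieces (ii)--(iii) are quadratically small using the elliptic estimates of Section~\ref{sec: ell-pressure} and the $\varphi^\delta_j$ smallness from (B1). The transport term $\mathrm U\cdot\nabla_{z,v}\Omega$ and $\mathcal N_\Omega[\Psi]$ are split by paraproduct $\rm LH/HL/HH$; using Remark~\ref{Rmk: A-switch} and the commutator gain near the critical times (Orr mechanism), they reduce to $\sqrt{\mathcal E}\,\mathcal{CK}_f+\epsilon\,\mathcal{CK}_f$ as in \cite{BM2015,MasmoudiZhao2020}, plus cross terms that couple into $\mathcal{CK}_a$ and $\mathcal{CK}_{\underline{\partial_t v}}$, all absorbed.

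The hardest piece, and the genuine novelty of the bootstrap, is the pressure interaction $\mathcal N_a[\Pi]$. Here I would use the pressure decomposition $\Pi=\Upsilon_2\Pi_{l,1}+\Upsilon_1\Pi_{l,2}+\Upsilon_2\Pi_{n,1}+\Upsilon_1\Pi_{n,2}$ together with the zero-mode identity \eqref{eq: pressure-zero-mode}. For the nonzero modes, Propositions~\ref{Prop: non-zero-pessure-L}--\ref{Prop: non-zero-pessure-N} (to be applied here) give weighted elliptic estimates for $\Pi_{l,j},\Pi_{n,j}$ in terms of $\Psi$ and the elliptic-error $\mathcal S_a[\Pi]$, which is itself small thanks to $\varphi^\delta,h,\underline{\partial_t v}=O(\epsilon)$. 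The density has a derivative loss relative to the vorticity, so the additional multiplier $\mathrm B_k(t,\eta)=\bigl(1+(k^2+|\eta|)/\langle t\rangle^{-2}\bigr)^{1/2}$ is used to gain a $1/2$-derivative for $a$ on short times; this is precisely what makes the pairing $\pa_z\Pi_{\mathrm{low}}\pa_v a_{\mathrm{high}}$ close, while $\pa_z\Pi_{\mathrm{high}}\pa_v a_{\mathrm{low}}$ is of reaction type and handled as in the Orr analysis. The linear term $\underline{\theta'}\partial_z\Psi$ in the $a$-equation interacts with $\mathrm B$ and must be treated carefully via Proposition~\ref{prop: I_a^l} so as to determine $K_a$.

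The auxiliary energies for the coordinate system close by transport structure. From \eqref{eq: v_t-2}--\eqref{eq: bar{h}-2} and \eqref{eq: varphi^d}, each of $\underline{\partial_t v},\bar h,\varphi^\delta_j$ satisfies a transport equation along $\underline{\partial_t v}$ with a quadratic forcing in $\Psi,\Pi,a$. Testing with $\mathrm A^{\mathrm R}$ produces $\mathcal{CK}_{\bar h},\mathcal{CK}_{\varphi^\delta_j}$ on the left; the time weights $\langle t\rangle^{2+2s},\langle t\rangle^{4-K_{\mathrm D}\epsilon}$ are chosen precisely so that the $1/t$ damping in \eqref{eq: v_t-2}--\eqref{eq: bar{h}-2} beats the weight growth. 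The parameters $K_v$ and $K_{\mathrm D}$ then absorb the coupling constants. For the compact support conclusion 2, I control $\|\Phi(t,\cdot)-u_\infty t\|_{L^\infty}\lesssim\epsilon$ via the representation \eqref{eq:Phi(t,y)} together with the $L^2$-damping of $P_0U^x$ following from (B1); this shifts the transported supports by $O(\epsilon)$, well within $[2.5\kappa_0-8C(k_M)\epsilon,\,1-2.5\kappa_0+8C(k_M)\epsilon]$. The main obstacle throughout is the pressure: the combination of the boundary/nonlocal decomposition and the $\mathrm B$-weight must close simultaneously with the $f$-equation, and this is where the constant $K_a$ and the threshold $k_M$ in \eqref{eq: k_M determine} have to be chosen consistently.
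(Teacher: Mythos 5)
Your overall architecture — the energy identity with the $\mathcal{CK}$ terms as coercive dissipation, the role of $k_M\ge 100C_0C_{u''}C_{ell}$ in absorbing the high-$k$ nonlocal term via Lemma~\ref{lem: I_{f,5}^{L}}, the paraproduct treatment of transport, the pressure decomposition with the $\rmB$-weight compensating the derivative loss on $a$, and the transport-type closure of the coordinate energies — is essentially the paper's proof of items 1 and 3, and I have no objection there.

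The concrete gap is in your argument for item 2 (compact support). The conclusion concerns the \emph{vertical} support of $\Om$ and $a$, but the quantity you propose to control, $\|\Phi(t,\cdot)-u_\infty t\|_{L^\infty}$, is the accumulated \emph{horizontal} drift $\f{1}{2\pi}\int_0^t\int_{\bbT}U^x\,dx\,d\tau$; bounding it says nothing about how far characteristics move in $y$. What is actually needed is (a) the bound $\int_1^\infty\|U^y(\tau)\|_{L^\infty}\,d\tau\lesssim\ep$, which does \emph{not} follow from an $L^2$ bound on $P_0U^x$ but from the pointwise decay $|U^y|\lesssim\ep\langle t\rangle^{-2}$, obtained by writing $\psi$ through the Green's function of $\Delta$ in the channel (the identities \eqref{eq:psi}--\eqref{eq:u}) and integrating by parts twice in $z$ against the oscillatory factor $e^{-iktv(t,z)}$, using the Gevrey control of $\udl{\om}$ in the moving frame supplied by (B1); and (b) the observation that the vorticity is \emph{not} purely transported: along the flow $(X^1,X^2)$ of the full velocity one has $\f{d}{dt}\om(t,X^1,X^2)=(u''\pa_x\psi)(t,X^1,X^2)+\cdots$, where the remaining source terms involve $\th'$ and $d$. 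One must therefore combine the $O(\ep)$ vertical displacement $|X^2(t,x,y)-y|\le C\ep$ with the compact support of $u''$, $\th'$ and (inductively) $d$ to conclude that $\f{d}{dt}\om(t,X^1,X^2)=0$ along characteristics starting outside $[4\kappa_0-C\ep,1-4\kappa_0+C\ep]$, and hence that the support never reaches the boundary. Without step (a) the displacement could accumulate in time, and without step (b) the source terms could create vorticity outside the transported support; your sketch supplies neither.
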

The remainder of this paper is devoted to the proof of Proposition \ref{prop: btsp}.

\section{Energy estimate}\label{sec: energy estimate}

\subsection{Energy estimate for $\mathcal{E}_f$}
A direct calculation gives
\begin{align}
&\nonumber\f12\f{d}{dt}\mathcal{E}_f+\mathcal{CK}_f\\
&\nonumber=-\f1{2\pi}\sum_{k=0,\, |k|\geq k_{M}}\int_{\mathbb{R}}\rmA_k(t,\eta)^2\widehat{\left(\Upsilon_2\rmU\cdot\na_{z,v}\Om\right)}_k(t,\eta)\overline{\widehat{\Om}_k(t,\eta)}d\eta\\
&\nonumber\quad-\f{1}{2\pi}\sum_{0<|k|<k_{M}}\int_{\mathbb{R}}\rmA_k(t,\eta)^2\mathcal{F}_2\Big(\bfD_{u,k}\left(\mathcal{F}_{1}\Big(\rmU\cdot\na_{z,v}\Om\Big)\right)\Big)_k(t,\eta)\overline{\widehat{f}_k(t, \eta)}d\eta\\
&\nonumber\quad
-\f1{2\pi}\sum_{k=0, \, |k|\geq k_{M}}\int_{\mathbb{R}}\rmA_k(t,\eta)^2\widehat{\left(\mathcal{N}_{\Om}[\Upsilon_2\Psi]\right)}_k(t,\eta)\overline{\widehat{\Om}_k(t, \eta)}d\eta\\
&\nonumber\quad-\f1{2\pi}\sum_{k=0, \, |k|\geq k_{M}}\int_{\mathbb{R}}\rmA_k(t,\eta)^2\widehat{\left(\mathcal{N}_a[\Pi]\right)}_{k}(t,\eta)\overline{\widehat{\Om}_k(t, \eta)}d\eta\\
&\nonumber\quad
+\f1{2\pi}\sum_{|k|\geq k_{M}}\int_{\mathbb{R}}\rmA_k(t,\eta)^2\widehat{\Big(\udl{\varphi_1}\pa_{z}(\Upsilon_2\Psi)\Big)}_k(t,\eta)\overline{\widehat{\Om}_k(t, \eta)}d\eta\\
&\nonumber\quad+\f{1}{2\pi}\sum_{0<|k|<k_{M}}\int_{\mathbb{R}}\rmA_k(t,\eta)^2\mathcal{F}_2\Big(\bfD_{u,k}\left(\mathcal{F}_{1}\Big(\varphi_1^{\d}\pa_{z}(\Upsilon_2\Psi)-\widetilde{\varphi_1}\pa_{z}\big(\mathring{\mathcal{T}}_{1,D}^{-1}[\Om]-\Upsilon_2\Psi\big)\Big)\right)\Big)_k(t,\eta)\overline{\widehat{f}_k(t, \eta)}d\eta\\
&\nonumber\quad-\f{1}{2\pi}\sum_{0<|k|<k_{M}}\int_{\mathbb{R}}\rmA_k(t,\eta)^2\mathcal{F}_2\Big(\bfD_{u,k}\left(\mathcal{F}_{1}\Big(\mathcal{N}_{\Om}[\Psi]\Big)\right)\Big)_{k}(t,\eta)\overline{\widehat{f}_k(t, \eta)}d\eta\\
&\label{eq: definition-I_fj}\quad-\f{1}{2\pi}\sum_{0<|k|<k_{M}}\int_{\mathbb{R}}\rmA_k(t,\eta)^2\widehat{\big(\bfD(\mathcal{N}_{a}[\Pi])\big)}_k(t,\eta)\overline{\widehat{f}_k(t, \eta)}d\eta
=\sum_{j=1}^{8}I_{f,j}.
\end{align}
We first treat $I_{f,2}$. The main idea is to recover the transport structure together with $I_{f,1}$. 
A usual way to recover the transport structure $\rmU\cdot\na_{z,v} D\Om$ is to study the commutator 
\beno
\big[D,\rmU\cdot\na_{z,v}\big]\Om.
\eeno
However, it is not good here, because the commutator will mix the information of different frequencies in $z$. The new idea is to use an important identity \eqref{eq: dual id}, which formally is that 
\ben\label{eq:DD^1formal}
\int D(f) D^1(g) dzdv=\int f g dzdv,
\een
where $D^1=(D^{-1})^*$ (see \eqref{eq:D^1dual} for more details).
More precisely, we need to control the following term 
\beno
\int \rmA D(\rmU\cdot\na_{z,v}\Om) \rmA D(\Om)dzdv. 
\eeno 
The idea is to remove the wave operator $D$ using \eqref{eq:DD^1formal} and recover the transport structure:
\begin{align*}
&\int \rmA D(\rmU\cdot\na_{z,v}\Om) \rmA D(\Om)dzdv\\
&=\int \rmA(\rmU\cdot\na_{z,v}\Om) \rmA(\Om)dzdv+\int [\rmA,D^1](\rmU\cdot\na_{z,v}\Om)  D\rmA(\Om)dzdv\\
&\quad+\int \rmA D^1(\rmU\cdot\na_{z,v}\Om)[\rmA, D](\Om)dzdv
+\int \rmA(D-D^1)(\rmU\cdot\na_{z,v}\Om)\rmA D(\Om)dzdv.
\end{align*}
With the previous brief idea in mind, now we present a precise calculation. 

\medskip

For $0<|k|<k_{M}$, let 
\beno
I_{f,2}^k=\int\rmA_k(\eta)\overline{\mathcal{F}_2\bfD_{u,k}(\mathcal{F}_{1}\Om)}_k(\eta)\rmA_k(\eta)\mathcal{F}_2\Big(\bfD_{u,k}\left(\mathcal{F}_{1}\Big(\rmU\cdot\na_{z,v}\Om\Big)\right)(t,k,\cdot)\Big)(\eta)d\eta
\eeno
then $I_{f,2}=\sum\limits_{0<|k|<k_{M}}I_{f,2}^k$ and
\begin{align*}
I_{f,2}^k&=\int\rmA_k(\eta)\overline{\mathcal{F}_2\bfD_{u,k}(\mathcal{F}_{1}\Om)}_k(\eta)\rmA_k(\eta)\mathcal{F}_2\Big(\bfD_{u,k}\left(\mathcal{F}_{1}\Big(\rmU\cdot\na_{z,v}\Om\Big)\right)(t,k,\cdot)\Big)(\eta)d\eta\\
&=\int\rmA_k(\eta)\overline{\mathcal{F}_2\bfD_{u,k}(\mathcal{F}_{1}\Om)}_k(\eta)\rmA_k(\eta)\mathcal{F}_2\Big(\tchi_2\bfD_{u,k}^1\left(\tchi_2\mathcal{F}_{1}\Big(\rmU\cdot\na_{z,v}\Om\Big)\right)(t,k,\cdot)\Big)(\eta)d\eta\\
&\quad+\int\rmA_k(\eta)\overline{\mathcal{F}_2\bfD_{u,k}(\mathcal{F}_{1}\Om)}_k(\eta)\rmA_k(\eta)
\mathcal{F}_2\Big[\big(\tchi_2\bfD_{u,k}-\tchi_2\bfD_{u,k}^1\big)\left(\tchi_2\mathcal{F}_{1}\Big(\rmU\cdot\na_{z,v}\Om\Big)\right)(t,k,\cdot)\Big](\eta)d\eta\\
&=\int\rmA_k(\eta)\overline{\widehat{\Om}}_k(\eta)\rmA_k(\eta)\reallywidehat{\big(\rmU\cdot\na_{z,v}\Om\big)}_k(\eta)d\eta\\
&\quad+\int(\rmA_k(\eta)-\rmA_k(\xi))\overline{\mathcal{D}(t,k,\eta,\xi)\hat{\Om}_k(\xi)}\rmA_k(\eta)\mathcal{F}_2\Big(\tchi_2\bfD_{u,k}^1\left(\tchi_2\mathcal{F}_{1}\Big(\rmU\cdot\na_{z,v}\Om\Big)\right)(t,k,\cdot)\Big)(\eta)d\xi d\eta\\
&\quad+\sum_{l}\int\overline{\mathcal{F}_2\bfD_{u,k}\big(\mathcal{F}_2^{-1}(\rmA_k(\eta)\widehat{\Om}_k)\big)}(\eta)\\
&\qquad\qquad\qquad\qquad
\times(\rmA_k(\eta)-\rmA_k(\xi))\mathcal{D}^1(t,k,\eta,\xi)\Big(\widehat{\rmU}_{k-l}(\xi-\xi_1)\cdot(il,i\xi_1)\widehat{\Om}_l(\xi_1)\Big)d\xi_1 d\xi d\eta\\
&\quad+\int\rmA_k(\eta)\bar{\hat{f}}_k(\eta)\rmA_k(\eta)\mathcal{F}_2\Big[\big(\tchi_2\bfD_{u,k}-\tchi_2\bfD_{u,k}^1\big)\left(\tchi_2\mathcal{F}_{1}\Big(\rmU\cdot\na_{z,v}\Om\Big)\right)(t,k,\cdot)\Big](\eta)d\eta\\
&=\int\rmA_k(\eta)\overline{\widehat{\Om}}_k(\eta)\rmA_k(\eta)\reallywidehat{\big(\rmU\cdot\na_{z,v}\Om\big)}_k(\eta)d\eta+I_{f,2,k}^{com1}+I_{f,2,k}^{com2}+I_{f,2,k}^{com3}
\end{align*}
and then
\ben\label{eq: Pi^com}
\begin{aligned}
I_{f,1}+I_{f,2}
=&-\f1{2\pi}\sum_{k\in\mathbb{Z}}\int_{\mathbb{R}}\rmA_k(t,\eta)^2\widehat{\left(\Upsilon\rmU\cdot\na_{z,v}\Om\right)}_k(t,\eta)\overline{\widehat{\Om}_k(t,\eta)}d\eta\\
&+\sum_{0<|k|<k_{M}}\big(I_{f,2,k}^{com1}+I_{f,2,k}^{com2}+I_{f,2,k}^{com3}\big)\eqdef I_{f}^{tr}+I_{f,2}^{com}.
\end{aligned}
\een
We have the following proposition for $I_f^{tr}$.
\begin{proposition}\label{prop: f-transport}
Under the bootstrap hypotheses,
\ben
\begin{aligned}
|I_f^{tr}|&\lesssim \ep\mathcal{CK}_f+\ep^2\mathcal{CK}_{h}+\ep^2\mathcal{CK}_{\varphi_9^{\d}}+\f{\ep^3}{\langle t\rangle^{2-\mathrm{K}_{\rm{D}}\ep/2}}\\
&\quad +\ep \left\|\left\langle\f{\pa_v}{t\pa_z}\right\rangle^{-1}(\pa_z^2+(\pa_v-t\pa_z)^2)\left(\f{|\na|^{\f{s}{2}}}{\langle t\rangle^s}\rmA+\sqrt{\f{\pa_t w}{w}}\tilde{\rmA}\right)P_{\neq}(\Psi\Upsilon_2)\right\|_2. 
\end{aligned}
\een
\end{proposition}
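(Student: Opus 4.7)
The plan is to follow the now-standard paraproduct architecture of \cite{BM2015, MasmoudiZhao2020}, keeping careful track of the two new features in the inhomogeneous setting: the density factor $\widetilde{\varphi_4}$ that distorts the elliptic operator, and the compact support cut-off $\Upsilon_2$ introduced after the nonlinear change of coordinates. First I would split the velocity as $\mathrm{U}=(0,\udl{\pa_tv})+\udl{\pa_yv}\,\na^{\bot}_{v,z}P_{\neq}(\Upsilon_2\Psi)$, then decompose $\udl{\pa_yv}=\widetilde{u'}+h$. This writes $I_f^{tr}$ as four pieces: the zero-mode transport by $\udl{\pa_tv}$, the main non-zero-mode transport with coefficient $\widetilde{u'}$, the correction with coefficient $h$, and remainder terms coming from commuting $\Upsilon_2$.

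\textbf{Step 1: the zero-mode piece.} For the $\udl{\pa_tv}\,\pa_v\Om$ contribution, symmetrize by integrating by parts (the divergence-free structure is essentially preserved since $\udl{\pa_tv}$ depends only on $v$, yielding only a lower-order commutator). Then bound using the weighted norm of $\udl{\pa_tv}$: the energy $\mathcal{E}_{\udl{\pa_tv}}$ contains the factor $\langle t\rangle^{4-\mathrm{K}_{\mathrm{D}}\ep}$, so $\|\udl{\pa_tv}\|_{\mathcal{G}^{s,\la,\sigma-6}}\lesssim \ep\langle t\rangle^{-2+\mathrm{K}_{\mathrm{D}}\ep/2}$, and together with $\|\Om\|\lesssim \ep$ and $\|\rmA\Om\|\lesssim \ep$ from (B1) this produces the $\ep^3\langle t\rangle^{-2+\mathrm{K}_{\mathrm{D}}\ep/2}$ term.

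\textbf{Step 2: paraproduct on the non-zero-mode piece.} Writing $\mathrm{V}\eqdef \widetilde{u'}\na^{\bot}_{v,z}P_{\neq}(\Upsilon_2\Psi)$, decompose $\mathrm{V}\cdot\na\Om=T_{\mathrm{V}}\Om+R_{\mathrm{V}}\Om+\mathcal{R}_{1,\mathrm{V}}\Om$ via a standard Littlewood--Paley partition into Transport (velocity low frequency, $\Om$ high frequency), Reaction (velocity high, $\Om$ low), and Remainder (comparable). For the Transport term, apply the frequency decomposition of Remark \ref{Rmk: A-switch} to the multiplier $\rmA_k(\eta)$; in the $\rmL\rmH$ region the commutator $\rmA_k(\eta)-\rmA_k(\xi)$ is absorbed by $\sqrt{\pa_t w/w}\,\tilde{\rmA}$, yielding the $\ep\,\mathcal{CK}_f$ contribution using Remark \ref{Rmk:CKAOm}. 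The Remainder is handled by a Gevrey product inequality and is likewise bounded by $\ep\,\mathcal{CK}_f$.

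\textbf{Step 3: the Reaction term and the $h,\varphi_9^{\d}$ corrections.} The Reaction term is the dangerous one near the Orr critical times. Using the distorted elliptic representation of $\Om$ in terms of $\Psi$ (that is, $\Om=\udl{\varphi_4}\pa_{zz}\Psi+\udl{\pa_yv}(\pa_v-t\pa_z)(\udl{\varphi_4}\udl{\pa_yv}(\pa_v-t\pa_z)\Psi)$, which up to small corrections is $\widetilde{\varphi_4}(\pa_z^2+(\pa_v-t\pa_z)^2)\Psi$), and the fact that on $\rmH\rmL$ Remark \ref{Rmk: A-switch} produces exactly the resonant factor $|\xi|/(k^2+(\xi-kt)^2)$, the right-hand side combines into precisely the norm
\[
\left\|\left\langle\f{\pa_v}{t\pa_z}\right\rangle^{-1}\!\!\big(\pa_z^2+(\pa_v-t\pa_z)^2\big)\!\left(\f{|\na|^{s/2}}{\langle t\rangle^s}\rmA+\sqrt{\f{\pa_t w}{w}}\tilde{\rmA}\right)\!P_{\neq}(\Upsilon_2\Psi)\right\|_2,
\]
multiplied by $\ep$ from the low-frequency $\Om$ factor. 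For the correction piece with coefficient $h$ (respectively $\varphi_9^{\d}=\underline{u'}-\widetilde{u'}$), the paraproduct proceeds identically; when the commutator falls on the low-frequency zero-mode $h$ it contributes $\ep^2\mathcal{CK}_h$ through Remark \ref{Rmk: switch-large-zero}, and the analogous treatment of the $\widetilde{\varphi_1}$-type coefficient difference produces $\ep^2\mathcal{CK}_{\varphi_9^{\d}}$.

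\textbf{Main obstacle.} The delicate step will be the Reaction estimate: one must simultaneously exploit the distorted Green function bounds of Proposition \ref{eq: T_1}--\ref{eq: T_2} to convert $\Om$-derivatives on the low-frequency side into the $\Psi$-weighted quantity above, while accounting for the extra commutators generated by the non-constant $\widetilde{\varphi_4}$ and the compact-support cut-off $\Upsilon_2$. These commutators are controlled by the Gevrey regularity of $\widetilde{\varphi_4}$ and $\Upsilon_2$, so that no derivative loss accumulates outside the single admissible norm on the right-hand side.
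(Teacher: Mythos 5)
Your proposal follows essentially the same route as the paper: the paper proves the $a$-version of this estimate (Section \ref{sec: transport}) by extracting the divergence term via Sobolev embedding (giving the $\ep^3\langle t\rangle^{-2+\rmK_{\rmD}\ep/2}$ contribution), performing the same Transport/Reaction/Remainder paraproduct, using Remark \ref{Rmk: A-switch} and the BM-type weight-ratio lemmas to get $\ep\,\mathcal{CK}$ from the transport commutator, and reading off the elliptic norm together with $\mathcal{CK}_h,\mathcal{CK}_{\varphi_9^{\d}}$ from the reaction term where $\udl{\pa_yv}\Upsilon_2\,\na^{\bot}P_{\neq}(\Upsilon_2\Psi)$ sits at high frequency. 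The only (harmless) imprecision is in your "main obstacle": no conversion of $\Om$ into $\Psi$ via the Green function is needed inside this proposition — the high-frequency reaction factor is already the stream function, and the Green-function/elliptic machinery enters only later, in Proposition \ref{prop:elliptic}, to close the resulting norm against $\mathcal{CK}_f$.
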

The proposition is proved in section \ref{sec: transport}. 

Note that although the wave operator in this paper is new, we still have the same estimates of the Fourier kernels of $\mathbf{D}_{u,k}$, $\mathbf{D}_{u,k}^1$, and $\tchi_2\bfD_{u,k}-\tchi_2\bfD_{u,k}^1$ as that in \cite{MasmoudiZhao2020}, see Proposition \ref{prop: kernel-wave-op} and Proposition \ref{prop: commutator}. 
Thus, the commutator term $I_{f,2}^{com}$ which involves the wave operator can be treated in the same way as \cite{MasmoudiZhao2020}. 
\begin{proposition}\label{prop:com}
Under the bootstrap hypotheses, 
\beno
\begin{aligned}
|I_{f,2}^{com}|\lesssim 
&\ep \mathcal{CK}_{f}
+\ep\mathcal{CK}_{h}+\ep\mathcal{CK}_{\varphi_9^{\d}}
+\f{\ep^3}{\langle t\rangle^{2-\rmK_{\rmD}\ep/2}}\\
&+\ep\left\|\left\langle\f{\pa_v}{t\pa_z}\right\rangle^{-1}(\pa_z^2+(\pa_v-t\pa_z)^2)\left(\f{|\na|^{\f{s}{2}}}{\langle t\rangle^s}\rmA+\sqrt{\f{\pa_t w}{w}}\tilde{\rmA}\right)P_{\neq}(\Psi\Upsilon_2)\right\|_2^2. 
\end{aligned}
\eeno
\end{proposition}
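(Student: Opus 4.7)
The plan is to treat the three pieces $I_{f,2,k}^{com1}$, $I_{f,2,k}^{com2}$, $I_{f,2,k}^{com3}$ separately, using in each case the Gevrey kernel bounds from Proposition \ref{prop: kernel-wave-op} and Proposition \ref{prop: commutator} to absorb the mismatch between the wave operator and the multiplier $\rmA$, and then feeding the resulting transport-type expression into the same paraproduct machinery one would use to prove Proposition \ref{prop: f-transport}. Since $|k| < k_M$ is bounded, the $k$-summation is harmless; the real work is in the $\eta,\xi$ frequency integrals.

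For $I_{f,2,k}^{com1}$ and $I_{f,2,k}^{com2}$, the integrand is decorated by the difference $\rmA_k(\eta)-\rmA_k(\xi)$ against the kernels $\mathcal{D}$ or $\mathcal{D}^1$. The idea is to use Remark \ref{Rmk: A-switch} (or its natural analogue for $|\eta-\xi|$ small versus large) to write
\[
|\rmA_k(\eta)-\rmA_k(\xi)| \lesssim \langle \eta-\xi\rangle\, e^{c\la\langle \eta-\xi\rangle^s}\bigl(\rmA_k(\xi)+\rmA_k(\eta)\bigr),
\]
so that the exponential Gevrey decay $e^{-\la_{\mathcal{D}}|\eta-\xi|^{s_0}}$ in Proposition \ref{prop: kernel-wave-op} (with $s<s_0$) produces a convolution kernel that is pointwise integrable in $\eta-\xi$. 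After this reduction each commutator is essentially an $\rmA$-weighted transport term $\int \rmA(\Om)\,\rmA(\rmU\cdot\na_{z,v}\Om)\,dzdv$ convolved against a benign Gevrey-decaying kernel; then I would run exactly the paraproduct splitting of $\rmU\cdot\na_{z,v}\Om$ into transport, reaction, and remainder terms that appears in the proof of Proposition \ref{prop: f-transport}, together with the elliptic estimates on $\Psi$ and the decomposition $\udl{\pa_yv}=\widetilde{u'}+h$, $\udl{\pa_tv}=\udl{\pa_tv}$ to redistribute zero-mode pieces onto the $\mathcal{CK}_{h}$, $\mathcal{CK}_{\varphi_9^\d}$, $\mathcal{CK}_{\udl{\pa_tv}}$ energies.

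For $I_{f,2,k}^{com3}$ the kernel is $\mathcal{D}^{com}$, which by Proposition \ref{prop: commutator} gains \emph{two extra powers} of $1+|\xi-kt|^2$ or $1+|\xi_1-kt|^2$ in the denominator. This gain is exactly what one would expect from a commutator with a smooth-symbol multiplier and it is what makes the $\bfD-\bfD^1$ piece behave like an elliptic smoothing of order $-2$. Combined with the elliptic estimate for $\Psi$, this extra decay cancels the two $t$-derivative losses coming from writing $\rmU=(0,\udl{\pa_tv})+\udl{\pa_yv}\na^{\bot}_{v,z}P_{\neq}\Psi$ and converts the integrand into one that is structurally identical to the reaction terms already controlled in Proposition \ref{prop: f-transport}. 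In particular, the worst piece is again the resonant contribution where one uses the $\left\langle\pa_v/(t\pa_z)\right\rangle^{-1}(\pa_z^2+(\pa_v-t\pa_z)^2)$ factor to absorb the transient growth of the stream function via $\mathcal{CK}_f$ and $w/\pa_t w$.

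The main obstacle I expect is purely bookkeeping: one must carefully split $(\eta,\xi,\xi_1)$-frequency space into $\rmL\rmH$, $\rmH\rmL$, and $\rmH\rmH$ regions (using Remark \ref{Rmk: A-switch}) so that the $e^{-\la_{\mathcal{D}}|\cdot|^{s_0}}$ decay can always be paired with the larger of the two frequencies while the factor $1/(1+|\xi-kt|^2)$ is always paired with the non-zero-mode side. There is no new analytic difficulty beyond what is done in \cite{MasmoudiZhao2020}; because the kernel estimates in Proposition \ref{prop: kernel-wave-op} and Proposition \ref{prop: commutator} here are identical in form to the ones used there, the same case-by-case estimates go through and produce exactly the $\ep\mathcal{CK}_f + \ep\mathcal{CK}_h + \ep\mathcal{CK}_{\varphi_9^\d} + \ep^3\langle t\rangle^{-2+\rmK_{\rmD}\ep/2}$ plus the final elliptic Sobolev term asserted.
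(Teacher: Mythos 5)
Your proposal takes essentially the same route as the paper, which for this proposition simply invokes the kernel estimates of Propositions \ref{prop: kernel-wave-op} and \ref{prop: commutator} and refers to the treatment of the $\Pi^{com}$ term in \cite{MasmoudiZhao2020}; your sketch is in fact more explicit than the paper's one-line reference. One caution: the displayed inequality you write, $|\rmA_k(\eta)-\rmA_k(\xi)|\lesssim\langle\eta-\xi\rangle e^{c\la\langle\eta-\xi\rangle^s}(\rmA_k(\xi)+\rmA_k(\eta))$, is trivially dominated by the sum on the right and so carries no commutator gain and no accounting for the resonance factor $\frac{|\xi|}{k^2+(\xi-kt)^2}\mathbf{1}_{t\in\mathrm{I}_{k,\xi}\cap\mathrm{I}_{k,\eta}}$ present in Remark \ref{Rmk: A-switch}; as you note further on, the resonant contributions must be paired with the $\left\langle\pa_v/(t\pa_z)\right\rangle^{-1}(\pa_z^2+(\pa_v-t\pa_z)^2)$ elliptic gain, and it is that step (not the crude pointwise bound on the weight difference) that makes the paraproduct decomposition close.
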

We refer to the estimate of $\Pi^{com}$ term in \cite{MasmoudiZhao2020} for the proof and more details. 

The last term in the previous two propositions will be controlled by the elliptic estimate. 
\begin{proposition}[Precision elliptic control] \label{prop:elliptic}
Under the bootstrap hypotheses,
\beq\label{eq:elliptic1}
\begin{aligned}
&\left\|\left\langle\f{\pa_v}{t\pa_z}\right\rangle^{-1}(\pa_z^2+(\pa_v-t\pa_z)^2)\left(\f{|\na|^{\f{s}{2}}}{\langle t\rangle^s}\rmA+\sqrt{\f{\pa_t w}{w}}\tilde{\rmA}\right)P_{|k|\geq k_{M}}(\Psi\Upsilon_2)\right\|_2^2\\
&\leq 2C_{ell}\Big[\mathcal{CK}_{f}
+C(k_M)\ep^2\Big(\mathcal{CK}_{h}+\mathcal{CK}_{\varphi_{9}^{\d}}+\mathcal{CK}_{\varphi_4^{\d}}\Big)\Big],
\end{aligned}
\eeq
The constant $C_{ell}$ is the same constant in Lemma \ref{lem: I_{f,5}^{L}}. \\
Generally, it holds that
\beq\label{eq:elliptic}
\begin{aligned}
&\left\|\left\langle\f{\pa_v}{t\pa_z}\right\rangle^{-1}(\pa_z^2+(\pa_v-t\pa_z)^2)\left(\f{|\na|^{\f{s}{2}}}{\langle t\rangle^s}\rmA+\sqrt{\f{\pa_t w}{w}}\tilde{\rmA}\right)P_{\neq}(\Psi\Upsilon_2)\right\|_2^2\\
&\lesssim_{k_{M}} \mathcal{CK}_{f}
+\ep^2\Big(\mathcal{CK}_{h}+\mathcal{CK}_{\varphi_{9}^{\d}}+\mathcal{CK}_{\varphi_4^{\d}}\Big).
\end{aligned}
\eeq
\end{proposition}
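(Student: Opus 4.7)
\textbf{Proof proposal for Proposition \ref{prop:elliptic}.} The plan is to invert the distorted Laplacian appearing in the last equation of \eqref{eq:f-1} by splitting it into the \emph{linearized} operator $\tilde{\Delta}_t^l$, whose coefficients $\widetilde{\varphi_4}, \widetilde{u'}$ are time-independent, plus a solution-dependent perturbation driven by $\varphi_4^{\d}=\udl{\varphi_4}-\widetilde{\varphi_4}$, $\varphi_9^{\d}=\udl{u'}-\widetilde{u'}$ and $h=\udl{\pa_yv}-\udl{u'}$. Writing
\[
\tilde{\Delta}_t^l(\Upsilon_2\Psi)=\Upsilon_2\,\Om+\mathcal{S}[\Psi]+[\tilde{\Delta}_t^l,\Upsilon_2]\Psi,
\]
where $\mathcal{S}[\Psi]$ collects the quadratic terms in $(\varphi_4^{\d},h,\varphi_9^{\d})\cdot\pa^2\Psi$, I first invert via the Green function of the Sturm--Liouville problem provided in Appendix~\ref{sec: ST equ} (Propositions~\ref{eq: T_1}--\ref{eq: T_2}), whose Fourier kernel $\mathcal{G}_{D,1}(t,k,\xi,\eta)$ satisfies $|\mathcal{G}_{D,1}|\lesssim e^{-\la_{\Delta}\langle \xi-\eta\rangle^s}(1+k^2+(\eta-kt)^2)^{-1}$. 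The crucial cancellation is that the operator $(\pa_z^2+(\pa_v-t\pa_z)^2)$ on the left-hand side produces the symbol $k^2+(\eta-kt)^2$ in Fourier variables, exactly killing the denominator of $\mathcal{G}_{D,1}$; the factor $\langle\pa_v/(t\pa_z)\rangle^{-1}$ is pointwise bounded by $1$ in Fourier, so no derivative loss occurs.

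With the elliptic problem reduced to inverting $\tilde{\Delta}_t^l$ against a source, the linear contribution $\Upsilon_2\Om$ produces an integral operator whose symbol is bounded uniformly in $k,\eta,t$, and the multiplier $\rmA$ passes through at the cost of the loss factor $e^{c\la\langle \xi-\eta\rangle^s}$, absorbed by the Gevrey decay of $\mathcal{G}_{D,1}$. Using Remark~\ref{Rmk: wave bdd-A} and the definition of the \textrm{CK} functional together with the elementary inequality
\[
\frac{|\na|^{s}}{\langle t\rangle^{2s}}+\frac{\pa_tw}{w}\lesssim \frac{\dot{\rmA}_k(t,\eta)}{\rmA_k(t,\eta)}+\frac{1}{1+k^2+(\eta-kt)^2}\cdot\frac{\dot{\rmA}_k}{\rmA_k},
\]
the two terms in the left-hand side of \eqref{eq:elliptic1}--\eqref{eq:elliptic} coming from $\Upsilon_2\Om$ are bounded by $C_{ell}\mathcal{CK}_f$ for some $C_{ell}=C_{ell}(u,\th,\kappa_0,s)$. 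For $|k|\ge k_M$, since $f=\Om$ on those modes by \eqref{eq: def-f}, this yields the constant $2C_{ell}$ in \eqref{eq:elliptic1} (the factor $2$ absorbing the commutator $[\tilde{\Delta}_t^l,\Upsilon_2]\Psi$ which is a lower-order term supported where $\Upsilon_2'\neq 0$, hence treated via a second Sturm--Liouville inversion as sketched after \eqref{eq: P_l,2} in the introduction).

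The remaining contribution comes from $\mathcal{S}[\Psi]$. Here I apply a standard paraproduct decomposition in the three regions $\mathrm{LH}$, $\mathrm{HL}$, $\mathrm{HH}$ of Remark~\ref{Rmk: A-switch}. In each region, one of the two factors $\varphi_4^{\d},h,\varphi_9^{\d}$ is in low frequency and the bootstrap hypotheses (B1) give $\|\varphi_j^{\d}\|_{\mathcal{G}^{s,\la,\s-6}}\lesssim \sqrt{\rmK_v}\,\ep$ and the analogous bound for $h$; otherwise it is in high frequency and absorbed by $\sqrt{\rmK_v}\sqrt{\mathcal{CK}_{\varphi_j^{\d}}}$ or $\sqrt{\mathcal{CK}_h}$ through the characterization in Remark~\ref{Rmk: switch-large-zero}. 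Since $\mathcal{S}[\Psi]$ is quadratic with one small factor, this produces the structure $\ep^2(\mathcal{CK}_h+\mathcal{CK}_{\varphi_9^{\d}}+\mathcal{CK}_{\varphi_4^{\d}})$ in \eqref{eq:elliptic}. The constant $C(k_M)$ in \eqref{eq:elliptic1} arises only from converting $\Om\leftrightarrow f$ via $\bfD_{u,k}^{-1}$ on the modes $0<|k|<k_M$ (cf.\ Remark~\ref{Rmk: wave bdd-A}).

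The principal technical obstacle is uniformity in $t$: the denominator $1+k^2+(\eta-kt)^2$ is not bounded below in any useful way along the resonance set $\eta=kt$, and this is exactly where the multiplier $\rmJ_k$ acquires its resonant growth. The reconciliation is that after cancellation with $(\pa_z^2+(\pa_v-t\pa_z)^2)$, no such denominator remains in the linear piece; the nonlinear piece $\mathcal{S}[\Psi]$ carries \emph{two} copies of derivatives of $\Psi$, so I must pay one power of $\langle t\rangle^2$ to convert $\Psi$-estimates to $\Om$-estimates on each factor. This forces a careful choice of which factor of $\mathcal{S}[\Psi]$ is paired with the $\mathcal{CK}$-multiplier and which with the $L^2$-bootstrap, and is where a routine but lengthy case analysis over $\mathrm{LH},\mathrm{HL},\mathrm{HH}$ is needed.
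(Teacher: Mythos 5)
Your proposal follows essentially the same route as the paper: split the distorted elliptic operator into the time-independent linearized operator $\mathcal{T}_1$ plus a solution-dependent error $\mathcal{S}_1[\Psi]$ built from $\varphi_4^{\d},h,\varphi_9^{\d}$, invert via the Sturm--Liouville Green's function of Proposition~\ref{prop: T_1} whose denominator $1+k^2+(\eta-kt)^2$ cancels $(\pa_z^2+(\pa_v-t\pa_z)^2)$, control the linear piece by $\mathcal{CK}_f$ via Remark~\ref{Rmk:CKAOm} (with the wave operator $\bfD_{u,k}^{-1}$ supplying $C(k_M)$ on the modes $0<|k|<k_M$), and treat $\mathcal{S}_1[\Psi]$ by paraproduct in the three regions before absorbing the $\ep\|\cdot\|^2$ self-term into the left-hand side. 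One slip in your closing discussion --- $\mathcal{S}_1[\Psi]$ does not carry two copies of derivatives of $\Psi$; it is linear in $\pa^2\Psi$ with small coefficient errors, exactly as you correctly describe it earlier --- but this does not affect the argument.
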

The basic idea of proving this proposition is to control the elliptic error terms, which is similar to previous papers \cite{BM2015, MasmoudiZhao2020}. However, a new Fourier analysis of the Green function is required, which is given in Appendix \ref{sec: ST equ}. The proof of this proposition is given in section \ref{sec: ell-stream}. 

The term $I_{f,3}$ and $I_{f,7}$ are arose from introducing the good unknown $\tom$. They are lower-order terms since there is no derivative loss. They are also different from the nonlocal error term $I_{f,6}^1$ or nonlocal small term $I_{f,7}$ that will be discussed later. In $I_{f,3}$ and $I_{f,7}$, the function $\pa_z(\Upsilon_2\Psi)$ interacts with the nonzero modes.
\begin{proposition}\label{prop: lower-order}
Under the bootstrap hypotheses, 
\beno
\begin{aligned}
|I_{f,3}|+|I_{f,7}|\lesssim 
&\ep \mathcal{CK}_{f}+\f{\ep^3}{\langle t\rangle^{2-\rmK_{\rmD}\ep/2}}\\
&+\ep\left\|\left\langle\f{\pa_v}{t\pa_z}\right\rangle^{-1}(\pa_z^2+(\pa_v-t\pa_z)^2)\left(\f{|\na|^{\f{s}{2}}}{\langle t\rangle^s}\rmA+\sqrt{\f{\pa_t w}{w}}\tilde{\rmA}\right)P_{\neq}(\Psi\Upsilon)\right\|_2^2. 
\end{aligned}
\eeno
\end{proposition}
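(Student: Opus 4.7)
The plan is to estimate $I_{f,3}$ and $I_{f,7}$ by a standard paraproduct decomposition together with the elliptic control of Proposition \ref{prop:elliptic} and the bootstrap hypotheses, exploiting the fact that both terms are genuinely bilinear in $(\Psi,\Omega)$ with one $\pa_z$ falling on $\Psi$ (so no derivative loss) and with coefficients $\udl{\varphi_2}, \udl{\varphi_3}, \udl{\pa_yv}$ that are either Gevrey-smooth backgrounds or small perturbations controlled by $\mathcal{E}_{\varphi_j^\d}$ and $\mathcal{E}_h$. Writing $\udl{\varphi_j}=\widetilde{\varphi_j}+\varphi_j^\d$ and $\udl{\pa_yv}=\widetilde{u'}+h$, each bilinear term reduces to a sum of pieces where the coefficient either has a uniform Gevrey bound or is controlled by $\ep$ in a lower-Sobolev-index Gevrey norm (with a matching $\mathcal{CK}$ contribution).

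The core step is to run a Littlewood--Paley split $f\cdot g = T_f g + T_g f + R(f,g)$ on each bilinear piece, after peeling off the smooth coefficient. In the $LH$ case, where $\pa_z\Psi$ sits at low frequency, one uses the bootstrap $L^\infty$ bound on $\pa_z\Psi$ (via the elliptic estimate at zero mode) and absorbs the $\rmA$-weight onto the high-frequency factor, yielding a contribution bounded by $\ep\,\mathcal{CK}_f$ after employing the frequency-shift identity in Remark \ref{Rmk: A-switch}. In the $HL$ case, where the full $\rmA$ weight lands on $\pa_z\Psi$, one invokes Proposition \ref{prop:elliptic}: the quantity $\rmA\,\pa_z\Psi_{\neq}$ is controlled by the elliptic RHS $\|\langle\pa_v/(t\pa_z)\rangle^{-1}(\pa_z^2+(\pa_v-t\pa_z)^2)(\tfrac{|\na|^{s/2}}{\langle t\rangle^s}\rmA+\sqrt{\pa_tw/w}\tilde\rmA)P_{\neq}(\Psi\Upsilon_2)\|_2$, and the companion factor $\rmA\,\Omega$ is bounded by $\ep$, giving exactly the last term of the proposition with a small prefactor. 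The $HH$ case uses two factors of comparable size and picks up Gevrey smoothing from the coefficient; terms involving $\varphi_j^\d$ or $h$ instead of their smooth linear parts yield the $\ep^3/\langle t\rangle^{2-\mathrm{K}_{\mathrm{D}}\ep/2}$ contribution after invoking the bootstrap decay of $\mathcal{E}_{\varphi_j^\d}$ and $\mathcal{E}_{\bar h}$ combined with the $\langle t\rangle^{-2}$-decay of $\pa_z\Psi$ obtained from the zero-mode elliptic estimate.

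For $I_{f,7}$, the wave operator $\bfD_{u,k}$ appears outside the bilinear form, but by Proposition \ref{prop: kernel-wave-op} its Fourier kernel obeys $|\mathcal{D}(t,k,\xi_1,\xi_2)|\lesssim e^{-\la_{\mathcal{D}}|\xi_1-\xi_2|^{s_0}}$. Since $s_0>s$ and $|k|<k_M$, this kernel convolves harmlessly with the $\rmA$-weight (Remark \ref{Rmk: wave bdd-A}), so $\|\rmA\,\bfD_{u,k}(\cdot)\|_2 \lesssim_{k_M} \|\rmA(\cdot)\|_2$, and $I_{f,7}$ reduces to the same paraproduct analysis as $I_{f,3}$ restricted to finitely many $k$, producing the same bound.

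The main obstacle is the $HL$ paraproduct case: one must verify that the multiplier $\rmA$ placed on $\pa_z\Psi$ fits inside the elliptic functional appearing in Proposition \ref{prop:elliptic}, which requires carefully tracking the $\langle\pa_v/(t\pa_z)\rangle^{-1}$ factor and using Remark \ref{Rmk: A-switch} to transfer frequency weights between the two factors. A secondary subtlety is handling the coefficient $\udl{\pa_yv}$ in the second summand of $\mathcal{N}_\Omega$ — decomposing it as $\widetilde{u'}+h$ and paying one $\ep$ to the $h$-piece (against $\mathcal{CK}_h$) is what ultimately gives the clean bound without additional loss.
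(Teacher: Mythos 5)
Your proposal follows essentially the same route as the paper's proof in Section \ref{sec: low-order}: strip the wave operator using the kernel bound of Proposition \ref{prop: kernel-wave-op}, decompose the coefficients as $\widetilde{\varphi}+\varphi^{\d}$ and $\udl{\pa_yv}=\widetilde{u'}+h$, and run a paraproduct in which the $\Om$-high piece is absorbed into $\ep\,\mathcal{CK}_f$, the $\pa_z\Psi$-high piece is treated as a reaction term controlled by the precision elliptic functional, and the high-high piece yields the integrable $\ep^3\langle t\rangle^{-2+\rmK_{\rmD}\ep/2}$ remainder. One bookkeeping caution: in the high-low case you should retain the factor $\big[\sqrt{\pa_tw/w}+|\na|^{s/2}\langle t\rangle^{-s}\big]$ that the multiplier comparison (as in \eqref{eq: est C_4}) places on the $f$-side, i.e.\ a $\mathcal{CK}_f^{1/2}$ rather than a bare $\|\rmA\Om\|_2\le\ep$, since the product $\mathcal{CK}_f^{1/2}\cdot\|\mathrm{elliptic}\|_2$ is what makes the resulting bound integrable in time.
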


The term $I_{f,6}$ contains two terms
\begin{align*}
I_{f,6}&=\f{1}{2\pi}\sum_{0<|k|<k_{M}}\int_{\mathbb{R}}\rmA_k(t,\eta)^2\mathcal{F}_2\Big(\bfD_{u,k}\left(\mathcal{F}_{1}\Big(\varphi_1^{\d}\pa_{z}(\Upsilon_2\Psi)\Big)\right)\Big)_k(t,\eta)\overline{\widehat{f}_k(t, \eta)}d\eta\\
&\quad+\f{1}{2\pi}\sum_{0<|k|<k_{M}}\int_{\mathbb{R}}\rmA_k(t,\eta)^2\mathcal{F}_2\Big(\bfD_{u,k}\left(\mathcal{F}_{1}\Big(\widetilde{\varphi_1}\pa_{z}\big(\mathring{\mathcal{T}}_{1,D}^{-1}[\Om]-\Upsilon_2\Psi\big)\Big)\right)\Big)_k(t,\eta)\overline{\widehat{f}_k(t, \eta)}d\eta\\
&\eqdef I_{f,6}^1+I_{f,6}^2. 
\end{align*}
The first term $I_{f,6}^1$ can be treated as the reaction term, which is similar to $\Pi^{u'',\ep_1}$ term in \cite{MasmoudiZhao2020}. The second term $I_{f,6}^2$ is relate to the elliptic error term $\mathring{\mathcal{T}}_{1,D}^{-1}[\mathcal{S}_1[\Psi]]$ (see \eqref{eq: diff-elliptic-1}), which is similar to $\Pi^{u'',\ep_2}$ term in \cite{MasmoudiZhao2020}.
\begin{proposition}\label{prop:nonlocal-ep}
Under the bootstrap hypotheses, 
\beno
\begin{aligned}
|I_{f,6}^1|+|I_{f,6}^2|\lesssim 
&\ep \mathcal{CK}_{f}+\f{\ep^3}{\langle t\rangle^{2-\rmK_{\rmD}\ep/2}}
+\ep\mathcal{CK}_{\varphi^{\d}_1}+\ep\mathcal{CK}_{h}+\ep\mathcal{CK}_{\varphi^{\d}_9}+\ep\mathcal{CK}_{\varphi^{\d}_4}\\
&+\ep\left\|\left\langle\f{\pa_v}{t\pa_z}\right\rangle^{-1}(\pa_z^2+(\pa_v-t\pa_z)^2)\left(\f{|\na|^{\f{s}{2}}}{\langle t\rangle^s}\rmA+\sqrt{\f{\pa_t w}{w}}\tilde{\rmA}\right)P_{\neq}(\Psi\Upsilon_2)\right\|_2^2. 
\end{aligned}
\eeno
\end{proposition}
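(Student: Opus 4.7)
The plan is to treat both $I_{f,6}^{1}$ and $I_{f,6}^{2}$ as variants of the ``Reaction-type'' nonlocal terms from \cite{MasmoudiZhao2020} (the $\Pi^{u'',\ep_1}$ and $\Pi^{u'',\ep_2}$ contributions there), adapted to the present setting where the coefficient is $\widetilde{\varphi_1}$ instead of $\widetilde{u''}$ and where we have an extra density contribution. The overall strategy is: (i) strip off the wave operator $\bfD_{u,k}$ using its Fourier kernel estimates (Proposition \ref{prop: kernel-wave-op}) together with Remark \ref{Rmk: wave bdd-A} and Remark \ref{Rmk:CKAOm}, paying a negligible loss in $\lambda(t)$ and a controllable commutator error; (ii) perform a paraproduct decomposition of the product inside; and (iii) in each paraproduct region, trade derivatives/regularity between the two factors using either the Gevrey bound on the time-independent coefficient $\widetilde{\varphi_1}$ or the $\mathcal{CK}$-control on the time-dependent small quantity.

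First I would handle $I_{f,6}^{1}$. Write $\varphi_1^\delta \pa_z(\Upsilon_2\Psi)$ as a paraproduct, decomposed into $\rmH\rmL$ (high $\varphi_1^\delta$, low $\pa_z\Psi$), $\rmL\rmH$ (low $\varphi_1^\delta$, high $\pa_z\Psi$), and $\rmH\rmH$. Since $\varphi_1^\delta$ is a zero-mode-in-$z$ function of $v$ only (see Remark \ref{Rmk: equation of coefficients}), the frequency-in-$z$ of $\pa_z(\Upsilon_2\Psi)$ is transferred entirely to the output, so on the support of the $\rmH\rmL$ paraproduct we may apply Remark \ref{Rmk: A-switch} (case $\ell=0$) and bound $\rmA_k(\eta)\lesssim \rmA^{\rmR}(\eta-\xi)e^{c\lambda\langle k,\xi\rangle^s}$, which matches the resonant weight carried by $\varphi_1^\delta$. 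Combined with Remark \ref{Rmk: switch-large-zero}, the $\rmH\rmL$ contribution is bounded by
\[
\bigl\|\sqrt{\tfrac{\pa_t w}{w}}\tilde{\rmA}(\varphi_1^\delta P_{\neq}\pa_z(\Upsilon_2\Psi))\bigr\|_{2}\,\|\rmA f\|_{2}
\lesssim \ep\,\mathcal{CK}_{\varphi_1^\delta}+\ep\,\mathcal{CK}_f+\text{(elliptic term)}.
\]
The $\rmL\rmH$ piece is essentially a Reaction term: $\varphi_1^\delta$ is now a small, smooth factor, and the large frequency lies on $\pa_z(\Upsilon_2\Psi)$, so we invoke exactly the elliptic control from Proposition \ref{prop:elliptic}, losing $\ep$ from the smallness of $\|\varphi_1^\delta\|_{\mathcal{G}^{s,\sigma-6}}\lesssim \ep$ under the bootstrap. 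The $\rmH\rmH$ piece is the easiest because both factors sit at comparable frequency and the Gevrey-product inequality gives quadratic smallness $\lesssim \ep^3/\langle t\rangle^{2-\rmK_{\rmD}\ep/2}$ after we use the $\langle t\rangle^{-2}$ decay from the elliptic gain. The stripping of $\bfD_{u,k}$ contributes an extra commutator term of the same form as the $I_{f,2}^{com}$ analysis in Proposition \ref{prop:com} and produces no new structural difficulty.

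For $I_{f,6}^{2}$ the coefficient $\widetilde{\varphi_1}$ is time-independent and Gevrey-regular (assumption (5) of Theorem \ref{Thm: main}), so the paraproduct in $\widetilde{\varphi_1}\cdot\pa_z(\mathring{\mathcal{T}}_{1,D}^{-1}[\Om]-\Upsilon_2\Psi)$ is essentially controlled by the quantity $\mathring{\mathcal{T}}_{1,D}^{-1}[\Om]-\Upsilon_2\Psi$, which is the elliptic difference accounting for the non-trivial coordinates/density. I would use the decomposition of the stream function (equation \eqref{eq: decomposition stream} alluded to in section \ref{sec: step1}, and the solution-dependent estimates of Proposition \ref{prop: estimate diff-1} and Lemma \ref{lem: S_2}) to express this difference as a sum of terms linear in the time-dependent quantities $h, \varphi_4^\delta, \varphi_9^\delta$ (and hence $\bar h$), each multiplied by $\Psi$ or its derivatives. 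Plugging that expansion into $I_{f,6}^{2}$ yields paraproducts of exactly the type handled in step one, with $\varphi_1^\delta$ replaced by $h$, $\varphi_9^\delta$, $\varphi_4^\delta$; this produces the $\ep\mathcal{CK}_h$, $\ep\mathcal{CK}_{\varphi_9^\delta}$, $\ep\mathcal{CK}_{\varphi_4^\delta}$ contributions listed on the right-hand side. The low-high term is again absorbed into the elliptic control, and the high-high term into the $\ep^3/\langle t\rangle^{2-\rmK_{\rmD}\ep/2}$ remainder.

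The main obstacle I anticipate is the $\rmH\rmL$ region for both pieces, where the resonant time $t\in\rmI_{k,\eta}$ produces a growth of order $|\eta|/(k^2+(\eta-kt)^2)$ in $\rmA_k(\eta)$ relative to $\rmA^{\rmR}(\eta-\xi)\rmA_k(\xi)$ (cf.\ Remark \ref{Rmk: A-switch}). Absorbing this factor forces one to extract a full factor of $\dot{\rmA}^{\rmR}\rmA^{\rmR}$ from $\varphi_1^\delta$ (respectively $h,\varphi_9^\delta,\varphi_4^\delta$), and simultaneously a factor of $\dot{\rmA}_k\rmA_k$ from $f$; this is precisely where the product $\mathcal{CK}_{\varphi_1^\delta}\cdot\mathcal{CK}_f$ (and similar) arises, and where one must check that the bootstrap smallness $\ep$ can be split cleanly so that only first powers of $\mathcal{CK}$'s appear on the final estimate. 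The book-keeping is identical in spirit to Lemmas on the $\Pi^{u''}$ terms in \cite{MasmoudiZhao2020}, but now with an additional density-induced commutator $\tchi_2\bfD_{u,k}-\tchi_2\bfD_{u,k}^1$ coming from Proposition \ref{prop: commutator}, whose Fourier kernel $\min(\langle\xi-kt\rangle^{-2},\langle\xi_1-kt\rangle^{-2})e^{-\lambda_{\mathcal{D}}|\xi-\xi_1|^{s_0}}$ provides the necessary extra $\langle t\rangle^{-2}$ in precisely the high-high range, closing the estimate without incurring any new $\mathcal{CK}$-terms.
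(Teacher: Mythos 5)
Your proposal follows essentially the same route as the paper: $I_{f,6}^1$ is handled as a reaction term in the manner of $\Pi^{u'',\ep_1}$ of \cite{MasmoudiZhao2020}, and $I_{f,6}^2$ is reduced via the identity $\mathring{\mathcal{T}}_{1,D}^{-1}[\Om]-\Upsilon_2\Psi=\mathring{\mathcal{T}}_{1,D}^{-1}[\mathcal{S}_1[\Psi]]$ to the elliptic error controlled by Proposition \ref{prop: estimate diff-1}, which is exactly what the paper indicates. One minor correction: the commutator kernel $\mathcal{D}^{com}$ of Proposition \ref{prop: commutator} plays no role here (it is only needed to recover the transport structure in $I_{f,2}$); the $\langle t\rangle^{-2}$ gain in the high-high range comes from the lossy elliptic estimate, which you also correctly invoke.
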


The terms $I_{f,5}$ have the same structure and can be treated in the same way. Indeed, $I_{f,5}$ is similar to the term $\Pi^{u''}$ in \cite{MasmoudiZhao2020}. 
The key observation for this term is as follows: \\
1. we can treat it as the reaction term; \\
2. we can get an extra smallness if $|k|\geq k_{M}$ with $k_{M}$ sufficiently large.
\begin{proposition}\label{prop: nonlocal}
Under the bootstrap hypotheses 
\begin{align*}
|I_{f,5}|
\leq &\f{2C_0C_{\varphi_1}\mathcal{E}_f(t)}{\langle t\rangle^2}
+\f{1}{50}\mathcal{CK}_{f}
+\f{2C_0C_{ell}C_{\varphi_1}}{k_M}\left\|\left\langle\f{\pa_v}{t\pa_z}\right\rangle^{-1}\f{|\na|^{s/2}}{\langle t\rangle^{s}}\Delta_{L}P_{|k|\geq k_{M}}\rmA (\Psi\Upsilon_2)\right\|_2^2\\
&\quad +\f{2C_0C_{ell}C_{\varphi_1}}{k_M}\left\|1_{\rmR}\sqrt{\f{\pa_tw}{w}}\Delta_{L}\tilde{\rmA}P_{|k|\geq k_{M}}(\Psi\Upsilon_2)\right\|_2^2,
\end{align*}
where $1_{\rmR}=1_{t\in \mathrm{I}_{k,\eta}}$ for some $k\eta>0$.
\end{proposition}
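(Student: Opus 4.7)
The plan is to extend the linear bound of Lemma \ref{lem: I_{f,5}^{L}} to the full nonlinear $I_{f,5}$ by splitting into three pieces. Writing
\begin{align*}
\udl{\varphi_1}\,\pa_z(\Upsilon_2\Psi) &= \widetilde{\varphi_1}\,\pa_z\bigl(\mathring{\mathcal{T}}_{1,D}^{-1}\Om\bigr) + \widetilde{\varphi_1}\,\pa_z\bigl(\Upsilon_2\Psi - \mathring{\mathcal{T}}_{1,D}^{-1}\Om\bigr) + \varphi_1^{\d}\,\pa_z(\Upsilon_2\Psi),
\end{align*}
I decompose $I_{f,5} = I_{f,5}^L + I_{f,5}^{\mathrm{ell}} + I_{f,5}^{\d}$. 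For $I_{f,5}^L$, Lemma \ref{lem: I_{f,5}^{L}} gives $|I_{f,5}^L| \le \f{C_{\varphi_1}}{\langle t\rangle^2}\|\rmA P_{|k|\ge k_M}f\|_2^2 + \f{C_0 C_{ell}C_{\varphi_1}}{k_M}\mathcal{CK}_f$; the first factor is at most $\f{C_{\varphi_1}\mathcal{E}_f}{\langle t\rangle^2}$ by Remark \ref{Rmk: wave bdd-A}, while the second is at most $\tfrac{1}{100}\mathcal{CK}_f$ by the choice $k_M \ge 100 C_0 C_{\varphi_1''}C_{ell}$ from Remark \ref{Rmk: determine k_M}.

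For $I_{f,5}^{\mathrm{ell}}$, I would rerun the HL/LH/HH paraproduct decomposition used in the proof of Lemma \ref{lem: I_{f,5}^{L}}, now with $\mathring{\mathcal{T}}_{1,D}^{-1}\Om$ replaced by the elliptic error $\Upsilon_2\Psi - \mathring{\mathcal{T}}_{1,D}^{-1}\Om$. The HL and HH regions use the Gevrey smoothness of $\widetilde{\varphi_1}$ to transfer regularity, producing contributions of the form $\f{C_{\varphi_1}}{\langle t\rangle^2}\mathcal{E}_f$ (with room from the $|k|^{-2}$ elliptic gain for $|k|\ge k_M$). In the LH region, the key reaction bound
\begin{equation*}
\f{1}{k^2+(\zeta-kt)^2}\lesssim \f{1}{k_M}\sqrt{\tfrac{\dot\rmA_k(\eta)}{\rmA_k(\eta)}\tfrac{\dot\rmA_k(\zeta)}{\rmA_k(\zeta)}}\,\langle\eta-\xi,\zeta-\xi\rangle^3
\end{equation*}
from the proof of Lemma \ref{lem: I_{f,5}^{L}} still applies, but since $\rmA(\Upsilon_2\Psi - \mathring{\mathcal{T}}_{1,D}^{-1}\Om)$ is not directly controlled by $\mathcal{CK}_f$, the resulting product is precisely the two weighted norms of $\Delta_L P_{|k|\ge k_M}(\Upsilon_2\Psi)$ appearing in the statement, obtained from the standard splitting of $\dot\rmA/\rmA$ into a non-resonant $|\nabla|^{s/2}\langle t\rangle^{-s}\rmA$ piece and a resonant $1_{\rmR}\sqrt{\pa_tw/w}\,\tilde{\rmA}$ piece. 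The $\f{1}{k_M}$ prefactor then produces exactly the coefficient $\f{2C_0C_{ell}C_{\varphi_1}}{k_M}$ required.

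The third piece $I_{f,5}^\d$ is easier: the bootstrap gives $\|\varphi_1^\d\|_{\mathcal{G}^{s,\lambda,\sigma-6}}\lesssim \ep$, and combined with the $|k|^{-2}\le k_M^{-2}$ elliptic gain on $\pa_z(\Upsilon_2\Psi)$, each paraproduct region yields a contribution of size $\ep\cdot(\mathcal{CK}_f + \text{elliptic})$, absorbable with room. The main technical obstacle is the careful bookkeeping of absolute constants throughout, so that the cumulative $\mathcal{CK}_f$ coefficient from $I_{f,5}^L$, $I_{f,5}^{\mathrm{ell}}$ and $I_{f,5}^\d$ stays strictly below $\tfrac{1}{50}$ and the elliptic coefficients combine to exactly $\tfrac{2C_0C_{ell}C_{\varphi_1}}{k_M}$ rather than some larger constant; this is precisely what motivates the quantitative choice of $k_M$ in Remark \ref{Rmk: determine k_M}.
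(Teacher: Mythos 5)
The student takes a genuinely different decomposition from the paper's, and the treatment of the elliptic-error piece has a gap.

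The paper's proof keeps $\udl{\varphi_1}$ and $\Upsilon_2\Psi$ \emph{whole} and reruns the HL/LH/HH frequency decomposition from the proof of Lemma \ref{lem: I_{f,5}^{L}}, now with $\mathring{\mathcal{T}}_{1,D}^{-1}\Om$ replaced by $\Upsilon_2\Psi$ and $\widetilde{\varphi_1}$ by $\udl{\varphi_1}$. For HL/HH it uses the lossy elliptic estimate $\|P_{k\geq k_M}\Upsilon_2\Psi\|_{\mathcal{G}^{s,\s-6}}\lesssim \langle t\rangle^{-2}\mathcal{E}_f^{1/2}$ together with $\|\rmA^{\rmR}\udl{\varphi_1}\|_{L^2}\leq \frac{9}{8}C_{\varphi_1}$ (so $\widetilde{\varphi_1}$ and $\varphi_1^\d$ are handled together in one bound, no separate pieces). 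For LH, since the full distorted elliptic operator has no explicit Green function, the resonant/non-resonant reaction analysis stops at the two weighted norms of $\Delta_L P_{|k|\geq k_M}(\Psi\Upsilon_2)$ that appear literally in the statement; the conversion to $\mathcal{CK}_f + \ep^2(\mathcal{CK}_h+\dots)$ is deferred to the Remark that follows, via Proposition \ref{prop:elliptic}.

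You instead split $\udl{\varphi_1}\pa_z(\Upsilon_2\Psi) = \widetilde{\varphi_1}\pa_z(\mathring{\mathcal{T}}_{1,D}^{-1}\Om) + \widetilde{\varphi_1}\pa_z(\Upsilon_2\Psi - \mathring{\mathcal{T}}_{1,D}^{-1}\Om) + \varphi_1^\d\pa_z(\Upsilon_2\Psi)$ and treat $I_{f,5}^L$ by Lemma \ref{lem: I_{f,5}^{L}}. That decomposition is exactly the one the paper uses for $I_{f,6}$ (where the wave operator has already absorbed the linear part for $|k|<k_M$), not for $I_{f,5}$. Deployed here it creates a mismatch: the LH reaction estimate on $I_{f,5}^{\mathrm{ell}}$ produces weighted elliptic norms of the error $\Upsilon_2\Psi - \mathring{\mathcal{T}}_{1,D}^{-1}\Om = -\mathring{\mathcal{T}}_{1,D}^{-1}[\mathcal{S}_1[\Psi]]$, not of $\Psi\Upsilon_2$. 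You assert the result is "precisely the two weighted norms of $\Delta_L P_{|k|\ge k_M}(\Upsilon_2\Psi)$", but that step is unjustified: to convert you must either invoke Proposition \ref{prop: estimate diff-1} (which brings in $\mathcal{CK}_h$, $\mathcal{CK}_{\varphi_9^\d}$, $\mathcal{CK}_{\varphi_4^\d}$, none of which are permitted in the statement of Proposition \ref{prop: nonlocal}), or use the triangle inequality error $\lesssim \Psi\Upsilon_2 + \mathring{\mathcal{T}}_{1,D}^{-1}\Om$ and then bound the second piece by $\mathcal{CK}_f$, which doubles the $\mathcal{CK}_f$ contribution relative to Lemma \ref{lem: I_{f,5}^{L}} and makes the advertised $\tfrac{1}{50}$ budget much tighter than you acknowledge. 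The constant bookkeeping ("stays strictly below $\tfrac{1}{50}$") is asserted rather than shown, and the claim that the elliptic coefficients "combine to exactly $\tfrac{2C_0C_{ell}C_{\varphi_1}}{k_M}$" is not credible once the error piece and $\varphi_1^\d$ piece are accounted for honestly. The fix is to not split $\Upsilon_2\Psi$ at all: work with $\udl{\varphi_1}\pa_z(\Upsilon_2\Psi)$ directly as in the paper.
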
 
\begin{remark}
By applying \eqref{eq:elliptic1} and using \eqref{eq: k_M determine}, we get that
\beno
\begin{aligned}
|I_{f,5}|
\leq &\f{2C_0C_{\varphi_1}\mathcal{E}_f(t)}{\langle t\rangle^2}
+\f{1}{10}\mathcal{CK}_{f}
+C(k_{M})\ep^2\Big(\mathcal{CK}_{h}+\mathcal{CK}_{\varphi_9^{\d}}+\mathcal{CK}_{\varphi_4^{\d}}\Big).
\end{aligned}
\eeno
\end{remark}
\begin{proof}
Let us outline the main idea of the proof of Proposition \ref{prop: nonlocal}. As in the proof of Lemma \ref{lem: I_{f,5}^{L}}, we divide $I_f^5$ into high-low, low-high, and high-high interactions based on the frequencies. For the high-low and high-high interactions, we use the facts that 
\beno
&&\|P_{k\geq k_M}\Upsilon_2\Psi\|_{\mathcal{G}^{s,\s-6}}\leq \f{C}{\langle t\rangle^2}\|P_{k\geq k_M}\Om\|_{L^2}\leq \f{C}{\langle t\rangle^2}\mathcal{E}_f^{\f12}\\
&&\|\rmA^{\rmR}\udl{\varphi_1}\|_{L^2}\leq \|\rmA^{\rmR}\widetilde{\varphi_1}\|_{L^2}+\|\rmA^{\rmR}\varphi_1^{\d}\|_{L^2}\leq \f{9}{8}C_{\varphi_1},
\eeno
and obtain that 
\beno
|I^5_{f,\mathrm{HL}}|+|I^5_{f,\mathrm{HH}}|\leq \f{2C_0C_{\varphi_1}\mathcal{E}_f(t)}{\langle t\rangle^2}. 
\eeno
For the low-high interactions, we get that
\begin{align*}
|I^5_{f,\mathrm{LH}}|
&\lesssim \sum_{k\neq 0}\left|\int_{|\xi-\eta|\leq \f18|k,\xi|} \rmA \overline{\widehat{f}}_k(\eta)\rmA_{k}(t,\eta)\left(\widehat{\udl{\varphi_1}}(\eta-\xi)1_{|k|\geq k_{M}}|k|\widehat{(\Psi\Upsilon)}_k(\xi)\right) d\xi d\eta\right|\\
&\lesssim \sum_{k\neq 0}\bigg|\int_{|\xi-\eta|\leq \f18|k,\xi|}[1_{\mathrm{NR,NR}}+1_{\mathrm{NR,R}}+1_{\mathrm{R,NR}}+1_{\mathrm{R,R}}]1_{|k|\geq k_{M}}\\
&\qquad\qquad\qquad\times\rmA \overline{\widehat{f}}_k(\eta)\rmA_{k}(t,\eta)\left(\widehat{\udl{\varphi_1}}(\eta-\xi)|k|\widehat{(\Psi\Upsilon)}_k(\xi)\right) d\xi d\eta\bigg|\\
&=I^{5;\mathrm{NR,NR}}_{f,\mathrm{LH}}
+I^{5;\mathrm{NR,R}}_{f,\mathrm{LH}}
+I^{5;\mathrm{R,NR}}_{f,\mathrm{LH}}
+I^{5;\mathrm{R,R}}_{f,\mathrm{LH}}.
\end{align*}
For $I^{5;\mathrm{NR,NR}}_{f,\mathrm{LH}}$ and $I^{5;\mathrm{R,NR}}_{f,\mathrm{LH}}$, we use the fact that there is $C$ such that for all $k_M$
\beq\label{eq: 6.12modify}
\f{\langle t\rangle^{s}|k|}{\langle k,\xi\rangle^{s/2}}1_{t\notin \mathbf{I}_{k,\xi}} 1_{|k|\geq k_{M}}\leq \f{C}{k_{M}}  \left\langle\f{\xi}{tk}\right\rangle^{-1}\f{|k,\xi|^{s/2}}{\langle t\rangle^{s}}(k^2+(\xi-kt)^2),
\eeq
and obtain that 
\begin{align*}
|I^{5;\mathrm{NR,NR}}_{f,\mathrm{LH}}|+|I^{5;\mathrm{R,NR}}_{f,\mathrm{LH}}|
&\leq C \|\udl{\varphi_1}\|_{\mathcal{G}^{s,\la,\s-6}}\||\na|^{s/2}\rmA f\|_2
\left\|\f{|\pa_z|}{\langle\na\rangle^{s/2}}1_{\mathrm{NR}} 1_{|k|\geq k_{M}}P_{\neq}\rmA(\Psi\Upsilon)\right\|_2\\
&\leq \f{C_0/10}{k_{M}\langle t\rangle^{2s}}\|\udl{\varphi_1}\|_{\mathcal{G}^{s,\la,\s-6}} \||\na|^{s/2}\rmA f\|_2^2\\
&\quad+\f{C_0/10}{k_{M}}\|\udl{\varphi_1}\|_{\mathcal{G}^{s,\la,\s-6}} \left\|\left\langle\f{\pa_v}{t\pa_z}\right\rangle^{-1}\f{|\na|^{s/2}}{\langle t\rangle^{s}}\Delta_{L}P_{|k|\geq k_{M}}\rmA (\Psi\Upsilon)\right\|_2^2. 
\end{align*}
For $I^{5;\mathrm{NR,R}}_{f,\mathrm{LH}}$, we use the fact that 
\begin{align*}
&\sqrt{\f{w(t,\xi)}{\pa_tw(t,\xi)}}|k|\f{w_{\mathrm{R}}(t,\xi)}{w_{\mathrm{NR}}(t,\xi)}1_{|k|\geq k_{M}}1_{t\in \mathrm{I}_{k,\xi}}\\
&\approx \Big(1+\big|t-\f{\xi}{k}\big|\Big)^{\f12}\f{|k|^3\Big(1+\big|t-\f{\xi}{k}\big|\Big)}{\xi}1_{t\in \mathrm{I}_{k,\xi}}1_{|k|\geq k_{M}}
\approx \f{1}{k_{M}}\big(k^2+(\xi-kt)^2\big)\sqrt{\f{\pa_tw(t,\xi)}{w(t,\xi)}}.
\end{align*}
and obtain that
\begin{align*}
|I^{5;\mathrm{NR,R}}_{f,\mathrm{LH}}|
&\leq \f{C_0/10}{k_{M}} \|\udl{\varphi_1}\|_{\mathcal{G}^{s,\la,\s-6}}\left\|\sqrt{\f{\pa_tw}{w}}\tilde{\rmA} f\right\|_2^2+\f{C_0/10}{k_{M}} \|\udl{\varphi_1}\|_{\mathcal{G}^{s,\la,\s-6}}\left\|\f{|\na|^{s/2}}{\langle t\rangle^{s}}\rmA f\right\|_2^2\\
&\quad +\f{C_0/10}{k_{M}} \|\udl{\varphi_1}\|_{\mathcal{G}^{s,\la,\s-6}}\left\|\sqrt{\f{\pa_tw}{w}}\Delta_{L}\tilde{\rmA}P_{|k|\geq k_{M}}(\Psi\Upsilon)\right\|_2^2.
\end{align*}
For $I^{5;\mathrm{R,R}}_{f,\mathrm{LH}}$, we use the fact that 
\begin{align*}
&\sqrt{\f{w_{k}(t,\xi)}{\pa_tw_{k}(t,\xi)}}|k|1_{t\in \mathrm{I}_{k,\xi}}1_{|k|\geq k_{M}}
\lesssim |k|\sqrt{1+\Big|t-\f{\xi}{k}\Big|}1_{t\in \mathrm{I}_{k,\xi}}1_{|k|\geq k_{M}}\\
&\lesssim |k|\bigg(1+\Big|t-\f{\xi}{k}\Big|\bigg)\sqrt{\f{\pa_tw_{k}(t,\xi)}{w_{k}(t,\xi)}}1_{t\in \mathrm{I}_{k,\xi}}1_{|k|\geq k_{M}}
\lesssim \f{\big(k^2+(kt-\xi)^2\big)}{k_M}\sqrt{\f{\pa_tw_{k}(t,\xi)}{w_{k}(t,\xi)}}1_{t\in \mathrm{I}_{k,\xi}}1_{|k|\geq k_{M}},
\end{align*}
and obtain that 
\begin{align*}
|I^{5;\mathrm{R,R}}_{f,\mathrm{LH}}|
\leq \f{C_0/10}{k_{M}} \|\udl{\varphi_1}\|_{\mathcal{G}^{s,\la,\s-6}} \left\|\sqrt{\f{\pa_tw}{w}}\tilde{\rmA}f\right\|_2^2+\f{C_0/10}{k_{M}} \|\udl{\varphi_1}\|_{\mathcal{G}^{s,\la,\s-6}}
\left\|\sqrt{\f{\pa_tw}{w}}\Delta_{L}\tilde{\rmA}P_{|k|\geq k_{M}}1_{\rmR} (\Psi\Upsilon)\right\|_2^2.
\end{align*}
Here $C_0>1$ is a universal constant. By combining all the above estimates and using the bootstrap assumptions, we obtain the proposition.  

One may refer to the proof of $\Pi^{u''}$ in \cite{MasmoudiZhao2020} for more details. Indeed, the linear part of $I_{f,5}$ is treated in Lemma \ref{lem: I_{f,5}^{L}}, which is used to determine the $k_M$ before the bootstrap argument. 
\end{proof}

Now we control the pressure. We first estimate the zero mode. 

\begin{proposition}\label{prop:pressure-zero mode}
Under the bootstrap hypotheses,
\beno
\left\|t(t^2+|\pa_v|^2)(2-\pa_{v}^2)^{-1}\rmA_0P_0\big(\pa_v(\Pi\Upsilon_1)\big)\right\|_2\lesssim \ep
\eeno
and
\beno
\begin{aligned}
&\left\|t(t^2+|\pa_v|^2)(2-\pa_{v}^2)^{-1}\Big(\sqrt{\f{\pa_tw}{w}}+\f{\langle \pa_v\rangle^{\f{s}{2}}}{\langle t\rangle^s}\Big)\rmA_0P_0\big(\pa_v(\Pi\Upsilon_1)\big)\right\|_2^2\\
&\lesssim 
\f{\ep^4}{\langle t\rangle^{2s}}+\ep^2\mathcal{CK}_a+\ep^2\mathcal{CK}_{\varphi^{\d}_4}\\
&\quad+\ep^2\left\|\left\langle\f{\pa_v}{t\pa_z}\right\rangle^{-1}(\pa_z^2+(\pa_v-t\pa_z)^2)\left(\f{\langle\na\rangle^{\f{s}{2}}}{\langle t\rangle^s}\rmA+\sqrt{\f{\pa_t w}{w}}\tilde{\rmA}\right)P_{\neq}(\Psi\Upsilon)\right\|_2^2.
\end{aligned}
\eeno
\end{proposition}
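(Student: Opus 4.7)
The plan is to obtain $\pa_v P_0(\Pi\Upsilon_1)$ directly from the ODE-like relation \eqref{eq: pressure-zero-mode}, which I rewrite as
\begin{align*}
\bigl(1+(\udl{\varphi_4}\Upsilon_2)P_0(a)\bigr)\pa_v P_0(\Pi\Upsilon_1) \;=\; R_\Psi + R_a,
\end{align*}
where
\begin{align*}
R_\Psi &= (\udl{\varphi_4}\Upsilon_2)\,P_0\!\Bigl(\pa_z(\Psi\Upsilon_2)\pa_z(\pa_v-t\pa_z)(\Psi\Upsilon_2) - \pa_{zz}(\Psi\Upsilon_2)(\pa_v-t\pa_z)(\Psi\Upsilon_2)\Bigr), \\
R_a &= -(\udl{\varphi_4}\Upsilon_2)\,P_0\bigl[P_{\neq}(a)\,P_{\neq}(\pa_v-t\pa_z)P_{\neq}(\Pi\Upsilon_1)\bigr].
\end{align*}
Since $\udl{\varphi_4}=1/\udl{\th}$ is uniformly bounded above and below on the relevant set and the bootstrap yields $\|P_0(a)\|_{L^\infty}\lesssim\varepsilon$, the prefactor $\bigl(1+(\udl{\varphi_4}\Upsilon_2)P_0(a)\bigr)^{-1}$ is, for $\varepsilon$ sufficiently small, a bounded multiplier in the Gevrey-$\tfrac{1}{s}$ class with compactly supported perturbation. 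After inverting it (paraproduct against $\rmA_0$), the task reduces to bounding $R_\Psi$ and $R_a$ under the weight $M := t(t^2+|\pa_v|^2)(2-\pa_v^2)^{-1}\rmA_0$.

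For $R_\Psi$, the key observation is that $P_0\pa_z(\cdot)=0$, so the algebraic identity
\begin{align*}
\pa_z\Psi\,\pa_z(\pa_v-t\pa_z)\Psi-\pa_{zz}\Psi\,(\pa_v-t\pa_z)\Psi = \pa_z\bigl[\pa_z\Psi\,(\pa_v-t\pa_z)\Psi\bigr]-2\pa_{zz}\Psi\,(\pa_v-t\pa_z)\Psi
\end{align*}
collapses $R_\Psi$ to the symmetric bilinear form proportional to $P_0\bigl(\pa_{zz}(\Psi\Upsilon_2)(\pa_v-t\pa_z)(\Psi\Upsilon_2)\bigr)$. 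Each factor of $\Psi$ enjoys the inviscid-damping control provided by Proposition~\ref{prop:elliptic} (a frequency-by-frequency decay of order $\langle t\rangle^{-1}$ when paired with $\pa_z$ or $(\pa_v-t\pa_z)$). A standard Gevrey paraproduct, splitting the product into high-low, low-high and high-high interactions, then converts the weight $t(t^2+\eta^2)/(2+\eta^2)$ into an extra factor bounded by $t\cdot(t+|\eta|)/(1+|\eta|)$, which is absorbed by the two inviscid-damped factors plus the smoothing on low-frequency output. This yields $\|M\cdot R_\Psi\|_2\lesssim\varepsilon^2\lesssim\varepsilon$.

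For $R_a$, I use the non-zero mode pressure estimates (Propositions~\ref{Prop: non-zero-pessure-L} and \ref{Prop: non-zero-pessure-N}, to be proved in Section~\ref{sec: ell-pressure}), which control $(\pa_v-t\pa_z)P_{\neq}(\Pi\Upsilon_1)$ at the regularity of $\mathcal{M}_3$, together with the bootstrap $\|a\|_{\mathcal{G}^{s,\la,\s}}\lesssim\varepsilon$; the zero-mode projection of the product supplies additional smallness, and the weight $M$ is absorbed just as above. For the CK-type second estimate I apply the same decomposition after inserting either $\sqrt{\pa_tw/w}\,\tilde\rmA$ or $\langle\pa_v\rangle^{s/2}/\langle t\rangle^s\,\rmA$ into $M$; by Remark~\ref{Rmk: switch-large-zero} the extra weight is rerouted into $\mathcal{CK}_a$ (when it hits $a$), into $\mathcal{CK}_{\varphi_4^{\d}}$ (when it hits the coefficient $\udl{\varphi_4}-\widetilde{\varphi_4}$), and into the right-hand side of \eqref{eq:elliptic} (when it hits $\Psi$), producing exactly the target dissipation structure.

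The main obstacle is that a naive $L^2\!\cdot\! L^\infty$ bound on $R_\Psi$ loses a full power of $t$: the amplification $M\sim t^3$ at low output frequencies is just one power larger than the $t^{-2}$ gained from two inviscid-damped factors. One must therefore exploit the extra $(\pa_v-t\pa_z)$ built into the symmetrized identity (and, at the commutator level, the compensation between the two terms of $R_\Psi$ before $P_0$-projecting) in order to recover the missing factor of $\langle t\rangle^{-1}$ from the smoothing $(2-\pa_v^2)^{-1}$ when the output frequency is comparable to $t$. Getting this bookkeeping right, uniformly across the paraproduct regions, is where the work lies.
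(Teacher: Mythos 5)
Your proposal follows essentially the same route as the paper: isolate $\pa_vP_0(\Pi\Upsilon_1)$ from \eqref{eq: pressure-zero-mode} (the paper absorbs the $\ep\,\|\mathcal{M}_{00}\pa_vP_0(\Pi\Upsilon_1)\|_2$ contribution rather than inverting the prefactor, which is equivalent), use $P_0\pa_z=0$ to collapse the quadratic $\Psi$ term to $P_0\big(\pa_{zz}(\Psi\Upsilon_2)(\pa_v-t\pa_z)(\Psi\Upsilon_2)\big)$, estimate it by a frequency decomposition trading the weight $\f{t(t^2+\xi^2)}{2+\xi^2}$ against $\langle\eta/(kt)\rangle^{-1}(k^2+(\eta-kt)^2)$ from the precision elliptic control, and route the CK weights into $\mathcal{CK}_a$, $\mathcal{CK}_{\varphi_4^{\d}}$ and the $\Psi$-dissipation term exactly as in Proposition \ref{prop: 0pressure-error}. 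Only your final "obstacle" count is slightly off: the two factors give $t^{-2}\cdot t^{-1}=t^{-3}$, which already matches $M\sim t^{3}$, so no power of $t$ is actually missing in the lossy regime — the precision elliptic control is needed only when an input frequency is comparable to the output frequency.
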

Proposition \ref{prop:pressure-zero mode} is proved in section \ref{sec: pressure-zero}. 

The estimates of the non-zero modes of pressure are highly non-trivial. 
\begin{proposition}\label{Prop: non-zero-pessure-L}
Under the bootstrap hypotheses,
\begin{align*}
\left\|\langle\pa_z\rangle t(t+\langle \na\rangle)(1-\Delta)^{-1}\big((\pa_v-t\pa_z)^2+\pa_z^2\big)\rmA P_{\neq} \Pi_{\star}\right\|_2\lesssim \ep
\end{align*}
and
\begin{align*}
&\left\|\langle\pa_z\rangle t(t+\langle \na\rangle)(1-\Delta)^{-1}\big((\pa_v-t\pa_z)^2+\pa_z^2\big)\left(\f{\langle\na\rangle^{\f{s}{2}}}{\langle t\rangle^s}\rmA+\sqrt{\f{\pa_t w}{w}}\tilde{\rmA}\right) P_{\neq} \Pi_{\star}\right\|_2\\
&\lesssim \ep^2 \Big(\f{1}{\langle t\rangle^s}+\mathcal{CK}_{\varphi^{\d}_6}^{\f12}+\mathcal{CK}_{\varphi^{\d}_7}^{\f12}+\mathcal{CK}_{\th^{\d}}^{\f12}+\mathcal{CK}_{h}^{\f12}+\mathcal{CK}_{\varphi_8^{\d}}^{\f12}+\mathcal{CK}_{\varphi_9^{\d}}^{\f12}+\mathcal{CK}_a^{\f12}\Big)\\
&\quad+\ep\left\|\left\langle\f{\pa_v}{t\pa_z}\right\rangle^{-1}(\pa_z^2+(\pa_v-t\pa_z)^2)\left(\f{|\na|^{\f{s}{2}}}{\langle t\rangle^s}\rmA+\sqrt{\f{\pa_t w}{w}}\tilde{\rmA}\right)P_{\neq}(\Psi\Upsilon_2)\right\|_2\\
&\quad +\ep \left\|t(t^2+|\pa_v|^2)(2-\pa_{v}^2)^{-1}\Big(\sqrt{\f{\pa_tw}{w}}+\f{\langle \pa_v\rangle^{\f{s}{2}}}{\langle t\rangle^s}\Big)\rmA_0P_0\big(\pa_v(\Pi\Upsilon_1)\big)\right\|_2^2,
\end{align*}
where $\Pi_{\star}\in \{\Upsilon_2\Pi_{l,1}, \Upsilon_1\Pi_{l,2}\}$.
\end{proposition}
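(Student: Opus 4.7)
\medskip
\noindent\textbf{Proof plan for Proposition \ref{Prop: non-zero-pessure-L}.}

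The plan is to treat $\Upsilon_2\Pi_{l,1}$ and $\Upsilon_1\Pi_{l,2}$ separately, since their right-hand sides have different structures. In both cases I first localize the elliptic problem by multiplying the pressure equation by the appropriate cutoff ($\Upsilon_2$ or $\Upsilon_1$). For $\Upsilon_2\Pi_{l,1}$ this produces the equation
\[
\tilde{\Delta}_{t}(\Upsilon_2\Pi_{l,1})=\Upsilon_2\bigl(\udl{\varphi_6}\pa_{zz}(\Upsilon_2\Psi)+\udl{\varphi_7}P_{\neq}(\Upsilon_2\Psi)\bigr)+[\tilde{\Delta}_t,\Upsilon_2]\Pi_{l,1}+\mathcal{S}_a[\Pi_{l,1}],
\]
where $\tilde{\Delta}_t$ is the distorted Laplacian appearing in \eqref{eq: Pressure-l1} and $\mathcal{S}_a[\Pi_{l,1}]$ collects the corrections coming from writing the coefficients $(a+\udl{\th})$ and $\udl{\pa_yv}$ in terms of the time-independent linear operator $\tilde{\Delta}_t^l$ (built from $\widetilde{\th}$ and $\widetilde{u'}$). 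The linearization error $\tilde{\Delta}_t-\tilde{\Delta}_t^l$ is small (it carries the factors $a$, $\th^\d$, $h$, $\varphi_4^\d$) and can be absorbed using the bootstrap hypotheses together with Lemma \ref{lem: S_a} and Lemma \ref{lem: S_a-1} (the pressure analogues of $\mathcal{S}_2$).

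With this decomposition I invert $\tilde{\Delta}_t^l$ by means of the Fourier kernel estimates for the Sturm-Liouville Green function in Proposition \ref{eq: T_1} and Proposition \ref{eq: T_2}. The key gain is the factor $\bigl(1+k^2+(\xi-kt)^2\bigr)^{-1}$ which exactly cancels the weight $(\pa_z^2+(\pa_v-t\pa_z)^2)$ on the left-hand side of the target inequality; the extra $\langle \pa_z\rangle t(t+\langle\na\rangle)(1-\Delta)^{-1}$ factor is then paid for by the two derivatives $\pa_{zz}$ on the right and by using Proposition \ref{prop:elliptic} to pass from $\Psi$ back to $f$ (equivalently to $\Om$). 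This yields the $\Pi_{l,1}$ estimates, with the $\sqrt{\pa_t w/w}$/Gevrey-multiplier version producing the $\mathcal{CK}$ terms in $\varphi^\d_6,\varphi^\d_7,\th^\d,h,\varphi_9^\d,a$ via Remark \ref{Rmk: switch-large-zero} and the standard low-high/high-low paraproduct splitting already used in the analysis of $I_{f,5}$.

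The $\Upsilon_1\Pi_{l,2}$ piece is the delicate one, and this is where I expect the main obstacle to sit. The source $\udl{\varphi_8}\udl{\pa_yv}(\pa_v-t\pa_z)P_{\neq}(\Upsilon_2\Psi)$ carries the growing factor $(\pa_v-t\pa_z)$: a direct elliptic estimate would lose a full derivative in $v$ and therefore fail to close at the asserted level. Following the informal recipe in the introduction, I would carry out a two-step estimate. First, for $\Pi_{l,2}$ itself (without the $\Upsilon_1$ cutoff), invert $\tilde{\Delta}_t^l$ once and use that $\udl{\varphi_8}=\chi_2''u$, together with the smallness of $(\pa_v-t\pa_z)$ against $(\pa_z^2+(\pa_v-t\pa_z)^2)^{-1}$, to gain only one derivative; this controls $\langle\na\rangle\Pi_{l,2}$ by $\Psi$. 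Second, for $\Upsilon_1\Pi_{l,2}$, write
\[
\tilde{\Delta}_t(\Upsilon_1\Pi_{l,2})=\Upsilon_1\,\udl{\varphi_8}\,\udl{\pa_yv}(\pa_v-t\pa_z)P_{\neq}(\Upsilon_2\Psi)+[\tilde{\Delta}_t,\Upsilon_1]\Pi_{l,2}+\Upsilon_1\mathcal{S}_a[\Pi_{l,2}],
\]
and exploit the crucial disjointness $\mathrm{supp}\,\Upsilon_1'\cap\mathrm{supp}\,\varphi_8=\emptyset$: the first term vanishes pointwise (since $\Upsilon_1=1$ on $\mathrm{supp}\,\varphi_8$ after an inconsequential decomposition, or equivalently since $\varphi_8=0$ on $\mathrm{supp}\,\Upsilon_1'$ combined with the support of $\Upsilon_1$), so the $(\pa_v-t\pa_z)$ factor never actually meets the commutator. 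This lets me invert $\tilde{\Delta}_t^l$ a second time, gain an additional derivative, and obtain the desired $(\pa_z^2+(\pa_v-t\pa_z)^2)$-weighted control of $\Upsilon_1\Pi_{l,2}$.

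The remaining nonlinear/quasilinear contributions all come from $\mathcal{S}_a[\Pi_\star]$ and from writing $\udl{\varphi_j}=\widetilde{\varphi_j}+\varphi_j^\d$. These are estimated by standard paraproduct splits in the Gevrey norm $\mathcal{G}^{s,\la,\s}$: high-low interactions are absorbed into $\ep$ times the $\mathcal{CK}$ quantities listed in the statement, low-high interactions against the background coefficients produce the $\mathcal{CK}_{\varphi_j^\d}$, $\mathcal{CK}_h$, $\mathcal{CK}_a$ contributions (via Remark \ref{Rmk: switch-large-zero} and Remark \ref{Rmk: A-switch}), and the zero mode of $\Pi$ appears through $\mathcal{S}_a[\Pi_\star]$ contributing the last term. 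The hard point, as noted, is the careful support bookkeeping for $\Upsilon_1\Pi_{l,2}$; the rest is bookkeeping of the same type already carried out in Section \ref{sec: transport} and in the nonlocal estimate $I_{f,5}$.
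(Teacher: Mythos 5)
Your plan is essentially correct and follows the same route as the paper's proof. For $\Upsilon_2\Pi_{l,1}$ you correctly invert the constant-coefficient Sturm--Liouville operator via its Green-function kernel (Proposition~\ref{eq: T_2}), use the smallness of the coefficient corrections (Lemmas~\ref{lem: S_2} and \ref{lem: S_a}), and observe that the factor $\pa_{zz}$ in the source $\Xi_1$ together with the $\mathcal{M}_3$ weight converts into exactly $\left\langle\eta/(kt)\right\rangle^{-1}(k^2+(\eta-kt)^2)\rmA_k(\eta)$, which is the control of $\Psi$ provided by Corollary~\ref{Cor: psi-new} and Proposition~\ref{prop:elliptic}; this is precisely the paper's inequality \eqref{eq: inequality-1}. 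For $\Upsilon_1\Pi_{l,2}$, your two-step iteration is exactly the paper's Lemma~\ref{lem: P_{l,2}-1} followed by equation \eqref{eq: Pressure-l2-2}, and the key support observation $\Upsilon_1\udl{\varphi_8}\equiv 0$ (so the term containing $(\pa_v-t\pa_z)\Psi$ never meets the commutator $[\tilde{\Delta}_t,\Upsilon_1]$) is indeed what the paper relies on; the only caveat is that on $\mathrm{supp}\,\pa_v\Upsilon_1$ one also needs $a=0$ and $\udl{\th}, \udl{\pa_yv}$ constant, which holds by compact support of $a$ and of $\th',u''$ and is used to get the simplified right-hand side of \eqref{eq: Pressure-l2-2}.

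One small implementation difference worth noting: for $\Pi_{l,1}$ the paper does not compute the commutator $[\tilde{\Delta}_t,\Upsilon_2]\Pi_{l,1}$ explicitly; instead it applies $\mathcal{T}_N^{-1}$ to the equation first and then multiplies by the cutoff (that is the content of $\mathring{\mathcal{T}}_{j,N}^{-1}=\Upsilon_j\mathcal{T}_N^{-1}$), letting the off-diagonal decay of the Green function do the localization. Your cutoff-then-invert route would force you to estimate that commutator by hand, which is fine but slightly longer. Also a minor slip: the linearization errors in $\mathcal{S}_2$ carry the factors $a$, $\th^\d$ and $h$ (and $\varphi_9^\d$), but not $\varphi_4^\d$ — $\varphi_4=1/\th$ is the coefficient in the stream-function operator, while the pressure operator is built from $\th$ itself.
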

\begin{proposition}\label{Prop: non-zero-pessure-N}
Under the bootstrap hypotheses,
\begin{align*}
\left\|\langle\pa_z\rangle t(t+\langle \na\rangle)(1-\Delta)^{-1}\big((\pa_v-t\pa_z)^2+\pa_z^2\big)\rmA P_{\neq} \Pi_{\star}\right\|_2\lesssim \ep^2
\end{align*}
and
\begin{align*}
&\left\|\langle\pa_z\rangle t(t+\langle \na\rangle)(1-\Delta)^{-1}\big((\pa_v-t\pa_z)^2+\pa_z^2\big)\left(\f{\langle\na\rangle^{\f{s}{2}}}{\langle t\rangle^s}\rmA+\sqrt{\f{\pa_t w}{w}}\tilde{\rmA}\right) P_{\neq} \Pi_{\star}\right\|_2\\
&\lesssim  \ep\mathcal{CK}_f^{\f12}+\ep^2 \Big(\f{1}{\langle t\rangle^s}+\mathcal{CK}_{\varphi^{\d}_4}^{\f12}+\mathcal{CK}_{\varphi^{\d}_6}^{\f12}+\mathcal{CK}_{\varphi^{\d}_7}^{\f12}+\mathcal{CK}_{\th^{\d}}^{\f12}+\mathcal{CK}_{h}^{\f12}+\mathcal{CK}_{\varphi_8^{\d}}^{\f12}+\mathcal{CK}_{\varphi_9^{\d}}^{\f12}+\mathcal{CK}_a^{\f12}\Big)\\
&\quad+\left\|\left\langle\f{\pa_v}{t\pa_z}\right\rangle^{-1}(\pa_z^2+(\pa_v-t\pa_z)^2)\left(\f{|\na|^{\f{s}{2}}}{\langle t\rangle^s}\rmA+\sqrt{\f{\pa_t w}{w}}\tilde{\rmA}\right)P_{\neq}(\Psi\Upsilon_2)\right\|_2\\
&\quad +\ep \left\|t(t^2+|\pa_v|^2)(2-\pa_{v}^2)^{-1}\Big(\sqrt{\f{\pa_tw}{w}}+\f{\langle \pa_v\rangle^{\f{s}{2}}}{\langle t\rangle^s}\Big)\rmA_0P_0\big(\pa_v(\Pi\Upsilon_1)\big)\right\|_2^2,
\end{align*}
where $\Pi_{\star}\in \{\Upsilon_2\Pi_{n,1}, \Upsilon_1\Pi_{n,2}\}$.
\end{proposition}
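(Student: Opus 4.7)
\textbf{Plan of proof for Proposition \ref{Prop: non-zero-pessure-N}.} The overall strategy mirrors that of Proposition \ref{Prop: non-zero-pessure-L}, but with the additional twist that the source terms are quadratic in the unknowns $(\Psi,\Om,a)$, so the bootstrap smallness should buy us an extra factor of $\ep$ (hence $\ep^2$ rather than $\ep$ in the first estimate, and $\ep \mathcal{CK}_f^{1/2}$ instead of an $O(\ep)$ term in the second). The first step is to rewrite the elliptic equations \eqref{eq: Pressure-n1}, \eqref{eq: Pressure-n-2} in the form
\[
\bigl(\pa_z^2 + (\pa_v - t\pa_z)^2\bigr)\Pi_{\star} = \frac{\mathrm{RHS}_{\star}}{\widetilde{\varphi_{11}}} + \mathcal{S}_a[\Pi_{\star}],
\]
where the leading part is the twisted Laplacian and $\mathcal{S}_a[\Pi_{\star}]$ collects all time/solution-dependent coefficient errors (terms involving $a$, $\varphi_4^{\d}$, $h$, $\bar h$, $\varphi_{11}^{\d}$, etc.), which are analogous to the operator $\mathcal{S}_a$ already controlled in Lemma \ref{lem: S_a}. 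Multiplying by $\Upsilon_2$ (resp.\ $\Upsilon_1$) and commuting with $\pa_z^2 + (\pa_v-t\pa_z)^2$ produces boundary correction commutators of the type $[\Delta_L,\Upsilon_j]$, which as remarked in the introduction enjoy compact support away from $\mathrm{supp}\,\udl{\chi_2''}$ and hence are harmless after one more application of the Green function. This reduces the problem to estimating $\bigl(\pa_z^2 + (\pa_v-t\pa_z)^2\bigr)^{-1}$ acting on the nonlinear source, which is exactly what the Green function Propositions \ref{eq: T_1}, \ref{eq: T_2} from the appendix are designed for.

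Next I would analyze the source term by type. The quadratic-in-$\Psi$ pieces such as $\udl{\chi_2}(\udl{\pa_yv}(\pa_v-t\pa_z)\pa_z(\Upsilon_2\Psi))^2$ and the $\udl{\chi_2'}$, $\udl{\chi_2''}$ terms will be handled by a paraproduct decomposition in $\rmA$ using Remark \ref{Rmk: A-switch}: in low-high interactions the low factor is controlled by a crude Sobolev norm of $\Psi$ and the high factor is absorbed into the precision elliptic stream-function quantity on the right-hand side; in high-low we put the weight on the high factor and use bootstrap smallness; in high-high the extra derivatives on both sides are summable. Crucially, the prefactor $\langle\pa_z\rangle t(t+\langle\na\rangle)(1-\Delta)^{-1}$ on the left converts two powers of $(\pa_v-t\pa_z)$ from $\Delta_L \Pi_{\star}$ into the decay rate $t^{-2}$ that matches the elliptic gain for $\Psi$, so no derivative loss appears in these pieces and the bound $\ep^2$ follows from $\|\Psi\|_{\mathcal{G}^{s,\la,\s-6}} \lesssim \langle t\rangle^{-2}\mathcal{E}_f^{1/2}$ and the precision elliptic control already invoked. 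The $\mathcal{CK}$-weighted estimate then comes from distributing $\sqrt{\pa_tw/w}$ or $\langle\na\rangle^{s/2}\langle t\rangle^{-s}$ onto the high factor exactly as in the proof of Proposition \ref{prop:elliptic} and Proposition \ref{Prop: non-zero-pessure-L}.

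The genuinely new and \emph{hardest} term is $-2\pa_{zz}(\Upsilon_2\Psi)\bigl(\Om - \udl{\chi_2}\pa_{zz}(\Upsilon_2\Psi)\bigr)$, which is a Reaction-type interaction between $\Om$ (i.e.\ essentially $f$) and $\Psi$. When $\Om$ sits in high frequency this is precisely the place where an Orr-type transient growth could occur. The plan is to put the multiplier on $\Om$: estimate the high-frequency factor by $\|\rmA f\|_2$ (using Remark \ref{Rmk: wave bdd-A}) and the low factor by the elliptic gain $\pa_{zz}\Psi \in \mathcal{G}^{s,\la,\s-4}$ with $\langle t\rangle^{-2}$ decay. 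For the $\mathcal{CK}$-controlled version this term will produce the $\ep\mathcal{CK}_f^{1/2}$ contribution listed in the conclusion, obtained by distributing $\sqrt{\pa_tw/w}\tilde{\rmA}$ onto $f$ and paying one $\ep$ from $\|\pa_{zz}\Psi\|$; this is the source of the asymmetry with Proposition \ref{Prop: non-zero-pessure-L}. The symmetric case where $\Psi$ is in high frequency reduces to the precision elliptic control term already isolated in the estimate. Finally, the error term $\mathcal{S}_a[\Pi_{\star}]$ is absorbed by repeating the fixed-point/para\-product argument of Lemma \ref{lem: S_a} but with $\Pi$ replaced by $\Pi_{\star}$: here the bootstrap assumption on $\mathcal{E}_a, \mathcal{CK}_a, \mathcal{E}_{\varphi_j^{\d}}, \mathcal{CK}_h$ gives the remaining $\ep^2 \sum \mathcal{CK}_{\bullet}^{1/2}$ terms on the right-hand side of the conclusion, and the zero-mode pressure estimate of Proposition \ref{prop:pressure-zero mode} enters when the $\udl{\chi_2''}$ piece contributes a low-high interaction with $P_0(\Pi\Upsilon_1)$.
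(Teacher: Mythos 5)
Your plan for $\Upsilon_2\Pi_{n,1}$ matches the paper's proof: apply $\mathring{\mathcal{T}}_{2,N}^{-1}$ (the Neumann Green function of Proposition \ref{eq: T_2}), absorb the coefficient errors via Lemmas \ref{lem: S_2} and \ref{lem: S_a}, and estimate the quadratic source with the multipliers $\mathcal{M}_3,\mathcal{M}_4,\mathcal{M}_5$ by a paraproduct/frequency decomposition, with the term $\pa_{zz}(\Upsilon_2\Psi)\,P_{\neq}\Om$ producing the $\ep\,\mathcal{CK}_f^{1/2}$ contribution and the terms involving $\pa_vP_0(\Upsilon_2\Psi)$ handled through Corollary \ref{Cor: psi-new}. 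One caveat on your reduction: you propose to peel off $\widetilde{\varphi_{11}}$ and treat the leading operator as the flat twisted Laplacian, but the time-independent variable coefficients $\widetilde{\th},\widetilde{u'}$ are $O(1)$ perturbations, not small ones; the paper instead inverts the full Sturm--Liouville operator $\mathcal{T}_2$ and only treats the \emph{time-dependent} coefficient deviations ($\mathcal{S}_2,\mathcal{S}_a$) perturbatively. This is why Appendix C is needed, and your argument must go through it rather than through a Neumann inverse of $\Delta_L$.

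The genuine gap is in your treatment of $\Pi_{n,2}$. You assert that the prefactor $\langle\pa_z\rangle t(t+\langle\na\rangle)(1-\Delta)^{-1}$ converts two powers of $(\pa_v-t\pa_z)$ into $t^{-2}$ decay ``so no derivative loss appears,'' and you dismiss the cutoff commutators as harmless. That is false for the source of \eqref{eq: Pressure-n-2}: the nonzero-mode part of the right-hand side is $\udl{\chi_2''}(\udl{\pa_yv})^2\,(\pa_v-t\pa_z)P_{\neq}(\Upsilon_2\Psi)\cdot\pa_vP_0(\Upsilon_2\Psi)$, which carries only \emph{one} factor of $(\pa_v-t\pa_z)$ on the nonzero mode and a non-decaying zero-mode factor $\pa_vP_0(\Upsilon_2\Psi)$, so the Green function supplies only a $(-\Delta_L)^{-1/2}$ gain relative to the precision elliptic control of $\Psi$. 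Consequently the full weight $\mathcal{M}_3\,\Delta_L$ on $\Upsilon_2\Pi_{n,2}$ cannot be closed directly; this is exactly why the conclusion is stated for $\Upsilon_1\Pi_{n,2}$ and not $\Upsilon_2\Pi_{n,2}$. The paper's argument is a genuine two-step iteration: first close the estimate for $\Upsilon_2\Pi_{n,2}$ only in the weaker norm $\mathcal{M}_3(-\Delta_L)^{1/2}$ (using the half-gain versions, Lemmas \ref{lem: S_2-1} and \ref{lem: S_a-1}), and then solve the second elliptic problem \eqref{eq: Pressure-n2-2} for $\Upsilon_1\Pi_{n,2}$, whose source involves only $\pa_v\Upsilon_1,\pa_v^2\Upsilon_1$ times at most one derivative of $\Upsilon_2\Pi_{n,2}$ (already controlled in step one), thereby upgrading to the full $\mathcal{M}_3\,\Delta_L$ bound. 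Your proposal as written would attempt the full-weight estimate on $\Upsilon_2\Pi_{n,2}$ in one pass and would fail at the term $(\pa_v-t\pa_z)P_{\neq}(\Upsilon_2\Psi)\,\pa_vP_0(\Upsilon_2\Psi)$; you need to make the weak-norm first pass and the localization-upgrade second pass explicit.
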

There are four key ideas in estimates: 1, the pressure decomposition in section \ref{sec: step1} ensures the Fourier analysis works in the problem; 2, the new Fourier analysis of the Green function associated with the linearized equation is given in Appendix \ref{sec: ST equ}; 3, the well-designed weights $\rmB$, $\mathcal{M}_3$, $\mathcal{M}_4$, and $\mathcal{M}_5$ from \cite{ChenWeiZhangZhang2023} help control the elliptic error terms easily; 4, the iteration method helps us gain back the derivative loss in the equation of $\Pi_{l,2}$ and $\Pi_{n,2}$, see section \ref{sec: ell-pressure} for more details. These two propositions are proved in section \ref{sec: ell-pressure}.

As a direct corollary, we have the point-wise decay estimate for the pressure. 
\begin{corol}\label{cor: decay-pressure}
Under the bootstrap hypotheses,
\beno
\|\pa_vP_0(\Upsilon_2\Pi)\|_{\mathcal{G}^{s,\s-6}}\lesssim \f{\ep}{\langle t\rangle^3},
\eeno
and
\beno
\|P_{\neq}(\Pi_{\star})\|_{\mathcal{G}^{s,\s-6}}\lesssim \f{\ep}{\langle t\rangle^4},
\eeno
where $\Pi_{\star}\in \{\Upsilon_2\Pi_{l,1}, \Upsilon_2\Pi_{n,1}, \Upsilon_1\Pi_{l,2}, \Upsilon_1\Pi_{n,2}\}$. 
\end{corol}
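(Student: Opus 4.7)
The plan is to derive both bounds by a Fourier-side pointwise comparison of multiplier symbols. The three quoted propositions give weighted $L^{2}$ bounds of the form $\|\mathcal{M}(t,\nabla)\mathrm{A}\,\pi\|_{2}\lesssim \varepsilon$, where $\pi$ stands for the relevant piece of the pressure. Since $\mathrm{A}_{k}(t,\eta)\ge e^{\lambda(t)\langle k,\eta\rangle^{s}}\langle k,\eta\rangle^{\sigma}$, after squaring and integrating it suffices to verify a pointwise inequality of the form $\langle t\rangle^{\alpha}\langle k,\eta\rangle^{-6}\lesssim \mathcal{M}(t,k,\eta)$ with $\alpha=4$ for the nonzero modes and $\alpha=3$ for the zero mode, and the corollary then follows by Plancherel.

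For the nonzero-mode bound, Propositions \ref{Prop: non-zero-pessure-L}--\ref{Prop: non-zero-pessure-N} deliver (discarding the $\varepsilon^{2}$ error and $\mathcal{CK}$ terms on the right, which are time-integrable by (B3)) the estimate $\|\mathcal{M}(t,\nabla)\mathrm{A}P_{\neq}\Pi_{\star}\|_{2}\lesssim \varepsilon$, where
\[
\mathcal{M}(t,k,\eta)=\langle k\rangle\, t(t+\langle k,\eta\rangle)\,\frac{k^{2}+(\eta-kt)^{2}}{1+k^{2}+\eta^{2}}.
\]
I would verify $\langle t\rangle^{4}\langle k,\eta\rangle^{-6}\lesssim \mathcal{M}(t,k,\eta)$ for $|k|\ge 1$ by the standard resonant/non-resonant split. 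In the non-resonant regime $|\eta|\le |kt|/2$ or $|\eta|\ge 2|kt|$, one has $(\eta-kt)^{2}\gtrsim k^{2}t^{2}+\eta^{2}$, so the ratio is $\gtrsim \min(t^{2},k^{2}t^{2}/\eta^{2})$; a short subcase analysis ($|\eta|\lesssim |k|$, $|k|\le|\eta|\le t$, $|\eta|\ge t$) combined with $t(t+\langle k,\eta\rangle)\ge t^{2}$ and the factor $\langle k,\eta\rangle^{6}$ produces the required $\langle t\rangle^{4}$ decay in each case. In the resonant window $|kt|/2\le|\eta|\le 2|kt|$, one has $\langle k,\eta\rangle\gtrsim t$ directly, so $\langle k,\eta\rangle^{6}\gtrsim t^{6}$ already beats $\langle t\rangle^{4}$, and a trivial lower bound on $\mathcal{M}$ completes the case.

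For the zero mode, Proposition \ref{prop:pressure-zero mode} gives $\|\mathcal{M}_{0}(t,\partial_{v})\mathrm{A}_{0}P_{0}(\partial_{v}(\Pi\Upsilon_{1}))\|_{2}\lesssim \varepsilon$ with $\mathcal{M}_{0}(t,\eta)=t(t^{2}+\eta^{2})/(2+\eta^{2})$. Since $\partial_{v}$ appears on both sides, after cancelling the common $|\eta|$ factor it is enough to check $\langle t\rangle^{3}\lesssim \langle\eta\rangle^{6}\mathcal{M}_{0}(t,\eta)$, which I would verify in three trivial ranges: $|\eta|\le 1$ gives $\mathcal{M}_{0}\approx t^{3}$; $1\le|\eta|\le t$ gives $\mathcal{M}_{0}\approx t^{3}/\eta^{2}$, hence $\langle\eta\rangle^{6}\mathcal{M}_{0}\approx \eta^{4}t^{3}$; and $|\eta|\ge t$ gives $\mathcal{M}_{0}\approx t$, hence $\langle\eta\rangle^{6}\mathcal{M}_{0}\ge t^{7}$. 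To pass from the proposition's $\Pi\Upsilon_{1}$ to the corollary's $\Upsilon_{2}\Pi$, I would write $\Upsilon_{2}\Pi=\Pi\Upsilon_{1}+\Upsilon_{2}(1-\Upsilon_{1})\Pi$, note that $\Upsilon_{2}\equiv 1$ on $\operatorname{supp}\Upsilon_{1}$ so the second piece is supported away from the regions where $a$ and $\Omega$ live by (B2), and estimate that near-boundary piece directly from the elliptic equation \eqref{eq: Pressure-l1}--\eqref{eq: Pressure-n-2}, whose right-hand side there reduces to a purely quadratic-in-$\Psi$ expression, yielding even faster decay.

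No new technical difficulty appears; the only mildly delicate step is the cutoff-swap in the last paragraph, but it is not an analytic obstacle, merely a book-keeping matter. The main content is the symbol comparison, which is completely elementary once the Fourier multipliers are written out explicitly.
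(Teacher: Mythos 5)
Your argument is correct and is exactly the (unwritten) argument the paper intends: the corollary is stated as a "direct corollary" of Propositions \ref{prop:pressure-zero mode}, \ref{Prop: non-zero-pessure-L}, \ref{Prop: non-zero-pessure-N}, and the content is precisely the pointwise symbol comparisons $\langle t\rangle^{4}\lesssim \langle k,\eta\rangle^{6}\,\tfrac{\langle k\rangle t(t+\langle k\rangle+|\eta|)}{1+k^{2}+\eta^{2}}\,(k^{2}+(\eta-kt)^{2})$ for $k\neq 0$ and $\langle t\rangle^{3}\lesssim\langle\eta\rangle^{6}\,\tfrac{t(t^{2}+\eta^{2})}{2+\eta^{2}}$, both of which you verify correctly; note that the first estimates in those propositions carry no $\mathcal{CK}$ error terms, so your parenthetical about discarding them is unnecessary. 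The only caveat is your handling of the $\Upsilon_{1}$-versus-$\Upsilon_{2}$ mismatch in the zero mode: the paper deliberately does \emph{not} solve the zero mode of the pressure from the elliptic system \eqref{eq: Pressure-l1}--\eqref{eq: Pressure-n-2} (the Neumann compatibility condition obstructs this), but from the first-order relation \eqref{eq: pressure-zero-mode}, so the residual piece $\Upsilon_{2}(1-\Upsilon_{1})P_{0}(\Pi)$ should be controlled from that equation (or simply noted to be irrelevant wherever the corollary is applied, since it is always paired with $a$, $\th'$, or $\chi_{2}$-type factors supported where $\Upsilon_{1}\equiv 1$), rather than from the elliptic equation as you propose.
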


The terms $I_{f,4}$ and $I_{f,8}$ are new, which are the interactions between the density $a$ and the pressure $\Pi$. If the pressure is in higher frequencies, these terms behave as the Reaction term in \cite{BM2015}. If the density $a$ is in higher frequencies, there is a derivative loss. So we need the multiplier $\rmB$ to gain half derivative. We have the following proposition for the estimates. 
\begin{proposition}\label{prop: I_f4,I_f8}
Under the bootstrap hypotheses,
\begin{align*}
|I_{f,4}|+|I_{f,8}|\lesssim &\f{\ep^3}{\langle t\rangle^3}
+\ep \mathcal{CK}_f+\ep\mathcal{CK}_a\\
&+\ep\left\|t(t^2+|\pa_v|^2)(2-\pa_{v}^2)^{-1}\Big(\sqrt{\f{\pa_tw}{w}}+\f{\langle \pa_v\rangle^{\f{s}{2}}}{\langle t\rangle^s}\Big)\rmA_0P_0\big(\pa_v(\Pi\Upsilon_1)\big)\right\|_2^2\\
&+\ep\left\|\langle\pa_z\rangle t(t+\langle \na\rangle)(1-\Delta)^{-1}\big((\pa_v-t\pa_z)^2+\pa_z^2\big)\left(\f{\langle\na\rangle^{\f{s}{2}}}{\langle t\rangle^s}\rmA+\sqrt{\f{\pa_t w}{w}}\tilde{\rmA}\right) P_{\neq} \Pi_{\star}\right\|_2^2. 
\end{align*}
where $\Pi_{\star}=\Upsilon_2\Pi_{l,1}+\Upsilon_2\Pi_{n,1}+ \Upsilon_1\Pi_{l,2}+ \Upsilon_1\Pi_{n,2}$. 
\end{proposition}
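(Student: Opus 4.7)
The plan is to expand $\mathcal{N}_a[\Pi]=\udl{\varphi_4}\udl{\pa_yv}(\pa_v\Pi\,\pa_z a-\pa_z\Pi\,\pa_v a)+\udl{\varphi_5}\,a\,\pa_z\Pi$ and run a paraproduct decomposition, splitting each bilinear product into a High--Low (pressure in high frequency), Low--High (density in high frequency), and High--High piece. Because $I_{f,8}$ differs from $I_{f,4}$ only by the insertion of $\bfD_{u,k}$, and Proposition \ref{prop: kernel-wave-op} shows that the Fourier kernel of $\bfD_{u,k}$ decays like $e^{-\la_{\mathcal D}|\xi_1-\xi_2|^{s_0}}$, the wave operator is essentially a bounded smoothing convolution that does not move frequencies appreciably. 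Thus after standard paraproduct manipulations (and applying Remark \ref{Rmk: A-switch} to trade $\rmA$ factors between the two outputs of the wave operator), the estimation of $I_{f,8}$ reduces to the same bilinear analysis as $I_{f,4}$ with the same targets on the right-hand side; the wave-operator commutator contribution is absorbed by the same mechanism as in Proposition \ref{prop:com}.

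Once the reduction to plain paraproduct form is made, split the coefficients $\udl{\varphi_4},\udl{\pa_yv},\udl{\varphi_5}$ as $\widetilde{\varphi}+\varphi^\d$ (with $\udl{\pa_yv}=\widetilde{u'}+h$). In the \textbf{High--Low} case, the pressure sits in high frequency while $a$, $\udl{\pa_yv}$ and the $\varphi$'s are low frequency. Here the nonzero-mode pressure product gives (schematically) $\|\rmA P_{\neq}\Pi\|$-type norms which, via Propositions \ref{Prop: non-zero-pessure-L}--\ref{Prop: non-zero-pessure-N}, are exactly controlled by $\ep \mathcal{CK}_f$ plus the elliptic/pressure quantities in the target; the zero-mode piece $P_0(\pa_v\Pi\,\pa_z a)=0$ trivially, while terms such as $\pa_z\Pi\,\pa_v P_0(a)$ and $a\,\pa_z\Pi$ contract with $\pa_v P_0(\Pi\Upsilon_1)$ and are bounded by the zero-mode pressure norm in the target (Proposition \ref{prop:pressure-zero mode}). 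The smallness of the low-frequency factor (via the bootstrap $\mathcal{E}_a\lesssim\ep^2$) produces the prefactor $\ep$.

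In the \textbf{Low--High} case, the pressure is low frequency and the density (or $\pa_v a$) is high frequency. Here there is the well-known derivative-loss issue: the frequency of $a$ dominates, so $\rmA$ falls on $a$ rather than on $\Pi$. This is precisely what the multiplier $\rmB$ introduced in $\rmA^*=\rmA\rmB$ is designed to absorb: using $\rmB_k(t,\eta)\gtrsim\langle t\rangle^{-1}\langle k,\eta\rangle^{1/2}$ for $\langle k,\eta\rangle\gtrsim \langle t\rangle^2$ (and the trivial $\rmB\ge 1$ otherwise), we convert the extra half derivative on $a$ into an $\rmA^*\,a$ factor at the cost of an extra $\langle t\rangle^{-1}$ on the pressure. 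The remaining $\langle\na\rangle^{1/2}\rmA\Pi$-type quantity is bounded either by $\ep\langle t\rangle^{-3}$ (pure decay, yielding the $\ep^3/\langle t\rangle^3$ term) or, after using the Sobolev-corrected Gevrey product Lemmas and the fact that one may trade a full derivative plus a $\langle t\rangle$ against the pressure quantities appearing in the target via Propositions \ref{prop:pressure-zero mode}, \ref{Prop: non-zero-pessure-L}, and \ref{Prop: non-zero-pessure-N}. Using $\rmA^*$ on $a$ produces factors of $\sqrt{\mathcal{E}_a}$ and $\sqrt{\mathcal{CK}_a}$, and the bootstrap gives the $\ep\mathcal{CK}_a$ contribution. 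The \textbf{High--High} interactions are tame because both factors have fast-decaying weights, and are handled by brute force using the bootstrap $\mathcal{E}_f, \mathcal{E}_a\le C(k_M)\ep^2$ together with the Gevrey product inequality, producing an $\ep^3/\langle t\rangle^3$ remainder.

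The main obstacle will be the Low--High case for the term $\pa_v\Pi\,\pa_z a$ with $\Pi=\Upsilon_1\Pi_{l,2}$, since $\Pi_{l,2}$ comes with the weakest elliptic control (there is the genuine derivative loss flagged after \eqref{eq: P_l,2}, recovered only by the commutator/iteration scheme). There one must carefully combine the $\rmB$-gain on $a$ with the pressure estimate from Proposition \ref{Prop: non-zero-pessure-L} applied with $\Pi_\star=\Upsilon_1\Pi_{l,2}$, splitting the resonant and non-resonant frequency regions of $a$ via the standard $\rmA\mapsto \rmA^R$ trick; the resonant region gains a factor $\sqrt{\pa_t w/w}$ so the bound lands inside $\mathcal{CK}_a$, while the non-resonant region yields the $\langle\na\rangle^{s/2}/\langle t\rangle^s$ factor appearing inside the target norm on $\Pi_\star$. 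Once this case is closed, all other terms follow the same pattern and the claimed bound with the correct prefactor $\ep$ (from the bootstrap smallness of whichever factor is the low-frequency one) is obtained.
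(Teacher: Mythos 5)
Your proposal follows essentially the same route as the paper: Fourier/paraproduct splitting of $\mathcal{N}_a[\Pi]$ into high--low, low--high and high--high pieces, absorbing the wave operator through the Gevrey decay of its kernel $\mathcal{D}$, sending the pressure-high interactions to the $\mathcal{M}_{01}$ and $\mathcal{M}_4+\mathcal{M}_5$ pressure norms in the target, and using the $\rmB$ factor in $\rmA^*$ to recover the half derivative when $a$ is in high frequency, so the argument is sound. One small bookkeeping slip: the zero-mode pressure contribution that must be contracted against $\pa_vP_0(\Pi\Upsilon_1)$ is $\pa_vP_0(\Pi)\,\pa_za$ (it is $\pa_zP_0(\Pi)$, not $P_0(\pa_v\Pi\,\pa_za)$, that vanishes), and this is exactly the term the paper isolates as $I_{f,8,0}^{\rmH\rmL}$ and $I_{f,8,0}^{\rmL\rmH}$.
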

The proposition is proved in section \ref{sec: I_f4,8}. 

\subsection{Energy estimate of $\mathcal{E}_a$}
A direct calculation gives
\begin{align*}
\f12\f{d}{dt}\mathcal{E}_a+\mathcal{CK}_a
&=-\int \rmA^* a\rmA^*\left(\Upsilon_2\rmU\cdot\na_{z,v}a\right)dz dv
-\int \rmA^*a \rmA^*\big(\udl{\th'}\pa_z(\Upsilon_2\Psi)\big) dz dv\\
&\eqdef I_a^{tr}+I_a^{l}.
\end{align*}

We have the following proposition for the estimate of $I_a^{tr}$. 
\begin{proposition}\label{prop: a-transport}
Under the bootstrap hypotheses,
\ben
\begin{aligned}
|I_a^{tr}|&\lesssim \ep\mathcal{CK}_a+\ep^2\mathcal{CK}_{h}+\ep^2\mathcal{CK}_{\varphi_9^{\d}}+\f{\ep^3}{\langle t\rangle^{2-\mathrm{K}_{\rm{D}}\ep/2}}\\
&\quad +\ep \left\|\left\langle\f{\pa_v}{t\pa_z}\right\rangle^{-1}(\pa_z^2+(\pa_v-t\pa_z)^2)\left(\f{|\na|^{\f{s}{2}}}{\langle t\rangle^s}\rmA+\sqrt{\f{\pa_t w}{w}}\tilde{\rmA}\right)P_{\neq}(\Psi\Upsilon_2)\right\|_2. 
\end{aligned}
\een
\end{proposition}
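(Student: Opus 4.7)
The plan is to mirror the strategy used for the vorticity transport estimate in Proposition \ref{prop: f-transport}, but now adapted to the modified multiplier $\rmA^* = \rmA \cdot \rmB$ with $\rmB_k(t,\eta) = (1+(k^2+|\eta|)/\langle t\rangle^2)^{1/2}$. First I would split
\[
\rmU\cdot\nabla_{z,v}a = \udl{\pa_tv}\,\pa_va + \udl{\pa_yv}\,\nabla^{\bot}_{v,z}P_{\neq}(\Upsilon_2\Psi)\cdot\nabla_{z,v}a
\]
and decompose each piece via the Littlewood-Paley paraproduct into \textbf{Transport}, \textbf{Reaction} and \textbf{Remainder} pieces, exactly as in the Bedrossian-Masmoudi framework used elsewhere in the paper. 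For the first piece I would write $\udl{\pa_tv}=\widetilde{\pa_tv}+(\pa_tv)^{\d}$; since the linearised $(v,y)$-change is trivial here, the ``linear part'' essentially vanishes and what remains is tied to $\bar h$ and $\varphi_9^{\d}$ through equations \eqref{eq: v_t-2} and \eqref{eq: h-2}, producing the $\mathcal{CK}_{h}$ and $\mathcal{CK}_{\varphi_9^{\d}}$ contributions.

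For the main piece $\udl{\pa_yv}\,\nabla^{\bot}_{v,z}P_{\neq}(\Upsilon_2\Psi)\cdot\nabla_{z,v}a$, I would use the standard paraproduct decomposition $a = a_{\mathrm{Lo}}+a_{\mathrm{Hi}}$, $\Psi = \Psi_{\mathrm{Lo}}+\Psi_{\mathrm{Hi}}$. In the Transport term ($a$ high, $\Psi$ low), the commutator $[\rmA^*,\Psi_{\mathrm{Lo}}\cdot\nabla]$ can be estimated by Remark \ref{Rmk: A-switch} with the extra $\rmB$ factor absorbed into the loss of derivative, since $\rmB$ depends only on $|k,\eta|/\langle t\rangle$ and is a low-regularity multiplier. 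In the Reaction term ($\Psi$ high, $a$ low), the stream function $\Psi$ must be estimated using the elliptic structure $\Om = \widetilde{\Delta}_t\Psi$; writing $\pa_{z,v}\Psi$ in Fourier and using that $a_{\mathrm{Lo}}\in L^\infty$ in Gevrey, one recovers the elliptic error
\[
\Big\|\big\langle\tfrac{\pa_v}{t\pa_z}\big\rangle^{-1}(\pa_z^2+(\pa_v-t\pa_z)^2)\Big(\tfrac{|\na|^{s/2}}{\langle t\rangle^s}\rmA+\sqrt{\tfrac{\pa_tw}{w}}\tilde{\rmA}\Big)P_{\neq}(\Psi\Upsilon_2)\Big\|_2,
\]
which is exactly the right-hand side allowed by the proposition. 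The Remainder (balanced-frequency) term is treated by the usual resonance splitting via the intervals $\mathrm{I}_{k,\eta}$ and $\mathbf{I}_{k,\eta}$, with both factors dominated by their $\rmA^*$ norm, which closes at the level $\ep\mathcal{CK}_a$.

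The main obstacle will be handling the extra $\rmB$ factor when $a$ is at high frequency in the Reaction-type configuration. Since $\rmB_k(t,\eta)\approx 1+\sqrt{k^2+|\eta|}/\langle t\rangle$, this effectively costs one half-derivative on $a$, which is not paid for by $\sqrt{\pa_t w/w}$ alone. The remedy I have in mind is to exploit the fact that whenever $\rmB$ is large (i.e.\ $k^2+|\eta|\gg\langle t\rangle^2$), the prefactor $1/\langle t\rangle$ carried by the stream function via Corollary \ref{cor: decay-pressure}-type decay and the elliptic bound $\|\Psi\|_{\mathcal{G}^{s,\sigma-6}}\lesssim \langle t\rangle^{-2}\|\Om\|$ beats this loss, so that the $\rmB$ growth is compensated by time decay of $\Psi$ in the high-$\Psi$ regime. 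In the low-$\Psi$ regime the loss can be absorbed into $\ep\mathcal{CK}_a$ via the observation after Remark \ref{Rmk:CKAOm} that $\dot\rmA^*/\rmA^*\approx \dot\rmA/\rmA$. Once this accounting is carried out, all remaining terms either close into $\ep\mathcal{CK}_a+\ep^2\mathcal{CK}_h+\ep^2\mathcal{CK}_{\varphi_9^\d}$ or fall into the $\ep^3\langle t\rangle^{-2+\mathrm{K}_{\mathrm{D}}\ep/2}$ residue bound exactly as in the proof of Proposition \ref{prop: f-transport}.
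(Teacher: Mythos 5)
Your proposal follows essentially the same route as the paper: the paper writes $I_a^{tr}$ as a divergence term plus the commutator $\rmA^*(\Upsilon\rmU\cdot\na a)-\Upsilon\rmU\cdot\na\rmA^*a$, runs the Bedrossian--Masmoudi paraproduct decomposition into transport/reaction/remainder, splits the commutator factor into the $\rmA$-ratio and the $\rmB$-ratio, and absorbs the half-derivative loss from $\rmB_k(\eta)/\rmB_{k-l}(\eta-\xi)$ into the factor $(l^2+(\zeta-lt)^2)\langle\zeta/(lt)\rangle^{-1}$ carried by the elliptic norm of $\Psi$, exactly as you describe. One bookkeeping correction: the $\ep^2\mathcal{CK}_h+\ep^2\mathcal{CK}_{\varphi_9^{\d}}$ terms arise from the coefficient $\udl{\pa_yv}=\widetilde{u'}+h$ sitting at high frequency in the reaction term (the paper's $\rmR^a_{\rmN,3}$), not from the zero-mode piece $\udl{\pa_tv}\,\pa_va$, which is simply controlled by $\mathcal{E}_{\udl{\pa_tv}}^{1/2}\mathcal{CK}_a\lesssim\ep\mathcal{CK}_a$.
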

The proposition is proved in section \ref{sec: transport}. 

The linear term force $I_a^l$ is new because of the non-constant background density $\th$. However, it is $U^y\pa_y\th$ which can be regarded as the Reaction term $\rmR^a_{\rmN}$ but without smallness. 
\begin{proposition}\label{prop: I_a^l}
Under the bootstrap hypotheses,
\begin{align*}
|I_a^l|&\leq \f{C\ep^2}{\langle t\rangle^2}+C\ep\mathcal{CK}_{\varphi^{\d}_{10}}+ \varepsilon_0\mathcal{CK}_{a}\\
&\quad +C_{\varepsilon_0}\left\|\left\langle\f{\pa_v}{t\pa_z}\right\rangle^{-1}(\pa_z^2+(\pa_v-t\pa_z)^2)\left(\f{|\na|^{\f{s}{2}}}{\langle t\rangle^s}\rmA+\sqrt{\f{\pa_t w}{w}}\tilde{\rmA}\right)P_{\neq}(\Psi\Upsilon_2)\right\|_2^2. 
\end{align*}
\end{proposition}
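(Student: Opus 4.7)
My plan is to perform a paraproduct decomposition on $\udl{\th'}\pa_z(\Upsilon_2\Psi)$ based on relative frequency size, and then split the reaction-type pieces via $\udl{\th'}=\widetilde{\th'}+\varphi^{\d}_{10}$ to separate a time-independent smooth background from the transported correction. The structure mirrors the Reaction/Transport estimates of \cite{BM2015,MasmoudiZhao2020}, but applied to a coefficient carrying no natural $\ep$-smallness; the missing factor is recovered from $\varepsilon_0\mathcal{CK}_a$ on the density side, paired with the precision elliptic control of $\Psi$ on the stream function side.

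Concretely I write
\begin{align*}
\udl{\th'}\pa_z(\Upsilon_2\Psi)=T^{<}_{\udl{\th'}}\pa_z(\Upsilon_2\Psi)+T^{<}_{\pa_z(\Upsilon_2\Psi)}\udl{\th'}+R(\udl{\th'},\pa_z(\Upsilon_2\Psi)).
\end{align*}
For the transport (low-high, $\udl{\th'}$ at the low frequency $\eta-\xi$, $\Psi$ at the high frequency $(k,\xi)$), Remark \ref{Rmk: A-switch} gives $\rmA^*_k(\eta)\lesssim \rmA^*_k(\xi)e^{c\la\langle \eta-\xi\rangle^s}+\rmA^{\rmR}(\eta-\xi)\cdot(\text{resonant correction})$, so one copy of $\rmA^*$ is placed on $\pa_z\Psi$ while the smooth symbol $\widehat{\udl{\th'}}$ absorbs the low-frequency difference. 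A Cauchy--Schwarz using the elementary splitting $1=\varepsilon_0^{1/2}\cdot\varepsilon_0^{-1/2}$ then produces $\varepsilon_0\mathcal{CK}_a+C_{\varepsilon_0}\|\cdots P_{\neq}(\Upsilon_2\Psi)\|^2_2$, where the second factor matches the elliptic-CK expression after using the $\langle\pa_v/(t\pa_z)\rangle^{-1}$ prefactor to exploit nonresonant time-decay; the extra $\rmB$-weight inside $\rmA^*$ is harmless in this regime because $\rmB_k(t,\xi)$ is comparable to $1$ in the range of frequencies relevant to $\Delta_L$.

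For the reaction piece (high-low) I split $\udl{\th'}=\widetilde{\th'}+\varphi^{\d}_{10}$. Since $\widetilde{\th'}$ is time-independent with the Gevrey regularity granted by hypothesis~(5) of Theorem~\ref{Thm: main}, the quantity $\|\rmA^{\rmR}\widetilde{\th'}\|_{L^2}$ is a fixed constant; pairing it with $\rmA^*a$ (bounded by $\ep$) and the low-frequency $\pa_z\Psi$ (whose Gevrey norm decays like $\ep/\langle t\rangle^2$ by Proposition~\ref{prop:elliptic} together with the corresponding estimate for $P_0\Psi$) yields the contribution $C\ep^2/\langle t\rangle^2$. For the correction $\varphi^{\d}_{10}$ I factor $\rmA^{\rmR}=\sqrt{\dot\rmA^{\rmR}\rmA^{\rmR}}/\sqrt{\dot\rmA^{\rmR}/\rmA^{\rmR}}$ so that one copy supplies the CK quantity $\mathcal{CK}_{\varphi^{\d}_{10}}^{1/2}$, and combine with the bootstrap bound $\|\rmA^*a\|_2\lesssim\ep$ and the $\langle t\rangle^{-2}$ decay of low-frequency $\pa_z\Psi$ to obtain $C\ep\mathcal{CK}_{\varphi^{\d}_{10}}$. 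The remainder/high-high piece causes no trouble: both factors sit at comparable frequency and the available decay $\ep^2/\langle t\rangle^{2}$ (again from Proposition~\ref{prop:elliptic} and the bootstrap bound on $a$) covers it.

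The main obstacle is the transport piece near the resonant intervals $\mathrm{I}_{k,\xi}$, where the transfer $\rmA^*_k(\eta)\to\rmA^*_k(\xi)$ comes with a correction controlled only by $w_{\mathrm{R}}/w_{\mathrm{NR}}\lesssim 1+|t-\xi/k|$; this must be converted into $\sqrt{\pa_tw/w}\,\tilde\rmA$ on \emph{both} sides simultaneously. This is precisely why the elliptic-CK norm on the right-hand side of Proposition~\ref{prop: I_a^l} contains $\sqrt{\pa_tw/w}\,\tilde\rmA$ in addition to $|\na|^{s/2}\rmA/\langle t\rangle^s$, and why the parameter $\varepsilon_0$ is kept free: the resonant growth on the density side is absorbed into $\varepsilon_0\mathcal{CK}_a$, while the symmetric resonant growth on the stream-function side feeds into the $C_{\varepsilon_0}$-weighted elliptic term. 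Compatibility of the $\rmB$-weight with the unweighted $\rmA$ appearing in the elliptic norm follows from $\rmB_k(t,\xi)\approx 1$ in the Orr region $|\xi|\lesssim t|k|\lesssim t^2$ that governs the resonant contribution; outside this region, the extra polynomial gain from $\rmB$ together with Sobolev decay of $\Psi$ keeps the remainder within the $C\ep^2/\langle t\rangle^2$ bound.
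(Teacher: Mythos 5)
The overall architecture of your argument — paraproduct decomposition into low--high, high--low and high--high interactions, conversion of the low--high piece into $\varepsilon_0\mathcal{CK}_a+C_{\varepsilon_0}\|\cdots\|_2^2$, and the split $\udl{\th'}=\widetilde{\th'}+\varphi^{\d}_{10}$ in the high--low piece to generate $\ep^2/\langle t\rangle^2 + \ep\mathcal{CK}_{\varphi^{\d}_{10}}$ — matches what the paper does. However, there is a real gap in your treatment of the low--high piece, which is exactly where the new difficulty sits.

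You claim the extra $\rmB$-weight is harmless because ``$\rmB_k(t,\xi)$ is comparable to $1$ in the range of frequencies relevant to $\Delta_L$,'' and that outside the Orr region ``the extra polynomial gain from $\rmB$ together with Sobolev decay of $\Psi$ keeps the remainder within the $C\ep^2/\langle t\rangle^2$ bound.'' Both statements are wrong in the regime that matters. Since $\rmB_k(t,\xi)=\big(1+(k^2+|\xi|)/\langle t\rangle^2\big)^{1/2}\geq 1$, the $\rmB$-weight is never a gain — it is always a potential loss, and for $t\leq 2\sqrt{|\xi|}$ it is of size $\sim\sqrt{|\xi|}/\langle t\rangle$, which is unbounded. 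The paper controls $C_{11}^{\rmA}=|k|\,\rmA^*_k(\eta)/\rmA_k(\xi)$ by a case analysis in $t$ (four regimes depending on the size of $t$ relative to $\sqrt{|\xi|}$ and $|\xi|$). In the early-time Case 2 of the paper ($t\leq 2\sqrt{|\xi|}$, $|k|\geq\sqrt{|\xi|}/4$, hence $|k|\gtrsim t$), the bound obtained is
\[
C_{11}^{\rmA}\lesssim e^{c\la|\eta-\xi|^s}\left\langle\f{\xi}{kt}\right\rangle^{-1}\big(k^2+(\xi-kt)^2\big)\,\f{|\xi|^{s/2}|k,\eta|^{s/2}}{\langle t\rangle^2},
\]
and the leftover factor $|k,\eta|^{s/2}/\langle t\rangle^{2-s}$ on the $\eta$-side \emph{must} be paired with $\mathcal{CK}_a^{1/2}$ rather than with the static bootstrap bound $\|\rmA^*a\|_2\lesssim\ep$. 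Replacing $\mathcal{CK}_a^{1/2}$ by $\ep$ in this regime leaves an uncontrolled factor of $|k,\eta|^{s/2}$, and the elementary bound $|k|^2/\langle t\rangle\gtrsim t$ that occurs here even grows, so no ``Sobolev decay of $\Psi$'' can put this contribution into $\ep^2/\langle t\rangle^2$. In short, the step where you absorb the $\rmB$-weight is the crux of the estimate, and your proposal does not supply it; the correct mechanism is the conversion of $\rmB_k(\xi)|k|$ into the precision elliptic multiplier $\left\langle\pa_v/(t\pa_z)\right\rangle^{-1}\Delta_L\big(|\na|^{s/2}/\langle t\rangle^s+\sqrt{\pa_t w/w}\big)$ combined with a leftover factor fed to $\mathcal{CK}_a^{1/2}$, case by case in time.

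A secondary, less serious, omission: for the high--low piece the paper distinguishes the sub-cases $16|k|\leq|\xi|$ (where the $\rmA^{\rmR}$-weight lands on $\udl{\th'}$ as you propose) and $16|k|>|\xi|$ (where the multiplier ratio $\rmA^*_k(\eta)/\rmA_k(\xi)$ is analysed directly and the resulting weight is routed to the elliptic norm, not to $\udl{\th'}$). Your high--low argument only covers the first sub-case, but this is a comparatively routine fix once the low--high difficulty above is addressed.
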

The proposition is proved in section \ref{sec: I_a^l}. 

\subsection{Energy estimates of the coordinate system and coefficients}
The coordinate system is easy to control, which is similar to {\it Proposition 2.5} in \cite{BM2015}. 
\begin{proposition}[Coordinate system controls]\label{prop: coordinate} Under the bootstrap hypotheses, for $\ep$ sufficiently small and $\rmK_v$ sufficiently large there is a $\rmK>0$ such that
\begin{align}
\label{(2.30a)}
&\mathcal{E}_{h}+\f14\int_1^t \mathcal{CK}_h(\tau)d\tau\leq \f{1}{2}\rmK_v\ep^2,\\
\label{(2.30b)}
&\mathcal{E}_{\bar{h}}+\int_{1}^t\mathcal{CK}_{\bar{h}}(\tau)+d\tau\leq C(k_{M})\ep^2+\rmK\ep^3,\\
\label{(2.30c)}
&\langle t\rangle^{4-\rmK_{\rmD}\ep}\|\udl{\pa_tv}\|_{\mathcal{G}^{s,\la(t),\s-6}}^2\leq C(k_{M})\ep^2+\rmK\ep^3.
\end{align}
\end{proposition}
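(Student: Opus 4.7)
The plan is to close the three estimates \eqref{(2.30a)}--\eqref{(2.30c)} together, in the order \eqref{(2.30c)} $\Rightarrow$ \eqref{(2.30b)} $\Rightarrow$ \eqref{(2.30a)}, following the strategy of \emph{Proposition 2.5} in \cite{BM2015}, with the necessary modifications forced by the extra pressure terms in \eqref{eq: v_t-2} and \eqref{eq: bar{h}-2}. All three quantities involve only zero modes in $z$ and compactly supported data in $v$, so the multipliers $\rmA_0$ and $\rmA^{\rmR}$ act as standard zero-mode Gevrey weights and the commutator $[\rmA^{\rmR}, \udl{\pa_tv}\pa_v]$ is harmless (it yields only $\mathcal{CK}$ contributions weighted by $\|\udl{\pa_tv}\|_{\mathcal{G}^{s,\s-6}}$).

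The starting point is \eqref{(2.30c)}. Apply $e^{\la(t)\langle\eta\rangle^s}\langle\eta\rangle^{\s-6}\langle t\rangle^{2-\rmK_{\rmD}\ep/2}$ to \eqref{eq: v_t-2}, square in $L^2_v$, and integrate. The crucial feature is the dissipative term $\f{2}{t}\udl{\pa_tv}$, which yields a coercive contribution $\f{4}{t}\|\langle t\rangle^{2-\rmK_{\rmD}\ep/2}\udl{\pa_tv}\|_{\mathcal{G}^{s,\s-6}}^2$. This absorbs the time derivative of the weight $\langle t\rangle^{4-\rmK_{\rmD}\ep}$ (up to the small factor $\rmK_{\rmD}\ep$), produces a $\mathcal{CK}_{\udl{\pa_tv}}$ term of favorable sign, and leaves us to bound only the two forcing terms on the right-hand side of \eqref{eq: v_t-2}. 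The first forcing, $\f{\udl{\chi_1}}{t}\udl{\pa_yv}P_0(\na^{\bot}_{z,v}P_{\neq}(\Upsilon_2\Psi)\cdot\na_{z,v}(\Upsilon_2\udl{U^x}))$, is a zero-mode product of two non-zero mode pieces; using the inviscid-damping decay $\|P_{\neq}(\Upsilon_2\Psi)\|_{\mathcal{G}^{s,\s-6}}\lesssim \ep/\langle t\rangle^2$ from Proposition \ref{prop:elliptic} and $\|\udl{U^x}\|_{\mathcal{G}^{s,\s-6}}\lesssim \ep/\langle t\rangle$ this contributes $\lesssim \ep^3 \langle t\rangle^{-4+\rmK_{\rmD}\ep}/t$, which is integrable. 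The second forcing, $\f{\udl{\chi_1}}{t}P_0(P_{\neq}(a)\pa_zP_{\neq}(\Pi\Upsilon))$, is controlled by combining the density bootstrap and Corollary \ref{cor: decay-pressure}, which gives $\|P_{\neq}\Pi_\star\|_{\mathcal{G}^{s,\s-6}}\lesssim \ep/\langle t\rangle^4$; the product is then $\lesssim \ep^3/\langle t\rangle^5$. Both sources therefore produce $\rmK\ep^3$ after time integration, giving \eqref{(2.30c)}.

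With \eqref{(2.30c)} in hand, the bound $\|\udl{\pa_tv}\|_{\mathcal{G}^{s,\s-6}}\lesssim \ep \langle t\rangle^{-2+\rmK_{\rmD}\ep/2}$ is available. For \eqref{(2.30b)}, apply $\f{\langle t\rangle^{1+s}}{\langle\eta\rangle^s}\rmA_0$ to \eqref{eq: bar{h}-2}. The damping $-\f{2\bar{h}}{t}$ cancels against $\pa_t$ applied to $\langle t\rangle^{1+s}$ up to a negligible loss, and the transport term $\udl{\pa_tv}\pa_v\bar{h}$ yields $\ep \mathcal{CK}_{\bar h}$ after a paraproduct estimate (plus a reaction-type piece that is absorbed by $\mathcal{CK}$ controls of $h$ and the $\mathcal{CK}$-type quantities already introduced). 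The remaining source in \eqref{eq: bar{h}-2} is a zero-mode product of the form $\f{\udl{\pa_yv}}{t}P_0(\na\Psi\cdot\na\udl{\om} - \na\Pi\cdot\na a)$; this is handled, as for \eqref{(2.30c)}, using the inviscid damping decay of $P_{\neq}\Psi$, $P_{\neq}\Pi$, the point-wise estimate of Corollary \ref{cor: decay-pressure}, and the bootstrap bound $\mathcal{E}_a\le 10C(k_M)\ep^2$. Integrating in time gives the $C(k_M)\ep^2+\rmK\ep^3$ bound. The $h$ estimate \eqref{(2.30a)} is then immediate from \eqref{eq: h-2}: the transport term produces only $\ep\mathcal{CK}_h$ commutator contributions, while the source $\bar h$ is controlled in $L^\infty_t L^2_v$ by $\sqrt{\mathcal{E}_{\bar h}}\lesssim \langle t\rangle^{-1-s}\ep$, yielding an integrable forcing whose time integral is $\lesssim \ep^2$, which is absorbed by choosing $\rmK_v$ sufficiently large.

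The main technical obstacle is the first forcing of \eqref{(2.30c)}, namely the quadratic nonlocal term $\f{\udl{\chi_1}}{t}\udl{\pa_yv}P_0(\na^\bot P_{\neq}\Psi\cdot \na(\Upsilon_2\udl{U^x}))$, because the apparent decay of $P_{\neq}\Psi$ and $\udl{U^x}$ only gives $\ep^2/\langle t\rangle^3$, whereas the weight $\langle t\rangle^{4-\rmK_{\rmD}\ep}$ demands a net gain of $\langle t\rangle^{-1+\rmK_{\rmD}\ep}$ to remain integrable. This must be recovered via the elliptic estimate of Proposition \ref{prop:elliptic} coupled with the $\mathcal{CK}$-controlled remainder terms, i.e.\ by distributing one factor of $\langle t\rangle^{-s}|\na|^{s/2}$ onto $\Psi$ and converting it into a $\mathcal{CK}_f$ piece through a standard paraproduct split. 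Once this is handled, the remaining arguments are bookkeeping exercises with the $\mathcal{G}^{s,\la(t),\s-6}$ product rules, which are already established in Appendix~A of \cite{BM2015}.
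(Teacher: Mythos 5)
Your overall route is the same as the paper's: the paper's proof of Proposition \ref{prop: coordinate} is essentially a deferral to \emph{Proposition 2.5} of \cite{BM2015} (damping term $\tfrac{2}{t}\udl{\pa_tv}$, weighted energy estimates in the order $\udl{\pa_tv}\Rightarrow\bar h\Rightarrow h$, transport commutators absorbed into $\mathcal{CK}$ terms), plus two remarks about the new source terms. Your treatment of the transport/damping structure, of the quadratic $\Psi$--$U^x$ source, and of the $h$ equation is consistent with that.

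There is, however, one step that as written would not close. For \eqref{(2.30b)} the source $\f{\udl{\pa_yv}}{t}P_0(\pa_z\Pi_{l}\pa_va-\pa_v\Pi_{l}\pa_za)$ (and its $\Pi_n$ analogue) must be estimated in the full $\f{\langle t\rangle^{1+s}}{\langle\eta\rangle^{s}}\rmA_0$-weighted norm, not merely in $\mathcal{G}^{s,\s-6}$. You propose to control it with the pointwise decay of Corollary \ref{cor: decay-pressure} together with the bootstrap on $\mathcal{E}_a$; but when the pressure carries the high frequency in the paraproduct, $\rmA_0(\eta)$ must be transferred onto $\Pi$, and Corollary \ref{cor: decay-pressure} provides no $\rmA$-weighted control of $\Pi$ — one needs the multiplier estimates of Propositions \ref{Prop: non-zero-pessure-L} and \ref{Prop: non-zero-pessure-N} for the pressure-high piece and the $\rmB$-multiplier gain on $\rmA^*a$ for the density-high piece. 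This is precisely why the paper's proof points to the argument of Proposition \ref{prop: I_f4,I_f8} in section \ref{sec: I_f4,8} rather than to the pointwise corollary. A parallel (smaller) omission: the term $P_0(\pa_z(\Upsilon_2\Psi)\pa_v\udl{\om}-\pa_v(\Upsilon_2\Psi)\pa_z\udl{\om})$ requires the Gevrey and $\mathcal{CK}$ bounds on $\udl{\om}$ itself, which is the content of Corollary \ref{Cor: psi-new}; your sketch invokes only the decay of $P_{\neq}\Psi$. With these two ingredients substituted in, the rest of your bookkeeping (including the $\langle\eta\rangle^{s}$ versus $\langle t\rangle^{s}$ exchange in passing from $\mathcal{E}_{\bar h}$ to the forcing of the $h$ equation) is the standard \cite{BM2015} argument and goes through.
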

\begin{proof}
We use to the estimate of $\udl{\om}$ in Corollary \ref{Cor: psi-new} to control the term $P_0(\pa_z(\Upsilon_2\Psi)\pa_v\udl{\om}-\pa_v(\Upsilon_2\Psi)\pa_z\udl{\om})$. The term which involves $P_0(\pa_z\Pi_{l}\pa_va-\pa_v\Pi_{l}\pa_za)$ and $P_0(\pa_z\Pi_{n}\pa_va-\pa_v\Pi_{n}\pa_za)$ can be controlled by using the same idea of the proof of Proposition \ref{prop: I_f4,I_f8} in section \ref{sec: I_f4,8}. 
\end{proof}

\begin{proposition}\label{prop: coefficients} 
Under the bootstrap hypotheses, for $\ep$ sufficiently small and $\rmK_v$ sufficiently large there is a $\rmK>0$ such that for $j=1,2,...,11,$
\begin{align}
&\mathcal{E}_{\varphi^{\d}_j}+\f14\int_1^t \mathcal{CK}_{\varphi^{\d}_j}(\tau)d\tau\leq \f{1}{2}\rmK_v\ep^2.
\end{align}
\end{proposition}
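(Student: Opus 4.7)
The plan is to mimic the strategy used for Proposition \ref{prop: coordinate} by performing a standard energy estimate on the inhomogeneous transport equation \eqref{eq: varphi^d}:
\beno
\pa_t\varphi_j^{\d}+\udl{\pa_tv}\,\pa_v\varphi_j^{\d}=-\udl{\pa_tv}\,\pa_v\widetilde{\varphi_j},
\eeno
noting that every $\varphi_j^{\d}$ is a zero-mode function of $v$ with compact support (see Remark \ref{Rmk: equation of coefficients}), so only the 1D multiplier $\rmA^{\rmR}(t,\eta)$ enters. First, I would compute
\beno
\f12\f{d}{dt}\mathcal{E}_{\varphi_j^{\d}}+\mathcal{CK}_{\varphi_j^{\d}}
=-\int \rmA^{\rmR}\varphi_j^{\d}\,\rmA^{\rmR}\!\left(\udl{\pa_tv}\,\pa_v\varphi_j^{\d}\right)dv
-\int \rmA^{\rmR}\varphi_j^{\d}\,\rmA^{\rmR}\!\left(\udl{\pa_tv}\,\pa_v\widetilde{\varphi_j}\right)dv
\eqdef T_j+F_j.
\eeno
The crucial inputs from earlier propositions will be the decay bound $\|\udl{\pa_tv}\|_{\mathcal{G}^{s,\la(t),\s-6}}\lesssim \ep\langle t\rangle^{-2+\rmK_{\rmD}\ep/2}$ from \eqref{(2.30c)} and the fixed Gevrey regularity of $\widetilde{\varphi_j}$ (with a radius strictly larger than $\la(t)$) coming from assumption (5) of Theorem \ref{Thm: main} together with Appendix \ref{sec: operator A}-type commutator estimates.

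For the transport term $T_j$, I would apply a paraproduct decomposition $\udl{\pa_tv}\,\pa_v\varphi_j^{\d}=T_{\udl{\pa_tv}}\pa_v\varphi_j^{\d}+T_{\pa_v\varphi_j^{\d}}\udl{\pa_tv}+\mathcal{R}$. The transport piece, after commuting $\rmA^{\rmR}$ through $T_{\udl{\pa_tv}}$ and integrating by parts, produces $\tfrac12\int(\pa_v\udl{\pa_tv})|\rmA^{\rmR}\varphi_j^{\d}|^2\,dv$, bounded by $C\ep\langle t\rangle^{-2+\rmK_{\rmD}\ep/2}\mathcal{E}_{\varphi_j^{\d}}$, which is time integrable. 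The commutator $[\rmA^{\rmR},T_{\udl{\pa_tv}}]\pa_v\varphi_j^{\d}$ is treated by Remark \ref{Rmk: A-switch}, producing contributions absorbed in $\ep\,\mathcal{CK}_{\varphi_j^{\d}}+\ep\,\mathcal{CK}_{\udl{\pa_tv}}$ with small constant. The reaction piece $T_{\pa_v\varphi_j^{\d}}\udl{\pa_tv}$, where $\udl{\pa_tv}$ sits at high frequency, is estimated by Cauchy--Schwarz using $\|\pa_v\varphi_j^{\d}\|_\infty\lesssim \sqrt{\rmK_v}\ep$ (from the bootstrap on $\mathcal{E}_{\varphi_j^{\d}}$ with index $\s-6$) and the decay of $\|\rmA^{\rmR}\udl{\pa_tv}\|_2\lesssim \ep\langle t\rangle^{-2+\rmK_{\rmD}\ep/2}$.

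For the forcing term $F_j$, Cauchy--Schwarz gives
\beno
|F_j|\leq \sqrt{\mathcal{E}_{\varphi_j^{\d}}}\,\|\rmA^{\rmR}(\udl{\pa_tv}\,\pa_v\widetilde{\varphi_j})\|_2
\lesssim \sqrt{\mathcal{E}_{\varphi_j^{\d}}}\,\|\udl{\pa_tv}\|_{\mathcal{G}^{s,\la(t),\s-6}}\,\|\widetilde{\varphi_j}\|_{\mathcal{G}^{s,\la_0,\s+2}}
\lesssim \f{C\,\sqrt{\rmK_v}\,\ep^2}{\langle t\rangle^{2-\rmK_{\rmD}\ep/2}},
\eeno
where the Gevrey product estimate is straightforward since $\widetilde{\varphi_j}$ has a strictly larger regularity radius $\la_0>\la(t)$ (so the high-frequency paraproduct is harmless). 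Integrating in time yields $\int_1^t |F_j|\,d\tau\lesssim \sqrt{\rmK_v}\,\ep^2$. Combining both estimates,
\beno
\mathcal{E}_{\varphi_j^{\d}}(t)+2\int_1^t\mathcal{CK}_{\varphi_j^{\d}}\,d\tau
\leq \mathcal{E}_{\varphi_j^{\d}}(1)+C\sqrt{\rmK_v}\,\ep^2+\text{(absorbable CK terms)}.
\eeno
Choosing $\rmK_v$ large enough that $C\sqrt{\rmK_v}\leq \tfrac14\rmK_v$ and $\ep$ small enough to absorb the $\ep\,\mathcal{CK}_{\varphi_j^{\d}}$ contributions into $\tfrac74\int\mathcal{CK}_{\varphi_j^{\d}}$ on the left yields the claim. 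The main obstacle, though largely routine, will be the careful bookkeeping of the CK-type contributions from the paraproduct in $T_j$: one must verify that the losses incurred by transferring $\rmA^{\rmR}$ onto $\udl{\pa_tv}$ (which only has control at index $\s-6$) are compensated by the factor $\ep$ and the strong time decay of $\udl{\pa_tv}$, and that no dangerous interaction with the resonant growth factor $\rmJ^{\rmR}$ of $\rmA^{\rmR}$ arises. Since the exact same argument applies uniformly to all $j=1,\ldots,11$ (only the Gevrey norm of $\widetilde{\varphi_j}$ changes, all of which are finite by hypothesis (5)), this closes the bootstrap for the coefficient energies.
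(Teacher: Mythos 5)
Your proposal is correct and follows essentially the same route as the paper, which simply observes that each $\varphi_j^{\d}$ satisfies the same transport equation as $h$ with a forcing $-\udl{\pa_tv}\,\pa_v\widetilde{\varphi_j}$ that behaves like $\udl{\pa_tv}$ (since $\widetilde{\varphi_j}$ is a fixed Gevrey function of larger radius), and then defers to the $\rmA^{\rmR}$-weighted paraproduct energy estimate for $h$ in section 8.2 of \cite{BM2015}. Your write-up just makes explicit the transport/reaction/commutator bookkeeping that the cited argument carries out.
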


The energy estimates for the coefficients $\varphi^{\d}_j$ (j=1,...,11) is easier than $h$. They have the same transport structure as the equation of $h$:
\beq
(\pa_t+\udl{\pa_tv}\pa_v)\mathcal{U}=\mathcal{F},
\eeq
where $\mathcal{F}$ represents one of the forcing terms $\udl{\pa_tv}\pa_v\widetilde{\varphi_j}$. Due to the fact that $\widetilde{\varphi_j}$ are smoother given functions compared to the solutions, $\mathcal{F}$ has the same behavior as $\udl{\pa_tv}$ which is better than the force term $\bar{h}=\udl{\pa_yv}\pa_v\udl{\pa_tv}$ in the equation of $h$ \eqref{eq: h}. By following the estimate of $h$ in {\it section 8.2} of \cite{BM2015}, one can obtain the estimate of $\mathcal{E}_{\varphi^{\d}_j}(t)$ easily. 

\subsection{Proof of the main theorem}
Now, we prove the compact support result in Proposition \ref{prop: btsp} and conclude the proof. Under the bootstrap assumption, we have
\beno
\Delta\psi(t,x,y)=\om(t,x,y)=\udl{\om}(t,x-tv(t,y),v(t,y)), \quad 
\psi(x,0)=\psi(x,1)=0. 
\eeno
Taking the Fourier transform in $x$, we get
\ben\label{eq:psi}
\mathcal{F}_1\psi(t,k,y)=\int_0^1G_k(y,z)\udl{\om}(t,k,v(t,z))e^{-iktv(t,z)}dz,
\een
and
\ben\label{eq:u}
\pa_y\mathcal{F}_1\psi(t,k,y)=\int_0^1\pa_yG_k(y,z)\udl{\om}(t,k,v(t,z))e^{-iktv(t,z)}dz,
\een
where 
\beno
G_k(y_1,y_2)=\f{1}{k\sinh k}\times\left\{\begin{aligned}
&\sinh(k(1-y_2))\sinh ky_1,\quad y_1\leq y_2,\\
&\sinh(ky_2)\sinh(k(1-y_1)),\quad y_1\geq y_2,
\end{aligned}\right.
\eeno
and
\beno
G_0(y_1,y_2)=\left\{\begin{aligned}
&(1-y_2)y_1,\quad y_1\leq y_2,\\
&y_2(1-y_1),\quad y_1\geq y_2,
\end{aligned}\right.
\eeno

Integrating by parts in $z$ in the identities \eqref{eq:psi}(twice) and \eqref{eq:u} (once), we get that
\ben\label{eq: inviscid-damping}
|U^y(t,x,y)|\lesssim \f{\ep}{\langle t\rangle^2},\quad |U^x-<U^x>|\lesssim \f{\ep}{\langle t\rangle}. 
\een
Then let us define
\beno
\left\{\begin{aligned}
&\f{d(X^1,X^2)}{dt}(t,x,y)=(u(y)+U^x(t,X^1,X^2),U^y(t,X^1,X^2)),\\
&(X^1,X^2)\big|_{t=0}=(x,y).\\
\end{aligned}\right.
\eeno
Then $\f{d}{dt}\om(t,X^1,X^2)=(u''\pa_x\psi)(t,X_1,X_2)$ and $|X^2(t,x,y)-y|\leq C\ep$, thus the for $y\notin [4\kappa_0-C\ep, 1-4\kappa_0+C\ep]$, $\f{d}{dt}\om(t,X^1,X^2)=0$. 
Note that we only need that $u''$ to be equal to $0$ close to the boundary. Thus the support of $\om(t)$ will always be away from the boundary.  

This ends the proof of the bootstrap Proposition \ref{prop: btsp}. \qed

Let us now go back to the original coordinate system and prove Theorem \ref{Thm: main}. Applying the same method in {\it section 2.4} of \cite{BM2015}, we get for $\om_1(t,z,y)=\udl{\om}(t,z,v)=\om(t,x,y)$, $P_1(t, z, y)=P(t, x, y)=\Pi(t, z, v)$, $d_1(t, z, y)=d(t, x, y)=a(t, z, v)$ and $\psi_1(t,z,y)=\Psi(t,z,v)=\psi(t,x,y)$, 
\beno
\pa_t\om_1+\na^{\bot}_{z,y}P_{\neq}\psi_1\cdot\na_{z,y}\om_1-u''\pa_z\psi_1-\th'(y)\pa_zP_1+\pa_yP_1\pa_zd_1-\pa_zP_1\pa_ya=0,
\eeno
and
\beno
\pa_td_1+\th'\pa_z\psi_1+\na^{\bot}_{z,y}P_{\neq}\psi_1\cdot\na_{z,y}d_1=0
\eeno
Then we can define
\beno
\begin{aligned}
f_{\infty}=&\om_1(1)-\int_{1}^{\infty}\na^{\bot}_{z,y}P_{\neq}\psi_1(s)\cdot\na_{z,y}\om_1(s)ds+\int_{1}^{\infty}u''\pa_z\psi_1(s)ds\\
&+\int_1^{\infty}\th'(y)\pa_zP_1(s)ds-\int_1^{\infty}\pa_yP_1\pa_zd_1(s)ds+\int_1^{\infty}\pa_zP_1\pa_ya(s)ds,
\end{aligned}
\eeno
and
\beno
\begin{aligned}
d_{\infty}=&d_1(1)-\int_1^{\infty}\th'\pa_z\psi_1(s)ds-\int_1^{\infty}\na^{\bot}_{z,y}P_{\neq}\psi_1\cdot\na_{z,y}d_1(s)ds. 
\end{aligned}
\eeno
Therefore there exists $f_{\infty}, d_{\infty}$ such that, 
\beno
\left\|\om(t,x+tu(y)+\Phi(t,y)\chi_1(y),y)-f_{\infty}(x,y)\right\|_{\mathcal{G}^{s,\la_{\infty}}}\lesssim \f{\ep}{\langle t\rangle}.
\eeno
\beno
\left\|d(t,x+tu(y)+\Phi(t,y)\chi_1(y),y)-d_{\infty}(x,y)\right\|_{\mathcal{G}^{s,\la_{\infty}}}\lesssim \f{\ep}{\langle t\rangle}.
\eeno
Note that $\om(t,x,y)=0$ for $y\notin [2\kappa_0,1-2\kappa_0]$ so is $f_{\infty}$, thus we can replace $\Phi(t,y)\chi_1(y)$ by $\Phi(t,y)$ which gives \eqref{eq: Scattering om} and \eqref{eq: Scattering den}. 

Now, we prove \eqref{eq: inviscid damping}. We have 
\beno
-\pa_yP_0(U^x)=P_0(\om),
\quad
\pa_tP_0(U^x)+P_0(U^y\om)=0,
\eeno
which implies that $P_0(U^x)(t,y)=C_U$ for any $t\geq 1$ and $y\in [0,2\kappa_0]\cup [1-2\kappa_0,1]$. 
Let $\tilde{U}^x(t,z,y)=U^x(t,x,y)$, then $\Big(P_0(\tilde{U}^x)(t,y)-C_{U}\Big)$ has the same compact support as $\om_1$ and satisfies
\begin{align*}
\pa_t\Big(P_0(\tilde{U}^x)(t,y)-C_{U}\Big)
&=-P_0(\na^{\bot}_{z,y}P_{\neq }\psi_1\cdot\na_{z,y}\tilde{U}^x)\chi_1(y)\\
&=-P_0(-\pa_yP_{\neq }(\Upsilon_2\psi_1)\pa_z(\pa_y-t\pa_yv\pa_z)(\Upsilon_2\psi_1)+\pa_zP_{\neq }(\Upsilon_2\psi_1)\pa_y\pa_z(\Upsilon_2\psi_1)). 
\end{align*}
Then \eqref{eq: inviscid damping} follows from the fact that 
\ben\label{eq:decay-psi_1}
\|\Upsilon_2\psi_1\|_{\mathcal{G}^{s,\la_{\infty}',2}}\lesssim \f{\ep}{\langle t\rangle^2}. 
\een
This ends the proof of Theorem \ref{Thm: main}. \qed

\section{Elliptic estimate for steam function}\label{sec: ell-stream}
In this section, we study the following elliptic equation
\beq\label{eq: steam function}
\left\{\begin{aligned}
&\udl{\varphi_4}\pa_{zz}\Psi+\udl{\pa_yv}(\pa_v-t\pa_z)\Big(\udl{\varphi_4}\udl{\pa_yv}(\pa_v-t\pa_z)\Psi\Big)=\Om,\\
&\Psi(t, z, u(0))=\Psi(t, z, u(1))=0,
\end{aligned}\right.
\eeq
We rewrite the equation by separating its linear part and the nonlinear part. 
\beq\label{eq: decomposition stream}
\begin{aligned}
\Om&=\udl{\varphi_4}\pa_{zz}\Psi+\udl{\pa_yv}(\pa_v-t\pa_z)\Big(\udl{\varphi_4}\udl{\pa_yv}(\pa_v-t\pa_z)\Psi\Big)\\
&=\mathcal{T}_1[\Psi]
+\varphi^{\d}_4\pa_{zz}\Psi
+\udl{\pa_yv}(\pa_v-t\pa_z)\Big(\udl{\varphi_4}(\udl{\pa_yv}-\widetilde{u'})(\pa_v-t\pa_z)\Psi\Big)\\
&\quad+\udl{\pa_yv}(\pa_v-t\pa_z)\Big(\varphi_4^{\d}\widetilde{u'}(\pa_v-t\pa_z)\Psi\Big)
+(\udl{\pa_yv}-\widetilde{u'})(\pa_v-t\pa_z)\Big(\widetilde{\varphi_4}\widetilde{u'}(\pa_v-t\pa_z)\Psi\Big). 
\end{aligned}
\eeq
where 
\ben\label{eq: T_1}
\mathcal{T}_1[\Psi]=\widetilde{\varphi_2}\pa_{zz}\Psi+\widetilde{u'}(\pa_v-t\pa_z)\Big(\widetilde{\varphi_2}\widetilde{u'}(\pa_v-t\pa_z)\Psi\Big).
\een
It is easy to see that $\mathcal{T}_1$ is an elliptic second-order differential operator. We define $\mathcal{T}_{1,D}^{-1}$ to be the inverse operator with Dirichlet boundary conditions, namely for any function $f(t, z, v)$, $\mathcal{T}_1\big[\mathcal{T}_{1,D}^{-1}[f]\big]=f$ with 
\beno
\mathcal{T}_{1,D}^{-1}[f](t, z, 0)=\mathcal{T}_{1,D}^{-1}[f](t, z, 1)
\eeno
We also define 
\ben\label{eq: inverse cutoff-op}
\mathring{\mathcal{T}}_{1,D}^{-1}=\Upsilon_2\mathcal{T}_{1,D}^{-1}. 
\een
\begin{proposition}\label{prop: T_1}
Let $\Om$ be compactly supported. Then there is $\mathcal{G}_{D,1}(t, k, \xi, \eta)$ such that 
\beno
\widehat{(\mathring{\mathcal{T}}_{1,D}^{-1}[\Om])}(t, k, \xi)=\int_{\mathbb{R}}\mathcal{G}_{D,1}(t, k, \xi, \eta)\widehat{\Om}(t, k, \eta)d\eta,
\eeno
with estimate
\beno
|\mathcal{G}_{D,1}(t, k, \xi, \eta)|\leq C\min\left\{\f{e^{-\la_{\Delta}\langle \xi-\eta\rangle^s}}{1+k^2+(\xi-kt)^2}, \f{e^{-\la_{\Delta}\langle \xi-\eta\rangle^s}}{1+k^2+(\eta-kt)^2}\right\}.
\eeno
\end{proposition}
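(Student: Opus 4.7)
The strategy is to conjugate $\mathcal{T}_1$ by the oscillating factor $e^{iktv}$ to reduce to a $t$-independent Sturm-Liouville problem in $v$, construct the corresponding Green function using the homogeneous-solution analysis of Section 3 (Proposition \ref{prop: homsol} and Corollary \ref{corol: regular Phi}), and then obtain the kernel estimate on the Fourier side by combining exponential decay of the Green function, Gevrey regularity of the coefficients and of the cutoff $\Upsilon_2$, and integration by parts in $v$ and $v'$. First, taking the Fourier transform of \eqref{eq: steam function} in $z$ and writing $\hat\Psi(k,v)=e^{iktv}\tilde\Psi(k,v)$, $\tilde\Omega(k,v)=e^{-iktv}\hat\Omega(k,v)$, the identity $(\partial_v-itk)\hat\Psi=e^{iktv}\partial_v\tilde\Psi$ reduces the problem to the $t$-independent elliptic ODE
\[
-k^2\widetilde{\varphi_4}(v)\tilde\Psi+\widetilde{u'}(v)\partial_v\bigl(\widetilde{\varphi_4}(v)\widetilde{u'}(v)\partial_v\tilde\Psi\bigr)=\tilde\Omega(k,v),\qquad \tilde\Psi(u(0))=\tilde\Psi(u(1))=0.
\]

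Second, the Sturm-Liouville theory (same homogeneous solutions $\phi(y,y',k)$ that were analyzed above, viewed in $v=u(y)$ variables) furnishes a Green function $\tilde{G}(v,v';k)$ for this Dirichlet problem. The estimates \eqref{eq: est2}-\eqref{eq: est5} of Proposition \ref{prop: homsol}, together with Corollary \ref{corol: regular Phi}, translate into the bounds
\[
|\tilde{G}(v,v';k)|\lesssim \tfrac{1}{1+|k|}e^{-c|k||v-v'|},\qquad \|\tilde{G}(\cdot,\cdot;k)\|_{\mathcal{G}^{M|k|,s_0}_{ph,\cosh}([u(0),u(1)]^2)}\lesssim 1
\]
(with the logarithmic and jump pieces separated exactly as in \eqref{eq: new-E}). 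Substituting back, we obtain the representation
\[
\mathcal{G}_{D,1}(t,k,\xi,\eta)=\tfrac{1}{2\pi}\iint e^{-i(\xi-kt)v+i(\eta-kt)v'}\Upsilon_2(v)\tilde{G}(v,v';k)\,dv\,dv',
\]
so that $\widehat{\mathring{\mathcal{T}}_{1,D}^{-1}[\Omega]}(t,k,\xi)=\int \mathcal{G}_{D,1}(t,k,\xi,\eta)\hat\Omega(t,k,\eta)\,d\eta$.

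Third, the kernel estimate follows by a Paley-Wiener/Gevrey argument analogous to Lemma \ref{lem: Fourier_type1} and Lemma \ref{Rmk: fo-g-2}. The diagonal singularities of $\tilde{G}$ (encoded in $\Phi^{re}_{j,1}$ and in the logarithm term) are of the regular types already handled there, and the cutoff $\Upsilon_2$ guarantees compact support in $v$; combining the Gevrey-$s_0$ regularity of $\Upsilon_2\tilde{G}$ jointly in $(v,v')$ with the oscillation in $(\xi-kt,\eta-kt)$ produces a bound $|\mathcal{G}_{D,1}|\lesssim \tfrac{1}{1+k^2}e^{-\lambda_\Delta\langle\xi-\eta\rangle^s}$. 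To promote the denominator to $1+k^2+(\xi-kt)^2$, integrate by parts twice in $v$: each IBP trades $(\xi-kt)^{-1}$ for $\partial_v$ acting on $\Upsilon_2(v)\tilde{G}(v,v';k)$, and the second-order derivatives are controlled because the ODE satisfied by $\tilde{G}$ expresses $\partial_v^2\tilde{G}$ as $k^2$ times a bounded kernel plus Gevrey-smooth first-order terms. This yields $|\mathcal{G}_{D,1}|\lesssim e^{-\lambda_\Delta\langle\xi-\eta\rangle^s}/(1+k^2+(\xi-kt)^2)$; the symmetric IBP in $v'$ (which has no boundary contribution because $\Omega$ is compactly supported in $(u(0),u(1))$) delivers the bound with $(\eta-kt)^2$ in place of $(\xi-kt)^2$, and taking the minimum gives the claim. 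The main technical obstacle is the Gevrey joint regularity of $\tilde{G}(v,v';k)$ in the variable $(v+v')/2$ uniformly in $|v-v'|$ (the off-diagonal direction behaves well thanks to the $\cosh$-weighted spaces): the delicate commutation between the elliptic operator and $(\partial_v+\partial_{v'})$ is precisely what the spaces $\mathcal{G}^{M|k|,s_0}_{ph,\cosh}$ and the mapping estimate Proposition \ref{prop: T} are engineered to handle, and applying them to $\tilde{G}$ constructed via \eqref{eq:Phi_1} is the core of the argument.
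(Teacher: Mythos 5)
Your overall framework is correct and matches the paper's Appendix~\ref{sec: ST equ}: conjugate by $e^{ikt v}$ to remove the time dependence, build a Green's function for the resulting Sturm--Liouville problem, and deduce the Fourier kernel bound from joint Gevrey regularity of the (cut-off) kernel together with the diagonal decay $e^{-c|k||v-v'|}$. However, there is a concrete gap in the way you source the homogeneous solutions, which is the load-bearing step.

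You claim the Green's function $\tilde G(v,v';k)$ for the elliptic problem can be assembled from ``the same homogeneous solutions $\phi(y,y',k)$ that were analyzed above,'' invoking Proposition~\ref{prop: homsol} and Corollary~\ref{corol: regular Phi}. Those functions $\phi(y,y',k)=(u(y)-u(y'))\phi_1(y,y',k)$ solve the \emph{distorted Rayleigh} equation~\eqref{eq: homRay1}, which carries the singular nonlocal weight $\big(\tfrac{u'}{\th}\big)'/(u-c)$ and a free parameter $y'$. The operator $\mathcal{T}_1$ here, after conjugation, reduces to the \emph{regular} Sturm--Liouville ODE $\pa_v(h_1\pa_v\mathfrak{Q})-k^2h_2\mathfrak{Q}=\mathfrak{F}$ with smooth positive coefficients $h_1=\widetilde{\varphi_4}\widetilde{u'}$, $h_2=\widetilde{\varphi_4}/\widetilde{u'}$; its homogeneous solutions are not parametrized by a second base point and solve (in $y$-variables) $\pa_y(\tfrac{1}{\th}\pa_y\Psi)-\tfrac{k^2}{\th}\Psi=0$, which is precisely the homogeneous equation for $\tilde\Delta_k$, \emph{not} the Rayleigh equation. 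Consequently the bounds~\eqref{eq: est2}--\eqref{eq: est5} and Corollary~\ref{corol: regular Phi}, which are about $\phi_1$, do not transfer to $\tilde G$; they do not even produce a well-defined two-point kernel because $\phi(y,y',k)$ is indexed by a spectral parameter rather than being a Green's function. What the proof actually needs are the boundary-normalized solutions $\mathfrak{q}_\alpha(v,k),\ \mathfrak{q}_\beta(v,k)$ with $\mathfrak{q}_\alpha'(\alpha)=\mathfrak{q}_\beta'(\beta)=0$ (Lemma~\ref{eq: mfq}), the explicit Green kernel $\mathbf{G}_D$ of~\eqref{eq: Green-1} built from them (and the constants~\eqref{eq: constantsC}), and the Gevrey regularity estimates of Lemmas~\ref{lem: boundary q/q}--\ref{lem: G-regularity-1-v near v'}, including the WKB-type Lemma~\ref{lem:mfs} to control the kernel near the diagonal and uniformly in $|k|$. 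None of these objects or estimates is present in your proposal, and they cannot be replaced by the Rayleigh-side analysis you cite; without them the $e^{-\lambda_\Delta\langle\xi-\eta\rangle^s}$ and $1/(1+k^2+(\xi-kt)^2)$ bounds do not follow. The final Paley--Wiener step you describe (using~\eqref{eq: change variable green est}-type change of variables and Remark~\ref{Rmk: improved kernel regularity} to absorb the jump in $\partial_v\tilde G$) is in the right spirit, but it rests on the unproven regularity of a kernel you never actually construct.
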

The proposition is proved in Appendix \ref{sec: SL-D}. 

By applying ${\mathcal{T}}_{1,D}^{-1}$ on equation \eqref{eq: decomposition stream} and multiplying the cut-off function $\Upsilon_2$, we get that
\begin{align}\label{eq: diff-elliptic-1}
\mathring{\mathcal{T}}_{1,D}^{-1}[\Om]-\Upsilon_2\Psi
=\mathring{\mathcal{T}}_{1,D}^{-1}[\mathcal{S}_1[\Psi]]
\end{align}
where 
\beq
\begin{aligned}
\mathcal{S}_1[\Psi]=
&\varphi^{\d}_4\pa_{zz}\Psi
+\udl{\pa_yv}(\pa_v-t\pa_z)\Big(\udl{\varphi_4}(\udl{\pa_yv}-\widetilde{u'})(\pa_v-t\pa_z)\Psi\Big)\\
&+\udl{\pa_yv}(\pa_v-t\pa_z)\Big(\varphi_4^{\d}\widetilde{u'}(\pa_v-t\pa_z)\Psi\Big)
+(\udl{\pa_yv}-\widetilde{u'})(\pa_v-t\pa_z)\Big(\widetilde{\varphi_4}\widetilde{u'}(\pa_v-t\pa_z)\Psi\Big)\\
=&\varphi^{\d}_4\pa_{zz}\Psi+h_1(\pa_v-t\pa_z)^2(\Upsilon_2\Psi)+h_2(\pa_v-t\pa_z)(\Upsilon_2\Psi)
\end{aligned}
\eeq
has compact support with $\mathrm{supp}\,\mathcal{S}_1[\Psi]\subset [\kappa_0,1-\kappa_0]$ with 
\begin{align*}
&h_1=\udl{\pa_yv}\udl{\varphi_4}(\udl{\pa_yv}-\widetilde{u'})+\udl{\pa_yv}\varphi_4^{\d}\widetilde{u'}
+(\udl{\pa_yv}-\widetilde{u'})\widetilde{\varphi_4}\widetilde{u'}\\
&h_2=\udl{\pa_yv}\pa_v\big(\udl{\varphi_4}(\udl{\pa_yv}-\widetilde{u'})\big)
+\udl{\pa_yv}\pa_v\big(\varphi_4^{\d}\widetilde{u'}\big)
+(\udl{\pa_yv}-\widetilde{u'})\pa_v(\widetilde{\varphi_4}\widetilde{u'}). 
\end{align*}

\subsection{Lossy estimate}
We have the following lemmas.
\begin{lemma}\label{lem: lossy-elliptic u}
It holds for all $0\leq s_1\leq \s-1$ that 
\ben\label{eq: lossy-elliptic u1}
\|\pa_{z}^2\Upsilon_2P_{\neq}\Psi\|_{\mathcal{G}^{s,\la(t),s_1-2}}\lesssim \f{\|\Om\|_{\mathcal{G}^{s,\la(t),s_1}}}{\langle t\rangle^2}.
\een
\end{lemma}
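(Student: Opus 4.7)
The plan is to invert the elliptic operator via the decomposition \eqref{eq: diff-elliptic-1}, exploit the Fourier kernel bound from Proposition \ref{prop: T_1} to extract the $\langle t\rangle^{-2}$ gain for the main piece, and absorb the remainder using bootstrap smallness of the coefficients of $\mathcal{S}_1[\Psi]$.

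First I would write, thanks to \eqref{eq: diff-elliptic-1},
\begin{align*}
\pa_z^2\Upsilon_2 P_{\neq}\Psi=\pa_z^2\mathring{\mathcal{T}}_{1,D}^{-1}[P_{\neq}\Om]-\pa_z^2\mathring{\mathcal{T}}_{1,D}^{-1}[P_{\neq}\mathcal{S}_1[\Psi]],
\end{align*}
so the proof reduces to controlling each piece in $\mathcal{G}^{s,\la(t),s_1-2}$. For the main piece, Proposition \ref{prop: T_1} gives, for $k\neq 0$,
\begin{align*}
k^2|\mathcal{G}_{D,1}(t,k,\xi,\eta)|\lesssim \frac{k^2\,e^{-\la_{\Delta}\langle \xi-\eta\rangle^s}}{1+k^2+(\xi-kt)^2}.
\end{align*}
Splitting according to the dichotomy $|\xi|\le |k|t/2$ versus $|\xi|\ge|k|t/2$: in the first regime $|\xi-kt|\gtrsim |k|t$, giving $k^2/(k^2+(\xi-kt)^2)\lesssim \langle t\rangle^{-2}$; in the second regime $\langle k,\xi\rangle\gtrsim \langle t\rangle$, so the Sobolev weight $\langle k,\xi\rangle^{s_1-2}$ yields the factor $\langle k,\xi\rangle^{s_1}\langle t\rangle^{-2}$. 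In both cases the exponential factor $e^{-\la_{\Delta}\langle\xi-\eta\rangle^s}$ absorbs the discrepancy $e^{\la(t)(\langle k,\xi\rangle^s-\langle k,\eta\rangle^s)}$ via the triangle inequality for Gevrey weights (using $\la(t)<\la_{\Delta}$), and a Young/Schur argument in $\eta$ then yields
\begin{align*}
\|\pa_z^2\mathring{\mathcal{T}}_{1,D}^{-1}[P_{\neq}\Om]\|_{\mathcal{G}^{s,\la(t),s_1-2}}\lesssim \frac{\|\Om\|_{\mathcal{G}^{s,\la(t),s_1}}}{\langle t\rangle^2}.
\end{align*}

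For the error piece, the same kernel argument gives
\begin{align*}
\|\pa_z^2\mathring{\mathcal{T}}_{1,D}^{-1}[P_{\neq}\mathcal{S}_1[\Psi]]\|_{\mathcal{G}^{s,\la(t),s_1-2}}\lesssim \frac{\|\mathcal{S}_1[\Psi]\|_{\mathcal{G}^{s,\la(t),s_1}}}{\langle t\rangle^2}.
\end{align*}
Since $\mathcal{S}_1[\Psi]=\varphi_4^{\d}\pa_{zz}\Psi+h_1(\pa_v-t\pa_z)^2(\Upsilon_2\Psi)+h_2(\pa_v-t\pa_z)(\Upsilon_2\Psi)$, and since the bootstrap hypotheses together with Propositions \ref{prop: coordinate} and \ref{prop: coefficients} force $\|\varphi_4^{\d}\|_{\mathcal{G}^{s,\la,\s-6}}+\|h_1\|_{\mathcal{G}^{s,\la,\s-6}}+\|h_2\|_{\mathcal{G}^{s,\la,\s-6}}\lesssim \ep$, the standard Gevrey product inequality gives
\begin{align*}
\|\mathcal{S}_1[\Psi]\|_{\mathcal{G}^{s,\la,s_1}}\lesssim \ep\Big(\|\pa_z^2\Upsilon_2 P_{\neq}\Psi\|_{\mathcal{G}^{s,\la,s_1}}+\|(\pa_v-t\pa_z)^2\Upsilon_2 P_{\neq}\Psi\|_{\mathcal{G}^{s,\la,s_1}}+\text{l.o.t.}\Big).
\end{align*}
Using that $\tilde{\Delta}_t^l\Upsilon_2\Psi$ is comparable (via the same Fourier-kernel argument applied without the $k^2$) to $\Om$ up to the same error structure, the $(\pa_v-t\pa_z)^2$ term is controlled directly by $\|\Om\|_{\mathcal{G}^{s,\la,s_1}}$ plus a factor $\ep$ times the target norm (with a $\langle t\rangle^{-2}$-free contribution that still carries $\ep$).

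The main obstacle is the apparent derivative loss in the error bound, where the right-hand side would sit at regularity $s_1$ rather than $s_1-2$. The key to overcoming it is the $\ep$-smallness of the coefficients $\varphi_4^{\d},h_1,h_2$ from the bootstrap: iterating the decomposition at regularity level $s_1$ (or, equivalently, performing a finite induction on decreasing $s_1$ starting from the Sobolev ceiling $\s-1$) converts each appearance of $\|\pa_z^2 \Upsilon_2 P_{\neq}\Psi\|_{\mathcal{G}^{s,\la,s_1}}$ on the right into an $\ep$-multiple of the same object at slightly higher regularity, and the geometric series closes for $\ep$ small. This allows the error term to be absorbed into the left-hand side and yields the desired bound.
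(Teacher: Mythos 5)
Your handling of the main piece is correct: the dichotomy $|\xi|\lessgtr|k|t/2$ applied to the kernel bound of Proposition \ref{prop: T_1} extracts $\langle t\rangle^{-2}$, with the Gevrey-weight discrepancy absorbed by $e^{-\la_\Delta\langle\xi-\eta\rangle^s}$ since $\la(t)<\la_\Delta$. The gap is in the error piece. After inverting the kernel, you end up with $\langle t\rangle^{-2}\|\mathcal{S}_1[\Psi]\|_{\mathcal{G}^{s,\la,s_1}}$, and the Gevrey product then produces $\ep\langle t\rangle^{-2}\bigl(\|\pa_{zz}(\Upsilon_2\Psi)\|_{\mathcal{G}^{s,\la,s_1}}+\|(\pa_v-t\pa_z)^2(\Upsilon_2\Psi)\|_{\mathcal{G}^{s,\la,s_1}}+\cdots\bigr)$, which sits two Sobolev derivatives \emph{above} the target norm $\|\pa_{zz}\Upsilon_2\Psi\|_{\mathcal{G}^{s,\la,s_1-2}}$. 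Your proposed iteration pushes the Sobolev index up by $2$ each step, so the ``induction on decreasing $s_1$ from the ceiling $\sigma-1$'' has no base case: proving the lemma at $s_1=\sigma-1$ would require it at $\sigma+1$, which is outside the allowed range and outside what the bootstrap controls, and the coefficients $\|\Om\|_{\mathcal{G}^{s,\la,s_1+2n}}$ along the iteration are not uniformly finite. The geometric series you invoke does not close.

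The fix is to decouple the time decay from the error absorption. First prove the non-decaying elliptic estimate at the \emph{same} Sobolev index: since $(k^2+(\xi-kt)^2)\,|\mathcal{G}_{D,1}(t,k,\xi,\eta)|\lesssim e^{-\la_\Delta\langle\xi-\eta\rangle^s}$, one has $\|(\pa_z^2+(\pa_v-t\pa_z)^2)\mathring{\mathcal{T}}_{1,D}^{-1}[G]\|_{\mathcal{G}^{s,\la,s_1}}\lesssim\|G\|_{\mathcal{G}^{s,\la,s_1}}$, and feeding the decomposition \eqref{eq: diff-elliptic-1} through this bound gives, by direct absorption at level $s_1$ using the $\ep$-smallness of the coefficients of $\mathcal{S}_1[\Psi]$,
\begin{align*}
\|(\pa_z^2+(\pa_v-t\pa_z)^2)\Upsilon_2P_{\neq}\Psi\|_{\mathcal{G}^{s,\la,s_1}}\lesssim\|\Om\|_{\mathcal{G}^{s,\la,s_1}},
\end{align*}
hence $\|\mathcal{S}_1[\Psi]\|_{\mathcal{G}^{s,\la,s_1}}\lesssim\ep\|\Om\|_{\mathcal{G}^{s,\la,s_1}}$. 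Substituting this into your error bound and combining with the main piece yields the lemma; equivalently, one notes the pointwise multiplier inequality $k^2\langle k,\xi\rangle^{s_1-2}\lesssim\langle t\rangle^{-2}(k^2+(\xi-kt)^2)\langle k,\xi\rangle^{s_1}$ for $k\neq0$ and applies it to the non-decaying estimate. No ladder over Sobolev levels is needed. (For comparison, the paper simply cites Lemma 3.4 of Masmoudi--Zhao, where the argument takes this one-step form.)
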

With the estimate of the Fourier kernel of $\mathring{\mathcal{T}}_{1,D}^{-1}$, the lossy estimate is simple. We refer to the proof of {\it Lemma 3.4} in \cite{MasmoudiZhao2020}. 

\subsection{Precision elliptic control}
The precision elliptic control is slightly more complicated than that in \cite{MasmoudiZhao2020}. We have more terms here. 
The idea is to show that $\mathcal{S}_1[\Psi]$ is small. We have the following proposition. 
\begin{proposition}\label{prop: estimate diff-1}
Under the bootstrap hypotheses,
\ben\label{eq: error D-Psi_1}
\begin{aligned}
&\left\|\left\langle\f{\pa_v}{t\pa_z}\right\rangle^{-1}(\pa_z^2+(\pa_v-t\pa_z)^2)\left(\f{|\na|^{\f{s}{2}}}{\langle t\rangle^s}\rmA+\sqrt{\f{\pa_t w}{w}}\tilde{\rmA}\right)P_{\star}\big(\mathring{\mathcal{T}}_{1,D}^{-1}[\mathcal{S}_1[\Psi]]\big)\right\|_2^2\\
&\lesssim \ep \left\|\left\langle\f{\pa_v}{t\pa_z}\right\rangle^{-1}(\pa_z^2+(\pa_v-t\pa_z)^2)\left(\f{|\na|^{\f{s}{2}}}{\langle t\rangle^s}\rmA+\sqrt{\f{\pa_t w}{w}}\tilde{\rmA}\right)P_{\star}(\Psi\Upsilon_2)\right\|_2^2\\
&\quad +\ep \mathcal{CK}_{h}+\ep\mathcal{CK}_{\varphi_{9}^{\d}}+\ep\mathcal{CK}_{\varphi_4^{\d}},
\end{aligned}
\een
and
\ben\label{eq: error Psi_1}
\begin{aligned}
&\left\|\left\langle\f{\pa_v}{t\pa_z}\right\rangle^{-1}(\pa_z^2+(\pa_v-t\pa_z)^2)P_{\star}\big(\mathring{\mathcal{T}}_{1,D}^{-1}[\mathcal{S}_1[\Psi]]\big)\right\|_2^2\\
&\lesssim \ep \left\|\left\langle\f{\pa_v}{t\pa_z}\right\rangle^{-1}(\pa_z^2+(\pa_v-t\pa_z)^2)P_{\star}(\Psi\Upsilon_2)\right\|_2^2+\ep^2,
\end{aligned}
\een
where $P_{\star}\in \big\{P_{0<|k|<k_{M}}, P_{|k|\geq k_{M}}\big\}$ is the projecction. 
\end{proposition}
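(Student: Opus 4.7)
\textbf{Proof proposal for Proposition \ref{prop: estimate diff-1}.}
The plan is to use Proposition \ref{prop: T_1} to absorb the loss of two derivatives coming from $\pa_z^2+(\pa_v-t\pa_z)^2$ on the outside, and then to show that the coefficient functions $\varphi_4^{\d}$, $h_1$, $h_2$ appearing in $\mathcal{S}_1[\Psi]$ are all $O(\ep)$ in the relevant Gevrey norm, so the whole expression is small either multiplicatively (absorbed into the left-hand side) or through coefficient $\mathcal{CK}$ terms. Concretely, applying the kernel bound from Proposition \ref{prop: T_1} to $\mathring{\mathcal{T}}_{1,D}^{-1}$, the outer factor $(\pa_z^2+(\pa_v-t\pa_z)^2)\big\langle\tfrac{\pa_v}{t\pa_z}\big\rangle^{-1}$ is controlled by the factor $\tfrac{1}{1+k^2+(\xi-kt)^2}$ present in $\mathcal{G}_{D,1}$, while the Gevrey weight $e^{-\la_{\Delta}\langle\xi-\eta\rangle^s}$ allows us to transfer the multiplier $\rmA$ (and $\tilde\rmA$) through the operator up to a harmless loss. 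Thus \eqref{eq: error D-Psi_1} reduces to controlling
\[
\Big\|\Big(\tfrac{|\na|^{s/2}}{\langle t\rangle^s}\rmA+\sqrt{\tfrac{\pa_t w}{w}}\tilde{\rmA}\Big)\mathcal{S}_1[\Psi]\Big\|_2^2,
\]
and \eqref{eq: error Psi_1} to controlling $\|\rmA\,\mathcal{S}_1[\Psi]\|_2^2$ (with suitable Sobolev corrections).

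Next I would decompose each term of $\mathcal{S}_1[\Psi]=\varphi_4^{\d}\pa_{zz}\Psi+h_1(\pa_v-t\pa_z)^2(\Upsilon_2\Psi)+h_2(\pa_v-t\pa_z)(\Upsilon_2\Psi)$ paraproduct-style into the three regions $\mathrm{LH}$, $\mathrm{HL}$, $\mathrm{HH}$ using Remark \ref{Rmk: A-switch}, and observe that $h_1$, $h_2$ are polynomial expressions in the \emph{small} quantities $h=\udl{\pa_yv}-\udl{u'}$, $\varphi_9^{\d}=\udl{u'}-\widetilde{u'}$, $\varphi_4^{\d}=\udl{\varphi_4}-\widetilde{\varphi_4}$, plus derivatives thereof. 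In the $\mathrm{LH}$ case, where the coefficient is low-frequency and the stream function is high-frequency, the object $\pa_{zz}\Psi$ or $(\pa_v-t\pa_z)^j(\Upsilon_2\Psi)$ reconstructs exactly the structure on the right-hand side of \eqref{eq: error D-Psi_1}/\eqref{eq: error Psi_1}, and since $\|h\|_{\mathcal{G}^{s,\la,\s-6}}+\|\varphi_9^{\d}\|_{\mathcal{G}^{s,\la,\s-6}}+\|\varphi_4^{\d}\|_{\mathcal{G}^{s,\la,\s-6}}\lesssim\ep$ by bootstrap hypothesis (B1), this produces the $\ep$-prefactor allowing absorption into the left-hand side. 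In the $\mathrm{HL}$ case, where the coefficient is high-frequency, the factor $\sqrt{\pa_tw/w}$ (resp.\ the plain $\rmA$) falls on the coefficient and I apply Remark \ref{Rmk: switch-large-zero} to convert the weight into $\rmA^{\rmR}$ on $h$, $\varphi_9^{\d}$ or $\varphi_4^{\d}$; the corresponding $L^2$ norms with $\dot{\rmA}^{\rmR}\rmA^{\rmR}$-weight are exactly $\mathcal{CK}_h$, $\mathcal{CK}_{\varphi_9^{\d}}$, $\mathcal{CK}_{\varphi_4^{\d}}$, while the $\Psi\Upsilon_2$ factor in low frequency is controlled through the lossy elliptic estimate (Lemma \ref{lem: lossy-elliptic u}) by $\ep/\langle t\rangle^2$ from the bootstrap energy of $\Om$. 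The $\mathrm{HH}$ case is the easiest and follows directly from the Gevrey algebra property combined with the $\ep$-smallness of the coefficients.

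The main obstacle will be treating the $\mathrm{HL}$ contribution of the $h_2\,(\pa_v-t\pa_z)(\Upsilon_2\Psi)$ term cleanly: here $h_2$ contains $\pa_v$ derivatives of the $\d$-coefficients and only one copy of $(\pa_v-t\pa_z)$ acts on $\Psi$, so the matching with $\mathcal{CK}_{h}$ etc.\ requires using the weight switching in Remark \ref{Rmk: A-switch} together with the pointwise control $\|\pa_z(\Upsilon_2\Psi)\|_{\mathcal{G}^{s,\s-6}}\lesssim \ep\langle t\rangle^{-1}$ that follows from \eqref{eq:decay-psi_1} and the lossy estimate. A closely related technical issue is that the factor $\big\langle\tfrac{\pa_v}{t\pa_z}\big\rangle^{-1}$ on the outside does not act nicely when the frequency of $\Psi$ is non-resonant while the coefficient sits at a resonant frequency of $f$; here one must exploit that $h_1,h_2,\varphi_4^{\d}$ are zero modes in $z$, so $\ell=0$ in the convolution and the second bullet of Remark \ref{Rmk: A-switch} applies, yielding a matching $\rmA^{\rmR}$-norm on the coefficient and a clean $\mathcal{CK}$ term. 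Once these combinatorial cases are sorted, summing the estimates term-by-term gives \eqref{eq: error D-Psi_1}, and the analogous but strictly easier computation without the extra weight $\tfrac{|\na|^{s/2}}{\langle t\rangle^s}\rmA+\sqrt{\pa_t w/w}\tilde\rmA$ yields \eqref{eq: error Psi_1}.
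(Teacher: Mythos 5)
Your proposal follows essentially the same route as the paper: reduce via the kernel bound of Proposition \ref{prop: T_1} to estimating the weighted multipliers applied to $\mathcal{S}_1[\Psi]$, then paraproduct-decompose the three terms $\varphi_4^{\d}\pa_{zz}\Psi$, $h_1(\pa_v-t\pa_z)^2(\Upsilon_2\Psi)$, $h_2(\pa_v-t\pa_z)(\Upsilon_2\Psi)$ in $v$, absorbing the low-high pieces into the left-hand side via the $O(\ep)$ smallness of $h,\varphi_9^{\d},\varphi_4^{\d}$ and converting the high-low pieces into the $\mathcal{CK}$ terms, exactly as in the paper's appeal to the BM2015 elliptic argument. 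The only caveat is that your intermediate reduction should retain the weight $\left\langle\f{\pa_v}{t\pa_z}\right\rangle^{-1}$ (as in the paper's multipliers $\mathcal{M}_1,\mathcal{M}_2$) so the low-high contribution can actually be absorbed into the stated right-hand side, a point you effectively acknowledge in your later discussion.
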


\begin{proof}
By applying Proposition \ref{prop: T_1}, we have
\begin{align*}
&\left\|\left\langle\f{\pa_v}{t\pa_z}\right\rangle^{-1}(\pa_z^2+(\pa_v-t\pa_z)^2)\left(\f{|\na|^{\f{s}{2}}}{\langle t\rangle^s}\rmA+\sqrt{\f{\pa_t w}{w}}\tilde{\rmA}\right)P_{\star}\big(\mathring{\mathcal{T}}_{1,D}^{-1}[\mathcal{S}_1[\Psi]]\big)\right\|_2\\
&\lesssim \left\|\mathcal{M}_1\big([\mathcal{S}_1[\Psi]]\big)\right\|_2+\left\|\mathcal{M}_2\big([\mathcal{S}_1[\Psi]]\big)\right\|_2
\end{align*}
and
\begin{align*}
&\left\|\left\langle\f{\pa_v}{t\pa_z}\right\rangle^{-1}(\pa_z^2+(\pa_v-t\pa_z)^2)P_{\star}\big(\mathring{\mathcal{T}}_{1,D}^{-1}[\mathcal{S}_1[\Psi]]\big)\right\|_2
\lesssim \left\|\mathcal{M}_0\big([\mathcal{S}_1[\Psi]]\big)\right\|_2
\end{align*}
where
\beno
&&\mathcal{M}_0(t,\na)=\left\langle\f{\pa_v}{t\pa_z}\right\rangle^{-1}\rmA P_{\star},\\
&&\mathcal{M}_1(t,\na)=\left\langle\f{\pa_v}{t\pa_z}\right\rangle^{-1}\f{|\na|^{s/2}}{\langle t\rangle^{s}}\rmA P_{\star},\\
&&\mathcal{M}_2(t,\na)=\left\langle\f{\pa_v}{t\pa_z}\right\rangle^{-1}\sqrt{\f{\pa_tw}{w}}\tilde{\rmA} P_{\star}.
\eeno
Let 
\ben\label{eq: T^1,T^2}
\mathrm{T}^1=h_1(\pa_v-t\pa_z)^2(\Psi\Upsilon_2),\quad
\mathrm{T}^2=h_2(\pa_v-t\pa_z)(\Psi\Upsilon_2),\quad
\mathrm{T}^3=\varphi^{\d}_4\pa_{zz}(\Upsilon_2\Psi).
\een
Divide each via a paraproduct decomposition in the $v$ variable only 
\begin{align}
\label{eq:T_1}\mathrm{T}^1&=\sum_{\rmM\geq 8}(h_1)_{\rmM}(\pa_v-t\pa_z)^2(\Psi\Upsilon)_{<\rmM/8}
+\sum_{\rmM\geq 8}(h_1)_{<\rmM/8}(\pa_v-t\pa_z)^2(\Psi\Upsilon)_{\rmM}\\
\nonumber
&\quad+\sum_{\rmM\in \mathcal{D}}\sum_{\f18\rmM\leq \rmM'\leq 8\rmM}(h_1)_{\rmM'}(\pa_v-t\pa_z)^2(\Psi\Upsilon)_{\rmM}\\
\nonumber
&=\mathrm{T}^1_{\mathrm{HL}}+\mathrm{T}^1_{\mathrm{LH}}+\mathrm{T}^1_{\mathcal{R}},\\
\label{eq:T_2}\mathrm{T}^2&=\sum_{\rmM\geq 8}(h_2)_{M}(\pa_v-t\pa_z)(\Psi\Upsilon)_{<\rmM/8}
+\sum_{\rmM\geq 8}(h_2)_{<\rmM/8}(\pa_v-t\pa_z)(\Psi\Upsilon)_{\rmM}\\
\nonumber&\quad+\sum_{\rmM\in \mathcal{D}}\sum_{\f18\rmM\leq \rmM'\leq 8\rmM}(h_2)_{\rmM'}(\pa_v-t\pa_z)(\Psi\Upsilon)_{\rmM}\\
\nonumber&=\mathrm{T}^2_{\mathrm{HL}}+\mathrm{T}^2_{\mathrm{LH}}+\mathrm{T}^2_{\mathcal{R}},\\
\label{eq:T_3}\mathrm{T}^3&=\sum_{\rmM\geq 8}(\varphi^{\d}_4)_{\rmM}(\pa_z)^2(\Psi\Upsilon)_{<\rmM/8}
+\sum_{\rmM\geq 8}(\varphi^{\d}_4)_{<\rmM/8}(\pa_z)^2(\Psi\Upsilon)_{\rmM}\\
\nonumber
&\quad+\sum_{\rmM\in \mathcal{D}}\sum_{\f18\rmM\leq \rmM'\leq 8\rmM}(\varphi^{\d}_4)_{\rmM'}(\pa_z)^2(\Psi\Upsilon)_{\rmM}\\
\nonumber
&=\mathrm{T}^3_{\mathrm{HL}}+\mathrm{T}^3_{\mathrm{LH}}+\mathrm{T}^3_{\mathcal{R}}.
\end{align}
Note that $\mathrm{T}^1=h_1(\pa_v-t\pa_z)^2(\Psi\Upsilon)$ has a similar form as the term $\mathrm{T}^1=(1-(v')^2)(\pa_v-t\pa_z)^2\phi$ in \cite{BM2015}. The support of the functions is away from the boundary, which guarantees that the Fourier analysis works well. Similarly, the term $\mathrm{T}^2$ has the same structure. The new term $\mathrm{T}^3$ here is even better than $\mathrm{T}^1$. 
The key point in {\it proof of Proposition 2.4, section 4.2.1 --- 4.2.3} in \cite{BM2015} is that the Fourier multipliers $\mathcal{M}_0$, $\mathcal{M}_1$ and $\mathcal{M}_2$ have some algebraic type properties. It is the same here. Thus we have 
\begin{align*}
&\sum_{j=1}^3\left\|\mathcal{M}_0\mathrm{T}^j_{\mathrm{LH}}\right\|_2^2\lesssim \ep^2\left\|\mathcal{M}_0\Delta_{L}P_{\star}(\Psi\Upsilon)\right\|_2^2\\
&\sum_{j=1}^3\left\|\mathcal{M}_1\mathrm{T}^j_{\mathrm{LH}}\right\|_2^2\lesssim \ep^2\left\|\mathcal{M}_1\Delta_{L}P_{\star}(\Psi\Upsilon)\right\|_2^2\\
&\sum_{j=1}^3\left\|\mathcal{M}_2\mathrm{T}^j_{\mathrm{LH}}\right\|_2^2
\lesssim \ep^2\left\|\mathcal{M}_1\Delta_{L}P_{\star}(\Psi\Upsilon)\right\|_2^2+\ep^2\left\|\mathcal{M}_2\Delta_{L}P_{\star}(\Psi\Upsilon)\right\|_2^2,
\end{align*}
and
\begin{align*}
&\sum_{j=1}^3\left\|\mathcal{M}_0\mathrm{T}^j_{\mathrm{HL}}\right\|_2^2\lesssim \ep^2\left\|\mathcal{M}_0\Delta_{L}P_{\star}(\Psi\Upsilon)\right\|_2^2+\ep^4,\\
&\sum_{j=1}^3\left\|\mathcal{M}_1\mathrm{T}^j_{\mathrm{HL}}\right\|_2^2
\lesssim \ep^2\left\|\mathcal{M}_1\Delta_{L}P_{\star}(\Psi\Upsilon)\right\|_2^2+\ep^2\mathrm{CK}_{h}^2+\ep^2\mathrm{CK}_{\varphi^{\d}_4}^2+\ep^2\mathrm{CK}_{\varphi^{\d}_9}^2,\\
&\sum_{j=1}^3\left\|\mathcal{M}_2\mathrm{T}^j_{\mathrm{HL}}\right\|_2^2
\lesssim \ep^2\left\|\mathcal{M}_2\Delta_{L}P_{\star}(\Psi\Upsilon)\right\|_2^2+\ep^2\left\|\mathcal{M}_1\Delta_{L}P_{\star}(\Psi\Upsilon)\right\|_2^2\\
&\qquad\qquad\qquad\qquad
+\ep^2\mathrm{CK}_{h}^1+\ep^2\mathrm{CK}_{\varphi^{\d}_4}^1+\ep^2\mathrm{CK}_{\varphi^{\d}_9}^2,
\end{align*}
and
\begin{align*}
&\sum_{j=1}^3\left\|\mathcal{M}_0\mathrm{T}^j_{\mathcal{R}}\right\|_2^2\lesssim \ep^2\left\|\mathcal{M}_0\Delta_{L}P_{\star}(\Psi\Upsilon)\right\|_2^2,\\
&\sum_{j=1}^3\left\|\mathcal{M}_1\mathrm{T}_{\mathcal{R}}^j\right\|_2^2\lesssim \ep^2\left\|\mathcal{M}_1\Delta_{L}P_{\star}(\Psi\Upsilon)\right\|_2^2,\\
&\sum_{j=1}^3\left\|\mathcal{M}_2\mathrm{T}_{\mathcal{R}}^j\right\|_2^2\lesssim \ep^2\left\|\mathcal{M}_1\Delta_{L}P_{\star}(\Psi\Upsilon)\right\|_2^2+\ep^2\left\|\mathcal{M}_2\Delta_{L}P_{\star}(\Psi\Upsilon)\right\|_2^2,
\end{align*}
where $\Delta_{L}=\pa_{z}^2+(\pa_v-t\pa_z)^2$. 
Thus we prove the proposition. 
\end{proof}
Now we prove Proposition \ref{prop:elliptic}. 
\begin{proof}
We consider the case $|k|\geq k_{M}$ first. In this case, $P_kf=P_{k}\Om$. By using \eqref{eq: diff-elliptic-1}, and Proposition \ref{prop: estimate diff-1}, we have 
\begin{align*}
&\left\|\left\langle\f{\pa_v}{t\pa_z}\right\rangle^{-1}(\pa_z^2+(\pa_v-t\pa_z)^2)\left(\f{|\na|^{\f{s}{2}}}{\langle t\rangle^s}\rmA+\sqrt{\f{\pa_t w}{w}}\tilde{\rmA}\right)P_{|k|\geq k_{M}}(\Psi\Upsilon_2)\right\|_2^2\\
&\leq C_{ell}\mathcal{CK}_f+\left\|\left\langle\f{\pa_v}{t\pa_z}\right\rangle^{-1}(\pa_z^2+(\pa_v-t\pa_z)^2)\left(\f{|\na|^{\f{s}{2}}}{\langle t\rangle^s}\rmA+\sqrt{\f{\pa_t w}{w}}\tilde{\rmA}\right)P_{|k|\geq k_{M}}\big(\mathring{\mathcal{T}}_{1,D}^{-1}[\mathcal{S}_1[\Psi]]\big)\right\|_2^2\\
&\leq C_{ell}\mathcal{CK}_f+\ep \left\|\left\langle\f{\pa_v}{t\pa_z}\right\rangle^{-1}(\pa_z^2+(\pa_v-t\pa_z)^2)\left(\f{|\na|^{\f{s}{2}}}{\langle t\rangle^s}\rmA+\sqrt{\f{\pa_t w}{w}}\tilde{\rmA}\right)P_{|k|\geq k_{M}}(\Psi\Upsilon_2)\right\|_2^2\\
&\quad +\ep \mathcal{CK}_{h}+\ep\mathcal{CK}_{\varphi_{9}^{\d}}+\ep\mathcal{CK}_{\varphi_4^{\d}}
\end{align*}
which gives \eqref{eq:elliptic1}. 

For the case $0<|k|<k_{M}$, we have $P_kf(t, z, v)=\bfD_{u,k}[\mathcal{F}_1\Om(t, k,\cdot)](t, k, v)e^{izk}$ and thus 
\beno
P_k\Om(t, z, v)=\bfD_{u,k}^{-1}[\mathcal{F}_1f(t, k,\cdot)](t, k, v)e^{izk}.
\eeno 
By following the same argument, we have 
\begin{align*}
&\left\|\left\langle\f{\pa_v}{t\pa_z}\right\rangle^{-1}(\pa_z^2+(\pa_v-t\pa_z)^2)\left(\f{|\na|^{\f{s}{2}}}{\langle t\rangle^s}\rmA+\sqrt{\f{\pa_t w}{w}}\tilde{\rmA}\right)P_{|k|\geq k_{M}}(\Psi\Upsilon_2)\right\|_2^2\\
&\leq \left\|\left\langle\f{\pa_v}{t\pa_z}\right\rangle^{-1}\left(\f{|\na|^{\f{s}{2}}}{\langle t\rangle^s}\rmA+\sqrt{\f{\pa_t w}{w}}\tilde{\rmA}\right)P_{|k|\leq k_{M}}(\Om)\right\|_2^2+\ep \mathcal{CK}_{h}+\ep\mathcal{CK}_{\varphi_{9}^{\d}}+\ep\mathcal{CK}_{\varphi_4^{\d}}.
\end{align*}
To control the first term by $\mathcal{CK}_f$, we use Remark \ref{Rmk:CKAOm}. Thus we prove the proposition. 
\end{proof}
We conclude this section by introducing the following corollary, which will be used in the estimates of pressure terms. 
\begin{corol}\label{Cor: psi-new}
Under the bootstrap hypotheses,
\beno
 \left\|\left\langle\f{\pa_v}{t\pa_z}\right\rangle^{-1}(\pa_z^2+(\pa_v-t\pa_z)^2)\rmA P_{\neq}(\Psi\Upsilon_2)\right\|_2\lesssim \ep,
\eeno
and
\begin{align*}
&\left\|\langle\pa_v\rangle\rmA_0P_0(\Upsilon_2\Psi)\right\|_2\lesssim \ep,\\
&\left\|\Big(\sqrt{\f{\pa_tw}{w}}+\f{\langle \pa_v\rangle^{\f{s}{2}}}{\langle t\rangle^s}\Big)\langle\pa_v\rangle\rmA_0P_0(\Upsilon_2\Psi)\right\|_2\lesssim 
\mathcal{CK}_f^{\f12}+\ep\Big(\mathcal{CK}_h^{\f12}+\mathcal{CK}_{\varphi^{\d}_4}^{\f12}+\mathcal{CK}_{\varphi^{\d}_9}^{\f12}\Big). 
\end{align*}
Moreover, we have estimate for $\udl{\om}(t, z, v)=\udl{\th}\Om+\udl{\varphi}_2\udl{\pa_yv}(\pa_v-t\pa_z)\Psi$. It holds that
\beno
\|\rmA\udl{\om}\|_{L^2}\lesssim \ep ,
\eeno
and
\begin{align*}
&\left\|\Big(\sqrt{\f{\pa_tw}{w}}\rmA+\f{\langle \na\rangle^{\f{s}{2}}}{\langle t\rangle^s}\tilde{\rmA}\Big)\udl{\om}\right\|_2\lesssim 
\mathcal{CK}_f^{\f12}+\ep\Big(\mathcal{CK}_h^{\f12}+\mathcal{CK}_{\varphi^{\d}_2}^{\f12}+\mathcal{CK}_{\varphi^{\d}_4}^{\f12}+\mathcal{CK}_{\varphi^{\d}_9}^{\f12}\Big).
\end{align*}
\end{corol}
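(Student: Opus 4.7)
The plan is to derive all four bounds from three ingredients already in hand: the identity \eqref{eq: diff-elliptic-1} $\mathring{\mathcal{T}}_{1,D}^{-1}[\Om]-\Upsilon_2\Psi=\mathring{\mathcal{T}}_{1,D}^{-1}[\mathcal{S}_1[\Psi]]$, the Green-function kernel bound in Proposition \ref{prop: T_1}, and the bootstrap hypothesis (B1) through $\mathcal{E}(t)\lesssim \ep^2$ together with Remark \ref{Rmk: wave bdd-A} which gives $\|\rmA\Om\|_2\approx\|\rmA f\|_2\lesssim\ep$. The strategy is identical in spirit to Proposition \ref{prop:elliptic}, but with the bare multiplier $\rmA$ in place of the $\mathcal{CK}$-prefactor multipliers $\f{|\na|^{s/2}}{\langle t\rangle^s}\rmA$ and $\sqrt{\pa_tw/w}\tilde{\rmA}$; one produces the basic size estimate first, then its $\mathcal{CK}$-weighted companion.

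First, for the non-zero mode, I would apply $\mathcal{M}_{0}=\langle\pa_v/(t\pa_z)\rangle^{-1}(\pa_z^2+(\pa_v-t\pa_z)^2)\rmA P_{\neq}$ to both sides of \eqref{eq: diff-elliptic-1}. The principal part $\mathcal{M}_{0}\mathring{\mathcal{T}}_{1,D}^{-1}[\Om]$ is estimated directly by Proposition \ref{prop: T_1} and the kernel bound $|\mathcal G_{D,1}|\lesssim e^{-\la_{\Delta}\langle\xi-\eta\rangle^s}/(1+k^2+(\eta-kt)^2)$, which cancels the prefactor $\langle\pa_v/(t\pa_z)\rangle^{-1}(\pa_z^2+(\pa_v-t\pa_z)^2)$ and yields $\|\rmA\Om\|_2\lesssim\ep$. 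For the error $\mathring{\mathcal{T}}_{1,D}^{-1}[\mathcal{S}_1[\Psi]]$ one mimics Proposition \ref{prop: estimate diff-1} verbatim: paraproduct-decompose $\mathrm{T}^1,\mathrm{T}^2,\mathrm{T}^3$ as in \eqref{eq:T_1}--\eqref{eq:T_3}, and observe that each HL/LH/$\mathcal R$ piece of $\mathcal{M}_{0}\mathrm{T}^j$ is controlled by $\ep\,\|\mathcal{M}_{0}\Delta_L P_{\neq}(\Psi\Upsilon_2)\|_2+\ep^2$, using $\|h_1\|,\|h_2\|,\|\varphi_4^\d\|\lesssim\ep$ in sufficiently strong Gevrey norms (via the bootstrap on $h$, $\varphi_4^\d$, $\varphi_9^\d$). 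Taking $\ep$ small, the $\ep$-piece of the right-hand side is absorbed into the left, leaving the desired $\lesssim\ep$ bound.

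For the zero-mode statements the argument collapses: on $k=0$ the operator becomes $\pa_v(\udl{\varphi_4}(\udl{\pa_yv})^2\pa_v\Psi)=P_0(\Om)$ on $[u(0),u(1)]$ with Dirichlet data, so the standard one-dimensional elliptic estimate (together with the uniform ellipticity of the time-independent leading coefficient $\widetilde{\varphi_4}\widetilde{u'}^2$) gives $\|\langle\pa_v\rangle\rmA_0 P_0(\Upsilon_2\Psi)\|_2\lesssim \|\rmA_0 P_0\Om\|_2\lesssim \ep$, with the $\d$-type coefficient corrections absorbed exactly as above. The $\mathcal{CK}$-weighted version then follows by applying $\sqrt{\pa_tw/w}+\langle\pa_v\rangle^{s/2}/\langle t\rangle^s$ to the same identity, splitting the coefficients into their tilde/$\d$ parts as in Remark \ref{Rmk: switch-large-zero}, and using $\mathcal{CK}_f^{1/2}$ to pay for the main piece and $\ep(\mathcal{CK}_h^{1/2}+\mathcal{CK}_{\varphi_4^\d}^{1/2}+\mathcal{CK}_{\varphi_9^\d}^{1/2})$ to pay for the coefficient errors. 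Finally, the $\udl{\om}$ bounds follow from $\udl{\om}=\udl{\th}\Om+\udl{\varphi_2}\udl{\pa_yv}(\pa_v-t\pa_z)\Psi$: the first summand is $\lesssim\ep$ in $\rmA$-norm by Remark \ref{Rmk: wave bdd-A}, while the second is $\lesssim\ep$ once one notes $\|\rmA(\pa_v-t\pa_z)P_{\neq}(\Upsilon_2\Psi)\|_2$ is controlled by the already-proved first bound (the factor $\langle\pa_v/(t\pa_z)\rangle^{-1}(\pa_z^2+(\pa_v-t\pa_z)^2)$ dominates $(\pa_v-t\pa_z)$), after a routine Gevrey product estimate for multiplication by $\udl{\varphi_2}\udl{\pa_yv}$. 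The $\mathcal{CK}$-companion uses exactly Remark \ref{Rmk: switch-large-zero} on the coefficients.

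\textbf{Main obstacle.} The one step that is not merely a quotation is the bare-$\rmA$ version of Proposition \ref{prop: estimate diff-1}: one needs $\|\mathcal{M}_0 \mathrm{T}^j\|_2\lesssim \ep\|\mathcal{M}_0\Delta_L P_{\neq}(\Upsilon_2\Psi)\|_2+\ep^2$ \emph{without} the smallness factor $|\na|^{s/2}/\langle t\rangle^s$ that made the Reaction and LH estimates easy in \cite{BM2015, MasmoudiZhao2020}. The loss of this prefactor is harmless here because we only demand an $\ep$ bound, not a $\mathcal{CK}$-type one: the algebra-property $\|\rmA(fg)\|_2\lesssim \|\rmA f\|_2\|g\|_{\mathcal{G}^{s,\la,\s'}}+\|\rmA g\|_2\|f\|_{\mathcal{G}^{s,\la,\s'}}$ applied to the three $\mathrm{T}^j$ paraproduct pieces (with $\ep$-small $h_1,h_2,\varphi_4^\d$) closes the argument. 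Care is required for the LH piece, where $\mathcal{M}_0$ hits a high-frequency $\Psi\Upsilon_2$ factor: one commutes $\mathcal{M}_0$ past the paraproduct using Remark \ref{Rmk: A-switch} and absorbs the resulting $\Delta_L$-weighted high-frequency quantity into the left-hand side for $\ep$ small, exactly as in the proof of Proposition \ref{prop: estimate diff-1}.
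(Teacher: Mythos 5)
Your proposal is correct and follows essentially the same route as the paper: split $\Upsilon_2\Psi = \mathring{\mathcal{T}}_{1,D}^{-1}[\Om] - \mathring{\mathcal{T}}_{1,D}^{-1}[\mathcal{S}_1[\Psi]]$ via \eqref{eq: diff-elliptic-1}, bound the principal part through Proposition~\ref{prop: T_1} and Remark~\ref{Rmk: wave bdd-A}, absorb the error for small $\ep$, and handle the zero-mode and $\udl{\om}$ bounds by the same mechanism together with Remark~\ref{Rmk: switch-large-zero}. The one inefficiency is that the bare-$\rmA$ error estimate you flag as the ``main obstacle'' is already available for quotation as \eqref{eq: error Psi_1} in Proposition~\ref{prop: estimate diff-1} (the displayed version there omits the $\rmA$, which appears to be a typographical slip, since its proof runs through the $\rmA$-weighted multiplier $\mathcal{M}_0 = \langle\pa_v/(t\pa_z)\rangle^{-1}\rmA P_\star$), so there is no need to re-derive the paraproduct estimate.
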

\begin{proof}
The estimates for nonzero modes and the estimates for $\udl{\om}$ in the corollary follow directly from \eqref{eq: diff-elliptic-1}, \eqref{eq: error Psi_1} and Remark \ref{Rmk: wave bdd-A} and the estimate of zero mode of $\Upsilon_2\Psi$.

For the zero mode, similarly, we have the decomposition 
\beq\label{eq: decomposition stream-P_0}
\begin{aligned}
P_0(\Om)
&=P_0(\mathcal{T}_1[\Psi])+
h_1\pa_{vv}P_0(\Upsilon_2\Psi)+h_2\pa_vP_0(\Upsilon_2\Psi), 
\end{aligned}
\eeq
where 
\ben\label{eq: T_1-P_0}
P_0(\mathcal{T}_1[\Psi])=\widetilde{u'}\pa_v\Big(\widetilde{\varphi_2}\widetilde{u'}\pa_vP_0(\Psi)\Big).
\een
We then have 
\begin{align*}
\langle\pa_v\rangle\rmA_0P_0(\Upsilon_2\Psi)
=\langle\pa_v\rangle\mathring{\mathcal{T}}_{1,D}^{-1}\Big[P_0(\Om)-h_1\pa_{vv}P_0(\Upsilon_2\Psi)-h_2\pa_vP_0(\Upsilon_2\Psi)\Big].
\end{align*}
Notice that Proposition \ref{prop: T_1} holds also for $k=0$. 
Note that for $|\eta|\geq 2|\xi-\eta|$
\begin{align*}
\f{\rmA_0(\xi)}{\rmA^{\rmR}(\eta)}\lesssim e^{c\la\langle \xi-\eta\rangle^s},\quad \text{for some}\quad 0<c<1. 
\end{align*}
Thus we conclude that 
\begin{align*}
\left\|\langle\pa_v\rangle\rmA_0P_0(\Upsilon_2\Psi)\right\|_2
&\lesssim \left\|\langle\pa_v\rangle^{-1}\rmA_0P_0\Om\right\|_2
+\left\|\langle\pa_v\rangle^{-1}\rmA_0(h_1\pa_{vv}P_0(\Upsilon_2\Psi))\right\|_2\\
&\quad+\left\|\langle\pa_v\rangle^{-1}\rmA_0(h_2\pa_vP_0(\Upsilon_2\Psi))\right\|_{L^2}\\
&\lesssim \|\rmA_0P_0\Om\|_2+\ep \left\|\langle\pa_v\rangle\rmA_0P_0(\Upsilon_2\Psi)\right\|_2\\
&\quad+\Big(\|\rmA^{\rmR}h_1\|_{L^2}+\|\rmA^{\rmR}\langle\pa_v\rangle^{-1}h_2\|_{L^2}\Big)\|P_0(\Upsilon_2\Psi)\|_{\mathcal{G}^{s,\s-6}}.
\end{align*}
Similarly, we have 
\begin{align*}
&\left\|\Big(\sqrt{\f{\pa_tw}{w}}+\f{\langle \pa_v\rangle^{\f{s}{2}}}{\langle t\rangle^s}\Big)\langle\pa_v\rangle\rmA_0P_0(\Upsilon_2\Psi)\right\|_2\\
&\lesssim \mathcal{CK}_f^{\f12}
+\left\|\langle\pa_v\rangle^{-1}\rmA_0\Big(\sqrt{\f{\pa_tw}{w}}+\f{\langle \pa_v\rangle^{\f{s}{2}}}{\langle t\rangle^s}\Big)(h_1\pa_{vv}P_0(\Upsilon_2\Psi))\right\|_2\\
&\quad+\left\|\langle\pa_v\rangle^{-1}\rmA_0\Big(\sqrt{\f{\pa_tw}{w}}+\f{\langle \pa_v\rangle^{\f{s}{2}}}{\langle t\rangle^s}\Big)(h_2\pa_vP_0(\Upsilon_2\Psi))\right\|_{L^2}\\
&\lesssim \mathcal{CK}_f^{\f12}+\ep \left\|\Big(\sqrt{\f{\pa_tw}{w}}+\f{\langle \pa_v\rangle^{\f{s}{2}}}{\langle t\rangle^s}\Big)\langle\pa_v\rangle\rmA_0P_0(\Upsilon_2\Psi)\right\|_2\\
&\quad+\Big(\mathcal{CK}_h^{\f12}+\mathcal{CK}_{\varphi^{\d}_4}^{\f12}+\mathcal{CK}_{\varphi^{\d}_9}^{\f12}\Big)\|P_0(\Upsilon_2\Psi)\|_{\mathcal{G}^{s,\s-6}},
\end{align*}
which gives the corollary. 
\end{proof}
The Corollary \ref{Cor: psi-new} gives the estimates of $\udl{\om}(t, z, v)$ which is the same as $\Om$. The estimates of the zero mode of the stream function $P_0(\Upsilon_2\Psi)$ are used in the estimate of $A_4=\udl{\chi_2'}\udl{\pa_yv}\pa_vP_{0}(\Upsilon_2\Psi)\pa_{zz}(\Upsilon_2\Psi)$ in the estimates of the pressure, see Lemma \ref{Lem: Xi_4}. 

\section{Zero mode of the pressure}\label{sec: pressure-zero}
In this section, we estimate the zero mode of the pressure. Unlike the whole space case, we will not use the elliptic equation to study the zero mode of the pressure. Instead, we use \eqref{eq: pressure-zero-mode}. 
Let us define
\begin{align*}
&\mathcal{M}_{00}(t, \xi)=\f{t(t^2+|\xi|^2)}{2+\xi^2}\rmA_0(\xi), \\
&\mathcal{M}_{01}(t, \xi)=\f{t(t^2+|\xi|^2)}{2+\xi^2}\Big(\sqrt{\f{\pa_tw}{w}}+\f{\langle \xi\rangle^{\f{s}{2}}}{\langle t\rangle^{s}}\Big)\rmA_0(\xi)
\end{align*}
By applying $\mathcal{M}_{00}$ and $\mathcal{M}_{01}$ on \eqref{eq: pressure-zero-mode}, we have for $j=0,1$
\beq\label{eq: P_0 pressure-eq}
\begin{aligned}
&\mathcal{M}_{0j}\pa_vP_0(\Pi\Upsilon_1)\\
=&-\mathcal{M}_{0j}\Big((\udl{\varphi_4}\Upsilon_2)P_0(a)\pa_vP_0(\Pi\Upsilon_1)\Big)
-\mathcal{M}_{0j}\Big((\udl{\varphi_4}\Upsilon_2)P_0\big[P_{\neq}(a)P_{\neq}(\pa_v-t\pa_z)P_{\neq}(\Pi\Upsilon_1)\big]\Big)\\
&+\mathcal{M}_{0j}\Big[(\udl{\varphi_4}\Upsilon_2)P_0\Big(\pa_z(\Psi\Upsilon_2)\pa_z(\pa_v-t\pa_z)(\Psi\Upsilon_2)-\pa_{zz}(\Psi\Upsilon_2)(\pa_v-t\pa_z)(\Psi\Upsilon_2)\Big)\Big)\Big].
\end{aligned}
\eeq
We first show that the first two terms in \eqref{eq: P_0 pressure-eq} are small.
\begin{proposition}\label{prop: 0pressure-error}
Under the bootstrap hypotheses,
\begin{align*}
&\left\|\mathcal{M}_{00}\Big((\udl{\varphi_4}\Upsilon_2)P_0(a)\pa_vP_0(\Pi\Upsilon_1)\Big)\right\|_2
\lesssim \ep \|\mathcal{M}_{00}\pa_vP_0(\Pi\Upsilon_1)\|_{L^2},\\
&\left\|\mathcal{M}_{00}\langle\pa_v\rangle\Big((\udl{\varphi_4}\Upsilon_2)P_0\big[P_{\neq}(a)P_{\neq}(\pa_v-t\pa_z)P_{\neq}(\Pi\Upsilon_1)\big]\Big)\right\|_2
\lesssim \ep^2,\\
&\left\|\mathcal{M}_{01}\Big((\udl{\varphi_4}\Upsilon_2)P_0(a)\pa_vP_0(\Pi\Upsilon_1)\Big)\right\|_2
\lesssim\ep \|\mathcal{M}_{01}\pa_vP_0(\Pi\Upsilon_1)\|_{L^2}+\f{\ep^2}{\langle t\rangle^s}+\ep \mathcal{CK}_a^{\f12}\\
&\left\|\mathcal{M}_{01}\Big((\udl{\varphi_4}\Upsilon_2)P_0\big[P_{\neq}(a)P_{\neq}(\pa_v-t\pa_z)P_{\neq}(\Pi\Upsilon_1)\big]\Big)\right\|_2\\
&\lesssim \f{\ep^2}{\langle t\rangle^s}+\ep \big(\mathcal{CK}_a^{\f12}+\mathcal{CK}_{\varphi^{\d}_4}^{\f12}\big)\\
&\quad+\ep \left\|\Big(\sqrt{\f{\pa_tw}{w}}+\f{\langle\na\rangle^{\f{s}{2}}}{\langle t\rangle^s}\Big){\langle \pa_z\rangle t(1+\langle \pa_z\rangle+|\pa_v|)}(1-\Delta)^{-1}\big((\pa_v-t\pa_z)^2+\pa_z^2\big)\rmA P_{\neq}(\Pi\Upsilon_1)\right\|_{L^2}. 
\end{align*}
\end{proposition}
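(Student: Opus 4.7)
The plan is to perform a paraproduct decomposition in the $v$-variable on each of the four products, splitting into Low-High, High-Low, and Resonant pieces, and to handle each with tools tailored to the frequency regime. For the first and third estimates I write
\begin{align*}
P_0(a)\,\pa_v P_0(\Pi\Upsilon_1) &= \sum_{\rmM\geq 8}(P_0 a)_{<\rmM/8}\bigl(\pa_v P_0(\Pi\Upsilon_1)\bigr)_{\rmM}\\
&\quad + \sum_{\rmM\geq 8}(P_0 a)_{\rmM}\bigl(\pa_v P_0(\Pi\Upsilon_1)\bigr)_{<\rmM/8} + \mathrm{Res},
\end{align*}
and absorb the coefficient $\udl{\varphi_4}\Upsilon_2 = \widetilde{\varphi_4}\Upsilon_2 + \varphi_4^{\d}\Upsilon_2$ so that the linear part behaves as a fixed Gevrey multiplier and the nonlinear part $\varphi_4^{\d}$ produces only $\ep$-sized corrections via the bootstrap on $\mathcal{E}_{\varphi_4^{\d}}$. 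The dominant contribution is the Low-High piece, since the low-frequency factor $(P_0 a)_{<\rmM/8}$ obeys $\|\rmA^{\rmR}(P_0 a)_{<\rmM/8}\|_{L^\infty} \lesssim \|\rmA^* P_0 a\|_{L^2} \lesssim \ep$, and because the Fourier support of the product concentrates at frequency $\rmM$, the weight $\mathcal{M}_{00}$ moves onto the pressure, yielding $\ep\|\mathcal{M}_{00}\pa_v P_0(\Pi\Upsilon_1)\|_{L^2}$. The Resonant piece is handled identically.

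For the High-Low piece of the first inequality we cannot keep the pressure inside the $\mathcal{M}_{00}$-norm, so instead I will use an independent pointwise bound $\|\pa_v P_0(\Pi\Upsilon_1)_{<\rmM/8}\|_{L^\infty} \lesssim \ep^2/\langle t\rangle^3$ obtained from a lossy version of \eqref{eq: pressure-zero-mode} together with Corollary \ref{Cor: psi-new}; combined with $\|\mathcal{M}_{00}(P_0 a)_{\rmM}\|_{L^2} \lesssim \langle t\rangle\|\rmA^*(P_0 a)_{\rmM}\|_{L^2}$ at high $v$-frequency (since $\mathcal{M}_{00}\lesssim \langle t\rangle\rmA_0^*$ there), this yields a contribution of size $\ep^3/\langle t\rangle^2$, which is absorbed into $\ep\|\mathcal{M}_{00}\pa_v P_0(\Pi\Upsilon_1)\|_{L^2}$ after the bootstrap. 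The third inequality ($\mathcal{M}_{01}$) follows by the same decomposition, tracking where the extra CK-weight $\sqrt{\pa_t w/w}+\langle\xi\rangle^{s/2}/\langle t\rangle^s$ lands: landing on the pressure reproduces the first RHS term; landing on $a$ yields $\mathcal{CK}_a^{1/2}$ by the definition of $\rmA^*$, producing the $\ep^2/\langle t\rangle^s$ and $\ep\mathcal{CK}_a^{1/2}$ terms.

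The second and fourth estimates involve a zero mode built from two non-zero-mode factors, $P_0[P_{\neq}(a)\,P_{\neq}(\pa_v-t\pa_z)(\Pi\Upsilon_1)]$. I will invoke the non-zero-mode pressure decay from Propositions \ref{Prop: non-zero-pessure-L}--\ref{Prop: non-zero-pessure-N} (via Corollary \ref{cor: decay-pressure}), which gives $\|P_{\neq}(\pa_v-t\pa_z)(\Pi\Upsilon_1)\|_{\mathcal{G}^{s,\s-6}} \lesssim \ep/\langle t\rangle^3$, paired with $\|\rmA^* a\|_{L^2} \lesssim \ep$, and close by paraproduct; the extra $\langle\pa_v\rangle$ on the LHS is tamed by the fact that for a convolution of two non-zero-$k$ modes producing $k=0$, the output $v$-frequency is controlled by the sum of the input $v$-frequencies. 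For the fourth estimate the CK-weight is distributed as before, producing $\mathcal{CK}_a^{1/2}$, the $\mathcal{CK}_{\varphi^{\d}_4}^{1/2}$ term (when the weight lands on $\varphi_4^{\d}$ after the splitting $\udl{\varphi_4}=\widetilde{\varphi_4}+\varphi_4^{\d}$), and the term with the weighted pressure norm when it lands on $P_{\neq}\Pi$.

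The main obstacle is the High-Low piece of the first and third inequalities: one must find a pointwise decay $\|\pa_v P_0(\Pi\Upsilon_1)\|_{L^\infty}\lesssim \ep^2/\langle t\rangle^3$ fast enough to defeat the $\langle t\rangle^3$ growth intrinsic to $\mathcal{M}_{00}$ at low frequency, and one must do so without circularly invoking the very bound we are trying to prove. The resolution, standard in this circle of ideas, is to read off this pointwise bound from \eqref{eq: pressure-zero-mode} at a lower regularity level (so the invertibility $1+(\udl{\varphi_4}\Upsilon_2)P_0(a)\approx 1$ follows from the pointwise smallness of $P_0(a)$ alone) and then feed it as a black box into the weighted estimate.
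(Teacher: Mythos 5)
Your overall scheme (paraproduct in $v$, splitting $\udl{\varphi_4}=\widetilde{\varphi_4}+\varphi_4^{\d}$, using $\|\rmA^*a\|_{L^2}\lesssim\ep$ for the low-frequency factor, and tracking where the CK-weight lands for the $\mathcal{M}_{01}$ versions) matches the paper's proof. The divergence is in the High-Low piece of the first and third inequalities, and there your route has a genuine problem. You propose to establish an independent pointwise bound $\|\pa_vP_0(\Pi\Upsilon_1)\|_{L^\infty}\lesssim\ep^2/\langle t\rangle^3$ from \eqref{eq: pressure-zero-mode} and then claim the resulting $\ep^3/\langle t\rangle^2$ contribution is ``absorbed into $\ep\|\mathcal{M}_{00}\pa_vP_0(\Pi\Upsilon_1)\|_{L^2}$ after the bootstrap.'' That last inference is not valid: a fixed positive quantity cannot be bounded by $\ep\|\mathcal{M}_{00}\pa_vP_0(\Pi\Upsilon_1)\|_{L^2}$, which has no a priori lower bound, so as written you do not obtain the stated inequality. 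Moreover, the independent pointwise bound you want to black-box is itself entangled with the coupled system: \eqref{eq: pressure-zero-mode} contains $P_0[P_{\neq}(a)P_{\neq}(\pa_v-t\pa_z)P_{\neq}(\Pi\Upsilon_1)]$, whose control rests on the non-zero-mode pressure estimates, whose right-hand sides in turn carry the weighted zero-mode pressure norm. The paper avoids all of this with a one-line multiplier comparison: since $\mathcal{M}_{00}(t,\xi)=\f{t(t^2+\xi^2)}{2+\xi^2}\rmA_0(\xi)\gtrsim \langle t\rangle^3\langle\xi\rangle^{-2}\rmA_0(\xi)$, one has
\begin{align*}
\langle t\rangle^3\|\pa_vP_0(\Pi\Upsilon_1)\|_{\mathcal{G}^{s,\s-6}}\lesssim \|\mathcal{M}_{00}\pa_vP_0(\Pi\Upsilon_1)\|_{L^2},
\end{align*}
so the High-Low piece is bounded by $\|\rmA^{\rmR}(\udl{\varphi_4}\Upsilon_2)\|_{L^2}\|\rmA^*a\|_{L^2}\,\langle t\rangle^3\|\pa_vP_0(\Pi\Upsilon_1)\|_{\mathcal{G}^{s,\s-6}}\lesssim\ep\|\mathcal{M}_{00}\pa_vP_0(\Pi\Upsilon_1)\|_{L^2}$, exactly the form the proposition requires, with no independent decay needed. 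You should replace your detour with this observation.

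A secondary omission: for the second and fourth inequalities the substantive work is not the statement that the output $v$-frequency is the sum of the inputs, but the comparison of $\rmA_0(\xi)$ against $\rmA_k(\eta)$ across resonant intervals together with the ratio of $\f{t(t^2+\xi^2)}{2+\xi^2}$ to the non-zero-mode pressure weight $\f{\langle k\rangle t(t+\langle k\rangle+|\eta|)}{1+k^2+\eta^2}$. The paper carries this out via the bound \eqref{eq: A_0/A_k} and a case analysis producing the factors $(\xi-kt)^2+k^2$ and $\left\langle\f{\eta}{kt}\right\rangle^{-1}$ that reconstitute the weighted non-zero-mode pressure norm appearing on the right-hand side of the fourth inequality. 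Your proposal does not engage with this step, and without it the term
$\ep\big\|\big(\sqrt{\pa_tw/w}+\langle\na\rangle^{s/2}\langle t\rangle^{-s}\big)\langle\pa_z\rangle t(1+\langle\pa_z\rangle+|\pa_v|)(1-\Delta)^{-1}((\pa_v-t\pa_z)^2+\pa_z^2)\rmA P_{\neq}(\Pi\Upsilon_1)\big\|_{L^2}$
on the right-hand side cannot be produced.
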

\begin{proof}
It is easy to check that
\beno
\langle t\rangle^3\|\pa_vP_0(\Pi\Upsilon_1)\|_{\mathcal{G}^{s,\s-6}}\lesssim \|\mathcal{M}_{00}\pa_vP_0(\Pi\Upsilon_1)\|_{L^2}. 
\eeno
We have 
\begin{align*}
&\left\|\mathcal{M}_{00}\Big((\udl{\varphi_4}\Upsilon_2)P_0(a)\pa_vP_0(\Pi\Upsilon_1)\Big)\right\|_2\\
&\lesssim \|\rmA^{\rmR}(\udl{\varphi_4}\Upsilon_2)\|_{L^2}\|\rmA^*a\|_{L^2}\langle t\rangle^3\|\pa_vP_0(\Pi\Upsilon_1)\|_{\mathcal{G}^{s,\s-6}}+\ep \|\mathcal{M}_{00}\pa_vP_0(\Pi\Upsilon_1)\|_{L^2}\\
&\lesssim \ep \|\mathcal{M}_{00}\pa_vP_0(\Pi\Upsilon_1)\|_{L^2}. 
\end{align*}
and
\begin{align*}
&\left\|\mathcal{M}_{01}\Big((\udl{\varphi_4}\Upsilon_2)P_0(a)\pa_vP_0(\Pi\Upsilon_1)\Big)\right\|_2\\
&\lesssim \|(\udl{\varphi_4}\Upsilon_2)\|_{\mathcal{G}^{s,\s-6}}\mathcal{CK}_a^{\f12}\langle t\rangle^3\|\pa_vP_0(\Pi\Upsilon_1)\|_{\mathcal{G}^{s,\s-6}}+\ep \|\mathcal{M}_{01}\pa_vP_0(\Pi\Upsilon_1)\|_{L^2}\\
&\quad +\ep\Big(\f{1}{\langle t\rangle^s}+\mathcal{CK}_{\varphi^{\d}_4}^{\f12}\Big)\langle t\rangle^2\|\pa_vP_0(\Pi\Upsilon_1)\|_{\mathcal{G}^{s,\s-6}}\\
&\lesssim \ep \|\mathcal{M}_{01}\pa_vP_0(\Pi\Upsilon_1)\|_{L^2}+\f{\ep^2}{\langle t\rangle^s}+\ep \mathcal{CK}_a^{\f12}. 
\end{align*}
We also have for $|\xi-\eta|\leq \f12|\eta|$
\begin{align*}
&\f{t(t^2+|\xi|^2)}{2+\xi^2}\f{2+k^2+\eta^2}{\langle k\rangle t(t+\langle k\rangle+|\eta|)}
\lesssim\f{(t+|\xi|)(t+\langle k\rangle+|\xi|)}{\langle k\rangle (t+\langle k\rangle+|\eta|)}\f{2+k^2+\eta^2}{2+\xi^2}\\
&\lesssim \langle k,\xi-\eta\rangle^3\Big(|\eta-kt|+|k|\Big)\mathbf{1}_{|\eta-kt|\geq \f12|kt|}+\f{|\eta|}{|k|}\mathbf{1}_{|\eta-kt|\leq \f12|kt|, t\in {\rm{I}}_{k,\eta}}+
|\eta-kt|\mathbf{1}_{|\eta-kt|\leq \f12|kt|, t\notin {\rm{I}}_{k,\eta}}
\end{align*}
and by \eqref{EQ 3.32}, we get
\begin{align}\label{eq: A_0/A_k}
\f{\rmA_0(\xi)}{\rmA_k(\eta)}\lesssim \left(\f{|k|(|k|+|\eta-kt|)}{|\xi|}\mathbf{1}_{t\in {\rm{I}}_{k,\eta}}+1\right)e^{c\la|\eta-\xi|^s}e^{-c\la|k|^s}.
\end{align}
The case $|\xi-\eta|\geq \f12|\eta|$ is simple, in which case, we switch $\rmA_0(\xi)$ to $\rmA_{-k}^{*}(\xi-\eta)$. Thus we have 
\begin{align*}
&\left\|\mathcal{M}_{00}\langle\pa_v\rangle\Big((\udl{\varphi_4}\Upsilon_2)P_0\big[P_{\neq}(a)P_{\neq}(\pa_v-t\pa_z)P_{\neq}(\Pi\Upsilon_1)\big]\Big)\right\|_2\\
&\lesssim \|\rmA^{\rmR}(\udl{\varphi_4}\Upsilon_2)\|_{L^2}\|\rmA^*a\|_{L^2}\langle t\rangle^4\|P_{\neq}(\Pi\Upsilon_1)\|_{\mathcal{G}^{s,\s-6}}\\
&\quad+\ep \left\|{\langle \pa_z\rangle t(1+\langle \pa_z\rangle+|\pa_v|)}(1-\Delta)^{-1}\big((\pa_v-t\pa_z)^2+\pa_z^2\big)\rmA P_{\neq}(\Pi\Upsilon_1)\right\|_{L^2}\lesssim \ep^2
\end{align*}
and by Lemma \ref{Lem 3.4}
\begin{align*}
&\left\|\mathcal{M}_{01}\Big((\udl{\varphi_4}\Upsilon_2)P_0\big[P_{\neq}(a)P_{\neq}(\pa_v-t\pa_z)P_{\neq}(\Pi\Upsilon_1)\big]\Big)\right\|_2\\
&\lesssim \|(\udl{\varphi_4}\Upsilon_2)\|_{\mathcal{G}^s}\mathcal{CK}_a^{\f12}\langle t\rangle^4\|P_{\neq}(\Pi\Upsilon_1)\|_{\mathcal{G}^{s,\s-6}}\\
&\quad \ep \left\|\Big(\sqrt{\f{\pa_tw}{w}}+\f{\langle\na\rangle^{\f{s}{2}}}{\langle t\rangle^s}\Big){\langle \pa_z\rangle t(1+\langle \pa_z\rangle+|\pa_v|)}(1-\Delta)^{-1}\big((\pa_v-t\pa_z)^2+\pa_z^2\big)\rmA P_{\neq}(\Pi\Upsilon_1)\right\|_{L^2}\\
&\quad +\ep\Big(\f{1}{\langle t\rangle^s}+\mathcal{CK}_{\varphi^{\d}_4}^{\f12}\Big)\langle t\rangle^4\|P_{\neq}(\Pi\Upsilon_1)\|_{\mathcal{G}^{s,\s-6}}\\
&\lesssim \f{\ep^2}{\langle t\rangle^s}+\ep \big(\mathcal{CK}_a^{\f12}+\mathcal{CK}_{\varphi^{\d}_4}^{\f12}\big)\\
&\quad+\ep \left\|\Big(\sqrt{\f{\pa_tw}{w}}+\f{\langle\na\rangle^{\f{s}{2}}}{\langle t\rangle^s}\Big){\langle \pa_z\rangle t(1+\langle \pa_z\rangle+|\pa_v|)}(1-\Delta)^{-1}\big((\pa_v-t\pa_z)^2+\pa_z^2\big)\rmA P_{\neq}(\Pi\Upsilon_1)\right\|_{L^2}. 
\end{align*}
Thus, we prove the proposition. 
\end{proof}
We now prove Proposition \ref{prop:pressure-zero mode}. 
\begin{proof}
By Proposition \ref{prop: 0pressure-error}, we only need to control the last term in \eqref{eq: P_0 pressure-eq}. Indeed, we only need to consider 
\begin{align*}
I_j=\mathcal{M}_{0j}\Big[(\udl{\varphi_4}\Upsilon_2)P_0\Big(\pa_{zz}(\Psi\Upsilon_2)(\pa_v-t\pa_z)(\Psi\Upsilon_2)\Big)\Big)\Big]
\end{align*}
due to the projection $P_0$. 
By using the same idea in the proof of Proposition \ref{prop:pressure-zero mode}, we have for $|\xi-\eta|\leq \f{1}{2}|\eta|$
\begin{align*}
&\f{t(t^2+|\xi|^2)}{2+\xi^2}\left\langle \f{\eta}{kt}\right\rangle\approx \f{t(t^2+|\xi|^2)}{2+\xi^2}\f{|\eta|+|kt|}{|kt|}\\
&\lesssim 
\Big(|\eta-kt|+|k|\Big)\mathbf{1}_{|\eta-kt|\geq \f12|kt|}+\f{|\eta|}{|k|}\mathbf{1}_{|\eta-kt|\leq \f12|kt|, t\in {\rm{I}}_{k,\eta}}+
|\eta-kt|\mathbf{1}_{|\eta-kt|\leq \f12|kt|, t\notin {\rm{I}}_{k,\eta}}.
\end{align*}
Then by \eqref{eq: A_0/A_k}, 
\begin{align*}
\|I_0\|_{L^2}
&\lesssim \|\rmA^{\rmR}(\udl{\varphi_4}\Upsilon_2)\|_{L^2}\langle t\rangle^4\|P_{\neq}(\Psi\Upsilon_2)\|_{\mathcal{G}^{s,\s-6}}^2\\
&\quad+\|(\udl{\varphi_4}\Upsilon_2)\|_{\mathcal{G}^{s,\s-6}}
\langle t\rangle^2\|P_{\neq}(\Psi\Upsilon_2)\|_{\mathcal{G}^{s,\s-6}} \left\|\left\langle\f{\pa_v}{t\pa_z}\right\rangle^{-1}(\pa_z^2+(\pa_v-t\pa_z)^2)\rmA P_{\neq}(\Psi\Upsilon_2)\right\|_2\\
&\lesssim \ep^2,
\end{align*}
and by Lemma \ref{Lem 3.4}
\begin{align*}
\|I_1\|_{L^2}
&\lesssim \Big(\f{1}{\langle t\rangle^s}+\mathcal{CK}_a^{\f12}+\mathcal{CK}_{\varphi^{\d}_4}^{\f12}\Big)\langle t\rangle^4\|P_{\neq}(\Psi\Upsilon_2)\|_{\mathcal{G}^{s,\s-6}}^2\\
&\quad+\|(\udl{\varphi_4}\Upsilon_2)\|_{\mathcal{G}^{s,\s-6}}
\langle t\rangle^2\|P_{\neq}(\Psi\Upsilon_2)\|_{\mathcal{G}^{s,\s-6}} \\
&\quad\quad\times \left\|\left\langle\f{\pa_v}{t\pa_z}\right\rangle^{-1}(\pa_z^2+(\pa_v-t\pa_z)^2)\left(\f{|\na|^{\f{s}{2}}}{\langle t\rangle^s}\rmA+\sqrt{\f{\pa_t w}{w}}\tilde{\rmA}\right)P_{\neq}(\Psi\Upsilon_2)\right\|_2\\
&\lesssim \f{\ep^2}{\langle t\rangle^s}+\ep^2\mathcal{CK}_a^{\f12}+\ep^2\mathcal{CK}_{\varphi^{\d}_4}^{\f12}\\
&\quad+\left\|\left\langle\f{\pa_v}{t\pa_z}\right\rangle^{-1}(\pa_z^2+(\pa_v-t\pa_z)^2)\left(\f{|\na|^{\f{s}{2}}}{\langle t\rangle^s}\rmA+\sqrt{\f{\pa_t w}{w}}\tilde{\rmA}\right)P_{\neq}(\Psi\Upsilon_2)\right\|_2,
\end{align*}
which gives Proposition \ref{prop:pressure-zero mode}. 
\end{proof}

\section{Elliptic estimate for the pressure}\label{sec: ell-pressure}
In this section, we study the following elliptic equation
\beq\label{eq: pressure}
\left\{\begin{aligned}
&\pa_z\Big((a+\udl{\th})\pa_z\Pi\Big)+\udl{\pa_yv}(\pa_v-t\pa_z)\Big((a+\udl{\th})\udl{\pa_yv}(\pa_v-t\pa_z)\Pi\Big)=\Xi,\\
&(\pa_v-t\pa_z)\Pi(t, z, u(0))=(\pa_v-t\pa_z)\Pi(t, z, u(1))=0,
\end{aligned}
\right.
\eeq
where $\Pi\in \{\Pi_{l,1}, \Pi_{l,2}, \Pi_{n,1},\Pi_{n,2}\}$ and $\Xi$ represents the right-hand side of equations \eqref{eq: Pressure} respectively. 

We rewrite the equation by separating its linear part and the nonlinear part. 
\begin{align*}
\Xi=\mathcal{T}_2[\Pi]
&+\pa_z\Big((a+\th^{\d})\pa_z\Pi\Big)
+(\udl{\pa_yv}-\widetilde{u'})(\pa_v-t\pa_z)\Big((a+\udl{\th})\udl{\pa_yv}(\pa_v-t\pa_z)\Pi\Big)\\
&+\widetilde{u'}(\pa_v-t\pa_z)\Big((a+\th^{\d})\udl{\pa_yv}(\pa_v-t\pa_z)\Pi\Big)\\
&+\widetilde{u'}(\pa_v-t\pa_z)\Big(\widetilde{\th}(\udl{\pa_yv}-\widetilde{u'})(\pa_v-t\pa_z)\Pi\Big),
\end{align*}
where 
\beq\label{eq: T_2-def}
\mathcal{T}_2[\Pi]=\widetilde{\th}\pa_{zz}\Pi+\widetilde{u'}(\pa_v-t\pa_z)\Big(\widetilde{\th}\widetilde{u'}(\pa_v-t\pa_z)\Pi\Big)
\eeq
Similar to $\mathcal{T}_1$, we define $\mathcal{T}_{N}^{-1}$ to be the inverse operator of $\mathcal{T}_2$ with Neumann boundary conditions, namely for any function $f(t, z, v)$, $\mathcal{T}_2\big[\mathcal{T}_{N}^{-1}[f]\big]=f$ with 
\beno
\pa_v\mathcal{T}_{N}^{-1}[f](t, z, 0)=\pa_v\mathcal{T}_{N}^{-1}[f](t, z, 1)
\eeno
We also define 
\ben
\mathring{\mathcal{T}}_{j, N}^{-1}=\Upsilon_j\mathcal{T}_{N}^{-1}, \quad j=1,2.
\een
\begin{proposition}\label{eq: T_2}
Let $\Xi$ be compactly supported. Then for $k\neq 0$ there is $\mathcal{G}_{N, j}(t, k, \xi, \eta)$ such that for $j=1,2,$
\beno
\widehat{(\mathring{\mathcal{T}}_{j,N}^{-1}[\Xi])}(t, k, \xi)=\int_{\mathbb{R}}\mathcal{G}_{N, j}(t, k, \xi, \eta)\widehat{\Xi}(t, k, \eta)d\eta,
\eeno
with estimate
\beno
|\mathcal{G}_{N, j}(t, k, \xi, \eta)|\leq C\min\left\{\f{e^{-\la_{\Delta}\langle \xi-\eta\rangle^s}}{1+k^2+(\xi-kt)^2}, \f{e^{-\la_{\Delta}\langle \xi-\eta\rangle^s}}{1+k^2+(\eta-kt)^2}\right\}.
\eeno
\end{proposition}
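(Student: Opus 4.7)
The strategy parallels that of Proposition \ref{prop: T_1} (the Dirichlet case, whose proof is in the same appendix), with modifications for the Neumann condition and the different leading coefficient $\widetilde{\th}$ in place of $\widetilde{\varphi_4}$. After taking the Fourier transform in $z$ at frequency $k\neq 0$ and conjugating out the time-dependent phase, i.e.\ writing $\hat{\Pi}_k(v)=e^{iktv}g_k(v)$, the principal operator $\widetilde{u'}(\pa_v-itk)[\widetilde{\th}\widetilde{u'}(\pa_v-itk)\cdot]$ collapses to the stationary operator $\widetilde{u'}\pa_v[\widetilde{\th}\widetilde{u'}\pa_v\cdot]$, and the time-dependent Neumann condition $(\pa_v-itk)\hat{\Pi}_k=0$ at $v=u(0),u(1)$ becomes the ordinary Neumann condition $\pa_v g_k=0$. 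Switching to $y=u^{-1}(v)$ with $G_k(y)=g_k(u(y))$, the problem reduces to the self-adjoint Sturm--Liouville problem
\beno
\pa_y\bigl(\th(y)\pa_yG_k(y)\bigr)-k^2\th(y)G_k(y)=e^{-iktu(y)}\hat{\Xi}_k(u(y)),\quad \pa_yG_k(0)=\pa_yG_k(1)=0,
\eeno
which is invertible for every $k\neq 0$ by a direct energy estimate.

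\textbf{Green's function and kernel formula.} Two linearly independent homogeneous solutions $\psi_-(\cdot,k), \psi_+(\cdot,k)$ of $\pa_y(\th\pa_y\psi)=k^2\th\psi$ with $\pa_y\psi_-(0)=0$ and $\pa_y\psi_+(1)=0$ are constructed by the same contraction-mapping scheme as in Proposition \ref{prop: homsol} (formulated with $\cosh$-type weighted norms in place of the $(u-u(y'))^{2}$-weighted ones, which is easier because no critical-layer singularity appears here); this gives pointwise bounds $|\psi_\pm(y,k)|\approx e^{|k||\cdot|}$ together with the Gevrey-$s_0$ regularity of Corollary \ref{corol: regular Phi}, and a Wronskian $W(k)$ comparable to $|k|$. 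The associated Neumann Green's function
\beno
\mathcal{K}(y,y',k)=\frac{1}{W(k)}\bigl(\psi_-(y,k)\psi_+(y',k)\mathbf{1}_{y\leq y'}+\psi_+(y,k)\psi_-(y',k)\mathbf{1}_{y\geq y'}\bigr)
\eeno
then satisfies $|\mathcal{K}(y,y',k)|\lesssim |k|^{-1}e^{-C^{-1}|k||y-y'|}$. Undoing the substitutions and applying the cutoff $\Upsilon_j$ yields the representation
\beno
\mathcal{G}_{N,j}(t,k,\xi,\eta)=\frac{1}{2\pi}\int\!\!\int e^{-i(\xi-kt)v+i(\eta-kt)v'}\Upsilon_j(v)\mathcal{K}(u^{-1}(v),u^{-1}(v'),k)(u^{-1})'(v')\,dv\,dv'.
\eeno
The two bounds $(k^2+(\xi-kt)^2)^{-1}$ and $(k^2+(\eta-kt)^2)^{-1}$ now follow by integrating by parts twice in $v$ (respectively $v'$), using the distributional equation $\widetilde{u'}\pa_v(\widetilde{\th}\widetilde{u'}\pa_v\mathcal{K})-k^2\widetilde{\th}\mathcal{K}=\delta(v-v')$ to absorb the jump in $\pa_v\mathcal{K}$ at $v=v'$; combining the $(\xi-kt)^2$ factor from the IBPs with the $k^2$ factor from the SL operator produces the stated denominator, while $\Upsilon_j$ kills the contributions at $v=u(0),u(1)$.

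\textbf{Expected obstacle.} The main work lies in extracting the Gevrey factor $e^{-\la_\Delta\langle\xi-\eta\rangle^s}$. Since $\xi-\eta$ is the Fourier-dual of $v-v'$ (the $kt$ shifts cancel upon subtraction), this reduces to showing that the integrand $\Upsilon_j(v)\mathcal{K}(u^{-1}(v),u^{-1}(v'),k)(u^{-1})'(v')$ is jointly Gevrey-$s_0$ in $(v,v')$ with norm controlled uniformly in $k$. Away from the diagonal this is immediate from hypothesis (5) of Theorem \ref{Thm: main} and the Gevrey regularity of $\Upsilon_j$, while near the diagonal one has to adapt the weighted $\mathcal{G}^{M|k|,s_0}_{ph,\cosh}$-norm contraction argument of Proposition \ref{prop: T} and Corollary \ref{corol: regular Phi} to the Neumann homogeneous solutions $\psi_\pm$ (instead of $\phi_1$), which requires verifying the analogous two-parameter derivative estimates that survive through the Wronskian-division step. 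The standard Gevrey--Fourier correspondence (Lemma \ref{Lem: A1} and Remark \ref{Rmk: fourier-gevrey}) then converts these joint Gevrey bounds into the desired $e^{-\la_\Delta\langle\xi-\eta\rangle^s}$ decay, with $\la_\Delta$ depending only on the Gevrey norms of $u$, $\th$, and $\Upsilon_j$.
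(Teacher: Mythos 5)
Your proposal follows essentially the same route as the paper's Appendix C: reduce to a stationary self-adjoint Sturm--Liouville problem with Neumann conditions, build the Green's function from the two one-sided Neumann homogeneous solutions (the paper's $\mathfrak{q}_\al,\mathfrak{q}_\b$ from Lemma \ref{eq: mfq}, constructed by the same $\cosh$-weighted contraction), obtain the $(1+k^2+(\xi-kt)^2)^{-1}$ decay from the $|k|^{-1}e^{-C^{-1}|k||v-v'|}$ kernel bound plus two integrations by parts handling the diagonal jump, and convert joint Gevrey regularity of the kernel (the paper's Lemmas \ref{lem: boundary q/q}, \ref{lem: G-regularity-1}, \ref{lem: G-regularity-1-v near v'}) into the $e^{-\la_\Delta\langle\xi-\eta\rangle^s}$ factor via Lemma \ref{Lem: A1}. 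The only caveat is bookkeeping: the Wronskian is not comparable to $|k|$ but to $|k|$ times the exponentially large factor $\mathfrak{q}_\b(\al)$, which cancels against the product $\psi_-(y)\psi_+(y')$ exactly as in the paper's normalization, so your stated kernel bound is still correct.
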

The proposition is proved in Appendix \ref{sec: SL-N}. 

Next, we rewrite the pressure equation by separating interactions with zero-mode and interactions with $a$. 
\beq\label{eq: elliptic pressure-1}
\begin{aligned}
\Xi=\mathcal{T}_2[\Pi]&+\mathcal{S}_2[\Pi]+\mathcal{S}_a[\Pi],
\end{aligned}
\eeq
where the `pressure error' terms are
\begin{align*}
\mathcal{S}_2[\Pi]&=\th^{\d}\pa_{zz}(\Upsilon_2\Pi)
+g_1(\pa_v-t\pa_z)^2(\Upsilon_2\Pi)+g_2(\pa_v-t\pa_z)(\Upsilon_2\Pi),\\
\mathcal{S}_a[\Pi]&=\pa_z\big(a\pa_z(\Upsilon_2\Pi)\big)+g_3(\pa_v-t\pa_z)\big(a(\pa_v-t\pa_z)(\Upsilon_2\Pi)\big)
+g_4a(\pa_v-t\pa_z)(\Upsilon_2\Pi).
\end{align*}
with 
\begin{align*}
g_1&=(\udl{\pa_yv}-\widetilde{u'})\udl{\th}\udl{\pa_yv}+\tilde{u'}\th^{\d}\udl{\pa_yv}+\widetilde{u'}\tilde{\th}(\udl{\pa_yv}-\widetilde{u'})\\
g_2&=(\udl{\pa_yv}-\widetilde{u'})\pa_v(\udl{\th}\udl{\pa_yv})+\tilde{u'}\pa_v(\th^{\d}\udl{\pa_yv})+\widetilde{u'}\pa_v(\tilde{\th}(\udl{\pa_yv}-\widetilde{u'}))\\
g_3&=(\udl{\pa_yv}-\widetilde{u'})\udl{\pa_yv}+\tilde{u'}\udl{\pa_yv}+\widetilde{u'}(\udl{\pa_yv}-\widetilde{u'})\\
g_4&=(\udl{\pa_yv}-\widetilde{u'})\pa_v(\udl{\pa_yv})+\tilde{u'}\pa_v(\udl{\pa_yv})+\widetilde{u'}\pa_v((\udl{\pa_yv}-\widetilde{u'})). 
\end{align*}
By acting $\mathcal{T}_{N}^{-1}$ on \eqref{eq: elliptic pressure-1} and multiplying the cut-off function, we get that
\begin{align}\label{eq: pre-eq-error}
\mathring{\mathcal{T}}_{j, N}^{-1}[P_{\neq}\Xi]-\Upsilon_jP_{\neq}\Pi
=\mathring{\mathcal{T}}_{j, N}^{-1}\big[P_{\neq}\mathcal{S}_2[\Pi]+P_{\neq}\mathcal{S}_a[\Pi]\big].
\end{align}

We introduce three multipliers, which are constructed in \cite{ChenWeiZhangZhang2023}: for $k\in \mathbb{Z}\setminus\{0\},\ \xi\in\mathbb{R}$, let
\begin{align*}
\mathcal{M}_3(t,k,\xi)&=\f{\langle k\rangle t(t+\langle k\rangle+|\xi|)}{1+k^2+\xi^2}\rmA_k(\xi),\\
\mathcal{M}_4(t,k,\xi)&=\f{\langle k\rangle t(t+\langle k\rangle+|\xi|)}{1+k^2+\xi^2}\f{|k,\xi|^{\f{s}{2}}}{\langle t\rangle^s}\rmA_k(\xi),\\
\mathcal{M}_5(t,k,\xi)&=\f{\langle k\rangle t(t+\langle k\rangle+|\xi|)}{1+k^2+\xi^2}\sqrt{\f{\pa_tw}{w}}\tilde{\rmA}_k(\xi).
\end{align*}
It is easy to check that 
\ben\label{eq: inequality-11}
\f{\langle k\rangle t(t+\langle k\rangle+|\xi|)}{1+k^2+\xi^2}=\f{|kt|}{|\xi|+|kt|}\f{(|\xi|+|kt|)(t+\langle k\rangle+|\xi|)}{k^2+\xi^2+1}\geq \f{|kt|}{|\xi|+|kt|}\approx \left\langle\f{\xi}{kt}\right\rangle^{-1},
\een
which implies $\mathcal{M}_4(t, k, \xi)\geq \mathcal{M}_1(t, k, \xi)$ and $\mathcal{M}_5(t, k, \xi)\geq \mathcal{M}_2(t, k, \xi)$. 

We first control the two error terms $\mathring{\mathcal{T}}_{j, N}^{-1}\big[P_{\neq}\mathcal{S}_2[\Pi]\big]$ and $\mathring{\mathcal{T}}_{j, N}^{-1}\big[P_{\neq}\mathcal{S}_a[\Pi]\big]$
\begin{lemma}\label{lem: S_2}
Under the bootstrap hypotheses,
\begin{align*}
\left\|\mathcal{M}_3(\pa_z^2+(\pa_v-t\pa_z)^2)\mathring{\mathcal{T}}_{j, N}^{-1}P_{\neq}\big[\mathcal{S}_2[\Pi]\big]\right\|_2
&\lesssim \ep\left\|\mathcal{M}_3(\pa_z^2+(\pa_v-t\pa_z)^2)P_{\neq}(\Upsilon_2\Pi)\right\|_2\\
\left\|(\mathcal{M}_4+\mathcal{M}_5)(\pa_z^2+(\pa_v-t\pa_z)^2)\mathring{\mathcal{T}}_{j, N}^{-1}P_{\neq}\big[\mathcal{S}_2[\Pi]\big]\right\|_2
&\lesssim \ep\big(\mathcal{CK}_{\th^{\d}}^{\f12}+\mathcal{CK}_{h}^{\f12}+\mathcal{CK}_{\varphi_9^{\d}}^{\f12}\big)\\
&\quad +\ep\|(\mathcal{M}_4+\mathcal{M}_5)(\pa_z^2+(\pa_v-t\pa_z)^2)P_{\neq}(\Upsilon_2\Pi)\|_{2}. 
\end{align*}
\end{lemma}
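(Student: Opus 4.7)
The plan is to follow the same template as Proposition \ref{prop: estimate diff-1}: first use the Fourier kernel bound for $\mathring{\mathcal{T}}_{j,N}^{-1}$ from Proposition \ref{eq: T_2} to reduce the left-hand side to estimates of $\mathcal{S}_2[\Pi]$ weighted by the multipliers $\mathcal{M}_3,\mathcal{M}_4,\mathcal{M}_5$ themselves, and then decompose each of the three pieces
\[
\mathrm{S}^1=\th^{\d}\pa_{zz}(\Upsilon_2\Pi),\quad \mathrm{S}^2=g_1(\pa_v-t\pa_z)^2(\Upsilon_2\Pi),\quad \mathrm{S}^3=g_2(\pa_v-t\pa_z)(\Upsilon_2\Pi)
\]
by a Littlewood--Paley paraproduct in the $v$ variable into $\mathrm{LH}$, $\mathrm{HL}$ and resonant $\mathcal{R}$ pieces, exactly as in \eqref{eq:T_1}--\eqref{eq:T_3}. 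The kernel bound from Proposition \ref{eq: T_2} gives
\[
\big|(\pa_z^2+(\pa_v-t\pa_z)^2)\mathcal{G}_{N,j}(t,k,\xi,\eta)\big|\lesssim e^{-\la_{\Delta}\langle\xi-\eta\rangle^s},
\]
so applying $\mathcal{M}_3$ (resp.\ $\mathcal{M}_4$, $\mathcal{M}_5$) on the $\xi$-side and then transferring to the $\eta$-side via Remark \ref{Rmk: A-switch} produces a loss of at most $e^{c\la\langle\xi-\eta\rangle^s}$, which is absorbed into $e^{-\la_{\Delta}\langle\xi-\eta\rangle^s}$ for $\la_{\Delta}>c\la$.

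For the $\mathrm{LH}$ pieces the derivatives $\pa_{zz}$ and $(\pa_v-t\pa_z)^j$ land on $\Upsilon_2\Pi$; here $\mathcal{M}_3,\mathcal{M}_4,\mathcal{M}_5$ behave algebraically on the high frequency in the same way $\mathcal{M}_0,\mathcal{M}_1,\mathcal{M}_2$ did, and the low-frequency coefficients $\th^{\d},g_1,g_2$ are bounded by $\ep$ in $\mathcal{G}^{s,\s-6}$ via the bootstrap hypotheses together with Corollary \ref{Cor: psi-new}-type control. This yields the desired $\ep\,\|\mathcal{M}_{\cdot}(\pa_z^2+(\pa_v-t\pa_z)^2)P_{\neq}(\Upsilon_2\Pi)\|_2$ contribution. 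For the $\mathrm{HL}$ pieces the coefficient sits in high frequency, so on the $\mathcal{M}_4,\mathcal{M}_5$ side we use $\sqrt{\pa_tw/w}\,\rmA^{\rmR}$ or $\langle\pa_v\rangle^{s/2}\langle t\rangle^{-s}\rmA^{\rmR}$ acting on $\th^{\d},g_1,g_2$, producing the $\mathcal{CK}_{\th^{\d}}^{1/2}+\mathcal{CK}_h^{1/2}+\mathcal{CK}_{\varphi_9^{\d}}^{1/2}$ factors after expanding $g_1,g_2$ in terms of $h=\udl{\pa_yv}-\widetilde{u'}$, $\th^{\d}$ and $\varphi_9^{\d}=\udl{u'}-\widetilde{u'}$ and pairing with the pointwise bound $\|P_{\neq}(\Upsilon_2\Pi)\|_{\mathcal{G}^{s,\s-6}}\lesssim \ep\langle t\rangle^{-4}$ from the bootstrap. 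On the $\mathcal{M}_3$ side there is no square root weight on the coefficient, so Remark \ref{Rmk: switch-large-zero} gives only $\ep\|\mathcal{M}_3(\pa_z^2+(\pa_v-t\pa_z)^2)P_{\neq}(\Upsilon_2\Pi)\|_2$ modulo a fast-decaying remainder. The resonant piece $\mathcal{R}$ is handled by the standard argument that on the support of the paraproduct $|\rmM-\rmM'|\leq 3$, so one simply pairs the bounded norm of the coefficient with the high-frequency term.

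The key algebraic point that makes the paraproduct work is the inequality \eqref{eq: inequality-11}, which shows $\mathcal{M}_3\gtrsim \mathcal{M}_0$ (and similarly for $\mathcal{M}_{4,5}\gtrsim \mathcal{M}_{1,2}$), so all the commutator-type bounds on $\mathcal{M}_0,\mathcal{M}_1,\mathcal{M}_2$ used in the proof of Proposition \ref{prop: estimate diff-1} upgrade directly. The main obstacle I expect is checking the paraproduct commutator identities for the new multipliers $\mathcal{M}_3,\mathcal{M}_4,\mathcal{M}_5$: one must verify that for $|\xi-\eta|\leq \tfrac18|k,\eta|$,
\[
\mathcal{M}_3(t,k,\xi)\lesssim \mathcal{M}_3(t,k,\eta)\,e^{c\la\langle\xi-\eta\rangle^s},
\]
and analogous inequalities for $\mathcal{M}_4,\mathcal{M}_5$ (losing at most one $\sqrt{\pa_tw/w}$ factor that can be absorbed in the CK term of the high-frequency factor). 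The rational prefactor $\langle k\rangle t(t+\langle k\rangle+|\xi|)/(1+k^2+\xi^2)$ is not uniformly $L^\infty$-multiplier-bounded in $\xi-\eta$, but on the paraproduct support $|\xi|\approx|\eta|$ and $|k|$ fixed, so the ratio is bounded by $\langle\xi-\eta\rangle^2$ and is absorbed by the Gevrey gain. With these verifications in hand the sum of $\mathrm{LH}+\mathrm{HL}+\mathcal{R}$ contributions across $\mathrm{S}^1,\mathrm{S}^2,\mathrm{S}^3$ delivers both claimed bounds.
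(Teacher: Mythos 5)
Your overall route is the paper's: reduce via the kernel bound of Proposition \ref{eq: T_2} to estimating $\mathcal{M}_3,\mathcal{M}_4,\mathcal{M}_5$ applied directly to $\mathcal{S}_2[\Pi]$, split each of the three pieces by frequency into low--high, high--low and resonant contributions (the paper does this with direct multiplier comparisons rather than an explicit Littlewood--Paley paraproduct, which is cosmetic), put the coefficient in $\mathcal{G}^{s,\s-6}$ in the low--high case and in $\mathcal{CK}_{\th^\d},\mathcal{CK}_h,\mathcal{CK}_{\varphi_9^\d}$ in the high--low case, and use Lemma \ref{Lem 3.4} to trade $\sqrt{\pa_tw/w}$ between factors for $\mathcal{M}_5$.

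There is, however, one step that fails as written: the high--low regime of $\mathrm{S}^3=g_2(\pa_v-t\pa_z)(\Upsilon_2\Pi)$. The coefficient $g_2$ carries a full derivative of $h$, $\th^{\d}$, $\varphi_9^{\d}$, so when $g_2$ sits at the high frequency $\xi-\eta$ you need to control roughly $\langle\xi-\eta\rangle\,\rmA^{\rmR}(\xi-\eta)|\hat h(\xi-\eta)|$, which is one derivative beyond what $\mathcal{E}_h$ and $\mathcal{CK}_h$ provide. Your stated absorption mechanism --- ``the ratio is bounded by $\langle\xi-\eta\rangle^2$ and is absorbed by the Gevrey gain'' --- only operates on the paraproduct support $|\xi|\approx|\eta|$, where one genuinely gains $e^{-(1-c)\la|\xi-\eta|^s}$; in the high--low regime $|\xi-\eta|\geq 2|k,\eta|$ the switch $\rmA_k(\xi)\lesssim \rmA^{\rmR}(\xi-\eta)e^{c\la|k,\eta|^s}$ transfers the \emph{entire} Gevrey weight onto the coefficient, so there is no exponential gain in $\xi-\eta$ left to absorb the extra power. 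The paper recovers the lost derivative from the rational prefactor of $\mathcal{M}_3$ itself: since $|\xi|\gtrsim|\xi-\eta|$ in this regime,
\begin{align*}
\f{\langle k\rangle t(t+\langle k\rangle+|\xi|)}{1+k^2+\xi^2}\,\f{1+k^2+\eta^2}{\langle k\rangle t(t+\langle k\rangle+|\eta|)}
\lesssim \f{\langle k,\eta\rangle^2}{\langle \eta-\xi\rangle},
\end{align*}
and the factor $\langle\eta-\xi\rangle^{-1}$ exactly cancels the derivative inside $g_2$, after which the remaining $\langle k,\eta\rangle^2 e^{c\la|k,\eta|^s}$ on the low-frequency side is paired with the decay of $\Pi$. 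Without this prefactor computation (or an equivalent gain of one $v$-derivative on the coefficient side) the $g_2$ contribution cannot be closed with the stated energy functionals.
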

\begin{lemma}\label{lem: S_a}
Under the bootstrap hypotheses,
\begin{align*}
\left\|\mathcal{M}_3(\pa_z^2+(\pa_v-t\pa_z)^2)\mathring{\mathcal{T}}_{j, N}^{-1}P_{\neq}\big[\mathcal{S}_a[\Pi]\big]\right\|_2
&\lesssim \ep\left\|\mathcal{M}_3(\pa_z^2+(\pa_v-t\pa_z)^2)P_{\neq}(\Upsilon_2\Pi)\right\|_2+\ep^2,\\
\left\|(\mathcal{M}_4+\mathcal{M}_5)(\pa_z^2+(\pa_v-t\pa_z)^2)\mathring{\mathcal{T}}_{j, N}^{-1}P_{\neq}\big[\mathcal{S}_a[\Pi]\big]\right\|_2
&\lesssim \ep\big(\mathcal{CK}_{\th^{\d}}^{\f12}+\mathcal{CK}_{h}^{\f12}+\mathcal{CK}_{\varphi_9^{\d}}^{\f12}+\mathcal{CK}_a^{\f12}+\f{\ep}{\langle t\rangle^s}\big)\\
&\quad +\ep\|(\mathcal{M}_4+\mathcal{M}_5)(\pa_z^2+(\pa_v-t\pa_z)^2)P_{\neq}(\Upsilon_2\Pi)\|_{2}\\
&\quad +\ep \|\mathcal{M}_{01}\pa_vP_0(\Upsilon_2\Pi)\|_{L^2}.
\end{align*}
\end{lemma}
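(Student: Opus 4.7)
\textbf{Proof proposal for Lemma \ref{lem: S_a}.} The strategy closely parallels the proof of Lemma \ref{lem: S_2} (and of Proposition \ref{prop: estimate diff-1}), the decisive new feature being that the ``slow'' factor is the density $a$, whose natural Gevrey norm is carried by $\rmA^*=\rmA\rmB$ rather than by $\rmA$. I first apply Proposition \ref{eq: T_2} to reduce the task to controlling $\|\mathcal{M}_\bullet P_{\neq}\mathcal{S}_a[\Pi]\|_{L^2}$: the kernel bound in that proposition yields
\[
\|\mathcal{M}_\bullet(\pa_z^2+(\pa_v-t\pa_z)^2)\mathring{\mathcal{T}}_{j,N}^{-1}P_{\neq}[\mathcal{S}_a[\Pi]]\|_{L^2}
\lesssim \|\mathcal{M}_\bullet P_{\neq}\mathcal{S}_a[\Pi]\|_{L^2},
\]
at the cost of a Gevrey-radius loss $e^{-\la_\Delta\langle\xi-\eta\rangle^s}$ absorbed in $\la(t)$ by taking $\ep$ small.

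I then expand each of the three summands of $\mathcal{S}_a[\Pi]$,
\[
\pa_z(a\pa_z(\Upsilon_2\Pi)),\qquad g_3(\pa_v-t\pa_z)\big(a(\pa_v-t\pa_z)(\Upsilon_2\Pi)\big),\qquad g_4\,a(\pa_v-t\pa_z)(\Upsilon_2\Pi),
\]
through a paraproduct in $v$ as LH ($a$ low, $\Pi$ high) $+$ HL ($a$ high, $\Pi$ low) $+$ Reaction. In the LH and Reaction pieces I transfer the multiplier onto the pressure factor modulo a Gevrey-small commutator; together with $\|\rmA^* a\|_{L^2}\lesssim \ep$ this produces $\ep$ times the corresponding $\mathcal{M}_3$ or $\mathcal{M}_4+\mathcal{M}_5$ norm of $P_{\neq}(\Upsilon_2\Pi)$. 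When $(\pa_v-t\pa_z)(\Upsilon_2\Pi)$ is reorganized as $\pa_vP_0(\Upsilon_2\Pi)+(\pa_v-t\pa_z)P_{\neq}(\Upsilon_2\Pi)$, the zero-mode part additionally generates the term $\ep\|\mathcal{M}_{01}\pa_vP_0(\Upsilon_2\Pi)\|_{L^2}$ in the $\mathcal{M}_4+\mathcal{M}_5$ bound. The CK contributions $\mathcal{CK}_{\th^\d}^{1/2}$, $\mathcal{CK}_h^{1/2}$, $\mathcal{CK}_{\varphi_9^\d}^{1/2}$ arise when $\mathcal{M}_4$ or $\mathcal{M}_5$ falls on the low-frequency coefficients $g_3,g_4$, which by their defining formulas are built from $h$, $\pa_vh$, $\th^\d$ and $\varphi_9^\d$.

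The main obstacle is the HL pieces, where $a$ is at high frequency and $\Upsilon_2\Pi$ at low frequency. A crude estimate would lose two derivatives on $a$ (two copies of $\pa_v-t\pa_z$, each contributing a factor of $t$ at low output frequency), while $a$ is only controlled through $\rmA^*$. Here I exploit simultaneously (i) the gain $1/(1+k^2+(\xi-kt)^2)$ built into $\mathcal{M}_3,\mathcal{M}_4,\mathcal{M}_5$, (ii) the density weight $\rmB_k(t,\eta)\gtrsim (1+(k^2+|\eta|)/\langle t\rangle^2)^{1/2}$, designed precisely to supply a half-derivative gain on $a$ up to time $\sqrt{k^2+|\eta|}$, and (iii) the $\langle t\rangle^{-s}$ factor in $\mathcal{M}_4$ and the $\sqrt{\pa_tw/w}$ factor in $\mathcal{M}_5$; combined with the point-wise decay $\|P_{\neq}(\Upsilon_2\Pi)\|_{\mathcal{G}^{s,\sigma-6}}\lesssim \ep\langle t\rangle^{-4}$ from Corollary \ref{cor: decay-pressure}, these yield the CK term $\mathcal{CK}_a^{1/2}$ together with the lossy remainder $\ep/\langle t\rangle^s$ in the $\mathcal{M}_4+\mathcal{M}_5$ bound. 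For the $\mathcal{M}_3$ bound no such derivative gain from $\mathcal{M}$ is available, so I estimate the pressure directly in $\mathcal{G}^{s,\la,\sigma-6}$ via Corollary \ref{cor: decay-pressure} and use $\|\rmA^* a\|_{L^2}\lesssim \ep$, which yields the overall $\ep^2$ remainder. A careful counting of powers of $|k|,|\xi|,|\xi-kt|$ verifies that the three gains above exactly balance the worst-case derivative loss, completing the proof.
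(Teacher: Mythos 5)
Your overall architecture matches the paper's: reduce via Proposition \ref{eq: T_2} to $\|\mathcal{M}_\bullet P_{\neq}\mathcal{S}_a[\Pi]\|_2$, split into low-high/high-low/reaction in the density--pressure pair, use $\rmA^*$ (and in particular the $\rmB$ factor) for the high-frequency-$a$ pieces paired with the $\langle t\rangle^{-4}$ decay of $P_{\neq}(\Upsilon_2\Pi)$ and with $\mathcal{M}_{01}\pa_vP_0(\Upsilon_2\Pi)$ for the zero-mode pressure, and read off the CK terms from the coefficients $g_3,g_4$. Those parts are sound.

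There is, however, a genuine gap in your treatment of the low-high piece ($a$ low, $\Pi$ high), which is precisely the point the paper singles out. You claim you can ``transfer the multiplier onto the pressure factor modulo a Gevrey-small commutator.'' This is false in the resonant regime: when the output frequency is $(k,\xi)$, the pressure sits at $(l,\eta)$ with $k\neq l$, and $t\in \mathrm{I}_{k,\xi}\cap\mathrm{I}_{k,\eta}$, the ratio $\rmJ_k(\xi)/\rmJ_l(\eta)$ costs a factor $\f{|\xi|}{k^2+(\xi-kt)^2}\approx t$ (Remark \ref{Rmk: A-switch}, Lemma \ref{Lem 3.6}), not a Gevrey-small error. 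For the $\mathcal{M}_4+\mathcal{M}_5$ bound one could in principle absorb this with $\sqrt{\pa_tw/w}$ factors as in the reaction estimates, but the $\mathcal{M}_3$ bound has no such CK-type gain, so your argument does not close there. The paper's fix is to exploit the \emph{divergence form} of $\mathcal{S}_a[\Pi]$: keeping one copy of $(\pa_z,\pa_v-t\pa_z)$ outside the product produces the outer factor $|k|\langle t-\xi/k\rangle\approx |k|+|\xi-kt|$ on the left and only $|l|+|\eta-lt|$ on the inside, and the ratio $\f{|k|+|\xi-kt|}{|l|+|\eta-lt|}$ exactly cancels the resonance loss, yielding $|k|\mathcal{M}_3(t,k,\xi)\langle t-\xi/k\rangle\lesssim e^{c\la|k-l,\eta-\xi|^s}\mathcal{M}_3(t,l,\eta)(|l|+|\eta-lt|)$. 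Relatedly, the term $g_4\,a(\pa_v-t\pa_z)(\Upsilon_2\Pi)$ is \emph{not} in divergence form, and the paper must handle it separately by the $g_2$-type mechanism \eqref{eq: g_2-g_3}; your sketch does not distinguish it from the other two summands. Your ``careful counting of powers'' is only asserted for the high-low case, so this low-high resonance issue is left unaddressed and, as written, the proof does not go through.
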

By applying Proposition \ref{eq: T_2}, we have 
\begin{align*}
&\left\|\mathcal{M}_3(\pa_z^2+(\pa_v-t\pa_z)^2)\mathring{\mathcal{T}}_{j, N}^{-1}P_{\neq}\big[\mathcal{S}_2[\Pi]+\mathcal{S}_a[\Pi]\big]\right\|_2\\
&\lesssim \|\mathcal{M}_3P_{\neq}\mathcal{S}_2[\Pi]\|_2+\|\mathcal{M}_3P_{\neq}\mathcal{S}_a[\Pi]\|_2,\\
&\left\|(\mathcal{M}_4+\mathcal{M}_5)(\pa_z^2+(\pa_v-t\pa_z)^2)\mathring{\mathcal{T}}_{j, N}^{-1}P_{\neq}\big[\mathcal{S}_2[\Pi]+\mathcal{S}_a[\Pi]\big]\right\|_2\\
&\lesssim \|(M_4+M_5)P_{\neq}\mathcal{S}_2[\Pi]\|_2+\|(M_4+M_5)P_{\neq}\mathcal{S}_a[\Pi]\|_2. 
\end{align*}
The idea of the proof of these two lemmas is the same as the proofs of {\it Lemma 3.2, Lemma 3.3, and Lemma 3.4} of \cite{ChenWeiZhangZhang2023}. 
\begin{proof}
\no{\bf Estimate for $\mathcal{S}_2[\Pi]$: } 
By fact that
\begin{align*}
&\f{\langle k\rangle t(t+\langle k\rangle+|\xi|)}{1+k^2+\xi^2}\lesssim_{c} \f{\langle k\rangle t(t+\langle k\rangle+|\eta|)}{1+k^2+\eta^2} e^{c\min\{\langle\xi-\eta\rangle^{\f12},\langle k, \eta\rangle^{\f12}\}},\\
&\f{|k,\xi|^{\f{s}{2}}}{\langle t\rangle^s}\lesssim \f{|k,\eta|^{\f{s}{2}}}{\langle t\rangle^s}+\f{|\eta-\xi|^{\f{s}{2}}}{\langle t\rangle^s},
\end{align*}
and Remark \ref{Rmk: A-switch}, we get
\begin{align*}
\mathcal{M}_3(t, k, \xi)&\lesssim \rmA^{\rmR}(\xi-\eta)\f{\langle k\rangle t(t+\langle k\rangle+|\eta|)}{1+k^2+\eta^2}e^{c\la|k,\eta|^s}+ e^{c\la|\xi-\eta|^s}\mathcal{M}_3(t, k, \eta)\\
\mathcal{M}_4(t, k, \xi)&\lesssim \f{|\eta-\xi|^{\f{s}{2}}}{\langle t\rangle^s}\rmA^{\rmR}(\xi-\eta)\f{\langle k\rangle t(t+\langle k\rangle+|\eta|)}{1+k^2+\eta^2}e^{c\la|k,\eta|^s}+ e^{c\la|\xi-\eta|^s}\mathcal{M}_4(t, k, \eta).
\end{align*}
By Lemma \ref{Lem 3.4}, we have
\begin{align*}
\mathcal{M}_5(t, k, \xi)
&\lesssim \Big(\f{|\eta-\xi|^{\f{s}{2}}}{\langle t\rangle^s}+\sqrt{\f{\pa_tw}{w}}\Big)\rmA^{\rmR}(\xi-\eta)\mathcal{M}_3(t, k, \eta)e^{c\la|k,\eta|^s}\\
&\quad+ e^{c\la|\xi-\eta|^s}\Big(\mathcal{M}_4(t, k, \eta)+\mathcal{M}_5(t, k, \eta)\Big).
\end{align*}
Thus we have
\begin{align*}
&\left\|\mathcal{M}_3P_{\neq}[\th^{\d}\pa_{zz}(\Upsilon_2\Pi)+g_1(\pa_v-t\pa_z)^2(\Upsilon_2\Pi)]\right\|_{L^2}
\lesssim \ep\left\|\mathcal{M}_3(\pa_z^2+(\pa_v-t\pa_z)^2)P_{\neq}(\Upsilon_2\Pi)\right\|_2, \\
&\left\|\big(\mathcal{M}_4+\mathcal{M}_5\big)P_{\neq}[\th^{\d}\pa_{zz}(\Upsilon_2\Pi)+g_1(\pa_v-t\pa_z)^2(\Upsilon_2\Pi)]\right\|_{L^2}\\
&\lesssim \big(\mathcal{CK}_{\th^{\d}}^{\f12}+\mathcal{CK}_{h}^{\f12}+\mathcal{CK}_{\varphi_9^{\d}}^{\f12}\big)\left\|\mathcal{M}_3(\pa_z^2+(\pa_v-t\pa_z)^2)P_{\neq}(\Upsilon_2\Pi)\right\|_2\\
&\quad +\ep\|(\mathcal{M}_4+\mathcal{M}_5)(\pa_z^2+(\pa_v-t\pa_z)^2)P_{\neq}(\Upsilon_2\Pi)\|_{2}.
\end{align*}
For the $g_2(\pa_v-t\pa_z)\Pi$ term, since there is one derivative loss in $g_2$, we need to use the decay of $\Pi$ and the multiplier $\f{|k|t(t+|k,\eta|)}{k^2+\eta^2}$ to gain the regularity, which plays a similar role to the multiplier $\left\langle\f{\eta}{kt}\right\rangle^{-1}$ in $\mathcal{M}_1$ and $\mathcal{M}_2$. 
We have for $|\xi-\eta|\geq 2|k,\eta|$ (the case $g_2$ is in higher frequencies)
\beq\label{eq: g_2-g_3}
\begin{aligned}
&\f{\langle k\rangle t(t+\langle k\rangle+|\xi|)}{1+k^2+\xi^2}\f{1+k^2+\eta^2}{\langle k\rangle t(t+\langle k\rangle+|\eta|)}
\lesssim \f{\langle k,\eta\rangle^2}{1+k^2+\xi^2}\Big(1+\f{|\eta-\xi|}{(t+\langle k\rangle+|\eta|)}\Big)\lesssim \f{\langle k,\eta\rangle^2}{\langle \eta-\xi\rangle},
\end{aligned}
\eeq
and for $|\xi-\eta|\leq 2|k,\eta|$
\begin{align*}
\f{\langle k\rangle t(t+\langle k\rangle+|\xi|)}{1+k^2+\xi^2}\f{1+k^2+\eta^2}{\langle k\rangle t(t+\langle k\rangle+|\eta|)}
\lesssim \Big(1+\f{|\xi-\eta||\xi+\eta|}{1+k^2+\xi^2}\Big)\Big(1+\f{|\xi-\eta|}{t+\langle k\rangle+|\eta|}\Big)\lesssim \langle\xi-\eta\rangle^3.
\end{align*}
Then we have by Remark \ref{Rmk: A-switch} and Lemma \ref{Lem 3.4}, 
\begin{align*}
&\left\|\mathcal{M}_3[g_2(\pa_v-t\pa_z)P_{\neq}(\Upsilon_2\Pi)]\right\|_{L^2}
\lesssim \ep\left\|\mathcal{M}_3(\pa_z^2+(\pa_v-t\pa_z)^2)P_{\neq}(\Upsilon_2\Pi)\right\|_2, \\
&\left\|\big(\mathcal{M}_4+\mathcal{M}_5\big)P_{\neq}[g_2(\pa_v-t\pa_z)(\Upsilon_2\Pi)]\right\|_{L^2}\\
&\lesssim \big(\mathcal{CK}_{\th^{\d}}^{\f12}+\mathcal{CK}_{h}^{\f12}+\mathcal{CK}_{\varphi_9^{\d}}^{\f12}\big)\left\|\mathcal{M}_3(\pa_z^2+(\pa_v-t\pa_z)^2)P_{\neq}(\Upsilon_2\Pi)\right\|_2\\
&\quad +\ep\|(\mathcal{M}_4+\mathcal{M}_5)(\pa_z^2+(\pa_v-t\pa_z)^2)P_{\neq}(\Upsilon_2\Pi)\|_{2}.
\end{align*}
This gives the lemma. 
\end{proof}

\begin{proof}
{\bf Estimate for $\mathcal{S}_a[\Pi]$:} 
For the estimate of $\mathcal{S}_a[\Pi]$, we take advantage of the divergence form. Note that switching from $\rmA_k(\xi)$ to $\rmA_l(\eta)$, we will lose $\f{\xi}{k^2+(\xi-kt)^2}$ in the case $t\in {\rm{I}}_{k,\xi}\cap {\rm{I}}_{k,\eta},\, k\neq l$. By taking out one derivative $(\pa_z, \pa_v-t\pa_z)$, in the worst case $t\in {\rm{I}}_{k,\xi}\cap {\rm{I}}_{k,\eta},\, k\neq l$, we multiply a small fact $\f{\xi-kt}{\eta-lt}$. 

We first consider the first two terms in $\mathcal{S}_a[\Pi]$ when $\Pi$ is non-zero modes, namely, $\pa_z(a\pa_{z}P_{\neq}(\Upsilon_2\Pi))$ and $g_3(\pa_v-t\pa_z)(a(\pa_v-t\pa_z)P_{\neq}(\Upsilon_2\Pi))$. Since $g_3$ is the zero-mode, using the idea of the proof of $\mathcal{S}_2[\Pi]$, we have 
\begin{align*}
&\left\|\mathcal{M}_3P_{\neq}\big[g_3(\pa_v-t\pa_z)(a(\pa_v-t\pa_z)(\Upsilon_2\Pi))\big]\right\|_{L^2}
\lesssim \|\mathcal{M}_3P_{\neq}[(\pa_v-t\pa_z)(a(\pa_v-t\pa_z)P_{\neq}(\Upsilon_2\Pi))]\|_{L^2}\\
&\left\|\big(\mathcal{M}_4+\mathcal{M}_5\big)P_{\neq}[g_3(\pa_v-t\pa_z)(a(\pa_v-t\pa_z)P_{\neq}(\Upsilon_2\Pi))]\right\|_{L^2}\\
&\lesssim \left\|\big(\mathcal{M}_4+\mathcal{M}_5\big)P_{\neq}[(\pa_v-t\pa_z)(a(\pa_v-t\pa_z)P_{\neq}(\Upsilon_2\Pi))]\right\|_{L^2}\\
&\quad+\Big(\f{1}{\langle t\rangle^s}+\mathcal{CK}_{h}^{\f12}+\mathcal{CK}_{\varphi_9^{\d}}^{\f12}\Big)\|\mathcal{M}_3P_{\neq}[(\pa_v-t\pa_z)(a(\pa_v-t\pa_z)P_{\neq}(\Upsilon_2\Pi))]\|_{L^2}
\end{align*}
Then we treat the last term in $\mathcal{S}_a[\Pi]$, namely, $g_4a(\pa_v-t\pa_z)(\Upsilon_2\Pi)$. If $g_4$ is in lower frequencies, the treatment is easier than $g_3(\pa_v-t\pa_z)(a(\pa_v-t\pa_z)(\Upsilon_2\Pi))$. The difficulty happens when $g_4$ is in higher frequencies. In such a case, we follow the idea of the treatment of $g_2(\pa_v-t\pa_z)P_{\neq}(\Upsilon_2\Pi)$ term, then by \eqref{eq: g_2-g_3} we have
\begin{align*}
&\left\|\mathcal{M}_3\Big[g_4P_{\neq}\Big(a(\pa_v-t\pa_z)(\Upsilon_2\Pi)\Big)\Big]\right\|_{L^2}\\
&\lesssim \left\|\mathcal{M}_3(|\pa_z|+|\pa_v-t\pa_z|)P_{\neq}\Big(a(\pa_v-t\pa_z)P_{\neq}(\Upsilon_2\Pi)\Big)\Big]\right\|_{L^2}\\
&\quad+\left\|\mathcal{M}_3(|\pa_z|+|\pa_v-t\pa_z|)P_{\neq}\Big(a\pa_vP_{0}(\Upsilon_2\Pi)\Big)\Big]\right\|_{L^2}\\
&\left\|\big(\mathcal{M}_4+\mathcal{M}_5\big)P_{\neq}\Big[g_4\Big(a(\pa_v-t\pa_z)(\Upsilon_2\Pi)\Big)\Big]\right\|_{L^2}\\
&\lesssim \Big(\f{1}{\langle t\rangle^s}+\mathcal{CK}_{h}^{\f12}+\mathcal{CK}_{\varphi_9^{\d}}^{\f12}\Big)\left\|\mathcal{M}_3(|\pa_z|+|\pa_v-t\pa_z|)P_{\neq}\Big(a(\pa_v-t\pa_z)P_{\neq}(\Upsilon_2\Pi)\Big)\Big]\right\|_{L^2}\\
&\quad+\Big(\f{1}{\langle t\rangle^s}+\mathcal{CK}_{h}^{\f12}+\mathcal{CK}_{\varphi_9^{\d}}^{\f12}\Big)\left\|\mathcal{M}_3(|\pa_z|+|\pa_v-t\pa_z|)P_{\neq}\Big(a\pa_vP_{0}(\Upsilon_2\Pi)\Big)\Big]\right\|_{L^2}\\
&\quad +\ep\|(\mathcal{M}_4+\mathcal{M}_5)(|\pa_z|+|\pa_v-t\pa_z|)P_{\neq}(a(\pa_v-t\pa_z)(\Upsilon_2\Pi))\|_{2}.
\end{align*}
We have by Lemma \ref{Lem 3.6} and Remark \ref{Rmk: A-switch} that for $|k-l,\eta-\xi|\leq |l,\eta|$, $k\neq 0, l\neq 0$
\begin{align*}
&|k|\mathcal{M}_3(t,k,\xi)\left\langle t-\f{\xi}{k}\right\rangle\\
&\lesssim \mathcal{M}_3(t,l,\eta)(|l|+|\eta-lt|)\f{|k|+|\xi-kt|}{|l|+|\eta-lt|}\f{\langle k\rangle (t+\langle k\rangle+|\xi|)}{\langle l\rangle (t+\langle l\rangle+|\eta|)}\f{1+l^2+\eta^2}{1+k^2+\xi^2}e^{c\la |k-l,\eta-\xi|^s}\\
&\quad +\rmA_l(\eta)\f{\langle k\rangle t(t+\langle k\rangle+|\xi|)}{1+k^2+\xi^2}\f{|\xi|(|k|+|\xi-kt|)}{k^2+(\xi-kt)^2}\mathbf{1}_{t\in {\rm{I}}_{k,\xi}\cap {\rm{I}}_{k,\eta},\, k\neq l}\\
&\lesssim \mathcal{M}_3(t,l,\eta)(|l|+|\eta-lt|)e^{c\la |k-l,\eta-\xi|}\langle k-l,\eta-\xi\rangle^5+\rmA_l(\eta)\f{{\xi}/{k}}{{\eta}/{l}}|l|\left\langle t-\f{\eta}{l}\right\rangle e^{c\la |k-l,\eta-\xi|}\\
&\lesssim e^{c\la |k-l,\eta-\xi|}\mathcal{M}_3(t,l,\eta)(|l|+|\eta-lt|)
\end{align*}
For $|k-l,\eta-\xi|\geq |l,\eta|$, $k\neq 0, l\neq 0$, $k\neq l$, we have by Remark \ref{Rmk: A-switch}
\begin{align*}
&|k|\mathcal{M}_3(t,k,\xi)\left\langle t-\f{\xi}{k}\right\rangle\\
&\lesssim \f{\langle k\rangle t(t+\langle k\rangle+|\xi|)}{1+k^2+\xi^2}(|k|+|\xi-kt|)\rmA_{k-l}(\xi-\eta)e^{c\la|l,\eta|^{s}}\\
&\quad+\f{\langle k\rangle t(t+\langle k\rangle+|\xi|)}{1+k^2+\xi^2}\rmA_{k-l}(\xi-\eta)\f{|\xi|(|k|+|\xi-kt|)}{k^2+|\xi-kt|^2}e^{c\la|l,\eta|^{s}}\mathbf{1}_{t\in {\rm{I}}_{k,\xi}\cap {\rm{I}}_{k,\xi-\eta}, k\neq k-l}\\
&\lesssim \rmA_{k-l}^{*}(\xi-\eta)\langle t\rangle^3 e^{c\la|l,\eta|^{s}}.
\end{align*}
Similarly, by Lemma \ref{Lem 3.4}, we have
\begin{align*}
&|k|(\mathcal{M}_{4}(t,k,\xi)+\mathcal{M}_{5}(t,k,\xi))\left\langle t-\f{\xi}{k}\right\rangle\\
&\lesssim e^{c\la |k-l,\eta-\xi|}(\mathcal{M}_4(t,l,\eta)+\mathcal{M}_5(t,l,\eta))(|l|+|\eta-lt|)\\
&\quad+\Big(\sqrt{\f{\pa_tw}{w}}+\f{|k-l,\xi-\eta|^{\f{s}{2}}}{\langle t\rangle^s}\Big)\rmA_{k-l}^{*}(\xi-\eta)\langle t\rangle^2 e^{c\la|l,\eta|^{s}}.
\end{align*}
Thus, we get 
\begin{align*}
& \left\|\mathcal{M}_3P_{\neq}[(\pa_v-t\pa_z)(a(\pa_v-t\pa_z)P_{\neq}((\Upsilon_2\Pi)))]\right\|_{L^2}\\
 &\lesssim \ep \left\|\mathcal{M}_3\big((\pa_v-t\pa_z)^2+\pa_z^2\big)P_{\neq}(\Upsilon_2\Pi)\right\|_2
 +\ep \langle t\rangle^4\|P_{\neq}(\Upsilon_2\Pi)\|_{\mathcal{G}^{s,\s-6}}\\
&\left\|(\mathcal{M}_4+\mathcal{M}_5)P_{\neq}[(\pa_v-t\pa_z)(a(\pa_v-t\pa_z)P_{\neq}(\Upsilon_2\Pi))]\right\|_{L^2}\\
&\lesssim \ep \left\|(\mathcal{M}_4+\mathcal{M}_5)\big((\pa_v-t\pa_z)^2+\pa_z^2\big)P_{\neq}(\Upsilon_2\Pi)\right\|_{L^2}
+\langle t\rangle^4\|P_{\neq}(\Upsilon_2\Pi)\|_{\mathcal{G}^{s,\s-6}}\mathcal{CK}_a^{\f12}. 
\end{align*}

Next, we consider the second term in $\mathcal{S}_a[\Pi]$ when $\Pi$ is the zero mode, namely, $g_3 (\pa_v-t\pa_z) (P_{\neq}(a)\pa_vP_{0}(\Upsilon_2\Pi))$. Again, we first get rid of $g_3$: 
\begin{align*}
&\left\|\mathcal{M}_3\big[g_3(\pa_v-t\pa_z)(P_{\neq}a\pa_vP_0(\Upsilon_2\Pi))\big]\right\|_{L^2}
\lesssim \|\mathcal{M}_3[(\pa_v-t\pa_z)(P_{\neq}a\pa_vP_0(\Upsilon_2\Pi))]\|_{L^2}\\
&\left\|\big(\mathcal{M}_4+\mathcal{M}_5\big)[g_3(\pa_v-t\pa_z)(P_{\neq}a\pa_vP_{0}(\Upsilon_2\Pi))]\right\|_{L^2}\\
&\lesssim \left\|\big(\mathcal{M}_4+\mathcal{M}_5\big)P_{\neq}[(\pa_v-t\pa_z)(a\pa_vP_0(\Upsilon_2\Pi))]\right\|_{L^2}\\
&\quad+\Big(\f{1}{\langle t\rangle^{s}}+\mathcal{CK}_{h}^{\f12}+\mathcal{CK}_{\varphi_9^{\d}}^{\f12}\Big)\|\mathcal{M}_3[(\pa_v-t\pa_z)(P_{\neq}a\pa_vP_0(\Upsilon_2\Pi))]\|_{L^2}.
\end{align*}
We have for $|\xi-\eta|\leq 2|k,\eta|$, (which is associated with the case $a$ is in higher frequencies,)
\begin{align*}
\f{\langle k\rangle t(t+\langle k\rangle+|\xi|)}{1+k^2+\xi^2}\f{|\xi-kt|}{\rmB_k(\eta)}
&\lesssim \f{\langle k\rangle t(t+\langle k\rangle+|\xi|)}{1+k^2+\xi^2}\f{t|\xi-kt|}{t+|\xi|^{\f12}+|k|}\langle \xi-\eta\rangle^{\f12}\\
&\lesssim \langle t\rangle^3\langle \xi-\eta\rangle^{\f12},
\end{align*}
and for $|\xi-\eta|\geq 2|k,\eta|$,
\begin{align*}
&\f{\langle k\rangle t(t+\langle k\rangle+|\xi|)}{1+k^2+\xi^2}{|\xi-kt|}\Big(\f{t(t^2+(\xi-\eta)^2)}{2+(\xi-\eta)^2}\Big)^{-1}
\lesssim \langle k,\eta\rangle^6\f{|\xi-kt|}{t+|\xi|}.
\end{align*}
Thus we obtain by Remark \ref{Rmk: A-switch} that
\begin{align*}
\|\mathcal{M}_3[(\pa_v-t\pa_z)(P_{\neq}a\pa_vP_0(\Upsilon_2\Pi))]\|_{L^2}
&\lesssim \|\rmA^*a\|_{L^2}\langle t\rangle^3\|\pa_vP_0(\Upsilon_2\Pi)\|_{\mathcal{G}^{s,\s-6}}\\
&\quad+\|a\|_{\mathcal{G}^{s,\s-6}}\|\mathcal{M}_{00}\pa_vP_0(\Upsilon_2\Pi)\|_{L^2}\lesssim \ep^2
\end{align*}
and by Lemma \ref{Lem 3.4}
\begin{align*}
& \left\|\big(\mathcal{M}_4+\mathcal{M}_5\big)P_{\neq}[(\pa_v-t\pa_z)(a\pa_vP_0(\Upsilon_2\Pi))]\right\|_{L^2}\\
& \lesssim \ep\mathcal{CK}_a^{\f12}+\ep \|\mathcal{M}_{01}\pa_vP_0(\Upsilon_2\Pi)\|_{L^2}. 
\end{align*}
Thus we proved the lemma. 
\end{proof}
Now we introduce two lemmas with weaker weight. 
\begin{lemma}\label{lem: S_2-1}
Under the bootstrap hypotheses,
\begin{align*}
&\left\|\mathcal{M}_3(-\pa_z^2-(\pa_v-t\pa_z)^2)^{\f12}\mathring{\mathcal{T}}_{j, N}^{-1}P_{\neq}\big[\mathcal{S}_2[\Pi]\big]\right\|_2\\
&\lesssim \ep\left\|\mathcal{M}_3(-\pa_z^2-(\pa_v-t\pa_z)^2)^{\f12}P_{\neq}(\Upsilon_2\Pi)\right\|_2\\
&\left\|(\mathcal{M}_4+\mathcal{M}_5)(-\pa_z^2-(\pa_v-t\pa_z)^2)^{\f12}\mathring{\mathcal{T}}_{j, N}^{-1}P_{\neq}\big[\mathcal{S}_2[\Pi]\big]\right\|_2\\
&\lesssim \ep\big(\mathcal{CK}_{\th^{\d}}^{\f12}+\mathcal{CK}_{h}^{\f12}+\mathcal{CK}_{\varphi_9^{\d}}^{\f12}\big)
+\ep\|(\mathcal{M}_4+\mathcal{M}_5)(-\pa_z^2-(\pa_v-t\pa_z)^2)^{\f12}P_{\neq}(\Upsilon_2\Pi)\|_{2}. 
\end{align*}
\end{lemma}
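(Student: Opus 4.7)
The plan is to follow the proof of Lemma \ref{lem: S_2} essentially verbatim, with the only substantive change being that the exterior operator is now $(-\Delta_L)^{1/2}$ with $\Delta_L=\pa_z^2+(\pa_v-t\pa_z)^2$ rather than $-\Delta_L$ itself. Since the target norm demands one fewer power of $|\nabla_L|$, the estimates become strictly easier and no genuinely new ingredient is required.

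First, I would combine $(-\Delta_L)^{1/2}$ with the Fourier-kernel bound for $\mathring{\mathcal{T}}_{j,N}^{-1}$ provided by Proposition \ref{eq: T_2}. This yields an operator whose kernel obeys
\begin{align*}
\left|\mathcal{F}\big[(-\Delta_L)^{1/2}\mathring{\mathcal{T}}_{j,N}^{-1}\big](t,k,\xi,\eta)\right|
\lesssim \f{e^{-\la_{\Delta}\langle \xi-\eta\rangle^s}}{\big(1+k^2+(\xi-kt)^2\big)^{1/2}},
\end{align*}
so that, after absorbing the half-power of $-\Delta_L$, the problem reduces to bounding $\|\mathcal{M}_j P_{\neq}\mathcal{S}_2[\Pi]\|_2$ with one fewer derivative than in Lemma \ref{lem: S_2}. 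Second, I would expand $\mathcal{S}_2[\Pi]=\th^{\d}\pa_{zz}(\Upsilon_2\Pi)+g_1(\pa_v-t\pa_z)^2(\Upsilon_2\Pi)+g_2(\pa_v-t\pa_z)(\Upsilon_2\Pi)$ and apply the same paraproduct decomposition in $v$ as in \eqref{eq:T_1}--\eqref{eq:T_3}. For the first two terms, one of the two $L$-derivatives carried by $\pa_{zz}$ or $(\pa_v-t\pa_z)^2$ matches the single $(-\Delta_L)^{1/2}$ on the left, and the remaining $L$-derivative is absorbed into the multiplier switches
\begin{align*}
\mathcal{M}_3(t,k,\xi)
&\lesssim \rmA^{\rmR}(\xi-\eta)\,\f{\langle k\rangle t(t+\langle k\rangle+|\eta|)}{1+k^2+\eta^2}e^{c\la|k,\eta|^s}+ e^{c\la|\xi-\eta|^s}\mathcal{M}_3(t,k,\eta),
\end{align*}
together with their $\mathcal{M}_4, \mathcal{M}_5$ analogues and Lemma \ref{Lem 3.4}, exactly as in the proof of Lemma \ref{lem: S_2}; this produces the $\mathcal{CK}_{\th^{\d}}^{1/2}$, $\mathcal{CK}_h^{1/2}$, $\mathcal{CK}_{\varphi_9^{\d}}^{1/2}$ factors in the claimed bound.

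The only term that needs any care is $g_2(\pa_v-t\pa_z)(\Upsilon_2\Pi)$, where $g_2$ carries a $v$-derivative on the coefficient and thus generates an apparent derivative-loss when $g_2$ is at high frequency. I would handle this by the same device used for $g_2$ in the proof of Lemma \ref{lem: S_2}: namely, the algebraic inequality \eqref{eq: g_2-g_3} trades one power of $\mathcal{M}_3$-regularity of $\Pi$ against the lost derivative. The situation here is in fact milder than in Lemma \ref{lem: S_2}, because the target norm on the right is $\|\mathcal{M}_j(-\Delta_L)^{1/2}P_{\neq}(\Upsilon_2\Pi)\|_2$, which has one full $L$-derivative less than its two-derivative counterpart in Lemma \ref{lem: S_2}; so the remaining derivative in $(\pa_v-t\pa_z)(\Upsilon_2\Pi)$ exactly matches the $(-\Delta_L)^{1/2}$ factor on the right-hand side of the desired inequality. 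The main (and only) obstacle is the careful bookkeeping of the time-dependent $\mathcal{CK}$-factors generated by passing $\th^{\d}, g_1, g_2$ through the Gevrey weight, but this is identical to the corresponding computation in Lemma \ref{lem: S_2}. Assembling the three steps gives the claimed two bounds.
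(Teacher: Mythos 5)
Your proposal is correct and follows essentially the same route as the paper: reduce via Proposition \ref{eq: T_2} to controlling $\mathcal{M}_j(-\pa_z^2-(\pa_v-t\pa_z)^2)^{-1/2}P_{\neq}\mathcal{S}_2[\Pi]$, reuse the paraproduct and multiplier-switch machinery of Lemma \ref{lem: S_2} for the $\th^{\d}$ and $g_1$ terms, and treat $g_2$ with \eqref{eq: g_2-g_3} in the high-frequency case while matching the single remaining derivative against the $(-\Delta_L)^{1/2}$ on the right. The paper's only explicit addition is the switch of $(k^2+(\xi-kt)^2)^{-1/2}$ to $(k^2+(\eta-kt)^2)^{-1/2}$ when $|\xi-\eta|\leq 2|k,\eta|$, which is implicit in your "absorbing the half-power" step.
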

\begin{lemma}\label{lem: S_a-1}
Under the bootstrap hypotheses,
\begin{align*}
&\left\|\mathcal{M}_3(-\pa_z^2-(\pa_v-t\pa_z)^2)^{\f12}\mathring{\mathcal{T}}_{j, N}^{-1}P_{\neq}\big[\mathcal{S}_a[\Pi]\big]\right\|_2\\
&\lesssim \ep\left\|\mathcal{M}_3(\pa_z^2+(\pa_v-t\pa_z)^2)P_{\neq}(\Upsilon_1\Pi)\right\|_2+\ep^2,\\
&\left\|(\mathcal{M}_4+\mathcal{M}_5)(-\pa_z^2-(\pa_v-t\pa_z)^2)^{\f12}\mathring{\mathcal{T}}_{j, N}^{-1}P_{\neq}\big[\mathcal{S}_a[\Pi]\big]\right\|_2\\
&\lesssim \ep\big(\mathcal{CK}_{\th^{\d}}^{\f12}+\mathcal{CK}_{h}^{\f12}+\mathcal{CK}_{\varphi_9^{\d}}^{\f12}+\mathcal{CK}_a^{\f12}+\f{\ep}{\langle t\rangle^s}\big)\\
&\quad +\ep\left\|(\mathcal{M}_4+\mathcal{M}_5)(\pa_z^2+(\pa_v-t\pa_z)^2)P_{\neq}(\Upsilon_1\Pi)\right\|_{2}\\
&\quad +\ep \|\mathcal{M}_{01}\pa_vP_0(\Upsilon_2\Pi)\|_{L^2}.
\end{align*}
\end{lemma}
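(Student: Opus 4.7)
\medskip

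\noindent\textbf{Proof proposal for Lemma \ref{lem: S_a-1}.}

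The plan is to mimic the proof of Lemma \ref{lem: S_a} but to exploit the fact that the multiplier on the left-hand side carries only a half-power of the Laplacian $(-\pa_z^2-(\pa_v-t\pa_z)^2)^{1/2}$ rather than the full operator. First I would apply Proposition \ref{eq: T_2}, which represents $\mathring{\mathcal{T}}_{j,N}^{-1}$ as a convolution operator whose Fourier kernel is bounded by $e^{-\la_{\Delta}\langle \xi-\eta\rangle^s}/(1+k^2+(\eta-kt)^2)$. Composing with the half-Laplacian on the physical side generates a factor $\sqrt{k^2+(\xi-kt)^2}$ in the numerator, so that the problem reduces to controlling
\[
\bigl\|\mathcal{M}_3\,(k^2+(\xi-kt)^2)^{-1/2}\,\mathcal{F}P_{\neq}\mathcal{S}_a[\Pi]\bigr\|_{2}\quad\text{and its }\mathcal{M}_4,\mathcal{M}_5\text{ analogues.}
\]
Equivalently, I have gained one derivative compared with the estimates already carried out in Lemma \ref{lem: S_a}.

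Next I would split $\mathcal{S}_a[\Pi]$ into its three pieces $\pa_z\bigl(a\pa_z(\Upsilon_2\Pi)\bigr)$, $g_3(\pa_v-t\pa_z)\bigl(a(\pa_v-t\pa_z)(\Upsilon_2\Pi)\bigr)$, and $g_4\,a\,(\pa_v-t\pa_z)(\Upsilon_2\Pi)$. For the two divergence-form terms, the outermost $\pa_z$ or $(\pa_v-t\pa_z)$ contributes a factor of $|k|$ or $|\xi-kt|$ in Fourier; this factor is absorbed into the extra $\sqrt{k^2+(\xi-kt)^2}$ supplied by the half-derivative gain, so that the argument becomes a verbatim repetition of the paraproduct splitting (high--low, low--high, resonant) from Lemma \ref{lem: S_a}. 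In particular, the multiplier identities of the form \eqref{eq: g_2-g_3} and the switching inequalities from Remark \ref{Rmk: A-switch}, together with Lemma \ref{Lem 3.4} and Lemma \ref{Lem 3.6}, are applied exactly as before, but now I do not need to take out two derivatives from the pressure factor, which is why the full $(\pa_z^2+(\pa_v-t\pa_z)^2)$ appears on the right-hand side for the $\mathcal{M}_3$ bound. For the undifferentiated term $g_4\,a\,(\pa_v-t\pa_z)(\Upsilon_2\Pi)$, I peel off $g_4$ in frequency (using its $\mathcal{G}^{s,\s-6}$ boundedness in the low-frequency regime and the $\mathcal{CK}$ quantities $\mathcal{CK}_h,\mathcal{CK}_{\varphi^{\d}_9}$ in the high-frequency regime via \eqref{eq: g_2-g_3}) and then estimate $a(\pa_v-t\pa_z)(\Upsilon_2\Pi)$ as in the previous lemma.

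The zero-mode-of-$\Pi$ contribution coming from $g_3(\pa_v-t\pa_z)\bigl(P_{\neq}(a)\pa_vP_0(\Upsilon_2\Pi)\bigr)$ is handled in exactly the same way as in Lemma \ref{lem: S_a}: I first remove $g_3$, then split into the two cases according to whether $a$ or $\pa_vP_0(\Upsilon_2\Pi)$ is at higher frequency. In the former case I use the weight $\rmB$ hidden inside $\rmA^*$ to compensate the half-derivative loss on $a$, together with the bound $\|\mathcal{M}_{00}\pa_vP_0(\Upsilon_2\Pi)\|_{L^2}\lesssim \ep$ and its $\mathcal{M}_{01}$ counterpart from Proposition \ref{prop:pressure-zero mode}. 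In the latter case the $\mathcal{M}_3$ factor on the pressure side absorbs the frequency of $\pa_vP_0(\Upsilon_2\Pi)$, so the contribution is bounded by $\ep\|\mathcal{M}_{01}\pa_vP_0(\Upsilon_2\Pi)\|_{L^2}$ and by $\mathcal{CK}_a^{1/2}$ decay.

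I expect the main obstacle to be the bookkeeping for the term $g_4\,a\,(\pa_v-t\pa_z)(\Upsilon_2\Pi)$ together with the high-frequency piece of $a$ in the divergence terms, since these are the places where one derivative loss actually occurs and must be absorbed by the multiplier $\rmB$ inside $\rmA^*$. The key observation will be that the ratio
\[
\frac{\langle k\rangle t(t+\langle k\rangle+|\xi|)}{1+k^2+\xi^2}\,\frac{|\xi-kt|+|k|}{\rmB_k(\eta)}\lesssim \langle t\rangle^3\langle \xi-\eta\rangle^{1/2}
\]
(for $|\xi-\eta|\leq 2|k,\eta|$) which already underlies the proof of Lemma \ref{lem: S_a} remains valid here, and in fact is used with one fewer derivative, so the estimate closes with the same right-hand side modulo the replacement of the full Laplacian by its square root.
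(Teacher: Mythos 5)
Your proposal follows the paper's route: apply Proposition \ref{eq: T_2} so that the half-Laplacian on the left yields a gain of $(k^2+(\xi-kt)^2)^{-1/2}$, use it to cancel the outer divergence derivative in $\mathcal{S}_a[\Pi]$, and then rerun the paraproduct analysis of Lemma \ref{lem: S_a} on the remaining one-derivative terms. Two points deserve care, though. First, the reduction is not a \emph{verbatim} repetition in the resonant case: in Lemma \ref{lem: S_a} the loss $\frac{|\xi|}{k^2+(\xi-kt)^2}$ incurred when switching $\rmA_k(\xi)$ to $\rmA_l(\eta)$ (for $t\in\mathrm{I}_{k,\xi}\cap\mathrm{I}_{k,\eta}$, $k\neq l$) was partly compensated by the ratio $\frac{|k|+|\xi-kt|}{|l|+|\eta-lt|}$ coming from the outer derivative; here that helper is gone, and one must instead verify directly that
\begin{align*}
\mathcal{M}_3(t,k,\xi)\,(|l|+|\eta-lt|)\lesssim e^{c\la|k-l,\eta-\xi|}\,\mathcal{M}_3(t,l,\eta)\,(|l|+|\eta-lt|)^2,
\end{align*}
which is precisely why the \emph{full} Laplacian on $\Pi$ appears on the right-hand side — not, as you suggest, because "one does not need to take out two derivatives." Second, the stated bound involves $\Upsilon_1\Pi$ rather than $\Upsilon_2\Pi$; this requires the observation that $a(\pa_v-t\pa_z)P_{\neq}(\Upsilon_2\Pi)=a(\pa_v-t\pa_z)P_{\neq}(\Upsilon_1\Pi)$ by the support of $a$, a swap your write-up omits and which is essential for the iteration scheme used for $\Pi_{l,2}$ and $\Pi_{n,2}$.
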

Noticing that, by applying Proposition \ref{eq: T_2}, we only need to control 
\beno
 \left\|\mathcal{M}_3(-\pa_z^2-(\pa_v-t\pa_z)^2)^{-\f12}P_{\neq}\Big(\mathcal{S}_2[\Pi]+\mathcal{S}_a[\Pi]\Big)\right\|_2
\eeno
and
\beno
 \left\|(\mathcal{M}_4+\mathcal{M}_5)(-\pa_z^2-(\pa_v-t\pa_z)^2)^{-\f12}P_{\neq}\Big(\mathcal{S}_2[\Pi]+\mathcal{S}_a[\Pi]\Big)\right\|_2.
\eeno
Notice that when the gaining $(k^2+(\xi-kt)^2)^{-\f12}$ is weak, namely, $|\xi|\approx |kt|$, the weight $\f{\langle k\rangle t(t+\langle k\rangle+|\xi|)}{1+k^2+\xi^2}\approx 1$. 
The above two lemmas can be proved by using the same idea in the proof of Lemma \ref{lem: S_2} and Lemma \ref{lem: S_a}, and following the proofs step by step. 
\begin{proof}
We have for $|\xi-\eta|\leq 2|k,\eta|$ and $k\neq 0$ that
\begin{align*}
\f{\langle k\rangle t(t+\langle k\rangle+|\xi|)}{1+k^2+\xi^2}\f{\rmA_k(\xi)}{(k^2+(\xi-kt)^2)^{\f12}}\lesssim \f{\langle k\rangle t(t+\langle k\rangle+|\eta|)}{1+k^2+\xi^2}\f{\langle \xi-\eta\rangle^4}{(k^2+(\eta-kt)^2)^{\f12}}\rmA_k(\eta)e^{c\la\langle \xi-\eta\rangle^s}
\end{align*}
and for $|\xi-\eta|\geq 2|k,\eta|$ and $k\neq 0$, we have \eqref{eq: g_2-g_3}. Then we have by Remark \ref{Rmk: A-switch} and Lemma \ref{Lem 3.4}, 
\begin{align*}
&\left\|\mathcal{M}_3(-\pa_z^2-(\pa_v-t\pa_z)^2)^{-\f12}[g_2(\pa_v-t\pa_z)P_{\neq}(\Upsilon_2\Pi)]\right\|_{L^2}\\
&\lesssim \ep\left\|\mathcal{M}_3(-\pa_z^2-(\pa_v-t\pa_z)^2)^{\f12}P_{\neq}(\Upsilon_2\Pi)\right\|_2, \\
&\left\|\big(\mathcal{M}_4+\mathcal{M}_5\big)(-\pa_z^2-(\pa_v-t\pa_z)^2)^{-\f12}P_{\neq}[g_2(\pa_v-t\pa_z)(\Upsilon_2\Pi)]\right\|_{L^2}\\
&\lesssim \big(\mathcal{CK}_{\th^{\d}}^{\f12}+\mathcal{CK}_{h}^{\f12}+\mathcal{CK}_{\varphi_9^{\d}}^{\f12}\big)\left\|\mathcal{M}_3(\pa_z^2+(\pa_v-t\pa_z)^2)P_{\neq}(\Upsilon_2\Pi)\right\|_2\\
&\quad +\ep\|(\mathcal{M}_4+\mathcal{M}_5)(-\pa_z^2-(\pa_v-t\pa_z)^2)^{\f12}P_{\neq}(\Upsilon_2\Pi)\|_{2}.
\end{align*}
This gives Lemma \ref{lem: S_2-1}. 
\end{proof}
\begin{proof}
By using the same idea of the proof of Lemma \ref{lem: S_a}, we only need to estimate 
\begin{align*}
\|\mathcal{M}_3P_{\neq}[(a(\pa_v-t\pa_z)P_{\neq}(\Upsilon_2\Pi))]\|_{L^2},\quad \|\mathcal{M}_3P_{\neq}[(a\pa_zP_{\neq}(\Upsilon_2\Pi))]\|_{L^2}
\end{align*}
and
\begin{align*}
\left\|\big(\mathcal{M}_4+\mathcal{M}_5\big)P_{\neq}[(a(\pa_v-t\pa_z)P_{\neq}(\Upsilon_2\Pi))]\right\|_{L^2}.
\end{align*}
Considering the support of $a$, we have $a(\pa_v-t\pa_z)P_{\neq}(\Upsilon_2\Pi)=a(\pa_v-t\pa_z)P_{\neq}(\Upsilon_1\Pi)$. Thus we use $\Upsilon_1\Pi$ and estimate
\begin{align*}
\|\mathcal{M}_3P_{\neq}[(a(\pa_v-t\pa_z)P_{\neq}(\Upsilon_1\Pi))]\|_{L^2},\quad \|\mathcal{M}_3P_{\neq}[(a\pa_zP_{\neq}(\Upsilon_1\Pi))]\|_{L^2}
\end{align*}
and
\begin{align*}
\left\|\big(\mathcal{M}_4+\mathcal{M}_5\big)P_{\neq}[(a(\pa_v-t\pa_z)P_{\neq}(\Upsilon_1\Pi))]\right\|_{L^2}.
\end{align*}
We have by Lemma \ref{Lem 3.6} and Remark \ref{Rmk: A-switch} that for $|k-l,\eta-\xi|\leq |l,\eta|$, $k\neq 0, l\neq 0$
\begin{align*}
&\mathcal{M}_3(t,k,\xi)(|l|+|\eta-lt|)\\
&\lesssim \mathcal{M}_3(t,l,\eta)(|l|+|\eta-lt|)e^{\la |k-l,\eta-\xi|^s}\\
&\quad +\rmA_l(\eta)\f{\langle k\rangle t(t+\langle k\rangle+|\xi|)}{1+k^2+\xi^2}\f{|\xi|(|l|+|\eta-lt|)}{k^2+(\xi-kt)^2}\mathbf{1}_{t\in {\rm{I}}_{k,\xi}\cap {\rm{I}}_{k,\eta},\, k\neq l}\\
&\lesssim \mathcal{M}_3(t,l,\eta)e^{c\la |k-l,\eta-\xi|}\langle k-l,\eta-\xi\rangle^5+\rmA_l(\eta)\f{{\xi}/{k}}{{\eta}/{l}}|l|^2\left\langle t-\f{\eta}{l}\right\rangle^2 e^{c\la |k-l,\eta-\xi|}\\
&\lesssim e^{c\la |k-l,\eta-\xi|}\mathcal{M}_3(t,l,\eta)(|l|+|\eta-lt|)^2.
\end{align*}
The lemma follows from Remark \ref{Rmk: A-switch} and Lemma \ref{Lem 3.4}. 
\end{proof}

\subsection{Estimate of $\Pi_{l,1}$}
In this section, we obtain the estimate for $P_{\neq}(\Pi_{l,1})$. 
We only need to estimate $\mathcal{M}_3(\pa_z^2+(\pa_v-t\pa_z)^2)\mathring{\mathcal{T}}_{j,N}^{-1}[P_{\neq} \Xi_1]$,
$\mathcal{M}_4(\pa_z^2+(\pa_v-t\pa_z)^2)\mathring{\mathcal{T}}_{j,N}^{-1}[P_{\neq} \Xi_1]$, and
$\mathcal{M}_5(\pa_z^2+(\pa_v-t\pa_z)^2)\mathring{\mathcal{T}}_{j,N}^{-1}[P_{\neq} \Xi_1]$ where 
\beno
\Xi_1=\udl{\varphi_6}\pa_{zz}(\Upsilon_2\Psi)+\udl{\varphi_7}P_{\neq}(\Upsilon_2\Psi)
\eeno
By Proposition \ref{eq: T_2}, we have
\beq\label{eq: Xi_1}
\begin{aligned}
\left\|\mathcal{M}_3(\pa_z^2+(\pa_v-t\pa_z)^2)\mathring{\mathcal{T}}_{j,N}^{-1}[P_{\neq} \Xi_1]\right\|_2\lesssim \left\|\mathcal{M}_3[P_{\neq} \Xi_1]\right\|_2,\\
\left\|\mathcal{M}_4(\pa_z^2+(\pa_v-t\pa_z)^2)\mathring{\mathcal{T}}_{j,N}^{-1}[P_{\neq} \Xi_1]\right\|_2\lesssim \left\|\mathcal{M}_4[P_{\neq} \Xi_1]\right\|_2,\\
\left\|\mathcal{M}_5(\pa_z^2+(\pa_v-t\pa_z)^2)\mathring{\mathcal{T}}_{j,N}^{-1}[P_{\neq} \Xi_1]\right\|_2\lesssim \left\|\mathcal{M}_5[P_{\neq} \Xi_1]\right\|_2. 
\end{aligned}
\eeq
By using
\ben\label{eq: inequality-1}
(|t|+|k|+|\xi|)|k|\lesssim \langle k,\xi\rangle(|\xi-kt|+|k|),\quad \text{for}\ k\neq 0,
\een
and Remark \ref{Rmk: A-switch}, we then have for $2|k,\eta|\geq |\xi-\eta|$, $k\neq 0$,
\begin{align*}
&|k|^2\f{\langle k\rangle t(t+\langle k\rangle+|\xi|)}{1+k^2+\xi^2}\rmA_k(\xi)
\lesssim |k^2|\rmA_k(\eta)\langle \xi-\eta\rangle^3\f{\langle k\rangle t(t+\langle k\rangle+|\eta|)}{1+k^2+\eta^2}e^{c\la|\xi-\eta|^s}\\
&\lesssim \f{t|k|^2| k, \eta|(| k|+ |tk-\eta|)}{1+k^2+\eta^2}\f{|\eta|+|kt|}{|kt|}\left\langle\f{\eta}{kt}\right\rangle^{-1}\rmA_k(\eta)e^{c\la|\xi-\eta|^s}\\
&\lesssim (k^2+(\eta-kt)^2)\left\langle\f{\eta}{kt}\right\rangle^{-1}\rmA_k(\eta)e^{c\la|\xi-\eta|^s}. 
\end{align*}
and for $2|k,\eta|\leq |\xi-\eta|$, $k\neq 0$,
\begin{align*}
&|k|\f{\langle k\rangle t(t+\langle k\rangle+|\xi|)}{1+k^2+\xi^2}\rmA_k(\xi)
\lesssim \rmA^{\rmR}(\xi-\eta)e^{\la|k,\eta|^s}\langle t\rangle^2\langle k\rangle^2
\end{align*}
Thus we get that 
\begin{align}\label{eq: M_3Xi_1}
\left\|\mathcal{M}_3[P_{\neq} \Xi_1]\right\|_2
\lesssim \langle t\rangle^2\|P_{\neq}(\Upsilon_2\Psi)\|_{\mathcal{G}^{s,\s-6}}+ \left\|\left\langle\f{\pa_v}{t\pa_z}\right\rangle^{-1}(\pa_z^2+(\pa_v-t\pa_z)^2)\rmA P_{\neq}(\Psi\Upsilon_2)\right\|_2\lesssim \ep.
\end{align}
By Lemma \ref{Lem 3.4}, we have
\begin{align*}
\left\|\big(\mathcal{M}_4+\mathcal{M}_5\big)[P_{\neq} \Xi_1]\right\|_2
&\lesssim \Big(\f{1}{\langle t\rangle^s}+\mathcal{CK}_{\varphi^{\d}_6}^{\f12}+\mathcal{CK}_{\varphi^{\d}_7}^{\f12}\Big)\langle t\rangle^2\|P_{\neq}(\Upsilon_2\Psi)\|_{\mathcal{G}^{s,\s-6}}\\
&\quad+\left\|\left\langle\f{\pa_v}{t\pa_z}\right\rangle^{-1}(\pa_z^2+(\pa_v-t\pa_z)^2)\left(\f{|\na|^{\f{s}{2}}}{\langle t\rangle^s}\rmA+\sqrt{\f{\pa_t w}{w}}\tilde{\rmA}\right)P_{\neq}(\Psi\Upsilon_2)\right\|_2.
\end{align*}
Now we prove Proposition \ref{Prop: non-zero-pessure-L} for $\Pi_{\star}=\Upsilon_2\Pi_{l,1}$. 
\begin{proof}
By \eqref{eq: pre-eq-error}, Lemma \ref{lem: S_2}, Lemma \ref{lem: S_a}, \eqref{eq: Xi_1}, and \eqref{eq: M_3Xi_1}, we get that
\begin{align*}
&\left\|\mathcal{M}_3(\pa_z^2+(\pa_v-t\pa_z)^2)P_{\neq}(\Upsilon_2\Pi_{l,1})\right\|_2\\
&\lesssim 
\left\|\mathcal{M}_3(\pa_z^2+(\pa_v-t\pa_z)^2)\mathring{\mathcal{T}}_{2, N}^{-1}[P_{\neq}\Xi_1]\right\|_2
+\left\|\mathcal{M}_3(\pa_z^2+(\pa_v-t\pa_z)^2)\mathring{\mathcal{T}}_{j, N}^{-1}\big[P_{\neq}\mathcal{S}_2[\Pi_{l,1}]\big]\right\|_2\\
&\quad+\left\|\mathcal{M}_3(\pa_z^2+(\pa_v-t\pa_z)^2)\mathring{\mathcal{T}}_{2, N}^{-1}\big[P_{\neq}\mathcal{S}_a[\Pi_{l,1}]\big]\right\|_2\\
&\lesssim \ep+\ep\left\|\mathcal{M}_3(\pa_z^2+(\pa_v-t\pa_z)^2)P_{\neq}(\Upsilon_2\Pi_{l,1})\right\|_2
+\ep^2
\end{align*}
and
\begin{align*}
&\left\|(\mathcal{M}_4+\mathcal{M}_5)(\pa_z^2+(\pa_v-t\pa_z)^2)P_{\neq}(\Upsilon_2\Pi_{l,1})\right\|_2\\
&\lesssim 
\left\|(\mathcal{M}_4+\mathcal{M}_5)(\pa_z^2+(\pa_v-t\pa_z)^2)\mathring{\mathcal{T}}_{2, N}^{-1}[P_{\neq}\Xi_1]\right\|_2\\
&\quad+\left\|(\mathcal{M}_4+\mathcal{M}_5)(\pa_z^2+(\pa_v-t\pa_z)^2)\mathring{\mathcal{T}}_{j, N}^{-1}\big[P_{\neq}\mathcal{S}_2[\Pi_{l,1}]\big]\right\|_2\\
&\quad+\left\|(\mathcal{M}_4+\mathcal{M}_5)(\pa_z^2+(\pa_v-t\pa_z)^2)\mathring{\mathcal{T}}_{2, N}^{-1}\big[P_{\neq}\mathcal{S}_a[\Pi_{l,1}]\big]\right\|_2\\
&\lesssim \ep \Big(\f{1}{\langle t\rangle^s}+\mathcal{CK}_{\varphi^{\d}_6}^{\f12}+\mathcal{CK}_{\varphi^{\d}_7}^{\f12}+\mathcal{CK}_{\th^{\d}}^{\f12}+\mathcal{CK}_{h}^{\f12}+\mathcal{CK}_{\varphi_9^{\d}}^{\f12}+\mathcal{CK}_a^{\f12}\Big)\\
&\quad+\left\|\left\langle\f{\pa_v}{t\pa_z}\right\rangle^{-1}(\pa_z^2+(\pa_v-t\pa_z)^2)\left(\f{|\na|^{\f{s}{2}}}{\langle t\rangle^s}\rmA+\sqrt{\f{\pa_t w}{w}}\tilde{\rmA}\right)P_{\neq}(\Psi\Upsilon_2)\right\|_2\\
&\quad +\ep\|(\mathcal{M}_4+\mathcal{M}_5)(\pa_z^2+(\pa_v-t\pa_z)^2)P_{\neq}(\Upsilon_2\Pi_{l,1})\|_{2}\\
&\quad +\ep \|\mathcal{M}_{01}\pa_vP_0(\Upsilon_2\Pi)\|_{L^2},
\end{align*}
which gives 
\begin{align*}
&\left\|\mathcal{M}_3(\pa_z^2+(\pa_v-t\pa_z)^2)P_{\neq}(\Upsilon_2\Pi_{l,1})\right\|_2\lesssim \ep,\\
\end{align*}
and
\begin{align*}
&\left\|(\mathcal{M}_4+\mathcal{M}_5)(\pa_z^2+(\pa_v-t\pa_z)^2)P_{\neq}(\Upsilon_2\Pi_{l,1})\right\|_2\\
&\lesssim \ep \Big(\f{1}{\langle t\rangle^s}+\mathcal{CK}_{\varphi^{\d}_6}^{\f12}+\mathcal{CK}_{\varphi^{\d}_7}^{\f12}+\mathcal{CK}_{\th^{\d}}^{\f12}+\mathcal{CK}_{h}^{\f12}+\mathcal{CK}_{\varphi_9^{\d}}^{\f12}+\mathcal{CK}_a^{\f12}\Big)\\
&\quad+\left\|\left\langle\f{\pa_v}{t\pa_z}\right\rangle^{-1}(\pa_z^2+(\pa_v-t\pa_z)^2)\left(\f{|\na|^{\f{s}{2}}}{\langle t\rangle^s}\rmA+\sqrt{\f{\pa_t w}{w}}\tilde{\rmA}\right)P_{\neq}(\Psi\Upsilon_2)\right\|_2\\
&\quad +\ep \|\mathcal{M}_{01}\pa_vP_0(\Upsilon_2\Pi)\|_{L^2}.
\end{align*}
Thus we prove the estimates for $P_{\neq}(\Upsilon_2\Pi_{l,1})$. 
\end{proof}
\subsection{Estimate of $\Pi_{l,2}$}
The $\Pi_{l,2}$ term arose from the boundary correction. It seems worse than $\Pi_{l,1}$. However, on the support of $\udl{\varphi_8}$, $\Upsilon_1=0$. Thus to estimate $\Pi_{l,2}$, we use two steps. First, we get the estimate of $\Pi_{l,2}$ using equation \eqref{eq: Pressure-l2}. Second, we obtain an improved estimate for $\Upsilon_1\Pi_{l,2}$, namely, we study the following equation: 
\beq\label{eq: Pressure-l2-2}
\left\{
\begin{aligned}
&\pa_z\Big((a+\udl{\th})\pa_z(\Upsilon_1\Pi_{l,2})\Big)+\udl{\pa_yv}(\pa_v-t\pa_z)\Big((a+\udl{\th})\udl{\pa_yv}(\pa_v-t\pa_z)(\Upsilon_1\Pi_{l,2})\Big)\\
&\quad\quad \quad \quad\quad 
=2(\udl{\pa_yv})^2(\pa_v\Upsilon_1)\Big(\udl{\th}(\pa_v-t\pa_z)(\Upsilon_2\Pi_{l,2})\Big)
+(\udl{\pa_yv})^2\Big(\udl{\th}(\pa_v^2\Upsilon_1)(\Upsilon_2\Pi_{l,2})\Big),\\
&(\pa_v-t\pa_z)\Pi_{l,2}(t, z, u(0))=(\pa_v-t\pa_z)\Pi_{l,2}(t, z, u(1))=0.
\end{aligned}\right.
\eeq 
Note that on the support of $\pa_v\Upsilon_1$ or $\pa_v^2\Upsilon_1$, both $\udl{\th}$ and $\udl{\pa_yv}$ are constant value.

We show the idea in the following inequalities: 
\begin{align*}
&\text{Step 1. }\quad \|\Upsilon_2\Pi_{l,2}\|_{Y}\lesssim \ep  \|\Upsilon_2\Pi_{l,2}\|_{Y}+\ep \|\Upsilon_1\Pi_{l,2}\|_{X}+\text{good terms},\\
&\text{Step 2. }\quad \|\Upsilon_1\Pi_{l,2}\|_{X}\lesssim \ep \|\Upsilon_1\Pi_{l,2}\|_{X}+\|\Upsilon_2\Pi_{l,2}\|_{Y}+\text{good terms}. 
\end{align*}

Let us first estimate $P_{\neq} (\Upsilon_2\Pi_{l,2})$. 

\begin{lemma}\label{lem: P_{l,2}-1}
Under the bootstrap hypotheses,
\begin{align*}
\left\|\mathcal{M}_3(-\pa_z^2-(\pa_v-t\pa_z)^2)^{\f12}P_{\neq}(\Upsilon_2\Pi_{l,2})\right\|_{L^2}\lesssim \ep+\ep\left\|\mathcal{M}_3(\pa_z^2+(\pa_v-t\pa_z)^2)P_{\neq}(\Upsilon_1\Pi_{l,2})\right\|_2,
\end{align*}
and
\begin{align*}
&\left\|(\mathcal{M}_4+\mathcal{M}_5)(-\pa_z^2-(\pa_v-t\pa_z)^2)^{\f12}P_{\neq}(\Upsilon_2\Pi_{l,2})\right\|_{L^2}\\
&\lesssim +\ep\left\|(\mathcal{M}_4+\mathcal{M}_5)(\pa_z^2+(\pa_v-t\pa_z)^2)P_{\neq}(\Upsilon_1\Pi_{l,2})\right\|_{2}\\
&\quad+\ep\Big(\mathcal{CK}_{\th^{\d}}^{\f12}+\mathcal{CK}_{h}^{\f12}+\mathcal{CK}_{\varphi_8^{\d}}^{\f12}+\mathcal{CK}_{\varphi_9^{\d}}^{\f12}+\mathcal{CK}_a^{\f12}+\f{1}{\langle t\rangle^s}\Big)\\
&\quad +\ep \|\mathcal{M}_{01}\pa_vP_0(\Upsilon_2\Pi_{l,2})\|_{L^2}\\
&\quad+\left\|\left\langle\f{\pa_v}{t\pa_z}\right\rangle^{-1}(\pa_z^2+(\pa_v-t\pa_z)^2)\left(\f{|\na|^{\f{s}{2}}}{\langle t\rangle^s}\rmA+\sqrt{\f{\pa_t w}{w}}\tilde{\rmA}\right)P_{|k|\geq k_{M}}(\Psi\Upsilon_2)\right\|_2.
\end{align*}
Moreover, it holds that
\begin{align*}
\langle t\rangle^3\|P_{\neq}(\Upsilon_2\Pi_{l,2})\|_{\mathcal{G}^{s,\s-6}}
&+\langle t\rangle^2\left\|(-\pa_z^2-(\pa_v-t\pa_z)^2)^{\f12}P_{\neq}(\Upsilon_2\Pi_{l,2})\right\|_{\mathcal{G}^{s,\s-6}}\\
&\lesssim \ep+\ep\left\|\mathcal{M}_3(\pa_z^2+(\pa_v-t\pa_z)^2)P_{\neq}(\Upsilon_1\Pi_{l,2})\right\|_2.
\end{align*}
\end{lemma}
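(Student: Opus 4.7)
The plan is to run the same elliptic recipe that produced the bound for $\Pi_{l,1}$, but adapted to the fact that the right-hand side of \eqref{eq: Pressure-l2} carries only one derivative of $\Psi$ rather than two. Concretely, I will apply $\mathring{\mathcal{T}}_{2,N}^{-1}$ to the pressure equation written in the form \eqref{eq: elliptic pressure-1}, yielding
\begin{align*}
\Upsilon_2\Pi_{l,2}=\mathring{\mathcal{T}}_{2,N}^{-1}[P_{\neq}\Xi_2]-\mathring{\mathcal{T}}_{2,N}^{-1}\big[P_{\neq}\mathcal{S}_2[\Pi_{l,2}]+P_{\neq}\mathcal{S}_a[\Pi_{l,2}]\big],\qquad \Xi_2=\udl{\varphi_8}\udl{\pa_yv}(\pa_v-t\pa_z)P_{\neq}(\Upsilon_2\Psi),
\end{align*}
and then test with the multipliers $\mathcal{M}_3(-\Delta_L)^{1/2}$ and $(\mathcal{M}_4+\mathcal{M}_5)(-\Delta_L)^{1/2}$, where $\Delta_L=\pa_z^2+(\pa_v-t\pa_z)^2$. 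Because Proposition \ref{eq: T_2} only affords a gain of $(k^2+(\xi-kt)^2)^{-1}$ and the forcing has a single derivative $(\pa_v-t\pa_z)$, the natural estimate one can close on $\Upsilon_2\Pi_{l,2}$ is with $(-\Delta_L)^{1/2}$ only --- the full $\Delta_L$ control must be recovered on $\Upsilon_1\Pi_{l,2}$ through \eqref{eq: Pressure-l2-2} in a subsequent lemma. This is precisely the iteration advertised in the informal two-step inequality sketched at the start of the subsection.

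The main forcing term is handled by noting that, once Proposition \ref{eq: T_2} converts $\mathcal{M}_3(-\Delta_L)^{1/2}\mathring{\mathcal{T}}_{2,N}^{-1}$ into $\mathcal{M}_3(-\Delta_L)^{-1/2}$ acting on $\Xi_2$, the factor $(\pa_v-t\pa_z)$ of $\Xi_2$ is absorbed into $(-\Delta_L)^{-1/2}$. A standard paraproduct split in $v$ and the switching inequalities of Remark \ref{Rmk: A-switch}, plus the frequency bound on $\mathcal{M}_3$ used in \eqref{eq: M_3Xi_1}, then reduce the estimate to
\begin{align*}
\|\mathcal{M}_3(-\Delta_L)^{-1/2}P_{\neq}\Xi_2\|_2\lesssim \langle t\rangle^2\|P_{\neq}(\Upsilon_2\Psi)\|_{\mathcal{G}^{s,\lambda,\sigma-6}}+\Big\|\langle\tfrac{\pa_v}{t\pa_z}\rangle^{-1}\Delta_L\rmA P_{\neq}(\Upsilon_2\Psi)\Big\|_2\lesssim \ep,
\end{align*}
by the lossy stream-function bound of Lemma \ref{lem: lossy-elliptic u} together with Corollary \ref{Cor: psi-new}. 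The analogous estimate with $\mathcal{M}_4+\mathcal{M}_5$ in place of $\mathcal{M}_3$ uses the weight-switching identity of Lemma \ref{Lem 3.4} to trade $\sqrt{\pa_tw/w}$ and $|\na|^{s/2}/\langle t\rangle^s$ onto $\udl{\varphi_8}\udl{\pa_yv}$ (producing $\mathcal{CK}_{\varphi_8^\d}^{1/2}$, $\mathcal{CK}_h^{1/2}$, $\mathcal{CK}_{\varphi_9^\d}^{1/2}$, and the $\langle t\rangle^{-s}$ contribution) or onto $\Psi$ (producing the precision elliptic quantity at the end of the bound).

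The elliptic error terms are dispatched by Lemma \ref{lem: S_2-1} and Lemma \ref{lem: S_a-1}: Lemma \ref{lem: S_2-1} yields $\lesssim\ep\,\|\mathcal{M}_3(-\Delta_L)^{1/2}P_{\neq}(\Upsilon_2\Pi_{l,2})\|_2$ (plus the analogous $\mathcal{M}_4+\mathcal{M}_5$ term and $\mathcal{CK}$-factors), which is absorbed into the left-hand side after taking $\ep$ small; Lemma \ref{lem: S_a-1} trades $\mathcal{S}_a$ for $\ep\,\|\mathcal{M}_3\Delta_L P_{\neq}(\Upsilon_1\Pi_{l,2})\|_2$ plus controlled $\mathcal{CK}$ remainders and the zero-mode pressure norm $\|\mathcal{M}_{01}\pa_vP_0(\Upsilon_2\Pi_{l,2})\|_2$ --- which is exactly the error term that the statement of the lemma leaves on the right-hand side. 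Combining these pieces produces the first two displayed inequalities. For the \emph{moreover} part, I will use the simple point-wise comparison that $\langle t\rangle^3\rmA_k+\langle t\rangle^2\rmA_k(|\pa_z|+|\pa_v-t\pa_z|)\lesssim \mathcal{M}_3(-\Delta_L)^{1/2}$ on non-zero modes (this is the content of \eqref{eq: inequality-11} together with \eqref{eq: inequality-1}), so the $L^2$ bound on $\mathcal{M}_3(-\Delta_L)^{1/2}P_{\neq}(\Upsilon_2\Pi_{l,2})$ immediately upgrades to the stated $\mathcal{G}^{s,\sigma-6}$ decay estimates.

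The main obstacle is the derivative loss in $\Xi_2$: one cannot close an estimate for $\mathcal{M}_3\Delta_L P_{\neq}(\Upsilon_2\Pi_{l,2})$ directly from the equation \eqref{eq: Pressure-l2}, since the right-hand side only has a single $(\pa_v-t\pa_z)$ acting on $\Psi$, and $\mathcal{S}_a[\Pi_{l,2}]$ forces through $\Upsilon_1\Pi_{l,2}$ at the level of $\Delta_L$. This is why the present lemma settles only for $(-\Delta_L)^{1/2}$ on $\Upsilon_2\Pi_{l,2}$ and defers the $\Delta_L$ control, to be supplied in a companion lemma for $\Upsilon_1\Pi_{l,2}$ built from the commutator equation \eqref{eq: Pressure-l2-2}, whose forcing involves only $\pa_v\Upsilon_1$ and $\pa_v^2\Upsilon_1$ and is therefore supported strictly away from the boundary. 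The present estimate must be flexible enough that in the subsequent iteration the term $\ep\|\mathcal{M}_3\Delta_L P_{\neq}(\Upsilon_1\Pi_{l,2})\|_2$ on its right-hand side can be reabsorbed --- which dictates keeping the constant in front of that term exactly of size $\ep$.
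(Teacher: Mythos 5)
Your proposal follows essentially the same route as the paper: decompose via \eqref{eq: pre-eq-error} into the forcing $\Xi_2$ plus the error terms $\mathcal{S}_2[\Pi_{l,2}]$ and $\mathcal{S}_a[\Pi_{l,2}]$, use Proposition \ref{eq: T_2} to trade $\mathcal{M}_3(-\Delta_L)^{1/2}\mathring{\mathcal{T}}_{2,N}^{-1}$ for $\mathcal{M}_3(-\Delta_L)^{-1/2}$ so that the single $(\pa_v-t\pa_z)$ in $\Xi_2$ is absorbed, control the forcing by the frequency-splitting/weight-switching estimates (Remark \ref{Rmk: A-switch}, Lemma \ref{Lem 3.4}) yielding exactly the stated $\mathcal{CK}$ and precision-elliptic remainders, and dispatch the error terms with Lemmas \ref{lem: S_2-1} and \ref{lem: S_a-1}, absorbing the $\ep\|\cdot\Upsilon_2\Pi_{l,2}\|$ contribution into the left-hand side while leaving the $\ep\|\mathcal{M}_3\Delta_L P_{\neq}(\Upsilon_1\Pi_{l,2})\|_2$ term for the second step of the iteration. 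This matches the paper's argument, including the reason for settling for the half-power of $\Delta_L$ on $\Upsilon_2\Pi_{l,2}$.
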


\begin{proof}
By \eqref{eq: pre-eq-error} Lemma \ref{lem: S_2-1}, and Lemma \ref{lem: S_a-1}, we have
\begin{align*}
&\left\|\mathcal{M}_3(-\pa_z^2-(\pa_v-t\pa_z)^2)^{\f12}P_{\neq}(\Upsilon_2\Pi_{l,2})\right\|_{L^2}\\
&\lesssim \left\|\mathcal{M}_3(-\pa_z^2-(\pa_v-t\pa_z)^2)^{\f12}\mathring{\mathcal{T}}_{2,N}^{-1}[P_{\neq} \Xi_2]\right\|_{L^2}\\
&\quad+\left\|\mathcal{M}_3(-\pa_z^2-(\pa_v-t\pa_z)^2)^{\f12}\mathring{\mathcal{T}}_{j, N}^{-1}P_{\neq}\big[\mathcal{S}_2[\Pi_{l,2}]\big]\right\|_2\\
&\quad+\left\|\mathcal{M}_3(-\pa_z^2-(\pa_v-t\pa_z)^2)^{\f12}\mathring{\mathcal{T}}_{j, N}^{-1}P_{\neq}\big[\mathcal{S}_a[\Pi_{l,2}]\big]\right\|_2\\
&\lesssim \ep^2+\left\|\mathcal{M}_3(-\pa_z^2-(\pa_v-t\pa_z)^2)^{-\f12}[P_{\neq} \Xi_2]\right\|_{L^2}+\ep\left\|\mathcal{M}_3(-\pa_z^2-(\pa_v-t\pa_z)^2)^{\f12}P_{\neq}(\Upsilon_2\Pi_{l,2})\right\|_{L^2}\\
&\quad{+\ep\left\|\mathcal{M}_3(\pa_z^2+(\pa_v-t\pa_z)^2)P_{\neq}(\Upsilon_1\Pi_{l,2})\right\|_2},
\end{align*}
and
\begin{align*}
&\left\|(\mathcal{M}_4+\mathcal{M}_5)(-\pa_z^2-(\pa_v-t\pa_z)^2)^{\f12}P_{\neq}(\Upsilon_2\Pi_{l,2})\right\|_{L^2}\\
&\lesssim \left\|(\mathcal{M}_4+\mathcal{M}_5)(-\pa_z^2-(\pa_v-t\pa_z)^2)^{\f12}\mathring{\mathcal{T}}_{2,N}^{-1}[P_{\neq} \Xi_2]\right\|_{L^2}\\
&\quad+\left\|(\mathcal{M}_4+\mathcal{M}_5)(-\pa_z^2-(\pa_v-t\pa_z)^2)^{\f12}\mathring{\mathcal{T}}_{j, N}^{-1}P_{\neq}\big[\mathcal{S}_2[\Pi_{l,2}]\big]\right\|_2\\
&\quad+\left\|(\mathcal{M}_4+\mathcal{M}_5)(-\pa_z^2-(\pa_v-t\pa_z)^2)^{\f12}\mathring{\mathcal{T}}_{j, N}^{-1}P_{\neq}\big[\mathcal{S}_a[\Pi_{l,2}]\big]\right\|_2\\
&\lesssim 
\ep\|(\mathcal{M}_4+\mathcal{M}_5)(-\pa_z^2-(\pa_v-t\pa_z)^2)^{\f12}P_{\neq}(\Upsilon_2\Pi_{l,2})\|_{2}\\
&\quad{+\ep\|(\mathcal{M}_4+\mathcal{M}_5)(\pa_z^2+(\pa_v-t\pa_z)^2)P_{\neq}(\Upsilon_1\Pi_{l,2})\|_{2}}\\
&\quad+\ep\Big(\mathcal{CK}_{\th^{\d}}^{\f12}+\mathcal{CK}_{h}^{\f12}+\mathcal{CK}_{\varphi_9^{\d}}^{\f12}+\mathcal{CK}_a^{\f12}+\f{\ep}{\langle t\rangle^s}\Big)\\
&\quad +\left\|(\mathcal{M}_4+\mathcal{M}_5)(-\pa_z^2-(\pa_v-t\pa_z)^2)^{-\f12}[P_{\neq} \Xi_2]\right\|_{L^2}\\
&\quad +\ep \|\mathcal{M}_{01}\pa_vP_0(\Upsilon_2\Pi_{l,2})\|_{L^2},
\end{align*}
Now we control $\left\|\mathcal{M}_3(-\pa_z^2-(\pa_v-t\pa_z)^2)^{-\f12}[P_{\neq} \Xi_2]\right\|_{L^2}$ and $\left\|(\mathcal{M}_4+\mathcal{M}_5)(-\pa_z^2-(\pa_v-t\pa_z)^2)^{-\f12}[P_{\neq} \Xi_2]\right\|_{L^2}$. where 
\begin{align*}
\Xi_2=\udl{\varphi_8}\udl{\pa_yv}(\pa_v-t\pa_z)P_{\neq}(\Upsilon_2\Psi)
\end{align*}
By \eqref{eq: inequality-1}, we have for $|\xi-\eta|\leq 2|k,\eta|$
\begin{align*}
&\f{\langle k\rangle t(t+\langle k\rangle+|\xi|)}{1+k^2+\xi^2}\f{(\eta-kt)}{|k|+|\xi-kt|}\\
&\lesssim \f{(\eta-kt)}{1+|k|+|\xi|}\f{|\eta|+|kt|}{k}\left\langle\f{\eta}{kt}\right\rangle^{-1}\lesssim \langle \eta-\xi\rangle (k^2+(\eta-kt))^2\left\langle\f{\eta}{kt}\right\rangle^{-1}
\end{align*}
and for $|\xi-\eta|\geq 2|k,\eta|$
\begin{align*}
&\f{\langle k\rangle t(t+\langle k\rangle+|\xi|)}{1+k^2+\xi^2}\f{(\eta-kt)}{|k|+|\xi-kt|}\lesssim \f{(\eta-kt)t}{1+|k|+|\xi|}\lesssim \langle t\rangle^2|k,\eta|. 
\end{align*}
Thus we get by Remark \ref{Rmk: A-switch} and Lemma \ref{Lem 3.4} that 
\begin{align*}
&\left\|\mathcal{M}_3(-\pa_z^2-(\pa_v-t\pa_z)^2)^{-\f12}[P_{\neq} \Xi_2]\right\|_{L^2}\\
&\lesssim  \left\|\left\langle\f{\pa_v}{t\pa_z}\right\rangle^{-1}(\pa_z^2+(\pa_v-t\pa_z)^2)\rmA P_{\neq}(\Psi\Upsilon_2)\right\|_2\\
&\quad +(1+\|\rmA^{\rmR}(\varphi_8^{\d})\|_2)(1+\|\rmA^{\rmR}h\|_2+\|\rmA^{\rmR}\varphi_9^{\d}\|_2)\langle t\rangle^2\|P_{\neq}(\Psi\Upsilon_2)\|\lesssim \ep,
\end{align*}
and
\begin{align*}
&\left\|(\mathcal{M}_4+\mathcal{M}_5)(-\pa_z^2-(\pa_v-t\pa_z)^2)^{-\f12}[P_{\neq} \Xi_2]\right\|_{L^2}\\
&\lesssim \left\|\left\langle\f{\pa_v}{t\pa_z}\right\rangle^{-1}(\pa_z^2+(\pa_v-t\pa_z)^2)\left(\f{|\na|^{\f{s}{2}}}{\langle t\rangle^s}\rmA+\sqrt{\f{\pa_t w}{w}}\tilde{\rmA}\right)P_{|k|\geq k_{M}}(\Psi\Upsilon_2)\right\|_2\\
&\quad + \f{\ep}{\langle t\rangle^s}+\ep\mathcal{CK}_{\varphi^{\d}_8}^{\f12}+\ep\mathcal{CK}_{\varphi^{\d}_9}^{\f12}+\ep\mathcal{CK}_h^{\f12}. 
\end{align*}
Thus we proved the lemma. 
\end{proof}
Now we use \eqref{eq: Pressure-l2-2} to prove Proposition \ref{Prop: non-zero-pessure-L} for $\Pi_{\star}=\Upsilon_1\Pi_{l,2}$. 
\begin{proof}
By \eqref{eq: pre-eq-error}, Lemma \ref{lem: S_2}, Lemma \ref{lem: S_a}, \eqref{eq: Xi_1}, and \eqref{eq: M_3Xi_1}, we get that
\begin{align*}
&\left\|\mathcal{M}_3(\pa_z^2+(\pa_v-t\pa_z)^2)P_{\neq}(\Upsilon_1\Pi_{l,2})\right\|_2\\
&\lesssim 
\left\|\mathcal{M}_3[P_{\neq}\Xi_3]\right\|_2
+\left\|\mathcal{M}_3(\pa_z^2+(\pa_v-t\pa_z)^2)\mathring{\mathcal{T}}_{1, N}^{-1}\big[P_{\neq}\mathcal{S}_2[\Pi_{l,2}]\big]\right\|_2\\
&\quad+\left\|\mathcal{M}_3(\pa_z^2+(\pa_v-t\pa_z)^2)\mathring{\mathcal{T}}_{1, N}^{-1}\big[P_{\neq}\mathcal{S}_a[\Pi_{l,2}]\big]\right\|_2.
\end{align*}
and
\begin{align*}
&\left\|(\mathcal{M}_4+\mathcal{M}_5)(\pa_z^2+(\pa_v-t\pa_z)^2)P_{\neq}(\Upsilon_1\Pi_{l,2})\right\|_2\\
&\lesssim 
\left\|(\mathcal{M}_4+\mathcal{M}_5)[P_{\neq}\Xi_3]\right\|_2\\
&\quad+\left\|(\mathcal{M}_4+\mathcal{M}_5)(\pa_z^2+(\pa_v-t\pa_z)^2)\mathring{\mathcal{T}}_{1, N}^{-1}\big[P_{\neq}\mathcal{S}_2[\Pi_{l,2}]\big]\right\|_2\\
&\quad+\left\|(\mathcal{M}_4+\mathcal{M}_5)(\pa_z^2+(\pa_v-t\pa_z)^2)\mathring{\mathcal{T}}_{1, N}^{-1}\big[P_{\neq}\mathcal{S}_a[\Pi_{l,2}]\big]\right\|_2,
\end{align*}
with 
\beno
\Xi_3=2(\udl{\pa_yv})^2(\pa_v\Upsilon_1)\Big(\udl{\th}(\pa_v-t\pa_z)(\Upsilon_2\Pi_{l,2})\Big)
+(\udl{\pa_yv})^2\Big(\udl{\th}(\pa_v^2\Upsilon_1)(\Upsilon_2\Pi_{l,2})\Big)
\eeno
We have for $|\xi-\eta|\leq 2|k,\eta|$
\begin{align*}
\f{\langle k\rangle t(t+\langle k\rangle+|\xi|)}{1+k^2+\xi^2}
\lesssim \f{\langle k\rangle t(t+\langle k\rangle+|\eta|)}{1+k^2+\eta^2}\langle \xi-\eta \rangle^3,
\end{align*}
and for $|\xi-\eta|\geq 2|k,\eta|$
\begin{align*}
\f{\langle k\rangle t(t+\langle k\rangle+|\xi|)}{1+k^2+\xi^2}
\lesssim \langle t\rangle^2.
\end{align*}
Thus we have by Remark \ref{Rmk: A-switch} that
\begin{align*}
\left\|\mathcal{M}_3[P_{\neq}\Xi_3]\right\|_2
&\lesssim \left\|\mathcal{M}_3(-\pa_z^2-(\pa_v-t\pa_z)^2)^{\f12}P_{\neq}(\Upsilon_2\Pi_{l,2})\right\|_{L^2}\\
&\quad +\|\rmA^{\rmR}(\udl{\pa_yv}\Upsilon_1)\|_2^2\|\rmA^{\rmR}\udl{\th}\|_{L^2}\langle t\rangle^2\|(-\pa_z^2-(\pa_v-t\pa_z)^2)^{\f12}P_{\neq}(\Upsilon_2\Pi_{l,2})\|_{\mathcal{G}^{s,\s-6}}\lesssim \ep. 
\end{align*}
and by Lemma \ref{Lem 3.4}
\begin{align*}
&\left\|(\mathcal{M}_4+\mathcal{M}_5)[P_{\neq}\Xi_3]\right\|_2\\
&\lesssim \left\|(\mathcal{M}_4+\mathcal{M}_5)(-\pa_z^2-(\pa_v-t\pa_z)^2)^{\f12}P_{\neq}(\Upsilon_2\Pi_{l,2})\right\|_{L^2}\\
&\quad +\Big(\f{1}{\langle t\rangle^s}+\mathcal{CK}_{h}^{\f12}+\mathcal{CK}_{\th}^{\f12}\Big)\langle t\rangle^2\|(-\pa_z^2-(\pa_v-t\pa_z)^2)^{\f12}P_{\neq}(\Upsilon_2\Pi_{l,2})\|_{\mathcal{G}^{s,\s-6}}
\end{align*}
By using Lemma \ref{lem: P_{l,2}-1}, we proved the estimates for $P_{\neq}(\Upsilon_1\Pi_{l,2})$. 
\end{proof}

\subsection{Estimate of $\Pi_{n,1}$}
In this section, we treat the nonlinear pressure. Similarly, we first estimate 
\begin{align*}
\left\|\mathcal{M}_3[P_{\neq}\Xi_4]\right\|_2,\quad\text{and}\quad \left\|(\mathcal{M}_4+\mathcal{M}_5)[P_{\neq}\Xi_4]\right\|_2
\end{align*}
where 
\begin{align*}
\Xi_4&=-2\udl{\chi_2}(\udl{\pa_yv}(\pa_v-t\pa_z)\pa_z(\Upsilon_2\Psi))^2-2\pa_{zz}(\Upsilon_2\Psi)\Big(\Om-\udl{\chi_2}\pa_{zz}(\Upsilon_2\Psi)\Big),\\
&\quad
-2\udl{\chi_2'}\udl{\pa_yv}\Big[(\pa_z(\Upsilon_2\Psi))(\pa_v-t\pa_z)\pa_z(\Upsilon_2\Psi)+(\pa_v-t\pa_z)(\Upsilon_2\Psi)\pa_{zz}(\Upsilon_2\Psi)\Big]\\
&\quad
-\f{1}{2}\udl{\chi_2''}\Big[(\pa_z(\Upsilon_2\Psi))^2+(\udl{\pa_yv})^2((\pa_v-t\pa_z)P_{\neq}(\Upsilon_2\Psi))^2\Big]. 
\end{align*}
We have the following lemma:
\begin{lemma}\label{Lem: Xi_4}
Under the bootstrap hypotheses,
\beno
\left\|\mathcal{M}_3[P_{\neq}\Xi_4]\right\|_2\lesssim \ep^2,
\eeno
and
\begin{align*}
\left\|(\mathcal{M}_4+\mathcal{M}_5)[P_{\neq}\Xi_4]\right\|_2
&\lesssim \f{\ep^2}{\langle t\rangle^2}+\ep\mathcal{CK}_f^{\f12}
+\ep^2\mathcal{CK}_{h}^{\f12}+\ep^2\mathcal{CK}_{\varphi^{\d}_4}^{\f12}+\ep^2\mathcal{CK}_{\varphi^{\d}_9}^{\f12}\\
&\quad+\left\|\left\langle\f{\pa_v}{t\pa_z}\right\rangle^{-1}(\pa_z^2+(\pa_v-t\pa_z)^2)\left(\f{|\na|^{\f{s}{2}}}{\langle t\rangle^s}\rmA+\sqrt{\f{\pa_t w}{w}}\tilde{\rmA}\right)P_{\neq}(\Psi\Upsilon_2)\right\|_2. 
\end{align*}
\end{lemma}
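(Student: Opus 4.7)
The plan is to split $\Xi_4$ into its constituent quadratic pieces and estimate each via a paraproduct decomposition, reducing every piece to quantities already controlled by the bootstrap: the stream-function elliptic bound in Proposition \ref{prop:elliptic}, the decay of $P_{\neq}(\Upsilon_2\Psi)$ in the Gevrey seminorm (from Corollary \ref{Cor: psi-new}), the decay of $P_0(\Upsilon_2\Psi)$ (also Corollary \ref{Cor: psi-new}), and the $\mathcal{CK}_f$ quantity for the single factor $\Om$ appearing in $-2\pa_{zz}(\Upsilon_2\Psi)\,\Om$. Specifically I separate
\[
\Xi_4=\Xi_4^{\rm quad,\Psi}+\Xi_4^{\Om}+\Xi_4^{\chi'}+\Xi_4^{\chi''},
\]
where $\Xi_4^{\Om}=-2\pa_{zz}(\Upsilon_2\Psi)\,\Om$, $\Xi_4^{\rm quad,\Psi}$ collects the pure $\Psi$--$\Psi$ bulk products, and $\Xi_4^{\chi'},\Xi_4^{\chi''}$ collect the (compactly supported) terms carrying $\udl{\chi_2'}$ and $\udl{\chi_2''}$.

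For the $\mathcal{M}_3$ bound, the key symbol inequality is the one already used for $\Xi_1$ in \eqref{eq: M_3Xi_1}: when $k\neq 0$ and $|\xi-\eta|\le 2|k,\eta|$,
\[
|k|^2\,\frac{\langle k\rangle t(t+\langle k\rangle+|\xi|)}{1+k^2+\xi^2}\,\rmA_k(\xi)\;\lesssim\; (k^2+(\eta-kt)^2)\,\bigl\langle\tfrac{\eta}{kt}\bigr\rangle^{-1}\rmA_k(\eta)\,e^{c\la|\xi-\eta|^s},
\]
which for quadratic inputs converts two derivatives on the higher-frequency factor into the operator $\langle \pa_v/(t\pa_z)\rangle^{-1}(\pa_z^2+(\pa_v-t\pa_z)^2)\rmA$ that is controlled by Corollary \ref{Cor: psi-new}; the low-frequency factor is absorbed by the Gevrey decay $\|P_{\neq}(\Upsilon_2\Psi)\|_{\mathcal{G}^{s,\s-6}}\lesssim\ep\langle t\rangle^{-2}$. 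Exactly the same dichotomy handles $\Xi_4^{\Om}$: if $\Psi$ sits at high frequency, we invoke Corollary \ref{Cor: psi-new} for $\Om$ and Gevrey decay of $\Psi$; if $\Om$ sits at high frequency we use $\|\rmA\Om\|_2\lesssim\ep$ (Remark \ref{Rmk: wave bdd-A}) against $\langle t\rangle^{-2}\|P_{\neq}(\Upsilon_2\Psi)\|_{\mathcal{G}^{s,\s-6}}$. The $\chi_2''$ piece that contains $\pa_vP_0(\Upsilon_2\Psi)$ is handled via the zero-mode bound of Corollary \ref{Cor: psi-new}, noting $(\udl{\pa_yv})^2\udl{\chi_2''}$ is a small, compactly supported Gevrey multiplier. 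In all cases the resulting bound is $\ep^2$, as claimed.

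For the $(\mathcal{M}_4+\mathcal{M}_5)$ bound, I apply the same paraproduct split but now insert the extra weight $|\na|^{s/2}/\langle t\rangle^s$ or $\sqrt{\pa_tw/w}$ on the highest-frequency factor, using Remark \ref{Rmk: A-switch} and Lemma \ref{Lem 3.4} to distribute the $\rmA$ weight and the $\mathcal{CK}$-producing factor. The term $\Xi_4^{\Om}$ now yields $\ep\,\mathcal{CK}_f^{1/2}$ (when $\Om$ sits in the high-frequency slot, one obtains $\sqrt{\pa_tw/w}\,\rmA\Om$ which is bounded by $\mathcal{CK}_f^{1/2}$ up to the small-frequency error tracked by Corollary \ref{Cor: psi-new}). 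The bulk quadratic $\Psi$--$\Psi$ terms, together with the $\udl{\chi_2'}$ contributions, reduce as before to
\[
\left\|\left\langle\tfrac{\pa_v}{t\pa_z}\right\rangle^{-1}(\pa_z^2+(\pa_v-t\pa_z)^2)\!\left(\tfrac{|\na|^{s/2}}{\langle t\rangle^s}\rmA+\sqrt{\tfrac{\pa_tw}{w}}\tilde{\rmA}\right)\!P_{\neq}(\Upsilon_2\Psi)\right\|_2
\]
times a Gevrey-small factor, which is precisely one of the admissible output terms; the $\chi_2''$ piece involving $\pa_vP_0(\Upsilon_2\Psi)$ produces $\ep^2\langle t\rangle^{-2}$ via the corresponding zero-mode estimate, with the $\mathcal{CK}_h^{1/2}+\mathcal{CK}_{\varphi_4^{\d}}^{1/2}+\mathcal{CK}_{\varphi_9^{\d}}^{1/2}$ contributions arising exactly when the extra $\sqrt{\pa_tw/w}+|\na|^{s/2}/\langle t\rangle^s$ weight falls on the zero-mode Gevrey norm and one invokes the second inequality of Corollary \ref{Cor: psi-new}.

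The main obstacle is the $\Xi_4^{\Om}$ piece in the $\mathcal{M}_4+\mathcal{M}_5$ estimate, because $\Om$ is only controlled by $\rmA$ (not by $\rmA^*$) and carries no built-in temporal decay. The delicate step is the high-high interaction in $\pa_{zz}(\Upsilon_2\Psi)\cdot\Om$, where one must send one derivative onto $\Psi$ through the elliptic multiplier $\langle\pa_v/(t\pa_z)\rangle^{-1}(\pa_z^2+(\pa_v-t\pa_z)^2)$ and simultaneously let the $\sqrt{\pa_tw/w}$ factor act on $\Om$ to generate $\mathcal{CK}_f^{1/2}$; this requires using the symbol dichotomy of Remark \ref{Rmk: A-switch} together with Lemma \ref{Lem 3.4} to split the weight without losing the $\left\langle\eta/(kt)\right\rangle^{-1}$ gain that makes $\mathcal{M}_3$ comparable to $\mathcal{M}_1$ via \eqref{eq: inequality-11}. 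Once this case is settled, the remaining subcases are strictly easier variants of the estimates already carried out for $\Pi_{l,1}$.
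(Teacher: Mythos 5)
Your proposal is correct and follows essentially the same strategy as the paper: a frequency/paraproduct decomposition of each quadratic piece of $\Xi_4$, transfer of the weight $\mathcal{M}_3$ (resp.\ $\mathcal{M}_4+\mathcal{M}_5$) onto the high-frequency factor via Remark \ref{Rmk: A-switch} and Lemma \ref{Lem 3.4}, conversion of the derivatives on a high-frequency $\Psi$ factor into the elliptic quantity, $\mathcal{CK}_f^{1/2}$ when the extra weight lands on $\Om$, and Corollary \ref{Cor: psi-new} for the zero-mode factors. The paper merely organizes the terms into four structural classes ($\Psi$--$\Psi$ nonzero-mode products, products with $P_{\neq}\Om$, the $P_0\Om$ term, and the $\pa_vP_0\Psi$ term) rather than by cutoff function, but the case analysis and key symbol inequalities you describe coincide with the paper's.
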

\begin{proof}
We separate terms in $\Xi_4$ into four classes:
\begin{align*}
A_1&=\Big\{P_{\neq}\big(\udl{\chi_2}(\udl{\pa_yv}(\pa_v-t\pa_z)\pa_z(\Upsilon_2\Psi))^2\big), \ P_{\neq}\big(\udl{\chi_2''}(\udl{\pa_yv})^2((\pa_v-t\pa_z)P_{\neq}(\Upsilon_2\Psi))^2\big)\Big\}\\
A_2&=\Big\{P_{\neq}\big(\pa_{zz}(\Upsilon_2\Psi)P_{\neq}\Om\big),\ 
P_{\neq}\big(\udl{\chi_2}(\pa_{zz}(\Upsilon_2\Psi))^2\big),\ 
P_{\neq}\big(\udl{\chi_2''}(\pa_{z}(\Upsilon_2\Psi))^2\big), \\ 
&\quad \quad P_{\neq}\big(\udl{\chi_2'}\udl{\pa_yv}(\pa_z(\Upsilon_2\Psi))(\pa_v-t\pa_z)\pa_z(\Upsilon_2\Psi)\big),\ 
P_{\neq}\big(\udl{\chi_2'}\udl{\pa_yv}(\pa_v-t\pa_z)P_{\neq}(\Upsilon_2\Psi)\pa_{zz}(\Upsilon_2\Psi)\big)\Big\}\\
A_3&=\Big\{\pa_{zz}(\Upsilon_2\Psi)P_{0}\Om\Big\}\\
A_4&=\Big\{\udl{\chi_2'}\udl{\pa_yv}\pa_vP_{0}(\Upsilon_2\Psi)\pa_{zz}(\Upsilon_2\Psi)\Big\}
\end{align*}
The first term in each class is the typical term. 
By using \eqref{eq: inequality-1}, we have for $|k-l,\xi-\eta|\leq |l,\eta|$ that
\begin{align*}
\f{\langle k\rangle t(t+\langle k\rangle+|\xi|)}{1+k^2+\xi^2}(|\eta-lt|)|l|
&\lesssim \langle k-l,\xi-\eta\rangle^{4}\f{\langle l\rangle t(t+\langle l\rangle+|\eta|)}{1+l^2+\eta^2}(|\eta-lt|)|l|\\
&\lesssim \langle k-l,\xi-\eta\rangle^{4}\left\langle\f{\eta}{lt}\right\rangle^{-1}(l^2+(\eta-lt)^2)
\end{align*}
which together with Remark \ref{Rmk: A-switch} and Lemma \ref{Lem 3.4}, gives that for $f\in A_1$, $\left\|\mathcal{M}_3f\right\|_2\lesssim \ep^2$, and 
\begin{align*}
\left\|(\mathcal{M}_4+\mathcal{M}_5)f\right\|_2\lesssim &\ep^2\mathcal{CK}_h^{\f12}+\ep^2\mathcal{CK}_{\varphi^{\d}_9}^{\f12}+\f{\ep^2}{\langle t\rangle^s}\\
&+\ep\left\|\left\langle\f{\pa_v}{t\pa_z}\right\rangle^{-1}(\pa_z^2+(\pa_v-t\pa_z)^2)\left(\f{|\na|^{\f{s}{2}}}{\langle t\rangle^s}\rmA+\sqrt{\f{\pa_t w}{w}}\tilde{\rmA}\right)P_{\neq}(\Psi\Upsilon_2)\right\|_2. 
\end{align*}
We have $k, l\neq 0$ and $k\neq l$,
\begin{align*}
&(k-l)^2\f{\langle k\rangle t(t+\langle k\rangle+|\xi|)}{1+k^2+\xi^2}\rmA_k(\xi)\\
&\lesssim \langle t\rangle^2 e^{c\la|k-l,\xi-\eta|^s}\rmA_l(\eta)\\
&\quad+\langle l,\eta\rangle^4\left\langle \f{\xi-\eta}{(k-l)t} \right\rangle^{-1}((k-l)^2+(\xi-\eta-(k-l)t)^2)\rmA_{k-l}(\xi-\eta)e^{c\la |l,\eta|^s},
\end{align*}
which together with Remark \ref{Rmk: A-switch} and Lemma \ref{Lem 3.4} gives for $f\in A_2$, $\left\|\mathcal{M}_3f\right\|_2\lesssim \ep^2$, and 
\begin{align*}
\left\|(\mathcal{M}_4+\mathcal{M}_5)f\right\|_2\lesssim &\ep^2\mathcal{CK}_h^{\f12}+\ep^2\mathcal{CK}_{\varphi^{\d}_9}^{\f12}+\ep\mathcal{CK}_f^{\f12}+\f{\ep^2}{\langle t\rangle^s}\\
&+\ep\left\|\left\langle\f{\pa_v}{t\pa_z}\right\rangle^{-1}(\pa_z^2+(\pa_v-t\pa_z)^2)\left(\f{|\na|^{\f{s}{2}}}{\langle t\rangle^s}\rmA+\sqrt{\f{\pa_t w}{w}}\tilde{\rmA}\right)P_{\neq}(\Psi\Upsilon_2)\right\|_2. 
\end{align*}
By Remark \ref{Rmk: A-switch} and \eqref{eq: inequality-1}, we have for $k\neq 0$
\begin{align*}
&k^2\f{\langle k\rangle t(t+\langle k\rangle+|\xi|)}{1+k^2+\xi^2}\rmA_k(\xi)\\
&\lesssim \langle \xi-\eta\rangle^{3}\f{(|k|+|\eta-kt|)tk^2}{|k|+|\eta|}\f{\eta+kt}{kt}\left\langle\f{\eta}{kt}\right\rangle^{-1}\rmA_k(\eta)e^{c\la|\xi-\eta|^s}\\
&\quad+\langle k,\eta\rangle^{4}\f{(|k|+|\eta-kt|)tk^2}{|k|+|\eta|}\f{\eta}{(k^2+(\eta-kt)^2)}\rmA_0(\xi-\eta)e^{c\la|k,\eta|^s}\\
&\quad+\langle k,\eta\rangle^{4}\langle t\rangle^2\rmA_0(\xi-\eta)e^{c\la|k,\eta|^s}\\
&\lesssim \langle k,\eta\rangle^{5}\langle t\rangle^2\rmA_0(\xi-\eta)e^{c\la|k,\eta|^s}+\langle \xi-\eta\rangle^{3}(|k|^2+|\eta-kt|^2)\left\langle\f{\eta}{kt}\right\rangle^{-1}\rmA_k(\eta)e^{c\la|\xi-\eta|^s}
\end{align*}
which gives that for $f\in A_3$, $\left\|\mathcal{M}_3f\right\|_2\lesssim \ep^2$. By Lemma \ref{Lem 3.4}, it holds that
\begin{align*}
\left\|(\mathcal{M}_4+\mathcal{M}_5)f\right\|_2\lesssim &\ep\mathcal{CK}_f^{\f12}+\f{\ep^2}{\langle t\rangle^s}\\
&+\ep\left\|\left\langle\f{\pa_v}{t\pa_z}\right\rangle^{-1}(\pa_z^2+(\pa_v-t\pa_z)^2)\left(\f{|\na|^{\f{s}{2}}}{\langle t\rangle^s}\rmA+\sqrt{\f{\pa_t w}{w}}\tilde{\rmA}\right)P_{\neq}(\Psi\Upsilon_2)\right\|_2. 
\end{align*}
By Remark \ref{Rmk: A-switch} and \eqref{eq: inequality-1}, we have for 
\begin{align*}
&|\xi-\eta|\f{\langle k\rangle t(t+\langle k\rangle+|\xi|)}{1+k^2+\xi^2}\rmA_k(\xi)k^2\\
&\lesssim \langle\xi-\eta\rangle^{4}\f{t(\langle k\rangle+|\eta-kt|)}{1+|k|+|\eta|}\f{\eta+kt}{kt}\left\langle\f{\eta}{kt}\right\rangle^{-1}\rmA_k(\xi)k^2e^{c\la|\xi-\eta|^s}\\
&\quad+\langle t\rangle^2\langle k,\eta\rangle^{4}\f{t+|\xi-\eta|}{t \langle \xi-\eta\rangle(|k|+|\xi|)}|\xi-\eta|\rmA_0(\xi-\eta)e^{c\la|k,\eta|^s}\f{\xi}{k^2+(\xi-kt)^2}\\
&\lesssim \langle \xi-\eta\rangle^{4}(|k|^2+|\eta-kt|^2)\left\langle\f{\eta}{kt}\right\rangle^{-1}\rmA_k(\eta)e^{c\la|\xi-\eta|^s}\\
&\quad+\langle t\rangle^2\langle k,\eta\rangle^{4}|\xi-\eta|\rmA_0(\xi-\eta)e^{c\la|k,\eta|^s},
\end{align*}
which together with Corollary \ref{Cor: psi-new} and Lemma \ref{Lem 3.4} gives for $f\in A_4$, $\left\|\mathcal{M}_3f\right\|_2\lesssim \ep^2$, and
\begin{align*}
\left\|(\mathcal{M}_4+\mathcal{M}_5)f\right\|_2\lesssim &\ep\mathcal{CK}_f^{\f12}+\f{\ep^2}{\langle t\rangle^s}
+\ep^2\Big(\mathcal{CK}_h^{\f12}+\mathcal{CK}_{\varphi^{\d}_4}^{\f12}+\mathcal{CK}_{\varphi^{\d}_9}^{\f12}\Big)\\
&+\ep\left\|\left\langle\f{\pa_v}{t\pa_z}\right\rangle^{-1}(\pa_z^2+(\pa_v-t\pa_z)^2)\left(\f{|\na|^{\f{s}{2}}}{\langle t\rangle^s}\rmA+\sqrt{\f{\pa_t w}{w}}\tilde{\rmA}\right)P_{\neq}(\Psi\Upsilon_2)\right\|_2. 
\end{align*}
Thus we prove the lemma. 
\end{proof}

\subsection{Estimate of $\Pi_{n,2}$}
The strategy of estimating $\Pi_{n,2}$ is similar to the estimate of $\Pi_{l,2}$. We first get estimate for $P_{\neq} (\Upsilon_2\Pi_{n,2})$, where we use Corollary \ref{Cor: psi-new}. Then we study the elliptic equation for $P_{\neq} (\Upsilon_1\Pi_{n,2})$: 
\beq\label{eq: Pressure-n2-2}
\left\{
\begin{aligned}
&\pa_z\Big((a+\udl{\th})\pa_z(\Upsilon_1\Pi_{n,2})\Big)+\udl{\pa_yv}(\pa_v-t\pa_z)\Big((a+\udl{\th})\udl{\pa_yv}(\pa_v-t\pa_z)(\Upsilon_1\Pi_{n,2})\Big)\\
&\quad\quad \quad \quad\quad 
=2(\udl{\pa_yv})^2(\pa_v\Upsilon_1)\Big(\udl{\th}(\pa_v-t\pa_z)(\Upsilon_2\Pi_{n,2})\Big)
+(\udl{\pa_yv})^2\Big(\udl{\th}(\pa_v^2\Upsilon_1)(\Upsilon_2\Pi_{n,2})\Big),\\
&(\pa_v-t\pa_z)\Pi_{n,2}(t, z, u(0))=(\pa_v-t\pa_z)\Pi_{n,2}(t, z, u(1))=0.
\end{aligned}\right.
\eeq 
The proof is similar. We omit the details. 

\section{Transport terms}\label{sec: transport}
The transport structure in the equation of $a$ and $f$ are similar. Here we only present the proof for $a$. 
We rewrite $I_a^{tr}$ as 
\begin{align*}
I_a^{tr}&=-\f{1}{2}\int \na\cdot (\Upsilon\rmU) |\rmA^*a|^2dzdv
+\int \rmA^* a[\rmA^*(\Upsilon\rmU\cdot \na a)-\Upsilon\rmU\cdot\na\rmA^*a]dzdv\\
&=-\f{1}{2}\int \na\cdot (\Upsilon\rmU) |\rmA^*a|^2dzdv+\f{1}{2\pi}\sum_{\rmN\in \bfD}(\mathrm{T}_{\rmN}^a+\mathrm{R}_{\rmN}^a)+\f{1}{2\pi}\mathcal{R}^a,
\end{align*}
where $\rm{T}$, $\rm{R}$, and $\mathcal{R}$ are given by using the paraproduct decomposition:
\beq\label{eq: paraproduct decomposition}
\begin{aligned}
&\mathrm{T}_{\rmN}^a=2\pi \int \rmA^* a \left[\rmA^*\left((\Upsilon\mathrm{U})_{<\rmN/8}\cdot\na_{z,v}a_{\rmN}\right)-\mathrm{U}_{<\rmN/8}\cdot\na_{z,v}\rmA^*a _{\rmN}\right] dx,\\
&\mathrm{R}_{\rmN}^a=2\pi \int \rmA^*a \left[\rmA^*\left((\Upsilon\mathrm{U})_{\rmN}\cdot\na_{z,v}a_{<\rmN/8}\right)-\mathrm{U}_{\rmN}\cdot\na_{z,v}\rmA^* a_{<\rmN/8}\right] dx,\\
&\mathcal{R}^a=2\pi\sum_{\rmN\in \bf{D}}\sum_{\f18\rmN\leq \rmN'\leq 8\rmN}\int \rmA^* a \left[\rmA^*\left((\Upsilon\mathrm{U})_{\rmN}\cdot\na_{z,v}a_{\rmN'}\right)-(\Upsilon\mathrm{U})_{\rmN}\cdot\na_{z,v}\rmA^* a_{\rmN'}\right] dx.
\end{aligned}
\eeq
Similarly, we have the decomposition for $I^{tr}_f$ as in \eqref{eq: paraproduct decomposition} by replacing $\rmA^*$ by $\rmA$ and replacing $a$ by $f$. We denote them by $\mathrm{T}_{\rmN}^f$, $\mathrm{R}_{\rmN}^f$, and $\mathcal{R}^f$ respectively, also see {\it (2.26)} of \cite{BM2015}. 

By Sobolev embedding, $\s>5$ and the bootstrap hypotheses, we have 
\ben\label{eq: 2.25}
\left|\int \na\cdot(\Upsilon\mathrm{U})|\rmA\Om|^2dx\right|
\leq \|\na(\Upsilon\mathrm{U})\|_{\infty}\|\rmA\Om\|_2^2\lesssim \f{\ep}{\langle t\rangle^{2-\rmK_{\rmD}\ep/2}}\|\rmA\Om\|_2^2\lesssim \f{\ep^3}{\langle t\rangle^{2-\rmK_{\rmD}\ep/2}}.
\een

\subsection{Treatment of $\mathrm{T}_{\rmN}^a$}
We rewrite 
\beq\label{eq: T^a_N}
\begin{aligned}
\mathrm{T}_{\rmN}^a
&=\sum_{k=0}\int \rmA^*_k(\eta)\overline{\widehat{a}_k(\eta)}i\widehat{\udl{\pa_tv}}(\eta-\xi)_{<\f{\rmN}{8}}\xi \rmA^*_k(\xi)\widehat{a}_{k}(\xi)_{\rmN}\left(\sqrt{\f{{\langle t\rangle^2}+{k^2+|\eta|}}{{\langle t\rangle^2}+{k^2+|\xi|}}}-1\right)\f{\rmA_k(\eta)}{\rmA_k(\xi)}d\xi d\eta\\
&\quad+\sum_{k\neq 0}\sum_{l\neq k}\int \rmA^*_k(\eta)\overline{\widehat{a}_k(\eta)}\widehat{\udl{\pa_yv}\Upsilon_2^2\Psi}_{k-l}(\eta-\xi)_{<\f{\rmN}{8}}(k\xi-l\eta) \rmA^*_l(\xi)\widehat{a}_{l}(\xi)_{\rmN}\\
&\quad\quad\quad\quad\quad\quad\quad\quad\quad\quad\quad\quad\quad\quad\quad
\times\left(\sqrt{\f{{\langle t\rangle^2}+{k^2+|\eta|}}{{\langle t\rangle^2}+{l^2+|\xi|}}}-1\right)\f{\rmA_k(\eta)}{\rmA_l(\xi)}d\xi d\eta\\
&\quad+\sum_{k=0}\int \rmA^*_k(\eta)\overline{\widehat{a}_k(\eta)}i\widehat{\udl{\pa_tv}}(\eta-\xi)_{<\f{\rmN}{8}}\xi \rmA^*_k(\xi)\widehat{a}_{k}(\xi)_{\rmN}\Big(\f{\rmA_k(\eta)}{\rmA_l(\xi)}-1\Big)d\xi d\eta\\
&\quad+\sum_{k\neq 0}\sum_{l\neq k}\int \rmA^*_k(\eta)\overline{\widehat{a}_k(\eta)}\widehat{\udl{\pa_yv}\Upsilon_2^2\Psi}_{k-l}(\eta-\xi)_{<\f{\rmN}{8}}(k\xi-l\eta) \rmA^*_l(\xi)\widehat{a}_{l}(\xi)_{\rmN}\Big(\f{\rmA_k(\eta)}{\rmA_l(\xi)}-1\Big)d\xi d\eta\\
&\eqdef \mathrm{T}_{\rmN, 1}^a+\mathrm{T}_{\rmN,2}^a+\mathrm{T}_{\rmN,3}^a+\mathrm{T}_{\rmN,4}^a.
\end{aligned}
\eeq
For $\rmT^f_{N}$ term, we have an even simpler decomposition and there are no $\mathrm{T}_{\rmN, 1}^f$ and $\mathrm{T}_{\rmN,2}^f$. 

By fact that $|l,\xi|\approx |k, \eta|$, $|k\xi-l\eta|\leq |k-l,\eta-\xi||l,\xi|$, and
\begin{align*}
&|l,\xi|\left(\sqrt{\f{{\langle t\rangle^2}+{k^2+|\eta|}}{{\langle t\rangle^2}+{l^2+|\xi|}}}-1\right)\f{\rmA_k(\eta)}{\rmA_l(\xi)}\\
&\lesssim\f{(|k-l|+|\eta-\xi|)|l,\xi|}{\sqrt{{\langle t\rangle^2}+{l^2+|\xi|}}}\f{\rmA_k(\eta)}{\rmA_l(\xi)}
\lesssim e^{11\mu|k-l,\eta-\xi|}|l,\xi|^{s}\langle t\rangle^{2-2s},
\end{align*}
we obtain that
\beno
\begin{aligned}
&\sum_{\rmN}\mathrm{T}_{\rmN, 1}^a
\lesssim \langle t\rangle^{2-2s+2\tilde{q}}\|\udl{\pa_tv}\|_{\mathcal{G}^{\s-6}}\mathcal{CK}_a\lesssim \mathcal{E}_{\udl{\pa_tv}}^{\f12}\mathcal{CK}_a\lesssim \ep\mathcal{CK}_a,\\
&\sum_{\rmN}\mathrm{T}_{\rmN, 2}^a
\lesssim \langle t\rangle^{2-2s+2\tilde{q}}\|\udl{\pa_yv}\Upsilon_2\|_{\mathcal{G}^{\s-6}}\|\Upsilon_2\Psi\|_{\mathcal{G}^{\s-6}}\mathcal{CK}_a\lesssim \ep\mathcal{CK}_a.
\end{aligned}
\eeno
As in \cite{BM2015}, we write
\begin{align*}
\f{\rmA_k(\eta)}{\rmA_l(\xi)}-1
&=[e^{\la|k,\eta|^s-\la|l,\xi|^s}-1]
+e^{\la|k,\eta|^s-\la|l,\xi|^s}\Big[\f{\rmJ_{k}(\eta)}{\rmJ_{l}(\xi)}-1\Big]\f{\langle k, \eta\rangle^{\s}}{\langle l,\xi\rangle^{\s}}\\
&\quad
+e^{\la|k,\eta|^s-\la|l,\xi|^s}\Big[\f{\langle k, \eta\rangle^{\s}}{\langle l, \xi\rangle^{\s}}-1\Big]\\
&=C^{\rmA}_1+C^{\rmA}_2+C^{\rmA}_3.
\end{align*}
It is easy to check that
\beno
||l,\xi|C^{\rmA}_1|\lesssim |l,\xi|^{s}e^{c\la|k-l,\eta-\xi|^s},\quad 
||l,\xi|C^{\rmA}_3|\lesssim e^{c\la|k-l,\eta-\xi|^s}.
\eeno
By Lemma \ref{Lem 3.6}, Lemma \ref{Lem 3.7}, we have by following the steps in {\it section 5} of \cite{BM2015}. 
\begin{align*}
|C_2^{\rmA}|l,\xi||
\lesssim e^{c\la|k-l,\eta-\xi|^s}e^{20\mu|k-l,\eta-\xi|^{\f12}}
\Big(|l,\xi|^{s}+t^2\sqrt{\f{\pa_tw_{k}(t,\eta)}{w_{k}(t,\eta)}}\sqrt{\f{\pa_tw_{l}(t,\xi)}{w_{l}(t,\xi)}}\mathbf{1}_{k\neq l}\Big),
\end{align*}
which gives that
\beno
\begin{aligned}
&\sum_{\rmN}\mathrm{T}_{\rmN, 3}^a
\lesssim \langle t\rangle^{2\tilde{q}}\|\udl{\pa_tv}\|_{\mathcal{G}^{\s-6}}\mathcal{CK}_a\lesssim \mathcal{E}_{\udl{\pa_tv}}^{\f12}\mathcal{CK}_a\lesssim \ep\mathcal{CK}_a,\\
&\sum_{\rmN}\mathrm{T}_{\rmN, 4}^a
\lesssim \langle t\rangle^{2}\|\udl{\pa_yv}\Upsilon_2\|_{\mathcal{G}^{\s-6}}\|\Upsilon_2\Psi\|_{\mathcal{G}^{\s-6}}\mathcal{CK}_a\lesssim \ep\mathcal{CK}_a.
\end{aligned}
\eeno

\subsection{Treatment of $\rmR_{\rmN}^a$}
We rewrite
\begin{align*}
&\rmR_{\rmN}^a
=\sum_{k}\int \rmA^*_k(\eta)^2\overline{\widehat{a}_{k}(\eta)}i(\eta-\xi)
\widehat{\udl{\pa_tv}}(\xi)_{\rmN}\widehat{a}_k(\eta-\xi)_{<\rmN/8}d\xi d\eta\\
&+\sum_{\rmM\geq 8}\sum_{k\neq 0, l\neq 0}\int \rmA^*_k(\eta)^2\overline{\widehat{a}_{k}(\eta)}i(k\zeta-l\eta)
\Big(\widehat{\udl{\pa_yv}\Upsilon_2}(\xi-\zeta)_{<\rmM/8}\widehat{\Upsilon_2\Psi}_{l}(\zeta)_{\rmM}\Big)_{\rmN}\widehat{a}_{k-l}(\eta-\xi)_{<\rmN/8}d\xi d\eta d\zeta\\
&+\sum_{\rmM\geq 8}\sum_{k\neq 0, l\neq 0}\int \rmA^*_k(\eta)^2\overline{\widehat{a}_{k}(\eta)}i(k\zeta-l\eta)
\Big(\widehat{\udl{\pa_yv}\Upsilon_2}(\xi-\zeta)_{\rmM}\widehat{\Upsilon_2\Psi}_{l}(\zeta)_{<\rmM/8}\Big)_{\rmN}
\widehat{a}_{k-l}(\eta-\xi)_{<\rmN/8}d\xi d\eta d\zeta\\
&+\sum_{\rmM}\sum_{\rmM/8 \leq \rmM'\leq 8\rmM}\sum_{k\neq 0, l\neq 0}\int \rmA^*_k(\eta)^2\overline{\widehat{a}_{k}(\eta)}i(k\zeta-l\eta)
\Big(\widehat{\udl{\pa_yv}\Upsilon_2}(\xi-\zeta)_{\rmM}\widehat{\Upsilon_2\Psi}_{l}(\zeta)_{\rmM'}\Big)_{\rmN}\\
&\qquad\qquad\qquad\qquad\qquad\qquad\qquad\qquad\qquad\qquad\qquad\qquad\qquad
\times\widehat{a}_{k-l}(\eta-\xi)_{<\rmN/8}d\xi d\eta d\zeta\\
&-\sum_{k,l}\int \rmA^*_k(\eta)\overline{\widehat{a}_{k}(\eta)}
\Big(\widehat{(\Upsilon_2^2\rmU)}_{l}(\xi)\Big)_{\rmN}\cdot \rmA^*_{k-l}(\eta-\xi)\widehat{\na a}_{k-l}(\eta-\xi)_{<\rmN/8}d\xi d\eta\\
&=\rmR_{\rmN,1}^a+\rmR_{\rmN,2}^a+\rmR_{\rmN,3}^a+\rmR_{\rmN,4}^a+\rmR_{\rmN,5}^a.
\end{align*}
\subsubsection{Treatment of $\rmR_{\rmN,1}^a$}
For $\rmR_{\rmN,1}^a$, we have
\begin{align*}
\sqrt{\f{{\langle t\rangle^2}+{k^2+|\eta|}}{{\langle t\rangle^2}+{k^2+|\eta-\xi|}}}\lesssim \sqrt{|\eta|}\mathbf{1}_{t+|k|\leq \sqrt{\eta}}+\mathbf{1}_{t+|k|\geq \sqrt{\eta}}
\end{align*}
which together with Lemma \ref{Lem 3.6} gives that 
\begin{align*}
|\rmR_{\rmN,1}^a|
&\lesssim \sum_{k}\int \rmA^*_k(\eta)|\overline{\widehat{a}_{k}(\eta)}|\f{\rmA_k(\eta)}{\rmA_0(\xi)}\Big(\sqrt{|\eta|}\mathbf{1}_{t+|k|\leq \sqrt{\eta}}+\mathbf{1}_{t+|k|\geq \sqrt{\eta}}\Big)
\rmA_0(\xi)|\widehat{\udl{\pa_tv}}(\xi)_{\rmN}||\widehat{\na a}_k(\eta-\xi)_{<\rmN/8}|d\xi d\eta\\
&\lesssim \|a\|_{\mathcal{G}^{\s-6}}\|\rmA_0|\pa_v|^{\f12}(\udl{\pa_tv})_{\rmN}\|_{L^2}\|\rmA^* a_{\sim\rmN}\|_{L^2}
+\langle t\rangle\|a\|_{\mathcal{G}^{\s-6}}\|\rmA_0(\udl{\pa_tv})_{\rmN}\|_{L^2}\|\rmA^* a_{\sim\rmN}\|_{L^2},
\end{align*}
which together with the 
\ben
\sum_{\rmN}|\rmR_{\rmN,1}^a| \lesssim \f{\ep^3}{\langle t\rangle^{2s}}.
\een
We have for $|k-l, \eta-\xi|\leq \f{6}{32}|l,\xi|$ that
\beq\label{eq: A* to lower}
\begin{aligned}
&\f{\rmB_k(t,\eta)}{\rmB_{k-l}(t,\eta-\xi)}=\sqrt{\f{{\langle t\rangle^2}+{k^2+|\eta|}}{{\langle t\rangle^2}+{(k-l)^2+|\eta-\xi|}}}\\
&\lesssim \Big(1
+\f{|\zeta|^{\f12}}{\langle t\rangle}\mathbf{1}_{\langle t\rangle \leq \f{1}{8}|\zeta|^{\f12},\ |l|^2 \leq \f{1}{8}|\zeta|}
+\f{|l|}{\langle t\rangle}\mathbf{1}_{\langle t\rangle \leq \f{1}{8}|l|,\ |l|^2 \geq 8|\zeta|}\Big)\langle k-l,\eta-\xi\rangle\langle \zeta-\xi\rangle
\end{aligned}
\eeq
\subsubsection{Treatment of $\rmR_{\rmN, 2}^a$}
For $\rmR_{\rmN, 2}^a$, we have
\beno
|\xi-\zeta|\leq \f{6}{32}|l,\zeta|,\ |k-l,\eta-\xi|\leq \f{6}{32}|l,\xi|
\eeno
which gives $|l,\zeta|\approx |l,\xi|\approx |k,\eta|$. We then have
\begin{align*}
&\f{\rmA^*_k(\eta)}{\rmA_l(\zeta)\sqrt{{\langle t\rangle^2}+{(k-l)^2+|\eta-\xi|}}}\\
&\lesssim \f{\rmA_k(\eta)}{\rmA_l(\zeta)}\sqrt{\f{{\langle t\rangle^2}+{k^2+|\eta|}}{{\langle t\rangle^2}+{(k-l)^2+|\eta-\xi|}}}\\
&\lesssim \f{\rmA_k(\eta)}{\rmA_l(\zeta)}\langle k-l,\eta-\xi\rangle\langle \zeta-\xi\rangle
\Big(1
+\f{|\zeta|^{\f12}}{\langle t\rangle}\mathbf{1}_{\langle t\rangle \leq \f{1}{8}|\zeta|^{\f12},\ |l|^2 \leq \f{1}{8}|\zeta|}
+\f{|l|}{\langle t\rangle}\mathbf{1}_{\langle t\rangle \leq \f{1}{8}|l|,\ |l|^2 \geq 8|\zeta|}\Big)\\
&\eqdef C_4^{\rmA}+C_5^{\rmA}+C_6^{\rmA}.
\end{align*}
We then have by following the argument in {\it section 6} or \cite{BM2015} that
\beq\label{eq: est C_4}
\begin{aligned}
|l,\zeta|C_4^{\rmA}
\lesssim &e^{c\la |k-l,\eta-\xi|^s}e^{c\la|\xi-\zeta|^s}e^{20\mu |k-l,\eta-\xi|^{\f12}}e^{20\mu |\zeta-\xi|^{\f12}}\\
&\times \bigg(|k,\eta|^{s/2}|l,\zeta|^{1-s/2}\mathbf{1}_{t\notin \mathbf{I}_{l,\zeta}}\\
&\quad+\sqrt{\f{\pa_tw_k(\eta)}{w_{k}(\eta)}}\sqrt{\f{\pa_tw_l(\zeta)}{w_{l}(\zeta)}}\f{|\zeta|^2}{l^2\langle \zeta/t-l\rangle^2}\mathbf{1}_{2|\eta|^{\f12}\leq t\leq 2|\eta|}\mathbf{1}_{t\notin \mathbf{I}_{l,\zeta}}\\
&\quad+\Big[\sqrt{\f{\pa_tw_k(\eta)}{w_{k}(\eta)}}+\f{|k,\eta|^{s/2}}{\langle t\rangle^s}\Big]\f{w_{\rmR}(\zeta)}{w_{\rmN\rmR}(\zeta)}\sqrt{\f{w_l(\zeta)}{\pa_tw_l(\zeta)}}|l,\zeta| \mathbf{1}_{t\in \mathbf{I}_{l,\zeta}}\\
&\quad+\sqrt{\f{\pa_tw_k(\eta)}{w_{k}(\eta)}}\sqrt{\f{w_l(\zeta)}{\pa_tw_l(\zeta)}}|l|\mathbf{1}_{t\in \mathbf{I}_{l,\zeta}}\bigg)\\
&\lesssim e^{c\la |k-l,\eta-\xi|^s}e^{c\la|\xi-\zeta|^s}\Big[\sqrt{\f{\pa_tw_k(\eta)}{w_{k}(\eta)}}+\f{|k,\eta|^{s/2}}{\langle t\rangle^s}\Big]\\
&\quad\quad\quad
\times\left\langle \f{\zeta}{l t}\right\rangle^{-1}\Big[\f{|l,\zeta|^{\f{s}{2}}}{\langle t\rangle^s}+\sqrt{\f{\pa_tw_l(\zeta)}{w_{l}(\zeta)}}\Big](l^2+(\zeta-lt)^2)\mathbf{1}_{l\neq 0}. 
\end{aligned}
\eeq
The last inequality follows directly from {\it Lemma 6.1} in \cite{BM2015}. 
We also have by Lemma \ref{Lem 3.6} that
\begin{align*}
|l,\zeta|C_5^{\rmA}&\lesssim e^{c\la |k-l,\eta-\xi|^s}e^{c\la|\xi-\zeta|^s}|\zeta|^{\f32}\langle t\rangle^{-1}\mathbf{1}_{\langle t\rangle \leq \f{1}{8}|\zeta|^{\f12},\ |lt|\leq \f12|\zeta|}
\\
&\lesssim e^{c\la |k-l,\eta-\xi|^s}e^{c\la|\xi-\zeta|^s}\f{1}{\langle \zeta\rangle^{\f12}\langle t\rangle}(l^2+(\zeta-lt)^2)\mathbf{1}_{\langle t\rangle \leq \f{1}{8}|\zeta|^{\f12}}\\
&\lesssim e^{c\la |k-l,\eta-\xi|^s}e^{c\la|\xi-\zeta|^s}\f{|\zeta|^{\f12}}{|l|\langle t\rangle^2}(l^2+(\zeta-lt)^2)\left\langle \f{\zeta}{l t}\right\rangle^{-1}\\
&\lesssim e^{c\la |k-l,\eta-\xi|^s}e^{c\la|\xi-\zeta|^s}\f{|\eta|^{\f{s}{2}}|\zeta|^{\f{s}{2}}}{\langle t\rangle^2}(l^2+(\zeta-lt)^2)\left\langle \f{\zeta}{l t}\right\rangle^{-1}
\end{align*}
and
\begin{align*}
|l,\zeta|C_6^{\rmA}&\lesssim e^{c\la |k-l,\eta-\xi|^s}e^{c\la|\xi-\zeta|^s}\f{|l|}{\langle t\rangle}\Big(\mathbf{1}_{\langle t\rangle \leq \f{1}{8}|l|,\ |l|^2 \geq 8|\zeta|,\ |lt|\geq 2|\zeta|}
+\mathbf{1}_{\langle t\rangle \leq \f{1}{8}|l|,\ |l|^2 \geq 8|\zeta|,\ |lt|\leq 2|\zeta|}\Big)
\\
&\lesssim e^{c\la |k-l,\eta-\xi|^s}e^{c\la|\xi-\zeta|^s}\f{1}{\langle t\rangle^2}(l^2+(\zeta-lt)^2)\left\langle \f{\zeta}{l t}\right\rangle^{-1}.
\end{align*}
Therefore, we have
\begin{align*}
\sum_{\rmN}|\rmR^a_{\rmN,2}|
\lesssim &\mathcal{CK}_a^{\f12}\|\udl{\pa_yv}\Upsilon_2\|_{\mathcal{G}^{\s-6}}\|\rmB a\|_{\mathcal{G}^{\s-6}}\\
&\times\left\|\left\langle\f{\pa_v}{t\pa_z}\right\rangle^{-1}(\pa_z^2+(\pa_v-t\pa_z)^2)\left(\f{|\na|^{\f{s}{2}}}{\langle t\rangle^s}\rmA+\sqrt{\f{\pa_t w}{w}}\tilde{\rmA}\right)P_{\neq}(\Psi\Upsilon_2)\right\|_2.
\end{align*}
For the term $\rmR^f_{\rmN,2}$, the estimate is simpler where the estimates of $C_5^{\rmA}$ and $C_6^{\rmA}$ are not needed. 
\subsubsection{Treatment of $\rmR_{\rmN, 3}^a$}
The treatment of $\rmR_{\rmN, 3}^a$ is similar to the treatment of {\it $\rmR^{\ep,1}_{\rmN;\rmH\rmL}$ in section 6.2.1} of \cite{BM2015}. Here we present the key steps and main estimates.  In support of the integrand function, we have 
\beno
|\xi-\zeta|\approx |l,\xi|\approx |k,\eta|.
\eeno
We have
\ben\label{eq: B-to lower}
\begin{aligned}
&\f{\rmB_k(t,\eta)}{\rmB_{k-l}(t,\eta-\xi)}=\sqrt{\f{{\langle t\rangle^2}+{k^2+|\eta|}}{{\langle t\rangle^2}+{(k-l)^2+|\eta-\xi|}}}\\
&\lesssim \Big(1
+\f{|\zeta-\xi|^{\f12}}{\langle t\rangle}\mathbf{1}_{\langle t\rangle \leq \f{1}{8}|\zeta-\xi|^{\f12},\ |l|^2 \leq \f{1}{8}|\zeta-\xi|}
+\f{|l|}{\langle t\rangle}\mathbf{1}_{\langle t\rangle \leq \f{1}{8}|l|,\ |l|^2 \geq 8|\zeta-\xi|}\Big). 
\end{aligned}
\een
We will divide the integral based on the relative size of $l$ and $\xi$. For the case $16|l|\geq |\xi|$, we have on the support of the integrand, 
\beno
|k-l,\eta-\xi|\leq \f{3}{16}|l,\xi|,\quad |\xi-\zeta|\lesssim |l|,
\eeno
then by \eqref{eq: B-to lower}, we have
\begin{align*}
\f{|l,\zeta|\rmA^*_k(\eta)}{\rmA_l(\xi)\rmB_{k-l}(t,\eta-\xi)}
&=|l,\zeta|\f{\rmB_k(t,\eta)}{\rmB_{k-l}(t,\eta-\xi)}\f{\rmA_k(\eta)}{\rmA_l(\xi)}\\
&\lesssim |l|\Big(1
+\f{|l|}{\langle t\rangle}\mathbf{1}_{\langle t\rangle \leq \f{1}{8}|l|,\ |l|^2 \geq 8|\zeta-\xi|}\Big)\f{\rmA_k(\eta)}{\rmA_l(\xi)}
\eqdef C_7^{\rmA}+C_8^{\rmA}.
\end{align*}
We then get
\begin{align}
\label{eq: C7}C_7^{\rmA}&\lesssim e^{c\la|k-l,\eta-\xi|^{s}}e^{c\la|\zeta-\xi|^s}|k,\eta|^{\f{s}{2}}|l|^{1-\f{s}{2}}\mathbf{1}_{t\notin \mathbf{I}_{l,\xi}, l\neq 0}\\
\nonumber&\lesssim e^{c\la|k-l,\eta-\xi|^{s}}e^{c\la|\zeta-\xi|^s}\f{|k,\eta|^{\f{s}{2}}}{\langle t\rangle^s}\left\langle \f{\zeta}{l t}\right\rangle^{-1}\f{|l,\zeta|^{\f{s}{2}}}{\langle t\rangle^s}(l^2+(\zeta-lt)^2)\mathbf{1}_{l\neq 0},\\
C_8^{\rmA}&\lesssim e^{c\la|k-l,\eta-\xi|^{s}}e^{c\la|\zeta-\xi|^s}\f{|l|^2}{\langle t\rangle^2}\mathbf{1}_{\langle t\rangle \leq \f{1}{8}|l|,\ |l|^2 \geq 8|\zeta-\xi|}\\
\nonumber&\lesssim e^{c\la|k-l,\eta-\xi|^{s}}e^{c\la|\zeta-\xi|^s}\f{1}{\langle t\rangle^2}(l^2+(\zeta-lt)^2)\left\langle \f{\zeta}{l t}\right\rangle^{-1}\mathbf{1}_{l\neq 0}.
\end{align}
For the case $16|l|<|\xi|$, we have on the support of the integrand,
\beno
|k-l,\eta-\xi|\leq \f{3}{16}|l,\xi|,\quad |l,\zeta|\leq \f{67}{100}|\xi-\zeta|,
\eeno
and then 
\begin{align*}
&\f{|l,\zeta|\rmA^*_k(\eta)}{\rmA^{\rmR}(\xi-\zeta)\rmB_{k-l}(t,\eta-\xi)}
=|l,\zeta|\f{\rmB_k(t,\eta)}{\rmB_{k-l}(t,\eta-\xi)}\f{\rmA_k(\eta)}{\rmA^{\rmR}(\xi-\zeta)}\\
&\lesssim |l,\zeta|\Big(1
+\f{|\zeta-\xi|^{\f12}}{\langle t\rangle}\mathbf{1}_{\langle t\rangle \leq \f{1}{8}|\zeta-\xi|^{\f12},\ |l|^2 \leq \f{1}{8}|\zeta-\xi|}
+\f{|l|}{\langle t\rangle}\mathbf{1}_{\langle t\rangle \leq \f{1}{8}|l|,\ |l|^2 \geq 8|\zeta-\xi|}\Big)\f{\rmA_k(\eta)}{\rmA^{\rmR}(\xi-\zeta)}\\
&\lesssim \Big(1
+\f{|\zeta-\xi|^{\f12}}{\langle t\rangle}\mathbf{1}_{\langle t\rangle \leq \f{1}{8}|\zeta-\xi|^{\f12},\ |l|^2 \leq \f{1}{8}|\zeta-\xi|}\Big)
e^{c\la|k-l,\eta-\xi|}e^{c\la|l,\zeta|}
\eqdef C^{\rmA}_9+C^{\rmA}_{10}.
\end{align*}
We get that
\begin{align}
\label{eq: C9}&C^{\rmA}_9\lesssim e^{c\la|k-l,\eta-\xi|}e^{c\la|l,\zeta|}\\
&C^{\rmA}_{10}\lesssim e^{c\la|k-l,\eta-\xi|}e^{c\la|l,\zeta|}\f{|k,\eta|^{\f{s}{2}}|\zeta-\xi|^{\f{s}{2}}}{\langle t\rangle}.
\end{align}
Combining all the estimates, we arrive at
\begin{align*}
\sum_{\rmN}|\rmR^a_{\rmN,3}|
\lesssim &\mathcal{CK}_a^{\f12}\|\udl{\pa_yv}\Upsilon_2\|_{\mathcal{G}^{\s-6}}\|\rmB a\|_{\mathcal{G}^{\s-6}}\\
&\times\left\|\left\langle\f{\pa_v}{t\pa_z}\right\rangle^{-1}(\pa_z^2+(\pa_v-t\pa_z)^2)\left(\f{|\na|^{\f{s}{2}}}{\langle t\rangle^s}\rmA+\sqrt{\f{\pa_t w}{w}}\tilde{\rmA}\right)P_{\neq}(\Psi\Upsilon_2)\right\|_2\\
&+\left\|\f{\langle \pa_v\rangle^{\f{s}{2}}}{\langle t\rangle^{s}}\rmA^{*}a\right\|_{L^2}\left\|\f{\langle \pa_v\rangle^{\f{s}{2}}}{\langle t\rangle^{s}}\rmA^{\rmR}(\udl{\pa_yv}\Upsilon_2)\right\|_{L^2}\langle t\rangle^{2s-1}\|\Upsilon_2\Psi\|_{\mathcal{G}^{\s-6}}\|\rmB a\|_{\mathcal{G}^{\s-6}}\\
&+\f{1}{\langle t\rangle^2}\left\|\rmA^{*}a\right\|_{L^2}\left\|\rmA^{\rmR}(\udl{\pa_yv}\Upsilon_2)\right\|_{L^2}\langle t\rangle^{2}\|\Upsilon_2\Psi\|_{\mathcal{G}^{\s-6}}\|\rmB a\|_{\mathcal{G}^{\s-6}}\\
\lesssim &\ep \mathcal{CK}_a^{\f12}\left\|\left\langle\f{\pa_v}{t\pa_z}\right\rangle^{-1}(\pa_z^2+(\pa_v-t\pa_z)^2)\left(\f{|\na|^{\f{s}{2}}}{\langle t\rangle^s}\rmA+\sqrt{\f{\pa_t w}{w}}\tilde{\rmA}\right)P_{\neq}(\Psi\Upsilon_2)\right\|_2\\
&+\ep^2 \mathcal{CK}_a^{\f12}\left(\f{1}{\langle t\rangle^s}+\mathcal{CK}_{h}^{\f12}+\mathcal{CK}_{\varphi^{\d}_9}^{\f12}\right)+\f{\ep^3}{\langle t\rangle^2}. 
\end{align*}

\subsubsection{Treatment of $\rmR_{\rmN,4}^a$}
The treatment of $\rmR_{\rmN,4}^a$ is similar to that of {\it $\rmR_{\rmN, \rmH\rmL}^{\ep,1}$ in section 6.2.1} of \cite{BM2015}. 
We divide it into two cases $|l|> 100|\zeta|$ and $|l|\leq 100|\zeta|$. 

For the first case $|l|> 100|\zeta|$, on the support of the integrand, we have
\beno
|k-l,\eta-\xi|\leq \f{6}{32}|l,\xi|,\quad |\xi-\zeta|\leq 24|\zeta|\leq \f{24}{100}|l,\zeta|,
\eeno
which gives 
\beno
|\xi|\lesssim |\zeta|\lesssim |l|,\quad |l,\xi|\approx |l|\approx |k,\eta|
\eeno
\beno
\begin{aligned}
&|l,\zeta|\f{\rmA^*_k(\eta)}{\rmA_l(\zeta)\rmB_{k-l}(t,\eta-\xi)}\lesssim |l|\f{\rmA_k(\eta)}{\rmA_l(\zeta)}\f{\rmB_k(t,\eta)}{\rmB_{k-l}(t,\eta-\xi)}=\f{\rmA_k(\eta)}{\rmA_l(\zeta)}\sqrt{\f{{\langle t\rangle^2}+{k^2+|\eta|}}{{\langle t\rangle^2}+{(k-l)^2+|\eta-\xi|}}}\\
&\lesssim |l|^{1-\f{s}{2}}|k,\eta|^{\f{s}{2}}\Big(1
+\f{|l|}{\langle t\rangle}\mathbf{1}_{|l|\geq 100t}\Big)\mathbf{1}_{t\notin\mathbf{I}_{l,\zeta}}e^{\la|k-l,\eta-\xi|^s}e^{\la |\xi-\zeta|^{s}}\\
&\lesssim \f{|k,\eta|^{\f{s}{2}}}{\langle t\rangle^s}\left\langle \f{\zeta}{l t}\right\rangle^{-1}\f{|l,\zeta|^{\f{s}{2}}}{\langle t\rangle^s}(l^2+(\zeta-lt)^2)\mathbf{1}_{l\neq 0}e^{\la|k-l,\eta-\xi|^s}e^{\la |\xi-\zeta|^{s}}.
\end{aligned}
\eeno

For the second case $|l|\leq 100|\zeta|$, on the support of the integrand, we have
\beno
|k-l,\eta-\xi|\leq \f{3}{16}|l,\xi|,\quad |\xi-\zeta|\leq 24|\zeta|\leq 24|l,\zeta|,\quad |l,\zeta|\leq 101|\zeta|\leq 2424|\xi-\zeta|
\eeno
We have 
\beno
e^{\la|k,\eta|^s}\leq e^{\la|l,\xi|^s+c\la|k-l,\eta-\xi|^s}\leq e^{c\la|l,\zeta|^s+c\la|\xi-\zeta|^s+c\la|k-l,\eta-\xi|^s}
\eeno
and 
\beno
\rmJ_{k}(\eta)\lesssim e^{2\mu|k,\eta|^{\f12}}\leq e^{2\mu|l,\zeta|^{\f12}+2\mu|\xi-\zeta|^{\f12}+2\mu|k-l,\eta-\xi|^{\f12}}
\eeno
and
\beno
\f{\rmB_k(t,\eta)}{\rmB_{k-l}(t,\eta-\xi)}
\lesssim \langle k,\eta\rangle\lesssim 1+|l,\zeta|+|\xi-\zeta|+|k-l,\eta-\xi|
\eeno
Thus we have 
\ben
\sum_{\rmN\geq 8}|\rmR_{\rmN,4}^a|\lesssim \f{\ep^3}{\langle t\rangle^2}+\ep\mathcal{CK}_a+\ep\left\|\left\langle\f{\pa_v}{t\pa_z}\right\rangle^{-1}(\pa_z^2+(\pa_v-t\pa_z)^2)\f{|\na|^{\f{s}{2}}}{\langle t\rangle^s}\rmA P_{\neq}(\Psi\Upsilon_2)\right\|_2^2. 
\een

\subsubsection{Treatment of $\rmR_{\rmN, 5}^a$}
The treatment of $\rmR_{\rmN, 5}^a$ is simple. We have
\begin{align*}
|\rmR_{\rmN, 5}^a|\lesssim \|\rmA^*a_{\sim \rmN}\|_{L^2}\|(\Upsilon_2\rmU)_{\rmN}\|_{H^{\s-6}}\|\rmA^*a_{<\rmN/8}\|_{L^2},
\end{align*}
which gives that 
\beno
\sum_{\rmN\geq 8}|\rmR_{\rmN, 5}^a|\lesssim \f{\ep^3}{\langle t\rangle^{2-\rmK_{\rm{D}}\ep/2}}. 
\eeno

\subsection{Treatment of $\mathcal{R}^a$}
The treatment of $\mathcal{R}^a$ is similar to that of $\rmR_{\rmN,4}^a$, one can follow {\it section 7} of \cite{BM2015} and obtain that
\beno
|\mathcal{R}^a|\lesssim \f{\ep^3}{\langle t\rangle^{2-\rmK_{\rm{D}}\ep/2}}. 
\eeno

\section{The lower order terms in $I_f$}\label{sec: low-order}
In this section, we estimate $I_{f,3}$ and $I_{f,9}$. 
Recall that
\begin{align*}
I_{f,3}
&=-\f{1}{2\pi}\sum_{|k|\geq k_{M}}\int_{\mathbb{R}}\rmA_k(t,\eta)^2\widehat{\Big(\udl{\varphi_2}\pa_z\Psi\Om\Big)}_{k}(t,\eta)\overline{\widehat{f}_k(t, \eta)}d\eta\\
&\quad-\f{1}{2\pi}\sum_{|k|\geq k_{M}}\int_{\mathbb{R}}\rmA_k(t,\eta)^2\mathcal{F}\Big(\udl{\varphi_3}\pa_z\Psi\udl{\pa_yv}(\pa_v-t\pa_z)\Psi\Big)_{k}(t,\eta)\overline{\widehat{f}_k(t, \eta)}d\eta\\
&=I_{f,3}^1+I_{f,3}^2,
\end{align*}
and
\begin{align*}
I_{f,9}
&=-\f{1}{2\pi}\sum_{0<|k|<k_{M}}\int_{\mathbb{R}}\rmA_k(t,\eta)^2\mathcal{F}_2\Big(\bfD_{u,k}\left(\mathcal{F}_{1}\Big(\udl{\varphi_2}\pa_z\Psi\Om\Big)\right)\Big)_{k}(t,\eta)\overline{\widehat{f}_k(t, \eta)}d\eta\\
&\quad-\f{1}{2\pi}\sum_{0<|k|<k_{M}}\int_{\mathbb{R}}\rmA_k(t,\eta)^2\mathcal{F}_2\Big(\bfD_{u,k}\left(\mathcal{F}_{1}\Big(\udl{\varphi_3}\pa_z\Psi\udl{\pa_yv}(\pa_v-t\pa_z)\Psi\Big)\right)\Big)_{k}(t,\eta)\overline{\widehat{f}_k(t, \eta)}d\eta\\
&=I_{f,9}^1+I_{f,9}^2.
\end{align*}
The estimates are similar to each other. Here we present the treatment of the most technical term $I_{f,9}^1$ which involves the wave operator. By Proposition \ref{prop: kernel-wave-op}, we have 
\begin{align*}
|I_{f,9}^1|
&\lesssim \left|\sum_{0<|k|<k_{M}}\sum_{l\neq 0}\int_{\mathbb{R}^4}\rmA_k(\eta)^2\mathcal{D}(t, k, \eta,\xi_1)\widehat{\udl{\varphi_2}}(\xi_1-\xi_2)il\widehat{\Upsilon_2\Psi}_l(\xi_3)\widehat{\Om}_{k-l}(\xi_2-\xi_3)\overline{\widehat{f}_k(\eta)}d\eta d\xi_1 d\xi_2d\xi_3\right|\\
&\lesssim \sum_{0<|k|<k_{M}}\sum_{l\neq 0}\int_{\mathbb{R}^4}\rmA_k(\eta)^2e^{-\la_{\mathcal{D}}|\eta-\xi_1|^s}|\widehat{\udl{\varphi_2}}(\xi_1-\xi_2)||l\widehat{\Upsilon_2\Psi}_l(\xi_3)||\widehat{\Om}_{k-l}(\xi_2-\xi_3)||\overline{\widehat{f}_k(\eta)}|d\eta d\xi_1 d\xi_2d\xi_3\\
&\lesssim \sum_{0<|k|<k_{M}}\sum_{l\neq 0}\int_{\mathbb{R}^4}e^{-c\la_{\mathcal{D}}|\eta-\xi_1|^s}\rmA_k(\xi_1)|\widehat{\udl{\varphi_2}}(\xi_1-\xi_2)||l\widehat{\Upsilon_2\Psi}_l(\xi_3)||\widehat{\Om}_{k-l}(\xi_2-\xi_3)||\overline{\widehat{\rmA f}_k(\eta)}|d\eta d\xi_1 d\xi_2d\xi_3\\
&= \sum_{\rmN\geq 8}\sum_{0<|k|<k_{M}}\sum_{l\neq 0}\int_{\mathbb{R}^4}e^{-c\la_{\mathcal{D}}|\eta-\xi_1|^s}\rmA_k(\xi_1)\Big(|\widehat{\udl{\varphi_2}}(\xi_1-\xi_2)||l\widehat{\Upsilon_2\Psi}_l(\xi_3)|\Big)_{<\rmN/8}\\
&\qquad\qquad\qquad\qquad\qquad\qquad
\times |\widehat{\Om}_{k-l}(\xi_2-\xi_3)_{\rmN}||\overline{\widehat{\rmA f}_k(\eta)}|d\eta d\xi_1 d\xi_2d\xi_3\\
&\quad+\sum_{\rmN\geq 8}\sum_{\rmM\geq 8}\sum_{0<|k|<k_{M}}\sum_{l\neq 0}\int_{\mathbb{R}^4}e^{-c\la_{\mathcal{D}}|\eta-\xi_1|^s}\rmA_k(\xi_1)\Big(|\widehat{\udl{\varphi_2}}(\xi_1-\xi_2)_{\rmM}||l\widehat{\Upsilon_2\Psi}_l(\xi_3)_{<\rmM/8}|\Big)_{\rmN}\\
&\qquad\qquad\qquad\qquad\qquad\qquad
\times |\widehat{\Om}_{k-l}(\xi_2-\xi_3)_{<\rmN/8}||\overline{\widehat{\rmA f}_k(\eta)}|d\eta d\xi_1 d\xi_2d\xi_3\\
&\quad+\sum_{\rmN\geq 8}\sum_{\rmM\geq 8}\sum_{0<|k|<k_{M}}\sum_{l\neq 0}\int_{\mathbb{R}^4}e^{-c\la_{\mathcal{D}}|\eta-\xi_1|^s}\rmA_k(\xi_1)\Big(|\widehat{\udl{\varphi_2}}(\xi_1-\xi_2)_{<\rmM/8}||l\widehat{\Upsilon_2\Psi}_l(\xi_3)_{\rmM}|\Big)_{\rmN}\\
&\qquad\qquad\qquad\qquad\qquad\qquad
\times |\widehat{\Om}_{k-l}(\xi_2-\xi_3)_{<\rmN/8}||\overline{\widehat{\rmA f}_k(\eta)}|d\eta d\xi_1 d\xi_2d\xi_3\\
&\quad+\sum_{\rmN\geq 8}\sum_{\rmM}\sum_{\f18\rmM\leq \rmM'\leq 8\rmM}\sum_{0<|k|<k_{M}}\sum_{l\neq 0}\int_{\mathbb{R}^4}e^{-c\la_{\mathcal{D}}|\eta-\xi_1|^s}\rmA_k(\xi_1)\Big(|\widehat{\udl{\varphi_2}}(\xi_1-\xi_2)_{\rmM'}||l\widehat{\Upsilon_2\Psi}_l(\xi_3)_{\rmM}|\Big)_{\rmN}\\
&\qquad\qquad\qquad\qquad\qquad\qquad
\times |\widehat{\Om}_{k-l}(\xi_2-\xi_3)_{<\rmN/8}||\overline{\widehat{\rmA f}_k(\eta)}|d\eta d\xi_1 d\xi_2d\xi_3\\
&\quad+\sum_{\rmN}\sum_{\f18\rmN\leq \rmN'\leq 8\rmN}\sum_{0<|k|<k_{M}}\sum_{l\neq 0}\int_{\mathbb{R}^4}e^{-c\la_{\mathcal{D}}|\eta-\xi_1|^s}\rmA_k(\xi_1)\Big(|\widehat{\udl{\varphi_2}}(\xi_1-\xi_2)||l\widehat{\Upsilon_2\Psi}_l(\xi_3)|\Big)_{\rmN'}\\
&\qquad\qquad\qquad\qquad\qquad\qquad
\times |\widehat{\Om}_{k-l}(\xi_2-\xi_3)_{\rmN}||\overline{\widehat{\rmA f}_k(\eta)}|d\eta d\xi_1 d\xi_2d\xi_3\\
&\eqdef I_{f,9, \rmL\rmH}^1+I_{f,9, \rmH\rmL,\rmH\rmL}^1
+I_{f,9, \rmH\rmL,\rmL\rmH}^1+I_{f,9, \rmH\rmL,\rmH\rmH}^1
+I_{f,9, \rmH\rmH}^1. 
\end{align*}
\subsection{The low-high interactions}
For the low-high interaction, the estimate is easy. We have by Lemma \ref{Lem 3.6}
\begin{align*}
\f{\rmA_k(\xi_1)}{\rmA_{k-l}(\xi_2-\xi_3)}\lesssim e^{c\la |\xi_1-\xi_2|+c\la|l,\xi_3|}\Big(1+\f{\xi_1}{k^2}\mathbf{1}_{t\in \mathbf{I}_{k,\xi_1}}\Big)\lesssim e^{c\la |\xi_1-\xi_2|+c\la|l,\xi_3|}\langle k-l, \xi_2-\xi_3\rangle^{\f12}\langle t\rangle^{\f12},
\end{align*}
which gives that
\ben
\begin{aligned}
I_{f,9, \rmL\rmH}^1
&\lesssim_{k_{M}} \|\udl{\varphi_2}\|_{\mathcal{G}^{\s-6}}\langle t\rangle^2\|\Upsilon_2\Psi\|_{\mathcal{G}^{\s-6}}\left\|\f{|\na|^{\f{s}{2}}}{\langle t\rangle^{\f34}}\rmA \Om\right\|_2\mathcal{CK}_f^{\f12}
\lesssim_{k_{M}} \ep \mathcal{CK}_f. 
\end{aligned}
\een

\subsection{The high-low interactions}
The treatment of $I_{f,9, \rmH\rmL,\rmH\rmL}^1$ is similar to the Reaction term $\mathrm{R}_{\rmN, 3}^a$, but simpler. We will divide the integral based on the relative size of $l$ and $\xi_3$. For the case $16|l|\geq |\xi_3|$, we have on the support of the integrand, 
\beno
|l,\xi_3|\leq \f{3}{16}|\xi_1-\xi_2|,\quad |k-l,\xi_2-\xi_3|\leq \f{3}{16}|l,\xi_1-\xi_2+\xi_3|\lesssim |\xi_1-\xi_2|.
\eeno
Then by \eqref{eq: C7}, we have 
\begin{align*}
|l|\f{\rmA_k(\xi_1)}{\rmA_l(\xi_3)}
&\lesssim 
e^{c\la |k-l,\xi_2-\xi_3|^s+c\la|\xi_1-\xi_2|^s}
\f{|k,\xi_1|^{\f{s}{2}}}{\langle t\rangle^s}\left\langle \f{\xi_3}{l t}\right\rangle^{-1}\f{|l,\xi_3|^{\f{s}{2}}}{\langle t\rangle^s}(l^2+(\xi_3-lt)^2)\mathbf{1}_{l\neq 0}\\
&\lesssim 
\langle \xi_1-\eta\rangle e^{c\la |k-l,\xi_2-\xi_3|^s+c\la|\xi_1-\xi_2|^s}\f{|k,\eta|^{\f{s}{2}}}{\langle t\rangle^s}\left\langle \f{\xi_3}{l t}\right\rangle^{-1}\f{|l,\xi_3|^{\f{s}{2}}}{\langle t\rangle^s}(l^2+(\xi_3-lt)^2)\mathbf{1}_{l\neq 0}.
\end{align*}
Here $\langle \xi_1-\eta\rangle$ will be absorbed by $e^{-c\la_{\mathcal{D}}|\xi_1-\eta|^s}$. 

For the case $16|l|<|\xi_3|$, we have on the support of the integrand,
\beno
|\xi_3|\approx |l,\xi_3|\leq \f{3}{16}|\xi_1-\xi_2|,\quad |k-l,\xi_2-\xi_3|\leq \f{3}{16}|l,\xi_1-\xi_2+\xi_3|\lesssim |\xi_1-\xi_2|.
\eeno
Then by \eqref{eq: C9}, we have 
\begin{align*}
|l|\f{\rmA_k(\xi_1)}{\rmA^{\rmR}(\xi_1-\xi_2)}
\lesssim e^{c\la |k-l,\xi_2-\xi_3|^s+c\la|\xi_3|^s}. 
\end{align*}
We then get that 
\beno
I_{f,9, \rmH\rmL,\rmH\rmL}^1\lesssim \ep \mathcal{CK}_f^{\f12}\left\|\left\langle\f{\pa_v}{t\pa_z}\right\rangle^{-1}(\pa_z^2+(\pa_v-t\pa_z)^2)\left(\f{|\na|^{\f{s}{2}}}{\langle t\rangle^s}\rmA+\sqrt{\f{\pa_t w}{w}}\tilde{\rmA}\right)P_{\neq}(\Psi\Upsilon_2)\right\|_2+\f{\ep^3}{\langle t\rangle^2}. 
\eeno

We then treat $I_{f,9, \rmH\rmL,\rmL\rmH}^1$, which is similar to the Reaction term $\mathrm{R}_{\rmN,2}^a$, but simpler, because the derivative loss is only in $z$. 
On the support of the integrand, we have that
\beno
|\xi_1-\xi_2|\leq \f{6}{32}|l,\xi_3|,\quad |k-l,\xi_2-\xi_3|\leq \f{6}{32}|l,\xi_1-\xi_2+\xi_3|
\eeno
which gives $|l,\xi_1-\xi_2+\xi_3|\approx |l,\xi_3|\approx |k,\xi_1|$. 
We then have by \eqref{eq: est C_4}, 
\beno
\begin{aligned}
|l|\f{\rmA_k(\xi_1)}{\rmA_l(\xi_3)}&\lesssim e^{c\la |k-l,\xi_2-\xi_3|^s+c\la|\xi_1-\xi_2|^s}\Big[\sqrt{\f{\pa_tw_k(\xi_1)}{w_{k}(\xi_1)}}+\f{|k,\xi_1|^{s/2}}{\langle t\rangle^s}\Big]\\
&\quad\quad\quad
\times\left\langle \f{\xi_3}{l t}\right\rangle^{-1}\Big[\f{|l,\xi_3|^{\f{s}{2}}}{\langle t\rangle^s}+\sqrt{\f{\pa_tw_l(\xi_3)}{w_{l}(\xi_3)}}\Big](l^2+(\xi_3-lt)^2)\mathbf{1}_{l\neq 0}\\
&\lesssim \langle \xi_1-\eta\rangle e^{c\la |k-l,\xi_2-\xi_3|^s+c\la|\xi_1-\xi_2|^s}\Big[\sqrt{\f{\pa_tw_k(\eta)}{w_{k}(\eta)}}+\f{|k,\eta|^{s/2}}{\langle t\rangle^s}\Big]\\
&\quad\quad\quad
\times\left\langle \f{\xi_3}{l t}\right\rangle^{-1}\Big[\f{|l,\xi_3|^{\f{s}{2}}}{\langle t\rangle^s}+\sqrt{\f{\pa_tw_l(\xi_3)}{w_{l}(\xi_3)}}\Big](l^2+(\xi_3-lt)^2)\mathbf{1}_{l\neq 0},
\end{aligned}
\eeno
which gives that
\begin{align*}
I_{f,9, \rmH\rmL,\rmL\rmH}^1
\lesssim &\mathcal{CK}_f^{\f12}\|\udl{\varphi_2}\|_{\mathcal{G}^{\s-6}}\|\Om\|_{\mathcal{G}^{\s-6}}\\
&\times\left\|\left\langle\f{\pa_v}{t\pa_z}\right\rangle^{-1}(\pa_z^2+(\pa_v-t\pa_z)^2)\left(\f{|\na|^{\f{s}{2}}}{\langle t\rangle^s}\rmA+\sqrt{\f{\pa_t w}{w}}\tilde{\rmA}\right)P_{\neq}(\Psi\Upsilon_2)\right\|_2\\
\lesssim &\ep \mathcal{CK}_f+\ep \left\|\left\langle\f{\pa_v}{t\pa_z}\right\rangle^{-1}(\pa_z^2+(\pa_v-t\pa_z)^2)\left(\f{|\na|^{\f{s}{2}}}{\langle t\rangle^s}\rmA+\sqrt{\f{\pa_t w}{w}}\tilde{\rmA}\right)P_{\neq}(\Psi\Upsilon_2)\right\|_2^2.
\end{align*}

The treatment of $I_{f,9, \rmH\rmL,\rmH\rmH}^1$ is similar to that of $\rmR_{\rmN,4}^a$. We have
\beno
I_{f,9, \rmH\rmL,\rmH\rmH}^1\lesssim \f{\ep^3}{\langle t\rangle^2}+\ep\mathcal{CK}_f+\ep\left\|\left\langle\f{\pa_v}{t\pa_z}\right\rangle^{-1}(\pa_z^2+(\pa_v-t\pa_z)^2)\f{|\na|^{\f{s}{2}}}{\langle t\rangle^s}\rmA P_{\neq}(\Psi\Upsilon_2)\right\|_2^2. 
\eeno

\subsection{The high-high interactions}
The treatment of $I_{f,9, \rmH\rmH}^1$ is even simpler than that of $\mathcal{R}^a$. We have  
\beno
I_{f,9, \rmH\rmH}^1\lesssim \f{\ep^3}{\langle t\rangle^{2-\rmK_{\rmD}\ep/2}}. 
\eeno

\section{Linear force term in $I_a$}\label{sec: I_a^l}
In this section, we estimate $I_a^l$ and prove Proposition \ref{prop: I_a^l}. We write 
\begin{align*}
I_a^l&=-\f{1}{2\pi}\sum_{k\neq 0}\int \rmA_k^*(\eta)^2\overline{\widehat{a}_k(\eta)}\widehat{\udl{\th'}}(\eta-\xi)ik \widehat{\Upsilon_2\Psi}_{k}(\xi)d\xi d\eta\\
&=-\f{1}{2\pi}\sum_{\rmN\geq 8}\sum_{k\neq 0}\int \rmA_k^*(\eta)^2\overline{\widehat{a}_k(\eta)}\widehat{\udl{\th'}}(\eta-\xi)_{<\rmN/8}ik \widehat{\Upsilon_2\Psi}_{k}(\xi)_{\rmN}d\xi d\eta\\
&\quad-\f{1}{2\pi}\sum_{\rmN\geq 8}\sum_{k\neq 0}\int \rmA_k^*(\eta)^2\overline{\widehat{a}_k(\eta)}\widehat{\udl{\th'}}(\eta-\xi)_{\rmN}ik \widehat{\Upsilon_2\Psi}_{k}(\xi)_{<\rmN/8}d\xi d\eta\\
&\quad-\f{1}{2\pi}\sum_{\rmN}\sum_{\f18\rmN\leq \rmN'\leq 8\rmN}\sum_{k\neq 0}\int \rmA_k^*(\eta)^2\overline{\widehat{a}_k(\eta)}\widehat{\udl{\th'}}(\eta-\xi)_{\rmN'}ik \widehat{\Upsilon_2\Psi}_{k}(\xi)_{\rmN}d\xi d\eta\\
&\eqdef I_{a,\rmL\rmH}^l+I_{a,\rmH\rmL}^l+I_{a,\rmH\rmH}^l.
\end{align*} 
For $I_{a,\rmL\rmH}^l$, on the support of the integrand, we have
\beno
|\eta-\xi|\leq \f{3}{16}|k,\xi|
\eeno
and then by Lemma \ref{Lem 3.6}, it holds that
\beno
|k|\f{\rmA^*_k(\eta)}{\rmA_k(\xi)}
\lesssim |k| e^{c\la|\eta-\xi|^s}\sqrt{1+\f{k^2+|\xi|}{\langle t\rangle^2}}=C_{11}^{\rmA}.
\eeno
We consider the following four cases:
\begin{itemize}
\item[1.] if $t\leq 2\sqrt{|\xi|}, \, |k|\leq \f{1}{4}\sqrt{|\xi|}$, then \beno
C_{11}^{\rmA}\leq |k|e^{c\la|\eta-\xi|^s}\f{\sqrt{|\xi|}}{\langle t\rangle}\lesssim e^{c\la|\eta-\xi|^s}\left\langle\f{\xi}{kt}\right\rangle^{-1}\f{\xi^2}{k\langle t\rangle^2}\lesssim e^{c\la|\eta-\xi|^s}\left\langle\f{\xi}{kt}\right\rangle^{-1}\f{(\xi-kt)^2+k^2}{\langle t\rangle^2}.
\eeno
\item[2.] if $t\leq 2\sqrt{|\xi|},\, |k|\geq \f{1}{4}\sqrt{|\xi|}\geq \f18t$, then
\begin{align*}
C_{11}^{\rmA}
&\leq |k|e^{c\la|\eta-\xi|^s}\f{|k|}{\langle t\rangle}\lesssim e^{c\la|\eta-\xi|^s}\left\langle\f{\xi}{kt}\right\rangle^{-1}\f{|k||\xi|}{\langle t\rangle^2}\mathbf{1}_{|kt|\leq 2|\xi|}
+e^{c\la|\eta-\xi|^s}\left\langle\f{\xi}{kt}\right\rangle^{-1}\f{|k|^2}{\langle t\rangle}\mathbf{1}_{|kt|\geq 2|\xi|}\\
&\lesssim e^{c\la|\eta-\xi|^s}\Big((\xi-kt)^2+k^2\Big)\Big(\left\langle\f{\xi}{kt}\right\rangle^{-1}\f{|\xi|^{\f12}}{\langle t\rangle^2}\mathbf{1}_{|k|\approx \sqrt{|\xi|}}+\left\langle\f{\xi}{kt}\right\rangle^{-1}\f{1}{\langle t\rangle^3}\Big)\\
&\lesssim e^{c\la|\eta-\xi|^s}\Big((\xi-kt)^2+k^2\Big)\left\langle\f{\xi}{kt}\right\rangle^{-1}\f{|\xi|^{\f{s}{2}}|k,\eta|^{\f{s}{2}}}{\langle t\rangle^2}.
\end{align*}
\item[3.] if $2|\xi|>t\geq 2\sqrt{|\xi|}$, then $t\in \mathrm{I}_{l,\xi}\cap \mathrm{I}_{j,\eta}$ for some $l, j$, and 
\begin{align*}
C_{11}^{\rmA}
&\lesssim |k|^2e^{c\la|\eta-\xi|^s}\sqrt{\f{\pa_tw_l(\xi)}{w_l(\xi)}}\sqrt{\f{\pa_tw_j(\eta)}{w_j(\eta)}}\Big(1+|\f{\xi}{l}-t|\Big)^{\f12}\Big(1+|\f{\eta}{j}-t|\Big)^{\f12}\\
&\lesssim |k|^2e^{c\la|\eta-\xi|^s}\sqrt{\f{\pa_tw_l(\xi)}{w_l(\xi)}}\sqrt{\f{\pa_tw_j(\eta)}{w_j(\eta)}}\Big(1+|\f{\xi}{k}-t|\Big)\left\langle\f{\xi}{kt}\right\rangle^{-1}\mathbf{1}_{|k|\gtrsim |l|\approx |j|}\\
&\quad +|k|^2e^{c\la|\eta-\xi|^s}\sqrt{\f{\pa_tw_l(\xi)}{w_l(\xi)}}\sqrt{\f{\pa_tw_j(\eta)}{w_j(\eta)}}\Big(1+|\f{\xi}{k}-t|\Big)\mathbf{1}_{|kt|\leq \f12|\xi|}\\
&\lesssim e^{c\la|\eta-\xi|^s}\sqrt{\f{\pa_tw_l(\xi)}{w_l(\xi)}}\sqrt{\f{\pa_tw_j(\eta)}{w_j(\eta)}}\big(k^2+|\xi-tk|^2\big)\left\langle\f{\xi}{kt}\right\rangle^{-1}. 
\end{align*}
\item[4.] if $t\geq 2|\xi|$, then
\begin{align*}
C_{11}^{\rmA}
&\lesssim k^2e^{c\la|\eta-\xi|^s}\f{k^2+|\xi-tk|^2}{k^2t^2}\left\langle\f{\xi}{kt}\right\rangle^{-1}. 
\end{align*}
\end{itemize}
Thus, we have
\ben
\begin{aligned}
I_{a,\rmL\rmH}^l
&\lesssim \mathcal{CK}_{a}^{\f12}\left\|\left\langle\f{\pa_v}{t\pa_z}\right\rangle^{-1}(\pa_z^2+(\pa_v-t\pa_z)^2)\left(\f{|\na|^{\f{s}{2}}}{\langle t\rangle^s}\rmA+\sqrt{\f{\pa_t w}{w}}\tilde{\rmA}\right)P_{\neq}(\Psi\Upsilon_2)\right\|_2\\
&\leq \varepsilon_0\mathcal{CK}_{a}+C_{\varepsilon_0}\left\|\left\langle\f{\pa_v}{t\pa_z}\right\rangle^{-1}(\pa_z^2+(\pa_v-t\pa_z)^2)\left(\f{|\na|^{\f{s}{2}}}{\langle t\rangle^s}\rmA+\sqrt{\f{\pa_t w}{w}}\tilde{\rmA}\right)P_{\neq}(\Psi\Upsilon_2)\right\|_2^2. 
\end{aligned}
\een

For $I_{a,\rmH\rmL}^l$, we consider two cases $16|k|\leq |\xi|$ and $16|k|>|\xi|$. 
For the first case $16|k|\leq |\xi|$, on the support of the integrand, we have
\beno
|\xi|\approx |k,\xi|\leq \f{3}{16}|\eta-\xi|\approx |k,\eta|\approx |\eta|.
\eeno
Then we have
\begin{align*}
\f{\rmA_k^*(\eta)}{\rmA^{\rmR}(\eta-\xi)}\lesssim e^{c\la|k,\xi|^s}\Big(1+\f{\langle\eta\rangle^{\f12}}{\langle t\rangle}\Big)
\lesssim e^{c\la|k,\xi|^s}\Big(1+\f{|k,\eta|^{\f{s}{2}}|\eta-\xi|^{\f{s}{2}}}{\langle t\rangle}\Big).
\end{align*}
For the second case $16|k|>|\xi|$, on the support of the integrand, we have 
\beno
|k|\approx |k,\xi|\leq \f{3}{16}|\eta-\xi|\approx |k,\eta|.
\eeno
Then we have 
\begin{align*}
\f{\rmA_k^*(\eta)}{\rmA_k(\xi)}
&\lesssim ke^{c\la|\eta-\xi|}\Big(1+\f{|\xi|+k^2}{\langle t\rangle^2}\Big)^{\f12}\\
&\lesssim |k|e^{c\la|\eta-\xi|}\Big(1+\f{|k|}{\langle t\rangle}\Big)\mathbf{1}_{\f{|k|}{16}\leq |\xi|<16|k|}
+|k|e^{c\la|\eta-\xi|}\Big(1+\f{|k|}{\langle t\rangle}+\f{\sqrt{|\xi|}}{\langle t\rangle}\Big)\mathbf{1}_{|\xi|<\f{1}{16}|k|}\\
&\lesssim e^{c\la|\eta-\xi|}\left\langle\f{\xi}{tk}\right\rangle^{-1}(k^2+(\xi-kt)^2)\Big(\f{|k|^{\f12}}{\langle t\rangle^{\f32}}+\f{|\xi|^{s}}{\langle t\rangle^{2s}}\Big).
\end{align*}
Therefore, we have
\begin{align*}
|I_{a,\rmH\rmL}^l|
&\lesssim \f{\ep^2}{\langle t\rangle^2}+\ep\mathcal{CK}_a^{\f12}\left\|\f{|\na|^{\f{s}{2}}}{\langle t\rangle}\rmA^{\rmR}\udl{\th'}\right\|_{L^2}\\
&\quad+\mathcal{CK}_a^{\f12}\left\|\left\langle\f{\pa_v}{t\pa_z}\right\rangle^{-1}(\pa_z^2+(\pa_v-t\pa_z)^2)\left(\f{|\na|^{\f{s}{2}}}{\langle t\rangle^s}\rmA+\sqrt{\f{\pa_t w}{w}}\tilde{\rmA}\right)P_{\neq}(\Psi\Upsilon_2)\right\|_2\\
&\lesssim \f{\ep^2}{\langle t\rangle^2}
+\ep\mathcal{CK}_a^{\f12}\Big(\f{1}{\langle t\rangle}+\mathcal{CK}_{\varphi^{\d}_{10}}^{\f12}\Big)\\
&\quad+\mathcal{CK}_a^{\f12}\left\|\left\langle\f{\pa_v}{t\pa_z}\right\rangle^{-1}(\pa_z^2+(\pa_v-t\pa_z)^2)\left(\f{|\na|^{\f{s}{2}}}{\langle t\rangle^s}\rmA+\sqrt{\f{\pa_t w}{w}}\tilde{\rmA}\right)P_{\neq}(\Psi\Upsilon_2)\right\|_2\\
&\lesssim \f{\ep^2}{\langle t\rangle^2}+\ep\mathcal{CK}_{\varphi^{\d}_{10}}+  \varepsilon_0\mathcal{CK}_{a}\\
&\quad +C_{\varepsilon_0}\left\|\left\langle\f{\pa_v}{t\pa_z}\right\rangle^{-1}(\pa_z^2+(\pa_v-t\pa_z)^2)\left(\f{|\na|^{\f{s}{2}}}{\langle t\rangle^s}\rmA+\sqrt{\f{\pa_t w}{w}}\tilde{\rmA}\right)P_{\neq}(\Psi\Upsilon_2)\right\|_2^2. 
\end{align*}
The treatment of $I_{a,\rmH\rmH}^l$ is simple. 
We have
\beno
|I_{a,\rmH\rmH}^l|\lesssim \f{\ep^2}{\langle t\rangle^2}.
\eeno

\section{The nonlinear interactions with the pressure in $I_f$}\label{sec: I_f4,8}
In this section, we treat the $I_{f,4}$ and $I_{f,8}$. We have by \eqref{eq: N_aPi} and Proposition \ref{prop: kernel-wave-op} that
\begin{align*}
I_{f,4}&=-\f{1}{2\pi}\sum_{l\neq 0}\int_{\mathbb{R}^3}\rmA_0(\eta)\Big[\mathbf{h}_1(\xi')(l(\xi+\zeta))\widehat{(\Pi_{\star})}_l(\xi)\widehat{a}_{-l}(\zeta)\Big]\rmA_0(\eta)\overline{\widehat{f}_0(\eta)}d\xi d\zeta d\eta\\
&\quad -\f{1}{2\pi}\sum_{l\neq 0}\int_{\mathbb{R}^3}\rmA_0(\eta)\Big[\mathbf{h}_2(\xi')(il)\widehat{(\Pi_{\star})}_l(\xi)\widehat{a}_{-l}(\zeta)\Big]\rmA_0(\eta)\overline{\widehat{f}_0(\eta)}d\xi d\zeta d\eta\\
&\quad -\f{1}{2\pi}\sum_{k\geq k_{M}}\sum_{l}\int_{\mathbb{R}^3}\rmA_k(\eta)\Big[\mathbf{h}_1(\xi')(-k\xi+l(\xi+\zeta))\widehat{(\Pi_{\star})}_l(\xi)\widehat{a}_{k-l}(\zeta)\Big]\rmA_k(\eta)\overline{\widehat{f}_k(\eta)}d\xi d\zeta d\eta\\
&\quad -\f{1}{2\pi}\sum_{k\geq k_{M}}\sum_{l\neq 0}\int_{\mathbb{R}^3}\rmA_k(\eta)\Big[\mathbf{h}_2(\xi')(il)\widehat{(\Pi_{\star})}_l(\xi)\widehat{a}_{k-l}(\zeta)\Big]\rmA_k(\eta)\overline{\widehat{f}_k(\eta)}d\xi d\zeta d\eta\\
I_{f,8}&=-\f{1}{2\pi}\sum_{0<k< k_{M}}\sum_{l}\int_{\mathbb{R}^4}\rmA_k(\eta')\mathcal{D}(t, k, \eta', \eta)\\
&\qquad\qquad\qquad\qquad\qquad  
\times \Big[\mathbf{h}_1(\xi')(-k\xi+l(\xi+\zeta))\widehat{(\Pi_{\star})}_l(\xi)\widehat{a}_{k-l}(\zeta)\Big]\overline{\widehat{\rmA f}_k(\eta')}d\xi d\zeta d\eta d\eta'\\
&\quad-\f{1}{2\pi}\sum_{0<k< k_{M}}\sum_{l\neq 0}\int_{\mathbb{R}^4}\rmA_k(\eta')\mathcal{D}(t, k, \eta', \eta)\\
&\qquad\qquad\qquad\qquad\qquad  
\times \Big[\mathbf{h}_2(\xi')(il)\widehat{(\Pi_{\star})}_l(\xi)\widehat{a}_{k-l}(\zeta)\Big]\overline{\widehat{\rmA f}_k(\eta')}d\xi d\zeta d\eta d\eta'\\
&=I_{f,8}^1+I_{f,8}^2,
\end{align*}
where $\xi'=\eta-\xi-\zeta$, 
\begin{align*}
\mathbf{h}_1(\xi')=\int_{\mathbb{R}}\widehat{(\udl{\varphi_4}\Upsilon_2)}(\xi'')\widehat{(\Upsilon_2\udl{\pa_yv})}(\xi'-\xi'')d\xi'',\qquad
\mathbf{h}_2(\xi')=\widehat{\udl{\varphi_5}}(\xi'). 
\end{align*}
and $\Pi_{\star}\in \big\{P_{\neq}(\Upsilon_2\Pi_{l,1}), \, P_{\neq}(\Upsilon_1\Pi_{l,2}),\, P_{\neq}(\Upsilon_2\Pi_{n,1}), \, P_{\neq}(\Upsilon_1\Pi_{n,2}),\, P_0(\Upsilon_2\Pi)\big\}$. 

We only present the estimates for $I_{f,8}^1$ which involves the wave operator, other terms are similar or even easier. 
We have by Proposition \ref{prop: kernel-wave-op} that 
\begin{align*}
|I_{f,8}^1|&\lesssim \sum_{0<k< k_{M}}\sum_{l}\int_{\mathbb{R}^4}e^{-c\la_{\mathcal{D}}|\eta-\eta'|^{s_0}}\rmA_k(\eta)
\Big(\mathbf{1}_{|\xi'|\geq 4|k,\eta|}+\mathbf{1}_{4|\xi'|\leq |k,\eta|}+\mathbf{1}_{\f14|k,\eta|\leq |\xi'|\leq 4|k,\eta|}\Big)
\\
&\qquad\qquad\qquad\qquad\qquad  
\times \Big[|\mathbf{h}_1(\xi')||-k\xi+l(\xi+\zeta)||\widehat{(\Pi_{\star})}_l(\xi)||\widehat{a}_{k-l}(\zeta)|\Big]|\overline{\widehat{\rmA f}_k(\eta')}|d\xi d\zeta d\eta d\eta'\\
&=I_{f,8}^{\rmH\rmL}+I_{f,8}^{\rmL\rmH}+I_{f,8}^{\rmH\rmH},
\end{align*}
Then we have by Remark \ref{Rmk: A-switch} and Corollary \ref{cor: decay-pressure} and fact that
\beno
 \|\rmA^{\rmR}\mathbf{h}_1\|_2\lesssim (1+\|\rmA^{\rmR}\varphi^{\d}_4\|_2)(1+\|\rmA^{\rmR}h\|_2+\|\rmA^{\rmR}\varphi_9^{\d}\|_{2})\lesssim 1
\eeno 
that
\begin{align*}
I_{f,8}^{\rmH\rmL}+I_{f,8}^{\rmH\rmH}
&\lesssim \|\rmA^{\rmR}\mathbf{h}_1\|_2\|P_{\neq}\Pi_{\star}\|_{\mathcal{G}^{s,\s-6}}\|a\|_{\mathcal{G}^{s,\s-6}}\|\rmA f\|_2\\
&\quad
+\|\rmA^{\rmR}\mathbf{h}_1\|_2\|P_{0}\pa_v\Pi\|_{\mathcal{G}^{s,\s-6}}\|a\|_{\mathcal{G}^{s,\s-6}}\|\rmA f\|_2\lesssim \f{\ep^3}{\langle t\rangle^3}.
\end{align*}
For $I_{f,8}^{\rmL\rmH}$, by Remark \ref{Rmk: A-switch}, we have 
\begin{align*}
I_{f,8}^{\rmL\rmH}&\lesssim \sum_{0<k< k_{M}}\sum_{l}\int_{\mathbb{R}^4}e^{-c\la_{\mathcal{D}}|\eta-\eta'|^{s_0}}e^{c\la|\xi'|^s}|\mathbf{h}_1(\xi')|
\Big(\mathbf{1}_{4|\xi'|\leq |k,\eta|}\Big)\rmA_k(\xi+\zeta)
\\
&\qquad\qquad\qquad\qquad\qquad  
\times \Big[|-k\xi+l(\xi+\zeta)||\widehat{(\Pi_{\star})}_l(\xi)||\widehat{a}_{k-l}(\zeta)|\Big]|\overline{\widehat{\rmA f}_k(\eta')}|d\xi d\zeta d\eta d\eta'\\
&=\sum_{0<k< k_{M}}\int_{\mathbb{R}^4}e^{-c\la_{\mathcal{D}}|\eta-\eta'|^{s_0}}e^{c\la|\xi'|^s}|\mathbf{h}_1(\xi')|
\mathbf{1}_{4|\xi'|\leq |k,\eta|}\rmA_k(\xi+\zeta)\\
&\qquad\qquad\qquad\qquad\qquad  
\times
\Big(\mathbf{1}_{|\xi|\geq 4|k,\zeta|}+\mathbf{1}_{4|\xi|\leq |k,\zeta|}+\mathbf{1}_{\f14|k,\zeta|\leq |\xi|\leq 4|k,\zeta|}\Big)
\\
&\qquad\qquad\qquad\qquad\qquad  
\times \Big[|-k\xi||\widehat{P_0(\Pi)}(\xi)||\widehat{a}_{k}(\zeta)|\Big]|\overline{\widehat{\rmA f}_k(\eta')}|d\xi d\zeta d\eta d\eta'\\
&\quad +\sum_{0<k< k_{M}}\sum_{l\neq 0}\int_{\mathbb{R}^4}e^{-c\la_{\mathcal{D}}|\eta-\eta'|^{s_0}}e^{c\la|\xi'|^s}|\mathbf{h}_1(\xi')|
\mathbf{1}_{4|\xi'|\leq |k,\eta|}\rmA_k(\xi+\zeta)
\\
&\qquad\qquad\qquad\qquad\qquad  
\times\Big(\mathbf{1}_{|l,\xi|\geq 4|k,\zeta|}+\mathbf{1}_{4|l,\xi|\leq |k,\zeta|}+\mathbf{1}_{\f14|k,\zeta|\leq |l,\xi|\leq 4|k,\zeta|}\Big)
\\
&\qquad\qquad\qquad\qquad\qquad  
\times \Big[|-k\xi+l(\xi+\zeta)||\widehat{(\Pi_{\star})}_l(\xi)||\widehat{a}_{k-l}(\zeta)|\Big]|\overline{\widehat{\rmA f}_k(\eta')}|d\xi d\zeta d\eta d\eta'\\
&=I_{f,8,0}^{\rmH\rmL}+I_{f,8,0}^{\rmL\rmH}+I_{f,8,0}^{\rmH\rmH} +I_{f,8,\neq}^{\rmH\rmL}+I_{f,8,\neq}^{\rmL\rmH}+I_{f,8,\neq}^{\rmH\rmH}. 
\end{align*}
The terms $I_{f,8,0}^{\rmH\rmH}$ and $I_{f,8,\neq}^{\rmH\rmH}$ are easy. We have
\begin{align*}
I_{f,8,0}^{\rmH\rmH}\lesssim \f{\ep^3}{\langle t\rangle^3},\quad
I_{f,8,\neq}^{\rmH\rmH}\lesssim \f{\ep^3}{\langle t\rangle^4}. 
\end{align*}
For the high-low interactions in $I_{f,8,0}^{\rmH\rmL}$, we have
\begin{align*}
\rmA_k(\xi+\zeta)\lesssim& \rmA_0(\xi)\Big(1+\f{\xi+\zeta}{k^2+(\xi+\zeta-kt)^2}\mathbf{1}_{t\in {\rm{I}}_{k,\xi+\zeta}}\Big)e^{c\la|k,\zeta|^s}\\
\lesssim& \rmA_0(\xi)\f{\langle \xi\rangle^{\f{s}{2}}}{\langle t\rangle^s}\f{t(t^2+\xi^2)}{2+\xi^2}\f{\langle\xi\rangle^{2-\f{s}{2}}}{t^{1-s}(t^2+\xi^2)}e^{c\la|k,\zeta|^s}\\
&+\rmA_0(\xi)\f{t(t^2+\xi^2)}{2+\xi^2}\f{\langle\xi\rangle^{3}/k^2}{t(t^2+\xi^2)}\f{\Big(\langle k,\zeta\rangle^{2}+\langle\eta-\eta'\rangle^{2}+\langle\xi'\rangle^{2}\Big)}{(1+(\eta'/k-t)^2)^{\f12}(1+(\xi/k-t)^2)^{\f12}}\mathbf{1}_{t\in {\rm{I}}_{k,\xi+\zeta}}e^{c\la|k,\zeta|^s}\\
\lesssim& \rmA_0(\xi)\f{\langle \xi\rangle^{\f{s}{2}}}{\langle t\rangle^s}\f{t(t^2+\xi^2)}{2+\xi^2}\f{\langle k,\eta'\rangle^{\f{s}{2}}}{t}\Big(\langle k,\zeta\rangle^{\f{s}{2}}+\langle\eta-\eta'\rangle^{\f{s}{2}}+\langle\xi'\rangle^{\f{s}{2}}\Big)e^{c\la|k,\zeta|^s}\\
&+\rmA_0(\xi)\f{t(t^2+\xi^2)}{2+\xi^2}\sqrt{\f{\pa_tw(\xi)}{w(\xi)}}\sqrt{\f{\pa_tw_k(\eta')}{w_k(\eta')}}\Big(\langle k,\zeta\rangle^{2}+\langle\eta-\eta'\rangle^{2}+\langle\xi'\rangle^{2}\Big)e^{c\la|k,\zeta|^s},
\end{align*}
which gives that
\begin{align*}
I_{f,8,0}^{\rmH\rmL}\lesssim \ep \mathcal{CK}_f+\ep\left\|t(t^2+|\pa_v|^2)(2-\pa_{v}^2)^{-1}\Big(\sqrt{\f{\pa_tw}{w}}+\f{\langle \pa_v\rangle^{\f{s}{2}}}{\langle t\rangle^s}\Big)\rmA_0P_0\big(\pa_v(\Pi\Upsilon_1)\big)\right\|_2^2.
\end{align*}
The high-low interactions in $I_{f,8,\neq}^{\rmH\rmL}$ is similar to the Reaction term $\rmR_{\rmN}^a$, by using \eqref{eq: est C_4}, we have
\begin{align*}
I_{f,8,\neq}^{\rmH\rmL}\lesssim \ep \mathcal{CK}_f+\ep\left\|\left\langle\f{\pa_v}{t\pa_z}\right\rangle^{-1}(\pa_z^2+(\pa_v-t\pa_z)^2)\left(\f{|\na|^{\f{s}{2}}}{\langle t\rangle^s}\rmA+\sqrt{\f{\pa_t w}{w}}\tilde{\rmA}\right)P_{\neq}(\Pi_{\star})\right\|_2^2. 
\end{align*}
Then by \eqref{eq: inequality-11}, we have
\begin{align*}
I_{f,8,\neq}^{\rmH\rmL}\lesssim \ep \mathcal{CK}_f+\ep\left\|\langle\pa_z\rangle t(t+\langle \na\rangle)(1-\Delta)^{-1}\big((\pa_v-t\pa_z)^2+\pa_z^2\big)\left(\f{\langle\na\rangle^{\f{s}{2}}}{\langle t\rangle^s}\rmA+\sqrt{\f{\pa_t w}{w}}\tilde{\rmA}\right) P_{\neq} \Pi_{\star}\right\|_2^2. 
\end{align*}

To treat the low-high interactions in $I_{f,8,0}^{\rmL\rmH}$ and $I_{f,8,\neq}^{\rmL\rmH}$, we take advantage of the multiplier $\rmB$. 
We have 
\begin{align*}
|k|\rmA_k(\xi+\zeta)\lesssim |k|\rmA_k^{*}(\zeta)e^{c\la|\xi|^s}\f{t}{t+|k|+|\zeta|^{\f12}}\lesssim \rmA_k^{*}(\zeta)e^{c\la|\xi|^s}\langle t\rangle,
\end{align*}
which gives that
\begin{align*}
I_{f,8,0}^{\rmL\rmH}\lesssim \|\rmA^*a\|_2\langle t\rangle\|\pa_vP_0(\Upsilon_2\Pi)\|_{\mathcal{G}^{s,\s-6}}\|\rmA f\|_{L^2}\lesssim \f{\ep^3}{\langle t\rangle^2}. 
\end{align*}
We also have 
\begin{align*}
&|-k\xi+l(\xi+\zeta)|\rmA_k(\xi+\zeta)\\
&\lesssim \langle l,\xi\rangle \langle k-l,\zeta\rangle \rmA_{k-l}^{*}(\zeta)e^{c\la|l,\xi|^s}\f{t}{t+|k-l|+|\zeta|^{\f12}}\Big(1+\f{\xi+\zeta}{k^2+(\xi+\zeta-kt)^2}\mathbf{1}_{t\in {\rm{I}}_{k,\xi+\zeta}\cap {\rm{I}}_{k,\zeta}\, l\neq 0}\Big)\\
&\lesssim \langle t\rangle^{2+2s}\f{|\zeta|^{\f{s}{2}}|k,\eta'|^{\f{s}{2}}}{\langle t\rangle^{2s}} \rmA_{k-l}^{*}(\zeta)e^{c\la|l,\xi|^s},
\end{align*}
which gives that
\begin{align*}
I_{f,8,\neq}^{\rmL\rmH}\lesssim \ep\mathcal{CK}_a+\ep\mathcal{CK}_f.
\end{align*}

\begin{appendix}
\section{Gevrey spaces}
The physical space characterization of Gevrey functions is also used in this paper. We start with a characterization of the Gevrey spaces on the physical side. 
See {\it Lemma A.1} in \cite{IonescuJia2020cmp} for the elementary proof. 
\begin{lemma}[\cite{IonescuJia2020cmp}]\label{Lem: A1}
Suppose that $d=1,2$, $0<s<1$, $K>1$ and $g\in C^{\infty}(\R^d)$ with $\mathrm{supp}\, g\subset [a,b]^d$ satisfies the bounds
\ben
|D^{\al}g(x)|\leq K^m(m+1)^{m/s}, \quad x\in \R^d
\een
for all integers $m\geq 0$ and multi-indices $\al$ with $|\al|=m$. Then
\ben
|\hat{g}(\xi)|\lesssim_{K,s} Le^{-\mu |\xi|^s},
\een
for all $\xi\in \R^d$ and some $\mu=\mu(K,s)>0$. 

Conversely, assume that, for some $\mu>0$ and $s\in (0,1)$,
\ben
|\hat{g}(\xi)|\leq Le^{-\mu |\xi|^s},
\een
for all $\xi\in \R^d$. Then there is $K>1$ depending on $s$ and $\mu$ such that 
 \ben
|D^{\al}g(x)|\lesssim_{\mu,s} K^m(m+1)^{m/s}, \quad x\in \R^d
\een
for all integers $m\geq 0$ and multi-indices $\al$ with $|\al|=m$.
\end{lemma}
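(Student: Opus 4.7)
The plan is to prove the two directions separately, each by converting between physical and Fourier side via a standard duality (integration by parts in one direction, Fourier inversion in the other), and then performing an optimization/Stirling step to convert polynomial-in-$m$ bounds into stretched exponentials, or vice versa.

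For the forward direction (physical $\Rightarrow$ Fourier), I would integrate by parts $m$ times to get, for any multi-index $\alpha$ with $|\alpha|=m$,
\begin{equation*}
|\xi^{\alpha}\hat{g}(\xi)| = \Bigl|\int D^{\alpha}g(x)\,e^{-ix\cdot\xi}\,dx\Bigr| \leq \|D^{\alpha}g\|_{L^1} \leq |b-a|^d K^m (m+1)^{m/s},
\end{equation*}
using compact support. Choosing a component of $\alpha$ in the coordinate direction where $|\xi_j|$ is largest gives $|\xi|^m|\hat g(\xi)| \lesssim_d (K\sqrt{d})^m(m+1)^{m/s}$, hence
\begin{equation*}
|\hat g(\xi)| \lesssim_{d}\, L\,\inf_{m\geq 0}\frac{(K\sqrt{d})^m(m+1)^{m/s}}{|\xi|^m}.
\end{equation*}
I then minimize over $m$ by taking $m = m(\xi) = \lfloor c|\xi|^s\rfloor$ with $c = c(K,s)$ sufficiently small. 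Using Stirling in the form $(m+1)^{m/s} \sim m^{m/s}$, the exponent reduces to $c|\xi|^s[(1/s)\log c + \log(K\sqrt d)]$, which is $\leq -\mu|\xi|^s$ for some $\mu = \mu(K,s,d) > 0$ provided $c < (K\sqrt d)^{-s}$ is chosen small enough. Low frequencies ($|\xi|\lesssim 1$) are handled separately via the trivial bound $|\hat g(\xi)|\leq \|g\|_{L^1}$.

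For the converse direction, I would use Fourier inversion: since $\hat g$ is exponentially decaying, $g$ is smooth and for $|\alpha|=m$,
\begin{equation*}
|D^{\alpha}g(x)| \leq (2\pi)^{-d}\int_{\mathbb R^d}|\xi|^m L\,e^{-\mu|\xi|^s}\,d\xi.
\end{equation*}
Passing to polar coordinates and substituting $u=\mu r^s$ converts this to a Gamma function,
\begin{equation*}
\int_0^\infty r^{m+d-1}e^{-\mu r^s}\,dr = \frac{1}{s\,\mu^{(m+d)/s}}\,\Gamma\!\left(\frac{m+d}{s}\right),
\end{equation*}
and Stirling gives $\Gamma((m+d)/s)\lesssim ((m+d)/s)^{(m+d)/s}e^{-(m+d)/s}\sqrt{(m+d)/s}$. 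Absorbing the $\mu$-dependence, the factor $s^{-(m+d)/s}$, the $d$-shift, and the $e^{-(m+d)/s}$ into a constant $K = K(\mu,s,d)>1$ yields $|D^{\alpha}g(x)|\lesssim_{\mu,s}K^m(m+1)^{m/s}$, as required.

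The main obstacle, which is only a bookkeeping issue rather than a conceptual one, is the optimization in the forward direction: one must verify carefully that the integer choice $m(\xi) = \lfloor c|\xi|^s\rfloor$ produces a bound of the claimed form uniformly in $\xi$, and that the resulting constant $\mu$ can be chosen strictly positive by picking $c$ small enough relative to $K$, $s$, and $d$. The converse direction is essentially a change of variables plus Stirling and presents no real difficulty beyond tracking the dependence of the emerging constant $K$ on $\mu$, $s$, and $d$.
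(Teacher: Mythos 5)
Your argument is correct, and it is the standard proof of this physical/Fourier-side Gevrey equivalence: integrate by parts plus an optimization over the number of derivatives $m \sim c|\xi|^s$ for one direction, Fourier inversion plus the substitution $u=\mu r^s$ yielding a Gamma factor (estimated by Stirling) for the other. The paper itself does not reprove this fact but defers it verbatim to Lemma A.1 of the cited Ionescu--Jia reference, which follows the same route, so there is nothing to compare beyond noting that your computation of the critical constant $c<(K\sqrt d)^{-s}$ and the absorption of the polynomial and $s^{-1}$, $e^{-1}$ factors into $K$ are both handled correctly. (One small nitpick: calling $(m+1)^{m/s}\sim m^{m/s}$ ``Stirling'' is a misnomer --- it is just the bounded factor $(1+1/m)^{m/s}\to e^{1/s}$ --- but it has no bearing on the validity of the estimate.)
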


For $x\in [a,b]^d$ and parameters $s\in (0,1)$ and $M\geq 1$, we define the spaces
\beq
\mathcal{G}_{ph}^{M,s}([a,b]^d)\eqdef
\left\{g: [a,b]^d\to \mathbb{C}:~\|g\|_{\mathcal{G}_{ph}^{M,s}([a,b]^d)}<\infty\right\}. 
\eeq
where 
\beq
\|g\|_{\mathcal{G}_{ph}^{M,s}([a,b]^d)}\eqdef \sup_{x\in [a,b]^d,\, m\geq 0,\, |\al|\leq m}\f{|D^{\al}g(x)|}{(m+1)^{m/s}M^{m}}. 
\eeq
Here `{\it ph}' represents the physical side. 

We define the spaces
\beq\label{eq:def-G_ph,1}
\mathcal{G}_{ph,1}^{M,s}([a,b]^d)\eqdef
\left\{g: [a,b]^d\to \mathbb{C}:~\|g\|_{\mathcal{G}_{ph,1}^{M,s}([a,b]^d)}<\infty\right\}. 
\eeq
where 
\beq
\|g\|_{\mathcal{G}_{ph,1}^{M,s}([a,b]^d)}\eqdef \sup_{x\in [a,b]^d,\, m\geq 0,\, |\al|\leq m}\f{|D^{\al}g(x)|}{\Gamma_s(m)M^{m}},
\eeq
with $\Gamma_s(m)=(m!)^{\f1s}(m+1)^{-2}$, also see \cite{Yamanaka} for more details. 

By Stirling's formula $N!\sim \sqrt{2\pi N}(N/e)^{N}$, it is easy to check that there exist $K_1<K_2$ such that 
\beno
K_1^{m}(m+1)^{m/s}\lesssim \Gamma_s(m)\lesssim  K_2^{m}(m+1)^{m/s},
\eeno
which implies
\ben\label{eq: space-embed}
\mathcal{G}_{ph}^{K_1M,s}([a,b]^d)\subset \mathcal{G}_{ph,1}^{M,s}([a,b]^d)\subset \mathcal{G}_{ph}^{K_2M,s}([a,b]^d). 
\een
We also have
\begin{equation}\label{eq:3.9}
\sum_{j=0}^{k}\f{k!}{j!(k-j)!}\Gamma_s(j)\Gamma_s(k-j)<\Gamma_s(k),
\end{equation}
which implies
\ben
\|fg\|_{\mathcal{G}_{ph,1}^{M,s}([a,b]^d)}\leq \|f\|_{\mathcal{G}_{ph,1}^{M,s}([a,b]^d)}\|g\|_{\mathcal{G}_{ph,1}^{M,s}([a,b]^d)}. 
\een

\begin{remark}\label{Rmk: fourier-gevrey}
Suppose $g\in \mathcal{G}_{ph,1}^{M,s}(\R^d)$ with $\mathrm{supp}\, g\subset [a,b]^d$, then for all integers $m\geq 0$ and multi-indices $\al$ with $|\al|=m$. Then
\ben
|\hat{g}(\xi)|\lesssim_{K,s} Le^{-\mu |\xi|^s},
\een
for all $\xi\in \R^d$ and some $\mu=\mu(K,s)>0$. 
\end{remark}

We also introduce the composition lemma in \cite{Yamanaka}. 
\begin{lemma}[\cite{Yamanaka}]\label{eq: composition}
Let $I$, $J$ be real open intervals and let $f:J\to \R$ be a $C^{\infty}$-function such that $f'\in \mathcal{G}_{ph,1}^{L,s}(J)$ and $g: I\to J$ be a $C^{\infty}$-function such that $g'\in \mathcal{G}_{ph,1}^{M,s}(I)$. Let $N$ be a real constant such that
\beno
N\geq \max(M,L\|g'\|_{\mathcal{G}_{ph,1}^{M,s}(I)}).
\eeno
Then the derivative $(f\circ g)'$ of the composite function $f\circ g$ belongs to $\mathcal{G}_{ph,1}^{N,s}(I)$ and satisfies
\beno
\|(f\circ g)'\|_{\mathcal{G}_{ph,1}^{N,s}(I)}\leq \f{N}{L}\|f'\|_{\mathcal{G}_{ph,1}^{L,s}(J)}.
\eeno
\end{lemma}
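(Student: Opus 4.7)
\textbf{Proof proposal for Lemma \ref{eq: composition}.} The plan is to use Faà di Bruno's formula to differentiate the composition, apply the Gevrey bounds assumed on $f'$ and $g'$, and then collapse the resulting combinatorial sum using iterated applications of the subadditivity inequality \eqref{eq:3.9} for $\Gamma_s$. Since $\|(f\circ g)'\|_{\mathcal{G}^{N,s}_{ph,1}(I)}$ is the supremum over $m\geq 0$ of $|(f\circ g)^{(m+1)}(x)|/(\Gamma_s(m)N^m)$, it suffices to bound $(f\circ g)^{(m+1)}(x)$ uniformly in $x\in I$ for each $m\geq 0$.

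The first step is to write, by Faà di Bruno's formula in Bell-polynomial form,
\begin{equation*}
(f\circ g)^{(m+1)}(x)=\sum_{k=1}^{m+1}f^{(k)}(g(x))\,B_{m+1,k}\bigl(g'(x),g''(x),\ldots,g^{(m+2-k)}(x)\bigr),
\end{equation*}
where $B_{m+1,k}$ denotes the partial Bell polynomial. The second step is to substitute the Gevrey estimates: for $k\geq 1$,
\begin{equation*}
|f^{(k)}(g(x))|=|(f')^{(k-1)}(g(x))|\leq \|f'\|_{\mathcal{G}^{L,s}_{ph,1}(J)}\Gamma_s(k-1)L^{k-1},
\end{equation*}
and for each $j\geq 1$, $|g^{(j)}(x)|\leq \|g'\|_{\mathcal{G}^{M,s}_{ph,1}(I)}\Gamma_s(j-1)M^{j-1}$. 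Expanding $B_{m+1,k}$ as a sum over compositions $(j_1,\ldots,j_k)$ of $m+1$ with $j_i\geq 1$, we obtain
\begin{equation*}
|(f\circ g)^{(m+1)}(x)|\leq \|f'\|_{\mathcal{G}^{L,s}_{ph,1}(J)}\sum_{k=1}^{m+1}L^{k-1}\Gamma_s(k-1)\,\|g'\|^{k}_{\mathcal{G}^{M,s}_{ph,1}(I)}M^{m+1-k}\,\Sigma_{m+1,k},
\end{equation*}
with $\Sigma_{m+1,k}$ a purely combinatorial sum of multinomial coefficients times $\prod_{i}\Gamma_s(j_i-1)$.

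The third step, which is the main technical obstacle, is to show by iterated application of \eqref{eq:3.9} that
\begin{equation*}
\Gamma_s(k-1)\,\Sigma_{m+1,k}\leq \Gamma_s(m),
\end{equation*}
so that all of the $\Gamma_s$ factors combine into the single target weight $\Gamma_s(m)$. The key identity in \eqref{eq:3.9}, namely $\sum_{j=0}^{k}\binom{k}{j}\Gamma_s(j)\Gamma_s(k-j)<\Gamma_s(k)$, lets one merge two consecutive $\Gamma_s$ factors at a time; applied $k-1$ times to the nested partition sum produced by Faà di Bruno, it collapses the product into $\Gamma_s(m)$. The delicate point is tracking the multiplicities arising from the permutations of $(j_1,\ldots,j_k)$ so that the constants do not blow up with $k$; because $\Gamma_s$ carries the factor $(m+1)^{-2}$, the residual sum over $k\in\{1,\ldots,m+1\}$ is absorbed without loss.

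The final step is to assemble the bound. Using $N\geq M$ and $N\geq L\|g'\|_{\mathcal{G}^{M,s}_{ph,1}(I)}$, each term in the sum over $k$ satisfies $L^{k-1}\|g'\|^{k}_{\mathcal{G}^{M,s}_{ph,1}(I)}M^{m+1-k}\leq L^{-1}\|g'\|_{\mathcal{G}^{M,s}_{ph,1}(I)}\cdot N^{m}$. Combining this with $N/L\geq \|g'\|_{\mathcal{G}^{M,s}_{ph,1}(I)}$ yields the claimed inequality
\begin{equation*}
|(f\circ g)^{(m+1)}(x)|\leq \tfrac{N}{L}\|f'\|_{\mathcal{G}^{L,s}_{ph,1}(J)}\Gamma_s(m)N^{m},
\end{equation*}
which is precisely the bound $\|(f\circ g)'\|_{\mathcal{G}^{N,s}_{ph,1}(I)}\leq \frac{N}{L}\|f'\|_{\mathcal{G}^{L,s}_{ph,1}(J)}$. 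The hardest part is the combinatorial collapsing of the Faà di Bruno sum into $\Gamma_s(m)$; everything else is bookkeeping once that step is in place.
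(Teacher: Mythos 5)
This lemma is a citation to Yamanaka, so the paper itself supplies no proof; I will therefore assess your argument on its own. Your Faà di Bruno set-up is a reasonable starting point, but the proof as written has a genuine gap at exactly the step you yourself flag as the "hardest part," and the heuristic you give for why that step works is not correct.

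First, the absorption argument. You claim the per-$k$ estimate $\Gamma_s(k-1)\Sigma_{m+1,k}\leq\Gamma_s(m)$ together with the $(m+1)^{-2}$ built into $\Gamma_s$ lets you sum over $k\in\{1,\dots,m+1\}$ "without loss." This does not work: $\Gamma_s(m)$ on the right-hand side already carries its $(m+1)^{-2}$ factor, so if each of the $m+1$ terms is merely $\leq\Gamma_s(m)$, the sum is $\leq(m+1)\Gamma_s(m)$ and the unwanted $(m+1)$ survives. There is no spare $(m+1)^{-2}$ to spend. What you would actually need is the single summed estimate $\sum_{k=1}^{m+1}\Gamma_s(k-1)\Sigma_{m+1,k}\leq\Gamma_s(m)$, and you give no argument for that.

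Second, and more seriously, the tool you invoke to collapse the combinatorics, inequality \eqref{eq:3.9}, is not literally true as written in the paper. Take $s=1$, so $\Gamma_1(0)=1$, $\Gamma_1(1)=1/4$, $\Gamma_1(2)=2/9$. For $k=2$ the left side of \eqref{eq:3.9} is $\Gamma_1(0)\Gamma_1(2)+2\Gamma_1(1)^2+\Gamma_1(2)\Gamma_1(0)=2/9+1/8+2/9>2/9=\Gamma_1(2)$, and for $k=0$ or $k=1$ it already fails ($1<1$ is false, $1/2<1/4$ is false). What is true is a version with a uniform constant, $\sum_{j=0}^{k}\binom{k}{j}\Gamma_s(j)\Gamma_s(k-j)\leq C\,\Gamma_s(k)$, because the $j=0$ and $j=k$ terms each contribute exactly $\Gamma_s(k)$ and the interior terms decay like $1/(j+1)^2$. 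But iterating a $\leq C\,\Gamma_s$ bound $k-1$ times, the way you propose, produces a factor $C^{k-1}$ that then gets summed over $k$; without additional structure this blows up and certainly does not give the exact constant $N/L$ in the lemma. Indeed, at $m=2$ a direct check shows the straightforward triangle-inequality bound from Faà di Bruno exceeds the claimed one: $|(f\circ g)^{(3)}|\leq\|f'\|\,\tfrac{N}{L}\,N^2\big[\Gamma_s(2)+3\Gamma_s(1)^2+\Gamma_s(2)\big]$, which for $s=1$ is about $0.63\,\|f'\|\tfrac{N}{L}N^2$, strictly larger than the target $\Gamma_s(2)\,\|f'\|\tfrac{N}{L}N^2\approx 0.22\,\|f'\|\tfrac{N}{L}N^2$.

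The upshot is that one cannot obtain the clean constant $N/L$ by applying the worst-case Gevrey bounds termwise in Faà di Bruno and then estimating the combinatorial sum; the termwise bound is already too lossy. Yamanaka's actual argument must exploit more structure, either via an induction on $m$ that carries a sharper quantity than the naive supremum, or via a formal majorant-series argument in which one shows the relevant power series $\sum_m\Gamma_s(m)t^m/m!$ is closed under the operations of product and composition with the right normalization, so that the constant never deteriorates. As it stands, your proposal names the key difficulty honestly but neither proves the combinatorial collapse nor gives a mechanism for it that survives scrutiny; a small arithmetic slip in the final display ($L^{k-1}\|g'\|^kM^{m+1-k}\leq\|g'\|N^m$, not $\leq L^{-1}\|g'\|N^m$) is easy to fix, but the main gap is not.
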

We also introduce the estimate of the inverse function in the Gevrey class. 
\begin{lemma}[\cite{Yamanaka}]\label{eq: inverse-gevrey}
Let $I$ and $J$ be real open intervals. Let $f:I\to J$ be a $C^{\infty}$-surjection such that
\beno
|f'(x)|\geq \f{1}{A},\quad x\in I
\eeno
for some $A>0$ and such that $f''\in \mathcal{G}_{ph,1}^{L,s}(I)$. Then $f$ has a $C^{\infty}$-inverse $f^{-1}:J\to I$ such that $(f^{-1})'\in  \mathcal{G}_{ph,1}^{M,s}(J)$ for some $M>0$. 
\end{lemma}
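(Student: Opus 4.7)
\textbf{Proposal for proof of Lemma \ref{eq: inverse-gevrey}.} The plan is to first establish existence and smoothness of $h := f^{-1}$ by the classical inverse function theorem, and then extract Gevrey estimates on $h'$ by working through the identity $h'(y) = G(h(y))$ with $G := 1/f'$. The argument splits into two self-contained regularity estimates, each closed by induction powered by the subconvolutivity \eqref{eq:3.9} of the weights $\Gamma_s(m)$.

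\emph{Step 1 (existence of the inverse).} Since $|f'|\geq 1/A>0$ and $f'$ is continuous on the interval $I$, $f'$ has constant sign, so $f:I\to J$ is a strictly monotone $C^\infty$-bijection. The classical inverse function theorem yields $h\in C^\infty(J;I)$ with $h'(y)=1/f'(h(y))$ and, in particular, $|h'(y)|\leq A$.

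\emph{Step 2 (showing $G:=1/f'\in\mathcal{G}_{ph,1}^{L_1,s}(I)$).} Differentiating the identity $f'\cdot G=1$ with the Leibniz rule gives the recursion
\beno
G^{(m)}(x)\;=\;-\frac{1}{f'(x)}\sum_{j=0}^{m-1}\binom{m}{j}G^{(j)}(x)\,(f')^{(m-j)}(x),\qquad m\geq 1.
\eeno
Combining $|G|\leq A$, the estimate $|(f')^{(k)}(x)|\leq \|f''\|_{\mathcal{G}_{ph,1}^{L,s}}\,\Gamma_s(k-1)\,L^{k-1}$ for $k\geq 1$, and iterated application of \eqref{eq:3.9} to the Leibniz convolution, I would induct on $m$ to close a bound of the form $|G^{(m)}(x)|\leq C_0\,\Gamma_s(m)\,L_1^m$, where $L_1$ is chosen large enough in terms of $A$, $L$, and $\|f''\|_{\mathcal{G}_{ph,1}^{L,s}}$ to absorb the inductive constants.

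\emph{Step 3 (propagating Gevrey regularity to $h'$).} Expressing $h^{(m+1)} = (G\circ h)^{(m)}$ via Fa\`a di Bruno,
\beno
h^{(m+1)}(y) \;=\; \sum_{k=1}^{m} G^{(k)}(h(y))\,B_{m,k}\bigl(h'(y),\ldots,h^{(m-k+1)}(y)\bigr),
\eeno
I would induct on $m$. Assuming $|h^{(j)}(y)|\leq \Gamma_s(j-1)\,M^{j-1}$ for $1\leq j\leq m$, the Gevrey bound on $G$ from Step 2 together with the partition expansion of the Bell polynomials $B_{m,k}$ leads to a sum of products of $\Gamma_s$'s; repeated use of \eqref{eq:3.9} collapses every such convolution to $\Gamma_s(m)$ up to an explicit combinatorial factor that is absorbed by taking $M$ sufficiently large relative to $L_1$ and $A$. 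This closes the induction and yields $h'\in\mathcal{G}_{ph,1}^{M,s}(J)$.

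\emph{Main obstacle.} The principal difficulty is the combinatorial accounting in Step 3: Fa\`a di Bruno produces sums over set-partitions with multinomial weights, and one must verify that convolutions of $\{\Gamma_s(m)\}_{m\geq 0}$ reproduce $\Gamma_s(m)$ up to a constant that can be absorbed into $M^m$. The inequality \eqref{eq:3.9}, applied iteratively along the block structure of each partition, is exactly the tool that makes this mechanical. Once the inductive ansatz $|h^{(j)}|\leq \Gamma_s(j-1)M^{j-1}$ is fixed and $M$ is allowed to depend on $L_1$, $A$, and the combinatorial constants, the closure is routine; the delicate part is merely the order in which one chooses $L_1$ and then $M$.
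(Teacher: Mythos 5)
The paper states this lemma as a citation to Yamanaka and supplies no proof of its own, so there is no in-paper argument to compare against. Your outline reproduces the standard Denjoy--Carleman inversion argument and is essentially correct as a route: (i) the inverse function theorem gives $h=f^{-1}\in C^\infty$ with $|h'|\leq A$; (ii) the division step bounds $G=1/f'$ in the Gevrey class via the Leibniz recursion for $f'\cdot G=1$; and (iii) Fa\`a di Bruno plus subconvolutivity of the weight sequence $\{\Gamma_s(m)\}$ propagates the bound to $h'$. A route that uses the cited Yamanaka lemmas more directly would instead package the Fa\`a di Bruno combinatorics once and for all inside Lemma~\ref{eq: composition} and close by a scaling/fixed-point argument on the constant $M$ in the identity $h'=G\circ h$; your explicit partition expansion is more self-contained but demands more hands-on bookkeeping. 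Either way the result follows.

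One caution about the tool you lean on: the inequality \eqref{eq:3.9} is mis-stated in the paper. Since $\Gamma_s(0)=1$, the $j=0$ and $j=k$ terms of $\sum_{j=0}^k\binom{k}{j}\Gamma_s(j)\Gamma_s(k-j)$ already contribute $2\Gamma_s(k)$, so the asserted strict inequality $<\Gamma_s(k)$ cannot hold for any $k\geq 1$. What is true, and what your argument actually needs, is either the termwise bound $\binom{k}{j}\Gamma_s(j)\Gamma_s(k-j)\leq\Gamma_s(k)$ for $0\leq j\leq k$ (which follows from $1/s>1$, $a!\,b!\leq(a+b)!$, and $(a+1)(b+1)\geq a+b+1$), or the summed form $\sum_{j=0}^k\binom{k}{j}\Gamma_s(j)\Gamma_s(k-j)\leq C\Gamma_s(k)$ with a universal constant $C$. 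Your phrasing --- collapsing each convolution ``up to an explicit combinatorial factor'' --- already anticipates this, so your induction survives the correction; but do not read \eqref{eq:3.9} literally or the closure is vacuous. Finally, your inductive ansatz $|h^{(j)}|\leq\Gamma_s(j-1)M^{j-1}$ is mis-normalized at $j=1$ (it would force $|h'|\leq 1$, whereas only $|h'|\leq A$ is available); carry a prefactor $A$ throughout, which costs nothing.
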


\section{The Fourier transform of the integral operator}
In this section, we make some preparations to study the Gevrey regularity of the nonlocal part of the wave operator. Indeed, we will write the nonlocal part into the following four types of integral operators:
\begin{align*}
&\Pi_m(F)(v')=\int_{\R}K(v, v')\mu_m(v-v')e^{-ikt(v-v')}F(v)dv,\quad m=1,2,3,4,\\
&\Pi_m^*(F)(v)=\int_{\R}K(v, v')\mu_m(v-v')e^{-ikt(v-v')}F(v')dv',\quad m=1,2,3,4,
\end{align*}
where $K(v,v')$ represents a smooth kernel with compact support which may vary from one line to the other, and  $\mu_1\in C^{\infty}(\R)$ with compact support such that 
\beno
\mu_1(u)=\left\{\begin{aligned}
&1,|u|\leq u(1)-u(0),\\
&0,|u|\geq 2(u(1)-u(0)),
\end{aligned}\right.
\eeno
$\mu_2(u)=\mu_1(u)\mathbf{1}_{\R^-}(u)$ and $\mu_3(u)=\mu_1(u)\ln|u|$, $\mu_4(u)=p.v.\f{1}{u}$. Thus it holds that
\ben\label{eq:fourier-mu}
|\widehat{\mu_1}(\xi)|+|\widehat{\mu_2}(\xi)|+|\widehat{\mu_3}(\xi)|+|\widehat{\mu_4}(\xi)|\lesssim 1
\een

It is easy to check that $\Pi_m^*, \, m=1,2,3,4$ are the dual operators of $\Pi_m,\, m=1,2,3,4$. 

For a smooth kernel with compact support defined on $\R^2$, let $\widehat{K}$ be the Fourier transform in the first variable and $\widehat{\widehat{K}}$ be the Fourier transform in both variables. 
\begin{lemma}\label{lem: Fourier_type1}
Suppose $K(v, v')\in C^{\infty}(\R^2)$ with compact support, then it holds for $m=1,2,3,4$ that
\beno
\widehat{\Pi_m(F)}(\eta)=\f{1}{2\pi}\int_{\R^2}\hat{F}(\xi)\widehat{\widehat{K}}(-\xi-\xi', \eta+\xi')\widehat{\mu_m}(\xi'+kt)d\xi'd\xi,
\eeno
and
\beno
\widehat{\Pi_m^*(F)}(\xi)=\f{1}{2\pi}\int_{\R^2}\hat{F}(\eta)\widehat{\widehat{K}}(-\xi-\xi', \eta+\xi')\widehat{\mu_m}(\xi'+kt)d\xi'd\eta.
\eeno
\end{lemma}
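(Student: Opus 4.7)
\medskip

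\noindent\textbf{Proof proposal for Lemma \ref{lem: Fourier_type1}.}
The plan is to prove both identities by a direct computation using Fourier inversion and Fubini, with the delta functions produced by the free $v,v'$ integrals doing all of the actual work. I will do $\Pi_m(F)$ in detail; the formula for $\Pi_m^*$ follows the same pattern (and is also consistent with the duality relation $\langle\Pi_m F,G\rangle=\langle F,\Pi_m^*G\rangle$ combined with Plancherel).

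\smallskip

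First I would substitute the Fourier inversion formulas for each of the three factors appearing in the definition of $\Pi_m(F)(v')$:
\begin{align*}
F(v)&=\frac{1}{2\pi}\int \hat F(\xi)\,e^{iv\xi}\,d\xi,\\
\mu_m(v-v')&=\frac{1}{2\pi}\int \widehat{\mu_m}(\zeta)\,e^{i(v-v')\zeta}\,d\zeta,\\
K(v,v')&=\frac{1}{(2\pi)^2}\iint \widehat{\widehat K}(\xi_1,\xi_2)\,e^{iv\xi_1+iv'\xi_2}\,d\xi_1\,d\xi_2.
\end{align*}
Plugging these into the definition and then taking the Fourier transform in $v'$ produces an integrand of the form $e^{iv(\xi_1+\zeta-kt+\xi)}\,e^{iv'(\xi_2-\zeta+kt-\eta)}$ multiplied by known factors.

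\smallskip

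Next, I would apply Fubini (justified by the compact support of $K$ and $\mu_m$ in the physical variables, together with the rapid decay of $\hat F$) to carry out the $v$ and $v'$ integrals first. These produce the two Dirac masses
\[
2\pi\,\delta(\xi_1+\zeta-kt+\xi)\quad\text{and}\quad 2\pi\,\delta(\xi_2-\zeta+kt-\eta),
\]
which integrate out $\xi_1,\xi_2$ in favor of $\xi_1=kt-\zeta-\xi$ and $\xi_2=\zeta-kt+\eta$. After this step the expression becomes a double integral in $(\zeta,\xi)$ against $\widehat{\widehat K}(kt-\zeta-\xi,\ \zeta-kt+\eta)\,\widehat{\mu_m}(\zeta)\,\hat F(\xi)$.

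\smallskip

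Finally, the change of variable $\xi'=\zeta-kt$ (i.e.\ $\zeta=\xi'+kt$) puts the formula exactly into the stated form, with $\widehat{\mu_m}(\xi'+kt)$ and $\widehat{\widehat K}(-\xi-\xi',\ \eta+\xi')$. The argument for $\Pi_m^*$ is identical modulo swapping the roles of $v$ and $v'$ in the Fourier inversion step; alternatively one can invoke $\Pi_m^*=(\Pi_m)^*$ and transfer the identity via Plancherel. Since $\widehat{\mu_m}$ is a bounded tempered distribution for each $m=1,2,3,4$ by \eqref{eq:fourier-mu}, every integral in sight is absolutely convergent, so no additional regularization is needed.

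\smallskip

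\noindent\emph{Expected main difficulty.} There is no real conceptual obstacle here — this is a bookkeeping lemma whose content is just the Fourier representation of a linear operator whose kernel factorizes. The only care needed is to track the $(2\pi)$ normalizations consistently with the convention fixed in the Notations section (no prefactor on $\mathcal F_2$) and to make sure that the change of variable $\xi'=\zeta-kt$ is applied so that the shift $kt$ ends up inside $\widehat{\mu_m}$ rather than inside $\widehat{\widehat K}$, which is precisely the form needed later when estimating the Fourier kernels of the wave operator in Proposition \ref{prop: kernel-wave-op}.
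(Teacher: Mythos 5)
Your proof is correct, and it takes a somewhat different route from the paper's. The paper first performs the change of variables $K_L(v-v',v')=K(v,v')$ so that the kernel becomes a genuine function of the convolution variable $w=v-v'$ and of $v'$; it then only inverts the Fourier transform of $F$, recognizes the remaining $v$-integral as the Fourier transform of the product $K_L(\cdot,v')\mu_m$ evaluated at the shifted frequency $-\xi+kt$, applies the convolution theorem once, and only at the end takes $\mathcal F_2$ in $v'$ and converts $\widehat{\widehat{K_L}}$ back to $\widehat{\widehat{K}}$ via the elementary identity $\widehat{\widehat{K_L}}(\xi,\eta)=\widehat{\widehat{K}}(\xi,\eta-\xi)$. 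You instead invert all three factors $F$, $\mu_m$, $K$ up front, integrate out $v,v'$ to produce two delta functions, and then collapse the redundant frequency variables; the shear $K\mapsto K_L$ never appears explicitly, but it is implicit in how the deltas tie $\xi_1,\xi_2$ to $\zeta,\xi,\eta$. Both give the same kernel relation; the paper's version has the minor advantage of producing the useful intermediate formula $\Pi_m(F)(v')=\frac{1}{2\pi}\int\int\hat F(\xi)e^{iv'\xi}\widehat{K_L}(-\xi-\xi',v')\widehat{\mu_m}(\xi'+kt)\,d\xi'd\xi$ along the way, while yours is slightly more mechanical and symmetric between the two variables.

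Two small cautions. First, be careful with the $2\pi$ bookkeeping: carrying out your computation literally (with the no-prefactor convention $\hat f(\eta)=\int f e^{-iv\eta}dv$ and its inversion with $\frac{1}{2\pi}$) yields a prefactor $\frac{1}{(2\pi)^2}$, not $\frac{1}{2\pi}$ as written in the statement; the paper's own derivation has the same discrepancy because it drops a $\frac{1}{2\pi}$ when passing from $\mathcal F_1(K_L(\cdot,v')\mu_m)$ to the convolution integral. This has no bearing on the kernel estimates that the lemma is used for, but you should not claim the constant matches exactly without either tracking it carefully or adopting the paper's implicit normalization. Second, your remark that "every integral in sight is absolutely convergent" is not literally true for $m=4$: $\mu_4=\mathrm{p.v.}\frac{1}{u}$ is not compactly supported and $\widehat{\mu_4}$ is bounded but not integrable, so the Fourier inversion of $\mu_4$ and the Fubini step both need the principal-value interpretation and the compact support of $K$ in the $v$ variable to make sense. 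The paper's proof has exactly the same unstated subtlety, so this is not a real gap, but the assertion should be softened.
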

\begin{proof}
Let $K_{L}(v-v',v')=K(v, v')$, then we have that 
\begin{align*}
\Pi_m(F)(v')&=\int_{\R}K_L(v-v',v')\mu_m(v-v')e^{-ikt(v-v')}F(u)dv\\
&=\f{1}{2\pi}\int_{\R}K_L(v-v',v')\mu_m(v-v')e^{-ikt(v-v')}\int_{\R}\hat{F}(\xi)e^{i\xi v}d\xi dv\\
&=\f{1}{2\pi}\int_{\R}\hat{F}(\xi)e^{iv'\xi}\int_{\R}K_L(v-v',v')\mu_m(v-v')e^{-ikt(v-v')}e^{i\xi (v-v')} dvd\xi\\
&=\f{1}{2\pi}\int_{\R}\hat{F}(\xi)e^{iv'\xi}\mathcal{F}_1\big(K_L(\cdot, v')\mu_m\big)(-\xi+kt)d\xi\\
&=\f{1}{2\pi}\int_{\R}\int_{\R}\hat{F}(\xi)e^{iv'\xi}\widehat{K_L}(-\xi-\xi', v')\widehat{\mu_m}(\xi'+kt)d\xi'd\xi.
\end{align*}

Thus we get that
\begin{align*}
\widehat{\Pi_m(F)}(\eta)=\f{1}{2\pi}\int_{\R^2}\hat{F}(\xi)\widehat{\widehat{K_L}}(-\xi-\xi', \eta-\xi)\widehat{\mu_m}(\xi'+kt)d\xi'd\xi.
\end{align*}
We also have
\begin{align*}
\widehat{\widehat{K_L}}(\xi, \eta)
&=\int_{\R^2}K_L(w,v')e^{-iw\xi-ic\eta}dwdv'\\
&=\int_{\R^2}K_L(v-v',v')e^{-i(v-v')\xi-iv'\eta}dvdv'\\
&=\int_{\R^2}K(v,v')e^{-i(v-v')\xi-iv'\eta}dvdv'
=\widehat{\widehat{K}}(\xi,\eta-\xi),
\end{align*}
which gives the first identity and the second identity can be obtained by the same argument. 
\end{proof}
\begin{lemma}\label{Rmk: fo-g-2}
Suppose $K(v, v')\in \mathcal{G}_{ph,1}^{M,s_0}(\R^2)$ with compact support in $[u(0),u(1)]^2$, then there exists $\la=\la(M,s_0)$, such that for $m=1,2,3,4,$
\ben\label{eq: Kernel-Fourier-G}
\left|\int\widehat{\widehat{K}}(-\xi-\xi', \eta+\xi')\widehat{\mu_m}(\xi'+kt)d\xi'\right|\lesssim e^{-\la|\eta-\xi|^{s_0}}. 
\een
\end{lemma}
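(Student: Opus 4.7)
The plan is to reduce everything to pointwise decay of the double Fourier transform $\widehat{\widehat K}$ and then perform a subadditivity splitting to extract the target $e^{-\lambda|\eta-\xi|^{s_0}}$ factor while keeping a residual that is integrable in $\xi'$.

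First, I would upgrade the hypothesis on $K$ to decay of $\widehat{\widehat K}$. Since $K\in \mathcal{G}_{ph,1}^{M,s_0}(\mathbb{R}^2)$ with compact support in $[u(0),u(1)]^2$, the embedding \eqref{eq: space-embed} combined with Lemma \ref{Lem: A1} (applied in $d=2$) — equivalently Remark \ref{Rmk: fourier-gevrey} in $\mathbb{R}^2$ — yields a constant $\mu=\mu(M,s_0)>0$ with
\[
\big|\widehat{\widehat K}(\zeta_1,\zeta_2)\big|\lesssim e^{-\mu|(\zeta_1,\zeta_2)|^{s_0}},\qquad (\zeta_1,\zeta_2)\in\mathbb{R}^2.
\]
Combined with the uniform bound $|\widehat{\mu_m}|\lesssim 1$ from \eqref{eq:fourier-mu}, the integrand in the quantity of interest is pointwise majorized by $C\,e^{-\mu|(\xi+\xi',\eta+\xi')|^{s_0}}$, uniformly in $k,t$.

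Next comes the main step: the algebraic splitting of this exponential. Using $|(a,b)|\geq \tfrac{1}{\sqrt 2}(|a|+|b|)$ and the elementary inequality $(|a|+|b|)^{s_0}\geq 2^{-1}(|a|^{s_0}+|b|^{s_0})$ for $s_0\in(0,1)$, I get
\[
|(\xi+\xi',\eta+\xi')|^{s_0}\geq c_{s_0}\big(|\xi+\xi'|^{s_0}+|\eta+\xi'|^{s_0}\big),
\]
and by subadditivity of $x\mapsto x^{s_0}$ on $[0,\infty)$,
\[
|\xi+\xi'|^{s_0}+|\eta+\xi'|^{s_0}\geq \big|(\xi+\xi')-(\eta+\xi')\big|^{s_0}=|\xi-\eta|^{s_0}.
\]
Splitting the exponent as $\mu c_{s_0}=\tfrac12\mu c_{s_0}+\tfrac12\mu c_{s_0}$ I would obtain
\[
e^{-\mu|(\xi+\xi',\eta+\xi')|^{s_0}}\leq e^{-\frac{\mu c_{s_0}}{2}|\xi-\eta|^{s_0}}\cdot e^{-\frac{\mu c_{s_0}}{2}(|\xi+\xi'|^{s_0}+|\eta+\xi'|^{s_0})}.
\]
The first factor is independent of $\xi'$ and delivers the desired decay with $\lambda=\tfrac{\mu c_{s_0}}{2}$; the second factor is bounded by $e^{-\frac{\mu c_{s_0}}{2}|\xi+\xi'|^{s_0}}$, which is integrable in $\xi'$ with a bound independent of $\xi$. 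Integrating in $\xi'$ completes the estimate.

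The only genuine obstacle is verifying the 2D Gevrey/compact-support $\to$ Fourier decay step quantitatively with the $\Gamma_{s_0}$-normalization of $\mathcal{G}_{ph,1}^{M,s_0}$; this is however exactly the content of Lemma \ref{Lem: A1} after using \eqref{eq: space-embed} to pass between the $(m+1)^{m/s_0}$ and $\Gamma_{s_0}(m)$ normalizations, so no new ideas are required. Everything else is the subadditivity splitting, which is standard for Gevrey-type convolution estimates.
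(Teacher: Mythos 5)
Your proposal is correct and follows essentially the same route as the paper: both deduce the pointwise bound $|\widehat{\widehat K}|\lesssim e^{-\mu|\cdot|^{s_0}}$ from the compactly supported Gevrey hypothesis (via Lemma \ref{Lem: A1}/Remark \ref{Rmk: fourier-gevrey} and \eqref{eq: space-embed}), use $|\widehat{\mu_m}|\lesssim 1$ from \eqref{eq:fourier-mu}, and then split the exponential by subadditivity of $x\mapsto x^{s_0}$ to extract $e^{-\la|\xi-\eta|^{s_0}}$ while leaving a factor integrable in $\xi'$. The paper merely compresses this splitting into a single displayed chain of inequalities; your version makes the same steps explicit.
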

\begin{proof}
This lemma follows directly from \eqref{eq:fourier-mu}, Remark \ref{Rmk: fourier-gevrey} and the fact that
\beno
\int_{\R}e^{-\la'(|\xi+\xi'|^2+|\eta+\xi'|^2)^{\f{s_0}{2}}}d\xi'
\lesssim \int_{\R}\f{e^{-\la|\xi-\eta|^{s_0}}}{1+|\xi+\xi'|^2+|\eta+\xi'|^2}d\xi'\lesssim e^{-\la|\xi-\eta|^{s_0}},
\eeno
holds for $0<\la<\la'$. 
\end{proof}
\begin{remark}\label{Rmk: improved kernel regularity}
We also point out that if the regularity assumption on $K(v, v')$ is replaced by 
\ben\label{eq: new-regularity-assu}
\sup_{m\geq 0}\f{\|(\pa_v+\pa_{v'})^{m}(1-\pa_{vv}-\pa_{v'v'})K(v, v')\|_{L^1}}{\Gamma_s(m)M^m}\leq C,
\een
then \eqref{eq: Kernel-Fourier-G} also holds. Indeed, the weaker regularity assumption \eqref{eq: new-regularity-assu} implies that there is $\la$ such that
\ben
|\widehat{\widehat{K}}(\xi,\eta)|\lesssim \f{e^{-\la|\xi+\eta|^s}}{1+|\xi|^2+|\eta|^2}.
\een
Then \eqref{eq: Kernel-Fourier-G} follows directly from \eqref{eq:fourier-mu}. 
\end{remark}

\section{Sturm-Liouville Equation}\label{sec: ST equ}
In this section, we consider two elliptic equations:
\beq\label{eq: steam function-ST}
\left\{\begin{aligned}
&\widetilde{\varphi_2}\pa_{zz}\Psi+\widetilde{u'}(\pa_v-t\pa_z)\Big(\widetilde{\varphi_2}\widetilde{u'}(\pa_v-t\pa_z)\Psi\Big)=\Om,\\
&\Psi(t, z, u(0))=\Psi(t, z, u(1))=0,
\end{aligned}\right.
\eeq
and
\beq\label{eq: pressure-lin}
\left\{\begin{aligned}
&\widetilde{\th}\pa_{zz}\Pi+\widetilde{u'}(\pa_v-t\pa_z)\Big(\widetilde{\th}\widetilde{u'}(\pa_v-t\pa_z)\Pi\Big)=\Xi\Upsilon,\\
&(\pa_v-t\pa_z)\Pi(t, z, u(0))=(\pa_v-t\pa_z)\Pi(t, z, u(1))=0.
\end{aligned}
\right.
\eeq
For the first equation \eqref{eq: steam function-ST}, if we let
\beq\label{eq: Q-Psi}
\mathfrak{Q}=\mathcal{F}_1[\Psi](t,k,v)e^{-itkv}
\eeq 
then $\mathfrak{Q}$ solves 
\beno
\left\{\begin{aligned}
&\pa_v(h_1(v)\pa_v\mathfrak{Q})-k^2h_2(v)\mathfrak{Q}=\mathfrak{F}(v, k)\\
&\mathfrak{Q}(\al, k)=\mathfrak{Q}(\b, k)=0
\end{aligned}\right.
\eeno
where 
\beq\label{eq: coefficients}
\begin{aligned}
&\al=u(0),\  \b=u(1),\ \mathfrak{F}(v, k)=\widetilde{u'}(v)^{-1}\mathcal{F}_1[\Om]e^{-iktv},\\ &h_1(v)=\widetilde{\varphi_2}\widetilde{u'}(v),\  h_2(v)=\widetilde{\varphi_2}(v)\widetilde{u'}(v)^{-1}.
\end{aligned}
\eeq 

For the second equation \eqref{eq: pressure-lin}, if we let
\ben
\mathfrak{R}=\mathcal{F}_1[\Pi](t, k, v)e^{-it kv}
\een
then $\mathfrak{R}$ solves 
\beno
\left\{\begin{aligned}
&\pa_v(h_1(v)\pa_v\mathfrak{R})-k^2h_2(v)\mathfrak{R}=\mathfrak{G}(v, k)\\
&\pa_v\mathfrak{R}(\al, k)=\pa_v\mathfrak{R}(\b, k)=0
\end{aligned}\right.
\eeno
where 
\beq\label{eq: coefficients-2}
\begin{aligned}
&\al=u(0),\  \b=u(1),\ \mathfrak{G}(v, k)=\widetilde{u'}(v)^{-1}\mathcal{F}_1[\Xi]e^{-iktv},\\ &h_1(v)=\widetilde{\th}\widetilde{u'}(v),\  h_2(v)=\widetilde{\th}(v)\widetilde{u'}(v)^{-1}.
\end{aligned}
\eeq 
Both equations are Sturm-Liouville Equations. 

\subsection{Dirichlet boundary conditions}\label{sec: SL-D}
In this section, we study the general Sturm-Liouville Equation
\ben\label{eq: SLequ}
\left\{\begin{aligned}
&\pa_v(h_1(v)\pa_v\mathfrak{Q})-k^2h_2(v)\mathfrak{Q}=\mathfrak{F}(v, k)\\
&\mathfrak{Q}(\al, k)=\mathfrak{Q}(\b, k)=0
\end{aligned}\right.
\een
where $h_1, h_2$ are positive functions and $\mathrm{supp}\, h_j' \subset [\al_1, \b_1]$ for $j=1,2$ with $\al<\la_1<\b_1<\b$. We also assume that 
$h_1', h_2'\in \mathcal{G}^{M, s}_{ph}([\al, \b])$. 
Let us first study the homogeneous equation. 
\begin{lemma}\label{eq: mfq}
There exist positive functions $\mfq_{\al}(v,k), \mfq_{\b}(v,k)\geq 1$ such that $\mfq_{\al}(\al,k)=\mfq_{\b}(\b, k)=1$, $\mfq_{\al}'(\al, k)=\mfq_{\b}'(\al, k)=0$, and $\mfq_{\al}(v, k)$ solves
\beno
\mfq_{\al}(v,k)=1+k^2\int_{\al}^v\f{1}{h_1(v')}\int_{\al}^{v'}h_2(v'')\mfq_{\al}(v'' ,k)d v''dv',
\eeno
and $\mfq_{\b}(v, k)$ solves
\beno
\mfq_{\b}(v,k)=1+k^2\int_{\b}^v\f{1}{h_1(v')}\int_{\b}^{v'}h_2(v'')\mfq_{\b}(v'' ,k)d v''dv'.
\eeno

Moreover, it holds for $v\in [\al, \b]$ that
\begin{align}
&\begin{aligned}
\label{eq: q'/q}
&C^{-1}\min\{|k|^2(v-\al), |k|\}\leq \f{\mfq_{\al}'(v,k)}{\mfq_{\al}(v,k)}\leq C\min\{|k|^2(v-\al), |k|\}\\
&C^{-1}\min\{|k|^2(\b-v), |k|\}\leq -\f{\mfq_{\b}'(v,k)}{\mfq_{\b}(v,k)}\leq C\min\{|k|^2(\b-v), |k|\}
\end{aligned}\\
\label{eq: q upperlower}
&e^{C^{-1}|k||v-\al|}\leq \mfq_{\al}(v,k)\leq e^{C|k||v-\al|}, \quad e^{C^{-1}|k||v-\b|}\leq \mfq_{\b}(v,k)\leq e^{C|k||v-\b|}. 
\end{align}
The Wronskian is
\beno
W[\mfq_{\al}, \mfq_{\b}](v)=\f{\mfq_{\b}'(\al)h_1(\al)}{h_1(v)}=-\f{\mfq_{\al}'(\b)h_1(\b)}{h_1(v)}<0. 
\eeno
which also implies 
\beno
-\f{\mfq_{\b}'(\al)}{\mfq_{\al}'(\b)}\approx \f{\mfq_{\b}(\al)}{\mfq_{\al}(\b)}\approx 1.
\eeno
\end{lemma}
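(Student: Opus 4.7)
The plan is to mirror the strategy used for Proposition \ref{prop: homsol}, adapted to the more general coefficients $h_1, h_2$. We construct $\mathfrak{q}_\alpha$ (the construction of $\mathfrak{q}_\beta$ is symmetric, so we focus on $\mathfrak{q}_\alpha$) by a contraction mapping argument in a weighted $\cosh$-type space. Specifically, consider the linear operator
\begin{equation*}
\mathrm{T}[\mathfrak{q}](v,k) = \int_{\alpha}^{v}\frac{1}{h_1(v')}\int_{\alpha}^{v'}h_2(v'')\mathfrak{q}(v'',k)\,dv''dv',
\end{equation*}
and seek $\mathfrak{q}_\alpha = 1 + k^2\mathrm{T}[\mathfrak{q}_\alpha]$ in the weighted norm $\|f\|_X = \sup_{v\in[\alpha,\beta]} |f(v)|/\cosh A|k|(v-\alpha)$. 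A double integration shows $\|\mathrm{T}\|_{X\to X} \lesssim 1/(A^2k^2)$, so choosing $A$ large yields a contraction and produces a unique fixed point. Positivity $\mathfrak{q}_\alpha \geq 1$ and monotonicity in $v$ follow from the fact that $\mathrm{T}$ is a positive operator and the iteration is monotone. The boundary/initial conditions $\mathfrak{q}_\alpha(\alpha,k)=1$, $\mathfrak{q}_\alpha'(\alpha,k)=0$ are immediate from the integral equation.

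The main quantitative work is the estimate \eqref{eq: q'/q}. Introduce the logarithmic derivative
\begin{equation*}
f(v,k) = \frac{\mathfrak{q}_\alpha'(v,k)}{h_1(v)\mathfrak{q}_\alpha(v,k)},
\end{equation*}
which, differentiating the equation $(h_1\mathfrak{q}_\alpha')' = k^2 h_2 \mathfrak{q}_\alpha$, satisfies a Riccati-type ODE
\begin{equation*}
f' + h_1 f^2 = \frac{k^2 h_2}{h_1},\qquad f(\alpha,k) = 0.
\end{equation*}
Since $f\geq 0$ for $v\geq \alpha$, a barrier/comparison argument (analogous to the contradiction argument giving $|f|\le |k|/\theta$ in Proposition \ref{prop: homsol}) yields $f \lesssim |k|$ uniformly, which gives the upper bound in \eqref{eq: q'/q}. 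For the lower bound, use the integral representation of $\mathfrak{q}_\alpha'/\mathfrak{q}_\alpha$:
\begin{equation*}
\frac{\mathfrak{q}_\alpha'(v,k)}{\mathfrak{q}_\alpha(v,k)} = \frac{k^2}{h_1(v)}\int_\alpha^v h_2(v'')\frac{\mathfrak{q}_\alpha(v'',k)}{\mathfrak{q}_\alpha(v,k)}dv'',
\end{equation*}
and exploit the pointwise bound $\mathfrak{q}_\alpha(v'',k)/\mathfrak{q}_\alpha(v,k) \geq e^{-C|k|(v-v'')}$ (a direct consequence of $f\lesssim |k|$). For $|v-\alpha|\le 1/|k|$ this gives the linear lower bound $\gtrsim k^2(v-\alpha)$; for $|v-\alpha|\geq 1/|k|$, restricting the integration to the window $[v-1/(2|k|),v]$ yields the saturated bound $\gtrsim |k|$. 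Integrating \eqref{eq: q'/q} in $v$ then produces the two-sided exponential bound \eqref{eq: q upperlower}.

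The Wronskian formula is the easy part. Direct differentiation and the ODE give
\begin{equation*}
\bigl(h_1 W[\mathfrak{q}_\alpha,\mathfrak{q}_\beta]\bigr)' = \mathfrak{q}_\alpha(h_1\mathfrak{q}_\beta')' - \mathfrak{q}_\beta(h_1\mathfrak{q}_\alpha')' = k^2 h_2(\mathfrak{q}_\alpha\mathfrak{q}_\beta - \mathfrak{q}_\beta\mathfrak{q}_\alpha) = 0,
\end{equation*}
so $h_1 W$ is constant in $v$; evaluating at $v=\alpha$ and $v=\beta$ using $\mathfrak{q}_\alpha(\alpha)=1$, $\mathfrak{q}_\alpha'(\alpha)=0$, $\mathfrak{q}_\beta(\beta)=1$, $\mathfrak{q}_\beta'(\beta)=0$ gives the two stated expressions for $W$. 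The sign is negative since $\mathfrak{q}_\beta$ is decreasing at $\alpha$ and $\mathfrak{q}_\alpha$ is increasing at $\beta$. The final ratio $-\mathfrak{q}_\beta'(\alpha)/\mathfrak{q}_\alpha'(\beta) \approx \mathfrak{q}_\beta(\alpha)/\mathfrak{q}_\alpha(\beta) \approx 1$ then follows by combining $h_1(\alpha)\mathfrak{q}_\beta'(\alpha) = -h_1(\beta)\mathfrak{q}_\alpha'(\beta)$ with the saturation $|\mathfrak{q}_\alpha'/\mathfrak{q}_\alpha|\approx |k|$ at the opposite endpoint. The main obstacle I anticipate is bookkeeping the $|k|$-dependence of constants carefully enough that the estimates \eqref{eq: q'/q} and \eqref{eq: q upperlower} hold with a constant $C$ independent of $k$; this requires the contradiction/barrier argument for $f\lesssim |k|$ to be done uniformly, just as in Proposition \ref{prop: homsol}.
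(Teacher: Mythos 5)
Your proposal is correct and follows essentially the same route as the paper: contraction in a $\cosh$-weighted space for existence, a Riccati bound on the logarithmic derivative giving $\mathfrak{q}_\al'/\mathfrak{q}_\al\lesssim|k|$, the integral representation for the matching lower bounds, integration for the exponential bounds, and $(h_1W)'=0$ for the Wronskian. One small slip: with your normalization $f=\mathfrak{q}_\al'/(h_1\mathfrak{q}_\al)$ the Riccati equation is $f'+h_1f^2+2(h_1'/h_1)f=k^2h_2/h_1^2$, not $f'+h_1f^2=k^2h_2/h_1$ (the paper avoids the first-order term by taking $\mathfrak{f}_\al=h_1\mathfrak{q}_\al'/\mathfrak{q}_\al$, which satisfies $\mathfrak{f}'+\mathfrak{f}^2/h_1=k^2h_2$); since $h_1'/h_1$ is bounded, the barrier argument and the conclusion $f\lesssim|k|$ are unaffected.
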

\begin{proof}
We only show the proof for $\mfq_{\al}$. 

\no{\bf Existence. } It is easy to show that $T[\mfq_{\al}]=\int_{\al}^v\f{1}{h_1(v')}\int_{\al}^vh_2(v'')\mfq_{\al}(v'' ,k)d v''dv'$ is a contraction map in the weighted Gevrey space $\mathcal{G}_{ph, \cosh}^{M|k|,s}([\al,\b])\eqdef\{f\in C^{\infty}([\al,\b]):~\|f\|_{\mathcal{G}_{ph, \cosh}^{M|k|,s}([\al,\b])}<\infty\}$ by taking $M$ large enough, where 
\beno
\|f\|_{\mathcal{G}_{ph, \cosh}^{M|k|,s}([\al,\b])}\eqdef
\sup_{v\in [\al,\b], m\geq 0}\f{|\pa_v^mf|}{\Gamma_s(m)M^m|k|^{m}\cosh M|k|(v-\al)}
\eeno
and $\Gamma_s(m)=(m!)^{\f1s}(m+1)^{-2}$. The proof is simpler than the proof of Proposition \ref{prop: T}. We omit the details. Note that $T$ is a positive operator, then $\mfq_{\al}(v, k)\geq 1$ and $\mfq_{\al}'(v, k)\geq 0$. 

\no{\bf Estimates. } Let $\mathfrak{f}_{\al}=h_1\mfq_{\al}'/\mfq_{\al}$, then $\mathfrak{f}_{\al}'+\mathfrak{f}_{\al}^2/h_1=k^2h_2$ which gives that $0\leq \mathfrak{f}_{\al}\leq |k|\sqrt{h_2h_1}$ and thus $0\leq \mathfrak{q}_{\al}'/\mfq_{\al}\leq |k|\sqrt{h_2/h_1}\leq C|k|$. Therefore, we have for $v''\leq v$
\begin{align*}
1\geq \f{\mfq_{\al}(v'', k)}{\mfq_{\al}(v, k)}\geq e^{-C|k||v''-v|}. 
\end{align*}
We have for $|v-\al|\leq \f{1}{|k|}$, 
\begin{align*}
\f{\mathfrak{q}_{\al}'(v, k)}{\mfq_{\al}(v ,k)}=\f{k^2}{h_1(v)}\int_{\al}^{v}h_2(v'')\f{\mfq_{\al}(v'' ,k)}{\mfq_{\al}(v ,k)}d v''\approx k^2(v-\al). 
\end{align*}
As for $|v-\al|\geq \f{1}{|k|}$, we have 
\begin{align*}
\f{\mathfrak{q}_{\al}'(v, k)}{\mfq_{\al}(v ,k)}\geq \f{k^2}{h_1(v)}\int_{v-\f{1}{2|k|}}^{v}h_2(v'')\f{\mfq_{\al}(v'' ,k)}{\mfq_{\al}(v ,k)}d v'' \geq C^{-1}|k|. 
\end{align*}
Combining the above estimates, we get \eqref{eq: q'/q}, \eqref{eq: q upperlower} follows directly. 

Let us now computer the Wronskian. A direct calculation gives
\beno
\pa_v\Big(h_1W[\mfq_{\al}, \mfq_{\b}]\Big)=0
\eeno
which gives the formula of the Wronskian. 
\end{proof}

Let us now construct two linearly independent solutions $\mfq_{l}$ and $\mfq_{r}$:
\ben
\mfq_l(v,k)=\mfq_\al(v, k)-\mfq_{\b}(v, k)/\mfq_{\b}(\al, k)\\
\mfq_r(v,k)=\mfq_\b(v, k)-\mfq_{\al}(v, k)/\mfq_{\al}(\b, k)
\een

With the homogeneous solution $\mfq_r$ and $\mfq_l$, we get that 
\begin{align*}
\mathfrak{Q}(v, k)=\f{1}{h_1(v)}\f{\mfq_r(v,k)}{W[\mfq_r, \mfq_l]}\int_{0}^v\mfq_l(v',k)\mathfrak{F}(v',k)h_1(v')dv'
+\f{1}{h_1(v)}\f{\mfq_l(v,k)}{W[\mfq_r, \mfq_l]}\int_v^1\mfq_r(v',k)\mathfrak{F}(v',k)h_1(v')dv',
\end{align*}
where 
\begin{align*}
W[\mfq_r, \mfq_l]=\Big(1-\f{1}{\mfq_{\al}(\b)\mfq_{\b}(\al)}\Big)W[\mfq_{\al}, \mfq_{\b}](v)=\Big(1-\f{1}{\mfq_{\al}(\b)\mfq_{\b}(\al)}\Big)\f{\mfq_{\b}'(\al)h_1(\al)}{h_1(v)}
\end{align*}
We also express $\mathfrak{Q}(v, k)$ by using $\mfq_{\al}$ and $\mfq_{\b}$. 
\beq\label{eq: Q-represent}
\begin{aligned}
\mathfrak{Q}(v, k)&=\f{1}{h_1(v)}\f{1}{W[\mfq_r, \mfq_l]}\int_{\al}^v\Big(1+\f{1}{\mfq_{\b}(\al)\mfq_{\al}(\b)}\Big)\mfq_{\b}(v,k)\mfq_{\al}(v',k)\mathfrak{F}(v',k)h_1(v')dv'\\
&\quad+\f{1}{h_1(v)}\f{1}{W[\mfq_r, \mfq_l]}\int_v^{\b}\Big(1+\f{1}{\mfq_{\b}(\al)\mfq_{\al}(\b)}\Big)\mfq_{\al}(v,k)\mfq_{\b}(v',k)\mathfrak{F}(v',k)h_1(v')dv'\\
&\quad-\f{1}{h_1(v)}\f{1}{W[\mfq_r, \mfq_l]}\int_{\al}^{\b}\f{\mfq_{\al}(v,k)\mfq_{\al}(v',k)}{\mfq_{\al}(\b)}\mathfrak{F}(v',k)h_1(v')dv'\\
&\quad-\f{1}{h_1(v)}\f{1}{W[\mfq_r, \mfq_l]}\int_{\al}^{\b}\f{\mfq_{\b}(v,k)\mfq_{\b}(v',k)}{\mfq_{\b}(\al)}\mathfrak{F}(v',k)h_1(v')dv'
\end{aligned}
\eeq

We now introduce the Fourier kernel for the Sturm-Liouville type elliptic problem \eqref{eq: SLequ}. For any given fixed $k$, and positive Gevrey functions $h_1, h_2 \in \mathcal{G}^{M, s}_{ph}([\al, \b])$ with constant values near the boundary and any Gevrey cut-off function $\Upsilon$ with compact support $\mathrm{supp}\,\Upsilon\subset (\al, \b)$, we define a linear operator $\triangle_{D, k}^{-1}$ to be such that for a function $\mathfrak{F}$, 
\ben\label{eq:Def-inverlaplace}
\triangle_{D, k}^{-1}\mathfrak{F}=\Upsilon\mathfrak{Q}
\een 
where $\mathfrak{Q}$ solves
\beno
\left\{\begin{aligned}
&\pa_v(h_1(v)\pa_v\mathfrak{Q})-k^2h_2(v)\mathfrak{Q}=\mathfrak{F}(v, k)\Upsilon(v)\\
&\mathfrak{Q}(\al, k)=\mathfrak{Q}(\b, k)=0.
\end{aligned}\right.
\eeno
For the case $k=0$, we have
\begin{align*}
\triangle_{D, 0}^{-1}\mathfrak{F}=\int_{\al}^{\b}\mathbf{G}_D(v, v', 0)\Upsilon(v)\Upsilon(v')\mathfrak{F}(v', 0)dv'
\end{align*}
where
\begin{align*}
\mathbf{G}_D(v, v', 0)=\left\{
\begin{aligned}
\Big(\int_{\al}^{\b}\f{1}{h_1(v)}dv\Big)^{-1}\Big(\int_{\al}^v\f{1}{h_1(w)}dw\Big)\Big(\int_{\b}^{v'}\f{1}{h_1(w)}dw\Big)h_1(v')\quad \al\leq v\leq v'\leq \b\\
\Big(\int_{\al}^{\b}\f{1}{h_1(v)}dv\Big)^{-1}\Big(\int_{\al}^{v'}\f{1}{h_1(w)}dw\Big)\Big(\int_{\b}^{v}\f{1}{h_1(w)}dw\Big)h_1(v')\quad \al\leq v'\leq v\leq \b.
\end{aligned}
\right.
\end{align*}
By \eqref{eq: Q-represent}, we have for $k\neq 0$
\begin{align*}
\triangle_{D, k}^{-1}\mathfrak{F}=\int_{\al}^{\b}\mathbf{G}_D(v, v', k)\Upsilon(v)\Upsilon(v')\mathfrak{F}(v', k)dv'
\end{align*}
where
\beq\label{eq: Green-1}
\begin{aligned}
\mathbf{G}_{D}(v, v', k)
&=\mathfrak{C}^1_{\al,\b}\f{1}{|k|}\f{\mfq_{\al}(v',k)\mfq_{\b}(v,k)}{\mfq_{\b}(\al,k)}\f{h_1(v')}{h_1(v)}\chi_{\mathbb{R}^+}(v-v')\\
&\quad+\mathfrak{C}^1_{\al,\b}\f{1}{|k|}\f{\mfq_{\al}(v,k)\mfq_{\b}(v',k)}{\mfq_{\b}(\al,k)}\f{h_1(v')}{h_1(v)}\chi_{\mathbb{R}^+}(v'-v)\\
&\quad+\mathfrak{C}^2_{\al, \b}\f{1}{|k|}\f{\mfq_{\al}(v,k)\mfq_{\al}(v',k)}{\mfq_{\al}(\b, k)\mfq_{\al}(\b,k)}\f{h_1(v')}{h_1(v)}\\
&\quad+\mathfrak{C}^3_{\al, \b}\f{1}{|k|}\f{\mfq_{\b}(v,k)\mfq_{\b}(v',k)}{\mfq_{\b}(\al, k)\mfq_{\b}(\al, k)}\f{h_1(v')}{h_1(v)}
\end{aligned}
\eeq
with constants 
\beq\label{eq: constantsC}
\begin{aligned}
\mathfrak{C}^1_{\al,\b}&=\f{|k|\mfq_{\b}(\al)}{\mfq_{\b}'(\al)}\f{1}{h_1(\al)}\Big(1+\f{1}{\mfq_{\b}(\al)\mfq_{\al}(\b)}\Big)\Big(1-\f{1}{\mfq_{\al}(\b)\mfq_{\b}(\al)}\Big)^{-1}\\
\mathfrak{C}^2_{\al, \b}&=-\Big(1-\f{1}{\mfq_{\al}(\b)\mfq_{\b}(\al)}\Big)^{-1}\f{|k|\mfq_{\al}(\b)}{\mfq_{\b}'(\al)h_1(\al)}\\
\mathfrak{C}^3_{\al, \b}&=-\Big(1-\f{1}{\mfq_{\al}(\b)\mfq_{\b}(\al)}\Big)^{-1}\f{|k|\mfq_{\b}(\al)}{\mfq_{\b}'(\al)h_1(\al)},
\end{aligned}
\eeq
where $\mfq_{\al}$ and $\mfq_{\b}$ are obtained in Lemma \ref{eq: mfq}. 

A direct calculation gives that 
\begin{align}
\begin{aligned}\label{eq: pa_vK}
\pa_v\triangle_{D, k}^{-1}\mathfrak{F}
&=\int_{\al}^{\b}\mathbf{G}_D(v, v', k)\Upsilon'(v)\Upsilon(v')\mathfrak{F}(v', k)dv'\\
&\quad -\f{h_1'(v)}{h_1(v)}\int_{\al}^{\b}\mathbf{G}_D(v, v', k)\Upsilon(v)\Upsilon(v')\mathfrak{F}(v', k)dv'\\
&\quad +\int_{\al}^{\b}\mathbf{G}^1_D(v, v', k)\Upsilon(v)\Upsilon(v')\mathfrak{F}(v', k)dv',
\end{aligned}
\end{align}
and
\begin{align}
\begin{aligned}\label{eq: pa_vvK}
\pa_{vv}\triangle_{D, k}^{-1}\mathfrak{F}
&=\int_{\al}^{\b}\mathbf{G}_D(v, v', k)\Upsilon''(v)\Upsilon(v')\mathfrak{F}(v', k)dv'\\
&-2\f{h_1'(v)}{h_1(v)}\int_{\al}^{\b}\mathbf{G}_D(v, v', k)\Upsilon'(v)\Upsilon(v')\mathfrak{F}(v', k)dv'\\
&+2\int_{\al}^{\b}\mathbf{G}^1_D(v, v', k)\Upsilon'(v)\Upsilon(v')\mathfrak{F}(v', k)dv'\\
&\quad -\Big(\f{h_1'(v)}{h_1(v)}\Big)'\int_{\al}^{\b}\mathbf{G}_D(v, v', k)\Upsilon(v)\Upsilon(v')\mathfrak{F}(v', k)dv'\\
&\quad +\int_{\al}^{\b}\mathbf{G}^2_D(v, v', k)\Upsilon(v)\Upsilon(v')\mathfrak{F}(v', k)dv'\\
&\quad -2\f{h_1'(v)}{h_1(v)}\int_{\al}^{\b}\mathbf{G}_D^1(v, v', k)\Upsilon(v)\Upsilon(v')\mathfrak{F}(v', k)dv'\\
&+\mathfrak{C}^1_{\al,\b}\f{h_1(v)}{|k|}\left(\f{\mfq_{\al}(v,k)}{\mfq_{\b}(\al,k)}\Big(\f{\mfq_{\b}(v,k)}{h_1(v)}\Big)'-\f{\mfq_{\b}(v,k)}{\mfq_{\b}(\al,k)}\Big(\f{\mfq_{\al}(v,k)}{h_1(v)}\Big)'\right)\Upsilon(v)\mathfrak{F}(v, k)
\end{aligned}
\end{align}
where 
\begin{align*}
\mathbf{G}_{D}^1(v, v', k)
&=\mathfrak{C}^1_{\al,\b}\f{1}{|k|}\f{\mfq_{\b}'(v,k)}{\mfq_{\b}(v,k)}\f{\mfq_{\al}(v',k)\mfq_{\b}(v,k)}{\mfq_{\b}(\al,k)}\f{h_1(v')}{h_1(v)}\chi_{\mathbb{R}^+}(v-v')\\
&\quad+\mathfrak{C}^1_{\al,\b}\f{1}{|k|}\f{\mfq_{\al}'(v,k)}{\mfq_{\al}(v,k)}\f{\mfq_{\al}(v,k)\mfq_{\b}(v',k)}{\mfq_{\b}(\al,k)}\f{h_1(v')}{h_1(v)}\chi_{\mathbb{R}^+}(v'-v)\\
&\quad+\mathfrak{C}^2_{\al, \b}\f{1}{|k|}\f{\mfq_{\al}'(v,k)}{\mfq_{\al}(v,k)}\f{\mfq_{\al}(v,k)\mfq_{\al}(v',k)}{\mfq_{\al}(\b, k)\mfq_{\al}(\b,k)}\f{h_1(v')}{h_1(v)}\\
&\quad+\mathfrak{C}^3_{\al, \b}\f{1}{|k|}\f{\mfq_{\b}'(v,k)}{\mfq_{\b}(v,k)}\f{\mfq_{\b}(v,k)\mfq_{\b}(v',k)}{\mfq_{\b}(\al, k)\mfq_{\b}(\al, k)}\f{h_1(v')}{h_1(v)},\\
\mathbf{G}_{D}^2(v, v', k)
&=\mathfrak{C}^1_{\al,\b}\f{1}{|k|}\f{\mfq_{\b}''(v,k)}{\mfq_{\b}(v,k)}\f{\mfq_{\al}(v',k)\mfq_{\b}(v,k)}{\mfq_{\b}(\al,k)}\f{h_1(v')}{h_1(v)}\chi_{\mathbb{R}^+}(v-v')\\
&\quad+\mathfrak{C}^1_{\al,\b}\f{1}{|k|}\f{\mfq_{\al}''(v,k)}{\mfq_{\al}(v,k)}\f{\mfq_{\al}(v,k)\mfq_{\b}(v',k)}{\mfq_{\b}(\al,k)}\f{h_1(v')}{h_1(v)}\chi_{\mathbb{R}^+}(v'-v)\\
&\quad+\mathfrak{C}^2_{\al, \b}\f{1}{|k|}\f{\mfq_{\al}''(v,k)}{\mfq_{\al}(v,k)}\f{\mfq_{\al}(v,k)\mfq_{\al}(v',k)}{\mfq_{\al}(\b, k)\mfq_{\al}(\b,k)}\f{h_1(v')}{h_1(v)}\\
&\quad+\mathfrak{C}^3_{\al, \b}\f{1}{|k|}\f{\mfq_{\b}''(v,k)}{\mfq_{\b}(v,k)}\f{\mfq_{\b}(v,k)\mfq_{\b}(v',k)}{\mfq_{\b}(\al, k)\mfq_{\b}(\al, k)}\f{h_1(v')}{h_1(v)}. 
\end{align*}

\subsection{Regularity of the Green's function}
In this section, we study the Gevrey regularity of the Green's function. According to the above decompositions, we only need to estimate $\f{\mfq_{\al}(v',k)\mfq_{\b}(v,k)}{\mfq_{\b}(\al,k)}$ with $v'\leq v$, $\f{\mfq_{\al}(v,k)\mfq_{\b}(v',k)}{\mfq_{\b}(\al,k)}$ with $v'\geq v$ and $\f{\mfq_{\al}(v,k)}{\mfq_{\al}(\b,k)}$ with $v<\b-\d$, $\f{\mfq_{\b}(v,k)}{\mfq_{\b}(\al,k)}$ with $v\geq \al+\d$ for some $\d>0$. By Lemma \ref{eq: mfq}, it is easy to check all of the four terms are in Gevrey. So we only focus on the uniformity in $k$ of the Gevrey norm. Now we assume $|k|\geq k_0>0$ with $k_0$ large enough which is determined by $h_1, h_2$ and $\d$. Without loss of generality, we assume $k\geq k_0>0$. 

Now let us first treat $\f{\mfq_{\al}(v,k)}{\mfq_{\al}(\b,k)}$ with $v<\b-\d$, $\f{\mfq_{\b}(v,k)}{\mfq_{\b}(\al,k)}$ with $v\geq \al+\d$ for some $\d>0$ and give the following lemma: 
\begin{lemma}\label{lem: boundary q/q}
It holds for $\al+\d<v\leq \b-\d$
\ben\label{eq: q/q gevrey}
\left|\pa_v^{m}\Big(\f{\mfq_{\al}(v,k)}{\mfq_{\al}(\b,k)}\Big)\right|\leq C\Gamma_s(m)M^m,
\een
and for $\al+\d<v<\b-\d$
\ben\label{eq: q/q gevrey-2}
\left|\pa_v^{m}\Big(\f{\mfq_{\b}(v,k)}{\mfq_{\b}(\al,k)}\Big)\right|\leq C\Gamma_s(m)M^m.
\een
\end{lemma}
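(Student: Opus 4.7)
The crucial observation is that although $\mfq_\al(v,k)$ grows exponentially in $|k|$, the ratio $\mfq_\al(v,k)/\mfq_\al(\b,k)$ is exponentially \emph{small} in $|k|$ for $v \leq \b - \d$. Indeed, Lemma \ref{eq: mfq} gives $\mfq_\al'/\mfq_\al \geq C^{-1}|k|$ once $v - \al \geq 1/|k|$, so integrating over $[v,\b]$ yields
$$\f{\mfq_\al(v,k)}{\mfq_\al(\b,k)} \leq e^{-C^{-1}|k|(\b-v)} \leq e^{-C^{-1}|k|\d}$$
valid for $v \geq \al + \d$ and $|k| \geq 1/\d$. This exponential smallness is what will absorb the $|k|^m$ factors coming from the $m$-th derivative of $\mfq_\al$.

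\textbf{Step 1.} I would establish a uniform-in-$k$ Gevrey bound of the form
$$|\pa_v^m \mfq_\al(v,k)| \leq C \Gamma_s(m)(M|k|)^m \mfq_\al(v,k), \qquad v \in [\al,\b],\ m\geq 0,$$
with constants depending only on the Gevrey-$s$ norms of $h_1, h_2$. This may be obtained by running a contraction argument in a weighted Gevrey space with weight $\mfq_\al(v,k)$, applied to the integral equation $\mfq_\al = 1 + k^2 T[\mfq_\al]$ with $T[\phi](v) = \int_\al^v h_1^{-1}(v')\int_\al^{v'} h_2(v'')\phi(v'')dv''dv'$, in direct analogy with Proposition \ref{prop: T}. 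Alternatively, set $\psi = \log \mfq_\al$; the ODE becomes the Riccati equation $\psi'' + (\psi')^2 + (h_1'/h_1)\psi' = k^2 h_2/h_1$, from which inductive Gevrey estimates on $\psi$ (whose derivatives scale linearly in $|k|$ thanks to \eqref{eq: q'/q}--\eqref{eq: q upperlower}), combined with Fa\`a di Bruno's formula, yield the displayed bound on $\mfq_\al = e^\psi$.

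\textbf{Step 2.} For $v \in [\al+\d, \b-\d]$ and $|k| \geq 1/\d$, dividing Step 1 by the constant $\mfq_\al(\b,k)$ and applying the exponential decay of the ratio gives
$$\left|\pa_v^m \Big(\f{\mfq_\al(v,k)}{\mfq_\al(\b,k)}\Big)\right| \leq C \Gamma_s(m)(M|k|)^m e^{-C^{-1}|k|\d}.$$
Elementary optimisation yields $\sup_{x\geq 0}(Mx)^m e^{-C^{-1}\d x} \leq (CMm/\d e)^m$, and for $0<s<1$ (so $1/s > 1$) one has $m^m e^{-m} \leq m! \leq \Gamma_s(m)(m+1)^2 \leq \Gamma_s(m)2^m$ for $m$ large enough (the remaining finite range being harmless). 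Consequently $(M|k|)^m e^{-C^{-1}|k|\d} \leq \tilde{M}^m \Gamma_s(m)$ with $\tilde{M} = 2CM/\d$, giving \eqref{eq: q/q gevrey}. The bounded range $|k| \leq 1/\d$ is handled by the smooth dependence of $\mfq_\al(\cdot,k)$ on the compact parameter $k$, which provides a uniform Gevrey bound with constants depending only on $\d, h_1, h_2$.

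\textbf{Main obstacle.} The technical heart is producing Step 1 with the \emph{correct} exponential weight $\mfq_\al(v,k)$ on the right-hand side. A naive application of a $\cosh(M|k|(v-\al))$-type weight, as in Proposition \ref{prop: T}, produces rate $M|k|$ potentially exceeding the growth rate $C^{-1}|k|$ of $\mfq_\al$, which would prevent the division by $\mfq_\al(\b,k)$ from yielding exponential gain. The Riccati reformulation together with the sharp pointwise bounds on $\mfq_\al'/\mfq_\al$ from Lemma \ref{eq: mfq} should provide the cleanest route, since it decouples the exponential profile from the Gevrey regularity analysis. The estimate \eqref{eq: q/q gevrey-2} for $\mfq_\b/\mfq_\b(\al)$ follows by the symmetric argument with $\al \leftrightarrow \b$.
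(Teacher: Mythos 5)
Your proposal is correct and follows essentially the same route as the paper: the paper also proves, by induction on $m$ using the ODE $\pa_v(h_1\pa_v\mfq_\al)=k^2h_2\mfq_\al$ and the base bounds $\mfq_\al(v,k)/\mfq_\al(\b,k)\lesssim e^{-C^{-1}\d|k|}$, $\mfq_\al'/\mfq_\al\lesssim|k|$, a bound of the form $\sum_j\binom{m}{j}\Gamma_s(m-j)C^{m-j}(C|k|)^je^{-C^{-1}\d|k|}$ for $\pa_v^m(\mfq_\al(v,k)/\mfq_\al(\b,k))$, and then absorbs the powers of $|k|$ exactly as you do via $(C|k|)^je^{-C^{-1}\d|k|}\lesssim(C\d)^{-j}\Gamma_s(j)$. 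The only difference is organizational — the paper runs the induction directly on the normalized quantity rather than first proving your Step 1 with the pointwise weight $\mfq_\al(v,k)$ and dividing afterwards — which is immaterial.
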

\begin{proof}
Let us prove the first estimate and the second one can be obtained by the same argument. Notice that $\mfq=\{\mfq_{\al}, \mfq_{\b}\}$ solves
\beno
\pa_v(h_1\pa_v\mfq)=k^2h_2\mfq.
\eeno
Thus for $m_1\geq 2$ or $m_1\geq 2$, a direct calculation gives 
\begin{align*}
\pa_v^{m_1}\mfq&=\pa_v^{m_1-2}\Big(\f{-h_1'}{h_1}\pa_v\mfq+k^2\f{h_2}{h_1}\mfq\Big)\\
&=\sum_{j=0}^{m_1-2}\f{(m_1-2)!}{j!(m_1-2-j)!}\pa_v^{j}\Big(\f{-h_1'}{h_1}\Big)\pa_v^{m_1-2-j+1}\mfq
+k^2\sum_{l=0}^{m_1-2}\f{(m_1-2)!}{l!(m_1-2-l)!}\pa_v^{l}\Big(\f{h_2}{h_1}\Big)\pa_v^{m_1-2-l}\mfq
\end{align*}
We now use mathematical induction to prove the following result: \\
For all $N\geq 0$, it holds that
\beq\label{eq: MI-ass-1}
\left|\pa_v^{N}\Big(\f{\mfq_{\al}(v,k)}{\mfq_{\al}(\b, k)}\Big)\right|\leq \Big(\sum_{j=0}^{N}\f{N!}{j!(N-j)!}\Gamma_s(N-j)C^{N-j}(C|k|)^{j}e^{-C^{-1}\d |k|}\Big).
\eeq
It follows directly by the monotonicity of $\mfq_{\al}(v,k)$ that
\beno
\f{\mfq_{\al}(v,k)}{\mfq_{\al}(\b, k)}\lesssim \f{\mfq_{\al}(\b-\d,k)}{\mfq_{\al}(\b, k)}\lesssim e^{-\int_{\b-\d}^{\b}\mathfrak{f}_{\al}(v)/h_1(v)dv}\lesssim e^{C^{-1}k\d},
\eeno
and by \eqref{eq: q'/q}, it holds that
\beno
\f{\mfq_{\al}'(v,k)}{\mfq_{\al}(\b, k)}\lesssim k\f{\mfq_{\al}(v,k)}{\mfq_{\al}(\b, k)}\lesssim k e^{-\int_{\b-\d}^{\b}\mathfrak{f}_{\al}(v)/h_1(v)dv}\lesssim k e^{C^{-1}k\d}.
\eeno
Let us now assume that \eqref{eq: MI-ass} holds for any $N\leq m-1$ with $m\geq 2$, then for $N=m$, and by \eqref{eq:3.9}, we get that 
\begin{align*}
&\left|\f{\pa_v^{m}\mfq_{\al}}{\mfq_{\al}(\b, k)}\right|=\left|\pa_v^{m-2}\Big(\f{-h_1'}{h_1}\pa_v\mfq+k^2\f{h_2}{h_1}\mfq\Big)/\mfq_{\al}(\b, k)\right|\\
&\leq \sum_{j=0}^{m-2}\f{(m-2)!}{j!(m-2-j)!}\left|\pa_v^{j}\Big(\f{-h_1'}{h_1}\Big)\f{\pa_v^{m-2-j+1}\mfq_{\al}(v)}{\mfq_{\al}(\b, k)}\right|
+k^2\sum_{l=0}^{m-2}\f{(m-2)!}{l!(m-2-l)!}\left|\pa_v^{l}\Big(\f{h_2}{h_1}\Big)\f{\pa_v^{m-2-l}\mfq_{\al}}{\mfq_{\al}(\b, k)}\right|\\
&\leq  \sum_{j=0}^{m-2}\f{(m-2)!}{j!(m-2-j)!}M_1^j\Gamma_s(j)\Big(\sum_{j_1=0}^{m_j}\f{m_j!}{j_1!(m_j-j_1)!}\Gamma_s(m_j-j_1)C^{m_j-j_1}(C|k|)^{j_1}e^{-C^{-1}\d |k|}\Big)\\
&\quad + k^2\sum_{l=0}^{m-2}\f{(m-2)!}{l!(m-2-l)!}M_1^l\Gamma_s(l)\Big(\sum_{l_1=0}^{m_l-1}\f{(m_l-1)!}{l_1!((m_l-1)-l_1)!}\Gamma_s(m_l-1-l_1)C^{m_l-1-l_1}(C|k|)^{l_1}e^{-C^{-1}\d |k|}\Big)\\
&\leq \sum_{j=0}^{m-2}\sum_{j_1=0}^{m_j}\f{(m-2)!}{j!(m-2-j)!}\Gamma_s(j)\f{m_j!}{j_1!(m_j-j_1)!}\Gamma_s(m-1-j-j_1)C^{m-1-j_1}(C|k|)^{j_1}e^{-C^{-1}\d |k|}\\
&\quad + \sum_{l=0}^{m-2}\sum_{l_1=0}^{m_l-1}\f{(m-2)!}{l!(m-2-l)!}\Gamma_s(l)\f{(m_l-1)!}{l_1!((m_l-1)-l_1)!}\Gamma_s(m-l-2-l_1)C^{m-2-l_1}(C|k|)^{l_1+2}e^{-C^{-1}\d |k|}\\
&\leq \sum_{j_1=0}^{m-1}\f{(m-1)!}{j_1!(m-1-j_1)!}\Gamma_s(m-1-j_1)C^{m-1-j_1}(C|k|)^{j_1}e^{-C^{-1}\d |k|}\\
&\quad +\sum_{l_1=0}^{m-2}\f{(m-l_1)!}{(l_1+2)!(m-2-l_1)!}\Gamma_s(m-2-l_1)C^{m-2-l_1}(C|k|)^{l_1+2}e^{-C^{-1}\d |k|},
\end{align*}
where $m_j=m-1-j$. Thus we proved \eqref{eq: MI-ass-1}. The lemma follows directly by applying
\beno
(C|k|)^{j}e^{-C^{-1}\d |k|}\lesssim (C\d)^{-j}(j^j/e^j)< (C\d)^{-j}\Gamma_s(j),
\eeno
and \eqref{eq:3.9}. 
\end{proof}

Let us rewrite $\mfq_{\b}$. Note that $\mfq_{\al}(v)$ and 
\beno
h_1(\al)\mfq_{\al}(v)\int_{\al}^{v}\f{1}{h_1(w)\mfq_{\al}(w)^2}dw
\eeno
are two linearly independent solutions. It is easy to get that 
\begin{align*}
\mfq_{\b}(v)
&=\mfq_{\b}(\al)\mfq_{\al}(v)+\mfq_{\b}'(\al)h_1(\al)\mfq_{\al}(v)\int_{\al}^{v}\f{1}{h_1(w)\mfq_{\al}(w)^2}dw\\
&=\f{\mfq_{\al}(v)}{\mfq_{\al}(\b)}-h_1(\b)\mfq_{\al}'(\b)\mfq_{\al}(v)\int_{\b}^{v}\f{1}{h_1(w)\mfq_{\al}(w)^2}dw.
\end{align*}

\begin{lemma}\label{lem: G-regularity-1}
It holds for $v'<v-\d$ and $v,v'\in [\al+\d,\b-\d]$, that
\beno
\left|\pa_v^{m-m_1}\pa_{v'}^{m_1}\Big(\f{\mfq_{\b}(v,k)\mfq_{\al}(v',k)}{\mfq_{\b}(\al, k)}\Big)\right|
\leq C\Gamma_s(m)M^m,
\eeno
and for $v'>v+\d$
\beno
\left|\pa_v^{m-m_1}\pa_{v'}^{m_1}\Big(\f{\mfq_{\al}(v,k)\mfq_{\b}(v',k)}{\mfq_{\b}(\al, k)}\Big)\right|
\leq C\Gamma_s(m)M^m.
\eeno
\end{lemma}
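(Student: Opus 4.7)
We prove the first estimate; the second follows by the symmetry $v\leftrightarrow v'$, $\al\leftrightarrow\b$, $\mfq_\al\leftrightarrow\mfq_\b$. The function factors as $F(v,v',k)=F_1(v)F_2(v')$ with $F_1(v)=\mfq_\b(v)/\mfq_\b(\al)$ and $F_2(v')=\mfq_\al(v')$, so
\[
\pa_v^{m-m_1}\pa_{v'}^{m_1}F(v,v',k) = \pa_v^{m-m_1}F_1(v)\cdot\pa_{v'}^{m_1}F_2(v').
\]
The cornerstone of the argument is the pointwise exponential cancellation
\[
F(v,v',k)\lesssim e^{-C^{-1}|k|(v-v')}\leq e^{-C^{-1}|k|\d},
\]
which holds because $\mfq_\al$ and $\mfq_\b$ satisfy the same Sturm-Liouville equation and thus share the WKB rate $|k|\int\sqrt{h_2/h_1}\,dw$: the exponential growth of $\mfq_\al(v')$ is compensated by the exponential decay of $\mfq_\b(v)/\mfq_\b(\al)$ across the gap $v-v'\geq\d$.

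The strategy is to establish cosh-Gevrey estimates on each factor by adapting the contraction mapping used in Proposition \ref{prop: homsol} and Corollary \ref{corol: regular Phi}. Iterating the integral equation $\mfq_\al = 1 + k^2 \mathrm{T}_0\circ\mathrm{T}_{2,2}[\mfq_\al]$ in the weighted space $\mathcal{G}^{M_0|k|,s}_{ph,\cosh}([\al,\b])$ (with $M_0$ large enough that the iteration contracts) yields
\[
|\pa_{v'}^{m_1}\mfq_\al(v')|\lesssim \Gamma_s(m_1)(M_0|k|)^{m_1}\mfq_\al(v'),
\]
and the same iteration applied to $F_1$, which satisfies the same ODE as $\mfq_\b$ merely rescaled by $1/\mfq_\b(\al)$, gives
\[
|\pa_v^{m-m_1}F_1(v)|\lesssim \Gamma_s(m-m_1)(M_0|k|)^{m-m_1}F_1(v).
\]
Multiplying these bounds, applying the convolution inequality $\Gamma_s(m_1)\Gamma_s(m-m_1)\leq \Gamma_s(m)$ from \eqref{eq:3.9}, and inserting the pointwise bound on the product $F_1(v)F_2(v')=F(v,v',k)$ produces
\[
|\pa_v^{m-m_1}\pa_{v'}^{m_1}F|\lesssim \Gamma_s(m)(M_0|k|)^m e^{-C^{-1}|k|\d}.
\]
The polynomial-in-$|k|$ growth is then absorbed into the exponential decay through the elementary bound $|k|^m e^{-C^{-1}|k|\d}\leq (Cm/\d)^m e^{-m}\leq (C'/\d)^m\Gamma_s(m)$ (valid for $s\in(0,1]$ by Stirling), producing the claimed estimate with $M=C'M_0/\d$ depending only on $\d$ and the background coefficients.

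The main obstacle is to ensure the cosh-rate constant governing $F_1$ matches the one governing $F_2$ closely enough that their product inherits the exponential decay of $F$; a naive separation of variables yields mismatched rate constants $M_1,M_2$ whose imbalance destroys the cancellation. This matching is handled cleanly by invoking the Wronskian/reduction-of-order identity underlying Lemma \ref{eq: mfq} to rewrite
\[
\frac{\mfq_\b(v)}{\mfq_\b(\al)} = \mfq_\al(v)\bigg[1+\frac{\mfq_\b'(\al) h_1(\al)}{\mfq_\b(\al)}\int_\al^v\frac{dw}{h_1(w)\mfq_\al(w)^2}\bigg],
\]
expressing $F_1$ as $\mfq_\al(v)$ times a bounded integral factor whose Gevrey regularity reduces entirely to that of $\mfq_\al$ together with the Gevrey coefficient $h_1$. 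With this rewriting both factors of $F$ are governed by the single cosh-norm of $\mfq_\al$, the WKB cancellation is manifest, and the induction closes with one universal constant $M_0$ depending only on $h_1,h_2$.
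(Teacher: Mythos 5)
Your proposal has a genuine quantitative gap in the final absorption step. After factoring $F=F_1(v)F_2(v')$, multiplying the claimed bounds $|\pa_v^{m-m_1}F_1|\lesssim\Gamma_s(m-m_1)(M_0|k|)^{m-m_1}F_1$ and $|\pa_{v'}^{m_1}F_2|\lesssim\Gamma_s(m_1)(M_0|k|)^{m_1}F_2$, and using $F_1F_2\lesssim e^{-C^{-1}|k|\d}$, you reach $\Gamma_s(m)(M_0|k|)^me^{-C^{-1}|k|\d}$. Applying your elementary bound $|k|^me^{-C^{-1}|k|\d}\lesssim(C'/\d)^m\Gamma_s(m)$ then yields $\Gamma_s(m)^2(M_0C'/\d)^m$, \emph{not} $\Gamma_s(m)M^m$. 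The extra factor $\Gamma_s(m)=(m!)^{1/s}(m+1)^{-2}$ is super-exponential in $m$ (for any $s<1$) and cannot be absorbed into a geometric constant $M^m$; your final estimate is strictly weaker than the claimed one. There is also a subsidiary issue: the $\mathcal{G}^{M_0|k|,s}_{ph,\cosh}$ contraction gives derivative bounds against the weight $\cosh M_0|k|(v'-\al)$, and since $M_0$ must be chosen large for contraction while $\mfq_\al$ grows only at the rate $C$ of \eqref{eq: q upperlower}, one generally has $\cosh M_0|k|(v'-\al)\gg\mfq_\al(v')$; the replacement of the cosh weight by $\mfq_\al(v')$ in your intermediate estimate is not justified, and the reduction-of-order rewriting does not repair this since the Gevrey regularity of the bracket (which contains $\mfq_\al^{-2}$) does not follow from a cosh-weighted bound on $\mfq_\al$.

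The paper sidesteps both problems by running a single induction on the \emph{full product} $\mfq_\b(v)\mfq_\al(v')/\mfq_\b(\al)$ with the spread-out inductive hypothesis
$$\left|\pa_v^{N_1}\pa_{v'}^{N_2}\Big(\f{\mfq_{\b}(v,k)\mfq_{\al}(v',k)}{\mfq_{\b}(\al, k)}\Big)\right|\leq \sum_{j=0}^{N}\f{N!}{j!(N-j)!}\Gamma_s(N-j)C^{N-j}(C|k|)^{j}e^{-C^{-1}\d |k|},$$
which places the Gevrey weight $\Gamma_s(N-j)$ and the $|k|$-growth $(C|k|)^j$ in complementary binomial slots rather than stacking the full $\Gamma_s(N)$ against $(C|k|)^N$. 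This is exactly the structure preserved by the Sturm--Liouville recursion (two $v$-derivatives trade for $k^2$) combined with the Leibniz rule and \eqref{eq:3.9}; after absorbing $(C|k|)^je^{-C^{-1}\d|k|}\lesssim(C\d)^{-j}\Gamma_s(j)$ term by term, the binomial convolution $\sum_j\binom{N}{j}\Gamma_s(j)\Gamma_s(N-j)<\Gamma_s(N)$ collapses the sum back to a \emph{single} $\Gamma_s(N)$. In the paper, the reduction-of-order identity and the pointwise decay serve only to establish the low-order base cases $m=0,1,2$ (and to reduce the first term of the decomposition to the already-proven Lemma \ref{lem: boundary q/q}); they are not a substitute for the sum-structured induction.
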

\begin{proof}
The two estimates can be proved by the same argument. Here we only prove the first inequality. 
We write
\begin{align*}
\Big(\f{\mfq_{\b}(v,k)\mfq_{\al}(v',k)}{\mfq_{\b}(\al, k)}\Big)
&=\Big(\f{\mfq_{\b}(v,k)\mfq_{\al}(v,k)}{\mfq_{\b}(\al, k)}\Big)\Big(\f{\mfq_{\al}(v',k)}{\mfq_{\al}(v,k)}\Big)\\
&=\f{\mfq_{\al}(v)\mfq_{\al}(v',k)}{\mfq_{\al}(\b)\mfq_{\b}(\al, k)}-\f{h_1(\b)\mfq_{\al}'(\b)}{\mfq_{\b}(\al, k)}\mfq_{\al}(v)\mfq_{\al}(v')\int_{\b}^{v}\f{1}{h_1(w)\mfq_{\al}(w)^2}dw
\end{align*}
For the first term, we have by Lemma \ref{lem: boundary q/q} that
\begin{align*}
\left|\pa_v^{m-m_1}\pa_{v'}^{m_1}\Big(\f{\mfq_{\al}(v)\mfq_{\al}(v',k)}{\mfq_{\al}(\b)\mfq_{\b}(\al, k)}\Big)\right|
\leq C\Gamma_s(m)M^m,
\end{align*}
Then by the monotonicity of $\mfq_{\al}$, we have $0< \f{\mfq_{\al}(v',k)}{\mfq_{\al}(v,k)}\lesssim e^{-C^{-1}\d k}$ and $\f{\mfq_{\al}(v)}{\mfq_{\al}(w)}\lesssim e^{-C^{-1}k|v-w|}$ for $v\leq w$, which gives that 
\begin{align*}
&\left|\mfq_{\al}(v)\mfq_{\al}(v')\int_{\b}^{v}\f{1}{h_1(w)\mfq_{\al}(w)^2}dw\right|
\lesssim k\int_{\b}^{v}e^{-2C^{-1}k|v-w|}dwe^{-C^{-1}k|v-v'|}\lesssim e^{-C^{-1}\d k},\\
&\left|(\pa_v,\pa_{v'})\Big(\mfq_{\al}(v)\mfq_{\al}(v')\int_{\b}^{v}\f{1}{h_1(w)\mfq_{\al}(w)^2}dw\Big)\right|
\lesssim k^2\int_{\b}^{v}e^{-2C^{-1}k|v-w|}dwe^{-C^{-1}k|v-v'|}\lesssim ke^{-C^{-1}\d k},\\
&\left|(\pa_v,\pa_{v'})^2\Big(\mfq_{\al}(v)\mfq_{\al}(v')\int_{\b}^{v}\f{1}{h_1(w)\mfq_{\al}(w)^2}dw\Big)\right|
\lesssim k^3\int_{\b}^{v}e^{-2C^{-1}k|v-w|}dwe^{-C^{-1}k|v-v'|}\lesssim k^2e^{-C^{-1}\d k}.
\end{align*}
We now use mathematical induction to prove the following result: \\
For all $N=N_1+N_2$, it holds that
\beq\label{eq: MI-ass}
\left|\pa_v^{N_1}\pa_{v}^{N_2}\Big(\f{\mfq_{\b}(v,k)\mfq_{\al}(v',k)}{\mfq_{\b}(\al, k)}\Big)\right|\leq \Big(\sum_{j=0}^{N}\f{N!}{j!(N-j)!}\Gamma_s(N-j)C^{N-j}(C|k|)^{j}e^{-C^{-1}\d |k|}\Big).
\eeq
Suppose that \eqref{eq: MI-ass} holds for $N_1+N_2=0, 1, 2, ..., m_1+m_2+1$, we get for $N_1=m_1+m_2+2$
Thus we have
\begin{align*}
&\left|\f{\pa_v^{m_1+2}\mfq_{\b}(v)\pa_{v'}^{m_2}\mfq_{\al}(v')}{\mfq_{\b}(\al, k)}\right|\\
&\leq \sum_{j=0}^{m_1}\left|\f{(m_1)!}{j!(m_1-j)!}\pa_v^{j}\Big(\f{-h_1'}{h_1}\Big)\pa_v^{m_1+1-j}\mfq_{\b}(v)\pa_{v'}^{m_2}\mfq_{\al}(v')/\mfq_{\b}(\al, k)\right|\\
&\quad+k^2\sum_{l=0}^{m_1}\left|\f{(m_1)!}{l!(m_1-l)!}\pa_v^{l}\Big(\f{h_2}{h_1}\Big)\pa_v^{m_1-l}\mfq_{\b}(v)\pa_{v'}^{m_2}\mfq_{\al}(v')/\mfq_{\b}(\al, k)\right|\\
&\leq \sum_{j=0}^{m_1}\f{(m_1)!}{j!(m_1-j)!}C^{j}\Gamma_s(j)\Big(\sum_{j_1=0}^{N_j}\f{N_j!}{j_1!(N_j-j_1)!}\Gamma_s(N_j-j_1)C^{N_j-j_1}(C|k|)^{j_1}e^{-C^{-1}\d |k|}\Big)\\
&\quad+C^{-2}\sum_{l=0}^{m_1}\f{(m_1)!}{l!(m_1-l)!}C^{l}\Gamma_s(l)\Big(\sum_{j_2=0}^{N_l}\f{N_l!}{j_2!(N_l-j_2)!}\Gamma_s(N_l-j_2)C^{N_l-j_2}(C|k|)^{j_2+2}e^{-C^{-1}\d |k|}\Big)\\
&=I_1+I_2,
\end{align*}
where $N_j=m_1+m_2+1-j$ and $N_l=m_1+m_2-l$.

For the first term, we have
\begin{align*}
I_1&=\sum_{j=0}^{m_1}\sum_{j_1=0}^{N_j}\f{(m_1)!}{j!(m_1-j)!}\Gamma_s(j)\f{N_j!}{j_1!(N_j-j_1)!}\Gamma_s(N_j-j_1)C^{m_1+m_2+1-j_1}(C|k|)^{j_1}e^{-C^{-1}\d |k|}\\
&=\sum_{j=0}^{m_1}\sum_{j_1=0}^{N_j}
\left(\f{m_1!(m_1+m_2+1-j)!}{(m_1-j)!(m_1+m_2+1-j_1)!}\right)\f{(m_1+m_2+1-j_1)!}{j_1!(m_1+m_2+1-j_1)!}\\
&\quad\quad\times\left(\f{(N_j-j_1+j)!}{j!(N_j-j_1)!}\Gamma_s(N_j-j_1)\Gamma_s(j)\right)C^{m_1+m_2+1-j_1}(C|k|)^{j_1}e^{-C^{-1}\d |k|}\\
&\leq \sum_{j_1=0}^{m_1+m_2+1}
\f{(m_1+m_2+1-j_1)!}{j_1!(m_1+m_2+1-j_1)!}\Gamma_s(m_1+m_2+1-j_1)C^{m_1+m_2+1-j_1}(C|k|)^{j_1}e^{-C^{-1}\d |k|}.
\end{align*}
Similarly, for the second term, we have
\begin{align*}
I_2&= C^{-2}\sum_{l=0}^{m_1}\sum_{j_2=0}^{N_l}\f{(m_1)!}{l!(m_1-l)!}\Gamma_s(l)\f{N_l!}{j_2!(N_l-j_2)!}\Gamma_s(N_l-j_2)C^{m_1+m_2-j_2}(C|k|)^{j_2+2}e^{-C^{-1}\d |k|}\\
&\leq C^{-2}\sum_{j_2=0}^{m_1+m_2}\f{(m_1+m_2+2-j_2)!}{(j_2+2)!(m_1+m_2-j_2)!}\Gamma_s(m_1+m_2-j_2)C^{m_1+m_2-j_2}(C|k|)^{j_2+2}e^{-C^{-1}\d |k|}.
\end{align*}
Combining the above estimate, 
\begin{align*}
\left|\f{\pa_v^{m_1+2}\mfq_{\b}(v)\pa_{v'}^{m_2}\mfq_{\al}(v')}{\mfq_{\b}(\al, k)}\right|
\leq \Big(\sum_{j=0}^{N}\f{N!}{j!(N-j)!}\Gamma_s(N-j)C^{N-j}(C|k|)^{j}e^{-C^{-1}\d |k|}\Big)
\end{align*}
with $N=m_1+m_2+2$. Using a similar argument, we can also get for any $m_1, m_2$, 
\begin{align*}
\left|\f{\pa_v^{m_1}\mfq_{\b}(v)\pa_{v'}^{m_2+2}\mfq_{\al}(v')}{\mfq_{\b}(\al, k)}\right|
\leq \Big(\sum_{j=0}^{N}\f{N!}{j!(N-j)!}\Gamma_s(N-j)C^{N-j}(C|k|)^{j}e^{-C^{-1}\d |k|}\Big)
\end{align*}
holds for $N=m_1+m_2+2$. Thus we proved \eqref{eq: MI-ass}. The lemma follows directly by applying
\beno
(C|k|)^{j}e^{-C^{-1}\d |k|}\lesssim (C\d)^{-j}(j^j/e^j)< (C\d)^{-j}\Gamma_s(j),
\eeno
and \eqref{eq:3.9}. 
\end{proof}

By using the fact that $h_1, h_2$ are constant-valued functions near the boundary, and the boundary condition of $\mfq_{\al}(v)$, we obtain that for $\al\leq v\leq \al+\d$, 
\beno
\mfq_{\al}(v)=\cosh \Big(k\int_{\al}^v\sqrt{\f{h_2}{h_1}}(w)dw\Big)
\eeno
and 
\beno
k\sqrt{h_1(\al)h_2(\al)}\mfq_{\al}(v)\int_{\al}^{v}\f{1}{h_1(w)\mfq_{\al}(w)^2}dw=\sinh  \Big(k\int_{\al}^v\sqrt{\f{h_2}{h_1}}(w)dw\Big).
\eeno
Let 
\ben\label{eq: def-e_al+}
\mfe_{\al,+}(v)=\mfq_{\al}(v)+h_1(\al)\mfq_{\al}(v)\int_{\al}^{v}\f{1}{h_1(w)\mfq_{\al}(w)^2}dw=e^{\int_{\al}^v\mfs_{\al,+}(w)/h_1(w)dw},
\een
then a direct calculation gives that $\mfe_{\al,+}(v)$ solves
\beno
\pa_v(h_1\pa_v\mfe_{\al,+}(v))=k^2h_2\mfe_{\al,+}(v)
\eeno
and that $\mfs_{\al,+}(v)=\f{h_1(v)\mfe_{\al,+}'(v)}{\mfe_{\al,+}(v)}$ solves
\beno
\mfs_{\al,+}'(v)+\f{\mfs_{\al,+}(v)^2}{h_1(v)}=k^2h_2(v),
\eeno
with boundary condition $\mfs_{\al,+}(\al)=k \sqrt{h_1(\al)h_2(\al)}$. 

Note that the above calculation for $\mfs_{\al,+}$ is similar to $\mathfrak{f}_{\al}$. 
\begin{lemma}\label{lem:mfs}
There is $k_0>0$ and there are $C$ and $M$ independent of $k$, so that  for any $k>k_0$, it holds that
\beno
\sup_{m\geq 0}\left|\pa_v^m\big(\mfs_{\al,+}(v)-k\sqrt{h_1(v)h_2(v)}\big)\right|\leq C\Gamma_s(m)M^m.
\eeno
\end{lemma}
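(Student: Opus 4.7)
The plan is to study the error $r(v) := \mfs_{\al,+}(v) - k\sqrt{h_1(v)h_2(v)}$. Substituting $\mfs_{\al,+} = r + k\sqrt{h_1 h_2}$ into the Riccati equation $\mfs_{\al,+}' + \mfs_{\al,+}^2/h_1 = k^2 h_2$ and cancelling the $O(k^2)$ terms gives
\[
r' + \frac{r^2}{h_1} + 2k\sqrt{h_2/h_1}\, r = -k\, \pa_v\sqrt{h_1 h_2}, \qquad r(\al) = 0.
\]
The structural fact that drives the argument is that since $h_1'$ and $h_2'$ are supported in $[\al_1, \b_1]$ with $\al < \al_1$, the forcing vanishes on $[\al, \al_1]$, hence $r \equiv 0$ on that interval, and consequently all derivatives of $F[r] := r^2/h_1 + k\,\pa_v\sqrt{h_1 h_2}$ vanish at $w = \al$. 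Introducing $\Phi(v,w) := \int_w^v 2k\sqrt{h_2/h_1}(u)\,du$ converts the Riccati equation to the integral representation
\[
r(v) = -\int_\al^v e^{-\Phi(v,w)}\, F[r](w)\,dw,
\]
in which the exponential provides the decay $e^{-ck(v-w)}$ since $\sqrt{h_2/h_1} \geq c_0 > 0$.

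The main device is the integration-by-parts identity $e^{-\Phi(v,w)} = \sigma(w)^{-1}\pa_w e^{-\Phi(v,w)}$, where $\sigma := 2k\sqrt{h_2/h_1}$, which gains a factor of $\sigma^{-1} \sim 1/k$ at each step. Using the vanishing of $F[r]$ (and all of its $w$-derivatives) at $w=\al$ to discard all boundary contributions, one obtains
\[
r(v) = -\sigma(v)^{-1} F[r](v) + \int_\al^v e^{-\Phi(v,w)}\, \pa_w\bigl(\sigma^{-1} F[r]\bigr)(w)\, dw,
\]
whose leading term $-\sigma^{-1} F[r] = -\pa_v\sqrt{h_1h_2}/(2\sqrt{h_2/h_1}) - \sigma^{-1} r^2/h_1$ exhibits the WKB approximation as a $k$-independent Gevrey function plus an $O(1/k)$ correction. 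I plan to implement this as a Banach contraction argument for the map $T[r]$ (the right-hand side above) on a closed ball of a weighted Gevrey space $\mathcal{G}_{ph,1}^{M',s_0}([\al,\b])$ with $M'$ chosen independently of $k$ for all $k \geq k_0$ sufficiently large. The Gevrey algebra property \eqref{eq:3.9} controls the quadratic term $r^2/h_1$, and uniform Gevrey norms of $h_1^{-1},\sigma^{-1},\sqrt{h_1h_2}$ follow from Lemma~\ref{eq: inverse-gevrey} and Lemma~\ref{eq: composition} applied to $h_1, h_2$.

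The main obstacle is the Gevrey mapping property of the integral operator $S[g](v) := \int_\al^v e^{-\Phi(v,w)} g(w)\, dw$. Naive differentiation in $v$ produces factors of $\sigma(v) \sim k$ through the chain rule on $e^{-\Phi}$, so $S$ is not bounded on Gevrey spaces uniformly in $k$ in isolation. The rescue is that in the fixed-point equation $S$ is composed with $\pa_w\sigma^{-1}$, which supplies the compensating $1/k$; moreover, the exponential decay $e^{-ck(v-w)}$ provides a bound of the form $k^m e^{-ck(v-w)} \lesssim C^m \Gamma_{s_0}(m)(v-w)^{-m}$ in the relevant range, which suffices precisely because $\Gamma_{s_0}(m) = (m!)^{1/s_0}(m+1)^{-2}$ is super-factorial for $s_0 < 1$ and hence absorbs the $k^m$ factors generated by Faà di Bruno applied to $e^{-\Phi}$. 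Carefully bookkeeping this in the spirit of Proposition~\ref{prop: T}, so that all combinatorial constants fit inside $\Gamma_{s_0}(m)M'^{\,m}$ with $M'$ independent of $k$, will close the contraction with factor $\leq C_*/k_0 < 1$ and yield the claimed Gevrey bound on $r$.
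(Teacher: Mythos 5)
Your overall strategy coincides with the paper's: peel off the WKB leading term, derive a Riccati equation for the remainder with the large damping coefficient $2k\sqrt{h_2/h_1}$, use the support condition on $h_1',h_2'$ to kill the boundary layer at $v=\al$, and close with the Gevrey algebra property \eqref{eq:3.9} for the quadratic term. However, there is a genuine gap at the step you yourself identify as the main obstacle. The bound $k^m e^{-ck(v-w)}\lesssim C^m\Gamma_{s_0}(m)(v-w)^{-m}$ is true pointwise but cannot be integrated against $g(w)$ over $[\al,v]$: the singularity $(v-w)^{-m}$ at $w=v$ is non-integrable for every $m\geq 1$, so it does not yield a bound on $\pa_v^m S[g]$. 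If instead one integrates $k^me^{-ck(v-w)}$ directly, one gets $|\pa_v^m S[g]|\sim k^{m-1}\|g\|_{\infty}$, which is unbounded in $k$ for $m\geq 2$; and your single integration by parts via $e^{-\Phi}=\s(w)^{-1}\pa_we^{-\Phi}$ gains only \emph{one} factor of $1/k$, whereas $m$ derivatives of the kernel produce $k^m$. To close the Duhamel route one would have to iterate the integration by parts to all orders (a full $1/k$-asymptotic expansion), which is not what is written, and the appeal to ``the spirit of Proposition~\ref{prop: T}'' does not help: the kernel there is bounded uniformly in $k$, which is precisely not the case here.

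The paper circumvents this entirely by never differentiating the Duhamel formula. After the change of variables $w(v)=\int_{\al}^{v}\sqrt{h_2/h_1}(z)\,dz$ the damping coefficient becomes the \emph{constant} $2k$, so the equation reads $R'+2kR+A(w)R+B(w)R^2+C(w)=0$ with $A,B,C$ in a $k$-independent Gevrey class. Applying $D^m=\pa_w^m/(\Gamma_s(m)M^m)$ to this ODE produces no new powers of $k$ (the only $k$-dependence sits in the linear term $2kD^mR$, which is kept on the left with its favorable sign), and the energy/Gronwall inequality $\f12\f{d}{dw}|D^mR|^2+2k|D^mR|^2\leq C(\sup_{m_1\leq m}|D^{m_1}R|+1)^2|D^mR|$ then yields $\sup_m|D^mR|^2\lesssim 1/k$ directly. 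Note that this argument also requires $D^mR(0)=0$ for all $m$ (otherwise the boundary layer $e^{-2kw}$ contaminates the higher derivatives with factors of $k^m$); this is exactly where your observation that the forcing vanishes to all orders at $w=0$ — which the paper uses implicitly — enters. If you replace your kernel estimate by this differentiated-ODE energy argument (or, equivalently, prove the uniform bound $\|S[g]\|_{\mathcal{G}^{M,s}}\lesssim k^{-1}\|g\|_{\mathcal{G}^{M,s}}$ by the same Gronwall device rather than by kernel bounds), the rest of your contraction scheme goes through.
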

\begin{proof}
We use the idea of WKB approximation and rewrite $\mfs_{\al,+}(v)$ as 
\begin{align*}
\mfs_{\al,+}(v)=k\sqrt{h_1(v)h_2(v)}-\f{(\sqrt{h_1(v)h_2(v)})'}{2\sqrt{h_2(v)/h_1(v)}}+r(v),
\end{align*}
then $r$ solves
\beno
r'+2k\sqrt{h_2/h_1}r-\f{(\sqrt{h_1(v)h_2(v)})'}{\sqrt{h_2(v)/h_1(v)}}r+\f{r^2}{h_1}-\Big(\f{(\sqrt{h_1(v)h_2(v)})'}{2\sqrt{h_2(v)/h_1(v)}}\Big)'+\Big(\f{(\sqrt{h_1(v)h_2(v)})'}{2\sqrt{h_2(v)}}\Big)^2=0.
\eeno
Let $R(w)=r(v)$ with $w(v)=\int_{\al}^v\sqrt{h_2/h_1}(z)dz$, then 
\beno
R'+2kR+A(w)R+B(w)R^2+C(w)=0,
\eeno
with initial data $R(0)=0$, 
where 
\begin{align*}
&A(w)=-\sqrt{h_1/h_2}\f{(\sqrt{h_1(v)h_2(v)})'}{\sqrt{h_2(v)/h_1(v)}}\\
&B(w)=\sqrt{\f{1}{h_1(v)h_2(v)}}\\
&C(w)=-\sqrt{h_1/h_2}\Big(\f{(\sqrt{h_1(v)h_2(v)})'}{2\sqrt{h_2(v)/h_1(v)}}\Big)'+\sqrt{h_1/h_2}\Big(\f{(\sqrt{h_1(v)h_2(v)})'}{2\sqrt{h_2(v)}}\Big)^2.
\end{align*}
By the Gevrey regularity of $h_1, h_2$ and composition Lemma \ref{eq: composition}, it is easy to check that there are $C$ and $M_1$ such that 
\beno
\sup_{m\geq 0}|\pa_w^m(A, B, C)(w)|\leq C\Gamma_s(m)M_1^m.
\eeno
Let $D^{m}=\f{\pa_w^m}{\Gamma_s(m)M^m}$, then we get that 
\begin{align*}
\f12\f{d}{dw}|D^mR|^2+2k|D^mR|^2
\leq C(\sup_{0\leq m_1\leq m}|D^{m_1}R|+1)^2|D^mR|.
\end{align*}
By the Gronwall's inequality, we get that for $k_0>0$ large enough and $k>k_0$, 
\beno
\sup_{m}|D^mR(w)|^2\lesssim \f{1}{k},
\eeno
which together with Lemma \ref{eq: composition} gives
\beno
\sup_{m\geq 0}|\pa_v^mr(v)|\leq C\f{\Gamma_s(m)M_1^m}{\sqrt{k}}.
\eeno
Thus we proved the lemma. 
\end{proof}

Thus we have the following remark:
\begin{remark}\label{Rmk: e_al^-1}
It holds that for $v\geq \al+\d$
\begin{align*}
\left|\pa_v^{m+1}\Big(\f{1}{\mfe_{\al,+}(v)}\Big)\right|\lesssim \Gamma_s(m)M^{m}e^{-C^{-1}k|v-\al|}
\end{align*}
\end{remark}
\begin{proof}
We rewrite $\f{1}{\mfe_{\al,+}(v)}=g_{\mfe}(v)e^{-\int_{\al}^vk\sqrt{h_1h_2(w)}dw}$. Then Lemma \ref{lem:mfs} gives that 
\beno
\left|\pa_v^{m+1}g_{\mfe}(v)\right|\lesssim \Gamma_s(m)M^{m}. 
\eeno
We also have for $v\geq \al+\d$
\begin{align*}
\left|\pa_v^{m+1}e^{-\int_{\al}^vk\sqrt{h_1h_2(w)}dw}\right|
\leq \Gamma_s(m)M^{m}e^{-C^{-1}k|v-\al|}
\end{align*}
which gives the remark. 
\end{proof}

\begin{lemma}
For any $\f{\b-\al}{100}>\d>0$, there are $M$ and $C$ independent of $k$, such that for $|v'-v|\leq \d$
\begin{align*}
\sup_{m\geq 0}\int_{\al+\d}^{\b-\d}\int_{\{0<v-v'\leq \d\}\cap [\al,\b]}\left|\f{(\pa_v+\pa_{v'})^m}{\Gamma_s(m)M^m}|k|^{-1}(1-\pa_{vv}-\pa_{v'v'})\Big(\f{\mfq_{\al}(v')}{\mfq_{\al}(v)}\Big)\right|dvdv'\leq C. 
\end{align*}
and 
\beno
\left|\f{(\pa_v+\pa_{v'})^m}{\Gamma_s(m)M^m}\Big(\f{\mfq_{\al}(v')}{\mfq_{\al}(v)}\Big)\right|\leq C
\eeno
\end{lemma}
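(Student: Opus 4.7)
The approach is to exploit the WKB structure of $\mfq_\al$ together with the key cancellation $(\pa_v+\pa_{v'})e^{-k(v-v')}\equiv 0$, which is exactly the mechanism that keeps the $(\pa_v+\pa_{v'})^m$ norms uniform in $k$ even though individual $\pa_v$ derivatives of $\mfq_\al(v')/\mfq_\al(v)$ grow like $k^m$. Since $v, v'\in[\al+\d,\b]$ are bounded away from the left endpoint by at least $\d$, I would first decompose $\mfq_\al = \tfrac12(\mfe_{\al,+}+\mfe_{\al,-})$, where $\mfe_{\al,\pm}$ are the WKB fundamental solutions with $\mfe_{\al,\pm}(\al)=1$ and $\mfe_{\al,\pm}'(\al)=\pm k\sqrt{h_2/h_1}(\al)$. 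The subdominant solution obeys $|\mfe_{\al,-}/\mfe_{\al,+}|\lesssim e^{-C^{-1}k\d}$ on this region, and the ratio $\mfq_\al(v')/\mfq_\al(v)$ equals $\mfe_{\al,+}(v')/\mfe_{\al,+}(v)$ times a factor whose $(\pa_v+\pa_{v'})^m$ derivatives are exponentially small in $k\d$; the latter can be converted into $\Gamma_s(m)M^m$ estimates at no cost using the trick from Lemma \ref{lem: boundary q/q} that $(k\d)^j e^{-C^{-1}k\d}\lesssim \Gamma_s(j)(C\d)^{-j}$.

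The principal piece is thus $\mfe_{\al,+}(v')/\mfe_{\al,+}(v)=e^{-\Psi(v,v')}$ with $\Psi(v,v')=\int_{v'}^v \mfs_{\al,+}(w)/h_1(w)\,dw$. By Lemma \ref{lem:mfs} I would split $\Psi(v,v')=k\Phi_0(v,v')+R(v,v')$ where $\Phi_0(v,v')=\int_{v'}^v\sqrt{h_2/h_1}(w)\,dw$ and $R$ has $k$-uniform Gevrey norms. The critical structural fact is that $(\pa_v+\pa_{v'})\Phi_0=\sqrt{h_2/h_1}(v)-\sqrt{h_2/h_1}(v')$ vanishes on the diagonal; more generally, $|(\pa_v+\pa_{v'})^m\Phi_0|\lesssim |v-v'|\,\Gamma_s(m)M^m$, and similarly for $R$ with constants independent of $k$. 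A Faà di Bruno expansion of $(\pa_v+\pa_{v'})^m e^{-\Psi}$ produces a sum of products of $k(\pa_v+\pa_{v'})^{j_i}\Phi_0$ and $(\pa_v+\pa_{v'})^{j_i}R$ multiplied by the exponential $e^{-\Psi}$; since each factor is at most $k|v-v'|\cdot \Gamma_s(j_i)M^{j_i}$ and $e^{-\Psi}$ provides an $e^{-ck(v-v')}$ decay, the elementary bound $(k|v-v'|)^\ell e^{-ck(v-v')/2}\lesssim \Gamma_s(\ell)M^\ell$ absorbs all $k$-factors into the Gevrey hierarchy. Collecting combinatorial factors with \eqref{eq:3.9} gives the pointwise estimate $|(\pa_v+\pa_{v'})^m(\mfq_\al(v')/\mfq_\al(v))|\le C\Gamma_s(m)M^m$, which is the second inequality.

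For the $L^1$ estimate, the operator $|k|^{-1}(1-\pa_{vv}-\pa_{v'v'})$ can bring down at worst two more $\pa_v$ or $\pa_{v'}$ derivatives of the exponential phase, each supplying a single factor of $k$; after multiplying by $|k|^{-1}$ the integrand is bounded pointwise by $C\Gamma_s(m)M^m\bigl(1+k\bigr)e^{-ck(v-v')}$. Integrating in the $\tau=v-v'$ direction over $[0,\d]$ gives $\int_0^\d(1+k)e^{-ck\tau}\,d\tau\lesssim 1$ uniformly in $k$, and integrating in $v'$ over the bounded interval $[\al+\d,\b-\d]$ only multiplies by a constant, yielding the desired bound.

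The main obstacle is executing the Faà di Bruno step cleanly while simultaneously tracking (i) the $|v-v'|$ factor coming from the diagonal cancellation, (ii) the Gevrey combinatorics governed by $\Gamma_s$, and (iii) the $k$-uniformity. The most transparent implementation is probably to introduce a weighted Gevrey norm modeled on $\mathcal{G}^{M|k|,s}_{ph,\cosh}$ from Proposition \ref{prop: T} but with the symmetric weight $\cosh$ replaced by an off-diagonal decay weight such as $e^{ck(v-v')/2}\mathbf{1}_{v>v'}$, and then to verify the bound by a direct mathematical induction on $m$ exactly as in Lemma \ref{lem: boundary q/q}, using the ODE $\pa_v(h_1\pa_v\mfq_\al)=k^2h_2\mfq_\al$ at each induction step to reduce two derivatives of $\mfq_\al$ to zero or one derivatives at the cost of a factor $k^2$, which is then paid for by the $(k|v-v'|)^2 e^{-ck(v-v')}$ absorption.
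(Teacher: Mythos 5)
Your proposal is correct and follows essentially the same route as the paper: factor out the dominant WKB exponential $\mfe_{\al,+}(v')/\mfe_{\al,+}(v)$ (the paper writes the subdominant correction via reduction of order, $\mfq_{\al}=\mfe_{\al,+}\bigl(1-k\sqrt{h_1h_2}(\al)\int_{\al}^v h_1^{-1}\mfe_{\al,+}^{-2}\bigr)$, which is equivalent to your $\tfrac12(\mfe_{\al,+}+\mfe_{\al,-})$ splitting), exploit that $(\pa_v+\pa_{v'})\int_{v'}^{v}k\sqrt{h_1h_2}\,dz$ is a boundary difference of size $O(k|v-v'|)$, and absorb the resulting powers $(k|v-v'|)^{j}$ into $e^{-C^{-1}k|v-v'|}$ via $(k\tau)^{j}e^{-c k\tau}\lesssim\Gamma_s(j)C^j$, finishing the $L^1$ bound by integrating $k\,e^{-ck\tau}$ over $\tau\in(0,\d]$. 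The only caveat is your closing suggestion to run the induction ``exactly as in Lemma \ref{lem: boundary q/q}'': that lemma has a constant denominator $\mfq_{\al}(\b)$, whereas here $1/\mfq_{\al}(v)$ does not satisfy the ODE, so the explicit WKB representation (your primary route, and the paper's) is the one that actually works.
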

\begin{proof}
Let us first use $\mfe_{\al,+}$ to represent $\mfq_{\al}(v)$, by matching the boundary condition, we have
\begin{align*}
\mfq_{\al}(v)=\mfe_{\al,+}(v)-k\sqrt{h_1(\al)h_2(\al)}\mfe_{\al,+}(v)\int_{\al}^v\f{1}{h_1(w)\mfe_{\al,+}(w)^2}dw. 
\end{align*}
Thus we have 
\begin{align}\label{eq: q/q by e/e}
\f{\mfq_{\al}(v)}{\mfq_{\al}(w)}=\f{\mfe_{\al,+}(v)}{\mfe_{\al,+}(w)}\left(1+\f{k\sqrt{h_1(\al)h_2(\al)}\int_{w}^v\f{1}{h_1(z)\mfe_{\al,+}(z)^2}dz}{1-k\sqrt{h_1(\al)h_2(\al)}\int_{\al}^v\f{1}{h_1(w)\mfe_{\al,+}(w)^2}dw}\right)
\end{align}
By the fact that $\mfq_{\al}(v)\geq 1$ and \eqref{eq: def-e_al+}, it is easy to 
\beno
1-k\sqrt{h_1(\al)h_2(\al)}\int_{\al}^v\f{1}{h_1(w)\mfe_{\al,+}(w)^2}dw\geq c_0>0
\eeno
for some $c_0$ independent of $k$. By Remark \ref{Rmk: e_al^-1}, we have that for $m\geq 1$ and $w\geq \al+\d$
\begin{align*}
&\left|\pa_{v}^{m}\Big(1-k\sqrt{h_1(\al)h_2(\al)}\int_{\al}^v\f{1}{h_1(w)\mfe_{\al,+}(w)^2}dw\Big)\right|\\
&\lesssim k\left|\pa_v^{m-1}\Big(\f{1}{h_1(w)\mfe_{\al,+}(w)^2}\Big)\right|\lesssim \Gamma_s(m)M^{m}
\end{align*}
and
\begin{align*}
&\left|(\pa_v+\pa_w)^m\left(k\sqrt{h_1(\al)h_2(\al)}\int_{w}^v\f{1}{h_1(z)\mfe_{\al,+}(z)^2}dz\right)\right|\\
&\lesssim k \left|\int_{w}^v\pa_z^m\left(\f{1}{h_1(z)\mfe_{\al,+}(z)^2}\right)dz\right|\lesssim \Gamma_s(m)M^{m},\\
&\left\|(\pa_v+\pa_w)^m(1-\pa_{vv}-\pa_{ww})\left(k\sqrt{h_1(\al)h_2(\al)}\int_{w}^v\f{1}{h_1(z)\mfe_{\al,+}(z)^2}dz\right)\right\|_{L_{v,w}^1(\{0<v<w-\d\}\cap [\al+\d,\b-\d]^2)}\\
&\lesssim \int_{\al+\d}^{\b-\d}k \left|\pa_v^{m+1}\left(\f{1}{h_1(v)\mfe_{\al,+}(v)^2}\right)\right|dv+\int_{\al+\d}^{\b-\d}k \left|\pa_w^{m+1}\left(\f{1}{h_1(w)\mfe_{\al,+}(w)^2}\right)\right|dw\\
&\lesssim \Gamma_s(m)M^{m}
\end{align*}
here we use the fact that
\beno
\mfe_{\al,+}(w)\geq e^{-C^{-1}k |w-\al|}.
\eeno
Note that 
\begin{align*}
\f{\mfe_{\al,+}(v)}{\mfe_{\al,+}(w)}=\f{g_{\mfe}(v)}{g_{\mfe}(w)}e^{\int_{w}^vk\sqrt{h_1h_2(z)}dz},
\end{align*}
where $g_{\mfe}(v)$ is defined in Remark \ref{Rmk: e_al^-1}. 
Thus we obtain that for $0\leq w-v\leq \d$
\begin{align*}
&(\pa_v+\pa_w)^me^{\int_{w}^vk\sqrt{h_1h_2(z)}dz}\\
&=(\pa_v+\pa_w)^{m-1}e^{\int_{w}^vk\sqrt{h_1h_2(z)}dz}k\Big(\int_{w}^v(\sqrt{h_1h_2(z)})'dz\Big)\\
&\quad +e^{\int_{w}^vk\sqrt{h_1h_2(z)}dz}k\Big(\int_{w}^v\f{d^m}{dz^m}(\sqrt{h_1h_2(z)})dz\Big),
\end{align*}
which gives that
\begin{align*}
|(\pa_v+\pa_w)^me^{\int_{w}^vk\sqrt{h_1h_2(z)}dz}|
&\leq e^{-C^{-1}k|v-w|}\sum_{j=0}^{m}(k|v-w|)^jC^{m-j}\Gamma_s(m-j)\\
&\lesssim \Gamma_s(m)M^m,
\end{align*}
and
\begin{align*}
\left|(1-\pa_{vv}-\pa_{ww})(\pa_v+\pa_w)^me^{\int_{w}^vk\sqrt{h_1h_2(z)}dz}\right|
&\lesssim k^2\Gamma_s(m)M^m e^{-C^{-1}k|w-v|}.
\end{align*}
Combining all the estimates, together with \eqref{eq: q/q by e/e}, we get the lemma. 
\end{proof}
\begin{lemma}\label{lem: G-regularity-1-v near v'}
It holds for $|v'-v|\leq \d$, $v,v'\in [\al+\d,\b-\d]$ that
\begin{align*}
\sup_{m\geq 0}\int_{\al+\d}^{\b-\d}\int_{\{0<v-v'\leq \d\}\cap [\al,\b]}\left|\f{(\pa_v+\pa_{v'})^m}{\Gamma_s(m)M^m}|k|^{-1}(1-\pa_{vv}-\pa_{v'v'})\Big(\f{\mfq_{\b}(v,k)\mfq_{\al}(v',k)}{\mfq_{\b}(\al, k)}\Big)\right|dvdv'\leq C. 
\end{align*}
and for $v-\d<v'<v$
\beno
\left|\f{(\pa_v+\pa_{v'})^m}{\Gamma_s(m)M^m}\Big(\f{\mfq_{\b}(v,k)\mfq_{\al}(v',k)}{\mfq_{\b}(\al, k)}\Big)\right|\leq C
\eeno
and
\begin{align*}
\sup_{m\geq 0}\int_{\al+\d}^{\b-\d}\int_{\{0<v'-v\leq \d\}\cap [\al,\b]}\left|\f{(\pa_v+\pa_{v'})^m}{\Gamma_s(m)M^m}|k|^{-1}(1-\pa_{vv}-\pa_{v'v'})\Big(\f{\mfq_{\al}(v,k)\mfq_{\b}(v',k)}{\mfq_{\b}(\al, k)}\Big)\right|dvdv'\leq C. 
\end{align*}
\beno
\left|\pa_v^{m-m_1}\pa_{v'}^{m_1}\Big(\f{\mfq_{\b}(v,k)\mfq_{\al}(v',k)}{\mfq_{\b}(\al, k)}\Big)\right|
\leq C\Gamma_s(m)M^m,
\eeno
and for $v'-\d<v<v'$
\beno
\left|\f{(\pa_v+\pa_{v'})^m}{\Gamma_s(m)M^m}\Big(\f{\mfq_{\al}(v,k)\mfq_{\b}(v',k)}{\mfq_{\b}(\al, k)}\Big)\right|\leq C. 
\eeno
\end{lemma}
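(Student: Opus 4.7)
The proof will reduce the estimates on $\mfq_{\b}(v,k)\mfq_{\al}(v',k)/\mfq_{\b}(\al,k)$ to the estimates on $\mfq_{\al}(v')/\mfq_{\al}(v)$ already established in the lemma immediately preceding Lemma~\ref{lem: G-regularity-1-v near v'}. The device is the identity
\begin{equation*}
\mfq_{\b}(v)=\frac{\mfq_{\al}(v)}{\mfq_{\al}(\b)}-h_1(\b)\mfq_{\al}'(\b)\,\mfq_{\al}(v)\int_{\b}^v\frac{dw}{h_1(w)\mfq_{\al}(w)^2},
\end{equation*}
recorded just after Lemma~\ref{lem: G-regularity-1}. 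Multiplying by $\mfq_{\al}(v')/\mfq_{\b}(\al)$ and using the Wronskian identity from Lemma~\ref{eq: mfq}, which gives $|h_1(\b)\mfq_{\al}'(\b)/\mfq_{\b}(\al)|=|h_1(\al)\mfq_{\b}'(\al)/\mfq_{\b}(\al)|\lesssim|k|$, yields
\begin{equation*}
\frac{\mfq_{\b}(v)\mfq_{\al}(v')}{\mfq_{\b}(\al)}=\frac{\mfq_{\al}(v)\mfq_{\al}(v')}{\mfq_{\al}(\b)\mfq_{\b}(\al)}+O(k)\int_v^{\b}\frac{1}{h_1(w)}\cdot\frac{\mfq_{\al}(v)}{\mfq_{\al}(w)}\cdot\frac{\mfq_{\al}(v')}{\mfq_{\al}(w)}\,dw.
\end{equation*}
The boundary term is controlled directly by Lemma~\ref{lem: boundary q/q} together with the monotonicity estimate \eqref{eq: q upperlower}: since $v\in[\al+\d,\b-\d]$, the prefactor $\mfq_{\al}(v)/\mfq_{\al}(\b)$ supplies the decay $e^{-C^{-1}k\d}$, which absorbs any polynomial-in-$k$ losses created by $(1-\pa_{vv}-\pa_{v'v'})$ and by differentiating with $(\pa_v+\pa_{v'})^m$.

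For the integral term, I would rewrite each ratio $\mfq_{\al}(\,\cdot\,)/\mfq_{\al}(w)$ through the WKB representation \eqref{eq: q/q by e/e}, which splits it into a Gevrey-smooth factor times $\mfe_{\al,+}(\,\cdot\,)/\mfe_{\al,+}(w)=\exp\!\bigl(-\int_{\cdot}^w\mfs_{\al,+}/h_1\bigr)$. Since $v,v'\leq w$ throughout the $w$-integration, each exponential carries decay $e^{-C^{-1}k(w-v)}$ respectively $e^{-C^{-1}k(w-v')}$ by Lemma~\ref{lem:mfs}. The integrand is thus the product of a Gevrey-regular function (whose bounds follow from Lemma~\ref{lem:mfs}, the composition Lemma~\ref{eq: composition}, and \eqref{eq:3.9}) and an exponentially decaying phase. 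The analysis then proceeds by mathematical induction in the total order of $(\pa_v+\pa_{v'})$-derivatives, exactly as in \eqref{eq: MI-ass}: any derivative landing on the phase extracts a factor of $k\sqrt{h_2/h_1}$, which is absorbed using $k^j e^{-C^{-1}k(w-v)}\lesssim (C(w-v))^{-j}\Gamma_s(j)$ together with the $w$-integration, which converts $k\cdot e^{-C^{-1}k(w-v)}$ into an $O(1)$ quantity — precisely why the $|k|^{-1}$ prefactor appears in the Laplacian-weighted statement and why an integration in $(v,v')$ (rather than a pointwise estimate) is needed.

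The main obstacle will be the Laplacian-weighted bound: $(1-\pa_{vv}-\pa_{v'v'})$ can generate $k^2$ factors, and one must carefully pair each extracted $k$ with either exponential decay or with the $w$-integration. The delicate case is when $\pa_v$ hits the lower limit of $\int_\b^v$, producing a boundary contribution whose integrand is $\mfq_{\al}(v')/(h_1(v)\mfq_{\al}(v))$ at $w=v$ — this is exactly the quantity controlled (with its $|k|^{-1}(1-\pa_{vv}-\pa_{v'v'})$-weighted integral) by the unnamed lemma preceding Lemma~\ref{lem: G-regularity-1-v near v'}, so the boundary terms close the induction. The pointwise bound $|({(\pa_v+\pa_{v'})^m}/{\Gamma_s(m)M^m})(\mfq_{\b}(v)\mfq_{\al}(v')/\mfq_{\b}(\al))|\leq C$ follows from the same decomposition, where the $w$-integration now produces $O(1/k)$ and cancels the $O(k)$ prefactor without needing the $|k|^{-1}$ weight. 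Finally, the symmetric statements for $\mfq_{\al}(v)\mfq_{\b}(v')/\mfq_{\b}(\al)$ in the region $v<v'$ follow by the same argument after swapping the roles of the two endpoints, using the analogous representation of $\mfq_{\al}$ in terms of $\mfq_{\b}$ obtained from $\mfq_{\b}$'s boundary conditions.
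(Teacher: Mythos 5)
The paper states Lemma \ref{lem: G-regularity-1-v near v'} without proof, leaving it to follow from the machinery just developed (the representation of $\mfq_{\b}$ in terms of $\mfq_{\al}$, Lemma \ref{lem: boundary q/q}, the unnamed lemma on $\mfq_{\al}(v')/\mfq_{\al}(v)$, and the WKB construction of Lemma \ref{lem:mfs}). Your proposal assembles exactly these ingredients in the intended way: the splitting into the boundary term $\mfq_{\al}(v)\mfq_{\al}(v')/(\mfq_{\al}(\b)\mfq_{\b}(\al))$ (killed by the $e^{-C^{-1}k\d}$ decay of Lemma \ref{lem: boundary q/q}) plus the $O(k)\int_v^{\b}$ term, the Wronskian bound $|h_1(\b)\mfq_{\al}'(\b)/\mfq_{\b}(\al)|\lesssim|k|$, the identification of the boundary contributions from differentiating the integration limit with the quantity controlled by the preceding unnamed lemma, and the symmetric representation of $\mfq_{\al}$ via $\mfq_{\b}$ for the region $v<v'$. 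This is the right proof.

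One step is stated imprecisely and would fail if used literally: the absorption ``$k^j e^{-C^{-1}k(w-v)}\lesssim (C(w-v))^{-j}\Gamma_s(j)$'' is the analogue of the paper's $(C|k|)^je^{-C^{-1}\d|k|}\lesssim(C\d)^{-j}\Gamma_s(j)$ with the \emph{fixed} separation $\d$ replaced by $w-v$, and $(w-v)^{-j}$ is neither bounded nor integrable near $w=v$ for $j\ge 1$. The correct mechanism — which the paper's own proof of the unnamed lemma exhibits via the bound on $(\pa_v+\pa_w)^m e^{\int_w^v k\sqrt{h_1h_2}}$ — is that after parametrizing the integral so that the integration variable moves with $v$ (e.g.\ $w=v+r$), the \emph{combined} derivative $(\pa_v+\pa_{v'})$ applied to the WKB phase produces only factors of size $O\big(k(r+|v-v'|)\big)$, each tamed pointwise by the matching exponential $e^{-C^{-1}k(r+|v-v'|)}$; the raw $O(k)$ prefactor from the Wronskian is then the only power of $k$ left, and it is cancelled either by the $w$-integration (for the $|k|^{-1}(1-\pa_{vv}-\pa_{v'v'})$-weighted integral bounds) or by the $O(1/k)$ size of $\int_v^{\b}e^{-2C^{-1}k(w-v)}dw$ (for the pointwise bounds). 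With this correction your induction closes and the proof is complete.
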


\begin{proposition}
Let $\widehat{\mathbf{G}}_{D}$ be the Fourier transform of the kernel $\mathbf{G}_{D}$ defined in \eqref{eq: Green-1}, namely, 
\beno
\widehat{\mathbf{G}}_{D}(k, \xi_1, \xi_2)=\f{1}{2\pi}\int_{\mathbb{R}^2}\widehat{\mathbf{G}}_{D}(v, v', k)\Upsilon(v)\Upsilon(v')e^{-i v\xi_1-i v'\xi_2}dvdv'.
\eeno
Then there is $\la_{\Delta}$ such that for all $k\in\mathbb{Z}$, 
\ben\label{eq: unbound Green}
|\widehat{\mathbf{G}}_{D}(k, \xi_1, \xi_2)|\leq C\min\left\{\f{e^{-\la_{\Delta}\langle\xi_1+\xi_2\rangle^{s}}}{1+k^2+\xi_1^2}, \f{e^{-\la_{\Delta}\langle\xi_1+\xi_2\rangle^{s}}}{1+k^2+\xi_2^2}\right\}.
\een
\end{proposition}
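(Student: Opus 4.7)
The plan is to apply (a slight elaboration of) the Remark following Lemma \ref{Rmk: fo-g-2}, which converts an $L^1$ Gevrey estimate for $(\pa_v+\pa_{v'})^m(1-\pa_{vv}-\pa_{v'v'})K$ into Fourier decay of the form $e^{-\la|\xi_1+\xi_2|^s}/(1+\xi_1^2+\xi_2^2)$. The task is to upgrade the factor $(1+\xi_1^2+\xi_2^2)^{-1}$ to $(1+k^2+\xi_j^2)^{-1}$, which I will do by showing instead that $(1+k^2-\pa_{vv})\big(\Upsilon(v)\Upsilon(v')\mathbf{G}_{D}(v,v',k)\big)$ satisfies a Gevrey-weighted $L^1$ estimate uniform in $k$. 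The key mechanism is the ODE identity $\pa_{vv}\mfq_{\al,\b}=(k^2h_2/h_1)\mfq_{\al,\b}-(h_1'/h_1)\pa_v\mfq_{\al,\b}$, which trades a $\pa_{vv}$ derivative for a $k^2$ factor.

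First I would decompose $\mathbf{G}_D$ into the four terms listed in \eqref{eq: Green-1}. The last two are ``separated'' products like $\mfq_{\al}(v)\mfq_{\al}(v')/\mfq_{\al}(\b)^2$, each carrying an explicit $|k|^{-1}$. On $\mathrm{supp}\,\Upsilon\times\mathrm{supp}\,\Upsilon$, Lemma \ref{lem: boundary q/q} yields Gevrey regularity of $\mfq_{\al}(\cdot)/\mfq_{\al}(\b)$ uniform in $k$; meanwhile $|\mfq_{\al}(v)/\mfq_{\al}(\b)|\lesssim e^{-C^{-1}|k|(\b-v)}$ supplies exponential smallness in $|k|$ which offsets the $k^2$ produced by $\pa_{vv}$ through the ODE. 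The net effect is that $(1+k^2-\pa_{vv})$ applied to these two pieces is Gevrey-bounded in $(\pa_v+\pa_{v'})^m$ and has $L^1$-norm $O(1)$.

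For the first two piecewise terms (supported on $v\gtrless v'$), I split the $(v,v')$-domain using a smooth cutoff in $v-v'$ into the off-diagonal region $|v-v'|\ge \d/2$ and the near-diagonal region $|v-v'|\le \d$. On the off-diagonal region I use Lemma \ref{lem: G-regularity-1}, combined with the sharp bound $\mfq_{\b}(v)\mfq_{\al}(v')/\mfq_{\b}(\al)\lesssim e^{-C^{-1}|k||v-v'|}$ (from Lemma \ref{eq: mfq} together with the Wronskian identity); this makes the $L^1$ estimate of $(1+k^2-\pa_{vv})$ applied to this piece trivial after again invoking the ODE. On the near-diagonal region, Lemma \ref{lem: G-regularity-1-v near v'} is designed precisely for this setting: it provides the Gevrey-weighted $L^1$ bound for $|k|^{-1}(1-\pa_{vv}-\pa_{v'v'})$ acting on $\mfq_{\b}(v)\mfq_{\al}(v')/\mfq_{\b}(\al)$, and combining this with the $|k|^{-1}$ prefactor in \eqref{eq: Green-1} gives exactly the required $(1+k^2+\xi_1^2)^{-1}$ weight after the Remark of Lemma \ref{Rmk: fo-g-2}.

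The main obstacle will be the jump of $\pa_v\mathbf{G}_D$ across $v=v'$, which produces a $\d(v-v')$ in $\pa_{vv}\mathbf{G}_D$ (the Green's function source). This delta has $L^1$ norm $O(1)$ when paired with $\Upsilon(v)\Upsilon(v')$, but it must be accounted for carefully because the $k^2\mathbf{G}_D$ term and the $\pa_{vv}\mathbf{G}_D$ term almost cancel (modulo the $(1-h_2/h_1)$ coefficient and the $\pa_v\mathbf{G}_D$ correction); this is the reason Lemma \ref{lem: G-regularity-1-v near v'} is formulated with the symmetric operator $(1-\pa_{vv}-\pa_{v'v'})$, which distributes the singularity symmetrically and is exactly matched to the Remark. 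Once the bound with $(1+k^2+\xi_1^2)^{-1}$ is established, the companion bound with $(1+k^2+\xi_2^2)^{-1}$ follows by the symmetric argument, interchanging $v\leftrightarrow v'$ (and correspondingly $\mfq_{\al}\leftrightarrow\mfq_{\b}$ in the relevant estimates), so that taking the minimum yields \eqref{eq: unbound Green}.
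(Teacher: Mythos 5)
Your proposal is correct and follows essentially the same route as the paper: the paper's proof is exactly the combination of the decomposition \eqref{eq: Green-1}, the derivative formulas \eqref{eq: pa_vK}--\eqref{eq: pa_vvK} (which package the ODE trade-off $\pa_{vv}\leftrightarrow k^2$ and the diagonal delta contribution), the regularity Lemmas \ref{lem: boundary q/q}, \ref{lem: G-regularity-1}, \ref{lem: G-regularity-1-v near v'}, and the Fourier conversion via Lemma \ref{Lem: A1} and Remark \ref{Rmk: improved kernel regularity}. The only cosmetic difference is that the paper displays the change of variables $(v,v')\mapsto(v,v'-v)$ to isolate the $\xi_1+\xi_2$ direction, whereas you achieve the same effect by working with the $(\pa_v+\pa_{v'})^m$ Gevrey norms directly.
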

\begin{proof}
Notice that 
\ben\label{eq: change variable green est}
\begin{aligned}
&\int_{\mathbb{R}^2}\widehat{\mathbf{G}}_{D}(v, v', k)\Upsilon(v)\Upsilon(v')e^{-i v\xi_1-i v'\xi_2}dvdv'\\
&=\int_{\mathbb{R}^2}\widehat{\mathbf{G}}_{D}(v, v+w, k)\Upsilon(v)\Upsilon(v+w)e^{-i v(\xi_1+\xi_2)-i v'\xi_2}dvdw
\end{aligned}
\een
The upper bound \eqref{eq: unbound Green} follows from \eqref{eq: Green-1}, \eqref{eq: pa_vK}, \eqref{eq: pa_vvK} Lemma \ref{lem: boundary q/q}, Lemma \ref{lem: G-regularity-1}, Lemma \ref{lem: G-regularity-1-v near v'}, Lemma \ref{Lem: A1}, Lemma \ref{lem: Fourier_type1}, Lemma \ref{Rmk: fo-g-2} and Remark \ref{Rmk: improved kernel regularity}. 
\end{proof}

By using \eqref{eq: Q-Psi} and \eqref{eq: coefficients}, it is easy for us to obtain the following proposition which is equivalent to Proposition \ref{prop: T_1}. 
\begin{proposition}\label{eq: elliptic estimate}
Let $\Psi$ solve \eqref{eq: steam function-ST} with $\Om$ compactly supported. Then there is $\mathcal{G}_{D,1}(t, k, \xi, \eta)$ such that for $k\neq 0$
\beno
\widehat{(\Psi\Upsilon)}(t, k, \xi)=\int_{\mathbb{R}}\mathcal{G}_{D,1}(t, k, \xi, \eta)\widehat{\Om}(t, k, \eta)d\eta,
\eeno
with estimate
\beno
|\mathcal{G}_{D,1}(t, k, \xi, \eta)|\leq C_{ell}\min\left\{\f{e^{-\la_{\Delta}\langle \xi-\eta\rangle^s}}{1+k^2+(\xi-kt)^2}, \f{e^{-\la_{\Delta}\langle \xi-\eta\rangle^s}}{1+k^2+(\eta-kt)^2}\right\}.
\eeno
\end{proposition}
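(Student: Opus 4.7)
The plan is to reduce Proposition \ref{eq: elliptic estimate} to the already-established kernel bound on $\widehat{\mathbf{G}}_D$ via the substitution $\mathfrak{Q}(t,k,v)=\widehat{\Psi}(t,k,v)e^{-itkv}$ introduced in \eqref{eq: Q-Psi}. Under this substitution, \eqref{eq: steam function-ST} becomes the Sturm--Liouville ODE \eqref{eq: SLequ} with $h_1=\widetilde{\varphi_2}\widetilde{u'}$, $h_2=\widetilde{\varphi_2}/\widetilde{u'}$ (as recorded in \eqref{eq: coefficients}) and source
\[
\mathfrak{F}(v,k)=\widetilde{u'}(v)^{-1}\widehat{\Om}(t,k,v)\,e^{-iktv}.
\]
Since $\Om$ has compact support strictly inside $[\alpha,\beta]$, we may freely insert the cut-off $\Upsilon$ on the source, reducing the recovery of $\Upsilon\mathfrak{Q}$ to the action of $\triangle_{D,k}^{-1}$ introduced in \eqref{eq:Def-inverlaplace}.

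Next I would invoke the kernel representation \eqref{eq: Green-1} to write $\Upsilon(v)\mathfrak{Q}(t,k,v)=\int \mathbf{G}_D(v,v',k)\Upsilon(v)\Upsilon(v')\mathfrak{F}(v',k)\,dv'$, and then return to the physical unknown. A direct computation using \eqref{eq: Q-Psi} gives
\[
\widehat{\Psi\Upsilon}(t,k,\xi)=\int e^{-iv(\xi-kt)}\Upsilon(v)\mathfrak{Q}(t,k,v)\,dv,
\]
so that, inserting the Green's function representation and performing a Fourier transform in $v'$, the output is a double integral against the kernel $\widehat{\mathbf{G}}_D(k,\xi_1,\xi_2)$ evaluated at shifted frequencies. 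Specifically, the variable $\xi$ gets shifted by $-kt$ on the output side (from the $e^{itkv}$) and the variable $\eta$ gets shifted by $-kt$ on the input side (from the $e^{-iktv'}$ inside $\mathfrak{F}$). With $\xi_1=\xi-kt$ and $\xi_2=kt-\eta$ one finds $\xi_1+\xi_2=\xi-\eta$, so the previously established bound
\[
|\widehat{\mathbf{G}}_D(k,\xi_1,\xi_2)|\lesssim \min\!\left\{\tfrac{e^{-\lambda_{\Delta}\langle\xi_1+\xi_2\rangle^{s}}}{1+k^2+\xi_1^2},\,\tfrac{e^{-\lambda_{\Delta}\langle\xi_1+\xi_2\rangle^{s}}}{1+k^2+\xi_2^2}\right\}
\]
translates exactly into the claimed estimate on $\mathcal{G}_{D,1}(t,k,\xi,\eta)$.

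The main technical points, and where I expect some care but no genuine obstacle, are the following. First, the factor $\widetilde{u'}^{-1}$ in $\mathfrak{F}$ produces a convolution in Fourier with a Gevrey-class function with rapidly decaying Fourier transform; this only shrinks the Gevrey radius slightly, which is absorbed into $\lambda_\Delta$ by the standard elementary inequality used throughout the appendix (cf.\ the use in Lemma \ref{Rmk: fo-g-2}). Second, the cut-off $\Upsilon$ applied to $\mathfrak{Q}$ on the output side produces an outer convolution against $\widehat{\Upsilon}$, handled identically. Third, one must confirm that the two bounds in the minimum are preserved separately after the two shifts: the $(1+k^2+\xi_1^2)^{-1}$ bound controls the shifted output variable $\xi-kt$, while the $(1+k^2+\xi_2^2)^{-1}$ bound controls the shifted input variable $\eta-kt$; both come directly from the corresponding bounds on $\widehat{\mathbf{G}}_D$.

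The genuine difficulty has therefore been pushed entirely into the preceding proposition's kernel estimate, whose proof is the heart of Appendix \ref{sec: ST equ} and rests on the Gevrey-uniform bounds for $\mathfrak{q}_\alpha,\mathfrak{q}_\beta$ established in Lemmas \ref{eq: mfq}--\ref{lem: G-regularity-1-v near v'}. Given that input, the statement of Proposition \ref{eq: elliptic estimate} is essentially a change of unknown together with a Fourier-side bookkeeping of two frequency translations.
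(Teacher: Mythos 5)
Your proposal is correct and is exactly the paper's route: the paper dispatches this proposition in one line ("By using \eqref{eq: Q-Psi} and \eqref{eq: coefficients}, it is easy for us to obtain..."), i.e.\ the substitution $\mathfrak{Q}=\mathcal{F}_1[\Psi]e^{-itkv}$, the Green's function representation via $\triangle_{D,k}^{-1}$, and the two frequency shifts $\xi_1=\xi-kt$, $\xi_2=kt-\eta$ with $\xi_1+\xi_2=\xi-\eta$, which carry the bound on $\widehat{\mathbf{G}}_D$ into the claimed bound on $\mathcal{G}_{D,1}$. Your bookkeeping of the shifts and your treatment of the harmless Gevrey factors $\widetilde{u'}^{-1}$ and $\Upsilon$ are all consistent with what the paper intends.
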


Notice that from $\Delta \psi=\om$, we have
\beq\label{eq: steam function-2}
\left\{\begin{aligned}
&\pa_{zz}\Psi+\widetilde{u'}(\pa_v-t\pa_z)\Big(\widetilde{u'}(\pa_v-t\pa_z)\Psi\Big)=\udl{\om}\\
&\Psi(t, z, u(0))=\Psi(t, z, u(1))=0.
\end{aligned}\right.
\eeq
If we set $\mathfrak{Q}=\mathcal{F}_1(\Psi)(t,k,v)e^{-itkv}$ and $h_1(v)=\widetilde{u'}(v)$, $h_2(v)=\widetilde{u'}(v)^{-1}$ and $\mathfrak{F}=\mathcal{F}_1[\udl{\om}]e^{-itkv}$, then we also obtain the Sturm-Liouville Equation \eqref{eq: SLequ}. We then have the following corollary:
\begin{corol}\label{corol: psi-om}
Let $\Psi$ solve \eqref{eq: steam function-2} with $\udl{\om}$ compactly supported. Then there is $\mathcal{G}_2(t, k, \xi, \eta)$ such that 
\beno
\widehat{(\Psi\Upsilon)}(t, k, \xi)=\int_{\mathbb{R}}\mathcal{G}_2(t, k, \xi, \eta)\widehat{\udl{\om}}(t, k, \eta)d\eta,
\eeno
with estimate
\beno
|\mathcal{G}_2(t, k, \xi, \eta)|\leq C_{ell}\min\left\{\f{e^{-\la_{\Delta}\langle \xi-\eta\rangle^s}}{1+k^2+(\xi-kt)^2}, \f{e^{-\la_{\Delta}\langle \xi-\eta\rangle^s}}{1+k^2+(\eta-kt)^2}\right\}.
\eeno
\end{corol}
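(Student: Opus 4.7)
\medskip

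\noindent\textbf{Proof proposal for Corollary \ref{corol: psi-om}.}

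The plan is to observe that the statement is a direct specialization of Proposition \ref{eq: elliptic estimate} to the elliptic operator $\Delta$ written in the moving variables $(z,v)$. First I would take the Fourier transform in $z$ of \eqref{eq: steam function-2} and introduce the unknown $\mathfrak{Q}(v,k) = \mathcal{F}_1(\Psi)(t,k,v)\, e^{-itkv}$, exactly as suggested in the paragraph preceding the corollary. Writing $(\pa_v - t\pa_z)$ in terms of a conjugation by $e^{itkv}$, the equation becomes the Sturm--Liouville problem \eqref{eq: SLequ} with coefficients $h_1(v) = \widetilde{u'}(v)$, $h_2(v) = \widetilde{u'}(v)^{-1}$ and forcing $\mathfrak{F}(v,k) = \mathcal{F}_1[\udl{\om}](t,k,v)\, e^{-itkv}$, together with Dirichlet boundary conditions at $v = u(0), u(1)$.

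Next I would verify that the hypotheses of Section \ref{sec: SL-D} are met for this particular pair $(h_1, h_2)$. Monotonicity of $u$ on $[0,1]$ with $u' \geq c_0 > 0$ gives positivity of both $h_1$ and $h_2$. The compact support condition $\mathrm{supp}\, u'' \subset [4\kappa_0, 1 - 4\kappa_0]$ translates into $\widetilde{u'}$ being constant near each endpoint of $[u(0), u(1)]$, so $h_1', h_2'$ have compact support strictly inside the interval, as required. The Gevrey regularity assumption (5) of Theorem \ref{Thm: main} on $u''$, combined with the composition and inversion lemmas (Lemma \ref{eq: composition} and Lemma \ref{eq: inverse-gevrey}), ensures that $h_1, h_2 \in \mathcal{G}_{ph}^{M,s}([u(0),u(1)])$ for some $M$ depending only on the background flow.

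With these verifications in hand, I would invoke Proposition \ref{eq: elliptic estimate} verbatim: it produces a kernel $\mathcal{G}_{D,1}(t,k,\xi,\eta)$ representing $\widehat{(\Psi \Upsilon)}(t,k,\xi)$ as an integral against $\widehat{\udl{\om}}(t,k,\eta)$, with the bound
\[
|\mathcal{G}_{D,1}(t,k,\xi,\eta)| \leq C_{ell}\min\!\left\{\frac{e^{-\la_{\Delta}\langle \xi-\eta\rangle^s}}{1+k^2+(\xi-kt)^2},\ \frac{e^{-\la_{\Delta}\langle \xi-\eta\rangle^s}}{1+k^2+(\eta-kt)^2}\right\}.
\]
Setting $\mathcal{G}_2 = \mathcal{G}_{D,1}$ yields the corollary. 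The conjugation factors $e^{-itkv}$ on the $v$-side and $e^{itkv'}$ on the $v'$-side introduced in \eqref{eq: change variable green est} are exactly what encode the $t$-dependence $(\xi - kt)$, $(\eta - kt)$ in the Fourier bound, so no further work is required.

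The only minor obstacle I anticipate is bookkeeping the cutoff $\Upsilon$: strictly speaking Proposition \ref{eq: elliptic estimate} gives a kernel for $\widehat{(\Psi\Upsilon)}$ against $\widehat{\Xi\Upsilon}$ for a compactly supported source, while here $\udl{\om}$ is already compactly supported inside $\mathrm{supp}\, \Upsilon$ by the bootstrap support property (B2) combined with the choice of $\Upsilon_2 \equiv 1$ on a slightly larger set. Thus multiplying the source by $\Upsilon$ does nothing and the conclusion transfers with the same constant $C_{ell}$ and the same $\la_\Delta$. No new Fourier analysis or Green's-function construction is needed beyond what was already done for the Dirichlet Sturm--Liouville problem.
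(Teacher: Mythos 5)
Your proposal is correct and follows essentially the same route as the paper: reduce \eqref{eq: steam function-2} after Fourier transform in $z$ and conjugation by $e^{-itkv}$ to the Sturm--Liouville problem \eqref{eq: SLequ} with $h_1=\widetilde{u'}$, $h_2=\widetilde{u'}^{-1}$, check the coefficient hypotheses of Section \ref{sec: SL-D}, and read off the kernel bound. One small imprecision: Proposition \ref{eq: elliptic estimate} is stated for the coefficient pair $(h_1,h_2)$ arising from $\tilde\Delta$ (i.e.\ involving $\widetilde{\varphi_2}$ or $\widetilde{\varphi_4}$), so it cannot be invoked ``verbatim'' here; what you actually need to invoke is the general Green's-function analysis of the Dirichlet Sturm--Liouville problem from Appendix \ref{sec: SL-D} (Lemmas \ref{eq: mfq}--\ref{lem: G-regularity-1-v near v'} together with Lemma \ref{Lem: A1}, Lemma \ref{lem: Fourier_type1}, etc.) applied to the new pair $(\widetilde{u'},\widetilde{u'}^{-1})$, which you do correctly verify to satisfy the required positivity, boundary-constancy, and Gevrey hypotheses. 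With that wording fixed, the argument is exactly the one intended by the paper.
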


\subsection{Neumann boundary conditions}\label{sec: SL-N}
We also study the general Sturm-Liouville Equation with Neumann boundary conditions
\ben\label{eq: Sturm-Liouville Equation-N}
\left\{\begin{aligned}
&\pa_v(h_1(v)\pa_v\mathfrak{R})-k^2h_2(v)\mathfrak{R}=\mathfrak{G}(v, k)\\
&\pa_v\mathfrak{R}(\al, k)=\pa_v\mathfrak{R}(\b, k)=0
\end{aligned}\right.
\een
where $h_1, h_2$ are positive functions and $\mathrm{supp}\, h_j' \subset [\al_1, \b_1]$ for $j=1,2$. We also assume that 
$h_1', h_2'\in \mathcal{G}^{M, s}_{ph}([\al, \b])$. 

By Lemma \ref{eq: mfq}, it is easy to get that
\ben\label{eq: Rep-R-G}
\begin{aligned}
\mathfrak{R}(v, k)&=\f{1}{h_1(v)}\f{\mfq_{\b}(v,k)}{W[\mfq_{\b}, \mfq_{\al}]}\int_{0}^v\mfq_{\al}(v',k)\mathfrak{G}(v',k)h_1(v')dv'\\
&\quad+\f{1}{h_1(v)}\f{\mfq_{\al}(v,k)}{W[\mfq_{\b}, \mfq_{\al}]}\int_v^1\mfq_{\b}(v',k)\mathfrak{G}(v',k)h_1(v')dv',
\end{aligned}
\een
with 
\beno
W[\mfq_{\b}, \mfq_{\al}](v)=-W[\mfq_{\al}, \mfq_{\b}](v)=-\f{\mfq_{\b}'(\al)h_1(\al)}{h_1(v)}=\f{\mfq_{\al}'(\b)h_1(\b)}{h_1(v)}>0. 
\eeno
We now introduce the Fourier kernel for the Sturm-Liouville type elliptic problem \eqref{eq: Sturm-Liouville Equation-N}. For any given fixed $k$, and positive Gevrey functions $h_1, h_2 \in \mathcal{G}^{M, s}_{ph}([\al, \b])$ with constant values near the boundary and any Gevrey cut-off function $\Upsilon$ with compact support $\mathrm{supp}\,\Upsilon\subset (\al, \b)$, we define a linear operator $\triangle_{N, k}^{-1}$ to be such that for a function $\mathfrak{G}$, 
\ben\label{eq:Def-inverlaplace-N}
\triangle_{N, k}^{-1}\mathfrak{G}=\Upsilon\mathfrak{R}
\een 
where $\mathfrak{R}$ solves
\beno
\left\{\begin{aligned}
&\pa_v(h_1(v)\pa_v\mathfrak{R})-k^2h_2(v)\mathfrak{R}=\mathfrak{G}(v, k)\Upsilon(v)\\
&\pa_v\mathfrak{R}(\al, k)=\pa_v\mathfrak{R}(\b, k)=0.
\end{aligned}\right.
\eeno
By \eqref{eq: Rep-R-G}, we have for $k\neq 0$
\begin{align*}
\triangle_{N, k}^{-1}\mathfrak{G}=\int_{0}^1\mathbf{G}_N(v, v', k)\Upsilon(v)\Upsilon(v')\mathfrak{G}(v', k)dv'
\end{align*}
with 
\beq\label{eq: green-N}
\begin{aligned}
\mathbf{G}_N(v, v', k)&=\f{1}{h_1(v)}\f{\mfq_{\b}(v,k)}{W[\mfq_{\b}, \mfq_{\al}]}\mfq_{\al}(v',k)h_1(v')\chi_{\mathbb{R}^+}(v-v')\\
&\quad+\f{1}{h_1(v)}\f{\mfq_{\al}(v,k)}{W[\mfq_{\b}, \mfq_{\al}]}\mfq_{\b}(v',k)h_1(v')\chi_{\mathbb{R}^+}(v'-v).
\end{aligned}
\eeq
Similarly to Proposition \ref{eq: elliptic estimate}, by using the representation formula of the Green function \eqref{eq: green-N} and applying Lemma \ref{lem: G-regularity-1} and Lemma \ref{Lem: A1}, we have the following proposition. 
\begin{proposition}\label{eq: elliptic estimate-N}
Let $\Pi$ solve \eqref{eq: pressure-lin} with $\Xi$ compactly supported. Then there is $\mathcal{G}_{N}(t, k, \xi, \eta)$ such that 
\beno
\widehat{(\Pi\Upsilon)}(t, k, \xi)=\int_{\mathbb{R}}\mathcal{G}_{N}(t, k, \xi, \eta)\widehat{\Xi}(t, k, \eta)d\eta,
\eeno
with estimate
\beno
|\mathcal{G}_{N}(t, k, \xi, \eta)|\leq C\min\left\{\f{e^{-\la_{\Delta}\langle \xi-\eta\rangle^s}}{1+k^2+(\xi-kt)^2}, \f{e^{-\la_{\Delta}\langle \xi-\eta\rangle^s}}{1+k^2+(\eta-kt)^2}\right\}.
\eeno
\end{proposition}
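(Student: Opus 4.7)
The plan is to follow the proof of the Dirichlet counterpart (Proposition \ref{eq: elliptic estimate}) essentially verbatim, replacing the Dirichlet Green's function \eqref{eq: Green-1} by its Neumann analogue \eqref{eq: green-N}. First, I pass to the Fourier transform in $z$ and make the change of unknown $\mathfrak{R}(v,k) = \mathcal{F}_1[\Pi](t,k,v)\,e^{-itkv}$, which converts \eqref{eq: pressure-lin} into the Sturm-Liouville problem \eqref{eq: Sturm-Liouville Equation-N} with source $\mathfrak{G}(v,k) = \widetilde{u'}(v)^{-1}\mathcal{F}_1[\Xi](t,k,v)\,e^{-itkv}$, as already recorded in \eqref{eq: coefficients-2}. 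The representation formula \eqref{eq: Rep-R-G} then yields $(\Pi\Upsilon)^\wedge(t,k,\xi)$ as an integral operator in $v$ with kernel $\mathbf{G}_N(v,v',k)\Upsilon(v)\Upsilon(v')$, modulated by the phase $e^{-itk(v-v')}$ coming from composing the two changes of variable.

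Second, I compute $\mathcal{G}_N(t,k,\xi,\eta)$ by taking the Fourier transform in $(v,v')$, following the device in \eqref{eq: change variable green est}: after the substitution $(v,v')\mapsto (v,w)$ with $w=v'-v$, the phase factorizes as $e^{-iv(\xi+\eta)}e^{-iw(\eta+kt)}$, so the shift $\eta-kt$ (resp.\ $\xi-kt$ after the symmetric substitution $w=v-v'$) emerges automatically from the $w$-integral. Lemma \ref{lem: Fourier_type1} and Lemma \ref{Rmk: fo-g-2} (together with Remark \ref{Rmk: improved kernel regularity}) then convert Gevrey regularity in $(v,w)$ into a factor $e^{-\la_{\Delta}\langle \xi+\eta-0\rangle^s}$; here what plays the role of the frequency variable $\eta-\xi$ in the Dirichlet statement reappears as the required $\langle \xi-\eta\rangle^s$ factor after adjusting for the sign conventions of $\Xi$ versus $\Om$.

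Third, to produce the $(1+k^2+(\xi-kt)^2)^{-1}$ (and symmetric $(\eta-kt)$) denominator, I exploit the fact that $\mathbf{G}_N$ solves the Sturm-Liouville ODE away from the diagonal. Applying $(1-\pa_v^2-\pa_{v'}^2)$ to $\mathbf{G}_N$ produces a function comparable in size to $1+k^2$ off-diagonal plus a boundary-jump contribution on the diagonal $v=v'$; combined with the factor $(\xi-kt)^2$ brought out by the two $\pa_v$ derivatives through the phase $e^{-itkv}$, this is exactly the mechanism that gives the elliptic denominator. The Gevrey bounds in Lemma \ref{lem: G-regularity-1} handle the off-diagonal regime $|v-v'|\ge \delta$ pointwise, while Lemma \ref{lem: G-regularity-1-v near v'} supplies the integrated $L^1$-type bound needed for the near-diagonal regime, which is precisely the hypothesis \eqref{eq: new-regularity-assu} of Remark \ref{Rmk: improved kernel regularity}.

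The main obstacle, as in the Dirichlet case, is the near-diagonal regime $|v-v'|<\delta$, where the kernel $\mathbf{G}_N$ is only Lipschitz (the $v$-derivative jumps at $v=v'$), so pointwise Gevrey regularity fails and one must use the weaker integrated bound. This is the content of Lemma \ref{lem: G-regularity-1-v near v'}, which depends on the WKB expansion of Lemma \ref{lem:mfs} and the representation \eqref{eq: q/q by e/e} of $\mfq_\al(v')/\mfq_\al(v)$ as a product of an exponential of the action integral $k\int\sqrt{h_1 h_2}$ with a Gevrey-smooth correction. Since the Neumann Green's function \eqref{eq: green-N} has strictly fewer boundary terms than the Dirichlet one (there is no analogue of the $\mathfrak{C}^2_{\al,\b}$ and $\mathfrak{C}^3_{\al,\b}$ reflection terms in \eqref{eq: Green-1}), and since the auxiliary derivative formulas \eqref{eq: pa_vK}-\eqref{eq: pa_vvK} simplify accordingly with no new singular behavior, the remaining steps are a direct transcription of the Dirichlet proof and no additional estimate is required.
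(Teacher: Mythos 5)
Your proposal is correct and follows essentially the same route as the paper: the paper's own proof of this proposition consists of exactly the reduction to the Sturm--Liouville problem \eqref{eq: Sturm-Liouville Equation-N}, the Neumann Green's function \eqref{eq: green-N}, and the kernel regularity lemmas (Lemma \ref{lem: G-regularity-1}, Lemma \ref{lem: G-regularity-1-v near v'}, Lemma \ref{Lem: A1}) already established for the Dirichlet case. Your write-up is in fact more explicit than the paper's one-line argument (in particular in spelling out the near-diagonal $L^1$ mechanism of Remark \ref{Rmk: improved kernel regularity} and the absence of the $\mathfrak{C}^2_{\al,\b},\mathfrak{C}^3_{\al,\b}$ reflection terms); the only blemish is a sign slip in the factorized phase, which should read $e^{-iv(\xi-\eta)}e^{iw(\eta-kt)}$ after the substitution $w=v'-v$, but this does not affect the argument.
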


\section{Properties of multiplier $\rmA$}\label{sec: operator A}
In this section, we list some key properties of the multiplier $\rmA$ from \cite{BM2015}. 
\begin{lemma}[Lemma 3.3 in \cite{BM2015}]\label{Lem 3.3}
For $t\in {\rm{I}}_{k,\eta}$ and $t>2\sqrt{|\eta|}$, we have the following with $\tau=t-\f{\eta}{k}$
\begin{align*}
\f{\pa_tw_{\rmN\rmR}(t,\eta)}{w_{\rmN\rmR}(t,\eta)}\approx \f{1}{1+|\tau|}\approx \f{\pa_tw_{\rmR}(t,\eta)}{w_{\rmR}(t,\eta)}.
\end{align*}
\end{lemma}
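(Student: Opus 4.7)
The plan is to differentiate the explicit piecewise formulas for $w_{\mathrm{NR}}$ and $w_{\mathrm{R}}$ given by \eqref{eq: w_NR-R}--\eqref{eq: w_R-L} and reduce the comparison to uniform two-sided bounds on the coefficients $a_{k,\eta}$ and $b_{k,\eta}$. The key preliminary observation is that on $\mathrm{I}_{k,\eta}$ one has $t\approx |\eta/k|$, so $t>2\sqrt{|\eta|}$ forces $|k|\leq \tfrac12\sqrt{|\eta|}$, i.e.\ $k^2/|\eta|\leq 1/4$. Feeding this into the explicit expressions \eqref{eq: b_k,eta}, \eqref{eq: a_k,eta} and treating the case $k=1$ separately (where $b_{1,\eta}=1-1/\eta$), one sees immediately that both $a_{k,\eta}$ and $b_{k,\eta}$ lie in a fixed interval $[c_0, C_0]\subset(0,\infty)$ with constants independent of $k,\eta$.

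Next I would work on the two halves of the critical interval separately. On the right half $\mathrm{I}^{\mathrm{R}}_{k,\eta}$, taking $\log$ of \eqref{eq: w_NR-R} and differentiating gives
\[
\frac{\partial_t w_{\mathrm{NR}}(t,\eta)}{w_{\mathrm{NR}}(t,\eta)}
= \frac{C\kappa_1\, b_{k,\eta}}{1+b_{k,\eta}\,\tau},
\qquad \tau=t-\eta/k\geq 0,
\]
while combining \eqref{eq: w_R-R} with the previous line (again via a log-derivative) yields
\[
\frac{\partial_t w_{\mathrm{R}}(t,\eta)}{w_{\mathrm{R}}(t,\eta)}
= \frac{(1+C\kappa_1)\, b_{k,\eta}}{1+b_{k,\eta}\,\tau}.
\]
By the coefficient bounds above, both quantities are $\approx 1/(1+|\tau|)$. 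On the left half $\mathrm{I}^{\mathrm{L}}_{k,\eta}$ the analogous computation from \eqref{eq: w_NR-L} and \eqref{eq: w_R-L}, keeping track that $\partial_t|\tau|=-1$ there, gives the symmetric identities $\partial_t w_{\mathrm{NR}}/w_{\mathrm{NR}}=(1+C\kappa_1)a_{k,\eta}/(1+a_{k,\eta}|\tau|)$ and $\partial_t w_{\mathrm{R}}/w_{\mathrm{R}}=C\kappa_1 a_{k,\eta}/(1+a_{k,\eta}|\tau|)$, both again $\approx 1/(1+|\tau|)$.

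The only real point of care — which I would flag as the sole obstacle — is that the bound $a_{k,\eta},b_{k,\eta}\geq c_0>0$ actually uses the hypothesis $t>2\sqrt{|\eta|}$. Without it, $k$ could be as large as $\mathrm{E}(\sqrt{|\eta|})$, in which case $1-k^2/\eta$ is not bounded away from $0$ and the factor $b_{k,\eta}$ in the denominator $1+b_{k,\eta}\tau$ (which controls the size of $(1+|\tau|)^{-1}$) would degenerate, breaking the lower bound in the $\approx$ comparison. Once this coefficient control is in place, the proof reduces to the elementary log-derivative calculation sketched above and the two-sided comparison follows without further work.
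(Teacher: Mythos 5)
The paper does not prove this lemma at all: it is imported verbatim from \cite{BM2015} (hence the bracketed attribution in the statement), so there is no in-paper argument to compare against. Your proof is the standard direct verification and is essentially correct: the log-derivative identities on $\mathrm{I}^{\mathrm{R}}_{k,\eta}$ and $\mathrm{I}^{\mathrm{L}}_{k,\eta}$ (including the sign from $\partial_t|\tau|=-1$ on the left half) are all right, and you correctly identify that the whole lemma reduces to two-sided bounds on $a_{k,\eta}$, $b_{k,\eta}$, since $b/(1+b|\tau|)\approx 1/(1+|\tau|)$ once $c_0\le b\le C_0$.

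One caveat on the step you call immediate. For $k\ge2$ your reduction is airtight: $t>2\sqrt{|\eta|}$ together with $t\le t_{k-1,\eta}=\tfrac{\eta(2k-1)}{2k(k-1)}$ forces $\sqrt{\eta}>\tfrac{4k(k-1)}{2k-1}$, hence $k^2/\eta<\tfrac{(2k-1)^2}{16(k-1)^2}\le 9/16$, so $1-k^2/\eta\ge 7/16$ uniformly. But for $k=1$ the separate definition $b_{1,\eta}=1-1/\eta$ is \emph{not} bounded below by the hypothesis $t>2\sqrt{|\eta|}$ alone: the constraint only requires $\eta>1$ (so that $(2\sqrt{\eta},2\eta]$ is nonempty), and as $\eta\downarrow 1$ one has $b_{1,\eta}\to 0$ while $|\tau|$ stays of order one, so the lower bound in the first $\approx$ degenerates. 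This borderline regime is present in the original \cite{BM2015} statement as well and is immaterial in applications (the weight is trivial for $|\eta|\lesssim 1$), but if you want a self-contained proof you should either restrict to $|\eta|\ge 2$, say, or note explicitly that the implicit constants are only claimed uniformly for $|\eta|$ bounded away from $1$.
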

\begin{lemma}[Lemma 3.4 in \cite{BM2015}]\label{Lem 3.4}
The following holds. 
\begin{enumerate}
\item For $t\geq 1$, and $k, l, \eta, \xi$ such that $\max\{2\sqrt{|\xi|},2\sqrt{|\eta|}\}<t<2\min\{|\xi|,|\eta|\}$, 
\beno
\f{\pa_tw_{k}(t,\eta)}{w_{k}(t,\eta)}\f{w_{l}(t,\xi)}{\pa_tw_{l}(t,\xi)}\lesssim \langle\eta-\xi\rangle.
\eeno
\item For all $t\geq 1$, and $k, l, \eta, \xi$, such that for some $\al\geq 1$, $\f{1}{\al}|\xi|\leq |\eta|\leq \al|\xi|$,
\beno
\sqrt{\f{\pa_tw_{l}(t,\xi)}{w_{l}(t,\xi)}}\lesssim_{\al}\left[\sqrt{\f{\pa_tw_{k}(t,\eta)}{w_{k}(t,\eta)}}+\f{|\eta|^{\f{s}{2}}}{\langle t\rangle^s}\right]\langle \eta-\xi\rangle.
\eeno
\end{enumerate}
\end{lemma}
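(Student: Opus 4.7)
Since the lemma is lifted from \cite{BM2015}, my plan is to reconstruct its proof by reducing everything to the asymptotic in Lemma \ref{Lem 3.3} and a careful case split based on whether $t$ sits inside the resonant interval for one (or both) of the pairs $(k,\eta)$, $(l,\xi)$. The key structural fact is that outside the active range $[t_{\rmE(\sqrt{|\eta|}),\eta}, 2|\eta|]$ the weight $w_k(t,\eta)$ is independent of $t$, so the left-hand sides vanish trivially in those regions; the substance of the lemma lies in the active range where Lemma \ref{Lem 3.3} applies.

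For part (1), the hypothesis $\max\{2\sqrt{|\xi|},2\sqrt{|\eta|}\}<t<2\min\{|\xi|,|\eta|\}$ locates $t$ in the active range of both $w_k(\cdot,\eta)$ and $w_l(\cdot,\xi)$, so I can choose resonant indices $k_0\in[1,\rmE(\sqrt{|\eta|})]$ and $l_0\in[1,\rmE(\sqrt{|\xi|})]$ with $t\in \mathrm{I}_{k_0,\eta}$ and $t\in \mathrm{I}_{l_0,\xi}$ (or the non-resonant companions, which are easier because the logarithmic derivative there is dominated by the resonant one). Lemma \ref{Lem 3.3} reduces the target ratio to
\[
\f{\pa_t w_k(t,\eta)}{w_k(t,\eta)}\f{w_l(t,\xi)}{\pa_t w_l(t,\xi)}\ \approx\ \f{1+|t-\xi/l_0|}{1+|t-\eta/k_0|},
\]
and by the triangle inequality the numerator is at most $(1+|t-\eta/k_0|)+|\eta/k_0-\xi/l_0|$. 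Thus the whole argument boils down to the pointwise estimate
\[
|\eta/k_0-\xi/l_0|\ \lesssim\ \langle\eta-\xi\rangle\bigl(1+|t-\eta/k_0|\bigr),
\]
which I would verify by splitting $\eta/k_0-\xi/l_0=(\eta-\xi)/k_0+\xi(l_0-k_0)/(k_0l_0)$: the first summand is controlled directly by $|\eta-\xi|$, while for the second I exploit that both $k_0$ and $l_0$ are the integer parts of $\eta/t$ and $\xi/t$ respectively (up to order-one factors determined by the definition of $\mathrm{I}_{k,\eta}$), so $|k_0-l_0|\lesssim |\eta-\xi|/t$ and combining with $|\xi|\lesssim k_0 l_0 t$ closes the estimate.

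For part (2), I split on whether $t\in[2\sqrt{|\xi|},2|\xi|]$. If $t>2|\xi|$ the left-hand side vanishes; if $t<2\sqrt{|\xi|}$ then $w_l(t,\xi)$ is frozen at its value at $t_{\rmE(\sqrt{|\xi|}),\xi}$, so the left-hand side is again zero. Otherwise $t$ is in the active range for $(l,\xi)$, and by $|\eta|\approx_{\alpha}|\xi|$ the active range for $(k,\eta)$ coincides with that for $(l,\xi)$ up to constants. If in addition $t$ lies in a critical interval for $(k,\eta)$, part (1) already gives the conclusion with the first summand on the right. If $t$ is not in any critical interval for $(k,\eta)$, then $|t-\eta/k|\gtrsim\sqrt{|\eta|}$ for every admissible $k$, so by Lemma \ref{Lem 3.3} $\sqrt{\pa_t w_l/w_l}\lesssim 1/\sqrt{1+|t-\xi/l_0|}$ and the bound $|\xi|^{s/2}/\langle t\rangle^s\gtrsim 1/\langle t\rangle^{s/2}$ absorbs it, using $t\lesssim|\xi|\approx|\eta|$ and $s<1$.

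\textbf{Main obstacle.} The real work is the pointwise estimate for $|\eta/k_0-\xi/l_0|$ in part (1) and the coordination of $k_0$ versus $l_0$ when $\eta$ and $\xi$ can be separated: I need $k_0\approx l_0$ to a sharp enough degree that $|k_0-l_0|/(k_0 l_0)\lesssim \langle\eta-\xi\rangle/|\xi|$, and this is where the definitions \eqref{eq: b_k,eta}, \eqref{eq: a_k,eta} of the interval widths enter explicitly. The remaining case analysis is mechanical once this estimate is in hand.
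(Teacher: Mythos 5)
The paper does not prove this lemma; it is lifted verbatim from \cite{BM2015} and cited. So there is no internal proof to compare against, and the question is simply whether your reconstruction would go through. Your overall strategy---reduce via Lemma \ref{Lem 3.3} to the asymptotic $\partial_t w/w \approx (1+|\tau|)^{-1}$ with $\tau$ measured to the nearest resonant time, then do the case analysis on critical intervals---is the standard one, and the decomposition $\eta/k_0 - \xi/l_0 = (\eta-\xi)/k_0 + \xi(l_0-k_0)/(k_0l_0)$ is a reasonable way to organize part (1). But two concrete steps in your sketch do not hold as written.

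For part (1), the claim $|k_0-l_0|\lesssim |\eta-\xi|/t$ is false near boundaries. The resonant indices are roughly the nearest integers to $\eta/t$ and $\xi/t$, so one only gets $|k_0-l_0|\lesssim 1 + |\eta-\xi|/t$; the additive $1$ cannot be dropped, since $\eta$ and $\xi$ can both be arbitrarily close to a common boundary value $a_m t$ (the point where $I_{m,\cdot}$ and $I_{m-1,\cdot}$ meet as $\eta$-intervals) while $k_0 = l_0\pm 1$. In that regime $|\xi(l_0-k_0)|/(k_0l_0)\approx t/k_0$, which is not $\lesssim\langle\eta-\xi\rangle$. What saves you is precisely the factor $(1+|t-\eta/k_0|)$ that you quoted but then did not use: when $\eta$ is near $a_{k_0}t$ one has $t\approx t_{k_0-1,\eta}$, the right endpoint of $I_{k_0,\eta}$, and there $t-\eta/k_0 = \frac{\eta}{2k_0(k_0-1)}\approx t/k_0$, so $1+|t-\eta/k_0|\gtrsim t/k_0$ absorbs the bad term. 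You need to make this case explicit; "closes the estimate" as stated is a genuine gap.

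For part (2), the absorption step is wrong. You argue $|\xi|^{s/2}/\langle t\rangle^s \gtrsim \langle t\rangle^{-s/2}$ and claim this absorbs $\sqrt{\partial_t w_l/w_l}\lesssim (1+|\tau|)^{-1/2}$, but that requires $1+|\tau|\gtrsim \langle t\rangle^s$, which fails when $t$ is exactly (or nearly) a critical time for $(l_0,\xi)$ so that $\tau=0$ and the left-hand side is of order $1$. What actually works in the region where $w_k(\cdot,\eta)$ is frozen because $t\lesssim \sqrt{|\eta|}$ is the cruder but correct observation $|\eta|^{s/2}/\langle t\rangle^s\gtrsim_\alpha 1$, which directly dominates the trivial bound $\sqrt{\partial_t w_l/w_l}\leq 1$. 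You also omitted the case $2|\eta| < t < 2|\xi|$ (the other edge where the active windows of $(k,\eta)$ and $(l,\xi)$ fail to coincide); there $\partial_t w_k=0$ but $\partial_t w_l\neq 0$, and one must check that either $|\tau|$ is forced large (so the left side is small) or $|\eta-\xi|\gtrsim |\xi|$, using $\eta<t/2<\xi$ together with $|\eta|\approx_\alpha|\xi|$. Both issues are fixable, but they are exactly the places where the mechanical case analysis has content, and your sketch passes over them.
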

\begin{lemma}[Lemma 3.5 in \cite{BM2015}]\label{Lem 3.5}
For all $t, \eta, \xi$, we have
\beno
\f{w_{\rmN\rmR}(t,\xi)}{w_{\rmN\rmR}(t,\eta)}\lesssim e^{\mu|\eta-\xi|^{\f12}}.
\eeno
\end{lemma}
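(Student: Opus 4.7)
The approach is a case analysis on the relative size of $|\eta-\xi|$ compared with $\min(|\eta|,|\xi|)$. First I would reduce to the positive case $0 < \xi \leq \eta$: if $\eta,\xi$ have opposite signs or if one is non-positive, the relevant critical intervals $\mathrm{I}_{k,\cdot}$ force one of the weights to equal $1$ at time $t$ (or both to be governed only by $|k| \le E(\sqrt{|\cdot|})$ on disjoint index sets), and the bound follows trivially. I would also dispose of the trivial regimes $t \geq 2\eta$ (both weights equal $1$) and $t \leq t_{E(\sqrt{\eta}),\eta}$ (both weights are frozen at their minimum values), leaving the range $t_{E(\sqrt{\eta}),\eta}<t<2\eta$.

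Next, I would handle the easy regime $|\eta-\xi| \geq \eta/4$. A direct computation from the recursive definitions \eqref{eq: w_NR-R}--\eqref{eq: w_NR-L} gives $w_{\mathrm{NR}}(t,\eta) \ge \prod_{k=1}^{E(\sqrt{\eta})} (k^2/\eta)^{C\kappa_1}$, which by Stirling is $\gtrsim e^{-c\sqrt{\eta}}$ for some universal $c>0$. Combined with the trivial upper bound $w_{\mathrm{NR}}(t,\xi) \le 1$ and $|\eta-\xi|^{1/2} \gtrsim \sqrt{\eta}$ in this regime, the desired estimate follows provided $\mu \geq c$.

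The nontrivial case is $|\eta-\xi| \leq \eta/4$, so $\xi \approx \eta$. I would telescope the definition and write
\begin{align*}
\log\frac{w_{\mathrm{NR}}(t,\xi)}{w_{\mathrm{NR}}(t,\eta)} \;=\; \sum_{k=1}^{E(\sqrt{\eta})} \Delta_k(t),
\end{align*}
where $\Delta_k$ is the difference of the contributions indexed by $k$ to $\log w_{\mathrm{NR}}$. Partition the indices into ``regular'' ones, for which both $t\notin \mathrm{I}_{k,\eta}$ and $t\notin \mathrm{I}_{k,\xi}$ (or both lie in the same left/right subinterval), and ``transition'' ones, for which $t$ moves across an endpoint $t_{k,\eta}$ as $\eta$ is perturbed to $\xi$. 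For regular $k$ the functions on the right of \eqref{eq: w_NR-R}--\eqref{eq: w_NR-L} are smooth in $\eta$, and Taylor expansion using $b_{k,\eta},a_{k,\eta}= O(1)$ and $|\eta/k-\xi/k|=|\eta-\xi|/k$ yields $|\Delta_k| \lesssim |\eta-\xi|/(k\eta)\cdot(\text{interval length}) \lesssim |\eta-\xi|/\eta$. Summing over the $\lesssim \sqrt{\eta}$ regular indices gives $\lesssim |\eta-\xi|/\sqrt{\eta} \leq |\eta-\xi|^{1/2}$ using $|\eta-\xi|\le\eta$.

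\textbf{Main obstacle.} The delicate step will be handling the transition indices, where at fixed $t$ a small perturbation of $\eta$ can move $t$ from inside $\mathrm{I}_{k,\eta}$ to outside (so one side uses a $w_{\mathrm{NR}}$-formula while the other uses matching-at-endpoint), or switch it between $\mathrm{I}^{L}_{k,\eta}$ and $\mathrm{I}^{R}_{k,\eta}$. At each transition I expect $|\Delta_k| = O(1)$ by the continuity of $w_{\mathrm{NR}}$ at endpoints $t_{k,\eta}$ (the matching in \eqref{eq: w_NR-R}--\eqref{eq: w_NR-L} is designed precisely for this). The key quantitative input is that a transition can occur only when $|\eta/k - \xi/k| \gtrsim |t_{k-1,\eta}-t_{k,\eta}| \sim \eta/k^2$, i.e., $k \lesssim \eta/|\eta-\xi|$, so the number of transition indices among $k \leq E(\sqrt{\eta})$ is at most $\min(\sqrt{\eta},\eta/|\eta-\xi|)$. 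When $|\eta-\xi|\geq 1$ this count is $\lesssim \eta/|\eta-\xi| \leq \sqrt{\eta}\cdot\sqrt{\eta/|\eta-\xi|^2}$; balancing via $|\eta-\xi|\le \eta$ shows the total transition contribution is $\lesssim \sqrt{|\eta-\xi|}$, which is the Gevrey-$\tfrac{1}{2}$ loss allowed by the statement. Absorbing all constants into $\mu$ completes the argument.
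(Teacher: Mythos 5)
Your overall plan — reduce to $0<\xi\leq\eta$, dispose of the trivial time regimes, handle $|\eta-\xi|\gtrsim\eta$ via the crude lower bound $w_{\mathrm{NR}}\gtrsim e^{-c\sqrt{\eta}}$, and then telescope for the main case — is a reasonable route, and the regular-index contribution is treated correctly. The gap is in the transition analysis, and it is not cosmetic.

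First, the inequality you state as the ``key quantitative input'' has the wrong direction: $|\eta-\xi|/k\gtrsim\eta/k^2$ is equivalent to $k\gtrsim\eta/|\eta-\xi|$, not $k\lesssim\eta/|\eta-\xi|$. More substantively, this is not the right characterization of the transition indices. For a \emph{fixed} $t$, the relevant cutoff index $k_0(t,\eta)$ is the one with $t\in\mathrm{I}_{k_0,\eta}$, and $k_0\approx\eta/t$; a transition occurs at $k$ precisely when $k$ lies between $k_0(t,\xi)$ and $k_0(t,\eta)$, i.e.\ when $|k-\eta/t|\lesssim|\eta-\xi|/t$. The number of transition indices is therefore $\approx|\eta-\xi|/t$, not $\min(\sqrt{\eta},\,\eta/|\eta-\xi|)$. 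Moreover, your final balancing $\eta/|\eta-\xi|\leq\sqrt{|\eta-\xi|}$ holds only when $|\eta-\xi|\geq\eta^{2/3}$, so the argument as written does not cover all of the regime $1\leq|\eta-\xi|\leq\eta/4$.

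Second, the claim that $|\Delta_k|=O(1)$ at each transition ``by continuity'' conflates continuity with boundedness. When $\eta'$ is pushed across the threshold so that $t$ passes from outside $\mathrm{I}_{k,\eta'}$ to fully past it, the factor for that $k$ changes from $1$ to $(k^2/\eta')^{1+2C\kappa_1}$, and its logarithm is of size $\log(\eta/k^2)\approx\log(t^2/\eta)$, which can be as large as $\log\eta$ (when $k\approx 1$, i.e.\ $t\approx\eta$). It is bounded \emph{in total}, but not by an $O(1)$-per-transition estimate. The way to close the argument is to bound the total transition contribution as (number of transitions) $\times$ (size per transition):
\begin{align*}
\sum_{k\text{ transition}}|\Delta_k| \;\lesssim\; \frac{|\eta-\xi|}{t}\,\log\!\Big(\frac{t^2}{\eta}\Big),
\end{align*}
and then observe that, over $t\in[\sqrt{\eta},2\eta]$, the quantity $t^{-1}\log(t^2/\eta)$ is maximized at $t\approx e\sqrt{\eta}$ where it is $\approx 1/\sqrt{\eta}$. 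This gives the total $\lesssim|\eta-\xi|/\sqrt{\eta}\leq|\eta-\xi|^{1/2}$ (using $|\eta-\xi|\leq\eta$), which is the desired bound. An equivalent and perhaps cleaner route is to note that $\eta\mapsto\log w_{\mathrm{NR}}(t,\eta)$ is absolutely continuous with $|\partial_\eta\log w_{\mathrm{NR}}(t,\eta)|\lesssim k_0/\eta+1/k_0\lesssim 1/\sqrt{\eta}$ a.e., and integrate in $\eta$; either way, the crucial input you are missing is the optimization of the competing factors $|\eta-\xi|/t$ and $\log(t^2/\eta)$ over the time window $[\sqrt{\eta},2\eta]$.
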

\begin{lemma}[Lemma 3.6 in \cite{BM2015}]\label{Lem 3.6}
We have for $t\in {\rm{I}}_{k,\eta}\cap {\rm{I}}_{k,\xi}$ with $k\neq l$ that
\beno
\f{\rmJ_{k}(\eta)}{\rmJ_{l}(\xi)}\lesssim\f{|\eta|}{k^2+(\eta-kt)^2}e^{9\mu|k-l,\eta-\xi|^{\f12}}.
\eeno
we also rewrite it as 
\beno
\f{\rmJ_{k}(\eta)}{\rmJ_{l}(\xi)}\lesssim\f{|\eta|}{k^2}\sqrt{\f{\pa_tw_{k}(t,\eta)}{w_{k}(t,\eta)}}\sqrt{\f{\pa_tw_{l}(t,\xi)}{w_{l}(t,\xi)}}e^{20\mu|k-l,\eta-\xi|^{\f12}}.
\eeno
In other cases, we have improved estimates
\beno
\f{\rmJ_{k}(\eta)}{\rmJ_{l}(\xi)}\lesssim e^{10\mu|k-l,\eta-\xi|^{\f12}}.
\eeno
Moreover, if $t\in {\rm{I}}_{l,\xi}$, $t\notin {\rm{I}}_{k,\eta}$ and $\f{1}{\al}|\xi|\leq |\eta|\leq \al|\xi|$ for some $\al>0$, then
\ben\label{EQ 3.32}
\f{\rmJ_{k}(\eta)}{\rmJ_{l}(\xi)}\lesssim\f{|l|(l+|\xi-lt|)}{|\xi|}e^{11\mu|k-l,\eta-\xi|^{\f12}}.
\een
\end{lemma}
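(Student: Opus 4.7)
The statement is cited from Bedrossian--Masmoudi \cite{BM2015}, so the plan is to adapt the analysis there. First, I would reduce the problem: write $\rmJ_k(\eta)=e^{\mu|\eta|^{1/2}}/w_k(t,\eta)+e^{\mu|k|^{1/2}}$, note the second term is dominated by the first in the regimes that matter, and compute
\[
\frac{\rmJ_k(\eta)}{\rmJ_l(\xi)} \;\lesssim\; e^{\mu(|\eta|^{1/2}-|\xi|^{1/2})}\,\frac{w_l(t,\xi)}{w_k(t,\eta)} + (\text{trivial lower-order terms}).
\]
The elementary bound $|\sqrt{|\eta|}-\sqrt{|\xi|}|\leq\sqrt{|\eta-\xi|}$ absorbs the exponential prefactor into the final $e^{c\mu|k-l,\eta-\xi|^{1/2}}$. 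The remainder of the work is to control $w_l(t,\xi)/w_k(t,\eta)$ by distinguishing cases according to whether $t\in\mathrm{I}_{k,\eta}$ and $t\in\mathrm{I}_{l,\xi}$, since only these triggers switch $w$ between the resonant and nonresonant branches.

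For the first claim ($t\in \mathrm{I}_{k,\eta}\cap\mathrm{I}_{k,\xi}$, $k\neq l$), the critical intervals $\{\mathrm{I}_{j,\xi}\}_j$ at fixed $\xi$ are disjoint, so $t\in\mathrm{I}_{k,\xi}$ forces $t\notin\mathrm{I}_{l,\xi}$, giving $w_l(t,\xi)=w_{\rmN\rmR}(t,\xi)$ while $w_k(t,\eta)=w_{\rmR}(t,\eta)$. Then
\[
\frac{w_l(t,\xi)}{w_k(t,\eta)}
=\frac{w_{\rmN\rmR}(t,\xi)}{w_{\rmN\rmR}(t,\eta)}\cdot\frac{\eta}{k^2(1+b_{k,\eta}|t-\eta/k|)}.
\]
Lemma~\ref{Lem 3.5} handles the first factor with a gain of $e^{\mu|\eta-\xi|^{1/2}}$. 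For the second factor, substituting the explicit formula for $b_{k,\eta}$ and using that $1+b_{k,\eta}|t-\eta/k|$ ranges from $1$ to $\eta/k^2$ on $\mathrm{I}_{k,\eta}$, one checks $\eta/[k^2(1+b|t-\eta/k|)]\lesssim \eta/[k^2+(\eta-kt)^2]$ up to a multiplicative loss of at most $|\eta-kt|/k \lesssim \eta/k^2$; this loss is absorbed into $e^{9\mu|k-l,\eta-\xi|^{1/2}}$ by combining $|k-l|^{1/2}\geq 1$ with the bound $|\eta-\xi|\lesssim \eta/k$ (which follows from the two resonance conditions) and choosing $\mu$ as in the paper. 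The equivalent formulation with $\sqrt{\pa_tw/w}$ factors is then immediate from Lemma~\ref{Lem 3.3}, which says $\pa_t w_{\rmN\rmR}/w_{\rmN\rmR}\approx 1/(1+|t-\eta/k|)$, so that $\sqrt{\pa_tw_k(t,\eta)/w_k(t,\eta)}\sqrt{\pa_tw_l(t,\xi)/w_l(t,\xi)}$ contributes a factor comparable to $[(1+|t-\eta/k|)(1+|t-\xi/l|)]^{-1/2}$, which repackages the first bound into the second.

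The ``other cases'' estimate $\rmJ_k(\eta)/\rmJ_l(\xi)\lesssim e^{10\mu|k-l,\eta-\xi|^{1/2}}$ is easier: if $t\notin\mathrm{I}_{k,\eta}$ then $w_k=w_{\rmN\rmR}(t,\eta)$, so $w_l/w_k \leq w_{\rmN\rmR}(t,\xi)/w_{\rmN\rmR}(t,\eta)\lesssim e^{\mu|\eta-\xi|^{1/2}}$ by Lemma~\ref{Lem 3.5} (with trivial modification when $w_l$ is resonant since $w_{\rmR}\leq w_{\rmN\rmR}$). Finally, for \eqref{EQ 3.32}, the hypothesis $t\in\mathrm{I}_{l,\xi}$, $t\notin\mathrm{I}_{k,\eta}$ gives $w_l(t,\xi)=w_{\rmR}(t,\xi)$, $w_k(t,\eta)=w_{\rmN\rmR}(t,\eta)$, so
\[
\frac{w_l(t,\xi)}{w_k(t,\eta)}
=\frac{l^2}{\xi}\bigl(1+b_{l,\xi}|t-\xi/l|\bigr)\frac{w_{\rmN\rmR}(t,\xi)}{w_{\rmN\rmR}(t,\eta)}
\lesssim \frac{l(l+|lt-\xi|)}{\xi}\,e^{\mu|\eta-\xi|^{1/2}},
\]
using $1+b_{l,\xi}|t-\xi/l|\approx 1+|lt-\xi|/l$ and the comparability of $|\eta|$ and $|\xi|$.

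The main technical obstacle is the first case: squeezing the factor $\eta/k^2$ arising from the resonant weight into $\eta/[k^2+(\eta-kt)^2]$ times an exponential $e^{9\mu|k-l,\eta-\xi|^{1/2}}$. This is where the sharp choice of $\mu$ and the geometric constraint $|\eta-\xi|\lesssim \eta/k$ (coming from both critical intervals containing the same $t$) play a delicate balancing role, as in the original argument of \cite{BM2015}.
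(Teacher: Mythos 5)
First, a structural point: the paper does not prove this lemma at all — it is imported verbatim from \cite{BM2015} and stated in the appendix with a citation — so your proposal can only be judged against the known argument there. Your overall route (reduce to the quotient $w_l(t,\xi)/w_k(t,\eta)$ after peeling off the exponential prefactors with $||\eta|^{1/2}-|\xi|^{1/2}|\le|\eta-\xi|^{1/2}$, decide which weight sits on the resonant branch, use Lemma \ref{Lem 3.5} for the $w_{\rmN\rmR}/w_{\rmN\rmR}$ quotient and the explicit formulas \eqref{eq: w_NR-R}--\eqref{eq: w_R-L} for the $w_{\rmR}/w_{\rmN\rmR}$ quotient) is exactly the \cite{BM2015} argument, and it correctly yields the second display and \eqref{EQ 3.32}.

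The genuine gap is your treatment of the first display. Your computation produces $\f{|\eta|}{k^2(1+b_{k,\eta}|t-\eta/k|)}\approx\f{|\eta|}{|k|(|k|+|\eta-kt|)}$, and you then assert that the remaining factor $1+|t-\eta/k|$ — which is what separates $|k|(|k|+|\eta-kt|)$ from $k^2+(\eta-kt)^2\approx(|k|+|\eta-kt|)^2$ — can be absorbed into $e^{9\mu|k-l,\eta-\xi|^{1/2}}$. It cannot: take $\xi=\eta$ and $l=k\pm1$, so that the exponential is the fixed constant $e^{9\mu}$, while $1+|t-\eta/k|$ ranges up to $\sim|\eta|/k^2$ on $\mathrm{I}_{k,\eta}$ and is unbounded; your auxiliary bound $|\eta-\xi|\lesssim|\eta|/|k|$ is an upper bound on the separation and gives no lower bound to feed the exponential. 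In fact the first display as printed fails near the endpoints of $\mathrm{I}_{k,\eta}$ (there $w_{\rmR}=w_{\rmN\rmR}$, so for $\xi=\eta$ the left side is $\approx1$ while the right side is $\approx k^2/|\eta|$); it is a mistranscription of the \cite{BM2015} bound $\f{|\eta|}{|k|(|k|+|\eta-kt|)}$, which is what your argument actually proves and what the second display encodes via Lemma \ref{Lem 3.3}. You should prove that version rather than force the stronger one. A secondary omission: your ``other cases'' discussion only treats $t\notin\mathrm{I}_{k,\eta}$. The remaining configuration $t\in\mathrm{I}_{k,\eta}$ but $t\notin\mathrm{I}_{k,\xi}$ puts $w_{\rmR}(t,\eta)$ in the denominator and costs a factor as large as $|\eta|/k^2$; one must then exploit that $t$ lying in the critical interval for $\eta$ but not for $\xi$ forces enough separation between $\eta$ and $\xi$ that this loss is dominated by $e^{\mu|\eta-\xi|^{1/2}}$. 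That step is absent from your write-up.
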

\begin{lemma}[Lemma 3.7 in \cite{BM2015}]\label{Lem 3.7}
Let $t\leq \f{1}{2}\min\{\sqrt{|\eta|},\sqrt{|\xi|}\}$. Then
\beno
\left|\f{\rmJ_{k}(\eta)}{\rmJ_{l}(\xi)}-1\right|\lesssim \f{\langle k-l, \eta-\xi\rangle}{\sqrt{|\xi|+|\eta|+|k|+|l|}}e^{11\mu|k-l,\eta-\xi|^{\f12}}. 
\eeno
\end{lemma}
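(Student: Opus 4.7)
The plan is to exploit the crucial feature of the regime $t \leq \tfrac12 \min\{\sqrt{|\eta|}, \sqrt{|\xi|}\}$: both $t < t_{\mathrm{E}(\sqrt{|\eta|}),\eta}$ and $t < t_{\mathrm{E}(\sqrt{|\xi|}),\xi}$, so by the first case of \eqref{eq: w_k(t,eta)} the weights $w_k(t,\eta) = w_k(t_{\mathrm{E}(\sqrt{|\eta|}),\eta},\eta)$ and $w_l(t,\xi) = w_l(t_{\mathrm{E}(\sqrt{|\xi|}),\xi},\xi)$ are ``frozen'' at their endpoint values and so depend only on the frequency variables, not on $t$. This reduces the claim to a purely static estimate on these frozen weights and on the exponential factors $e^{\mu|\cdot|^{1/2}}$.

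First I would split the difference into three pieces by writing
\begin{align*}
\rmJ_{k}(\eta) - \rmJ_{l}(\xi)
&= e^{\mu|\eta|^{1/2}}\!\left(\tfrac{1}{w_k(t,\eta)}-\tfrac{1}{w_l(t,\xi)}\right)
+ \tfrac{e^{\mu|\eta|^{1/2}}-e^{\mu|\xi|^{1/2}}}{w_l(t,\xi)}
+ \bigl(e^{\mu|k|^{1/2}}-e^{\mu|l|^{1/2}}\bigr),
\end{align*}
and then dividing by $\rmJ_l(\xi) \gtrsim e^{\mu|\xi|^{1/2}} + e^{\mu|l|^{1/2}}$. For the second and third pieces, the mean value theorem applied to $x\mapsto e^{\mu\sqrt{x}}$ (whose derivative is $\tfrac{\mu}{2\sqrt{x}}e^{\mu\sqrt{x}}$) yields the bound $\lesssim \tfrac{|\eta-\xi|}{\sqrt{|\eta|+|\xi|}}e^{\mu|\eta|^{1/2}+c\mu|\eta-\xi|^{1/2}}$ and the analogous bound with $k,l$ in place of $\eta,\xi$; combined with $|\eta-\xi|\lesssim\langle\eta-\xi\rangle^{1/2}\cdot\sqrt{|\eta|+|\xi|+|k|+|l|}^{1/2}\cdots$ and absorbing the polynomial factor into $e^{11\mu|k-l,\eta-\xi|^{1/2}}$, this yields the desired bound with room to spare.

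The substantive step is controlling the first piece, namely the relative difference $|w_k(t,\eta)/w_l(t,\xi) - 1|$. Here I would proceed in two sub-steps: (i) the $k\leftrightarrow l$ swap at fixed frequency, reducing to estimating $w_k(t_{\mathrm{E}(\sqrt{|\eta|}),\eta},\eta)/w_l(t_{\mathrm{E}(\sqrt{|\eta|}),\eta},\eta)$, which is a ratio of values of the piecewise product defining $w_{\mathrm{NR}}$ and differs only through the $O(\langle k-l\rangle)$ many critical intervals between the two indices—each contributing a factor close to $1$ since at that endpoint $t=t_{\mathrm{E}(\sqrt{|\eta|}),\eta}$ each factor $\bigl(\tfrac{j^2}{\eta}(1+b_{j,\eta}|t-\eta/j|)\bigr)^{C\kappa_1}$ is essentially $1+O(1/\sqrt{|\eta|})$; and (ii) the $\eta\leftrightarrow \xi$ swap at fixed index, which is handled by comparing the iterative products and using $|t_{j,\eta}-t_{j,\xi}|\lesssim |\eta-\xi|/j$ together with Lemma~\ref{Lem 3.5}. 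Each of these sub-steps contributes a factor $\tfrac{\langle k-l,\eta-\xi\rangle}{\sqrt{|\eta|+|\xi|+|k|+|l|}}e^{c\mu|k-l,\eta-\xi|^{1/2}}$.

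The main obstacle will be sub-step (i)--(ii) above: the weights $w_k$ are defined by an iterative product across all critical intervals $\mathrm{I}_{j,\eta}$ for $1\leq j\leq \mathrm{E}(\sqrt{|\eta|})$, so a clean comparison requires tracking how each factor varies as one swaps $(k,\eta)\leftrightarrow (l,\xi)$, using the explicit formulas \eqref{eq: b_k,eta}--\eqref{eq: a_k,eta} for $a_{j,\eta}$, $b_{j,\eta}$ together with the frozen-time identity $\tfrac{j^2}{\eta}\bigl(1+b_{j,\eta}|t_{j-1,\eta}-\eta/j|\bigr)=1$. Once these telescoping comparisons are assembled, the $1/\sqrt{|\eta|+|\xi|+|k|+|l|}$ decay drops out from the single ``extra'' factor that is no longer cancelled, exactly as in \cite{BM2015}.
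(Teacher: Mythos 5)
First, a remark on the comparison itself: this paper does not prove the lemma — it is imported verbatim from \cite{BM2015} in Appendix~\ref{sec: operator A} — so your proposal can only be measured against the argument in that reference. Your architecture matches it: freeze the weights via the first branch of \eqref{eq: w_k(t,eta)} (and indeed $t\leq\f12\sqrt{|\eta|}<t_{\rmE(\sqrt{|\eta|}),\eta}$, so this is legitimate), split $\rmJ_k(\eta)-\rmJ_l(\xi)$ into the three differences you write (the algebra checks out), and handle the exponentials by the mean value theorem.

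Two points need repair. (i) Your sub-step on the $k\leftrightarrow l$ swap misreads how $w_k$ depends on $k$: for fixed $(t,\eta)$ the weights $w_k(t,\eta)$ and $w_l(t,\eta)$ do not differ ``through $O(\langle k-l\rangle)$ many critical intervals between the two indices'' — the iterated product in \eqref{eq: w_NR-R}--\eqref{eq: w_R-L} runs over critical times in $t$ at fixed $\eta$, not over the index $k$. The correct and much simpler fact is that the frozen value $w_k(t_{\rmE(\sqrt{|\eta|}),\eta},\eta)$ is \emph{exactly} independent of $k$: the only possible $k$-dependence is whether $t_{\rmE(\sqrt{|\eta|}),\eta}\in\mathrm{I}_{k,\eta}$, and at the left endpoint one computes from \eqref{eq: a_k,eta} that $1+a_{k,\eta}|t_{k,\eta}-\eta/k|=\eta/k^2$, hence $w_{\mathrm R}(t_{k,\eta},\eta)=w_{\mathrm{NR}}(t_{k,\eta},\eta)$. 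So your sub-step (i) is vacuous, and all $k,l$-dependence sits in $e^{\mu|k|^{1/2}}-e^{\mu|l|^{1/2}}$. (ii) The decisive quantitative input — that $\big|w_{\mathrm{NR}}(t_{\rmE(\sqrt{|\xi|}),\xi},\xi)\,/\,w_{\mathrm{NR}}(t_{\rmE(\sqrt{|\eta|}),\eta},\eta)-1\big|\lesssim \f{|\eta-\xi|}{\sqrt{|\eta|+|\xi|}}e^{c\mu|\eta-\xi|^{1/2}}$ — is asserted but not established. Lemma~\ref{Lem 3.5}, which you invoke, yields only $e^{\mu|\eta-\xi|^{1/2}}$ with no $1/\sqrt{|\eta|+|\xi|}$ gain, and that gain is precisely what distinguishes the present lemma from Lemma~\ref{Lem 3.5}. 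One must iterate the recursion down to the last critical time to get the closed form $w_{\mathrm{NR}}(t_{\rmE(\sqrt{|\eta|}),\eta},\eta)=\prod_{j=1}^{\rmE(\sqrt{|\eta|})}(j^2/|\eta|)^{1+2C\kappa_1}$ and then run a Stirling-type estimate showing the logarithm of the ratio of the two products is $O(|\eta-\xi|/\sqrt{|\eta|+|\xi|})$; note the contribution comes not only from the ``uncancelled extra factors'' but also from the common factors $(j^2/|\eta|)$ versus $(j^2/|\xi|)$. Finally, the denominators do not match automatically: the $k,l$-piece produces $|k-l|/\sqrt{|k|+|l|}$ rather than $|k-l|/\sqrt{|k|+|l|+|\eta|+|\xi|}$, and when $|\eta|+|\xi|\gg|k|+|l|$ one must use $\rmJ_l(\xi)\geq e^{\mu|\xi|^{1/2}}\gg e^{\mu\max(|k|,|l|)^{1/2}}$ to recover the missing decay; the ``$\cdots$ with room to spare'' conceals a genuine case analysis.
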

\end{appendix}

\bibliographystyle{siam.bst} 
\bibliography{references.bib}

\end{CJK*}

\end{document}